\newif\ifsubsections
	\definecolor{linkred}{rgb}{0.7,0.2,0.2}
	\definecolor{linkblue}{rgb}{0,0.2,0.6}
	\definecolor{linkred}{rgb}{0.0,0.0,0.0}
	\definecolor{linkblue}{rgb}{0,0.0,0.0}
\theoremstyle{plain}
\newtheorem{thm}[equation]{Theorem}
\newtheorem{prop}[equation]{Proposition}
\newtheorem{lem}[equation]{Lemma}
\newtheorem{lem-def}[equation]{Lemma-Definition}
\newtheorem{cor}[equation]{Corollary}
\theoremstyle{remark}
\newtheorem{ex}[equation]{Example}
\newtheorem{rmk}[equation]{Remark}
\theoremstyle{definition}
\newtheorem{dfn}[equation]{Definition}
\newcommand{\quash}[1]{}  
\newcommand{\nc}{\newcommand}
\nc{\on}{\operatorname}
\newcommand{\fraka}{{\mathfrak a}}
\newcommand{\frakg}{{\mathfrak g}}
\newcommand{\frakl}{{\mathfrak l}}
\newcommand{\frakp}{{\mathfrak p}}
\newcommand{\frakq}{{\mathfrak q}}
\newcommand{\fraks}{{\mathfrak s}}
\newcommand{\frakt}{{\mathfrak t}}
\newcommand{\bA}{{\mathbb A}}
\newcommand{\bB}{{\mathbb B}}
\newcommand{\bC}{{\mathbb C}}
\newcommand{\bE}{{\mathbb E}}
\newcommand{\bF}{{\mathbb F}}
\newcommand{\bG}{{\mathbb G}}
\newcommand{\bN}{{\mathbb N}}
\newcommand{\bO}{{\mathbb O}}
\newcommand{\bP}{{\mathbb P}}
\newcommand{\bQ}{{\mathbb Q}}
\newcommand{\bR}{{\mathbb R}}
\newcommand{\bZ}{{\mathbb Z}}
\newcommand{\mA}{{\mathcal A}}
\newcommand{\mB}{{\mathcal B}}
\newcommand{\mC}{{\mathcal C}}
\newcommand{\mD}{{\mathcal D}}
\newcommand{\mE}{{\mathcal E}}
\newcommand{\mF}{{\mathcal F}}
\newcommand{\mG}{{\mathcal G}}
\newcommand{\mH}{{\mathcal H}}
\newcommand{\mL}{{\mathcal L}}
\newcommand{\mM}{{\mathcal M}}
\newcommand{\mN}{{\mathcal N}}
\newcommand{\mO}{{\mathcal O}}
\newcommand{\mP}{{\mathcal P}}
\newcommand{\mR}{{\mathcal R}}
\newcommand{\mS}{{\mathcal S}}
\newcommand{\mY}{{\mathcal Y}}
\nc{\al}{{\alpha}} 
\nc{\be}{{\beta}} 
\nc{\ga}{{\gamma}}
\nc{\la}{{\lambda}}
\nc{\ve}{{\varepsilon}} 
\nc{\Ga}{{\Gamma}} 
\nc{\La}{{\Lambda}}
\nc{\ad}{{\on{ad}}}
\newcommand{\Ad}{{\on{Ad}}}
\nc{\aff}{{\on{aff}}}
\nc{\Aff}{{\on{Aff}}}
\newcommand{\Aut}{{\on{Aut}}}
\nc{\Bun}{{\on{Bun}}}
\newcommand{\cha}{{\on{char}}}
\nc{\der}{{\on{der}}}
\nc{\diag}{{\on{diag}}}
\nc{\Fl}{{\mF\ell}}
\newcommand{\Gal}{{\on{Gal}}}
\newcommand{\Gr}{{\on{Gr}}}
\newcommand{\Hom}{{\on{Hom}}}
\nc{\IC}{{\on{IC}}}
\newcommand{\id}{{\on{id}}}
\nc{\Id}{{\on{Id}}}
\nc{\Ind}{{\on{Ind}}}
\nc{\inv}{{\on{Inv}}}
\nc{\Iso}{{\on{Isom}}}
\newcommand{\Lie}{{\on{Lie}}}
\newcommand{\Pic}{{\on{Pic}}}
\newcommand{\pr}{{\on{pr}}}
\nc{\Ql}{{\overline{\bQ}_\ell}}
\nc{\res}{{\on{res}}}
\newcommand{\Res}{{\on{Res}}}
\newcommand{\s}{{\on{sc}}}
\newcommand{\Spec}{\on{Spec}}
\nc{\tr}{{\on{tr}}}
\nc{\mmu}{\mu_\bullet}
\nc{\boxstar}{\makebox[0pt][l]{$\square$}\raisebox{.115ex}{\hspace{0.09em}\Large$\star$}\ } 
\nc{\fstar}{\makebox[0pt][l]{$\square$}\raisebox{.115ex}{\hspace{0.09em}\Large$\star$}^f } 
\newcommand*\numcircledtikz[1]{\tikz[baseline=(char.base)]{
            \node[shape=circle,draw,inner sep=-0.06em, very thin] (char) {\Large \raisebox{.02ex}{#1}};}} 
            \nc{\ostar}{\text{\kern0.12em\numcircledtikz{$\star$}\kern0.18em}} 
\newcommand{\GL}{{\on{GL}}}
\nc{\GSp}{{\on{GSp}}} \nc{\GU}{{\on{GU}}} \nc{\SL}{{\on{SL}}}
\nc{\SU}{{\on{SU}}} \nc{\SO}{{\on{SO}}}
\newcommand{\gl}{{\frakg\frakl}}
\nc{\Hk}{{\on{Hk}}}
\nc{\lhk}{\Hk^{\on{loc}}}
\nc{\pf}{{p^{-\infty}}}
\nc{\bb}{{\mathbf{b}}}
\nc{\OGr}{\on{OGr}}
\nc{\wGr}{{\widetilde{\Gr}}}
\nc{\Sat}{{\on{Sat}}}
\nc{\Sp}{\on{Sp}}
\nc{\Ran}{\on{Ran}}
\nc{\Rat}{\on{Rat}}
\nc{\Rep}{\on{Rep}}
\nc{\Loc}{\on{Loc}}
\def\xcoch{\mathbb{X}_\bullet}
\def\xch{\mathbb{X}^\bullet}
\begin{document}

%
%
%
%
%
%

\title{An introduction to affine Grassmannians and the geometric Satake equivalence}

%
%
\author{Xinwen Zhu} \thanks{Partially supported by NSF DMS -1303296/1535464, a Sloan Fellowship and PCMI} 
\address{Department of Mathematics, California Institute of Technology, Pasadena, CA 91125}
\email{xzhu@caltech.edu} 

%
%
\begin{abstract}
We introduce various affine Grassmannians, study their geometric properties, and give some applications. We also discuss the geometric Satake equivalence.
These are the expanded lecture notes for a mini-course in 2015 PCMI summer school.  
\end{abstract}
\keywords{Park City Mathematics Institute}

%
%
\maketitle

%
%


\tableofcontents

\section*{Introduction}\addcontentsline{toc}{section}{Introduction}
\subsection{Some motivations.}
Let $F$ be a complete discrete valuation field, with $\mO\subset F$ its ring of integers and $k$ its residue field. Let $V=F^n$ be an $n$-dimensional $F$-vector space. A lattice of $V$ is a finitely generated $\mO$-submodule $\La$ of $V$ such that $\La\otimes F=V$. For example, $\La_0=\mO^n$ is a lattice in $V$. The main purpose of the lecture notes is to endow the set of lattices of $V$ (and its variants and generalisations) with a structure as an infinite dimensional algebraic variety, known as the affine Grassmannian $\Gr$, to study its geometric properties, and to give some of its applications.

But let us start with an incomplete list of motivations of studying such object.

The first motivation lies in the relationship between loop algebras/groups and Kac-Moody algebras/groups. Since every lattice can be translated to $\La_0$ by a linear automorphism of $V$, $\Gr$ is naturally identified with $\GL_n(F)/\GL_n(\mO)$ as a set. When $F=k((t))$, the group $\GL_n(F)$ is closely related to an affine Kac-Moody group over $k$. Realising $\Gr$ as a (partial) flag variety of the Kac-Moody group is important for representation theory of affine Kac-Moody algebras. A precise relation between them will be briefly discussed in \S\ \ref{Kac-Moody}.

The second motivation is the study of moduli of vector bundles on an algebraic curve. Let $X$ be a smooth projective algebraic curve over $k$, and let $x\in X(k)$ be a point. We identify the complete local ring of $X$ at $x$ with $k[[t]]$ by choosing a local coordinate $t$ around $x$. Then one can identify $\Gr$ with the following set
\[\Gr=\left\{ \mE \mbox{ is a vector bundle on } X, \ \beta: \mE|_{X-x}\simeq \mO_{X-x}^n \mbox{ is a trivialisation}\right\}.\]
Therefore, there is a map from $\Gr$ to the set of isomorphism classes of rank $n$ vector bundles on $X$. The latter in fact is the underlying set of points of a much more sophisticated algebro-geometric object, namely the moduli stack $\Bun_n$ of rank $n$ vector bundles on $X$. It is therefore very desirable to upgrade the above set-theoretical map to a morphism
\[u_x:\Gr\to \Bun_n\]
of algebro-geometric objects.
Such a morphism is called the uniformization map, and is an important tool to study certain geometric properties of $\Bun_n$. We refer to \S\ \ref{one point uniformization} for more detailed explanations of this idea.

The third motivation comes from the study of some (locally) closed subvarieties in $\Gr$, arising from various aspects of the Langlands program. The list of such subvarieties include affine Springer fibers (related to harmonic analysis on reductive groups over non-archimedean local fields), affine Deligne-Lusztig varieties (related to reduction of Shimura varieties mod $p$) an Kisin varieties (related to deformation spaces of Galois representations). We refer to \S\ \ref{digression} for an introduction of these varieties.

The fourth motivation also comes from the Langlands program. Namely, the Satake isomorphism\footnote{Or rather, Langlands' re-interpretation of the Satake isomorphism.}, which describes the ring of $\GL_n(\mO)$-bi-invariant functions on $\GL_n(F)$, is the starting point of the Langlands duality. It turns out that the Satake isomorphism admits a vast generalisation, known as the geometric Satake equivalence. This is the starting point of the geometric Langlands program, and in recent years also finds some unexpected applications to the classical arithmetic Langlands program (e.g. see \cite{Laf}). But even in order to formulate the geometric Satake equivalence, we need to know the algebro-geometric structure on $\Gr$.

\subsection{Scope and contents.}

\subsubsection{}The notes are based on five lectures given at the Park City Mathematical Institute in Summer of 2015. They are aimed as a serious introduction to the subject. That said, we will address (what we think) the most basic and important aspects of the subject. Certainly there are many other important topics untouched in the notes, partly due to the limitation of the space and partly due to the author's incapability.  

On the other hand, we hope that these notes form a supplement rather than a supplantation of the existing literature. Therefore, we do not give a detailed proof of every result, in particular when such a proof already exists in literature. Instead we shall point out relevant references. On the other hand, there are some well-known and widely used results for which no (complete) proofs seem to exist in literature. In that case, we will try to supply the details.

The notes contain a few results that did not appear in literature before, although it is quite plausible that they are well-known to experts. Such results include, for example, the calculation of the relative Picard sheaves of Beilinson-Drinfeld Grassmannians (\S\ \ref{line bundle on BD Grass}).

\subsubsection{}
We briefly describe the contents of each section.

Section \ref{Lecture I} explains basic constructions, including various (moduli) interpretations of affine Grassmannians. We will also introduce the important determinant line bundle, and discuss a relationship between affine Grassmannians over complex numbers and the based loop groups. Although mostly  we consider groups over an equal characteristic local field, at the end of this section we briefly discuss the recent progress of defining affine Grassmannians for  $p$-adic groups.

Section \ref{Lecture II} studies some basic geometry of affine Grassmannians. We will introduce various subvarieties in the affine Grassmannian, including Schubert varieties, opposite Schubert ``varieties'', transversal slices, and some important subvarieties arising from the Langlands program. We study the Picard group of the affine Grassmannian for a simple simply-connected group $G$.  We also briefly discuss the relation between affine Grassmannians and partial flag varieties of affine Kac-Moody groups.

The interpretation of affine Grassmannians as certain moduli of bundles on curves admits a vast generalisation, known as Beilinson-Drinfeld Grassmannians. Section \ref{Lecture III} is devoted to these objects. A salient feature of Beilinson-Drinfeld Grassmannians is the factorisation property. We explain this property in some details, in particular formulate it via the Ran space of a curve. We also discuss line bundles on Beilinson-Drinfeld Grassmannians.

Section \ref{Lecture IV} discusses some applications of affine Grassmannians to the study of moduli of bundles on curves. The basic tools are uniformization theorems. In fact, there are two types of such theorems: One point uniformization which is valid when $G$ is semisimple, and the ad\`elic uniformization, which is the algebro-geometric version of Weil's ad\`elic description of the isomorphism classes of $G$-bundles on a curve.  As an application, we give a description of the Picard group of the moduli of $G$-bundles on a smooth projective curve when $G$ is simple and simply-connected, and interpret global sections of line bundles on the moduli space as conformal blocks.

Section \ref{Lecture V} is devoted to the geometric Satake equivalence. The theory is naturally divided into two parts. The first part is the construction of a Tannakian category from certain category of perverse sheaves on the affine Grassmannian. A key ingredient is the fusion product structure on the category, whose construction relies on the factorisation property of Beilinson-Drinfeld Grassmannians. The second part is the study of the Tannakian group. Here the main ingredient is the Mirkovi\'c-Vilonen theory on the geometry of semi-infinite orbits. We also explain how to extend the geometric Satake equivalence to the forms that are suitable for applications to the (geometric) Langlands program. At the end of this section, we explain how to recover the classical Satake isomorphism from the geometric one. 

There is also an appendix where we collect some backgrounds on $\ell$-adic sheaves that are used in the Section \ref{Lecture V}.

\subsection{Conventions and notations.}
Now we fix notations and conventions that will be used throughout the notes. Readers are \emph{strongly} suggested to skip this subsection and come back only when necessary.
 
 \subsubsection{$k$-spaces.}\label{Intro:sp}
Let $k$ denote a commutative field. 
Let $\on{Aff}_k$ denote the category of affine $k$-schemes, i.e. the category opposite to the category $k\on{-alg}$ of commutative $k$-algebras. A presheaf (resp. prestack) is a covariant functor (resp. $2$-functor)  from $\Aff_k$ to the category of sets (resp. $2$-category of groupoids). We regard a set as a discrete groupoid so sometimes regard a presheaf as a discrete prestack.

We endow $\on{Aff}_k$ with the fpqc topology. I.e., a cover of $R$ is a faithfully flat map $R\to R'$. A $k$-\emph{space} is a sheaf on $\on{Aff}_k$ with respect to this topology. Explicitly, a space $\mF$ is a covariant functor $k\on{-alg}\to \on{Set}$  
\begin{enumerate}
\item  that respects finite products, i.e.
\[\mF(\prod_i R)\to \prod_i \mF(R_i)\]
is an isomorphism;
\item and such that if $R\to R'$ is faithfully flat, 
\begin{equation}\label{fpqc sheaf}
\mF(R)\to \mF(R')\rightrightarrows\mF(R'\otimes_RR') 
\end{equation}
is an equaliser. 
\end{enumerate}
Morphisms between two spaces are natural transformations of functors. The category of $k$-spaces is denoted by $\on{Sp}_k$. It is well-known that $\on{Sp}_k$ contains the category $\on{Sch}_k$ of $k$-schemes as a full subcategory. Recall that a map $f:\mF\to\mG$ of presheaves is called schematic if for every scheme $T$, the fiber product $\mF\times_{\mG}T$ is a scheme. One can similarly define the notion of a stack. 

Let $X$ and $Y$ be two prestacks. The fiber product $X\times Y$ always means $X\times_kY$. If $Y=\Spec R$, it is sometimes also written as $X_R$ or $X\otimes R$ or $X\otimes_k R$. Sometimes, we also write $X\times_YZ$ by $X\otimes_AB$ if $Y=\Spec A$ and $Z=\Spec B$.

Let $X$ be a prestack over $k$. We define the ring of regular functions on $X$ as
\begin{equation}\label{reg fun}
\Gamma(X,\mO_X)=\Hom(X, \bG_a),
\end{equation}
where $\Hom$ is calculated in the category of prestacks. I.e. a regular function $f$ on $X$ is a coherent way to assign every $x\in X(R)$ an element $f_x\in R$, satisfying the natural compatibility conditions.
Likewise, we define the Picard groupoid of line bundles on $X$ as
\begin{equation}\label{Pic line}
\Pic(X)=\Hom(X,\bB\bG_m),
\end{equation}
where $\bB\bG_m$ is the classifying stack of $\bG_m$ (see \S~\ref{Intro:torsor} below).
I.e. an object in $\Pic(X)$, called a line bundle on $X$, is a coherent way to assign every $x\in X(R)$ a line bundle on $\Spec R$, satisfying the natural compatibility conditions.

\subsubsection{Ind-schemes.}\label{Intro:Indsch}
In the notes a special class of $k$-spaces play a prominent role.

\begin{dfn}\label{ind-sch}
An ind-scheme over $k$ is a space that can be written as a \emph{filtered} direct limit of subspaces 
\begin{equation}\label{presentation}
X=``\underrightarrow{\lim}''_{i\in I}X_i
\end{equation}
such that each $X_i$ is represented by a scheme and that for every arrow
 $i\to j$ in $I$, the corresponding map $X_i\to X_j$ is a closed embedding.
\end{dfn}

\begin{rmk}In literature, ind-schemes defined as above are sometimes called strict ind-schemes to distinguish them from general ind-objects in the category of schemes. The latter notion will never appear in the notes.
\end{rmk}

\begin{dfn}Let $X$ be an ind-scheme over $k$. Then $X$ is called: \emph{(i) ind-affine;  (ii) ind-of finite type over $k$; (iii) ind-proper over $k$; (iv) ind-projective over $k$; (v) reduced}
if in some presentation as \eqref{presentation},
each $X_i$ can be chosen to be (i) affine; (ii) of finite type over $k$; 
(iii) proper over $k$; (iv) projective over $k$; (v) reduced. We define $X_{\on{red}}:=``\underrightarrow{\lim}''_{i\in
I}(X_i)_{\on{red}}$. 
\end{dfn}

\begin{ex}A formal scheme is an ind-scheme such that $X_{\on{red}}$ is
a scheme.
\end{ex}

\subsubsection{Torsors.}\label{Intro:torsor}
Let $G$ be a $k$-group (i.e. a group object in the category of $k$-spaces). A map $E\to X$ of prestacks is called a $G$-torsor if there is an action of $G$ on $E$ such that: 
\begin{enumerate}
\item the map $E\to X$ is $G$-equivariant (where $G$ acts trivially on $X$); and
\item for every $\Spec R\to X$, there is a faithfully flat map $\Spec R'\to \Spec R$ such that $\Spec R'\times_XE$ is $G$-equivariantly isomorphic to $\Spec R'\times G$. 
\end{enumerate}
If $G$ is an affine group scheme over $k$, this is equivalent to requiring that the map $E\to X$ is schematic and faithfully flat and that the natural action map $G\times E\cong E\times_X E$ is an isomorphism. We say that this $G$-torsor is fppf (resp. \'etale, resp. Zariski) locally trivial if $\Spec R'$ in (2) can be chosen such that $\Spec R'\to \Spec R$ is a fppf (resp. \'etale, resp. Zariski) cover.

Let $X$ be a $k$-space with an action of $k$-group $G$. We denote by $[X/G]$ the fpqc stack whose $R$-points are $G$-torsors $E$ on $\Spec R$ together with a $G$-equivariant map $E\to X$. The natural projection $X\to [X/G]$ then is a $G$-torsor. If $X=\Spec k$, $[X/G]$ is usually denoted by $\bB G$, called the classifying stack of $G$. If the action of $G$ on $X$ is free, then $[X/G]$ is a $k$-space. 
Let $E\to X$ be a $G$-torsor and $Y$ be a $G$-space (i.e. a $k$-space with a $G$-action), we define the twisted product as
\[X\tilde\times Y:= E\times^G Y:=[E\times Y/G],\]
where $G$ acts on $E\times Y$ diagonally. There is a natural projection $X\tilde\times Y\to X$ with fibers isomorphic to $Y$. If $Y=V$ is a linear representation of $G$, we sometimes also write $E\times^GV$ by $V_E$, and regard it as a vector bundle on $X$.

If $G=\GL_n$ is the general linear group, and $V=k^n$ is the standard representation of $\GL_n$. Then $(\mE\to X)\mapsto (V_\mE\to X)$ induces an equivalence of groupoids between $\GL_n$-torsors on $X$ and rank $n$ vector bundles on $X$. We will use this equivalence freely in the notes.

There is an obvious generalisation of the above notations for a (non-constant) group space $\underline G$ over some base $k$-scheme $S$. We will denote by $\mE^0$ \emph{the} trivial $\underline G$-torsor.

\subsubsection{The disc.}\label{Intro:disc}
We write 
$$D=\Spec k[[t]],\quad D^*=\Spec k((t)),$$ and call them \emph{the} disc and \emph{the} punctured disc to emphasise a preferred choice of the coordinate $t$. If $R$ is a $k$-algebra, we write 
$$D_R: = D\hat\times\Spec R:=\Spec R[[t]], \quad D_R^*:= D^*\hat\times \Spec R:= \Spec R((t)),$$
and think them as a family of discs (resp. punctured discs) parameterised by $\Spec R$.

Unless otherwise stated, by a local field $F$, we mean a field over $k$ complete with respect to a discrete valuation that is trivial on $k$, and whose residue field is $k$. We denote by $\mO\subset F$ the ring of integers. 
Then upon a choice of  a uniformizer $t\in \mO$, we have $F\cong k((t))$ and $\mO\cong k[[t]]$.
Let
$$\mO\hat\otimes R:=\underleftarrow\lim (\mO/t^n\otimes R), \quad F\hat\otimes R:=\mO\hat\otimes R[1/t]$$ so $\mO\hat\otimes R\cong R[[t]]$, and $F\hat\otimes R\cong R((t))$.
We write $D_{F,R}=\Spec (\mO\hat\otimes R)$ and $D^*_{F,R}=\Spec (F\hat\otimes R)$.

Let $X$ be a smooth geometrically connected curve over $k$. Let $|X|$ denote the set of closed points of $X$. For $x\in |X|$, let $X^*= X-\{x\}$. 
Let $\mO_x$ denote the complete local ring of $X$ at $x$ and $F_x$ denote the fractional field of $\mO_x$. For a $k$-algebra $R$, We write $D_{x,R}$ and $D^*_{x,R}$ instead of $D_{F_x,R}$ and $D^*_{F_x,R}$ for simplicity.
If $x:\Spec R\to X$ is an $R$-point of $X$, we write $\Ga_x\subset X_R$ the graph of $x$, and let $\hat{\Ga}_x$ denote the formal completion of $\Ga_x$ in $X_R$.

\subsubsection{Group theoretical data.}\label{Intro:group}

If $G$ is a connected split reductive group over $k$, 
we denote by $G_\der$ the derived group of $G$, $G_\ad$ the adjoint form of $G$, and $G_\s$ the simply-connected cover of $G_\der$.

We denote by $T$ the abstract Cartan of $G$. Recall that it is defined as the quotient of a Borel subgroup $B\subset G$ by its unipotent radical. It turns out that $T$ is independent of the choice of $B$ up to a canonical isomorphism. When we need to embed $T$ into $G$ as a maximal torus, we will state it explicitly.

Let $\xch=\xch(T)$ denote the weight lattice, i.e. the free abelian group $\Hom(T,\bG_m)$, and $\xcoch=\xcoch(T)$ denote the coweight lattice, i.e. the dual of $\xch(T)$. Let $\Phi\subset \xch$ (resp. $\Phi^\vee\subset \xcoch$) denote the set of roots (resp. coroots). The quadruple $(\xch,\xcoch,\Phi,\Phi^\vee)$ is called the root datum of $G$. Let $W$ denote the Weyl group. The Borel subgroup $B\subset G$ determines the semi-group of dominant coweights $\xcoch(T)^+\subset \xcoch(T)$ and the subset of positive roots $\Phi^+\subset\Phi$, which turn out to be independent of the choice of $B$.  Then there is a canonical bijection
 \[\xcoch(T)/W\cong \xcoch(T)^+.\]
Recall that there is a partial order $\leq$ on $\xcoch(T)$: $\la\leq \mu$ if $\mu-\la$ is a non-negative integral linear combinations of simple coroots; we say $\la<\mu$ if $\la\leq \mu$ but $\la\neq \mu$. The restriction of the partial order to $\xcoch(T)^+$ sometimes is called the Bruhat order on $\xcoch(T)^+$. 
We will usually denote by $2\rho\in\xch(T)$ the sum of all positive roots, and let $\rho=\frac{1}{2}(2\rho)\in \xch(T)\otimes\bQ$.

\subsection{Acknowledgement.} The notes are based on lectures given at the Park City Mathematical Institute in Summer of 2015. I would like to thank the organisers, audiences and the staff of the PCMI. I would also like to thank Weizhe Zheng for useful discussions related to \S~\ref{ULA sheaf} and  Michael Harris, Jiuzu Hong and Joel Kamnitzer for valuable comments.

\section{Lecture I: Affine Grassmannians and their first properties}\label{Lecture I}
We start to define the objects we are going to study.

\subsection{The affine Grassmannian of $\GL_n$.}\label{aff GLn}
First it makes sense to talk about a family of lattices in $k((t))^n$. 
\begin{dfn}
Let $R$ be a $k$-algebra. An $R$-family of lattices in $k((t))^n$ is a finitely generated projective $R[[t]]$-submodule $\La$ of $R((t))^n$ such that $\La\otimes_{R[[t]]}R((t))=R((t))^n$. 
\end{dfn}

\begin{dfn}
The affine Grassmannian $\Gr_{\GL_n}$ (for $\GL_n$) is the presheaf that assigns every $k$-algebra $R$ the set of $R$-families of lattices in $k((t))^n$. 
\end{dfn}

For simplicity, we write $\Gr_{\GL_n}$ by $\Gr$ in this subsection.
The whole subject is based on the following theorem. 

\begin{thm}\label{rep}
The affine Grassmannian $\Gr$ is represented by an ind-projective scheme.
\end{thm}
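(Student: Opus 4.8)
\emph{Proof plan.} The plan is to exhibit $\Gr$ as a filtered union of ordinary (finite-dimensional) Grassmannians glued along closed embeddings. Write $\La_0=R[[t]]^n$ for the standard lattice, and for each integer $N\geq 0$ let $\Gr^{(N)}\subset\Gr$ be the subfunctor whose $R$-points are the lattices $\La$ with $t^N\La_0\subset\La\subset t^{-N}\La_0$. The first step is to check that $\Gr=\varinjlim_N\Gr^{(N)}$ as presheaves, i.e. that every $R$-family of lattices is bounded. Since $\La$ is finitely generated over $R[[t]]$, each generator lies in $t^{-N}\La_0$ for $N$ large, giving the upper bound; and since $\La\otimes_{R[[t]]}R((t))=R((t))^n$, every element of the finitely generated quotient $t^{-N}\La_0/\La$ is $t$-power torsion, hence the whole quotient is killed by a single power of $t$, giving a lower bound $t^{N'}\La_0\subset\La$.

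The heart of the argument is to represent each $\Gr^{(N)}$ by a projective scheme. Here I would pass to the finite truncation $M_N:=t^{-N}\La_0/t^N\La_0$, a free $R$-module of rank $2Nn$ carrying the nilpotent operator $\theta$ given by multiplication by $t$. Sending $\La\mapsto\bar\La:=\La/t^N\La_0$ should identify $\Gr^{(N)}(R)$ with the set of $R$-submodules $\bar\La\subseteq M_N$ such that (a) the quotient $M_N/\bar\La$ is finite locally free over $R$, and (b) $\bar\La$ is $\theta$-stable. Condition (a) alone is classified by the disjoint union $\coprod_d\mathrm{Gr}(d,M_N)$ of ordinary Grassmannians, a projective $k$-scheme; condition (b) is the vanishing of the composite $\bar\La\hookrightarrow M_N\xrightarrow{\theta}M_N\twoheadrightarrow M_N/\bar\La$, which cuts out a closed subscheme. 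Hence $\Gr^{(N)}$ is a closed subscheme of a projective scheme, and is therefore projective.

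Verifying that the transition map is a closed embedding is then formal: inside the Grassmannian of $M_{N+1}$, the image of $\Gr^{(N)}$ is the locus where $\bar\La$ simultaneously contains $t^N\La_0/t^{N+1}\La_0$ and is contained in $t^{-N}\La_0/t^{N+1}\La_0$, each of which is a closed condition. Assembling the pieces gives $\Gr\cong\varinjlim_N\Gr^{(N)}$ as an ind-projective scheme; that $\Gr$ is genuinely an fpqc sheaf (so that this colimit really computes $\Gr$ and not merely its sheafification) follows from fpqc descent for finitely generated projective modules together with the quasi-compactness of fpqc covers, which forces any such cover to factor through some $\Gr^{(N)}$.

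The step I expect to be the main obstacle is justifying the dictionary (a)--(b), namely that a genuine $R[[t]]$-lattice $\La$ (finitely generated projective over $R[[t]]$, with $\La\otimes R((t))=R((t))^n$) corresponds \emph{exactly} to a $\theta$-stable $\bar\La$ whose cokernel in $M_N$ is finite locally free over $R$. The delicate direction is recovering projectivity of $\La$ over $R[[t]]$ from local freeness of the truncation: here I would invoke the local criterion of flatness, using that $\La$ is $t$-torsion free, $t$-adically complete (it is sandwiched between the complete modules $t^{\pm N}\La_0$), and that its reduction modulo $t$ is finite projective over $R$, to conclude that $\La$ is flat and, being finitely presented, projective over $R[[t]]$; non-Noetherian subtleties can be dispatched by a standard reduction to finite-type $R$. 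Everything else is bookkeeping with the Grassmannian functor.
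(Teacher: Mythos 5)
Your overall architecture coincides with the paper's: the same exhaustion by the $\Gr^{(N)}$, the same identification of $\Gr^{(N)}$ with the $t$-stable locus inside a finite Grassmannian of $M_N=t^{-N}\La_0/t^N\La_0$, and the same closed-embedding check for the transition maps (your boundedness argument for an $R$-family of lattices, which the paper only asserts, is correct). The one place where you deviate is the hard direction of the dictionary (a)--(b), namely recovering that $\La=\ker\bigl(t^{-N}\La_0\to M_N/\bar\La\bigr)$ is finite projective over $R[[t]]$, and it is exactly there that your sketch has a gap. You propose the local criterion of flatness along the ideal $(t)$, whose inputs are that $t$ is $\La$-regular (clear) and that $\La/t\La$ is flat over $R$ --- but you assert the latter (``its reduction modulo $t$ is finite projective over $R$'') as if it were given. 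It is not: writing $Q=M_N/\bar\La$ and tensoring $0\to\La\to t^{-N}\La_0\to Q\to 0$ with $R[[t]]/t$ exhibits $\La/t\La$ as an extension of $\ker(R^n\to Q/tQ)$ by the $t$-torsion $Q[t]$, and neither of these $R$-modules is projective in general (take $R=k[\epsilon]/\epsilon^2$, $n=1$, and $t$ acting on $Q=R$ by $\epsilon$). So the projectivity of $\La/t\La$ is essentially the crux of the lemma, not a free input; as written, this step is unjustified. (The easier forward direction --- that $M_N/\bar\La$ is locally free when $\La$ is $R[[t]]$-projective --- is also left unaddressed, though it is routine.)

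The gap is reparable, and then your route does work: $\La/t\La$ is the kernel of the surjection $t^{-N}\La_0/t\La\twoheadrightarrow t^{-N}\La_0/\La=Q$, and $t^{-N}\La_0/t\La$ is an extension of the free module $t^{-N}\La_0/t^{-N+1}\La_0\cong R^n$ by $t^{-N+1}\La_0/t\La\cong Q$ (multiplication by $t$ identifies the latter with $Q$), hence is projective; a surjection from a projective onto the projective $Q$ splits, so $\La/t\La$ is a direct summand of $Q\oplus R^n$ and is finite projective. With that supplied, and after the reduction to $R$ of finite type over $k$ that you correctly flag (needed so that $R[[t]]$ is noetherian, $\La$ is a finite module over it, and $(t)$ lies in the Jacobson radical --- which is what the local criterion actually requires; the ``$t$-adic completeness'' you invoke is not the relevant hypothesis), one concludes that $\La$ is flat, hence projective, over $R[[t]]$. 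This ending is genuinely different from the paper's, which instead passes to the polynomial model $\La_f$ over $R[t]$, using flatness of $R[t]\to R[[t]]$ for noetherian $R$, precisely so that the fiberwise flatness criterion applies with fibers $K[t]$, where $t$-torsion-freeness gives flatness for free. Your variant avoids the detour through $R[t]$ at the cost of the extension argument above; the paper's avoids any statement about $\La/t\La$ at the cost of comparing $R[[t]]$ with $R[t]$. Both are legitimate once complete.
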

Recall that this in particular means that $\Gr$ can be written as an increasing union of finite dimensional projective schemes (cf. \S~\ref{Intro:Indsch}). 

Since the actual proof of the theorem is a little bit technical but the underlying idea is very simple, we first give an informal account (at the level of $k$-points). Let $\La_0=k[[t]]^n$ denote the standard lattice. Let $\Gr^{(N)}\subset \Gr$ denote the subspace classifying lattices $\La$ in $k((t))^n$ that land in between $t^N\La_0\subset \La\subset t^{-N}\La_0$, then $\Gr$ is an increasing union of these $\Gr^{(N)}$. It is enough to show that $\Gr^{(N)}$ is represented by a projective scheme.
For a lattice $\La\in \Gr^{(N)}$, the quotient $\La/t^N\La_0$ can be regarded as a subspace of $k^{2nN}\cong t^{-N}\La_0/t^N\La_0$, stable under the action of $t$. In this way, $\Gr^{(N)}$ is realised as a closed subscheme of the usual Grassmannian variety $\Gr(2nN)$ classifying finite dimensional subspaces in $k^{2nN}$, and therefore is a projective scheme. Now we turn this idea into a rigorous proof.
\begin{proof}
Given an $R$-family of lattice $\La\subset R((t))^n$, there always exists some integer $N$ large enough such that 
\begin{equation}\label{subfun}
t^NR[[t]]^n\subset \La\subset t^{-N}R[[t]]^n.
\end{equation}
So $\Gr$ is the union of subfunctors $\Gr^{(N)}$ consisting of those lattices satisfying \eqref{subfun}.
The key observation is the following lemma.
\begin{lem}\label{projectivity}
Via the map
$$\La\mapsto Q=t^{-N}R[[t]]^n/\La,$$ $\Gr^{(N)}$ is identified with the presheaf $\Gr^{(N),f}$
\[\Gr^{(N),f}:=\left\{R[[t]] \mbox{-quotient modules of } \frac{t^{-N}R[[t]]^n}{t^NR[[t]]^n} \mbox{ that are projective as }R\mbox{-modules}\right\}.\] 
\end{lem}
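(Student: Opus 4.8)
The plan is to present the identification as two mutually inverse, evidently natural maps and to isolate all the real content in two projectivity assertions. Write $F = t^{-N}R[[t]]^n$ and $M = F/t^NR[[t]]^n$, and set $B = R[t]/t^{2N}$; multiplication by $t^N$ identifies $M$ with the free $B$-module $B^n$, and the ideal $(t)\subset B$ is nilpotent. Define $\alpha(\La)=F/\La$ and $\beta(Q)=\ker(F\twoheadrightarrow M\twoheadrightarrow Q)$. Because $t^NR[[t]]^n\subset\La$, the module $\alpha(\La)$ is a quotient of $M$, while $\beta(Q)$ automatically satisfies the sandwich \eqref{subfun}; and at the level of sub/quotient modules $\alpha\circ\beta=\id$ and $\beta\circ\alpha=\id$ are immediate. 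Two things are free: the full-rank condition $\La\otimes_{R[[t]]}R((t))=R((t))^n$ holds automatically, since inverting $t$ turns $t^NR[[t]]^n\subset\La\subset F$ into $R((t))^n\subset\La\otimes R((t))\subset R((t))^n$; and both $Q=\alpha(\La)$ and $\overline{\La}:=\ker(M\to Q)$ are finitely generated over $R$, being sub/quotients of the finite free $R$-module $M$. Thus the statement reduces to \textbf{(A)}: if $\La$ is $R[[t]]$-projective then $Q=F/\La$ is $R$-projective; and \textbf{(B)}: if $Q$ is $R$-projective then $\La=\ker(F\to Q)$ is $R[[t]]$-projective.

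For (A), the module $Q$ is finitely presented over $R$ (it is $M/\overline{\La}$ with $M$ finite free and $\overline{\La}$ finitely generated), so it is enough to show $Q$ is $R$-flat, projectivity then being automatic. I would prove flatness by the local criterion of flatness. Since $Q$ is a $B$-module and $B$ is finite free over $R$, one has $\on{Tor}^R_1(Q,-)\cong\on{Tor}^B_1(Q,B\otimes_R-)$, moving the question to $B$, where the relevant ideal $(t)$ is nilpotent and the criterion applies with no Noetherian hypothesis; equivalently one may first reduce to $R$ Noetherian by a spreading-out argument, where $R[[t]]$ is $R$-flat. Concretely, for a prime $\frakp$ with residue field $\kappa$ the fibre $Q\otimes_R\kappa$ is the cokernel of the lattice $\La_\kappa\subset F_\kappa=t^{-N}\kappa[[t]]^n$ over the discrete valuation ring $\kappa[[t]]$, of finite colength governed by elementary divisors; projectivity of $\La$ over $R[[t]]$ makes $\La\otimes_{R[[t]]}B=\La/t^{2N}\La$ an $R$-projective module of fibre rank $2nN$, and feeding this into the local criterion yields the $R$-flatness of $Q$.

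For (B), first split the $R$-linear exact sequence $0\to\overline{\La}\to M\to Q\to 0$, which is possible because $Q$ is projective; this makes $\overline{\La}$ an $R$-projective module, shows $\La$ is finitely generated over $R[[t]]$, and records that $\La$ is $t$-torsion-free (a submodule of $F$) and $t$-adically complete (since $t^{2N}F\subset\La\subset F$ with $F$ complete). By the local criterion of flatness with respect to $(t)$ — legitimate because $\La$ is complete — the $R[[t]]$-flatness of $\La$ reduces to the vanishing $\on{Tor}^{R[[t]]}_1(\La,R)=0$, which is exactly the $t$-torsion-freeness, together with $R$-flatness of $\La/t\La$. This last flatness is the heart of the matter; I would establish it fibrewise, using that over each residue field $\kappa$ the ring $\kappa[[t]]$ is a discrete valuation ring, so that $\La_\kappa$ is free of rank $n$ and the fibre behaviour of $\La/t\La$ is controlled by the given projective $Q$. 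Granting it, completeness lets one lift a finite presentation (indeed a fibrewise basis) through the reduction modulo $t$, so that $\La$ is finitely presented and flat, hence projective — in fact locally free of rank $n$ — over $R[[t]]$. Finally, naturality of $\alpha$ and $\beta$ in $R$ follows from $R$-flatness of $Q$: flatness is precisely what guarantees $F_{R'}/\La_{R'}=Q\otimes_RR'$ for every $R\to R'$, so the bijection is an isomorphism of presheaves.

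The main obstacle is commutative algebra over a general, possibly non-Noetherian, $k$-algebra $R$: power series do not commute with base change and $R[[t]]$ need not be $R$-flat, so one cannot pass naively between $R$-flatness of $Q$ and $R[[t]]$-flatness of $\La$. The devices that make the flatness bookkeeping legitimate are (i) the nilpotent ideal $(t)\subset B=R[t]/t^{2N}$, over which $M\cong B^n$ and base change along $R\to R'$ and $R\to\kappa$ is exact, and (ii) the $t$-adic completeness of $R[[t]]$ and of $\La$, which underwrites both the local flatness criterion and the lifting of fibrewise bases. With these in hand, the fibrewise picture over the discrete valuation rings $\kappa[[t]]$ controls all ranks and closes the argument.
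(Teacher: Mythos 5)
Your reduction of the lemma to the two assertions (A) and (B), and your diagnosis that the whole difficulty is commutative algebra over a non-Noetherian $R$ for which $R[[t]]$ need not be $R$-flat, are both on target. But the two flatness arguments you propose do not close, and in each case the missing step is exactly the one the paper's proof is organised around. For (A): the local criterion of flatness with respect to the nilpotent ideal $(t)\subset B=R[t]/t^{2N}$ controls flatness of $Q$ \emph{over $B$}, whereas what you need is the vanishing of $\on{Tor}^B_1(Q,B\otimes_R-)$ on the subcategory of modules induced from $R$ --- and $Q$ is essentially never $B$-flat (already $Q=B/tB$ is $R$-flat but not $B$-flat), so the criterion is not measuring the right thing. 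Your fallback, ``reduce to $R$ Noetherian by spreading out,'' is also unavailable in this direction: a finite projective $R[[t]]$-submodule of $R((t))^n$ is presented by matrices whose entries are power series with infinitely many coefficients, and $R[[t]]$ is not a filtered colimit of $R_0[[t]]$ over finitely generated $R_0\subset R$ (nor is $R_0[[t]]\otimes_{R_0}R$ equal to $R[[t]]$). The paper avoids all of this with a direct splitting: $\Lambda/t\Lambda$ is $R$-projective, hence $R((t))^n/\Lambda\cong\bigoplus_{k\ge0}t^{-k-1}\Lambda/t^{-k}\Lambda$ is $R$-projective, and the exact sequence $0\to Q\to R((t))^n/\Lambda\to R((t))^n/t^{-N}R[[t]]^n\to 0$ has free cokernel, so it splits and $Q$ is a direct summand of a projective module. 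That argument is valid for arbitrary $R$ and requires no flatness criterion at all.

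For (B) you correctly isolate the $R$-flatness of $\La/t\La$ as ``the heart of the matter,'' but the fibrewise argument you defer to is precisely what cannot be run as stated: to know that $\La\otimes_R\kappa$ is a rank-$n$ lattice in $t^{-N}\kappa[[t]]^n$ you need $R[[t]]\otimes_R\kappa\cong\kappa[[t]]$ and $\on{Tor}^R_1(R[[t]],\kappa)=0$, both of which fail for general $R$, and the completed local criterion over $R[[t]]$ is itself delicate without Noetherian hypotheses. The paper's two moves here are the ones you are missing. First, in this direction the Noetherian reduction \emph{is} legitimate, because the datum is a quotient of the fixed finite free module $t^{-N}R[[t]]^n/t^NR[[t]]^n$, so $\Gr^{(N),f}$ is locally of finite type (by Lemma \ref{Embedding}) and commutes with filtered colimits of rings; one may therefore assume $R$ finitely generated over $k$. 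Second, one then replaces $R[[t]]$ by $R[t]$, which is flat over the Noetherian ring $R$ and satisfies $\La=\La_f\otimes_{R[t]}R[[t]]$, and applies the fibrewise flatness criterion to the finitely generated, $R$-flat module $\La_f$ over $R[t]$: the fibres $\La_f\otimes_{R[t]}K[t]=\ker(t^{-N}K[t]^n\to Q\otimes_RK)$ are $t$-torsion-free over the principal ideal domain $K[t]$, hence flat. Your instinct to work with completeness of $R[[t]]$ directly, rather than descending to the polynomial ring, is what forces you into the unjustified fibre computation; with the Noetherian reduction and the passage to $R[t]$ in hand, the rest of your outline for (B) matches the paper.
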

As we shall see, although this statement is quite obvious for $k$-points, it is subtle for general $R$.

We will need another lemma.
Let $\Gr(m)$ denote the usual Grassmannian variety, classifying finite dimensional subspaces in $k^m$. It is the disjoint union of those $\Gr(r,m)$, classifying those subspaces of dimension $r$. 
\begin{lem}\label{Embedding}
The functor $\Gr^{(N),f}$ is represented by a closed subscheme of $\Gr(2nN)$. 
\end{lem}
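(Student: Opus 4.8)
The plan is to realise $\Gr^{(N),f}$ as the zero locus of an explicit map of vector bundles on $\Gr(2nN)$. First I would set $M:=t^{-N}k[[t]]^n/t^Nk[[t]]^n$, a $k$-vector space of dimension $2nN$ on which multiplication by $t$ acts as a nilpotent endomorphism $\tau$ with $\tau^{2N}=0$; choosing the basis $\{t^je_i:\ -N\le j\le N-1,\ 1\le i\le n\}$ identifies $M\cong k^{2nN}$. For any $k$-algebra $R$ one has $t^{-N}R[[t]]^n/t^NR[[t]]^n=M\otimes_kR$ as an $R[[t]]$-module, the $t$-action being $\tau\otimes\id$. The key reinterpretation is that giving an $R[[t]]$-quotient module of $M\otimes_kR$ is the same as giving an $R$-module quotient $M\otimes_kR\twoheadrightarrow Q$ whose kernel $K$ is stable under $\tau\otimes\id$: the $R[[t]]$-structure on such a $Q$ is then forced and unique. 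Requiring in addition that $Q$ be $R$-projective is exactly the condition defining a point of $\Gr(2nN)$. This produces a natural transformation $\Gr^{(N),f}\to\Gr(2nN)$, and it is a monomorphism of presheaves since the forgotten $R[[t]]$-structure is recovered uniquely; thus $\Gr^{(N),f}$ is identified with the subfunctor of $\Gr(2nN)$ carving out those quotients with $t$-stable kernel.

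It remains to check that this subfunctor is closed, which I would do by writing the $t$-stability condition as the vanishing of a section of a vector bundle. On $\Gr(2nN)$ let $0\to\mK\to M\otimes\mO\to\mQ\to0$ be the tautological short exact sequence of locally free sheaves ($\mO$ the structure sheaf, $\mK$ the universal subbundle, $\mQ$ the universal quotient, their ranks constant on each component of the disjoint union). The endomorphism $\tau$ of $M$ induces $\tau\otimes\id$ on $M\otimes\mO$, and I would form the composite
\[\psi:\ \mK\hookrightarrow M\otimes\mO \longrightarrow M\otimes\mO \twoheadrightarrow\mQ,\]
whose middle arrow is $\tau\otimes\id$. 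This $\psi$ is a morphism of vector bundles, i.e. a global section of $\mK^\vee\otimes\mQ$, and the locus where a section of a vector bundle vanishes is a closed subscheme $Z(\psi)\subset\Gr(2nN)$, characterised by the universal property that $x\colon\Spec R\to\Gr(2nN)$ factors through $Z(\psi)$ iff $x^*\psi=0$. For such an $x$ classifying $0\to K\to M\otimes R\to Q\to0$, the pullback $x^*\psi$ is the composite $K\to M\otimes R\to Q$ with middle map $\tau\otimes\id$, and it vanishes precisely when $\tau(K)\subseteq K$. Hence $x$ factors through $Z(\psi)$ if and only if $K$ is $t$-stable, i.e. if and only if $x$ lies in $\Gr^{(N),f}$. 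Therefore $\Gr^{(N),f}=Z(\psi)$ is a closed subscheme of $\Gr(2nN)$.

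Almost every step is formal; the only points demanding care are the identification in the first paragraph that an $R[[t]]$-quotient is the same datum as an $R$-projective quotient with $t$-stable kernel (so that nothing beyond the underlying $R$-module quotient is lost, making the map to $\Gr(2nN)$ a monomorphism), and the standard fact that the zero scheme of a section of a vector bundle represents the vanishing condition functorially. I do not expect a genuine obstacle here: unlike Lemma~\ref{projectivity}, whose subtlety lies in controlling $R$-projectivity for general $R$, the projectivity is already built into membership in $\Gr(2nN)$, and the remaining content is purely the closedness of the $t$-stability locus.
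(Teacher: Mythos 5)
Your proof is correct and follows essentially the same route as the paper, which simply asserts that $\Gr^{(N),f}$ is the closed subscheme of $\Gr(2nN)$ cut out by stability of the (sub)module under multiplication by $t$. You have merely supplied the standard justification the paper leaves implicit — realising the $t$-stability condition as the zero locus of the induced section of $\mK^\vee\otimes\mQ$ — and that argument is sound.
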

We first prove Lemma \ref{Embedding}. We identify $t^{-N}k[[t]]^n/t^Nk[[t]]^n$ with $k^{2nN}$. Then multiplication by $t$ induces a nilpotent operator $\Phi$ on this vector space, and $\Gr^{(N),f}$ is the closed subscheme of $\Gr(2nN)$ classifying those subspaces of $k^{2nN}$ that are stable under $\Phi$. Note that in fact $\Gr^{(N),f}$ is a union of Springer fibers (\cite[\S~1]{Yu}).

Next we prove Lemma \ref{projectivity}. We first need to show that there is indeed a well-defined map
\begin{equation}\label{I:map}
\Gr^{(N)}\to \Gr^{(N),f}
\end{equation}
as claimed in the lemma. I.e. we need to show that $Q$ is projective as an $R$-module. Since $\Lambda/t\Lambda$ is $R$-projective,
$$R((t))^n/\Lambda\cong\bigoplus_{k\geq 0}t^{-k-1}\Lambda/t^{-k}\Lambda$$ is also $R$-projective. Then the projectivity of $Q$ follows from the exact sequence of $R$-modules
\[0\to Q\to R((t))^n/\Lambda\to R((t))^n/t^{-N}R[[t]]^n\to 0.\]

Therefore \eqref{I:map} realises $\Gr^{(N)}$ as a subpresheaf of $\Gr^{(N),f}$. We also need to show that  \eqref{I:map} is surjective on $R$-points for every $R$. In other words, we need to show that for every $k$-algebra $R$ and an $R$-point of $\Gr^{(N),f}$ represented by a quotient map $ t^{-N}R[[t]]^n/t^NR[[t]]^n\to Q$, the $R[[t]]$-module
\[\Lambda:=\ker( t^{-N}R[[t]]^n\to  t^{-N}R[[t]]^n/t^NR[[t]]^n\to Q)\]
is a finite projective $R[[t]]$-module. Then $\Lambda\otimes R((t))\simeq R((t))^n$ automatically.

Since $\Gr^{(N),f}$ is locally of finite type (by Lemma \ref{Embedding}), we have (cf. \cite[Tag01ZC]{St})
$$\Gr^{(N),f}(\underrightarrow\lim R_i)=\underrightarrow\lim \Gr^{(N),f}(R_i).$$ By writing $R$ as a union of finitely generated sub-$k$-algebras, we can therefore 
assume that $R$ is finitely generated over $k$.
Then the natural ring map $R[t]\to R[[t]]$ is flat since $R$ is noetherian (cf. \cite[Tag00MB]{St}), and induces an isomorphism $t^{-N}R[t]^n/t^NR[t]^n\cong t^{-N}R[[t]]^n/t^NR[[t]]^n$. Then we can define an $R[t]$-module $\Lambda_f$ as
\[\Lambda_f:= \ker( t^{-N}R[t]^n\to  t^{-N}R[t]^n/t^NR[t]^n\to Q).\]
By the flatness of the map $R[t]\to R[[t]]$, $\Lambda=\Lambda_f\otimes_{R[t]}R[[t]]$. Therefore, it is enough to prove that $\Lambda_f$ is a finite projective $R[t]$-module. Clearly, $\Lambda_f$ is finitely generated, so it is enough to prove that it is flat over $R[t]$. Since $\La_f$ is already $R$-flat, by \cite[Tag00MH]{St} it is enough to show that for every $x\in \Spec R$, with residue field $K$, the base change $\Lambda_f\otimes_{R[t]}K[t]$ is flat over $K[t]$. Since $Q$ is $R$-flat, we have
\[\Lambda_f\otimes_{R[t]}K[t]= \ker ( t^{-N}K[t]^n\to Q\otimes_R K),\]
which is  $t$-torsion free and therefore is flat over $K[t]$, as desired. This finishes the proof of Lemma \ref{projectivity}.

Finally, it is not hard to see that the inclusion $\Gr^{(N)}\to \Gr^{(N+1)}$ is a closed embedding. The theorem then follows.
\end{proof}

\begin{rmk}The idea of replacing $R[[t]]$ by $R[t]$ is a special case of Beauville-Laszlo's realisation of $\Gr$ via a global curve. See Theorem \ref{glob aff} below.
\end{rmk}

\subsection{Affine Grassmannians of general groups.}
Let $G$ be a smooth affine $k$-group. Recall the notion of torsors (cf. \S~\ref{Intro:torsor}) and $R$-family of discs (cf. \S~\ref{Intro:disc}).
We define the affine Grassmannian $\Gr_{G}$ of $G$ as
\begin{equation}\label{loc aff}
\Gr_{G}(R)=\left\{(\mE,\beta)\ \left|\ \begin{split}&\mE \mbox{ is a } G\mbox{-torsor on } D_R , \mbox{ and }\\
& \beta:\mE|_{D^*_R}\cong  \mE^0|_{D^*_R}\mbox{ is a trivialisation }\end{split}\right.\right\}.
\end{equation}

Note that this definition makes sense for non-constant group schemes. Namely, if $\underline G$ is an affine group scheme smooth over $k[[t]]$, we can define the affine Grassmannian $\Gr_{\underline G}$ of $\underline G$ exactly as in \eqref{loc aff}, with $G$ replaced by $\underline G$. 

\begin{thm}\label{I:rep aff}
The presheaf $\Gr_{\underline G}$ is represented an ind-scheme, ind-of finite type over $k$. If $\underline G$ is reductive, then $\Gr_{\underline G}$ is ind-projective.
\end{thm}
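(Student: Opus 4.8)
The plan is to bootstrap from the case $\underline G=\GL_n$ established in Theorem \ref{rep}. Since $\underline G$ is smooth affine of finite type over the discrete valuation ring $\mO=k[[t]]$, it admits a closed embedding $\underline G\hookrightarrow \GL_{n,\mO}$ as a closed $\mO$-subgroup for some $n$ (such embeddings exist over a Dedekind base; one exhibits $\underline G$ as the stabiliser of a suitable line or vector in a finite free summand of the regular representation). Extension of structure group $\mE\mapsto \mE\times^{\underline G}\GL_n$, compatibly with trivialisations over $D^*_R$, then defines a morphism $\Gr_f:\Gr_{\underline G}\to \Gr_{\GL_n}$, and the entire theorem reduces to understanding this single morphism.

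The technical core is the assertion that $\Gr_f$ is schematic and of finite type. Concretely, for $T=\Spec R$ mapping to $\Gr_{\GL_n}$ and classifying a pair $(\mE,\beta)$, the fibre product $\Gr_{\underline G}\times_{\Gr_{\GL_n}}T$ is the functor of reductions of $(\mE,\beta)$ to $\underline G$ over $D_R$; equivalently, the functor of sections of the associated bundle $\mE\times^{\GL_n}(\GL_n/\underline G)$ over $D_R$ that restrict over $D^*_R$ to the tautological section $\sigma_0$ determined by $\beta$ and the base point $e\underline G$. I would prove this section functor is represented by a finite-type $R$-scheme, using that $\GL_n/\underline G$ is quasi-projective over $\mO$ (Chevalley, SGA\,3) together with the Beauville--Laszlo device of replacing $R[[t]]$ by $R[t]$ (the remark following Theorem \ref{rep}), so as to work with an honest quasi-projective family over $\bA^1_R$; boundedness of the pole order of $\sigma_0$ and flatness then cut out the extensions by finitely many equations. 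Granting this, pull back an ind-projective presentation of $\Gr_{\GL_n}$ as an increasing union of projective schemes $X_i$: each $Y_i:=\Gr_{\underline G}\times_{\Gr_{\GL_n}}X_i$ is of finite type over $k$, the transition maps $Y_i\hookrightarrow Y_{i+1}$ are closed immersions (pulled back from $X_i\hookrightarrow X_{i+1}$), and $\Gr_{\underline G}$ is the increasing union of the $Y_i$. This proves that $\Gr_{\underline G}$ is an ind-scheme, ind-of-finite type.

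For the second assertion I refine $\Gr_f$: if $\GL_n/\underline G$ is affine, then $\Gr_f$ is a closed immersion. Indeed, writing the associated bundle as $\Spec\mA\to D_R$ with $\mA$ a quasi-coherent $\mO_{D_R}$-algebra of finite type, a section is an $\mO_{D_R}$-algebra map $\mA\to R[[t]]$, while $\sigma_0$ is such a map into $R((t))$; this map extends across $D_R$ exactly when the images of finitely many algebra generators lie in $R[[t]]$ rather than $R((t))$, i.e.\ when finitely many (by boundedness) negative Laurent coefficients vanish, a closed condition, and the extension is then unique by $t$-torsion-freeness. Thus $\Gr_{\underline G}\times_{\Gr_{\GL_n}}T\hookrightarrow T$ is a closed subscheme. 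When $\underline G$ is reductive one can choose the embedding so that $\GL_n/\underline G$ is affine over $\mO$ (the relative form of the Matsushima--Haboush theorem for reductive group schemes). Then $\Gr_f$ is a closed immersion and $\Gr_{\underline G}$ is a closed sub-ind-scheme of the ind-projective $\Gr_{\GL_n}$, hence itself ind-projective.

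The step I expect to be the main obstacle is the representability and finite-typeness of the reduction functor, i.e.\ that $\Gr_f$ is schematic: in contrast to the count on $k$-points, one must work over an arbitrary test ring $R$, bound the pole order uniformly, and pass between $R[[t]]$ and $R[t]$, which is precisely where quasi-projectivity of $\GL_n/\underline G$ and the Beauville--Laszlo descent carry the weight. The affineness of $\GL_n/\underline G$ over $\mO$ for reductive $\underline G$ is the secondary nontrivial input that upgrades ind-finite type to ind-projectivity.
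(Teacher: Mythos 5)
Your overall route is the paper's: choose a faithful representation $\rho:\underline G\to \GL_{n,\mO}$, study the induced map $f_\rho:\Gr_{\underline G}\to\Gr_{\GL_n}$ fibrewise as a functor of sections of the twisted quotient $\mE\times^{\GL_n}(\GL_n/\underline G)$ over $D_R$ extending the tautological section over $D_R^*$, and use affineness of $\GL_n/\underline G$ in the reductive case to get a closed immersion into the ind-projective $\Gr_{\GL_n}$. That second half is correct and is exactly the paper's Propositions \ref{lin rep} and \ref{gen G}; your Laurent-coefficient argument for the affine case is the one given there, and citing Matsushima/Haboush for affineness of the quotient of a reductive pair is an acceptable substitute for the paper's reference.

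The gap is in the general (non-reductive) case. You invoke only Chevalley's theorem that $\GL_n/\underline G$ is \emph{quasi-projective}, and then claim the extension locus is ``cut out by finitely many equations.'' That inference fails for a merely quasi-projective (in particular projective) fibre: a section over $D_R^*$ is then given by homogeneous coordinates, and the locus in $\Spec R$ where it extends over $D_R$ is neither the vanishing locus of negative Laurent coefficients nor, in general, a locally closed subscheme. Concretely, for $V=\bP^1_{D_R}$ with $R=k[x]$ and the section $[x:t]$ over $D_R^*$, the extension locus is $V(x)\sqcup D(x)$, a monomorphism to $\Spec R$ that is not an immersion; no rescaling of homogeneous coordinates produces an extension over $R$ itself, even though one exists after restricting to $V(x)$ or to $D(x)$. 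What the argument actually needs is the stronger statement of Proposition \ref{lin rep} (from \cite{PR}): $\rho$ can be chosen so that $\GL_n/\underline G$ is \emph{quasi-affine}, i.e.\ $\underline G$ is the stabiliser of a vector rather than a line. Then $[\mE/\underline G]$ is a quasi-compact open in a scheme $W$ affine of finite presentation over $D_R$; extending the section into $W$ is the closed condition that finitely many negative Laurent coefficients vanish, and landing in the open $[\mE/\underline G]$ at $t=0$ is an open condition, so $f_\rho$ is a locally closed immersion and ind-finite-typeness follows. (The Beauville--Laszlo passage from $R[[t]]$ to $R[t]$, which you also invoke, is used in the paper only for Theorem \ref{rep} and is not needed here.)
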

\begin{rmk}
(i) Certainly in the above definition and theorem one can replace $k[[t]]$ by the ring of integers of any local field. 

(ii) It is easy to see that for the additive group $G=\bG_a$, $\Gr_{\bG_a}$ is represented by an inductive limit of affine spaces and therefore is not ind-projective. 
\end{rmk}

\begin{rmk}In the notes, we will state the results for general group schemes when no additional effort is needed. Readers unfamiliar with group schemes can safely replace $\underline G$ by a $k$-group $G$. 
\end{rmk}

The proof of the theorem is based on the following two results.
\begin{prop}\label{lin rep}
There exists a faithful $k[[t]]$-linear representation $\rho: \underline G\to \GL_n$ such that $\GL_n/\underline G$ is quasi-affine. In addition, if $\underline G$ is reductive, one can choose $\rho$ such that $\GL_n/\underline G$ is affine. 
\end{prop}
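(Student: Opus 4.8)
The plan is to produce the embedding in three stages, treating existence of a faithful closed immersion, quasi-affineness of the quotient, and (in the reductive case) affineness separately.

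First I would construct \emph{some} faithful closed immersion $\underline G\hookrightarrow \GL_n$ over $k[[t]]$. The coordinate ring $A=\Gamma(\underline G,\mO)$ is a finitely generated $k[[t]]$-Hopf algebra, and $\underline G$ acts on $A$ through the right regular corepresentation. Since every comodule is the filtered union of its finitely generated subcomodules, I can choose a finitely generated $\underline G$-stable submodule $M\subset A$ that contains a set of $k[[t]]$-algebra generators of $A$. As $\underline G$ is smooth, hence flat, $A$ is $t$-torsion free, so $M$ is a torsion-free finitely generated module over the PID $k[[t]]$ and is therefore free, say of rank $n$. The resulting $\rho\colon\underline G\to \GL(M)\cong \GL_n$ is faithful, and because $M$ generates $A$ the comorphism $\mO(\GL_n)\to A$ is surjective, i.e. $\rho$ is a closed immersion.

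Next, for quasi-affineness, I would realize $\underline G$ as the stabilizer of a vector. By the relative form of Chevalley's theorem (valid over a Noetherian base for a flat closed subgroup; cf. SGA3 and Conrad's theory of reductive group schemes), there is a finite free $\GL_n$-representation $V$ and a line subbundle $L\subset V$ with $\underline G=\on{Stab}_{\GL_n}(L)$. The group $\underline G$ acts on $L$ through a character $\chi$, and a standard twisting argument --- replacing $V$ by a tensor construction built from $V$, $\det$, and $\chi^{-1}$, after enlarging the original embedding by the one-dimensional representation $\chi^{-1}$ --- upgrades the stabilizer of a line to the stabilizer of an actual vector $v$ in a $\GL_N$-representation $W$. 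The orbit map $g\mapsto g\cdot v$ then induces a monomorphism $\GL_N/\underline G\to \bA(W)$; checking on the fibers over $k$ and $k((t))$ (where orbits of algebraic groups are locally closed) together with flatness shows this is a locally closed immersion. Since a locally closed subscheme of affine space is quasi-affine, $\GL_N/\underline G$ is quasi-affine.

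Finally, for the reductive case I would show the orbit of $v$ is in fact closed, so that $\GL_N/\underline G$ is affine. Over a field this is Matsushima's criterion (with Haboush--Richardson in positive characteristic): the quotient of an affine group by a reductive subgroup is affine. I would deduce the relative statement by working fiberwise --- $\underline G$ reductive means both the generic fiber over $k((t))$ and the special fiber over $k$ are reductive, so each fiber of $\GL_N/\underline G$ is affine --- and then promote this to affineness over $k[[t]]$ using flatness and the completeness of the base, i.e. by invoking the relative Matsushima criterion for reductive group schemes. The hard part will be precisely this transition from fields to the base $k[[t]]$: over a field both the reduction ``stabilizer of a line $\Rightarrow$ stabilizer of a vector'' and the closedness of the orbit are classical, and the genuine work is ensuring that closedness --- hence affineness --- holds in the total space rather than merely fiberwise, which requires controlling the special fiber and using flatness together with the complete discrete valuation structure of $k[[t]]$ (so that fpqc descent and the valuative criterion are available), as well as carrying out the character-twisting step compatibly with base change so that $v$ and its stabilizer behave well on both fibers.
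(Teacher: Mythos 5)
Your overall route is exactly the one taken in the references the paper cites for this statement (\cite{PR}*{Proposition 1.3} for quasi-affineness and \cite{Ap}*{Corollary 9.7.7} for the reductive case); the paper itself gives no argument beyond those citations. Stage 1 is correct as written: over the PID $k[[t]]$ a finitely generated torsion-free comodule is free, so the usual local-finiteness argument for the regular corepresentation produces a faithful closed immersion. Stage 2 is the right strategy (relative Chevalley, kill the character by enlarging the embedding by $\chi^{-1}$, realize $\underline G$ as the schematic stabilizer of a vector), but two points are passed over: you need representability of the fppf quotient $\GL_N/\underline G$ by a scheme over the Dedekind base (Anantharaman) before the orbit map even makes sense as a map of schemes, and the implication ``monomorphism $+$ fiberwise immersion $+$ flatness $\Rightarrow$ immersion'' is false for general morphisms. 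What saves you here is homogeneity: flatness of $\underline G$ and $\GL_N$ forces the two fibers of the quotient to have the same dimension, the special-fibre orbit lands in the special fibre of the flat closure of the generic orbit and is irreducible of full dimension there, and being an orbit it is therefore open in it. That argument should be spelled out rather than attributed to ``flatness''.

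The genuine gap is in Stage 3. Affineness of a flat, separated, finite-type scheme over $k[[t]]$ is \emph{not} implied by affineness of its two fibres, even with the base complete: take $X=\bA^1_{k[[t]]}\setminus\{\frakm\}$ with $\frakm=(t,u)$. It is flat, its generic fibre is $\bA^1_{k((t))}$ and its special fibre is $\bG_{m,k}$, both affine, yet $\Gamma(X,\mO_X)=k[[t]][u]$ by Hartogs and $X\to \Spec \Gamma(X,\mO_X)$ is not surjective, so $X$ is not affine. Hence the plan ``check affineness fibrewise by Matsushima and promote it using flatness and completeness'' cannot work as stated. The relative Matsushima criterion for reductive group schemes --- which is precisely \cite{Ap}*{Corollary 9.7.7}, proved via cohomological affineness and adequate moduli spaces rather than by fibrewise reduction --- must be invoked as a theorem in its own right, not derived from the field case. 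With that substitution (which is what the paper's citation amounts to), your argument is complete.
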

\begin{proof}See \cite[Proposition 1.3]{PR} for the first statement. See \cite[Corollary 9.7.7]{Ap} for the second statement.
\end{proof}

\begin{prop}\label{gen G}
Let $\rho:\underline G\to \GL_n$ be a linear representation such that $\GL_n/\underline G$ is quasi-affine, then the induced map $f_\rho:\Gr_{\underline G}\to \Gr_{\GL_n}$ is a locally closed embedding. In addition, if $\GL_n/\underline G$ is affine, it is a closed embedding.
\end{prop}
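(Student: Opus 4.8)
The plan is to analyse $f_\rho$ after base change along an arbitrary test scheme, using the dictionary between reductions of structure group and sections of an associated bundle. Write $Y=\GL_n/\underline G$, a quasi-affine (resp. affine) $k[[t]]$-scheme with $\GL_n$-action. First I would record the one input from representation theory: since $Y$ is quasi-affine, one may choose a finite free $k[[t]]$-module $W$ with a $\GL_n$-action and a vector $v\in W$ whose stabiliser is exactly $\underline G$, so that the orbit map $g\mapsto gv$ realises $Y$ as a locally closed $\GL_n$-stable subscheme of $W$, and this orbit is closed precisely when $Y$ is affine. This is the standard ``observable subgroup'' statement and is the only external ingredient.

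Next I would unwind the fibre product. Fix an affine test scheme $T=\Spec R$ with a map $T\to\Gr_{\GL_n}$, which by quasi-compactness factors through some $\Gr^{(N)}_{\GL_n}$ and hence classifies a $\GL_n$-torsor $\mF$ on $D_R$ with a trivialisation $\beta$ on $D_R^*$, equivalently a lattice $M\subset R((t))^n$ squeezed between $t^{N}$ and $t^{-N}$. Since $f_\rho$ is induced by pushing forward along $\rho$, the standard correspondence identifies $Z:=\Gr_{\underline G}\times_{\Gr_{\GL_n}}T$ with the functor of sections $s$ of $\mF\times^{\GL_n}Y\to D_R$ whose restriction to $D_R^*$ is the base-point section $s_0$ determined by $\beta$. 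Because $Y$ is separated and $D_R^*\hookrightarrow D_R$ is schematically dense (as $R[[t]]\hookrightarrow R((t))$), such a section is unique if it exists, so $Z\to T$ is a monomorphism; this is what lets us hope to represent $Z$ by a (locally closed) \emph{subscheme} of $T$.

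I would then linearise using $Y\hookrightarrow W$. The associated bundle $\mF\times^{\GL_n}W$ is a vector bundle on $D_R$, i.e. a finite projective $R[[t]]$-module $M_W\subset W\otimes R((t))$ with $M_W[1/t]=W\otimes R((t))$, and $s_0$ becomes the fixed vector $v\otimes1$. The first, closed, condition is that $s_0$ extend to a section over $D_R$, i.e. that $v\otimes1\in M_W$. Enlarging $N$ so that $t^N(W\otimes R[[t]])\subset M_W\subset t^{-N}(W\otimes R[[t]])$ and $v\otimes1\in t^{-N}(W\otimes R[[t]])$, the quotient $Q_W:=t^{-N}(W\otimes R[[t]])/M_W$ is a finite projective $R$-module by the argument proving Lemma \ref{projectivity}; the image $\bar v\in Q_W$ is then a section of a vector bundle on $T$ whose vanishing locus is a closed subscheme $Z'\subseteq T$, over which $s_0$ extends to a unique universal section $s$ of $\mF\times^{\GL_n}W$.

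Finally I would impose that $s$ land in the subscheme $\mF\times^{\GL_n}Y$. When $Y$ is closed in $W$ this is automatic, since $s^{-1}(\mF\times^{\GL_n}Y)$ is closed in $D_{Z'}$ and contains the schematically dense $D_{Z'}^*$; thus $Z=Z'$ is closed and $f_\rho$ is a closed embedding. In general, writing $\bar Y$ for the closure, the same density argument forces $s$ to factor through $\mF\times^{\GL_n}\bar Y$, and the bad locus $B:=s^{-1}\bigl(\mF\times^{\GL_n}(\bar Y\setminus Y)\bigr)$ is a closed subset of $D_{Z'}$ disjoint from $D_{Z'}^*$, hence supported on $V(t)\cong Z'$; therefore its image in $Z'$ is closed and $Z$ is the complementary open subscheme. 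This exhibits $Z\hookrightarrow Z'\hookrightarrow T$ as open-in-closed, i.e. $f_\rho$ as a locally closed embedding. The main obstacle is precisely the point the text flags for general $R$: ensuring these conditions are cut out by honest subschemes with the right base-change behaviour — concretely the projectivity of $Q_W$ and the schematic-density arguments guaranteeing uniqueness and forcing extended sections to stay inside $Y$. The last step, where quasi-affineness yields only an \emph{open} condition, is delicate because $D_R\to T$ is not proper, and is resolved by observing that the bad locus lives over $t=0$ and so has closed image.
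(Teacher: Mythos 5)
Your proof follows the same overall strategy as the paper's: identify the fibre product over a test scheme $T=\Spec R$ with the functor of sections of the associated bundle $[\mE/\underline G]\to D_R$ extending the canonical section $\bar\beta$ over $D_R^*$, show that extending over all of $D_R$ is a closed condition on $T$, and then observe that the remaining condition (landing in the quasi-affine locus rather than in its closure) need only be tested along $t=0$, where it is open; your "bad locus $B$ supported on $V(t)$" is exactly the paper's $s_0^{-1}([\mE/\underline G]\times_{D_R}\Spec A)$. The genuine difference is how the closed condition is cut out. The paper uses fpqc descent of quasi-affine morphisms to realise $[\mE/\underline G]$ as a quasi-compact open of a scheme $W$ affine of finite presentation over $D_R$, embeds $W$ into $\bA^N_{D_R}$, and kills the negative Laurent coefficients of the section. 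You instead linearise globally via a $\GL_n$-representation $W$ over $k[[t]]$ with a marked vector $v$ whose stabiliser is exactly $\underline G$, so the condition becomes $v\otimes 1\in M_W$, i.e.\ the vanishing of a section of the finite projective $R$-module $Q_W$ (correctly justified by the argument of Lemma \ref{projectivity}). This is clean and reuses machinery already in the paper, but it imports a strictly stronger external input than Proposition \ref{lin rep} supplies: the existence, over the base $k[[t]]$ rather than over a field, of such a pair $(W,v)$ realising $\GL_n/\underline G$ as a locally closed orbit, and as a \emph{closed} orbit in the affine case. Over a field this is the observability theorem; over the DVR $k[[t]]$ it needs a reference or an argument, and even over a field the phrase "the orbit is closed precisely when $Y$ is affine" should read "can be chosen closed when $Y$ is affine" — closedness is not automatic for an arbitrary $(W,v)$. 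The paper's descent route avoids this entirely, needing only quasi-affineness of the quotient as a scheme. The remaining points in your write-up — uniqueness of the section by schematic density of $D_R^*\subset D_R$, the base-change behaviour of $Q_W$, and the identification of $B$ with a closed subset of $V(t)\cong Z'$ — are all correct.
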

The proof is not related in the rest of the notes and readers who take the proposition on faith can skip it.
\begin{proof}
Let $\Spec R\to \Gr_{\GL_n}$ be a map, represented by $(\mE,\beta)$.
We want to show that the base change map 
$$\mF:=\Spec R\times_{\Gr_{\GL_n}}\Gr_{\underline G}\to \Spec R$$ is a locally closed embedding, and is a closed embedding if $\GL_n/\underline G$ is affine. Let us denote $\pi:\mE\to D_R$ for the projection, and regard $\beta$ as a section of $\pi$ over $D^*_R$. Let $$\bar\pi:[\mE/\underline G]\to D_R$$ denote the fpqc quotient. Since $\GL_n/\underline G$ is (quasi-)affine, by the faithfully flat descent for (quasi-)affine morphisms (cf. \cite[Tag0244, Tag0246]{St}), there is a scheme $W$, affine and of finite presentation over $D_R$, such that $[\mE/\underline G]\subset W$ is quasicompact open. The section $\beta$ induces a section of $\bar\pi$ over $D^*_R$, denoted by $\bar\beta$. Recall that giving a reduction of $\mE$ to a $\underline G$-torsor $\mE'$ is the same as giving a section of of $\bar\pi$. It follows that $\mF$ is the presheaf over $\Spec R$ that assigns every $R\to R'$ the set of sections $\beta'$ of $\bar\pi$ over $D_{R'}$ such that $\beta'|_{D^*_{R'}}=\bar\beta|_{D^*_{R'}}$. We have
\begin{lem}
Let $p:V\to D_R$ be an affine scheme of finite presentation, and $s$ be a section of $p$ over $D_R^*$. Then the presheaf over $\Spec R$ that assigns every $R\to R'$ the set of sections $s'$ of $p$ over $D_{R'}$ such that $s'|_{D^*_{R'}}=s|_{D^*_{R'}}$ is represented by a closed subscheme of $\Spec R$.
\end{lem}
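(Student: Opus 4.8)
The plan is to reduce the statement to a coordinate computation and then write down an explicit defining ideal for the closed subscheme. First I would choose a presentation of $V$ as a closed subscheme of affine space over $D_R$: since $V\to D_R$ is affine of finite presentation, write $V=\Spec A$ with $A=R[[t]][x_1,\dots,x_m]/(f_1,\dots,f_k)$. Under this presentation, for any $R$-algebra $R'$ a section $s'$ of $p$ over $D_{R'}$ is the same datum as an $m$-tuple $(a_1,\dots,a_m)$ with $a_i\in R'[[t]]$ satisfying $f_j(a_1,\dots,a_m)=0$ in $R'[[t]]$ for all $j$; likewise the given section $s$ over $D_R^*$ corresponds to an $m$-tuple $(b_1,\dots,b_m)$ with $b_i\in R((t))$ and $f_j(b)=0$.

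The key observation is that the compatibility condition $s'|_{D^*_{R'}}=s|_{D^*_{R'}}$ rigidifies $s'$ completely. Indeed it says precisely that $a_i$ equals the image $\bar b_i$ of $b_i$ under $R((t))\to R'((t))$ for each $i$. Since $t$ is a non-zero-divisor, the localisation map $R'[[t]]\to R'((t))$ is injective, so such an $a_i\in R'[[t]]$ is unique if it exists, and it exists exactly when $\bar b_i$ lies in the subring $R'[[t]]\subset R'((t))$. Moreover, once the $a_i$ are forced to equal $\bar b_i$, the equations $f_j(a)=0$ hold automatically: $f_j(a)\in R'[[t]]$ maps to $f_j(\bar b)=\overline{f_j(b)}=0$ in $R'((t))$, and injectivity of $R'[[t]]\hookrightarrow R'((t))$ forces $f_j(a)=0$. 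Thus the presheaf in question is a subfunctor of the final presheaf, which is a one-element set when every $\bar b_i$ lies in $R'[[t]]$ and is empty otherwise.

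It remains to express the condition ``$\bar b_i\in R'[[t]]$ for all $i$'' as a closed condition on $\Spec R$. I would write each $b_i=\sum_{n\geq -M}c_{i,n}t^n$ with $c_{i,n}\in R$, choosing a single $M\geq 0$ bounding the polar parts of all the finitely many $b_i$. Base changing to $R'$ replaces $c_{i,n}$ by its image $c_{i,n}'\in R'$, and $\bar b_i\in R'[[t]]$ holds if and only if $c_{i,n}'=0$ for all $n<0$. Letting $I\subset R$ be the ideal generated by the finite collection $\{c_{i,n}: 1\leq i\leq m,\ -M\leq n\leq -1\}$, the condition on $R'$ is exactly that $R\to R'$ kills $I$, i.e. factors through $R/I$. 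Hence the presheaf is represented by the closed subscheme $\Spec(R/I)\hookrightarrow\Spec R$, as desired.

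I do not expect a serious obstacle; the content is bookkeeping once one notices the rigidity coming from injectivity of $R'[[t]]\hookrightarrow R'((t))$. The only point requiring a little care is to confirm that the equations $f_j$ impose no further conditions beyond integrality of the polar parts — that is, that the value of the functor is genuinely a singleton, rather than possibly empty, whenever the $\bar b_i$ are integral; this is precisely the automatic-vanishing argument above, and it is also what makes the hypothesis of finite type (finitely many generators $x_i$) the only feature of $V$ that is actually used.
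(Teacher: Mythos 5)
Your proof is correct and follows essentially the same route as the paper: present $V$ inside an affine space over $D_R$, expand the section $s$ in coordinates as Laurent series, and cut out the closed subscheme by the ideal generated by the negative-degree coefficients. The extra steps you include — the rigidity coming from injectivity of $R'[[t]]\hookrightarrow R'((t))$ and the automatic vanishing of the defining equations $f_j$ — are exactly the points the paper leaves implicit, and they are handled correctly.
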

\begin{proof}
We can embed $V\subset \bA^N_{D_R}$ as a closed subscheme of some (big) affine space. Then using coordinates, we can write $s$ as
\[s=(s_1(t),\ldots,s_N(t)),\quad s_i(t)=\sum s_{ij}t^{j}\in R((t)).\]
The presheaf in question then is represented by the closed subscheme $\Spec A\subset \Spec R$ defined by the ideal generated by $\{s_{ij}=0, i=1,\ldots,N,j<0\}$.
\end{proof}
Applying this lemma to $V=W$ and $s=\bar\beta$, we obtain a closed subscheme $\Spec A\subset\Spec R$ and a tautological map $\beta_A: D_A\to W_{D_A}$ whose restriction to $D_A^*$ is $\bar\beta|_{D^*_A}$. Base change along the closed embedding $\Spec A\to D_A$ defined by $t=0$ induces a section $s_0:\Spec A\to W\times_{D_R}\Spec A$. Then $\mF$ is represented by the open subscheme $s_0^{-1}([\mE/\underline G]\times_{D_R}\Spec A)$ of $\Spec A$.
\end{proof}

\begin{rmk}\label{general base}
In the notes, we work over a fixed base field $k$. But for certain applications (e.g. \cite{PR2,PZ}), one needs to work over a base scheme $S=\Spec A$. Starting from an affine smooth group scheme $\underline G$ over $A[[t]]$, one can then similarly define the affine Grassmannian $\Gr_{\underline G}$ by \eqref{loc aff}, which is a presheaf over $S$. If the corresponding Proposition \ref{lin rep} holds in this generality, then the same arguments as above show that $\Gr_{\underline G}$ is representable as an ind-scheme ind-of finite presentation over $S$. It is known Proposition \ref{lin rep} holds if: (i) $\underline G$ is the base change of a linear reductive over $S$ ; (ii) $S$ is a Dedekind domain (cf. \cite[Appendix]{PZ}).
\end{rmk}

Interesting (and important) non-constant group schemes are parahoric group schemes from Bruhat-Tits theory (see \cite{Ti} for a summary). Instead of explaining what they are in general, we give two basic examples. 
\begin{ex}\label{hyperspacial and Iwahori}
Let $G$ be a connected reductive group over $k$ and $B\subset G$ be a Borel subgroup.
\begin{enumerate}
\item The group scheme $\underline G:=G\otimes_k \mO$ is parahoric, usually called a hyperspecial parahoric of $G$. 
\item Let $\underline G$ be as above. There is the evaluation map $\on{ev}: \underline G(\mO)\to G(k)$ given by $g\mapsto (g\mod t)$. Then there exists a smooth fiberwise connected affine group scheme $\underline G'$ over $\mO$ together with a homomorphism $\underline G'\to \underline G$, which induces an isomorphism $\underline G'\otimes_{\mO}F\cong \underline G\otimes_\mO F=G\otimes_k F$ and identifies $\underline G'(\mO)$ with $\on{ev}^{-1}(B(k))$. Usually $\underline G'$ is called an Iwahori group scheme of $G$. 
\end{enumerate}
We refer to Example \ref{hyperspacial and Iwahori2} for more discussions.
\end{ex}
\begin{rmk}\label{terminology} 
In this notes, we call $\Gr_{\underline G}$ the affine Grassmannian of $\underline G$. This is \emph{inconsistent} with the standard terminology in literature. For example,
if $\underline G$ is an Iwahori group scheme of $G$ as above, then $\Gr_{\underline G}$  is usually called the affine flag variety of $G$ in literature, and is usually denoted by $\Fl_{G}$.
\end{rmk}

The following proposition generalises the second statement of Theorem \ref{I:rep aff}.
\begin{prop}\label{indproper}
Let $\underline G$ be a fiberwise connected smooth affine group scheme over $\mO$ whose generic fiber is connected reductive. Then $\Gr_{\underline G}$ is ind-projective if and only if $\underline G$ is a parahoric group scheme.
\end{prop}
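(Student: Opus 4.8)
The plan is to characterise ind-properness through the valuative criterion and then match that criterion with the Bruhat--Tits notion of parahoricity. First I would record the structural facts already available: by Theorem~\ref{I:rep aff} the space $\Gr_{\underline G}$ is an ind-scheme ind-of finite type over $k$, and by Proposition~\ref{gen G} (applied to a faithful representation from Proposition~\ref{lin rep}) it admits a locally closed embedding into $\Gr_{\GL_n}$, hence is ind-separated. Consequently ind-properness is equivalent to the valuative criterion, which---because $\mK$- and $\mR$-points of an ind-scheme with closed transition maps automatically factor through a bounded piece---reduces to the following: for every discrete valuation ring $\mR$ over $k$ with fraction field $\mK$, the specialisation map $\Gr_{\underline G}(\mR)\to\Gr_{\underline G}(\mK)$ is a bijection. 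Writing a point as a class in the loop-group quotient $LG/L^+\underline G$ (which presents $\Gr_{\underline G}$ after fppf sheafification, $\underline G$ being smooth), surjectivity becomes an Iwasawa-type decomposition
\[ G(\mK\ppart) = G(\mR\ppart)\cdot \underline G(\mK[[t]]), \]
while injectivity is the separatedness already noted. Thus everything comes down to deciding for which $\underline G$ this decomposition holds.

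For the implication that parahoric groups give ind-projective Grassmannians, I would bootstrap from the cases already in hand. Choosing an Iwahori $\underline I\subseteq\underline G$ (a minimal parahoric), the projection $\Gr_{\underline I}\to\Gr_{\underline G}$ is a Zariski-locally trivial fibration whose fibre is the flag variety $\underline G_k/\underline I_k$ of the reductive quotient of the special fibre, hence projective; so it suffices to treat $\underline I$. For the Iwahori (the affine flag variety $\Fl_G$) I would pass to the simply connected cover via $G_\s\to G$: the affine flag variety of $G_\s$ is a partial flag variety of the associated affine Kac--Moody group and is ind-projective by Kumar--Mathieu, and the resulting comparison transports ind-projectivity to $\Fl_G$ along the finitely many connected components. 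Ampleness, i.e.\ the upgrade from ind-proper to ind-projective, is provided by pulling back the determinant line bundle along the embedding into $\Gr_{\GL_n}$.

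For the converse I would argue that ind-properness forces $\underline G$ to be the full parahoric it sits inside. Since $\underline G$ is affine of finite type, $\underline G(\mO)$ is a bounded subgroup of $G(F)$ and, having connected special fibre, is contained in a parahoric subgroup $\underline P(\mO)$; Bruhat--Tits functoriality promotes this inclusion of bounded groups to a homomorphism $\underline G\to\underline P$ of $\mO$-models which is the identity on generic fibres. This induces an ind-proper (because $\Gr_{\underline P}$ is separated) morphism $\Gr_{\underline G}\to\Gr_{\underline P}$, whose fibre over the base point $\mE^0$ is the quotient $L^+\underline P/L^+\underline G$. That fibre is built from an affine homogeneous space for the special fibres together with unipotent jet directions, so it is quasi-affine; being also proper, it must be $0$-dimensional, which forces $\underline G=\underline P$ after comparing special fibres of these smooth $\mO$-flat groups. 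Hence $\underline G$ is parahoric. Equivalently, one may exhibit the failure of the displayed decomposition directly by producing, when $\underline G\subsetneq\underline P$, a map $\bG_m\to\Gr_{\underline G}$ that does not extend over the origin.

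The main obstacle is the Bruhat--Tits input on both sides rather than the formal ind-scheme theory. On the sufficiency side it is the reduction to the simply connected group and the appeal to ind-projectivity of Kac--Moody flag varieties, together with making the fibration $\Gr_{\underline I}\to\Gr_{\underline G}$ rigorous for a non-constant group scheme; on the necessity side it is the construction of the canonical map $\underline G\to\underline P$ to the enveloping parahoric and the precise identification of the fibre $L^+\underline P/L^+\underline G$ as a quasi-affine scheme. The formal steps---ind-separatedness, the reduction to the valuative criterion, and the translation into the decomposition above---are routine given the representability and embedding results established earlier.
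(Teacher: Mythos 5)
The paper itself omits the proof of this proposition (it only remarks that the statement is analogous to the classical fact that a homogeneous space $G/H$ of a connected reductive group is proper if and only if $H$ is parabolic), so there is no argument of record to compare yours against; I am assessing the proposal on its own terms. Your reduction of ind-properness to the valuative criterion and to the displayed decomposition is correct, and the sufficiency direction (parahoric $\Rightarrow$ ind-projective) follows the standard route of \cite{PR} and \cite{Fa}: reduce to the Iwahori, then to the simply connected cover, then invoke the Kac--Moody theory, with ampleness supplied by the determinant line bundle. The only slips there are minor: $\pi_0(\Fl_G)\cong\pi_1(G)_I$ is infinite whenever $G$ has a nontrivial central torus, so ``finitely many connected components'' should be replaced by ``all components are isomorphic via translation by elements of $LG(k)$,'' and the central torus must be split off separately.

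The genuine gap is in the necessity direction, at the assertion that the fibre $L^+\underline P/L^+\underline G$ of $\Gr_{\underline G}\to\Gr_{\underline P}$ over the base point is quasi-affine. For an arbitrary parahoric $\underline P$ containing $\underline G$ this is simply false: take $\underline G$ to be an Iwahori group scheme and $\underline P=G\otimes_k\mO$ hyperspecial; then $L^+\underline P/L^+\underline G=L^+G/\on{ev}^{-1}(B)\cong G/B$ is projective of positive dimension. (Here $\underline G$ happens to be parahoric, so this does not contradict the proposition, but it does refute the intermediate claim and with it the deduction ``quasi-affine and proper, hence zero-dimensional.'') To repair the argument you must choose $\underline P$ with care. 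After an unramified base change assume $k$ algebraically closed; then $\underline G(\mO)$ lies in the kernel of the Kottwitz homomorphism because the fibres of $\underline G$ are connected (this step also needs justification), and it fixes a nonempty closed convex subset of the building. One must take $\underline P=\underline P_{\mathbf f}$ for a facet $\mathbf f$ that is \emph{maximal} among facets contained in that fixed locus: only then is the image of $\underline G_k$ in the reductive quotient $M$ of $\underline P_k$ not contained in any proper parabolic of $M$ (otherwise $\underline G(\mO)$ would fix a larger facet), and only then can one invoke the classical statement that a proper homogeneous space of $M$ has parabolic stabiliser to conclude that $M/\on{im}(\underline G_k)$, hence the fibre, hence $\Gr_{\underline G}$, fails to be proper unless $\underline G(\mO)=\underline P(\mO)$; uniqueness of the smooth connected model with prescribed points then yields $\underline G=\underline P$. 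This choice-of-facet step is precisely where the analogy the paper alludes to enters, and it is the idea your proposal is missing. A cleaner alternative for necessity, in the spirit of Borel's fixed-point theorem, is to let a fixed Iwahori $L^+\underline I$ act on the ind-proper $\Gr_{\underline G}$, produce a fixed point by a pro-solvable fixed-point argument, and deduce that $L^+\underline G$ contains a conjugate of $L^+\underline I$, which together with boundedness forces $\underline G(\mO)$ to be a parahoric subgroup.
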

This is similar to the classical result that a homogenous space of a connected reductive group $G$ is projective if and only if it is a partial flag variety of $G$. Since this result is not used elsewhere in the notes, we omit the proof.

Let us introduce  the convolution Grassmannian $\Gr_{\underline G}\tilde\times\Gr_{\underline G}$. It is defined as
\begin{equation}\label{I:conv}
\Gr_{\underline G}\tilde\times\Gr_{\underline G}(R)=\left\{(\mE_1,\mE_2,\beta_1,\beta_2)\ \left|\ \begin{split}&\mE_1,\mE_2 \mbox{ are } \underline G\mbox{-torsors on } D_R, \\
&  \beta_1:\mE_1|_{D_R^*}\cong \mE^0|_{D_R^*}, \beta_2:\mE_2|_{D_R^*}\cong  \mE_1|_{D_R^*}\end{split}\right.\right\}.
\end{equation}
There is a map
\begin{equation}\label{conv m0}
m:\Gr_{\underline G}\tilde\times\Gr_{\underline G}\to \Gr_{\underline G},\quad (\mE_1,\mE_2,\beta_1,\beta_2)\mapsto (\mE_2, \beta_1\beta_2),
\end{equation}
and a natural projection
$$\pr_1:\Gr_{\underline G}\tilde\times\Gr_{\underline G}\to \Gr_{\underline G}, \quad (\mE_1,\mE_2,\beta_1,\beta_2)\mapsto (\mE_1,\beta_1),$$
which together induce an isomorphism
\begin{equation}\label{I: isom}
(\pr_1,m):\Gr_{\underline G}\tilde\times\Gr_{\underline G}\cong \Gr_{\underline G}\times \Gr_{\underline G}.
\end{equation} 
In particular, the convolution Grassmannian is representable.  The map $m$ is usually called the convolution map.

Similarly, there exists the $n$-fold convolution Grassmannian $\Gr_{\underline G}\tilde\times\cdots\tilde\times\Gr_{\underline G}$, classifying $(\mE_i,\beta_i), \ i=1,\ldots,n$, where $\mE_i$ is a $\underline G$-torsor on $D_R$ and $\beta_i: \mE_{i}|_{D_R^*}\simeq \mE_{i-1}|_{D_R^*}$ is an isomorphism (where $\mE_0=\mE^0$ is trivial). 
There is a natural projection $\pr_j$ to the $j$-fold convolution Grassmannian for each $j=1,\ldots,n-1$ by remembering $(\mE_i,\beta_i),\ i=0,\ldots,j$, and
an $n$-fold convolution map
\begin{equation}\label{conv m}
m:\Gr_{\underline G}\tilde\times\cdots\tilde\times\Gr_{\underline G}\to \Gr_{\underline G},
\end{equation}
sending $(\mE_i,\beta_i)$ to $(\mE_n, \beta_1\cdots\beta_n)$.

\subsection{Groups attached to the punctured disc.}
There is another useful interpretation of $\Gr_{\underline G}$. First, we introduce jet and loop groups.
\begin{dfn}
Let $X$ be a presheaf over $\mO=k[[t]]$. The space of $n$-jets $L^nX$ of $X$ is the presheaf that assigns every $R$ the set
$$L^nX(R)=X(R[t]/t^n).$$ 
The formal jet space (sometimes also called the arc space, or the positive loop space) $L^+X$ of $X$ is the presheaf that assigns every $R$ the set 
$$LX(R)=X(R[[t]]).$$ 
Let $X$ be a presheaf over $F=k((t))$. The loop space $LX$ of $X$ is the presheaf that assigns every $R$ the set 
$$LX(R)=X(R((t))).$$
If $X$ is a presheaf over $k$, we write $L^+(X\otimes_k\mO)$ by $L^+X$ and $L(X\otimes_k F)$ by $LX$ for simplicity, if no confusion will arise.
\end{dfn}

\begin{prop}~
\begin{enumerate}
\item Let $X$ be a scheme of finite type over $\mO=k[[t]]$. Then $L^nX$ is represented by a scheme of finite type over $k$. In addition 
$$L^+X\cong \underleftarrow\lim\ L^nX$$ 
is represented by a scheme. If $X$ is affine, so is $L^+X$. If $f:X\to Y$ is an open (resp. closed embedding), so is $L^+f: L^+X\to L^+Y$.

\item Let $X$ be an affine scheme of finite type over $F=k((t))$. Then $LX$ is represented by an ind-affine scheme over $k$. If $f:X\to Y$ is a closed embedding, so is $Lf:LX\to LY$. In addition, if $X=\underline X\otimes_\mO F$ for some affine scheme $\underline X$ of finite type over $\mO$, then $L^+\underline X\subset LX$ is a closed subscheme. 
\end{enumerate}
\end{prop}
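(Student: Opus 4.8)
The plan is to reduce every assertion to the affine case by choosing coordinates, and then to glue. The common mechanism is that if $X=\Spec \mO[x_1,\dots,x_m]/(f_1,\dots,f_r)$, an $R$-point of a jet or loop space is an $\mO$-algebra map sending each $x_i$ to a truncated power series, a power series, or a Laurent series; expanding the relations $f_\alpha$ in powers of $t$ and equating coefficients turns the functor into the solution set of an explicit system of polynomial equations over $k$ in the coefficient variables.

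Concretely, for part (1) I would first take $X$ affine as above. For $L^nX$, writing $x_i\mapsto\sum_{j=0}^{n-1}x_i^{(j)}t^j$ and reducing each $f_\alpha$ modulo $t^n$ gives finitely many polynomial equations in the $mn$ variables $x_i^{(j)}$, so $L^nX$ is a closed subscheme of $\bA^{mn}_k$, of finite type. For a general $X$ of finite type over $\mO$ I would glue along an affine open cover, using that $L^n$ preserves open immersions: for $U\subset X$ open, $L^nU$ is the preimage of the open subscheme $L^1U=U\otimes_\mO k$ under the reduction map $L^nX\to L^1X=X\otimes_\mO k$, because a map out of $\Spec R[t]/t^n$ factors through $U$ exactly when its reduction to $\Spec R$ does (the thickening $\Spec R\hookrightarrow\Spec R[t]/t^n$ is a homeomorphism). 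Identifying $L^+X$ with $\varprojlim_n L^nX$ is then immediate for affine $X$ from $R[[t]]=\varprojlim_n R[t]/t^n$ and the universal property of the inverse limit of rings; since the transition maps $L^{n+1}X\to L^nX$ are affine, the inverse limit is represented by a scheme (affine if $X$ is). The statements on embeddings follow from the coordinate picture: a closed immersion $X\hookrightarrow Y$ simply adjoins the extra relations at every coefficient level and this persists in the limit, while open immersions are preserved by the same reduction-map argument, again passing to $\varprojlim$.

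For part (2) I would start with $L\bA^m$. The decomposition $R((t))=\varinjlim_N t^{-N}R[[t]]$ presents $(R((t)))^m$ as the filtered union of the subfunctors $R\mapsto(t^{-N}R[[t]])^m$, each an (infinite-dimensional) affine space over $k$, with transition maps $t^{-N}R[[t]]\hookrightarrow t^{-N-1}R[[t]]$ that are closed immersions; hence $L\bA^m$ is ind-affine. For a closed $X\hookrightarrow\bA^m$, substituting $x_i\in t^{-N}R[[t]]$ into the $f_\alpha$ and setting all coefficients to zero cuts out a closed, hence affine, subscheme of the level-$N$ space, and these assemble with closed transition maps into an ind-affine presentation of $LX$; the same bookkeeping shows a closed immersion $X\hookrightarrow Y$ induces a level-wise closed immersion $LX\hookrightarrow LY$. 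Finally, when $X=\underline X\otimes_\mO F$ the subfunctor $L^+\underline X$ of coordinates lying in $R[[t]]$ is exactly the level $N=0$ of this presentation, and each term of an ind-scheme presentation is closed in the colimit, so $L^+\underline X\subset LX$ is a closed subscheme.

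The routine part is the affine coefficient bookkeeping; the main obstacle is the non-affine and limit steps. Legitimising the gluing requires the preservation of open immersions by $L^n$, which rests on the topological invariance of nilpotent thickenings and must be set up so that the glued structure sheaves agree; and the representability of $\varprojlim_n L^nX$ as a scheme should be handled by citing the standard result on filtered inverse limits along affine transition morphisms rather than reproving it. In part (2) the delicate point is that each level of $LX$ is cut out by infinitely many coefficient equations, so one must note that a closed subscheme of an affine scheme is still affine, and confirm that $LX$ really is the filtered colimit of these levels as a functor --- which holds because any element of $R((t))$ has only finitely many terms of negative degree, so every $R$-point has bounded pole order and lies at some finite level.
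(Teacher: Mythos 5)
Your proof is correct and follows the standard coordinate/coefficient-equation argument; the paper itself does not prove this proposition but defers to \cite[1.a]{PR}, and your reduction to the affine case, the identification of $L^+X$ with the inverse limit along affine transition maps, and the presentation of $LX$ by pole order with closed transition maps is precisely the argument given there. The only point stated a bit glibly is the preservation of open immersions by $L^+$ (the underlying space of $\Spec R[[t]]$ is not that of $\Spec R$, unlike the truncated case), but this follows either from $L^+U=\varprojlim L^nU$ or from the fact that every point of $\Spec R[[t]]$ specializes into $V(t)$, so it is not a real gap.
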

\begin{proof}See \cite[1.a]{PR}.
\end{proof}

\begin{ex}If $X=\bA^1$, then $L^+X(R)=R[[t]]=\{\sum r_it^i \mid r_i\in R, r_i=0 \mbox{ for } i<0 \}$ and $LX(R)=R((t))=\{\sum r_it^i\mid  r_i\in R, r_i=0 \mbox{ for } i\ll 0\}$. Therefore $L^+X\cong \Spec k[r_0,r_1,\ldots]$ and $LX=\underrightarrow\lim_i \Spec k[r_{-i}, r_{-i+1},\ldots]$.
\end{ex}

\begin{ex}\label{hyperspacial and Iwahori2}
We consider the group schemes as in Example \ref{hyperspacial and Iwahori}. The natural homomorphism $\underline G'\to \underline G$ then induces a closed embedding $L^+\underline G'\subset L^+\underline G$. Indeed, the evaluation map can be upgraded to a map of affine $k$-groups $\on{ev}:L^+G\to G$ and $L^+\underline G'=\on{ev}^{-1}(B)$. Note that, however, the homomorphism $\underline G'\to \underline G$ is not a closed embedding since it induces an isomorphism when base change to $F$.
\end{ex}

Back to the affine Grassmannian.  Note that there is a natural action
\begin{equation}\label{I:action}
L\underline G\times \Gr_{\underline G}\to \Gr_{\underline G},\quad (A, (\mE,\beta))\mapsto (\mE,A\beta).
\end{equation}

\begin{prop}\label{I:quotient}
The affine Grassmannian $\Gr_{\underline G}$ can be identified with the fpqc quotient $[L\underline G/L^+\underline G]$. 
\end{prop}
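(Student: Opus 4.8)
The plan is to exhibit $L\underline G$ directly as an $L^+\underline G$-torsor over $\Gr_{\underline G}$, from which the identification $\Gr_{\underline G}\cong[L\underline G/L^+\underline G]$ is immediate. First I would define the natural transformation $\phi\colon L\underline G\to\Gr_{\underline G}$ sending $g\in L\underline G(R)=\underline G(R((t)))$ to the pair $(\mE^0,g)$, where $\mE^0$ is the trivial $\underline G$-torsor on $D_R$ and $g$ is viewed as an automorphism of $\mE^0|_{D^*_R}$, i.e. a trivialization over $D^*_R$. Right translation by the closed subgroup $L^+\underline G\subset L\underline G$ is free, so by the convention recalled in \S\ref{Intro:torsor} the quotient $[L\underline G/L^+\underline G]$ is a $k$-space. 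One checks at once that $\phi$ is invariant under this right action: for $h\in L^+\underline G(R)=\underline G(R[[t]])$, the automorphism of $\mE^0$ over $D_R$ given by $h$ identifies the pairs $(\mE^0,gh)$ and $(\mE^0,g)$, so $\phi(gh)=\phi(g)$ in $\Gr_{\underline G}(R)$. Thus $\phi$ factors through a map $[L\underline G/L^+\underline G]\to\Gr_{\underline G}$, and it suffices to show that $\phi$ is an $L^+\underline G$-torsor.

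Next I would identify the fibers of $\phi$. Fix $(\mE,\beta)\in\Gr_{\underline G}(R)$; unwinding the definition of the fiber product, a point of $\Spec R\times_{\Gr_{\underline G}}L\underline G$ over $R\to R'$ is an isomorphism $\eta\colon\mE^0|_{D_{R'}}\cong\mE|_{D_{R'}}$ of $\underline G$-torsors over the whole disc, the associated loop being $g=\beta\circ(\eta|_{D^*_{R'}})$. In other words the fiber is the sheaf of trivializations of $\mE$ over $D_{R'}$, which is a pseudo-torsor under $\Aut(\mE^0/D_{R'})=\underline G(R'[[t]])=L^+\underline G(R')$, the action $\eta\mapsto\eta\circ h$ matching right translation on $L\underline G$. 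Consequently $\phi$ is an $L^+\underline G$-torsor precisely when this pseudo-torsor is nonempty fpqc-locally on $\Spec R$, i.e. when every $\underline G$-torsor $\mE$ on $D_R$ becomes trivial after pulling back along some faithfully flat $R\to R'$.

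This local triviality is the heart of the matter and I expect it to be the main obstacle, because the issue is precisely to replace an arbitrary fpqc cover of $D_R=\Spec R[[t]]$ (which trivializes $\mE$ by the definition of a torsor) by one induced from the base $\Spec R$. Here the smoothness of $\underline G$ is essential, and I would proceed in three steps. (i) Restrict $\mE$ to $V(t)\cong\Spec R$: this is a torsor under the smooth affine $k$-group $\underline G\otimes_{k[[t]]}k$, and since the base $\Spec R$ is affine it is trivialized by some faithfully flat $R\to R'$. (ii) After replacing $R$ by $R'$, lift the resulting trivialization, viewed as a section of the torsor, successively along the square-zero thickenings $\Spec R'[t]/t^n\hookrightarrow\Spec R'[t]/t^{n+1}$; such lifts exist because $\mE$, being a torsor under the smooth group $\underline G$, is formally smooth over $D_{R'}$. (iii) Since $\underline G$, and hence $\mE$, is affine over $D_{R'}$, the compatible sections over the $\Spec R'[t]/t^n$ assemble through $R'[[t]]=\underleftarrow\lim R'[t]/t^n$ into a single section over $D_{R'}$, that is, a trivialization of $\mE|_{D_{R'}}$. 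With this lemma the pseudo-torsor of the previous paragraph is a genuine $L^+\underline G$-torsor, so $\phi$ realizes $\Gr_{\underline G}\cong[L\underline G/L^+\underline G]$.
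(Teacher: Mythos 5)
Your proof is correct and follows essentially the same route as the paper: both reduce the statement to exhibiting $L\underline G$ as an $L^+\underline G$-torsor over $\Gr_{\underline G}$ via the explicit correspondence $g\leftrightarrow(\mE,\beta,\epsilon)$ with $g=\beta\epsilon$, and both rest on the key lemma that every $\underline G$-torsor on $D_R$ is trivialized over $D_{R'}$ for a cover $\Spec R'\to\Spec R$ coming from the base, proved by taking a section along $t=0$ using smoothness of $\underline G$ and then lifting it over the formal disc. Your three-step unwinding of the lifting (square-zero thickenings plus passage to the limit using affineness) simply makes explicit what the paper's Lemma~\ref{et loc triv} asserts in one line.
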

\begin{proof}
We first need a lemma.
\begin{lem}\label{et loc triv}
Every $\underline G$-torsor on $D_R$ can be trivialised over $D_{R'}$ for some \'etale covering $\Spec R'\to \Spec R$.
\end{lem}
\begin{proof}First the map $\mE\otimes_{R[[t]]}R\to\Spec R$ is smooth (since $\underline G$ is smooth) and therefore admits a section for some \'etale covering  $\Spec R'\to \Spec R$. By the smoothness of $\mE\to D_R$, such a section can be lifted to a section $\epsilon$ of $\mE$ over $D_{R'}$.
\end{proof}
Now we prove the proposition,
The claim is equivalent to saying that $L\underline G$ is an $L^+\underline G$-torsor over $\Gr_{\underline G}$. More precisely, for a $k$-algebra $R$, there is a canonical isomorphism
\[L\underline G(R)\cong\left\{(\mE,\beta, \epsilon)\ \left|\ \begin{split}&(\mE,\beta)\in \Gr_{\underline G}(R) \\
& \epsilon: \mE^0\cong \mE \mbox{ is a trivialisation}\end{split}\right.\right\}.\]
In fact, the desired isomorphism is given by $A\mapsto (\mE^0,A,\on{id})$ with the inverse map given by $(\mE,\beta,\epsilon)\mapsto A:=\beta\epsilon$.
\end{proof}

\begin{rmk}
According to the proof of the proposition, the $L^+\underline G$-torsor $L\underline G\to \Gr_{\underline G}$ is locally trivial for the \'etale topology.
If $\underline{G}=G\otimes \mO$ is constant reductive (or more generally so-called tamely ramified), it is even locally trivial for the Zariski topology (see Lemma \ref{big cell}). 
\end{rmk}

The convolution Grassmannian \eqref{I:conv} also admits a similar interpretation. Using the $L^+\underline G$-torsor $L\underline G\to \Gr_{\underline G}$  and the left action of $L^+\underline G$ on $\Gr_{\underline G}$ (defined by \eqref{I:action}), we can form a twisted product   $L\underline G\times^{L^+ \underline G}\Gr_{\underline G}$ (cf. \S\ \ref{Intro:torsor}). By a similar argument, one sees that $L\underline G\times^{L^+ \underline G}\Gr_{\underline G}$ is isomorphic to $\Gr_{\underline G}\tilde\times\Gr_{\underline G}$ (in particular our notations are consistent).

We have a direct corollary.
\begin{cor}The affine Grassmannian $\Gr_{\underline G}$ is formally smooth.
\end{cor}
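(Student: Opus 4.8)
The statement is that $\Gr_{\underline G}$ is formally smooth, i.e.\ that for every surjection of $k$-algebras $R'\to R$ with square-zero kernel $I$ the map $\Gr_{\underline G}(R')\to\Gr_{\underline G}(R)$ is surjective. The plan is to read off this lifting property from the presentation $\Gr_{\underline G}=[L\underline G/L^+\underline G]$ of Proposition~\ref{I:quotient}, reducing formal smoothness of the quotient to that of the loop group $L\underline G$ together with a descent along the projection $\pi\colon L\underline G\to\Gr_{\underline G}$.

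First I would check that $L^+\underline G$ and $L\underline G$ are formally smooth. Since $\underline G$ is smooth, hence formally smooth, over $\mO=k[[t]]$, it suffices to observe that $R\mapsto R[[t]]$ and $R\mapsto R((t))$ carry square-zero extensions to square-zero extensions: the kernels of $R'[[t]]\to R[[t]]$ and $R'((t))\to R((t))$ are $I[[t]]$ and $I((t))$, whose squares lie in $I^2[[t]]=0$ and $I^2((t))=0$. Applying formal smoothness of $\underline G$ over $\mO$ (resp.\ of $\underline G\otimes_\mO F$ over $F$) to the $\mO$-algebra extension $R'[[t]]\to R[[t]]$ (resp.\ the $F$-algebra extension $R'((t))\to R((t))$) then lifts points, so $L^+\underline G(R')\to L^+\underline G(R)$ and $L\underline G(R')\to L\underline G(R)$ are surjective.

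It remains to transport formal smoothness from $L\underline G$ to the quotient. By the remark following Proposition~\ref{I:quotient} the $L^+\underline G$-torsor $\pi\colon L\underline G\to\Gr_{\underline G}$ is trivial over an \'etale cover of $\Gr_{\underline G}$; over such a cover $\pi$ becomes the projection $U\times L^+\underline G\to U$, which is formally smooth because $L^+\underline G$ is. As formal smoothness of a morphism is \'etale-local on the target, $\pi$ is a formally smooth fppf cover, and since formal smoothness descends along such covers the formal smoothness of $L\underline G$ forces that of $\Gr_{\underline G}$.

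The crux is this last descent, and the cleanest fully rigorous route avoids gluing local lifts by arguing directly with torsors on the disc. Given $(\mE,\beta)\in\Gr_{\underline G}(R)$ and a square-zero extension $R'\to R$, the closed immersion $D_R\hookrightarrow D_{R'}$ is a square-zero thickening of affine schemes, so the $\underline G$-torsor $\mE$ deforms to a $\underline G$-torsor $\mE'$ on $D_{R'}$: the obstruction and indeterminacy lie in $H^{\geq1}(D_R,\frakg_{\mE}\otimes I[[t]])$, which vanishes as $D_R$ is affine and $\underline G$ is smooth affine. One then extends the trivialisation $\beta$ across the square-zero thickening $D_R^*\hookrightarrow D_{R'}^*$ by invoking formal smoothness of the $\underline G$-torsor $\Iso(\mE',\mE^0)$ over $D_{R'}^*$, producing $\beta'$ and hence the desired lift $(\mE',\beta')$. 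I expect the only real work to be a careful justification of the unobstructed deformation of $\mE$ and the extension of $\beta$, the loop-group picture serving mainly to organise the argument.
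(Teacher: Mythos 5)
Your proposal is correct and follows essentially the same route as the paper, whose entire proof is the one-line observation that $L\underline G$ and $L^+\underline G$ are formally smooth; your verification of those two facts via the square-zero kernels $I[[t]]$ and $I((t))$ is exactly the intended justification. The one point the paper glosses over is the passage from $L\underline G$ to the quotient $[L\underline G/L^+\underline G]$ (gluing the lifts produced over an \'etale cover is not a formal tautology for an ind-scheme), and your closing argument --- deforming the $\underline G$-torsor across the affine square-zero thickening $D_R\hookrightarrow D_{R'}$ using vanishing of higher quasi-coherent cohomology on affines, then extending $\beta$ over $D^*_R\hookrightarrow D^*_{R'}$ by smoothness of $\Iso(\mE',\mE^0)$ --- is a sound and self-contained way to close that gap.
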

\begin{proof}Indeed, both $L\underline G$ and $L^+\underline G$ are formally smooth.
\end{proof}
\begin{rmk}\label{formal smoothness}
Formal smoothness for an ind-scheme is a weak notion. For example, $L\bG_m$ is formally smooth, but it is easy to see that it is highly \emph{non-reduced}.
\end{rmk}

Here is a deeper application of loop groups. We assume that $k$ is algebraically closed and $\underline G_F=\underline G\otimes F$ is connected reductive. Let $I=\Gal(\overline F/F)$ be the Galois group of $F$.  It acts on the fundamental group $\pi_1(\underline G_F)$ of $\underline G_F$. Let $\pi_1(\underline G_F)_I$ denote the group of coinvariants. Let $p$ denote the characteristic exponent of $k$.
\begin{thm}\label{components}~
\begin{enumerate}
\item There is a canonical isomorphism $\pi_0(L\underline G)\cong \pi_1(\underline G_F)_I$.
 The map $L\underline G\to \Gr_{\underline G}$ induces an isomorphism $\pi_0(L\underline G)\cong \pi_0(\Gr_{\underline G})$. Let $(\Gr_{\underline G})^0$ denote the neutral connected component.
\item Assume that $\underline G_F$ be semisimple and simply-connected. Then $\Gr_{\underline G}$ is reduced.
\item Assume that $\underline G_F$ is semisimple and $p\nmid \pi_1(\underline G_F)$. Let $G_{\on{sc}}$ be the simply-connected cover of $\underline G_F$ and assume that it extends to an \'etale cover $\underline G_{\on{sc}}\to \underline G$. Then it induces an isomorphism
$\Gr_{\underline G_{\on{sc}}}\cong (\Gr_{\underline G})^0$. In particular, $\Gr_{\underline G}$ is reduced. 
\item In general, assume that $p\nmid \pi_1(G_\der)$, where $G_\der$ is the derived group of $\underline G_F$. Then the map $\underline G_{\on{sc}}\to \underline G$ induced an isomorphism $\Gr_{\underline G_{\on{sc}}}\cong (\Gr_{\underline G})^0_{\on{red}}$.
\end{enumerate}
\end{thm}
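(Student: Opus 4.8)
The plan is to deduce the general reductive case from the semisimple case already established in part (3), using the derived subgroup together with the fact that the quotient by it is a torus. Throughout I would argue at the level of $k$-points — legitimate since $k$ is algebraically closed, so every $\underline G$-torsor on $D_R$ trivialises and $\Gr_{\underline G}(k)=\underline G(F)/\underline G(\mathcal O)$ — and then upgrade the pointwise statements to statements about reduced ind-schemes.

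First I would record that the map in question is well defined. The generic fibre of $\underline G_{\on{sc}}$ is semisimple and simply connected, so $\pi_1((\underline G_{\on{sc}})_F)=0$; by part (1) the ind-scheme $\Gr_{\underline G_{\on{sc}}}$ is therefore connected, and by part (2) it is reduced. Hence the homomorphism $\underline G_{\on{sc}}\to\underline G$ induces a map $\Gr_{\underline G_{\on{sc}}}\to\Gr_{\underline G}$ whose image lies in $(\Gr_{\underline G})^0$ and which, by reducedness of the source, factors through $(\Gr_{\underline G})^0_{\on{red}}$. It remains to prove that this factored map is an isomorphism.

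The heart of the argument is the reduction to the derived subgroup. Let $\underline G_\der\subset\underline G$ be the derived subgroup and $\underline D=\underline G/\underline G_\der$, a torus. As $\underline D$ is affine, the argument of Proposition \ref{gen G}, applied to the inclusion $\underline G_\der\hookrightarrow\underline G$ with affine quotient $\underline D$, shows that this inclusion induces a closed embedding $\Gr_{\underline G_\der}\hookrightarrow\Gr_{\underline G}$, carrying $(\Gr_{\underline G_\der})^0$ into $(\Gr_{\underline G})^0$. I claim it is bijective on the $k$-points of these neutral components. For surjectivity, take $g\in\underline G(F)$ whose class in $\pi_0(\Gr_{\underline G})=\pi_1(\underline G_F)_I$ vanishes; by functoriality of part (1) along $\underline G\to\underline D$, the image $\bar g\in\underline D(F)$ has trivial class, and since $\Gr_{\underline D}$ is the discrete lattice $\xcoch(\underline D)_I$ this forces $\bar g\in\underline D(\mathcal O)$. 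Because $k$ is algebraically closed and the fibres of $\underline G(\mathcal O)\to\underline D(\mathcal O)$ are torsors under the connected group $\underline G_\der$, hence trivial by Lang's theorem, I may lift $\bar g$ to some $g_0\in\underline G(\mathcal O)$; then $gg_0^{-1}\in\underline G_\der(F)$ represents the same class as $g$, proving surjectivity. Injectivity follows from $\underline G_\der(F)\cap\underline G(\mathcal O)=\underline G_\der(\mathcal O)$. A closed immersion that is surjective on $k$-points has the same reduction as its target, so $(\Gr_{\underline G_\der})^0_{\on{red}}=(\Gr_{\underline G})^0_{\on{red}}$.

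Finally I would invoke part (3). The group $\underline G_\der$ is semisimple with $p\nmid\pi_1(\underline G_\der)$, so its simply-connected cover extends to an étale cover $\underline G_{\on{sc}}\to\underline G_\der$ and part (3) gives $\Gr_{\underline G_{\on{sc}}}\cong(\Gr_{\underline G_\der})^0$, which is already reduced. Combining, $\Gr_{\underline G_{\on{sc}}}\cong(\Gr_{\underline G_\der})^0=(\Gr_{\underline G_\der})^0_{\on{red}}=(\Gr_{\underline G})^0_{\on{red}}$; since every map here is the natural one induced by $\underline G_{\on{sc}}\to\underline G_\der\to\underline G$, this composite agrees with the map of the first paragraph. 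The main obstacle is the bijectivity in the middle step, and more precisely its integral incarnation: when $\underline G$ is a non-constant (e.g.\ parahoric) group scheme one must first produce suitable $\mathcal O$-models of $\underline G_\der$ and $\underline D$, check that $\underline D$ is an affine torus so that the closed-embedding input applies, and verify that the étale-cover hypothesis of part (3) genuinely holds. This last point is exactly where the assumption $p\nmid\pi_1(\underline G_\der)$ is consumed, as it is what prevents the finite central kernel of $\underline G_{\on{sc}}\to\underline G_\der$ from introducing nilpotents that would spoil the identification of reduced structures.
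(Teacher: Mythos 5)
The paper does not actually prove Theorem \ref{components}: it only points to \cite{BD}, \cite{PR} and the other cited sources, so there is no in-text argument to compare against. Your strategy for part (4) --- reduce to the semisimple case (3) by passing to the derived subgroup and controlling the quotient torus $\underline D=\underline G/\underline G_\der$ --- is exactly the route taken in those references, and its skeleton is sound: the closed-embedding claim via the affine quotient $\underline D$ is a legitimate use of (the proof of) Proposition \ref{gen G}; the identity $\underline G_\der(F)\cap\underline G(\mO)=\underline G_\der(\mO)$ holds because $\underline G_\der$ is closed in $\underline G$ and $\Spec\mO$ is integral with dense generic point; and a closed immersion of ind-schemes ind-of finite type over an algebraically closed field that is surjective on $k$-points does induce an isomorphism on reductions.

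Two points need repair. The small one: the triviality of the $\underline G_\der$-torsor fibres of $\underline G(\mO)\to\underline D(\mO)$ is not Lang's theorem (that concerns finite fields); it holds because $\Spec k[[t]]$ is strictly henselian with algebraically closed residue field, so torsors under smooth affine group schemes over it are trivial. The substantive one: your surjectivity step produces $h=gg_0^{-1}\in\underline G_\der(F)$ with $h\underline G(\mO)=g\underline G(\mO)$, but you never check that $h\underline G_\der(\mO)$ lies in the \emph{neutral} component of $\Gr_{\underline G_\der}$, and this is needed because part (3) identifies $\Gr_{\underline G_{\on{sc}}}$ only with $(\Gr_{\underline G_\der})^0$, not with whichever components of $\Gr_{\underline G_\der}$ happen to meet $(\Gr_{\underline G})^0$. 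For constant split $\underline G$ the gap is easily filled: $\pi_1(G_\der)=\xcoch(T_\der)/Q^\vee\to\xcoch(T)/Q^\vee=\pi_1(G)$ is injective and $I$ acts trivially, so triviality of the class of $h$ in $\pi_0(\Gr_{\underline G})$ forces triviality in $\pi_0(\Gr_{\underline G_\der})$. In the ramified case the relevant map is $\pi_1(G_\der)_I\to\pi_1(G)_I$ on coinvariants, whose injectivity is obstructed by $H_1(I,\xcoch(D))$ and is not automatic; since the theorem is stated for general smooth affine $\underline G$ with reductive generic fibre, you should either restrict your claim to the constant case or address this point explicitly, alongside the $\mO$-model issues you already flagged.
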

The proof of this theorem (in various generalities) can be extracted from various places (\cite[\S 4.5]{BD}, \cite{BL}, \cite{LS}, \cite{BLS}, \cite{Fa} and ultimately \cite{PR}). Note that (1) is analogous to the well-known fact in topology: $\pi_0$ of a loop space is the same as $\pi_1$ of the original space (this is in fact more than an analogy when $k=\bC$, see Theorem \ref{top triv}).

By this theorem, the study of many questions on $\Gr_{\underline G}$ reduces to the study of the case when $G$ is simply-connected.

\begin{ex}
(i) Let $G=\bG_m$. By the theorem $\pi_0(\Gr_G)\cong \bZ$. In fact, $(\Gr_G)_{\on{red}}=\pi_0(G)=\bZ$. However, as mentioned in Remark \ref{formal smoothness}, it is non-reduced.

(ii) It was shown in \cite[Remark 6.4]{PR} that $\Gr_G$ is non-reduced for $G=\on{PGL}_2$ and $\cha\ k=2$.
\end{ex}

In addition to the natural action of $L\underline G$ on $\Gr_{\underline G}$, there is another important symmetry on $\Gr_{\underline G}$, at least when $\underline G=G\otimes_k\mO$ is constant.
Let $\Aut(D)$ be the group ind-scheme of automorphisms of $\mO=k[[t]]$. Precisely, it is the presheaf of groups on $\Aff_k$ defined as
\[\Aut(D)(R)=\Aut_R(R[[t]]).\]
Any $R$-automorphism $\varphi$ of $R[[t]]$ is determined by the image $$\varphi(t)=a_0+a_1t+a_2t^2+\cdots\in R[[t]].$$ It is an easy exercise to show that there exists $\psi$ such that $\psi(\varphi(t))=t$ if and only if $a_0$ is nilpotent and $a_1$ is invertible. Therefore,
\[\Aut(D)\cong \on{Spf} k[[a_0]] \times \Spec k[a_1^{\pm 1}]\times \Spec\ k[a_2,\cdots].\]
Note that $\Aut^+(D)=\Spec k[a_1^{\pm 1},a_2,\cdots]$ is a closed normal subgroup scheme that classifies those automorphisms of $R[[t]]$ that reduce to the identity map mod $t$. Then we have
\begin{equation}\label{decomp1}
\Aut(D)\cong \Aut^+(D)\rtimes \hat{\bG}_a,
\end{equation}
where $\hat{\bG}_a=\on{Spf} k[[a_0]]$ is the formal additive group. In addition, $\Aut^{++}(D)=\Spec\ k[a_2,\cdots]$ is the pro-unipotent radical of $\Aut^+(D)$. Write $\bG_m^{\on{rot}}=\Spec k[a_1^{\pm 1}]$, usually called the rotation torus. Then
\begin{equation}\label{decomp2}
\Aut^+(D)=\Aut^{++}(D)\rtimes \bG_m^{\on{rot}}.
\end{equation}
\begin{rmk}
If $F$ is a general local field without a preferred choice of the uniformizer, then there is no canonical semidirect product decomposition \eqref{decomp1} and \eqref{decomp2}. But the corresponding short exact sequences of group ind-schemes are still well-defined.
\end{rmk}

Now, if $\underline G$ is a group scheme over $k[[t]]$ equipped with an action of $\Aut(D)$ that lifts the canonical action on $k[[t]]$ (e.g. $\underline G=G\otimes_k k[[t]]$ is constant), then is the an action
\begin{equation}\label{auto of disc}
\Aut(D)\times \Gr_{\underline G}\to \Gr_{\underline G},\quad (g, (\mE,\beta))\mapsto (g^*\mE,g^*\beta).
\end{equation}
In particular, the rotation torus $\bG_m^{\on{rot}}$ acts on $\Gr_{\underline G}$.

\subsection{Beauville-Laszlo's theorem.}
There is another important moduli interpretation of $\Gr_{\underline G}$ due to Beauville-Laszlo.

Let $X$ be a reduced connected curve (but not necessarily smooth) over $k$ and let $x\in |X|$ be a smooth closed point. Let $\underline G$ be a smooth affine group scheme over $X$. 
Let $\Gr_{\underline G,x}$ be the presheaf defined as
\begin{equation}\label{aff glob}
\Gr_{\underline G,x}(R)=\left\{(\mE,\beta)\ \left|\ \begin{split}&\mE \mbox{ is a } \underline G\mbox{-torsor on } X_R , \mbox{ and }\\
& \beta:\mE|_{X_R^*}\cong \mE^0|_{X_R^*}\mbox{ is a trivialisation}.\end{split}\right.\right\}
\end{equation}
Let $\underline G_x=\underline G\otimes_X \mO_x$. There is a natural morphism 
$$\on{res}:\Gr_{\underline G,x}\to \Gr_{\underline G_x}$$ 
induced by the restriction of a $\underline G$-torsor on $X_R$ to $D_{x,R}$.

\begin{thm}\label{glob aff}
The above morphism is an isomorphism.
\end{thm}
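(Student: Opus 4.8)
The plan is to produce an explicit quasi-inverse to $\res$ by gluing, and to reduce the verification that it is a quasi-inverse to the Beauville--Laszlo descent lemma. Everything happens near $x$: on $X_R^*$ no gluing occurs, so I first record how to describe $\underline G$-torsors on $X_R$ by a Zariski cover together with local data at $x$. Choose an affine open $U\ni x$ of $X$ and a function $t\in\mO(U)$ cutting out $x$ as a reduced Cartier divisor, so that $\mO_x$ is the $t$-adic completion of $\mO(U)$ and $X=X^*\cup U$. For a $k$-algebra $R$ put $A=\mO(U)\otimes_k R$. Because $x$ is a smooth point of the reduced curve $X$, multiplication by $t$ is injective on $\mO_{X,x}$, and this persists after the flat base change $-\otimes_k R$; hence $t$ is a non-zero-divisor in $A$, its $t$-adic completion is $\mO_x\hat\otimes R$, and $A/t^nA\cong(\mO_x\hat\otimes R)/t^n$.

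The key input is Beauville--Laszlo's descent \cite{BL}: for a pair $(A,t)$ with $t$ a non-zero-divisor and $\widehat A$ the $t$-adic completion, the base-change functor $M\mapsto(M[1/t],\widehat M,\mathrm{can})$ is an equivalence from flat finitely presented $A$-modules onto triples consisting of a flat $A[1/t]$-module, a flat $\widehat A$-module, and an isomorphism after inverting $t$. The decisive feature is that this needs \emph{no} noetherian hypothesis on $R$, only that $t$ be a non-zero-divisor. Combining this local equivalence over $U$ with ordinary Zariski descent along $X=X^*\cup U$, and using that the equivalence is monoidal, exact, and preserves flatness, finite presentation and faithful flatness, I would transport it to quasi-coherent comodule-algebras, and thereby to the (affine, since $\underline G$ is affine) total spaces of torsors. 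This yields a Beauville--Laszlo equivalence for torsors: restriction identifies the groupoid of $\underline G$-torsors on $X_R$ with that of triples $(\mE^*,\mE_D,\gamma)$, where $\mE^*$ is a $\underline G$-torsor on $X_R^*$, $\mE_D$ a $\underline G_x$-torsor on $D_{x,R}$, and $\gamma$ an isomorphism of their restrictions to $D^*_{x,R}$.

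Granting this, the theorem follows formally. An object of $\Gr_{\underline G,x}(R)$ is a $\underline G$-torsor on $X_R$ equipped with a trivialisation $\beta$ over $X_R^*$, i.e. a Beauville--Laszlo triple in which the first entry $\mE^*$ has been rigidified to $\mE^0$; the remaining datum $(\mE_D,\gamma)$ is exactly an object of $\Gr_{\underline G_x}(R)$, and under these identifications the comparison functor is nothing but $\res$. (The rigidification by $\beta$ also kills all automorphisms, so the groupoids collapse to the sets defining the two presheaves.) Concretely, the inverse of $\res$ sends $(\mE',\beta')$ to the torsor glued from $\mE^0$ on $X_R^*$ and $\mE'$ on $D_{x,R}$ along $\beta'$ over $D^*_{x,R}$, together with the tautological $\beta$ read off from the $X_R^*$-piece.

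The hard part is the torsor upgrade in the second paragraph: one must check that the glued comodule-algebra really defines a $\underline G$-torsor on the whole of $X_R$, that is, that flatness, finite presentation, and bijectivity of the action map $\underline G\times_{X_R}\mE\to\mE\times_{X_R}\mE$ hold globally and not merely on the two pieces. Each of these is a property that descends through the Beauville--Laszlo equivalence from $X_R^*$ and $D_{x,R}$, where it holds by hypothesis, so the verification becomes formal once the module-level equivalence has been shown to be exact and monoidal; the one genuinely delicate issue, the absence of noetherian hypotheses on $R$, is already absorbed into Beauville--Laszlo's theorem. A minor loose end is that $x$ need not have residue field $k$, which is why I work with $\mO_x$ directly rather than with $k[[t]]$ throughout.
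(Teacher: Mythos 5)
Your proof is correct and rests on the same key input as the paper's, namely Beauville--Laszlo descent for the pair $(A[1/t],\hat A)$ upgraded from modules to affine schemes and hence to $\underline G$-torsors; the paper's official route packages this as Corollary \ref{global interpretation of loop} (the triple description of $L\underline G_x$) compared against the quotient presentation of Proposition \ref{I:quotient}, whereas you carry out the direct gluing construction of an inverse to $\on{res}$, which is precisely the alternative the paper mentions and leaves to the reader. Your attention to the two genuinely delicate points --- the absence of noetherian hypotheses on $R$ (absorbed into Beauville--Laszlo) and the fact that $x$ need not have residue field $k$ --- is exactly right.
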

Again, the actual proof of the theorem will be technical but the idea is simple, so we first give an informal account. Namely, if $\mE$ is a $\underline G_x$-torsor on $D_x$ with a trivialisation $\beta$ on $D_x^*$, then one can glue $\mE$ and the trivial $\underline G|_{X^*}$-torsor $\mE^0$ on $X^*$ by $\beta$. This construction then induces a map inverse to $\on{res}$. 
To turn this idea into a rigorous proof, we need to first discuss Beauville-Laszlo's descent theorem (cf. \cite{BL2}). 
\begin{thm}\label{descent lemma}
Let $p: A\to \tilde{A}$ be a homomorphism of commutative rings, and $f\in A$. Assume that $f$ (resp. $p(f)$) is not a zero divisor in $A$ (resp. $\tilde{A}$) and that the induced map $A/f\to \tilde{A}/p(f)$ is an isomorphism. Let $\mM_f(A)$ denote the category of $f$-torion free $A$-modules and $\mM_{p(f)}(\tilde{A})$ the similar category for $(\tilde{A},p(f))$. Denote by $\mM$ the category of triples $(M_1,M_2,\varphi)$ where $M_1$ is an $A_f$-module, $M_2\in \mM_{p(f)}(\tilde{A})$ and $\varphi$ is an isomorphism between $M_1\otimes_{A_f}\tilde{A}_{p(f)}$ and $M_2\otimes_{\tilde{A}}\tilde{A}_{p(f)}$. Then
\begin{enumerate}
\item the functor $M\mapsto M\otimes_A\tilde{A}$ maps $\mM_f(A)$ to $\mM_{p(f)}(\tilde{A})$, so we have the functor $F : \mM_f(A)\to \mM$ that sends $M\in\mM_f(A)$
to $(M_f, M\otimes_A\tilde{A}, \varphi)$ where
$\varphi$ is the natural isomorphism between $M_f\otimes_{A_f}\tilde{A}_{p(f)}$ and $(M\otimes_A\tilde{A})_{p(f)}$.
\item $F : \mM_f(A)\to \mM$ is an equivalence of categories.
\item $M\in\mM_f(A)$ is a finite (resp. flat, resp. finite projective) $A$-module if and only if both $M_f$ and $M\otimes_A\tilde{A}$ have this property.
\end{enumerate}
\end{thm}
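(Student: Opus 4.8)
The plan is to produce a quasi-inverse to $F$ by a fibre-product (glueing) construction, after first promoting the hypotheses to an isomorphism modulo \emph{every} power of $f$.

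First I would record the key consequence of the hypotheses: the canonical map induces isomorphisms $A/f^nA \cong \tilde A/p(f)^n\tilde A$ for all $n\ge 1$. This follows by induction on $n$, since multiplication by $f$ identifies $f^{n-1}A/f^nA$ with $A/fA$ (here the non-zero-divisor hypothesis is used), similarly on the $\tilde A$-side, so the five lemma applied to the short exact sequence $0 \to f^{n-1}A/f^nA \to A/f^nA \to A/f^{n-1}A \to 0$ and its $\tilde A$-analogue propagates the base case $A/f \cong \tilde A/p(f)$. For Claim (1), I would show $\on{Tor}_1^A(N,\tilde A)=0$ for every $A/f$-module $N$. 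It holds for $N=A/f$ because $\on{Tor}_1^A(A/f,\tilde A)=\ker\!\big(p(f)\colon \tilde A \to \tilde A\big)=0$; the general case follows by resolving $N$ by free $A/f$-modules and using that $N\otimes_A\tilde A\cong N$ for $A/f$-modules (immediate from $\tilde A/p(f)\tilde A\cong A/f$), so that the connecting map lands in the kernel of an injection. Applying this to $N=M/fM$ and tensoring $0 \to M \xrightarrow{f} M \to M/fM \to 0$ with $\tilde A$ shows that $p(f)$ acts injectively on $M\otimes_A\tilde A$, i.e. $M\otimes_A\tilde A\in\mM_{p(f)}(\tilde A)$, which is Claim (1).

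For Claim (2) I would define $G\colon\mM\to\mM_f(A)$ by the fibre product $G(M_1,M_2,\varphi)=M_1\times_{M_2\otimes_{\tilde A}\tilde A_{p(f)}}M_2$, where $M_1$ maps in by localisation followed by $\varphi$ and $M_2$ by localisation. That $G(M_1,M_2,\varphi)$ is $f$-torsion-free is immediate, as $f$ is invertible on $M_1$ and injective on $M_2$. The content is that $F$ and $G$ are mutually quasi-inverse. One direction is the glueing identity $M=M_f\cap(M\otimes_A\tilde A)$ inside $M_f\otimes_{A_f}\tilde A_{p(f)}\cong (M\otimes_A\tilde A)\otimes_{\tilde A}\tilde A_{p(f)}$, valid for $f$-torsion-free $M$; equivalently, the Mayer--Vietoris sequence
\[
0 \to M \to M_f\oplus(M\otimes_A\tilde A) \to (M\otimes_A\tilde A)\otimes_{\tilde A}\tilde A_{p(f)} \to 0
\]
is exact. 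This gives $G(F(M))\cong M$ naturally.

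For the opposite direction, writing $M:=G(M_1,M_2,\varphi)$, I must identify $M_f\cong M_1$ (easy: inverting $f$ kills $M_2$ and the fibre product trivialises) and, more seriously, $M\otimes_A\tilde A\cong M_2$. I expect this last isomorphism to be \emph{the main obstacle}, since $\otimes_A\tilde A$ need not commute with the fibre-product/limit; I would instead argue by $f$-adic d\'evissage, proving compatibly that $M/f^nM\cong M_2/p(f)^nM_2$ for all $n$ using the identifications $A/f^n\cong\tilde A/p(f)^n$ from the first step, and then comparing $M\otimes_A\tilde A$ with $M_2$ through these reductions. Finally, for Claim (3) I would descend along the conservative pair $(\,\cdot\otimes_AA_f,\ \cdot\otimes_A\tilde A\,)$: flatness is detected by vanishing of $\on{Tor}_1^A$, testable after inverting $f$ and after tensoring with $\tilde A$ via the Mayer--Vietoris sequence; finite generation is obtained by lifting a finite generating set of $M\otimes_A\tilde A$ to elements of $M$ (using completeness) and adjoining cleared denominators of generators of $M_f$; and finite projectivity follows by combining finite presentation with flatness. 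The single delicate point throughout is precisely the commutation of $\otimes_A\tilde A$ with the limit defining the fibre product, which is exactly where the hypothesis $A/f\cong\tilde A/p(f)$, promoted to all powers of $f$, does the essential work.
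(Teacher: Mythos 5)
The paper does not actually prove this statement; it cites Beauville--Laszlo [BL2] and observes that their arguments (written for $\tilde A=\hat A$) apply verbatim. Your overall strategy is exactly theirs: promote $A/f\cong\tilde A/p(f)$ to all powers $f^n$, deduce $\on{Tor}_1^A(N,\tilde A)=0$ for $A/f$-modules $N$ (hence Claim (1)), establish the glueing exact sequence $0\to M\to M_f\oplus(M\otimes_A\tilde A)\to M\otimes_A\tilde A_{p(f)}\to 0$ for $f$-torsion-free $M$ (which gives $G\circ F\cong\id$ and full faithfulness), and define the quasi-inverse by the fibre product. Those steps are correct as sketched (for the right-exactness of the glueing sequence you should make explicit the approximation $\tilde A=p(A)+p(f)^n\tilde A$, which is what your first step buys you). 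The genuine gap is at the point you yourself flag: the proposed ``$f$-adic d\'evissage'' does not prove $M\otimes_A\tilde A\cong M_2$. Knowing $M/f^nM\cong M_2/p(f)^nM_2$ for all $n$ cannot determine the map $u\colon M\otimes_A\tilde A\to M_2$, because neither side is $f$-adically separated or complete in general (take $M_2=\tilde A^{(\bN)}$, or note that $\tilde A$ itself is not assumed complete); passing to the inverse limit of the reductions loses exactly the information you need. The correct way to finish, and the one that matches [BL2], is torsion-theoretic rather than limit-theoretic: after inverting $p(f)$ the map $u$ identifies with $\varphi$, hence is an isomorphism, so $\ker u$ and $\on{coker}\,u$ are $p(f)$-power-torsion; then $\ker u=0$ because $M$ is $f$-torsion-free (it sits inside $M_1\oplus M_2$) and so $M\otimes_A\tilde A$ is $p(f)$-torsion-free by your own Claim (1) applied to $M$; and $\on{coker}\,u=0$ because $M/fM\to M_2/p(f)M_2$ is injective (if $\on{pr}_2(m)=p(f)y_2$ then $(f^{-1}\on{pr}_1(m),y_2)$ lies in the fibre product and $m$ is $f$ times it) and $M_2$ is $p(f)$-torsion-free, which lets you strip off one power of $p(f)$ at a time from $p(f)^km_2\in\on{im}(u)$.

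Two smaller points. In Claim (3) you invoke ``completeness'' to lift generators, but completeness of $\tilde A$ is not a hypothesis of the theorem as stated here; finiteness should instead be deduced from full faithfulness: lift enough elements of $M$ to generate both $M_f$ and $M\otimes_A\tilde A$, let $M'\subset M$ be the submodule they generate, and check $F(M')\to F(M)$ is an isomorphism of triples (injectivity of $M'\otimes_A\tilde A\to M\otimes_A\tilde A$ again coming from $p(f)$-torsion-freeness plus the fact that the map is an isomorphism after inverting $p(f)$). Likewise the flatness criterion is not a formal Tor computation, since $\tilde A$ is not flat over $A$ -- this is a separate lemma in [BL2]. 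Finally, ``inverting $f$ kills $M_2$'' is not what happens; rather localisation is exact and commutes with the fibre product, and $(M_2)_f\to N_f=N$ is an isomorphism, which is what trivialises the fibre product. The conclusion $M_f\cong M_1$ is correct.
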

In \cite{BL2}, $\tilde{A}$ was taken to be $\hat A$, the completion of $A$ with respect to $f$. But the same arguments apply.
\begin{rmk}
The content of this theorem is descending modules to $A$ from its covering $(A_f, \tilde A)$. It
does not follow from the usual faithfully flat descent, since $A\to \tilde A$ is not assumed to be flat. For example, if $A$ is non-Noetherian, $A\to \hat A$ is typically non-flat. But even in the Noetherian case, the isomorphism $\varphi$ does not give the full descent datum, i.e. the isomorphism over $\hat R\otimes_R\hat R$ is not included.
\end{rmk}

Here is the consequence of the theorem.
\begin{cor}\label{global interpretation of loop}
The loop group $L\underline G_x$ represents the functor that associates every $k$-algebra $R$ the set of triples $(\mE,\al,\beta)$, where $\mE$ is a $\underline G$-torsor on $X_R$, $\al$ is a trivialisation of $\mE|_{D_{x,R}}$, and $\beta$ is a trivialisation of $\mE|_{X^*_R}$. 
\end{cor}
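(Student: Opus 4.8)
The plan is to deduce the statement by combining the ``local'' description of the loop group in Proposition~\ref{I:quotient} with the Beauville--Laszlo comparison of global and local torsors in Theorem~\ref{glob aff}. Write $\mathcal{T}$ for the functor in the corollary, sending $R$ to the set of triples $(\mE,\al,\beta)$ with $\mE$ a $\underline G$-torsor on $X_R$, $\al$ a trivialisation of $\mE|_{D_{x,R}}$ and $\beta$ a trivialisation of $\mE|_{X^*_R}$. First I would write down the natural ``restriction'' map $\mathcal{T}\to L\underline G_x$: restricting $\mE$ and $\beta$ to the formal disc produces a $\underline G_x$-torsor $\mE|_{D_{x,R}}$ together with the trivialisation $\beta|_{D^*_{x,R}}$ over the punctured disc and the trivialisation $\epsilon:=\al^{-1}$ over $D_{x,R}$, i.e.\ exactly a point of the functor that Proposition~\ref{I:quotient} identifies with $L\underline G_x$. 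Concretely this map records the transition automorphism $\beta\al^{-1}$ of $\mE^0|_{D^*_{x,R}}$, that is, an element of $L\underline G_x(R)$; note that $\al$ and $\beta$ need not agree on the overlap $D^*_{x,R}$, and their discrepancy is precisely this loop group element.

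The main step is then to recognise this map as a morphism of torsors over an isomorphism of bases. Forgetting $\al$ gives a map $\mathcal{T}\to \Gr_{\underline G,x}$, and I claim it is an $L^+\underline G_x$-torsor: the group $L^+\underline G_x(R)=\underline G_x(\mO_x\hat\otimes R)$ acts on $\mathcal{T}$ by $g\cdot(\mE,\al,\beta)=(\mE,g\al,\beta)$, any two trivialisations of $\mE|_{D_{x,R}}$ differ by a unique such $g$, and by Lemma~\ref{et loc triv} the restriction $\mE|_{D_{x,R}}$ of any $\underline G$-torsor admits a trivialisation $\al$ after an \'etale cover, so the torsor is (\'etale, hence fpqc) locally trivial. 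On the other side, Proposition~\ref{I:quotient} exhibits $L\underline G_x\to \Gr_{\underline G_x}$ as an $L^+\underline G_x$-torsor. The restriction map above is $L^+\underline G_x$-equivariant and lies over the restriction morphism $\on{res}\colon \Gr_{\underline G,x}\to \Gr_{\underline G_x}$, which is an isomorphism by Theorem~\ref{glob aff}. After identifying the two bases along $\on{res}$, the map becomes an $L^+\underline G_x$-equivariant map of $L^+\underline G_x$-torsors over one and the same base, and any such map is automatically an isomorphism; this yields $\mathcal{T}\cong L\underline G_x$.

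I expect the essential input to be entirely contained in the two cited results, so there is no serious geometric obstacle left; the only thing requiring care is bookkeeping. Concretely, I must be consistent about the direction of the trivialisations — the corollary writes $\al,\beta$ as trivialisations of $\mE$, whereas the proof of Proposition~\ref{I:quotient} writes $\epsilon\colon\mE^0\cong\mE$ in the opposite direction — and I must check that the $L^+\underline G_x$-action used to trivialise $\al$ matches the one appearing in Proposition~\ref{I:quotient} (they agree up to the inversion $g\mapsto g^{-1}$, which does not affect the conclusion). Alternatively, if one prefers an explicit inverse to the torsor formalism, one can construct it directly: given $g\in L\underline G_x(R)$, glue the trivial torsor $\mE^0$ on $D_{x,R}$ to the trivial torsor $\mE^0$ on $X^*_R$ along $g$ over $D^*_{x,R}$, where the gluing is furnished precisely by the Beauville--Laszlo descent underlying Theorem~\ref{glob aff}; the two coordinate trivialisations then supply $\al$ and $\beta$, and one checks this is inverse to $(\mE,\al,\beta)\mapsto \beta\al^{-1}$.
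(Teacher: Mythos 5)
Your primary route is circular relative to the paper's own development: you take Theorem \ref{glob aff} (that $\on{res}:\Gr_{\underline G,x}\to\Gr_{\underline G_x}$ is an isomorphism) as the key input, but in the paper that theorem is \emph{deduced from} the present corollary — immediately after the corollary's proof the text states that Theorem \ref{glob aff} follows by comparing Proposition \ref{I:quotient} with Corollary \ref{global interpretation of loop}. So the torsor argument, as written, cannot be inserted at this point in the text. The bookkeeping itself is sound: $\mathcal{T}\to\Gr_{\underline G,x}$ (forgetting $\al$) is an $L^+\underline G_x$-torsor by Lemma \ref{et loc triv}, the comparison square over $\on{res}$ commutes, and an equivariant map of torsors covering an isomorphism of bases is automatically an isomorphism. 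What is missing is an independent proof of Theorem \ref{glob aff}, and any such proof again reduces to the Beauville--Laszlo descent of Theorem \ref{descent lemma}: the pair $\{D_{x,R},\,X^*_R\}$ is not a cover of $X_R$ to which ordinary faithfully flat descent applies (for non-noetherian $R$ the map to $\mO_x\hat\otimes R$ need not even be flat, and in any case no descent datum over the double overlap is supplied). In other words, the essential content of the corollary — surjectivity of $(\mE,\al,\beta)\mapsto(\beta|_{D^*_{x,R}})(\al|_{D^*_{x,R}})^{-1}$ onto $L\underline G_x$ — cannot be extracted by formal torsor manipulations alone; it is exactly where the non-flat descent must be used.

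The ``explicit inverse'' you sketch at the end is precisely the paper's proof: given $g\in L\underline G_x(R)$, one applies Theorem \ref{descent lemma} to the coordinate rings $H_1$ of $\underline G\times_X D_{x,R}$ and $H_2$ of $\underline G\times_X X^*_R$, with $\varphi$ the isomorphism over $R((t))$ induced by $g$; the resulting flat algebra $H$ yields $\mE=\Spec H$ together with its two canonical trivialisations $\al$ and $\beta$, and one checks this is inverse to $(\mE,\al,\beta)\mapsto(\beta|_{D^*_{x,R}})(\al|_{D^*_{x,R}})^{-1}$. If you promote that sketch to the main argument and drop the appeal to Theorem \ref{glob aff}, you recover the intended proof; as a bonus, your torsor comparison then gives the derivation of Theorem \ref{glob aff} from the corollary, which is the order the paper actually follows.
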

\begin{proof}We refer to \cite[Lemma 6.1]{PZ} for a detailed argument. Here is the idea. 

We can assume that $X=\Spec A$ is affine, and $x$ is defined by $t=0$. Let $R$ be a $k$-algebra and $g\in L\underline G_x(R)$. 
Let $H_1$ (resp. $H_2$) be the ring of regular functions of $\underline G\times_X D_{x,R}$ (resp. $\underline G\times_XX^*_R$), which is a flat $R[[t]]$-algebra (resp. $(A[t^{-1}]\otimes R)$-algebra). The element $g$ induces an algebra isomorphism  $\phi:H_1\otimes_{R[[t]]} R((t))\cong H_2 \otimes_{(A[t^{-1}]\otimes R)} R((t))$.
Applying Theorem \ref{descent lemma} to the triple $(H_1,H_2,\phi)$, one obtains a flat $(A\otimes R)$-algebra $H$. Then $\mE=\Spec H$ will be the $\underline G$-torsor on $X_R$, equipped with the canonical trivialisations $\al$ and $\beta$. 

Conversely, given $(\mE,\al,\beta)$, we obtain $g$ as $(\beta|_{D_{x,R}^*})(\al|_{D_{x,R}^*})^{-1}$.
\end{proof}

Now Theorem \ref{glob aff} follows by comparing Proposition \ref{I:quotient} and Corollary \ref{global interpretation of loop}. Alternatively, one can use a similar argument as above to directly construct a map inverse to $\on{res}$. We leave the details to readers.

\begin{rmk}Recall that the category of fpqc sheaves on $\Aff_k$ is equivalent to the category of fpqc sheaves on $\on{Sch}_k$. Under this equivalence, one can naturally extend the moduli description of $\Gr_{\underline G_x}$ as in Theorem \ref{glob aff} to all $k$-schemes. However, it would be awkward to try to extend the moduli description given in \eqref{loc aff} to all $k$-schemes since for a non-affine scheme $S$, $S\hat{\times} \Spec k((t))$ does not really make sense.
\end{rmk}

\subsection{The determinant line bundle.}\label{determinant line}
Our next topic is to construct a very ample line bundle on $\Gr_{\underline G}$. Recall from the proof of Theorem \ref{rep}, there is the following commutative diagram
\[\begin{CD}
\Gr^{(N)}@>>>\Gr^{(N+1)}\\
@VVV@VVV\\
\Gr(2nN)@>>>\Gr(2n(N+1))
\end{CD}\]
On the usual Grassmannian variety $\Gr(m)$, there is the determinant line bundle $\mL$ that associates a subspace $L\subset k^m$ the top exterior power of the quotient space $k^m/L$. This is in fact the ample generator of the Picard group (of each connected component) of $\Gr(m)$. These line bundles are compatible under the embedding $\Gr(m)\subset \Gr(m+1)$, and therefore defines an ample line bundle $\mL_{\det}$ on $\Gr_{\GL_n}$. We prefer to give a more canonical construction.

We start with an important notion.

\begin{dfn} A topological vector space is called \emph{linearly
    compact} if it is the topological dual of a discrete vector space
  (i.e., the usual dual with a basis of open neighbourhoods of $0$
  consisting of annihilators of finite dimensional subspaces). A
  topological space is called \emph{linearly locally compact}, or {\em
    Tate}, if it admits a basis of neighbourhoods of $0$ consisting of
  linearly compact subspaces. A \emph{lattice} in a Tate vector space
  $V$ is a linearly compact open subspace of $V$. 
\end{dfn}

Any two lattices $L_1, L_2$ in a Tate vector space are {\em
commensurable} with each other; that is, the quotients $L_1/(L_1
\cap L_2)$ and $L_2/(L_1 \cap L_2)$ are finite-dimensional.

\begin{rmk}\label{exact str} 
Tate vector spaces form a category, whose Hom's are the continuous
linear maps. This is an exact category in the sense of Quillen, where admissible monomorphisms are closed embeddings, and admissible epimorphisms are open surjective maps.

There is also a categorical approach to Tate vector spaces without invoking the topology. Recall that for an exact category $\mC$, Beilinson constructed the category $\underleftrightarrow{\lim} \mC$ of ``locally compact objects'' as a full subcategory of pro-ind-objects of $\mC$ (\cite[Appendix]{Be}). If we regard the category $\on{Vect}_k$ of finite dimensional vector spaces over $k$ as an exact category, 
then the category Tate vector spaces is exactly Beilinson's category $\underleftrightarrow{\lim}\on{Vect}_k$.  This approach has the advantage of avoiding using the topology, and can be generalised to other situations.
\end{rmk}

Let $V$ be a Tate vector space over $k$. Then for a $k$-algebra $R$, we define a topological $R$-module
\[V\hat\otimes_k R= \underleftarrow\lim_L (V/L)\otimes_kR,\]
where the inverse limit runs over the set of lattices in $V$. We can define a presheaf of groups $\GL(V)$ whose $R$-points are the continuous $R$-linear automorphisms of $V\hat\otimes_k R$.

\begin{dfn}An $R$-family of lattices in $V$ is an open bounded submodule of $\Lambda\subset V\hat\otimes_k R$ such that $V\hat\otimes_k R/\Lambda$ is a projective $R$-module.
\end{dfn}

\begin{dfn}
The Sato Grassmannian $\Gr(V)$ is the presheaf that assigns every $k$-algebra $R$ the set of $R$-families of lattices in $V$.
\end{dfn}

Exactly as Theorem \ref{rep},
\begin{thm}
The Sato Grassmannian $\Gr(V)$ is represented by an ind-projective scheme.
\end{thm}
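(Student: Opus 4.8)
The plan is to run the proof of Theorem~\ref{rep} essentially verbatim, with two changes: the standard lattice $k[[t]]^n$ and its rescalings $t^{\pm N}k[[t]]^n$ get replaced by an arbitrary exhausting family of lattices in $V$, and the auxiliary nilpotent operator $\Phi$ (which encoded the $k[[t]]$-module structure) disappears entirely. Indeed the Sato Grassmannian carries no ring action beyond that of $R$, so the $R$-families are merely $R$-submodules with no stability constraint, and the argument is in fact simpler: each finite piece will be literally a finite Grassmannian rather than a union of Springer fibers.

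First I would set up the exhaustion. For a pair of lattices $L\subset L'$ in $V$, let $\Gr(V)^{(L,L')}\subset\Gr(V)$ be the subpresheaf of those $R$-families $\Lambda$ with
$$L\hat\otimes_k R\subset \Lambda\subset L'\hat\otimes_k R.$$
Since every $R$-family is by definition open and bounded, it contains $L\hat\otimes_k R$ for some lattice $L\subset V$ and is contained in $L'\hat\otimes_k R$ for some lattice $L'\subset V$; using that any two lattices are commensurable (so $L\cap L'$ and $L+L'$ are again lattices) we may arrange $L\subset L'$. Hence $\Gr(V)$ is the filtered union of the $\Gr(V)^{(L,L')}$, the index set being directed by shrinking $L$ and enlarging $L'$.

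Next comes the key identification. Writing $W:=L'/L$, a finite-dimensional $k$-vector space, the map $\Lambda\mapsto\bar\Lambda:=\Lambda/(L\hat\otimes_k R)$ should identify $\Gr(V)^{(L,L')}$ with the functor of $R$-submodules $\bar\Lambda\subset W\otimes_k R$ with projective quotient, i.e.\ with the usual Grassmannian $\Gr(\dim_k W)$, which is projective. The only real content is that the projectivity condition transfers: from the short exact sequence
$$0\to (L'\hat\otimes_k R)/\Lambda\to (V\hat\otimes_k R)/\Lambda\to (V/L')\otimes_k R\to 0,$$
whose last term is free over $R$ (a $k$-vector space tensored with $R$), one reads off that $(V\hat\otimes_k R)/\Lambda$ is $R$-projective if and only if $(L'\hat\otimes_k R)/\Lambda=(W\otimes_k R)/\bar\Lambda$ is, so the two functors agree. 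The inverse sends a subbundle $\bar\Lambda$ to its preimage under $L'\hat\otimes_k R\to W\otimes_k R$, and one checks this $\Lambda$ is automatically open and bounded. Finally, for $L_1\subset L\subset L'\subset L_2$ the transition map $\Gr(V)^{(L,L')}\to\Gr(V)^{(L_1,L_2)}$ becomes, under these identifications, the inclusion of $\Gr(L'/L)$ into $\Gr(L_2/L_1)$ cut out by the closed conditions that a subbundle of $(L_2/L_1)\otimes_k R$ contain the image of $(L/L_1)\otimes_k R$ and lie inside $(L'/L_1)\otimes_k R$; this is a closed embedding. Granting this, $\Gr(V)=\underrightarrow\lim\,\Gr(V)^{(L,L')}$ is ind-projective.

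The main obstacle is precisely the one the author flagged for Theorem~\ref{rep}: everything is transparent on $k$-points, but for a general $R$ one must handle the completed tensor product $V\hat\otimes_k R$ with care. Concretely, I expect the delicate verifications to be that $(V\hat\otimes_k R)/(L\hat\otimes_k R)\cong (V/L)\otimes_k R$, and that openness and boundedness of $\Lambda$ genuinely produce lattices $L,L'$ \emph{in $V$} (rather than merely $R$-submodules) sandwiching it, so that the reduction to the finite-dimensional Grassmannian $\Gr(\dim_k W)$ is valid functorially in $R$.
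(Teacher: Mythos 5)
Your proposal is correct and is essentially the paper's own argument: the paper likewise reduces to the identification $\Gr(V)\cong \underrightarrow\lim_{L_1\subset L_2}\Gr(L_2/L_1)$ over pairs of lattices, calling it a "similar (and simpler)" version of the proof of Theorem \ref{rep}, and you have simply filled in the routine verifications (transfer of projectivity via the split exact sequence with free cokernel $(V/L')\otimes_k R$, and closedness of the transition maps).
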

In fact, by similar (and simpler) arguments, there is a natural isomorphism
$$\Gr(V)\cong \underrightarrow\lim_{L_1\subset L_2}\Gr(L_2/L_1),$$ 
where $L_1\subset L_2$ are two lattices so $L_2/L_1$ is a finite dimensional $k$-vector space, and $\Gr(L_2/L_1)$ denotes the usual Grassmannian variety classifying finite dimensional subspaces of $L_2/L_1$.

 Now we introduce the determinant line bundle on $\Gr(V)\times \Gr(V)$ (see \S~\ref{Intro:sp} for the definition of line bundle on a space).
First, recall that for a finite projective $R$-module $M$ its top exterior power, denoted by $\det(M)$, is an invertible $R$-module.
For $\Lambda_1,\La_2\in\Gr(V)(R)$, define the ``relative determinant line'' for $(\La_1,\La_2)$ as
\[\det(\La_1|\La_2)=\det(L\hat\otimes R/\La_1)\otimes \det(L\hat\otimes R/\La_2)^{-1},\]
where $L$ is some lattice in $V$ such that both $(L\hat\otimes R)/\La_1$ and $(L\hat\otimes R)/\La_2$ are projective $R$-modules. This is independent of the choice of $L$ up to a canonical isomorphism. By varying $R$, we obtain a line bundle $\mL_{\det}$ on $\Gr(V)\times\Gr(V)$. The following lemma, although elementary, summarises important properties of the relative determinant lines.
\begin{lem}~
\begin{enumerate} 
\item For any $g\in  \GL(V)(R)$, and $\La_1,\La_2\in\Gr(V)(R)$, there is a canonical isomorphism $$\det(g\La_1|g\La_2)\cong\det(\La_1|\La_2),$$ such that  for $g,g'$, the isomorphism $$\det(gg'\La_1|gg'\La_2)\cong\det(g'\La_1|g'\La_2)\cong\det(\La_1|\La_2)$$ coincides with $\det(gg'\La_1|gg'\La_2)\cong\det(\La_1|\La_2)$. In other words, the diagonal action of $\GL(V)$ on $\Gr(V)\times\Gr(V)$ lifts to an action on $\mL_{\det}$;

\item For any $\La_1,\La_2,\La_3$, there is a canonical isomorphism \[\ga_{123}:\det(
\La_1|\La_2)\otimes\det(\La_2|\La_3)\cong\det(\La_1|\La_3)\] such that for any $\La_1,\La_2,\La_3,\La_4$, $\ga_{134}\ga_{123}=\ga_{124}\ga_{234}$.
In other words, let $\mL_{\det}^\times$ denote the total space of $\mL_{\det}$. Then we have a groupoid 
$$\mL_{\det}^\times\rightrightarrows\Gr(V).$$
\end{enumerate}
\end{lem}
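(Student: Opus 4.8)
The plan is to reduce everything to one elementary fact — that the top exterior power $\det$ is multiplicative on short exact sequences of finite projective $R$-modules — together with a careful choice of auxiliary lattice. First I would record the independence statement underlying the very definition of $\det(\La_1|\La_2)$: if $L\subset L'$ are two lattices with $(L\hat\otimes R)/\La_i$ and $(L'\hat\otimes R)/\La_i$ both projective for $i=1,2$, then the short exact sequences $0\to (L\hat\otimes R)/\La_i\to (L'\hat\otimes R)/\La_i\to (L'/L)\otimes_k R\to 0$ yield canonical isomorphisms $\det((L'\hat\otimes R)/\La_i)\cong \det((L'/L)\otimes_k R)\otimes\det((L\hat\otimes R)/\La_i)$. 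Tensoring the $i=1$ instance with the inverse of the $i=2$ instance, the common factor $\det((L'/L)\otimes_k R)$ cancels, so $\det(\La_1|\La_2)$ is canonically independent of $L$; for an arbitrary pair of admissible lattices one passes to a common enlargement. This is the workhorse for both parts.

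For part (1), given $g\in\GL(V)(R)$ and a lattice $L$ large enough that $(L\hat\otimes R)/\La_i$ is projective, I would note that $gL$ is again a lattice and that $g$ induces an isomorphism $(L\hat\otimes R)/\La_i\cong (gL\hat\otimes R)/g\La_i$ of projective $R$-modules for $i=1,2$. Taking determinants and forming the relative line gives the desired $\phi_g:\det(g\La_1|g\La_2)\cong\det(\La_1|\La_2)$, well defined by the independence step. The compatibility $\phi_{g'}\circ\phi_g=\phi_{gg'}$ then follows purely from functoriality of $\det$ applied to the identity $g\circ g'=gg'$ of linear maps; this is exactly the cocycle condition making $\mL_{\det}$ a $\GL(V)$-equivariant line bundle for the diagonal action, i.e. a lift of that action to the total space.

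For part (2), I would fix a single lattice $L$ large enough that $(L\hat\otimes R)/\La_i$ is projective for $i=1,2,3$ and write each relative line in terms of $L$. The product $\det(\La_1|\La_2)\otimes\det(\La_2|\La_3)$ then contains the factor $\det((L\hat\otimes R)/\La_2)^{-1}\otimes\det((L\hat\otimes R)/\La_2)$, canonically trivialised by the evaluation pairing; what remains is $\det((L\hat\otimes R)/\La_1)\otimes\det((L\hat\otimes R)/\La_3)^{-1}=\det(\La_1|\La_3)$, and this defines $\ga_{123}$, again independent of $L$ by the first step. After expressing all four lines via a common $L$ adapted to $\La_1,\ldots,\La_4$, the associativity $\ga_{134}\ga_{123}=\ga_{124}\ga_{234}$ becomes the statement that the two orders of cancelling the $\det((L\hat\otimes R)/\La_2)^{\pm1}$ and $\det((L\hat\otimes R)/\La_3)^{\pm1}$ factors agree — a formal consequence of the symmetry of tensor cancellation. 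Declaring a morphism $\La_2\to\La_1$ to be an invertible element of $\det(\La_1|\La_2)$, with composition via $\ga$ and identities from the canonical trivialisation of $\det(\La|\La)$, assembles these data into the groupoid $\mL_{\det}^\times\rightrightarrows\Gr(V)$.

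The only genuine subtlety — and the step I would treat most carefully — is not any single isomorphism but the \emph{coherence}: ensuring that every construction is independent of the auxiliary lattice and that these independence isomorphisms are themselves compatible with $\phi_g$ and with $\ga$. Over a field all of this is visible at once, but for families over a general $R$ one must consistently invoke multiplicativity of $\det$ on short exact sequences of projective modules and keep track of the cancelling factors. I expect no real difficulty beyond this bookkeeping, since the underlying algebra is just the elementary theory of determinants of finite projective $R$-modules.
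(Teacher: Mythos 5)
Your argument is correct and is exactly the elementary verification the paper has in mind: the lemma is stated there without proof, and the independence-of-$L$ step you isolate as the workhorse is precisely the claim already asserted in the text when $\det(\La_1|\La_2)$ is defined. The one point to phrase carefully in part (1) is that for $g\in\GL(V)(R)$ the image $g(L\hat\otimes R)$ is an open bounded $R$-submodule of $V\hat\otimes_k R$ but generally not of the form $L'\hat\otimes R$ for a $k$-lattice $L'$; since it is bounded it is contained in some $L'\hat\otimes R$ with projective quotient, so your independence lemma applies verbatim once it is stated for arbitrary such reference submodules rather than only for those of the form $L\hat\otimes R$ with $L$ a lattice in $V$. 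With that adjustment the cocycle conditions in both parts reduce, as you say, to functoriality and multiplicativity of $\det$ on short exact sequences of finite projective $R$-modules, together with the fact that the two evaluation maps in the four-lattice identity act on disjoint tensor factors and hence commute.
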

\begin{rmk}
The groupoid defines a $\bG_m$-gerbe 
\[\mD_V=[\Gr(V)/ \mL_{\det}^\times],\] 
which is usually called the determinant gerbe of $V$ (the terminology was first introduced by Kapranov).
\end{rmk} 

\begin{rmk}
The above construction generalises in families. Namely, for a commutative ring $R$, there is the category $\on{Tate}_R$ of Tate $R$-modules, defined as the idempotent completion of $\underleftrightarrow{\lim}\on{Proj}_R$, where $\on{Proj}_R$ is the category of finite projective $R$-modules  (see \cite{Dr} for a topological approach). For a Tate $R$-module $M$, there is the corresponding Sato Grassmannian $\Gr(M)\to \Spec R$ (see \emph{loc. cit.}), which is represented by an ind-algebraic space. In addition, there is the determinant gerbe $\mD_M$ over $\Spec R$. 
\end{rmk}

\begin{rmk}\label{graded line}
In fact, the above construction should be upgraded to a graded version. Namely, for a finite projective $R$-module, we can define $\det(M)$ as a graded invertible $R$-module by assigning the degree of $\det(M)$ to be the rank of $M$ (which is a locally constant function on $\Spec R$). Then the above defined $\det(\La_1\mid\La_2)$ becomes a graded line bundle on $\Spec R$.
Remembering the grading is crucial for the factorisation structure on determinant lines (see Remark \ref{graded factorization} and Remark \ref{det factorization2}), which in turn is important for applications to symbols and reciprocity laws (cf. \cite{BBE, OZ1, OZ2}). 
\end{rmk}

Let $V=k((t))^n$, endowed with the usual $t$-adic topology. Then it is a Tate vector space, containing $\Lambda_0=k[[t]]^n$ as a lattice. It follows by Lemma \ref{projectivity} that  there is a canonical closed embedding $i:\Gr_{\GL_{n}}\to\Gr(V)$.
Therefore, via pullback we obtain a line bundle $\mL_{\det}|_{\{\La_0\}\times\Gr_{\GL_n}}$, still denoted by $\mL_{\det}$. Note that this is a very ample line bundle on $\Gr_{\GL_n}$.

In general, let $\rho: \underline G\to \GL_n$ be a faithful $\mO$-linear representation as in Proposition \ref{lin rep}. Then
by Proposition \ref{gen G}, it induces a locally closed embedding
$f_\rho: \Gr_{\underline G}\to \Gr_{\GL_n}$ and the pullback of $\mL_{\det}$ defines a very ample line bundle on $\Gr_{\underline G}$.

\begin{rmk}\label{Pfaffians}
We assume $\on{char} k\neq 2$  (to remove this assumption see \cite[\S\ 4.2.16]{BD}). 
Let $V$ be a Tate vector space $k$. A continuous symmetric bilinear form $B$ of $V$ is called non-degenerate if the induced map $V\to V^*$ is an isomorphism, where $V^*$ denotes the topological dual of $V$. In this case, given a lattice $\Lambda$, one can define its orthogonal complement $\Lambda^\perp$. A lattice $\Lambda$ is called a Lagrangian if $\Lambda=\Lambda^\perp$. 
The orthogonal Sato Grassmannian $\OGr(V,B)\subset \Gr(V)$ is the closed subspace classifying Lagrangian lattices. As constructed in \cite[\S\ 4]{BD}, the restriction of determinant line bundle $\mL_{\det}$ to $\OGr(V,B)\times \OGr(V,B)$ admits a canonical square root $\mL_{\on{Pf}}$, called the Pfaffian line bundle.
\end{rmk}

\subsection{Affine Grassmannians over complex number.}\label{top loop group}
In this subsection, we discuss affine Grassmannians over complex numbers. So we assume that $k=\bC$. We assume $\underline G=G\otimes_k \mO$ and write $\Gr$ for $\Gr_{\underline G}$. Then we can regard $\Gr$ as an infinite dimensional complex analytic space (and in particular a CW complex).

Let $K\subset G$ be a maximal compact subgroup and let $S^1$ be the unit circle. Let $\Omega K$ be the
space of polynomial maps $(S^1,1)\to (K,e_K)$. More precisely, let
us parameterise $S^1$ by $e^{i\phi}, \phi\in\bR$, and let
$K\subset \on{SO}(n,\bR)$ be an embedding. Then $\Omega K$ is the
space of maps from $(S^1,1)\to (K,e_K)$ such that when composed
with $K\subset \on{SO}(n,\bR)$, the matrix entries of the maps are given by
Laurent polynomials of $e^{i\phi}$. We regard $\Omega K$ as a topological group.
We have the following theorem.
\begin{thm}\label{top triv}~
\begin{enumerate}
\item The canonical map $\Omega K\to LG$ induces a homeomorphism $\Omega K\cong \Gr$. Equivalently, we have the factorisation $LG=\Omega K\cdot L^+G$.

\item There is an isomorphism $\Omega K\times\Omega K\cong \Gr\tilde\times\Gr$ making the following commutative diagram
\[\begin{CD}
\Omega K\times \Omega K@>m>>\Omega K\\
@VVV@VVV\\
\Gr\tilde\times\Gr @>m>>\Gr,
\end{CD}\]
where the top row is the usual multiplication of the loop group $\Omega K$, and the bottom row is the convolution map \eqref{conv m0}.
\end{enumerate}
\end{thm}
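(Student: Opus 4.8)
The plan is to prove both statements by reducing everything to the based loop group picture of Pressley--Segal, using the Iwasawa-type decomposition of the loop group over $\bC$. The core fact I would establish is the factorisation $LG = \Omega K\cdot L^+G$ at the level of topological points, which immediately yields (1), and then I would promote this to the convolution setting to obtain (2).

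For part (1), I would first treat the case $G=\GL_n$ (or reduce to it via a faithful representation as in Proposition \ref{lin rep}), where a loop $\gamma\in LG$ is a $\bC((t))$-point of $\GL_n$, i.e. an invertible matrix with Laurent-polynomial entries after restricting to $\Omega K$. The key analytic input is a loop-group Gram--Schmidt / Birkhoff-type argument: any smooth (here, Laurent-polynomial, hence real-analytic on $S^1$) loop $\gamma: S^1\to G(\bC)$ factors uniquely as $\gamma = \gamma_-\cdot\gamma_+$, where $\gamma_-\in\Omega K$ and $\gamma_+$ extends holomorphically to the disc $|t|\le 1$, i.e. $\gamma_+\in L^+G$. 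This is precisely the statement that $\Omega K\cap L^+G = \{e\}$ and that multiplication $\Omega K\times L^+G\to LG$ is a bijection; topologically it is a homeomorphism because both sides carry compatible CW/analytic structures. Composing with the quotient $LG\to LG/L^+G = \Gr$ then gives the claimed homeomorphism $\Omega K\cong\Gr$, since $\Omega K$ maps bijectively onto the quotient. I would cite Pressley--Segal for the unique factorisation and the homeomorphism assertion, as this is where the genuine analysis lives.

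For part (2), the strategy is to transport the isomorphism $(\pr_1,m):\Gr\tilde\times\Gr\cong\Gr\times\Gr$ from \eqref{I: isom}, together with its loop-group incarnation $L\underline G\times^{L^+\underline G}\Gr\cong\Gr\tilde\times\Gr$, through the homeomorphism of part (1). Concretely, using $\Gr\cong\Omega K$ and the fact that the $L^+G$-torsor $LG\to\Gr$ restricts over $\Omega K$ to the trivial decomposition $LG=\Omega K\cdot L^+G$, I would identify the twisted product $\Omega K\times\Omega K$ with $\Gr\tilde\times\Gr$: the first factor records $\mE_1$ via its unique $\Omega K$-representative, and the second factor records the relative position $\mE_2$ relative to $\mE_1$, again via its $\Omega K$-representative. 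Under this identification the convolution map $m(\mE_1,\mE_2,\beta_1,\beta_2)=(\mE_2,\beta_1\beta_2)$ becomes, on representatives $\gamma_1,\gamma_2\in\Omega K$, the honest group multiplication $\gamma_1\gamma_2$ followed by re-extraction of the $\Omega K$-part; but since $\Omega K$ is a group, the product $\gamma_1\gamma_2$ already lies in $\Omega K$, so $m$ is literally the multiplication of $\Omega K$. This gives commutativity of the square. The main point to check carefully is that passing to the $\Omega K$-representative is compatible with the twisting, i.e. that $g^*\mE$ for $g\in L^+G$ acting on the left does not change the class in $\Omega K$ after the factorisation.

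The hard part will be part (1): the genuine content is the unique factorisation $\gamma=\gamma_-\gamma_+$ with $\gamma_-\in\Omega K$ and $\gamma_+$ holomorphic on the disc, which is an analytic loop-group statement (a nonlinear Riemann--Hilbert / Gram--Schmidt argument), together with the verification that the resulting bijection $\Omega K\to\Gr$ is a homeomorphism of the relevant topologies rather than merely a continuous bijection. I expect to rely on Pressley--Segal for both the factorisation and the homeomorphism, reducing my own work to verifying that the algebraic quotient $LG/L^+G$ on $\bC$-points matches the analytic quotient and that the CW structures agree. Once (1) is in hand, part (2) is essentially formal: it is a matter of chasing the identifications through the twisted-product construction and using that $\Omega K$ is a genuine topological group, so the convolution map is forced to be multiplication.
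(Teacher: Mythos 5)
Your treatment of part (2) is fine and is exactly what the paper intends: once one has the section $\Omega K\to LG$ of $LG\to\Gr$ trivialising the $L^+G$-torsor, the twisted product $\Gr\tilde\times\Gr=LG\times^{L^+G}\Gr$ is identified with $\Omega K\times\Omega K$, and since $\Omega K$ is a group the convolution map is forced to be multiplication; the paper simply says ``Part (2) follows from (1) directly.'' For $G=\GL_n$ your part (1) also agrees with the paper: the genuine analytic content is the polynomial Iwasawa factorisation $L\GL_n=\Omega U(n)\cdot L^+\GL_n$ from Pressley--Segal \S 8.3.

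The gap is in your reduction of part (1) for general $G$ to the $\GL_n$ case via a faithful representation $\rho:G\to\GL_n$. Uniqueness of the $\GL_n$-factorisation does give injectivity (arranging $\rho(K)\subset U(n)$, one gets $\Omega K\cap L^+G\subset\Omega U(n)\cap L^+\GL_n=\{e\}$), but it does \emph{not} give surjectivity of $\Omega K\cdot L^+G\to LG$: if $\gamma\in LG$ and $\rho(\gamma)=\gamma_-\gamma_+$ with $\gamma_-\in\Omega U(n)$ and $\gamma_+\in L^+\GL_n$, there is no a priori reason for $\gamma_-$ to be a loop in $K$ or for $\gamma_+$ to lie in $L^+G$; equivalently, the unique $\Omega U(n)$-representative of a point of $\Gr_G\subset\Gr_{\GL_n}$ need not be a loop valued in $G$. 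This is precisely the point the paper flags (``Strictly speaking, only $G=\GL_n$ was proved in \emph{loc.\ cit.}'') and then resolves by a different mechanism: Nadler's parameterisation of the orbits of real forms of $G(\bC[t,t^{-1}])$ on $\Gr$, specialised to the compact real form, which yields the factorisation $LG=\Omega K\cdot L^+G$ for arbitrary reductive $G$. Without either that input or a direct proof of the Iwasawa decomposition for general $G$ (e.g.\ by Birkhoff/Bruhat rank-one reductions), your argument only establishes the theorem for $\GL_n$.
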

(1) is due to Pressley-Segal (cf. \cite[\S\ 8.3]{PS}). Strictly speaking, only $G=\GL_n$ was proved in \emph{loc. cit.} However, the general case can be deduced from a much more general theorem of Nadler (cf. \cite[\S\ 4]{Na}).
Namely, $\Omega K$ can be regarded as a ``real form'' of the infinite dimensional complex group $G(\bC[t,t^{-1}])$.  Nadler gave a parameterisation of the orbits on $\Gr$ under the action of some real forms of $G(\bC[t,t^{-1}])$. When specialised to $\Omega K$, his theorem implies (1).
Part (2) follows from (1) directly. 
\begin{rmk}
In particular, we see that $\Gr$ has the homotopy type as an $H$-space. In addition, the $L^+G$-torsor $LG\to \Gr$ is trivial, and $\Gr\tilde\times\Gr\cong \Gr\times\Gr$ as topologically spaces  (do not confuse this isomorphism with \eqref{I: isom}). As we shall see in \S\ \ref{Lecture V}, for the affine Grassmannian of $G$ over an arbitrary field $k$, all these facts have avatars at the level of cohomology (in fact at the level of \'etale homotopy type).
\end{rmk}

\subsection{Affine Grassmannians for $p$-adic groups.}\label{p-adic}
We briefly discuss the affine Grassmannian for a group over a $p$-adic field $F$, i.e. a finite extension of $\bQ_p$. 
 As in the case $F=k((t))$, the first step is to make sense a family of lattices of $V=\bQ_p^n$ parameterised by $\Spec R$ for an $\bF_p$-algebra $R$. In particular, one needs to make sense of the disc $D_R$. Although the fiber product $\Spec \bZ_p``\times''\Spec R$ does not literally make sense, it is widely-known that such $D_R$ should be defined as 
$$D_R:=\Spec W(R),\quad D^*_R:=\Spec W(R)[1/p],$$ 
where $W(R)$ is the ring of ($p$-typical) Witt vectors of $R$. This is a ring whose elements are sequences $(r_0,r_1,r_2,\ldots)\in R^{\bN}$, with the addition and the multiplication given by certain (complicated) polynomials. The projection to the first component $W(R)\to R, \ (r_0,r_1,\ldots)\mapsto r_0$ is a surjective ring homomorphism, with a multiplicative (but \emph{non-additive}) section $R\to W(R), \ r\mapsto [r]=(r,0,0,\ldots)$, called the Teichm\"uller lifting of $r$. If $R$ is a perfect ring, every element in $W(R)$ can be uniquely written as $\sum_{i\geq 0} [a_i]p^i$ so $W(R)$ can be regarded as a ``power series ring in variable $p$ and with coefficients in $R$''.
For example, $W(\bF_p)=\bZ_p$.  We refer to \cite[\S\ II.5]{Se2} for a general introduction.

In general, let $F$ be a $p$-adic field, with $\mO$ its ring of integers, and $k$ its residue field. Then one can similarly define $D_{F,R}$ and $D^*_{F,R}$ using the so-called ramified Witt vectors introduced by Drinfeld. If $R$ is perfect, they are simply given by
\[D_{F,R}= \Spec W(R)\otimes_{W(k)}\mO,\quad D^*_{F,R}= \Spec W(R)\otimes_{W(k)}F,\]
where $W(k)\to \mO$ is the unique ring homomorphism that induces the identity map of the residue fields.
Now let $\underline G$ be a smooth affine group scheme over $\mO$. Then one can define a presheaf on $\on{Aff}_k$ exactly as \eqref{loc aff}, with one caveat. The ring $W(R)$ is pathological if $R$ is not perfect. E.g. $p$ could be a zero divisor of $W(R)$ if $R$ is non-reduced. On the other hand, note that
\begin{enumerate}
\item if $R$ is a perfect $k$-algebra, then $W(R)$ is well-behaved, e.g. every element in $W(R)$ has a ``power series expansion''.
\item The $R$-points of a scheme $X$ for perfect rings $R$ determine $X$ up to perfection\footnote{The category of perfect $k$-schemes is a full subcategory of the category of presheaves on the category of perfect $k$-algebras, see \cite[Lemma A.10]{Z14}.}. 
\item The (\'{e}tale) topology of a scheme (e.g. the $\ell$-adic cohomology) does not change when passing to the perfection.
\end{enumerate}

Therefore, we restrict the naive moduli problem as in \eqref{loc aff} to the category of perfect $k$-algebras. This defines a presheaf on this category. The \'etale topology and the fpqc topology make sense on this category and one can do algebraic geometry in this setting (see \cite[Appendix A]{Z14} for an introduction).
Then the best question one can ask is: whether this functor is represented by an inductive limit of perfect $k$-schemes. 
The following difficult theorem,  proved in \cite{BS}, gives a positive answer to this question.
\begin{thm}The functor $\Gr_{\underline G}$ is represented by an ind-perfectly projective scheme, called the Witt vector affine Grassmannian.
\end{thm}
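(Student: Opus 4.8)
The plan is to follow the strategy of the equal-characteristic proof of Theorem~\ref{rep}, replacing $k[[t]]$ by the Witt vectors $W(R)$ and working throughout in the perfect setting, while isolating the two genuinely new difficulties: representability of the bounded pieces, and projectivity. First I would reduce to $\GL_n$. Granting the Witt-vector analogues of Proposition~\ref{lin rep} and Proposition~\ref{gen G}---a faithful $\mO$-linear representation $\rho:\underline G\to\GL_n$ with $\GL_n/\underline G$ quasi-affine, and affine when $\underline G$ is reductive, inducing a locally closed (resp.\ closed) embedding $\Gr_{\underline G}\to\Gr_{\GL_n}$---it suffices to treat $\GL_n$, and the closed-embedding case then transports ind-perfect-projectivity from $\GL_n$ to a reductive $\underline G$ (as in Theorem~\ref{I:rep aff} and Remark~\ref{general base}). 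For $\GL_n$ the functor sends a perfect $R$ to the set of finitely generated projective $W(R)$-submodules $\La\subset W(R)[1/p]^n$ with $\La[1/p]=W(R)[1/p]^n$.

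Second, exactly as before, every such $\La$ satisfies $p^N W(R)^n\subset\La\subset p^{-N}W(R)^n$ for $N\gg0$, so $\Gr_{\GL_n}$ is the increasing union of closed subfunctors $\Gr^{(N)}$, and it is enough to represent each $\Gr^{(N)}$. The naive imitation of Lemma~\ref{projectivity} and Lemma~\ref{Embedding} breaks down: the quotient $p^{-N}W(R)^n/p^N W(R)^n$ is a module over the truncated Witt vectors $W_{2N}(R)$, not over $R$, and multiplication by $p$ is Frobenius-semilinear, so $\Gr^{(N)}$ is no longer a ``$t$-stable subspace'' locus in an ordinary Grassmannian. To obtain representability I would filter $W_{2N}(R)^n$ by powers of $p$; the associated graded pieces are Frobenius twists of free $R$-modules, so sending a lattice to its induced subobjects on the graded pieces gives a map to a product of ordinary Grassmannians over Frobenius-twisted $R$, and one checks that $\Gr^{(N)}$ is cut out as a closed subfunctor (the condition of respecting the filtration shifts is closed). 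Perfecting exhibits $\Gr^{(N)}$ as a perfect scheme, and perfect properness then follows from the valuative criterion, using the elementary-divisor (Cartan) decomposition of lattices over $W$ of a perfect valuation ring to produce unique limits.

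The hard part is projectivity. In equal characteristic the ampleness of the determinant line bundle $\mL_{\det}$ is automatic because $\Gr^{(N)}$ embeds in a finite-dimensional Grassmannian whose determinant bundle is ample; here the Frobenius-semilinearity destroys that embedding, and indeed ``ample'' must first be given a workable meaning for perfect schemes. I would construct $\mL_{\det}$ as the relative determinant $\det(W(R)^n/\La\mid W(R)^n/\La_0)$ exactly as in \S~\ref{determinant line}, and then prove positivity directly. The route I expect to be the crux is a cohomological vanishing statement in perfect algebraic geometry---that $H^i(\Gr^{(N)},\mL_{\det}^{\otimes m})=0$ for $i>0$, $m\gg0$, with global sections separating points---combined with a general principle that a perfectly proper perfect scheme carrying such a line bundle is perfectly projective. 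A more geometric alternative is to build Demazure-type resolutions of the Schubert varieties $\Gr^{\le\mu}$ from perfections of iterated $\bP^1$-bundles, via minimal Witt-vector parahorics and the convolution construction of \eqref{conv m}, which are manifestly perfectly projective, and then descend projectivity along the resolution. Either way the essential obstacle is the loss of the finite-dimensional Grassmannian embedding: the positivity of $\mL_{\det}$, trivial in the equal-characteristic world, must be established from scratch in the perfect setting, which is precisely the content of \cite{BS}.
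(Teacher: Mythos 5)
The paper itself does not prove this theorem: it quotes it from \cite{BS} and offers only a one-sentence indication of the idea (construct an ample determinant line bundle; the obstruction is that $W(R)^n/\La$ is not an $R$-module). Your proposal agrees with that sketch at the level of strategy and correctly defers the ampleness to \cite{BS}, but the concrete scaffolding you supply for the two new difficulties contains genuine errors. The representability step fails: a lattice $\La$ with $p^N\La_0\subset\La\subset p^{-N}\La_0$ is \emph{not} determined by its induced subobjects in the graded pieces of $p^{-N}\La_0/p^N\La_0$, so your map to a product of ordinary Grassmannians is not a monomorphism, let alone a closed embedding. Concretely, for $n=2$ and $N=1$ the lattices $\La_c=W\cdot(p^{-1}e_1+ce_2)+W\cdot pe_2$ are pairwise distinct for distinct $c\in k$ (one has $\La_c=\La_{c'}$ only when $c\equiv c'\bmod p$), yet they all have $\on{gr}^0\La_c=\langle p^{-1}e_1\rangle$ and $\on{gr}^1\La_c=\langle e_1\rangle$; the fibres of your map are positive-dimensional. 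Had this step worked, both representability and projectivity would be easy and the theorem would not have been open. Likewise, "descending projectivity along a Demazure-type resolution" is not a valid move: projectivity does not descend along proper birational surjections, only properness does, which is exactly why the pre-\cite{BS} state of the art (\cite{Z14}) was an ind-perfectly \emph{proper algebraic space} and nothing stronger.

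On the line bundle: defining $\det\bigl(W(R)^n/\La\mid W(R)^n/\La_0\bigr)$ "exactly as in \S~\ref{determinant line}" is not meaningful, for the very reason you (and the paper) identify --- these quotients are $W(R)$-modules killed by a power of $p$, not finite projective $R$-modules, so their top exterior power over $R$ is undefined. The actual content of \cite{BS} at this point is a K-theoretic determinant: $W(R)^n/\La$ is a perfect complex of $W(R)$-modules supported on $\{p=0\}$, and for perfect $R$ there is a natural map from the K-theory of such complexes to $\Pic(R)$, which is what replaces the naive construction. Your first route to ampleness (cohomological positivity plus a criterion guaranteeing that a perfectly proper space carrying such a line bundle is perfectly projective) is in the right spirit --- Bhatt--Scholze do prove and apply a criterion of this shape, with the positivity verified by producing, through each point, a section whose nonvanishing locus is affine --- but as written your proposal neither constructs the line bundle nor argues its positivity, and the intermediate steps it does spell out are the ones that break.
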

The idea is to construct an ample ``determinant'' line bundle on $\Gr_{\GL_n}$. The difficulty is that for an $R$-family lattice $\La\subset W(R)^n$, the quotient $W(R)^n/\La$ is \emph{not} an $R$-module so  the construction given in \S\ \ref{determinant line} does not work. In addition, there is no similar symmetry $\Aut(D)$ on $\Gr_{\underline G}$ and of course no analogue of Theorem \ref{glob aff}  in $p$-adic setting.

Previously, it was proved in \cite[\S~1]{Z14} that $\Gr_{\underline G}$ is represented by an ind-perfectly proper algebraic space.
This is a much easier theorem, which suffices for most applications to topological problems, such as the geometric Satake equivalence which will be discussed in \S\ \ref{Lecture V}.

\section{Lecture II: More on the geometry of affine Grassmannians}\label{Lecture II}
We study some basic  geometry of affine Grassmannians. Let $F=k((t))$ and $\mO=k[[t]]$. Unless otherwise noted, we assume that $k$ is algebraically closed and $\underline G=G\otimes_k \mO$ is a constant reductive group scheme and write $\Gr$ for $\Gr_{G}$ if no confusion will arise. We refer to \cite{PR} and \cite[\S\ 6.3]{Z11} for generalisations of some statements to non-constant groups, and to \cite{Z14} for some corresponding statements for $p$-adic groups.

\subsection{Schubert varieties.}\label{II:Sch var}
We use notations in \S\ \ref{Intro:group}. There is a canonical bijection
\begin{equation}\label{Cartan decomp2}
G(\mO)\backslash G(F)/G(\mO)\cong \xcoch(T)/W\cong \xcoch(T)^+,
\end{equation}
which can be concretely realised as follows. In the rest of this subsection, we fix embeddings $T\subset B\subset G$.
Let $\mu\in\xcoch(T)$ be a coweight, which defines a map $\mu:F^\times\to T(F)\subset G(F)$. We denote by $t^\mu=\mu(t)$ the image of the uniformizer $t$ in $G(F)$, regarded as a $k$-point of $LG$. It depends on the choice of $t$. Its image in $\Gr$ is still denoted by $t^\mu$, which is independent of the choice of the uniformizer. Then we have the Cartan decomposition (cf. \cite[\S\ 3.3.3]{Ti})
\begin{equation}\label{Cartan decomp}
G(F)=\bigsqcup_{\mu\in\xcoch(T)^+} G(\mO)t^\mu G(\mO).
\end{equation}
Note that the double coset $ G(\mO)t^\mu G(\mO)$ does not depend on the uniformizer $t$ nor the embedding $T\subset G$. Therefore \eqref{Cartan decomp} induces \eqref{Cartan decomp2}.

Now let $\mE_1$ and $\mE_2$ be two $G$-torsors over $D$, and let 
\[\beta: \mE_1|_{D^*}\cong \mE_2|_{D^*}\]
be an isomorphism over $D^*$. By choosing isomorphisms $\phi_1:\mE_1\cong \mE^0$ and $\phi_2:\mE_2\cong \mE^0$, one obtains an automorphism of the trivial $G$-torsor $\phi_2\beta\phi_1^{-1}\in \Aut(\mE^0|_{D^*})$ and therefore an element in $G(F)$. Different choices of $\phi_1$ and $\phi_2$ will modify this element by left and right multiplication by elements from $G(\mO)$. Therefore,
$\beta$ gives rise to a well-defined element
\[\inv(\beta)\in G(\mO)\backslash G(F)/G(\mO)\cong \xcoch(T)^+,\]
sometimes called the relative position of $\beta$.
\begin{rmk}\label{non-algebraically closed}
Let $K$ be a not necessarily algebraically closed field containing $k$, $\mE_1, \mE_2$ be two $G$-torsors over $D_K$, and $\beta: \mE_1|_{D^*_K}\cong \mE_2|_{D^*_K}$ be an isomorphism. Then one can still define $\inv(\beta)\in \xcoch(T)^+$ as follows. After passing to an algebraic closure $\bar K/K$, the above construction gives 
$$\inv(\beta_{\bar K})\in G(\bar K[[t]])\backslash G(\bar K((t))/G(\bar K[[t]])\cong G(\mO)\backslash G(F)/G(\mO),$$
which is independent of the choice of the algebraic closure $\bar K$. Therefore, $\inv(\beta)=\inv(\beta_{\bar K})$ is well-defined.
\end{rmk}

Now let $\mE_1$ and $\mE_2$ be two $G$-torsors over $D_R$, and 
$\beta: \mE_1|_{D_R^*}\cong \mE_2|_{D_R^*}$.
Then for each $x\in \Spec R$, by base change we obtain 
$$(\mE_1|_{D_{k(x)}},\mE_2|_{D_{k(x)}},\beta_{k(x)}: \mE_1|_{D_{k(x)}^*}\simeq \mE_2|_{D_{k(x)}^*}).$$ Let $\inv_x(\beta):=\inv(\beta_{k(x)})$.

\begin{prop}\label{HP}
Let $X=\Spec R$. Given $\mu\in\xcoch(T)^+$,
the set of points 
$$X_{\leq \mu}:=\left\{x\in X\mid \inv_x(\beta)\leq \mu\right\}$$
is Zariski closed in $X$.
\end{prop}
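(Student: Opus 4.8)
The plan is to reduce the assertion to the statement that finitely many explicit ``pole bounds'' are closed conditions, after trivialising the torsors \'etale-locally.

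\smallskip
\emph{Step 1: local trivialisation.} Being Zariski closed is local for the fpqc topology on $X=\Spec R$: if $\Spec R'\to\Spec R$ is faithfully flat and quasi-compact (e.g.\ a surjective \'etale cover) it is submersive, so a subset of $X$ is closed as soon as its preimage in $\Spec R'$ is closed. By Lemma \ref{et loc triv} I may, after such a cover, trivialise both $\mE_1$ and $\mE_2$ over $D_{R'}$; then $\beta$ becomes an element $g\in LG(R')=G(R'(\!(t)\!))$, and $\inv_x(\beta)$ is the Cartan relative position of the reduction $g(x)\in G(k(x)(\!(t)\!))$. By Remark \ref{non-algebraically closed} this is compatible with residue-field extensions, so the preimage of $X_{\leq\mu}$ is exactly $\{y\in\Spec R':\inv_y(g)\leq\mu\}$; it therefore suffices to prove the latter is closed.

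\smallskip
\emph{Step 2: a representation-theoretic criterion over a field.} Fix a field $K\supset k$ and $g\in G(K(\!(t)\!))$ with $\inv(g)=\lambda\in\xcoch(T)^+$. I claim $\lambda\leq\mu$ is equivalent to the conjunction of two conditions: first, that $\lambda$ and $\mu$ have the same image in $\pi_1(G)=\xcoch(T)/\bZ\Phi^\vee$; and second, that for each member $V_\nu$ of a fixed finite family of representations of $G$, with standard lattice $\Lambda_\nu^0=V_\nu\otimes K[[t]]$, one has $\rho_\nu(g)\,\Lambda_\nu^0\subseteq t^{-N_\nu}\Lambda_\nu^0$ with $N_\nu=-\langle w_0\nu,\mu\rangle$. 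Indeed, each bound is invariant under $g\mapsto h_1gh_2$ with $h_i\in G(\mO)$ (as $\rho_\nu(h_i)$ preserves $\Lambda_\nu^0$), so it depends only on $\lambda$ and may be checked on $g=t^\lambda$; there it reads $\langle\xi,\lambda\rangle\geq\langle w_0\nu,\mu\rangle$ for every weight $\xi$ of $V_\nu$, whose binding case is the lowest weight $\xi=w_0\nu$. Letting $\nu$ run over a finite family of dominant weights whose lowest weights $w_0\nu$ span the rays of the antidominant cone, the second condition becomes the cone system $\langle\varpi_i,\mu-\lambda\rangle\geq0$ over all fundamental weights $\varpi_i$. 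Since the $\varpi_i$ are dual to the simple coroots, a vector of the coroot lattice lies in $\bZ_{\geq0}\Phi^\vee$ iff it pairs non-negatively with every $\varpi_i$; hence the two conditions together say precisely $\mu-\lambda\in\bZ_{\geq0}\Phi^\vee$, i.e.\ $\lambda\leq\mu$. Boundedness of $\{\lambda:\lambda\leq\mu\}$ guarantees a finite family of $V_\nu$ suffices.

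\smallskip
\emph{Step 3: globalisation.} The assignment $y\mapsto[\inv_y(g)]\in\pi_1(G)$ records the connected component of $\Gr$ meeting the image and is therefore locally constant on $\Spec R'$ (cf.\ Theorem \ref{components}), so the first condition of Step 2 cuts out a clopen subset; off it $\{\inv_y(g)\leq\mu\}$ is empty, and on it the condition reduces to the pole bounds. Finally each pole bound is closed: writing the matrix entries of $\rho_\nu(g)\in\GL(V_\nu)(R'(\!(t)\!))$ as Laurent series, the containment $\rho_\nu(g)\Lambda_\nu^0\subseteq t^{-N_\nu}\Lambda_\nu^0$ is the vanishing of the finitely many coefficients of $t$-degree $<-N_\nu$, i.e.\ of finitely many elements of $R'$, and this vanishing holds at a point $y$ exactly when the bound holds over $k(y)$. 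Intersecting these finitely many closed conditions and descending along the \'etale cover yields that $X_{\leq\mu}$ is closed.

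\smallskip
The main obstacle is Step 2, the field-level equivalence: one must see that the pole bounds cut out exactly the dominance order rather than its rational saturation. The delicate point is supplying the integrality $\mu-\lambda\in\bZ\Phi^\vee$ --- this is precisely what the locally constant component map contributes in Step 3 --- after which the duality between the fundamental weights and the simple coroots upgrades the cone inequalities to honest membership in the positive coroot semigroup. A secondary bookkeeping issue is the choice of the finite family $\{V_\nu\}$ (genuine representations of $G$, or of its simply connected cover pulled back suitably) whose lowest weights exhaust the needed functionals.
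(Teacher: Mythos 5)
Your argument is correct in substance, but it diverges from the paper's proof at the key step, so a comparison is worthwhile. The paper likewise reduces to pole bounds cut out by the vanishing of finitely many Laurent coefficients of $\rho_\chi(\beta)$; however, it passes from the rational cone condition $\mu-\lambda\in\bR_{\geq0}\Phi^\vee$ to the integral one $\mu-\lambda\in\bZ_{\geq0}\Phi^\vee$ by first assuming $G_\der$ simply connected (where $\xcoch(T)/\bZ\Phi^\vee$ is torsion-free, so integrality is automatic once one intersects the pole bounds over \emph{all} highest-weight representations) and then treating general $G$ by lifting $\beta$ and $\mu$ along a $z$-extension $\tilde G\to G$. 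You instead supply the integrality directly for arbitrary $G$ through the clopen condition $[\inv_y(g)]=[\mu]$ in $\pi_1(G)$, read off from the locally constant map $\Spec R'\to \pi_0(\Gr)\cong\pi_1(G)$ of Theorem \ref{components}; this lets you use a single finite family of representations and dispense with the $z$-extension, at the cost of invoking Theorem \ref{components}(1), which is itself not a triviality. Your normalisation of the exponent by the lowest weight, $N_\nu=-\langle w_0\nu,\mu\rangle$, is the right one: against a dominant $\lambda$ the binding weight of $V_\nu$ is $w_0\nu$, and with this exponent the bound at $g=t^\lambda$ is exactly $\langle -w_0\nu,\mu-\lambda\rangle\geq 0$.

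One repair is needed in the parenthetical you flag at the end of Step 2. Representations of the simply connected cover do not ``pull back'' to $G$ --- the cover maps \emph{to} $G_\der\subset G$, not from it --- so that route does not produce the representations you need (e.g.\ $\on{PGL}_2$ has no representation with lowest weight $-\varpi$). The fix is elementary: the restriction map $\xch(T)\to\Hom(\bZ\Phi^\vee,\bZ)$ contains the root lattice in its image and hence has finite-index image, so each $\varpi_i$ is realised, up to a positive integer multiple $n_i$, by the restriction of a dominant character $\chi_i\in\xch(T)$; the Weyl modules with these highest weights then deliver the inequalities $n_i\langle\varpi_i,\mu-\lambda\rangle\geq0$, which suffice because only the rational cone inequalities are extracted from the pole bounds, the integrality being carried entirely by the $\pi_1$ clause. (Alternatively, one passes to a $z$-extension exactly as the paper does.)
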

\begin{proof}We sketch the proof. Let $\rho_\chi: G\to \GL(V_\chi)$ be a finite dimensional highest weight representation of $G$ of highest weight $\chi$ (e.g. $V_\chi$ is the Weyl module of $G$, of highest weight $\chi$), and let $V_{\chi,\mE_i}=\mE_i\times^GV_{\chi}, i=1,2$ denote the induced vector bundles. Then by definition the subset in question is contained in the set 
$$X_{V_\chi,\leq \mu}:=\left\{x\in X\mid\rho_\chi(\beta_x)(V_{\chi,\mE_1})\subset t^{-(\chi,\mu)}(V_{\chi,\mE_2})\right\},$$
which is easily seen to be closed in $X$. Indeed, working Zariski locally on $X$ we can assume that both $V_{\chi,\mE_i}, i=1,2$ are trivial (by the argument as in Lemma \ref{et loc triv}). So $\rho_\chi(\beta)$ is given by a square matrix  with entries $a_{ij}=\sum_h a_{ij,h}t^h$  in $R((t))$. Then $X_{V_\chi,\leq \mu}$ is defined by the equations $a_{ij,h}=0$ for $h<-(\chi,\mu)$.

If the derived group $G_\der$ of $G$ is simply-connected, then one has
$$X_{\leq \mu}=\bigcap_{V_\chi} X_{V_\chi,\leq \mu},$$ where the intersection is taken over over all finite dimensional highest weight representations of $G$. Therefore, $X_{\leq \mu}$ is closed.

In general, one can always find a central extension $\tilde G$ of $G$ by a torus $D$ such that the derived group of $\tilde G$ is simply-connected (the so-called $z$-extension, cf. \cite[Proposition 3.1]{MS}). Using Lemma \ref{et loc triv}, \'etale locally on $X$ we can lift $\beta$ to a map $\tilde \beta$ of $\tilde G$-torsors and lift $\mu$ to a coweight $\tilde \mu$ of $\tilde G$. Then we reduce to the previous case.
\end{proof}

We define the (spherical) Schubert variety $\Gr_{\leq \mu}$ as the closed subset
$$\Gr_{\leq \mu}=\left\{(\mE,\beta)\in \Gr\mid \inv(\beta)\leq \mu\right\},$$
of $\Gr_G$, endowed with the reduced scheme structure. By Proposition \ref{HP}, 
\[\Gr_\mu:=\left\{(\mE,\beta)\in \Gr\mid \inv(\beta)=\mu\right\}=\Gr_{\leq \mu}\setminus \cup_{\la<\mu}\Gr_{\leq \la}\]
is an open subset of $\Gr_{\leq \mu}$, called a Schubert cell.

\begin{prop}\label{geom of Sch}~
\begin{enumerate}
\item $\Gr_{\mu}$ forms a single $L^+G$-orbit and is a smooth quasi-projective variety of dimension $(2\rho,\mu)$.

\item $\Gr_{\leq \mu}$ is the Zariski closure of $\Gr_\mu$, and so is a projective variety.
\end{enumerate}
\end{prop}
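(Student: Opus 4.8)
The plan is to establish (1) by exhibiting $\Gr_\mu$ as a genuine orbit of a finite-dimensional group and reading off its dimension on the Lie-algebra level, and (2) by an orbit-closure argument whose only real content is a degeneration statement.

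\emph{Part (1).} By the Cartan decomposition \eqref{Cartan decomp} together with the identification $\Gr\cong[LG/L^+G]$ of Proposition \ref{I:quotient}, the $k$-points of $\Gr_\mu$ are exactly $L^+G(k)\cdot t^\mu$, a single orbit. First I would check that the $L^+G$-action on the projective scheme $\Gr_{\le\mu}$ descends to the finite-dimensional quotient $L^mG=G(\mO/t^m)$ for $m\gg0$: there are finitely many cells $\Gr_\lambda\subset\Gr_{\le\mu}$, the stabiliser of each $t^\lambda$ contains a congruence subgroup $K_m=\ker(L^+G\to L^mG)$ (a direct computation with $\Ad(t^\lambda)$ on root spaces), and $\Gr_{\le\mu}$ is of finite type, so the action factors through a single $L^mG$. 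Then $\Gr_\mu$ is the orbit of $t^\mu$ under the connected algebraic group $L^mG$ acting on a projective variety, hence by the standard theory of group actions a smooth, locally closed — so quasi-projective — subvariety, whose reduced structure coincides with that of the Schubert cell. To compute its dimension I would pass to the tangent space: the differential of the orbit map identifies $T_{t^\mu}\Gr_\mu$ with $\frakg(\mO)/\bigl(\frakg(\mO)\cap\Ad(t^\mu)\frakg(\mO)\bigr)$. Using $\frakg=\frakt\oplus\bigoplus_{\al\in\Phi}\frakg_\al$ and $\Ad(t^\mu)\frakg_\al=t^{(\al,\mu)}\frakg_\al$, the $\frakt$-summand contributes nothing and each $\frakg_\al$ contributes $\max(0,(\al,\mu))$; summing and using dominance of $\mu$ gives $\sum_{\al\in\Phi^+}(\al,\mu)=(2\rho,\mu)$. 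Since $\Gr_\mu$ is smooth this is its dimension. The one subtlety is that this Lie-algebra count is exact, i.e. the stabiliser in $L^mG$ is smooth; this can be read off the explicit form of $\Ad(t^\mu)$, or circumvented by computing $\dim\Gr_\mu=\dim L^mG-\dim\on{Stab}$ directly.

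\emph{Part (2).} The inclusion $\overline{\Gr_\mu}\subseteq\Gr_{\le\mu}$ is immediate, since $\Gr_{\le\mu}$ is closed by Proposition \ref{HP} and contains $\Gr_\mu$. Being the orbit of the connected group $L^+G$, the cell $\Gr_\mu$ is irreducible, so $\overline{\Gr_\mu}$ is an irreducible closed $L^+G$-stable set; at the level of $k$-points it is therefore a union of cells $\Gr_\lambda$ with $\lambda\le\mu$. As $\Gr_{\le\mu}=\bigsqcup_{\lambda\le\mu}\Gr_\lambda$, proving equality reduces to showing $t^\lambda\in\overline{\Gr_\mu}$ for every dominant $\lambda<\mu$, after which $L^+G$-stability promotes the point to its entire cell. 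Once this is done $\Gr_{\le\mu}=\overline{\Gr_\mu}$, and as a closed finite-type subscheme of the ind-projective $\Gr$ it is projective.

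\emph{The main obstacle} is precisely this degeneration. I would reduce it to semisimple rank one: for a positive root $\be$ with $(\be,\mu)\ge1$, the homomorphism $\SL_2\to G$ attached to $\be$ carries a Schubert variety of the rank-one affine Grassmannian into $\overline{\Gr_\mu}$, and the elementary rank-one computation — an explicit one-parameter family built from the $\be$-root subgroups degenerating $t^\mu$ through the points $t^{\mu-k\be^\vee}$ — shows $t^{\mu-k\be^\vee}\in\overline{\Gr_\mu}$ for $0\le k\le(\be,\mu)$. Feeding this into the standard combinatorial fact that any dominant $\lambda\le\mu$ is joined to $\mu$ by a chain of dominant coweights, each obtained from the previous by subtracting a single positive coroot, yields $t^\lambda\in\overline{\Gr_\mu}$ for all dominant $\lambda\le\mu$. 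The delicate parts are the explicit rank-one family (verifying it genuinely lands in $\Gr_\mu$ and identifying its special fibre) and the combinatorics ensuring every $\lambda\le\mu$ is reached while staying in the dominant cone; by comparison the separability remark in the dimension count is minor.
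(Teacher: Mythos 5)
Your proposal follows the paper's proof essentially verbatim: part (1) is the same orbit-plus-tangent-space computation, identifying $T_{t^\mu}\Gr_\mu$ with $\frakg(\mO)/\frakg(\mO)\cap \Ad_{t^\mu}\frakg(\mO)=\bigoplus_{(\al,\mu)\geq 0}\frakg_\al(\mO)/t^{(\al,\mu)}\frakg_\al(\mO)$, and part (2) uses the same reduction, via the chain lemma that some positive coroot $\al$ has $\mu-\al$ dominant with $\la\leq\mu-\al\leq\mu$ (the paper cites Rapoport for this), to an explicit $\SL_2$-degeneration. The rank-one family you defer as the main obstacle is exactly the paper's curve $C_{\mu,\al}=Li_\al(K_m)t^\mu$ with $K_m=\Ad_{t^{\la_m}}(L^+\SL_2)$ and $m=(\mu,\al)-1$: this choice of $m$ makes $C_{\mu,\al}$ a $\bP^1$ whose open $\bA^1$ lies in $\Gr_\mu$ and whose point at infinity is $i_\al(\sigma_m)t^\mu=t^{\mu-\al}$ by the standard matrix identity, so only the single step $k=1$ of your proposed $\be$-string is needed or used.
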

\begin{proof}
For (1), note that the stabiliser of $t^\mu$ for the action of $L^+G$ is $L^+G\cap t^\mu L^+G t^{-\mu}$. The induced map
$$L^+G/(L^+G\cap t^\mu L^+G t^{-\mu}) \to LG/L^+G,\quad g \mapsto gt^\mu$$ then is a locally closed embedding. By the Cartan decomposition \eqref{Cartan decomp}, the image is exactly $\Gr_\mu$. Therefore, $\Gr_\mu$ is smooth and the tangent space of $\Gr_\mu$ at $t^\mu$ is identified with $\frakg(\mO)/\frakg(\mO)\cap \Ad_{t^\mu}\frakg(\mO)$. But it is easy to see that
\[\frakg(\mO)/\frakg(\mO)\cap \Ad_{t^\mu}\frakg(\mO)=\bigoplus_{(\al,\mu)\geq 0} \frakg_\al(\mO)/t^{(\al,\mu)}\frakg_\al(\mO),\]
where the sum is taken over all positive roots $\al$ of $G$ and $\frakg_\al$ denotes the corresponding root space. Therefore, by a simple calculation $\dim \Gr_\mu=(2\rho,\mu)$.

For  (2), note that if $\la\leq \mu$, then there exists a positive coroot $\al$ such that $\mu-\al$ is dominant and $\la\leq \mu-\al\leq \mu$ (e.g. \cite[Lemma 2.3]{R3}). Therefore, it is enough to show that $t^{\mu-\al}$ is contained in the Zariski closure of $\Gr_\mu$. To prove this, it is enough to construct a curve $C$ in $\Gr_{\leq\mu}$ such that $t^{\mu-\al}\in C$ and $C-\{t^{\mu-\al}\}\subset \Gr_\mu$. This reduces to an $\SL_2$-calculation as follows. 

First, for every integer $m$, let $t^{\la_m}:=\begin{pmatrix} t^m & 0\\ 0 &1\end{pmatrix}$, regarded as an element in $\on{PGL}_2(F)$. Let 
$$K_m=\Ad_{t^{\la_m}}(L^+\SL_2)\subset L\SL_2.$$  Note that the Lie algebra of $K_m$ as an $\mO$-module is spanned by $\{t^me, h, t^{-m}f\}$, where $\{e,h,f\}$ is the standard $\fraks\frakl_2$-triple. Also note that 
$$\sigma_m:=\begin{pmatrix}0&-t^m\\t^{-m}&0\end{pmatrix}=\begin{pmatrix} t^m & 0\\ 0 &1\end{pmatrix}\begin{pmatrix}0&-1\\1&0\end{pmatrix}\begin{pmatrix} t^{-m} & 0\\ 0 &1\end{pmatrix}\in K_m.$$
Let $L^{>0}\SL_2$ be the kernel of the evaluation map $\on{ev}:L^+\SL_2\to \SL_2$ (cf. Example \ref{hyperspacial and Iwahori2}), and let $K_m^{(1)}=\Ad_{t^{\la_m}}(L^{>0}\SL_2)$. Then $K_m/K_m^{(1)}\cong \SL_2$.

Now let $i_\al: \SL_2\to G$ be the canonical homomorphism corresponding to $\al$. Let $m=(\mu,\al)-1$ and consider the orbit 
\begin{equation}\label{curve C}
C_{\mu,\al}:=Li_\al(K_{m})t^\mu.
\end{equation} 
Note that $K_{m}^{(1)}\subset L^+G\cap t^\mu L^+G t^{-\mu}$ so $C_{\mu,\al}$ is a homogenous space under $K_{m}/K_m^{(1)}=\SL_2$. It is easy to see that $C_{\mu,\al}\cong \bP^1$, and that $(L^+G\cap Li_\al(K_m))t^\mu\cong \bA^1\subset \bP^1$. In addition, $\bP^1\setminus\bA^1=i_\al(\sigma_m)t^\mu$.
From the famous identity
\[\begin{pmatrix}0&-1\\1&0\end{pmatrix}=\begin{pmatrix}1&-t^{-1}\\0&1\end{pmatrix}\begin{pmatrix}1&0\\ t&1\end{pmatrix}\begin{pmatrix}1&-t^{-1}\\0&1\end{pmatrix}\begin{pmatrix}t^{-1}&0\\0&t\end{pmatrix},\]
we see that $i_{\al}(\sigma_m)t^\mu=(t^{\mu-\al} \mod L^+G)$. Therefore, $C_{\mu,\al}$ is the desired curve.
\end{proof}

\begin{rmk}We assume (for simplicity) that $G_\der$ is simply connected and $\cha\ k=0$. Then one can define a closed subscheme of $\Gr'_{\leq \mu}\subset \Gr$ whose $R$-points classify those $(\mE,\beta)$ such that for every irreducible highest weight representation $V_\chi$, $\beta(V_{\chi,\mE})\subset t^{-(\chi,\mu)}(V_{\chi}\otimes R[[t]])$. From the proof of Proposition \ref{HP}, we know that $\Gr_{\leq \mu}$ is the reduced subscheme of $\Gr'_{\leq \mu}$. But it is an open question to whether $\Gr'_{\leq \mu}=\Gr_{\leq \mu}$. This is closely related to \cite[Conjecture 2.14, Conjecture 2.20]{KWWY}. 
\end{rmk}

\begin{ex}\label{momega1}
Let $G=\GL_n$ and $\mu=(r,0,\ldots,0)$ (under the standard identification of $\xcoch(T)\cong \bZ^n$). Then using the lattice description of $\Gr_{\GL_n}$ as in \S\ \ref{aff GLn},
$$\Gr_{\leq \mu}(R)=\left\{ \Lambda\subset \Lambda_0:=R[[t]]^n\mid \on{rk} \Lambda_0/\Lambda=r\right\}.$$
Now assume that $r=n$.
Let $\mN_n$ denote the variety of $n\times n$ nilpotent matrices. There is a natural map
\begin{equation}\label{nil to Gr}
\mN_n\to \Gr_{\leq \mu},\quad A\mapsto \La=(t-A)\La_0.
\end{equation}
This is well-defined since $\det(t-A)=t^n$. We will show in Lemma \ref{Lus obs} that this is an open embedding so in particular, $\Gr_{\leq \mu}$ gives a compactification of $\mN_n$.
At the level of topological spaces, this was first observed by Lusztig (cf. \cite{Lu0}). 
Note that giving a dominant coweight $\la\leq \mu=(n,0,\ldots,0)$ is equivalent to giving a partition of $n$. So each $\la\leq \mu$ gives a nilpotent orbit $\mO_\la$ in $\mN_n$. We leave it as an exercise to show that \eqref{nil to Gr} maps $\mO_\la$ to $\Gr_\la$.
\end{ex}

We continue the general theory. There is a map 
\begin{equation}\label{parity map}
p:\xcoch(T)\to \bZ/2, \mu\mapsto (-1)^{(2\rho,\mu)}
\end{equation}
which factors through $\xcoch(T)\to \pi_1(G)\to \bZ/2$ and therefore induces a map $p:\pi_0(\Gr_G)\to \bZ/2$
by Theorem \ref{components}. 
\begin{cor}\label{parity}
The Schubert cell $\Gr_{\mu}$ is in the even (resp. odd) components, i.e. $p(\Gr_{\mu})=1$ (resp. $p(\Gr_{\mu})=-1$) if and only if $\dim \Gr_{\mu}$ is even (resp. odd).
\end{cor}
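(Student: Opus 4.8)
The plan is to observe that everything reduces to the single integer $(2\rho,\mu)$, which simultaneously computes the dimension of the Schubert cell and records the parity of the component containing it. Concretely, I would combine three facts already in hand: the dimension formula $\dim\Gr_\mu=(2\rho,\mu)$ from Proposition \ref{geom of Sch}(1), the identification $\pi_0(\Gr_G)\cong\pi_1(G)$ from Theorem \ref{components}(1), and the defining formula $p(\mu)=(-1)^{(2\rho,\mu)}$ together with its factorization through $\pi_1(G)$.

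First I would pin down which component of $\Gr_G$ contains $\Gr_\mu$. Since $\Gr_\mu$ is a single $L^+G$-orbit by Proposition \ref{geom of Sch}(1), it is irreducible, hence connected, and therefore lies entirely in the component of its base point $t^\mu$. Under the isomorphism $\pi_0(\Gr_G)\cong\pi_1(G)$ of Theorem \ref{components}(1), this component corresponds to the image $[\mu]$ of $\mu$ in $\pi_1(G)=\xcoch(T)/\bZ\Phi^\vee$. Next I would evaluate the induced map $p:\pi_0(\Gr_G)\to\bZ/2$ on this class: by construction $p$ comes from $p:\xcoch(T)\to\bZ/2$ via the factorization through $\pi_1(G)$ (which is legitimate since $(2\rho,\al^\vee)$ is even for every coroot $\al^\vee$, so $p$ annihilates $\bZ\Phi^\vee$), whence $p(\Gr_\mu)=p([\mu])=(-1)^{(2\rho,\mu)}$. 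Finally, feeding in the dimension formula gives $p(\Gr_\mu)=(-1)^{\dim\Gr_\mu}$, so $p(\Gr_\mu)=1$ precisely when $\dim\Gr_\mu$ is even and $p(\Gr_\mu)=-1$ precisely when it is odd.

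The argument is essentially immediate once these three ingredients are assembled, so there is no serious obstacle. The only point deserving care is the bookkeeping in the first step---verifying that the labeling of connected components supplied by Theorem \ref{components}(1) indeed sends the component of $t^\mu$ to the class $[\mu]\in\pi_1(G)$. This compatibility is built into the cited identification (it is how the isomorphism $\pi_0(\Gr_G)\cong\pi_1(G)$ is normalized, via the map $L\underline G\to\Gr_{\underline G}$), so no additional work is required beyond invoking it correctly.
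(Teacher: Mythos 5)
Your argument is correct and coincides with the paper's (implicit) proof: the corollary is stated as an immediate consequence of the dimension formula $\dim\Gr_\mu=(2\rho,\mu)$ from Proposition \ref{geom of Sch}, the definition \eqref{parity map} of $p$, and its factorization through $\pi_0(\Gr_G)\cong\pi_1(G)$ via Theorem \ref{components}. Your extra checks — that $\Gr_\mu$ is connected so it lies in the component of $t^\mu$, and that $(2\rho,\al^\vee)$ is even for every coroot so $p$ indeed kills $\bZ\Phi^\vee$ — are exactly the bookkeeping the paper leaves to the reader, and they are carried out correctly.
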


In the sequel, let $(LG)_{\leq \mu}$ (resp. $(LG)_\mu$) denote the preimage of $\Gr_{\leq \mu}$ (resp. $\Gr_\mu$) in $LG$. Note that $(LG)_{\leq \mu}$ and $(LG)_{\mu}$ are schemes.

For a coweight $\mu$, let $P_\mu$ denote the parabolic subgroup of $G$ corresponding to $\mu$, i.e. the group generated by the root subgroups $U_\al$ of $G$ for those roots $\al$ satisfying $\langle\al,\mu\rangle\leq 0$. Let us denote by $g\mapsto \bar{g}:=\on{ev}(g)$ for the evaluation map (see Example \ref{hyperspacial and Iwahori2}).
Then there is a natural projection
\[p_\mu: \Gr_\mu\cong L^+G/(L^+G\cap t^\mu L^+G t^{-\mu})\to G/P_\mu,\quad (gt^\mu \mod L^+G)\mapsto (\bar{g}\mod P_\mu).\]
The fibers are isomorphic to affine spaces. 
On the other hand, there is also a closed embedding
\[i_\mu: G/P_\mu\cong G\cdot t^\mu\subset \Gr_\mu\]
by regarding $G$ as a subgroup of $L^+G$ of ``constant loops''. Note that  
the action of  $\Aut^+(D)$ on $\Gr_G$ preserves $\Gr_\mu$\footnote{However, the action of $\Aut(D)$ on $\Gr_G$ does not preserve Schubert varieties as $gt^\mu\not\in \Gr_{\leq \mu}$ for $g\in \Aut(D)$ sending $t\to a_0+t$, where $a_0\in R$ is nilpotent.} and the fixed point subscheme under the rotation torus $\bG_m^{\on{rot}}$ is $G\cdot t^\mu$.

\begin{ex}We assume that $G=\GL_n$ and let $\La_0=k[[t]]^n$. Let $\mu=(m_1,\ldots,m_n)\in \bZ^n$ be a dominant coweight (so $m_i\geq m_{i+1}$). Then the map $p_\mu: \Gr_\mu\to G/P_\mu$ sends a lattice $\Lambda\in Gr_\mu$ to the following decreasing filtration $V=\La_0/t\La_0\cong k^n$. 
\[\on{Fil}^iV=t^{-m_i}\La\cap \La_0/t^{-m_i}\La\cap t\La_0.\]
Conversely, given a decreasing filtration $\on{Fil}^\bullet$ on $V$, the map $i_\mu$ sends it to the lattice
\[\Lambda = \sum\on{Fil}^iV\otimes  t^{m_i} \mO\subset V\otimes_kF\cong F^n.\]
\end{ex}

We give more explicit descriptions of $\Gr_{\leq \mu}$ for small $\mu$.

Recall that a dominant coweight $\mu$ is called minuscule if $\mu\neq 0$ and for every positive root $\al$, $(\al,\mu)\leq 1$. Note that minuscule coweights are minimal elements in $\xcoch(T)^+$ under the partial order $\leq$. Therefore,
\begin{lem}\label{minusch}
If $\mu$ is minuscule, then $\Gr_{\leq\mu}=\Gr_\mu\cong G/P_\mu$.
\end{lem}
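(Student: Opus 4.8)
The plan is to prove the two asserted equalities separately: first that the Schubert variety $\Gr_{\leq\mu}$ has no strictly smaller strata, so that $\Gr_{\leq\mu}=\Gr_\mu$, and then that the single cell $\Gr_\mu$ is already isomorphic to the partial flag variety $G/P_\mu$.

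For the first equality, I would use that $\Gr_{\leq\mu}=\{(\mE,\beta)\mid \inv(\beta)\leq\mu\}$ with $\inv(\beta)$ always lying in $\xcoch(T)^+$. Hence, by the stratification into Schubert cells, $\Gr_{\leq\mu}=\bigsqcup_{\la}\Gr_\la$, where $\la$ ranges over the dominant coweights with $\la\leq\mu$. Since $\mu$ is minuscule it is a minimal element of $\xcoch(T)^+$ for the partial order $\leq$ (as recalled just before the statement), so the only such $\la$ is $\mu$ itself, and therefore $\Gr_{\leq\mu}=\Gr_\mu$.

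For the isomorphism, I would invoke the closed embedding $i_\mu:G/P_\mu\cong G\cdot t^\mu\hookrightarrow\Gr_\mu$ recalled above and show it is surjective by comparing dimensions. By Proposition \ref{geom of Sch}(1), $\dim\Gr_\mu=(2\rho,\mu)=\sum_{\al>0}\langle\al,\mu\rangle$. Minusculeness forces $\langle\al,\mu\rangle\in\{0,1\}$ for every positive root $\al$, so this count equals $\#\{\al>0:\langle\al,\mu\rangle>0\}$. On the other hand $\Lie P_\mu=\frakt\oplus\bigoplus_{\langle\al,\mu\rangle\leq 0}\frakg_\al$, and dominance of $\mu$ makes $\langle\al,\mu\rangle>0$ possible only for $\al>0$; hence $\dim G/P_\mu=\#\{\al:\langle\al,\mu\rangle>0\}=\#\{\al>0:\langle\al,\mu\rangle>0\}$, the same number. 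Thus $i_\mu$ is a closed immersion of the irreducible proper variety $G/P_\mu$ into the irreducible variety $\Gr_\mu$ of equal dimension, so its image is a closed irreducible subset of full dimension, hence all of $\Gr_\mu$; since $\Gr_\mu$ is reduced, a surjective closed immersion onto it is an isomorphism. Combined with the first step this yields $\Gr_{\leq\mu}=\Gr_\mu\cong G/P_\mu$.

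The only real work lies in the dimension comparison, where the hypothesis $\langle\al,\mu\rangle\leq 1$ is used essentially: without it the fibers of the projection $p_\mu:\Gr_\mu\to G/P_\mu$ would be positive-dimensional affine spaces and $G\cdot t^\mu$ would be a proper subvariety, so the argument genuinely detects minusculeness. I should also note that the final ``surjective closed immersion is an isomorphism'' step relies on $\Gr_\mu$ being reduced, which holds since it is a variety; as an alternative closing remark, once $\Gr_\mu\cong G/P_\mu$ is established it is proper, hence closed in $\Gr$, which reproves $\Gr_{\leq\mu}=\Gr_\mu$ directly as the closure of an already closed set.
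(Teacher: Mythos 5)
Your proof is correct and follows the route the paper intends: the first equality is exactly the paper's observation that minuscule coweights are minimal in $\xcoch(T)^+$, and the identification $\Gr_\mu\cong G/P_\mu$ via the dimension count $(2\rho,\mu)=\#\{\al>0:\langle\al,\mu\rangle=1\}=\dim G/P_\mu$ is the standard argument the paper leaves implicit (equivalently, it shows the affine-space fibers of $p_\mu$ are points). Nothing is missing.
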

So minuscule Schubert varieties in $\Gr_G$ are exactly those partial flag varieties classifying parabolic subgroups of $G$ corresponding to minuscule coweights. Examples include the usual Grassmannians $\Gr(r,n)$, smooth quadrics in projective spaces, etc.

Recall that a dominant coweight $\mu$ is called quasi-minuscule if $\mu\neq 0$, $\mu$ is not minuscule, and for every positive root $\al$, $(\al,\mu)\leq 2$. For simplicity, we assume that $G_\der$ is simple.
Then $\mu$ is the short dominant coroot, denoted by $\theta$, and is the minimal element in $\xcoch(T)^+-\{0\}$ under the partial order $\leq$.  The root corresponding to $\theta$ is the highest root $\theta^\vee$. Let $\mL_\theta=G\times^{P_\theta} k_{\theta^\vee}$ be the very ample line bundle on $G/P_\theta$, where $P_\theta$ acts on the $1$-dimension space $k_{\theta^\vee}$ by the character $\theta^\vee$. 
Note that $0\in\xcoch(T)^+$ is the unique element that is strictly less than $\theta$. Therefore, $\Gr_{\leq \theta}=\Gr_\theta\sqcup \Gr_0$.
\begin{lem}\label{quasiminusch}
Assume (for simplicity) that $G_\der$ is simple. If $\mu=\theta$ is quasi-minuscule, then via the projection $p_\theta$, $\Gr_\theta$ is isomorphic to the total space of the line bundle $\mL_\theta$,
and $\Gr_{\leq \theta}$ is isomorphic to the projective cone of the projective embedding of $G/P_\theta$ by $\mL_\theta$. Blowing-up at the origin $\widetilde{\Gr}_{\leq \theta}= \bP(\mL_{\theta}\oplus \mO)$ gives a resolution of singularities . 
\end{lem}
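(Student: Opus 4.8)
The plan is to study $\Gr_\theta$ through the projection $p_\theta\colon\Gr_\theta\to G/P_\theta$ together with the section $i_\theta\colon G/P_\theta\cong G\cdot t^\theta\hookrightarrow\Gr_\theta$, identify $\Gr_\theta$ with the total space of $\mL_\theta$, and then read off $\Gr_{\leq\theta}$ and its resolution from the $\bG_m^{\on{rot}}$-action. For the first step I would compute the normal bundle of the zero section $G\cdot t^\theta$ inside $\Gr_\theta$. By Proposition~\ref{geom of Sch} the tangent space at $t^\theta$ is $\bigoplus_{(\al,\theta)>0}\frakg_\al(\mO)/t^{(\al,\theta)}\frakg_\al(\mO)$, and $\bG_m^{\on{rot}}$ grades it by powers of $t$. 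The weight-$0$ part is $\bigoplus_{(\al,\theta)>0}\frakg_\al=\frakg/\frakp_\theta$, the tangent space of $G/P_\theta$, consistent with $G\cdot t^\theta$ being the $\bG_m^{\on{rot}}$-fixed locus. Since $\theta$ is quasi-minuscule we have $(\al,\theta)=2$ only for $\al=\theta^\vee$, so the unique positive-weight contribution is the line $k\cdot tX_{\theta^\vee}$ with $X_{\theta^\vee}$ a highest root vector; in particular each fibre of $p_\theta$ is one-dimensional.

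To globalise, note that $p_\theta$ and $i_\theta$ are $G$-equivariant and $G$ acts transitively on $G/P_\theta$, so every fibre of $p_\theta$ is an affine line $\bA^1$. The action of $\bG_m^{\on{rot}}$ commutes with $G$, fixes $i_\theta(G/P_\theta)$ pointwise, and acts on each fibre with a single fixed point and weight $1$; hence $(p_\theta)_*\mO_{\Gr_\theta}$ is a $\bG_m^{\on{rot}}$-graded sheaf of algebras $\bigoplus_{n\ge0}\mathcal{M}^{\otimes n}$ generated in degree $1$ by a line bundle $\mathcal{M}$ (its weight-$1$ part, the conormal bundle). Thus $\Gr_\theta\cong\on{Tot}(\mathcal{N})$ with $\mathcal{N}=\mathcal{M}^{-1}$ the normal bundle. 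Finally $\mathcal{N}$ is the $G$-equivariant line bundle determined by the $P_\theta$-action on its fibre $k\cdot tX_{\theta^\vee}$ at $t^\theta$: the torus acts on $X_{\theta^\vee}$ by $\theta^\vee$ and trivially on $t$, so the character is $\theta^\vee$ and $\mathcal{N}\cong\mL_\theta$.

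For the cone, observe $\Gr_{\leq\theta}=\Gr_\theta\sqcup\{\La_0\}$, and the $\bG_m^{\on{rot}}$-action on the projective variety $\Gr_{\leq\theta}$ has exactly two fixed loci: the zero section $G\cdot t^\theta\cong G/P_\theta$, and $\La_0$, which is the limit in the opposite $\bG_m^{\on{rot}}$-direction of any point of $\Gr_\theta$ off the zero section. This is precisely the shape of the projective cone $\hat C$ over $G/P_\theta$ embedded by the very ample bundle $\mL_\theta$, whose smooth locus $\hat C\setminus\{v\}\cong\on{Tot}(\mL_\theta)$ carries $G/P_\theta$ as its section ``at infinity'' and whose vertex $v$ is the unique singular point. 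Using that $\Gr_{\leq\theta}$ is normal (a standard property of Schubert varieties), I would identify it with $\on{Proj}$ of the section ring of the ample generator and match this with the homogeneous coordinate ring of $\hat C$, giving $\Gr_{\leq\theta}\cong\hat C$ with $\La_0\mapsto v$. The resolution is then the standard resolution of a cone vertex: blowing up $v=\La_0$ yields the $\bP^1$-bundle over $G/P_\theta$ whose exceptional divisor is the section $\cong G/P_\theta$; since $\bP(E)\cong\bP(E\otimes\mL_\theta^{-1})$ this $\bP^1$-bundle is $\bP(\mL_\theta\oplus\mO)$, which is smooth and projective and an isomorphism over the smooth locus $\Gr_\theta$, hence a resolution of singularities.

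The main obstacle is the globalisation in the second paragraph: upgrading the pointwise tangent computation to the assertion that $\Gr_\theta$ is \emph{globally} the total space of $\mL_\theta$ (rather than merely an $\bA^1$-bundle), and pinning the $P_\theta$-character down to exactly $\theta^\vee$; the $\bG_m^{\on{rot}}$-grading of $(p_\theta)_*\mO$ is the key device here. Once normality of $\Gr_{\leq\theta}$ is granted, the cone identification and the resolution statement follow largely formally from the structure of the two $\bG_m^{\on{rot}}$-fixed loci.
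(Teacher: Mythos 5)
Your outline is, in substance, the argument of \cite[\S 7]{NP}, which is all the paper itself offers for this lemma (there is no proof in the text beyond that citation). The core computations are correct: the $\bG_m^{\on{rot}}$-weights on $T_{t^\theta}\Gr_\theta\cong\bigoplus_{(\al,\theta)>0}\frakg_\al(\mO)/t^{(\al,\theta)}\frakg_\al(\mO)$ are $0$ on the $G/P_\theta$-directions and $1$ on the single line $k\cdot tX_{\theta^\vee}$, using that $\theta^\vee$ is the unique positive root with $(\al,\theta)=2$; the rotation torus really does preserve the fibres of $p_\theta$ (since $p_\theta(g(t)t^\theta)=\overline{g(0)}$ and $(st)^\theta=s^\theta t^\theta$ with $s^\theta\in P_\theta$); and the $P_\theta$-character on the normal line is forced to be $\theta^\vee$ because a character of a parabolic is determined by its restriction to $T$. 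So the identification $\Gr_\theta\cong\on{Tot}(\mL_\theta)$ via the weight grading of $(p_\theta)_*\mO$ is sound, granted a word on why the weight-$n$ pieces are invertible and generated by the weight-$1$ piece (local triviality of the homogeneous fibration plus the fibrewise statement).

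The genuinely incomplete step is the identification of $\Gr_{\leq\theta}$ with the projective cone. Knowing that $\Gr_{\leq\theta}$ is normal, projective, and equal to $\on{Tot}(\mL_\theta)\sqcup\{e\}$ does not by itself determine the structure at $e$, and ``matching $\on{Proj}$ of the section ring of $\mO(1)$ with the homogeneous coordinate ring of $\hat C$'' would require computing $\Gamma(\Gr_{\leq\theta},\mO(n))$ — a Demazure-module calculation you have not indicated how to do. A cleaner way to close the argument with the tools already on the table: the fibre of $p_\theta$ over the base point is exactly the affine part of the rational curve $C_{\theta,\theta}$ of \eqref{curve C}, whose point at infinity is $e$; hence the rational map $\bP(\mL_\theta\oplus\mO)\dashrightarrow\Gr_{\leq\theta}$ extending the open immersion of $\on{Tot}(\mL_\theta)$ is defined everywhere (its indeterminacy locus has codimension $\geq 2$ and is $G$-stable, while the remaining section is a $G$-orbit of codimension $1$) and contracts the section with normal bundle $\mL_\theta^{-1}$ to $e$. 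This morphism is proper, birational and bijective, so normality of $\Gr_{\leq\theta}$ plus Zariski's main theorem identifies $\Gr_{\leq\theta}$ with the contraction of that section, which for $\mL_\theta$ ample (and $G/P_\theta$ projectively normal in the embedding) is precisely the projective cone; the same morphism is then the blow-up of the vertex, giving the resolution. Note that this route imports normality of $\Gr_{\leq\theta}$ from Theorem \ref{singularity}, hence the hypothesis $p\nmid|\pi_1(G_\der)|$, whereas \cite{NP} obtain the cone structure by a direct computation that does not presuppose normality; if you want the lemma in the generality stated, you should either check normality of this particular Schubert variety directly or follow their explicit construction.
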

\begin{proof}See \cite[\S\ 7]{NP}.
\end{proof}
\begin{rmk}
In fact, $\widetilde{\Gr}_{\leq \theta}$ is a kind of Demazure resolution of $\Gr_{\leq \theta}$. See \cite[Lemma 2.12]{Z14}.
\end{rmk}
\begin{ex}If $G=\SL_2$, then $\theta=\al$ is the positive coroot. In this case $\Gr_{\leq \theta}$ is isomorphic to the projective cone of the quadratic curve $\{(x,y,z)\mid x^2=yz\}\subset \bP^2$. 
\end{ex}

Given $\mu_1,\mu_2\in \xcoch^+$ of $G$, one can define the twisted product of $\Gr_{\leq \mu_1}$ and $\Gr_{\leq \mu_2}$ using the $L^+G$-torsor $(LG)_{\leq \mu_1}\to \Gr_{\leq \mu_1}$. Alternatively, one can define it as
\[\Gr_{\leq \mu_1}\tilde{\times}\Gr_{\leq \mu_2}=\left\{(\mE_1,\mE_2,\beta_1,\beta_2)\in\Gr\tilde\times\Gr\mid \inv(\beta_1)\leq \mu_1, \inv(\beta_2)\leq \mu_2\right\},\]
which is closed in $\Gr\tilde\times\Gr$ and therefore is representable. Similarly,  if $\mmu=(\mu_1,\ldots,\mu_n)$ is a sequence of dominant coweights of $G$, one can define
$$\Gr_{\mmu}:=\Gr_{\mu_1}\tilde\times\cdots\tilde\times\Gr_{\mu_n}\subset \Gr_{\leq\mmu}:=\Gr_{\leq \mu_1}\tilde\times\cdots\tilde\times\Gr_{\leq \mu_n}\subset\Gr\tilde\times\cdots\tilde\times\Gr.$$ 
Let $|\mmu|=\sum \mu_i$, then the convolution map \eqref{conv m} induces
\begin{equation}\label{conv m2}
m: \Gr_{\leq \mmu}\to \Gr_{\leq |\mmu|}, \quad (\mE_\bullet,\beta_\bullet)\mapsto (\mE_n,\beta_1\cdots\beta_n).
\end{equation}
This map is analogous to Demazure resolutions (except that $\Gr_{\leq \mmu}$ is not smooth in general). The geometry of these maps is very rich, as we shall see in \S~\ref{Lecture V}. Here we give one example.

\begin{ex}\label{conv v.s. springer}
We use notations from Example \ref{momega1}.  Now, let $\omega_1=(1,0,\ldots,0)$. We have the convolution map $m:\Gr_{\leq(\omega_1,\ldots,\omega_1)}\to\Gr_{\leq \mu}$. On the other hand, there is the classical Springer resolution $p:\tilde \mN_r\to\mN_r$ (cf. \cite[\S~1]{Yu}). These maps fit into the following commutative diagram
\begin{equation}\label{sat and springer}
\begin{CD}
\Gr_{\leq(\omega_1,\ldots,\omega_1)}@<<< \Gr_{\leq (\omega_1,\ldots,\omega_1)}^{\square}@>>>\tilde \mN_r\\ 
 @VmVV@VVV@VVpV\\
 \Gr_{\leq \mu}@<\pi<<\Gr_{\leq \mu}^{\square}@>\phi>> \mN_r,
\end{CD}
\end{equation}
where $\Gr_{\leq \mu}^{\square}(R)$ classify triples $(\mE,\beta,\epsilon)$ where $(\mE,\beta)\in\Gr_{\leq \mu}(R)$ and $\epsilon: \La_0/\La\cong R^r$ is an isomorphism of $R$-modules. Note that both squares in the diagram are Cartesian. Indeed, a point of $\Gr_{\leq(\omega_1,\ldots,\omega_1)}$ over $\La\in \Gr_{\leq \mu}$ gives a chain $\La=\La_r\subset\La_{r-1}\subset\cdots\subset \La_0=k[[t]]^n$. Via the framing $\epsilon$, such a chain gives a full flag of $k^r$ and vice versa.

Now we assume $r=n$. Then the formula \eqref{nil to Gr} in fact gives a section $\mN_n\to \Gr_{\leq \mu}^{\square}$ of $\phi$ (since $(t-A)\La_0$ is canonically trivial). Therefore, we obtain the following Cartesian diagram
\begin{equation}\label{sat and springer2}
\begin{CD}
\tilde\mN_n@>>>\Gr_{\leq(\omega_1,\ldots,\omega_1)}\\
@VpVV@VVmV\\
\mN_n@>>>\Gr_{\leq \mu}.
\end{CD}
\end{equation}
In other words, the convolution map $m$ in this case extends the Springer resolution for $\GL_n$.
\end{ex}

We continue the general discussion. The following fundamental theorem summarises the basic facts of the singularities of Schubert varieties. 
\begin{thm}\label{singularity}
Let $p$ denote the characteristic exponent of $k$. If $p\nmid |\pi_1(G_\der)|$, then $\Gr_{\leq \mu}$ is normal, Cohen-Macaulay, Gorenstein, and has rational singularities. 
\end{thm}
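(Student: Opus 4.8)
The plan is to exhibit, for each $\mu$, a resolution of $\Gr_{\leq\mu}$ by a smooth variety and to deduce all four assertions from Frobenius splitting, after first reducing to the simply-connected case. All four properties (normality, Cohen--Macaulayness, Gorenstein-ness, and rationality of singularities) are local and invariant under isomorphism, so I would begin with the standard reduction to $G$ semisimple simply-connected. This is exactly where the hypothesis $p\nmid|\pi_1(G_\der)|$ enters: by Theorem~\ref{components} it gives an isomorphism $\Gr_{G_\s}\cong(\Gr_G)^0_{\on{red}}$ identifying the reduced neutral component with the affine Grassmannian of the simply-connected cover, and for $G_\s$ the ind-scheme $\Gr_{G_\s}$ is reduced. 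A Schubert variety lying in the neutral component is then literally a Schubert variety of $\Gr_{G_\s}$; Schubert varieties in other components are carried isomorphically onto ones in the neutral component (for example, by translation by a central coweight such as $\La\mapsto t\La$ on $\Gr_{\GL_n}$, which sends Schubert varieties to Schubert varieties), so that only the case $\Gr_{\leq\mu}\subset\Gr_{G_\s}$ remains, with the characteristic now unconstrained.

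Next I would construct a Demazure-type resolution. Realising $\Gr_{G_\s}$ as a partial affine flag variety of the associated Kac--Moody group, one fixes a reduced decomposition and forms the corresponding Bott--Samelson variety $Z$: it is an iterated $\bP^1$-bundle, hence smooth, and comes with a proper morphism $\pi\colon Z\to\Gr_{\leq\mu}$ that is an isomorphism over the open cell $\Gr_\mu$. Birationality uses Proposition~\ref{geom of Sch}: $\Gr_\mu$ is a single $L^+G$-orbit of dimension $(2\rho,\mu)$ and $\Gr_{\leq\mu}$ is its Zariski closure. When $\mu$ is a sum of minuscule coweights one may instead take the smooth convolution variety $\Gr_{\leq\mmu}$ of \eqref{conv m2}, which is smooth by Lemma~\ref{minusch}, together with the convolution map $m$ as the resolution; this is frequently the more convenient model.

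The technical heart is Frobenius splitting. In characteristic $p>0$ one shows that $\Gr_{\leq\mu}$ carries a Frobenius splitting compatible with every sub-Schubert variety $\Gr_{\leq\la}$, $\la<\mu$; this is the affine (Kac--Moody) incarnation of the splitting of Schubert varieties due to Mathieu, Littelmann and Kumar, treated in the present setting in \cite{Fa}, \cite{PR} and \cite[\S~8]{BD}. Granting it, the formal consequences follow by now-standard arguments: a Frobenius-split variety that is the proper birational image of the smooth $Z$ is normal; the splitting forces $\pi_*\mO_Z=\mO_{\Gr_{\leq\mu}}$ together with $R^i\pi_*\mO_Z=0$ for $i>0$, so $\pi$ is a rational resolution and $\Gr_{\leq\mu}$ has rational singularities, whence it is Cohen--Macaulay. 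The characteristic-zero statement is then obtained by reduction modulo $p$ together with semicontinuity.

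Finally, for the Gorenstein property I would compute the dualizing sheaf from the resolution $\pi$ and the combinatorics of the boundary divisors of $Z$, showing that $\omega_{\Gr_{\leq\mu}}$ is an invertible sheaf expressible through the ample determinant line bundle $\mL_{\det}$ of \S~\ref{determinant line}. The two genuinely hard points are the construction of the compatible Frobenius splitting in the affine Kac--Moody setting (which is where the positive-characteristic input really lies and where the need for a careful treatment of small characteristics originates) and this explicit identification of the canonical sheaf; everything else is bookkeeping around the resolution $\pi$.
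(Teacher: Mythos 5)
Your overall strategy --- a Demazure/Bott--Samelson resolution plus compatible Frobenius splitting, with the characteristic-zero case obtained by reduction mod $p$ --- is indeed the route taken in the sources the paper defers to (\cite{Fa}, \cite{PR} for normality, Cohen--Macaulayness and rational singularities, and \cite{BD}, \cite[Theorem 6.11]{Z10} for the Gorenstein property), and you correctly locate where the hypothesis $p\nmid|\pi_1(G_\der)|$ enters. But two steps of your plan fail as written, and one of them is exactly the point the paper singles out as the key difficulty. The assertion that ``a Frobenius-split variety that is the proper birational image of the smooth $Z$ is normal'' is false: the union of the two coordinate axes in $\bA^2$ is compatibly split by the standard splitting $(xy)^{p-1}$ of the plane and is the proper birational image of its smooth normalization, yet it is not normal; Frobenius splitting only yields weak normality. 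Since $Z$ is smooth and $\pi$ is proper birational, the identity $\pi_*\mO_Z=\mO_{\Gr_{\leq\mu}}$ is essentially \emph{equivalent} to normality of $\Gr_{\leq\mu}$, so deducing normality from the splitting and then deducing $\pi_*\mO_Z=\mO$ from normality is circular. Establishing $\pi_*\mO_Z=\mO$ is precisely ``the key step'' in the paper's words, and in \cite{Fa} and \cite{PR} it requires genuinely extra input beyond the existence of a compatible splitting --- a comparison of the loop-group Schubert varieties with the Kac--Moody Schubert varieties known to be normal by Mathieu, Littelmann and Kumar, respectively a comparison of Demazure modules.

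The second gap is the reduction to the neutral component. A semisimple group has no nonzero central coweights, so for $G$ semisimple and not simply connected there is no ``translation by a central coweight'' available at all; and already for $\GL_n$ the translation $\La\mapsto t\La$ shifts the component index by $n$ and therefore only identifies components whose indices are congruent mod $n$. The standard reduction instead passes through the affine flag variety: the length-zero elements of the Iwahori--Weyl group normalize the Iwahori, hence act on $\Fl_G$ permuting the components, and one transports the result back to $\Gr_G$ along the smooth proper fibration $\Fl_G\to\Gr_G$ (whose restriction over a spherical Schubert variety is a $G/B$-bundle over it). Finally, for the Gorenstein property your plan of computing $\omega_{\Gr_{\leq\mu}}$ from the resolution is the right idea, but you should note that the computation in \cite{BD} only works in characteristic zero or very good characteristic; the extension to $p\nmid|\pi_1(G_\der)|$ is the content of \cite[Theorem 6.11]{Z10}.
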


\begin{proof}
This is a difficult theorem. 
Except the Gorenstein property, all statements were proved in \cite[Theorem 8]{Fa}, see also \cite[Theorem 0.3]{PR}. The key step is to prove that $\Gr_{\leq \mu}$ is normal.

That $\Gr_{\leq \mu}$ is Gorenstein  follows from \cite[Equation (241)]{BD}. Strictly speaking, the argument of \emph{loc. cit.} only works when the characteristic of $k$ is zero (or is very good for $G$). We refer to \cite[Theorem 6.11]{Z10} for more details and the extension to the case $p\nmid |\pi_1(G_\der)|$. See also Remark \ref{symplectic sing} for related discussions.
\end{proof}

\begin{rmk}
We very briefly discuss the parallel story when $\underline G$ is an Iwahori group scheme of $G$. See \cite{Fa,PR} for details. Write $I=\underline G(\mO)$. In this case, the Cartan decomposition is of the form
\[I\backslash G(F)/I\simeq \widetilde W,\]
where $\widetilde W\simeq \xcoch(T)\rtimes W$ is called the Iwahori Weyl group. This is a (quasi-)Coxeter group equipped with a Bruhat order $\leq$ and a length function $\ell:\widetilde W\to\bZ$.
For $w\in \widetilde{W}$, let $S_w$ denote the corresponding $L^+\underline G$-orbits on $\Gr_{\underline G}=\Fl_G$ and let $S_{\leq w}$ denote its closure. Then $S_w$ is isomorphic to an affine space of dimension $\ell(w)$, and $S_{\leq w}=\cup_{w'\leq w}S_w$. All $S_{\leq w}$'s are normal, Cohen-Macaulay, and with rational singularities. But they are not Gorenstein in general.
\end{rmk}

\subsection{Digression: Some sub-ind-schemes in $\Gr$.}\label{digression}
We make a digression to explain a construction of some important subvarieties of $\Gr$. This subsection is not used in the rest of the notes.  

Let $\varphi: \Gr\to \Gr$ a morphism defined over $k$. For $\ga\in LG$ and $\mu$,
we define a closed sub ind-scheme of $\Gr$ as
\[X(\mu,\ga\varphi)=\left\{ x\in \Gr\mid \inv(x,\ga\varphi(x))\leq \mu\right\}.\]
Explicitly,
write $x=g L^+G\in \Gr$. Then
\[X(\mu,\ga\varphi)= \left\{g L^+G\mid g^{-1}\ga\varphi(g) \in (LG)_{\leq \mu}\right\}/L^+G.\]

Here are a few equivalent definitions.
\begin{enumerate}
\item[(1)] $X(\ga\varphi,\mu)$ is defined by the following Cartesian diagram
\[\begin{CD}
X(\mu,\ga\varphi)@>>> \Gr\tilde\times\Gr_\mu\\
@VVV@VVV\\
\Gr@>1\times \ga\varphi>> \Gr\times \Gr.
\end{CD}\]

\item[(2)] Assume that $\varphi$ is induced by a morphism $\varphi:LG\to LG$ that preserves $L^+G$.
We consider the conjugate action of $LG$ twisted by $\varphi$. I.e. $\on{Int}(g)\cdot^\varphi g'= g^{-1}g' \varphi(g)$. Consider the morphism of stacks
\[\pi: [(LG)_\mu/^\varphi L^+G]\mapsto [LG/^\varphi LG].\]
Then $X(\mu,\ga\varphi)= \pi^{-1}(\ga)$.
\end{enumerate}

\begin{rmk}
In many cases, the quotient $[LG/^\varphi LG]$, although not well-behaved, should be thought as the moduli space of certain important algebraic structures.
\end{rmk}

Here are some concrete examples of the above construction.
\begin{enumerate}
\item[(i)] Let $\varphi=\id$ be the identity map. Then $X(\ga,\mu)$ is a group version affine Springer fiber (cf. \cite[\S~2.7.3]{Yu}). In this case, $[LG/LG]$ classifies conjugacy classes of $G$. There is also a Lie algebra analogue of this construction, which gives the (usual) affine Springer fibers (cf. \cite[\S~2.2]{Yu}).

\item[(ii)] Assume that $k=\bF_q$ and let $\varphi=\sigma$ be the $q$-Frobenius of $\Gr$. Then $X(\mu,\ga\varphi)$ is usually denoted by $X(\mu,b)$ and is called the affine Deligne-Lusztig variety (first introduced in \cite{Ra}). In this case $[LG/^{\sigma}LG]$ classifies $\sigma$-conjugacy classes of $G(\bar{k}((t)))$, which are closely related to Drinfeld Shtukas.

\item[(iii)] We continue to assume that $k=\bF_p$. Let $\varphi:\Gr\to \Gr$ be the morphism induced by the absolute Frobenius $k[[t]]\mapsto k[[t]], \sum a_it^i\mapsto \sum a_i^pt^{ip}$. Write $\ga\varphi$ by $\Phi$. Then $X(\Phi,\mu)$ is called the Kisin variety, and  $[LG/^{\sigma}LG]$ classifies Galois representations $\Gal(\overline F/ F)\to G(\bF_p)$ (see \cite{PR2}). 

\item[(iv)] Assume that $k=\bC$ and let $\varphi$ be induced by the loop rotation $t\mapsto qt$, for some $q\in \bG_m^{\on{rot}}$. Then $[LG/^{\varphi}LG]$ is closely related to $G$-bundles on the elliptic curve $E_q=\bC^\times/q^\bZ$ (Looijenga, Baranovsky and Ginzburg \cite{BG}).

\item[(v)] Instead of taking $\varphi\in \bG_m^{\on{rot}}$, one can let $\varphi\in \Aut^{++}(D)$. A Lie algebra analogue of  this construction was studied in \cite{FZ}.
\end{enumerate}

In each case, some basic questions to ask are
\begin{enumerate}
\item When is the variety finite dimensional?
\item If it is finite dimensional, is there a dimension formula? Are the irreducible components equidimensional?
\item How to parameterise the connected components? 
\item How to parameterise the irreducible components?
\end{enumerate}

These questions are better understood for Case (i) and (ii) (e.g. see \cite{Gortz, Yu} and the references cited there for a summary of known results), but remains widely untouched in other cases. 

\begin{rmk}Thanks to \S\ \ref{p-adic}, affine Springer fibers and affine Deligne-Lusztig varieties now are also defined for $p$-adic groups.\end{rmk}

\subsection{Opposite Schubert ``varieties'' and transversal slices.}
Opposite Schubert ``varieties'' are very different from Schubert varieties $\Gr_{\leq \mu}$ introduced in \S~\ref{II:Sch var}. In fact, they are not really varieties and are infinite dimensional (but finite codimensional). 

First, let $L^-G$ be the presheaf of groups defined as
\[L^-G(R)=G(R[t^{-1}]).\]
In \S~\ref{Kac-Moody}, we shall see that $LG$ is closely related a Kac-Moody group. From this point of view, $L^-G$ then is a ``parabolic opposite'' to $L^+G$.
There is another point of view. Namely, $L^-G$ is a special case of  the group $\underline G^{X^*}$ which will be introduced in \eqref{Gout} (corresponding to the case $X=\bP^1$ and $x=0$). In particular, by Lemma \ref{rep of Gout} below, the group $L^-G$ is represented by an ind-scheme.

We fix $T\subset B\subset G$ as in \S~\ref{II:Sch var}.
Then there is the Birkhoff decomposition of $G(F)$
\begin{equation}\label{Birk decom}
G(F)=\bigsqcup_{\mu\in \xcoch(T)^+} G(k[t^{-1}])t^\mu G(\mO),
\end{equation}
which induces
\begin{equation}\label{Birk decom 2}
L^-G(k)\backslash G(F)/G(\mO)\cong \xcoch(T)^+.
\end{equation}
Similar to the Cartan decomposition \eqref{Cartan decomp2}, \eqref{Birk decom 2} is independent of any choice.
We refer to \cite[Lemma 4]{Fa} for a proof of this decomposition for the general $G$.

We define opposite Schubert ``cells'' as $L^-G$-orbits on $\Gr$. By the Birkhoff decomposition \eqref{Birk decom 2}, they are of the form
\[\Gr^\mu=L^-G\cdot t^\mu \subset \Gr,\]
and are parameterised by $\xcoch(T)^+$. 
Let 
$$\Gr^{\geq \mu}=\bigsqcup_{\la\geq \mu}\Gr^\la,$$ called the opposite Schubert variety. The terminology is justified by the following proposition.
\begin{prop}\label{geom of opp Sch}~
\begin{enumerate}
\item $\Gr^\mu$ is a locally closed sub-ind-scheme of $\Gr$.

\item $\Gr_\mu\cap \Gr^\la\neq \emptyset$ if and only if $\mu\geq \la$. In addition, $\Gr_\mu\cap \Gr^\mu= G\cdot t^\mu\cong G/P_\mu$, which is fixed by the rotation torus $\bG_m^{\on{rot}}$.

\item $\Gr^{\geq \mu}$ is Zariski closed, and contains $\Gr^\mu$ as an open dense subset.

\item The codimension of $\Gr^{\geq \mu}$ is $(2\rho,\mu)-\dim G/P_\mu$.
\end{enumerate}
\end{prop}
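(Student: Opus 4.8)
The plan is to reduce the computation to a tangent-space calculation at the single point $t^\mu$, using the finite-dimensional Schubert cell $\Gr_\mu$ as a transversal slice. By part (3), $\Gr^\mu$ is open and dense in $\Gr^{\geq\mu}$, so both have the same codimension in $\Gr$, and it suffices to compute $\on{codim}_\Gr\Gr^\mu$. Combining parts (1) and (2) with the decomposition $\Gr^{\geq\mu}=\bigsqcup_{\la\geq\mu}\Gr^\la$, one first observes that $\Gr_\mu\cap\Gr^{\geq\mu}=\Gr_\mu\cap\Gr^\mu=G\cdot t^\mu\cong G/P_\mu$: indeed $\Gr_\mu\cap\Gr^\la\neq\emptyset$ forces $\la\geq\mu$ and $\mu\geq\la$ simultaneously, hence $\la=\mu$, and then part (2) identifies the intersection with $G/P_\mu$.

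The key step is to show that $\Gr_\mu$ meets $\Gr^{\geq\mu}$ transversally along $G\cdot t^\mu$. I would use the identification $T_{t^\mu}\Gr\cong\frakg(F)/\Ad_{t^\mu}\frakg(\mO)$, under which the left-multiplication action realizes $T_{t^\mu}\Gr_\mu$ as the image of $\Lie(L^+G)=\frakg(\mO)$ and $T_{t^\mu}\Gr^\mu$ as the image of $\Lie(L^-G)=\frakg(k[t^{-1}])$; these are the differentials of the respective orbit maps, which are surjective because both orbits are homogeneous. Since $k[[t]]+k[t^{-1}]=k((t))$ as $k$-vector spaces, we get $\frakg(\mO)+\frakg(k[t^{-1}])=\frakg(F)$, whence $T_{t^\mu}\Gr_\mu+T_{t^\mu}\Gr^\mu=T_{t^\mu}\Gr$. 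This is transversality at $t^\mu$; since $G\subset L^+G$ and $G\subset L^-G$ as constant loops, the subvarieties $\Gr_\mu$, $\Gr^\mu$ and $G\cdot t^\mu$ are all $G$-stable, so transversality propagates to every point of the intersection $G\cdot t^\mu$.

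Granting transversality, the finite-dimensional slice $\Gr_\mu$ computes the codimension: $\on{codim}_\Gr\Gr^{\geq\mu}=\on{codim}_{\Gr_\mu}(G\cdot t^\mu)=\dim\Gr_\mu-\dim G/P_\mu$. Plugging in $\dim\Gr_\mu=(2\rho,\mu)$ from Proposition~\ref{geom of Sch} yields the claimed value $(2\rho,\mu)-\dim G/P_\mu$. As a cross-check one can compute $\on{codim}_\Gr\Gr^\mu$ directly, root space by root space: $\Ad_{t^\mu}$ scales $\frakg_\al(\mO)$ by $t^{\langle\al,\mu\rangle}$, and the image of $k[t^{-1}]$ in $k((t))/t^{\langle\al,\mu\rangle}k[[t]]$ has codimension $\max(0,\langle\al,\mu\rangle-1)$; summing over positive roots gives $\sum_{\al>0}\max(0,\langle\al,\mu\rangle-1)=(2\rho,\mu)-\#\{\al>0:\langle\al,\mu\rangle>0\}=(2\rho,\mu)-\dim G/P_\mu$, in agreement.

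The main obstacle is the rigor of the transversal-slice argument in the ind-scheme setting: both $\Gr^{\geq\mu}$ and $T_{t^\mu}\Gr$ are infinite-dimensional, so one must check that ``codimension'' is well defined and genuinely computed by a finite-dimensional slice. The saving feature is that $\Gr^{\geq\mu}$ has \emph{finite} codimension while $\Gr_\mu$ is finite-dimensional, so once transversality $T_x\Gr_\mu+T_x\Gr^\mu=T_x\Gr$ is verified at each $x\in G\cdot t^\mu$—where $\Gr^{\geq\mu}$ is smooth, coinciding there with the open cell $\Gr^\mu$—the equality of codimensions is the standard statement for a transverse intersection. A secondary point to confirm is that $T_{t^\mu}\Gr^\mu$ really is the image of $\Lie(L^-G)$, i.e. that the $L^-G$-orbit is (formally) smooth with the expected tangent space; this follows from its homogeneity under $L^-G$.
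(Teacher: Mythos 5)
Your proposal addresses only part (4) of the proposition: parts (1)--(3) are invoked as known inputs (``by part (3)\ldots'', ``combining parts (1) and (2)\ldots''), but they are part of the statement to be proved and they carry most of the weight. In the paper, (1) requires exhibiting $\Gr^\mu=L^-G\cdot t^\mu$ as an increasing union of locally closed orbits $K_it^\mu$ for finite-dimensional subgroups $K_i\subset L^-G$ containing the stabiliser $L^-G\cap t^\mu L^+Gt^{-\mu}$; and (3) requires two genuinely separate arguments: that $\bigsqcup_{\la\leq\mu}\Gr^\la=L^-G\cdot t^\mu\cdot\Gr^0$ is open, which rests on the open embedding $L^+G\times L^{<0}G\to LG$ of Lemma \ref{big cell}, and that $\Gr^\la\subset\overline{\Gr^\mu}$ for every $\la>\mu$, which is proved by exhibiting the explicit rational curves $C_{\mu+\al,\al}$ of \eqref{curve C}. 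None of this appears in your write-up, so as a proof of the proposition it is incomplete.

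For part (4) itself, your ``cross-check'' root-space computation is correct and is essentially the paper's argument in disguise: summing $\max(0,\langle\al,\mu\rangle-1)$ over roots amounts to computing $\dim(L^-G\cap t^\mu L^+Gt^{-\mu})=(2\rho,\mu)+\dim P_\mu$ and subtracting $\dim G=\dim(L^-G\cap L^+G)$, the stabiliser at the base point of the open cell $\Gr^0$. The transversal-slice argument you put forward as the main route is shakier: as you yourself note, ``transversality of tangent spaces implies additivity of codimensions'' is not a theorem in the ind-scheme setting, and the only way to make the slice honest is again Lemma \ref{big cell} (translated by $t^\mu$), which identifies a neighbourhood of $t^\mu$ with a product and is exactly the ingredient already needed for (3); the paper in fact derives the transversal-slice statement \emph{from} this proposition rather than the other way around. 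I would therefore promote your direct count to the actual proof of (4) and supply proofs of (1)--(3) along the lines above.
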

\begin{proof}
(1) The stabiliser of $t^\mu$ in $L^-G$ is $L^-G\cap t^\mu L^+G t^{-\mu}$, which is a finite dimension subgroup. By writing $L^-G=\underrightarrow\lim K_i$ such that $K_i\supset L^-G\cap t^\mu L^+G t^{-\mu}$, we see that each $K_it^\mu$ is locally closed in $\Gr$ and $\Gr^\mu=\underrightarrow\lim K_it^\mu$ is an ind-scheme. We also note that 
\begin{equation}\label{dim opp Sch}
\dim(L^-G\cap t^\mu L^+G t^{-\mu})=(2\rho,\mu)+\dim P_\mu,
\end{equation} 
which follows by calculating the dimension of its Lie algebra as in the argument for Proposition \ref{geom of Sch}.

(2) We leave it as an exercise.

(3) We first show that $\Gr^0$ is open in the neutral connected component of $\Gr$. Note that there is an evaluation map
\[L^-G\to G,\quad g\mapsto g\mod t^{-1}.\]
We define $L^{<0}G=\ker(L^-G\to G)$ as the kernel of the above map. Note that $\Gr^0= L^{<0}G\cdot e$.
\begin{lem}\label{big cell}
Then the multiplication map
\[L^{+}G\times L^{<0}G\to LG\]
is an open embedding. In particular, $L^{<0}G\cong L^{<0}G\cdot e\subset \Gr$ is an open immersion.
\end{lem}
\begin{proof}Note that $L^{<0}G\cap L^+G=\{1\}$ and at the level of Lie algebras $L\frakg=L^+\frakg\oplus L^{<0}\frakg$, where $L^+\frakg=\frakg(\mO)$ and $L^{<0}\frakg=t^{-1}\frakg\otimes k[t^{-1}]$.
Therefore $L^{+}G\times L^{<0}G\to LG$ is an open embedding. 
\end{proof}
Now consider $L^-G\cdot t^\mu\cdot \Gr^0$. It is open and is a union of $L^-G$-orbits. It is easy to see that $t^\la$ belongs to it  if and only if $\Gr^\la\cap \Gr_\mu\neq \emptyset$ and therefore by (2) if and only if $\la\leq \mu$. Therefore, $\sqcup_{\la\leq \mu}\Gr^\la=L^-G\cdot t^\mu \cdot \Gr^0$ is open. It follows that $\Gr^{\geq \mu}$ then is closed. It remains to show that $\Gr^\la$ is in the closure of $\Gr^\mu$ for every $\la>\mu$. As argued in Proposition \ref{geom of Sch}, we can assume that $\la=\mu+\al$ for some positive root $\al$. Let $C_{\mu+\al,\al}$ be the curve defined by \eqref{curve C}. Then it is easy to see that $t^{\mu+\al}\in C_{\mu+\al,\al}$ and the complementary $\bA^1\subset \Gr^\mu$. This finishes the proof.

(4) follows from (3) and \eqref{dim opp Sch}.
\end{proof}
\begin{rmk} (i) The open subset $\Gr^0$ is usually called the ``big open cell'' of $\Gr$.

(ii) As we just see, the roles played by $L^+G$ and $L^-G$ are different. So it is also natural to consider the quotient $LG/L^-G$. By Theorem \ref{uniformization for P1} below, it can be interpreted as the moduli of $G$-torsors on $\bP^1$ together with a trivialisation on $D_0$ (the disc around $0\in \bP^1$). Sometimes in literature, $LG/L^-G$ is called the thick affine Grassmannian of $G$. Note that unlike $\Gr$, it is of global nature (i.e. its definition involves a global curve) and is a scheme (of infinite type).  
\end{rmk}

The best way to understand opposite Schubert cells/varieties is via the following theorem. Let $\Bun_G(\bP^1)$ denote the moduli stack of $G$-bundles on $\bP^1$. This is an algebraic stack locally of finite presentation over $k$ (e.g. see \cite{So}).

\begin{thm}\label{uniformization for P1}
Assume that $G$ is connected reductive over $k$. There is a canonical isomorphism 
\[ [L^-G\backslash \Gr]\simeq \Bun_G(\bP^1).\]
In fact, the map $\Gr\to \Bun_G(\bP^1)$ exhibits $\Gr$ as an $L^-G$-torsor on $\Bun_G(\bP^1)$ in \'etale topology.
\end{thm}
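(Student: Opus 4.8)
The plan is to prove the stronger ``in fact'' statement directly --- that the forgetful map $\Gr\to\Bun_G(\bP^1)$ is an $L^-G$-torsor for the \'etale topology --- since the isomorphism $[L^-G\backslash\Gr]\simeq\Bun_G(\bP^1)$ then follows formally: if $\Gr\to Y$ is an $L^-G$-torsor, then $Y\simeq[\Gr/L^-G]=[L^-G\backslash\Gr]$. First I would invoke Beauville--Laszlo (Theorem \ref{glob aff}) with $X=\bP^1$ and $x=0$ to reinterpret $\Gr$ as the functor classifying pairs $(\mE,\beta)$, where $\mE$ is a $G$-torsor on $\bP^1_R$ and $\beta$ is a trivialisation of $\mE$ over $(\bP^1\setminus 0)_R$. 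Since $\bP^1\setminus 0\cong\Spec k[t^{-1}]$ is affine, the automorphisms of the trivial torsor $\mE^0$ over $(\bP^1\setminus 0)_R$ are exactly $\Hom\bigl((\bP^1\setminus 0)_R,G\bigr)=G(R[t^{-1}])=L^-G(R)$, and the (left) action of $L^-G$ on $\Gr$, obtained by restricting \eqref{I:action} along $L^-G\subset LG$, is identified under this description with the action of $\Aut\bigl(\mE^0|_{(\bP^1\setminus 0)_R}\bigr)$ on the datum $\beta$. The forgetful map $u\colon(\mE,\beta)\mapsto\mE$ is then manifestly $L^-G$-invariant and furnishes the morphism $\Gr\to\Bun_G(\bP^1)$.

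\textbf{Torsor bookkeeping.} Next I would check the two defining properties of a torsor. The fibre condition is immediate: if $(\mE,\beta)$ and $(\mE,\beta')$ lie over the same $\mE\in\Bun_G(\bP^1)(R)$, then $g:=\beta'\circ\beta^{-1}$ is the unique element of $\Aut\bigl(\mE^0|_{(\bP^1\setminus 0)_R}\bigr)=L^-G(R)$ carrying one to the other, so the action map $L^-G\times\Gr\to\Gr\times_{\Bun_G(\bP^1)}\Gr$ is an isomorphism. It therefore remains to establish \'etale-local surjectivity of $u$: every $G$-torsor $\mE$ on $\bP^1_R$ should, after an \'etale cover $\Spec R'\to\Spec R$, admit a trivialisation over $(\bP^1\setminus 0)_{R'}$. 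Granting this, $u$ is an $L^-G$-torsor for the \'etale topology, and both assertions of the theorem follow simultaneously.

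\textbf{The key input (main obstacle).} The essential content, and where the whole difficulty resides, is this \'etale-local triviality of $\mE$ over $(\bP^1\setminus 0)_R\cong\bA^1_R$. I would argue it as follows. With $s:=t^{-1}$ a coordinate on $\bP^1\setminus 0=\Spec k[s]$, the point $\infty\in\bP^1$ is the origin $s=0$; restricting $\mE$ along the section $\{\infty\}_R\cong\Spec R$ gives a $G$-torsor on $\Spec R$, which --- $G$ being smooth and affine --- is trivial after some \'etale cover $\Spec R'\to\Spec R$. Over $R'$ the restriction $\mE|_{\bA^1_{R'}}$ is then a $G$-torsor on the affine line whose restriction to the origin is trivial, and by the theorem of Raghunathan--Ramanathan on principal bundles over the affine line (equivalently, the relevant case of the Drinfeld--Simpson local triviality theorem) such a torsor is pulled back from $\Spec R'$, hence trivial. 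This produces the sought-after trivialisation $\beta$ over $(\bP^1\setminus 0)_{R'}$ and completes the proof.

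\textbf{Remark on scope.} The reductivity of $G$ enters precisely through this triviality statement; the \emph{formal} structure of the argument --- identifying $L^-G$ as the automorphisms of $\mE^0$ over the affine complement and recognising $u$ as a torsor --- is robust, so the force of the theorem is concentrated entirely in the Raghunathan--Ramanathan/Drinfeld--Simpson input. (One should note that on $\bP^1$ the complement $\bA^1$ is special enough that no ``degree twist'' correction is needed, in contrast to the general-curve uniformization theorems.)
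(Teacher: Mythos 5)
Your formal setup coincides with the paper's: via Beauville--Laszlo one reads $\Gr$ as pairs $(\mE,\beta)$ with $\beta$ a trivialisation over $\bP^1\setminus\{0\}=\Spec k[t^{-1}]$, the freeness/fibre condition is immediate, and the whole theorem reduces to showing that every $G$-torsor on $\bP^1_R$ becomes trivial on $(\bP^1\setminus\{0\})_{R'}$ for some \'etale cover $\Spec R'\to\Spec R$. The gap is in your key step. After trivialising along the section at $\infty$ you invoke ``Raghunathan--Ramanathan'' to conclude that a $G$-torsor on $\bA^1_{R'}$ which is trivial at the origin is pulled back from $\Spec R'$. Raghunathan--Ramanathan is a theorem about $\bA^1$ over a \emph{field}; the relative statement over an arbitrary $k$-algebra $R'$ is not a theorem and is in fact false. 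Already for $G=\bG_m$ it fails: by the Traverso--Swan theorem $\Pic(R')\to\Pic(R'[s])$ is an isomorphism only when $R'_{\on{red}}$ is seminormal, so for $R'=k[x,y]/(y^2-x^3)$ there are line bundles on $\bA^1_{R'}$ trivial along the zero section yet not pulled back from $\Spec R'$; and since for reduced rings the units of $A[s]$ are the units of $A$, a \v{C}ech computation shows such classes survive any \'etale cover of $\Spec R'$. For semisimple $G$ the relative statement is a Bass--Quillen/Grothendieck--Serre type problem, known only under regularity hypotheses you cannot impose on $R'$. Nor is the Drinfeld--Simpson local triviality theorem a substitute as you use it: it concerns torsors on $X\times S$ restricted to $(X\setminus D)\times S$ --- i.e.\ it crucially exploits the extension to the projective curve, which your argument discards --- it is stated for semisimple $G$, and its \'etale (rather than fppf) form carries a condition on $\pi_1(G)$.

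The repair, which is the paper's actual proof, is to remember that your torsor on $\bA^1_{R'}$ extends to $\bP^1_{R'}$. One first reduces the structure group to a Borel $B$ after an \'etale base change (Drinfeld--Simpson, Theorem 1), and then trivialises the resulting $B$-bundle over $(\bP^1\setminus\{0\})_{R'}$ using the filtration of $B$ with graded pieces $\bG_a$ and $\bG_m$: the $\bG_a$-contributions are torsors under quasi-coherent sheaves on the affine scheme $\bA^1_{R'}$ and hence trivial, while the $\bG_m$-contributions are restrictions of line bundles on $\bP^1_{R'}$, which Zariski-locally on $\Spec R'$ have the form $\mO(d)\otimes\pi^*M$ and so are trivial on $\bA^1_{R'}$. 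It is precisely this use of the extension to $\bP^1$ (trivial Jacobian, and $\mO(d)$ trivial on the affine chart) that makes the statement hold unconditionally for connected reductive $G$, whereas your purely affine route does not close.
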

\begin{proof}
The statement is equivalent to saying that after passing to an \'etale covering map $R\to R'$ every $G$-bundle on $\bP^1\times \Spec R$ is trivial on $(\bP^1-\{0\})\times \Spec R'$. This follows from: (i) every $G$-bundle on $X_R$ admits a reduction to the Borel after a possible \'etale base change $R\to R'$ (\cite[Theorem 1]{DS}), and (ii) every $B$-bundle on $(\bP^1-\{0\})\times \Spec R$ is trivial after a possible \'etale base change $R\to R'$ since $B$ has a filtration by subgroups with the associated graded being $\bG_a$s and $\bG_m$s.
\end{proof}

At the level of $k$-points, the theorem says that
\[ \Bun_{G}(\bP^1)(k)\cong G(k[t^{-1}])\backslash G(k((t))/ G(k[[t]]).\]
In particular, Grothendieck's theorem for vector bundles over $\bP^1$ is equivalent to the Birkhoff decomposition \eqref{Birk decom} (for $G=\GL_n$).
Explicitly, the $G$-bundle corresponding to the double coset for $t^\mu$ is the one induced from the line bundle $\mO_{\bP^1}(1)$ by pushout along $\bG_m\stackrel{\mu}{\to}T\subset G$. We denote this $G$-bundle by $\mO(\mu)$, and let $[\mO(\mu)]$ denote the corresponding point on $\Bun_G(\bP^1)$.
Then Proposition \ref{geom of opp Sch} can be reformulated as
\begin{prop}\label{geom of strata}~
\begin{enumerate}
\item The point $[\mO(\la)]$ is in the closure of $[\mO(\mu)]$ if and only if $\la\geq \mu$; 
\item The closure of $[\mO(\mu)]$ is a closed substack of dimension $-(2\rho,\mu)-\dim P_\mu$.
\end{enumerate}
\end{prop}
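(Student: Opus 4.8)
The plan is to transport the entire statement through the uniformization isomorphism of Theorem \ref{uniformization for P1}, reducing both assertions to the already-established geometry of opposite Schubert cells in Proposition \ref{geom of opp Sch}. Write $\pi:\Gr\to\Bun_G(\bP^1)\cong[L^-G\backslash\Gr]$ for the quotient map. By Theorem \ref{uniformization for P1} this is an $L^-G$-torsor (\'etale-locally trivial), hence faithfully flat, surjective, and in particular open. Under the Birkhoff decomposition \eqref{Birk decom} the $G$-bundle $\mO(\mu)$ corresponds to the double coset of $t^\mu$, so the reduced point $[\mO(\mu)]\in\Bun_G(\bP^1)$ is exactly the image $\pi(\Gr^\mu)$ of the opposite Schubert cell $\Gr^\mu=L^-G\cdot t^\mu$, and $\pi^{-1}([\mO(\mu)])=\Gr^\mu$.

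For part (1), I would use the fact that for an open surjective continuous map the preimage of a closure is the closure of the preimage: $\pi^{-1}\bigl(\overline{\{[\mO(\mu)]\}}\bigr)=\overline{\pi^{-1}(\{[\mO(\mu)]\})}=\overline{\Gr^\mu}$. By Proposition \ref{geom of opp Sch}(3) the right-hand side is the opposite Schubert variety $\Gr^{\geq\mu}=\bigsqcup_{\la\geq\mu}\Gr^\la$. Since $\pi$ is surjective, it follows that $\overline{\{[\mO(\mu)]\}}=\pi(\Gr^{\geq\mu})$, so $[\mO(\la)]$ lies in the closure of $[\mO(\mu)]$ if and only if $t^\la\in\Gr^{\geq\mu}$, i.e. if and only if $\la\geq\mu$. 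This is exactly assertion (1).

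For part (2), I would compute the dimension of the reduced point $[\mO(\mu)]$ as a substack and then invoke density. The restriction $\pi:\Gr^\mu\to[\mO(\mu)]$ identifies $[\mO(\mu)]$ with the residual gerbe $[L^-G\backslash\Gr^\mu]\cong B\,\Aut(\mO(\mu))$, where $\Aut(\mO(\mu))$ is the stabiliser of $t^\mu$ in $L^-G$, namely $L^-G\cap t^\mu L^+G t^{-\mu}$. This is a \emph{finite-dimensional} algebraic group (as noted in the proof of Proposition \ref{geom of opp Sch}(1)), of dimension $(2\rho,\mu)+\dim P_\mu$ by \eqref{dim opp Sch}. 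Hence $\dim[\mO(\mu)]=-\dim\Aut(\mO(\mu))=-(2\rho,\mu)-\dim P_\mu$. Because $\Gr^\mu$ is open and dense in $\Gr^{\geq\mu}$ by Proposition \ref{geom of opp Sch}(3), its image $[\mO(\mu)]$ is open dense in the closure $\overline{\{[\mO(\mu)]\}}$, so the closure has the same dimension, giving (2).

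The main obstacle I anticipate is making the dimension bookkeeping rigorous in the face of the infinite-dimensionality of $\Gr$ and $L^-G$: one must argue relative to the torsor $\pi$ rather than naively subtracting infinite dimensions, and justify the formula $\dim[\mathrm{pt}/H]=-\dim H$ for the finite-dimensional automorphism group $H=\Aut(\mO(\mu))$, together with the fact that $\Bun_G(\bP^1)$ is an algebraic stack locally of finite presentation so that all these dimensions are well defined. The closure--preimage compatibility used in (1) also tacitly relies on $\pi$ being an open morphism, which I would deduce from the flatness of the torsor.
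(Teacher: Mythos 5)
Your proof is correct and follows exactly the route the paper intends: the paper offers no separate argument for Proposition \ref{geom of strata}, stating only that it is Proposition \ref{geom of opp Sch} ``reformulated'' through the isomorphism $\Bun_G(\bP^1)\simeq[L^-G\backslash\Gr]$ of Theorem \ref{uniformization for P1}, which is precisely the transport you carry out. Your added care about openness of the torsor map, the identification of the stabiliser with $\Aut(\mO(\mu))$, and the formula $\dim[\mathrm{pt}/H]=-\dim H$ simply fills in details the paper leaves implicit.
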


As an application of Proposition \ref{geom of opp Sch},  we can also introduce the transversal slices.
\begin{prop}$L^{<0}G\cdot t^\la$ is a transverse slice of $\Gr_\la$ in $\Gr$. More precisely, for every $\la\leq \mu$, let $S^\la_{\leq \mu}=L^{<0}G\cdot t^\la\cap \Gr_{\leq \mu}$. Then $S^\la_{\leq \mu}$
intersects $\Gr_\nu$ transversally for every $\la\leq \nu\leq \mu$.
\end{prop}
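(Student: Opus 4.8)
My plan is to reduce the statement to a tangent-space computation at each point of the intersection, exploiting that both participants are single group orbits and hence smooth, and that their tangent spaces are the images of the corresponding Lie algebras. Write $S^\la=L^{<0}G\cdot t^\la$. Since $S^\la\subset \Gr^\la=L^-G\cdot t^\la$ (as $L^{<0}G\subset L^-G$), Proposition \ref{geom of opp Sch}(2) shows $S^\la\cap\Gr_\nu$ is empty unless $\nu\geq\la$; intersecting with $\Gr_{\leq\mu}$ forces $\nu\leq\mu$, which accounts exactly for the range $\la\leq\nu\leq\mu$ and shows $S^\la_{\leq\mu}\cap\Gr_\nu=S^\la\cap\Gr_\nu$. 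It therefore suffices to prove that $S^\la$ and $\Gr_\nu$ meet transversally in $\Gr$ for each such $\nu$, i.e.\ that their Zariski tangent spaces span $T_x\Gr$ at every $x$ in the intersection.

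\textbf{Key computation.} Fix $x\in S^\la\cap\Gr_\nu$ and lift it to $\tilde x\in LG$. Because $\Gr=LG/L^+G$ is homogeneous, the infinitesimal action identifies $T_x\Gr\cong\frakg(F)/\Ad_{\tilde x}\frakg(\mO)$, the kernel $\Ad_{\tilde x}\frakg(\mO)$ being the Lie algebra of the stabiliser $\tilde x L^+G\tilde x^{-1}$. By Proposition \ref{geom of Sch}(1) the cell $\Gr_\nu$ is the single orbit $L^+G\cdot x$, so $T_x\Gr_\nu$ is the image of $\frakg(\mO)=\Lie(L^+G)$; likewise $S^\la=L^{<0}G\cdot x$ is a single $L^{<0}G$-orbit, so $T_xS^\la$ is the image of $L^{<0}\frakg=t^{-1}\frakg[t^{-1}]=\Lie(L^{<0}G)$. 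Hence the sum $T_x\Gr_\nu+T_xS^\la$ is the image of $\frakg(\mO)+L^{<0}\frakg$ in $\frakg(F)/\Ad_{\tilde x}\frakg(\mO)$. The point is then the decomposition $\frakg(F)=\frakg(\mO)\oplus L^{<0}\frakg$ already used in Lemma \ref{big cell} (Laurent series split as Taylor part plus strictly polar part), which gives $\frakg(\mO)+L^{<0}\frakg=\frakg(F)$ independently of $\tilde x$. Therefore the two tangent spaces span $T_x\Gr$ at every intersection point, which is the desired transversality.

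\textbf{Main obstacle and finishing.} The genuine work is not the spanning identity but its two endpoints. First, I must justify that $T_xS^\la$ really is the image of $L^{<0}\frakg$: this requires knowing that $S^\la$ is smooth and that the orbit map $L^{<0}G\to S^\la$ is a submersion onto its image in the ind-scheme setting. This follows from the orbit being a locally closed sub-ind-scheme (in the spirit of Proposition \ref{geom of opp Sch}(1)) together with homogeneity of orbits under group ind-schemes; some care is needed because $S^\la$ is infinite-dimensional (though finite-codimensional), so I would verify the spanning as an honest equality of $k$-vector spaces rather than by a dimension count, which the explicit direct-sum decomposition makes clean. Second, to upgrade the pointwise tangent statement to the scheme-theoretic assertion that the intersection is transverse, I would invoke that both $S^\la$ and $\Gr_\nu$ are smooth and that their tangent spaces span at every closed point, whence the scheme-theoretic intersection $S^\la\cap\Gr_\nu$ is itself smooth (in particular reduced) of the expected codimension. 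As a sanity check justifying the word ``slice,'' the case $\nu=\la$ gives $S^\la\cap\Gr_\la=\{t^\la\}$, a reduced point, consistent with $S^\la$ being complementary in dimension to $\Gr_\la$.
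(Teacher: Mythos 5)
Your argument is correct and is essentially the paper's: the proof in the text is a one-line reduction to Lemma \ref{big cell} (the open embedding $L^{+}G\times L^{<0}G\to LG$), and your tangent-space computation is exactly the infinitesimal content of that lemma, namely the decomposition $L\frakg=\frakg(\mO)\oplus L^{<0}\frakg$. The one caveat you correctly flag yourself — upgrading pointwise spanning of tangent spaces to a scheme-theoretic transversality statement in the ind-scheme setting — is most cleanly handled by the group-level open embedding of Lemma \ref{big cell}, i.e.\ by the local product structure it provides near $S^\la$.
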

\begin{proof}It follows from Lemma \ref{big cell}.
\end{proof}

\begin{rmk}\label{symplectic sing}
The variety $S^\la_{\leq \mu}$ is an example of a variety with symplectic singularities (cf. \cite[Theorem 2.7]{KWWY}). We refer to \cite[Definition 1.1]{Bea} for the definition of this notion. According to \cite[Proposition 1.3]{Bea}, symplectic singularities are rational Gorenstein, which is consistent with Theorem \ref{singularity}.
\end{rmk}
We discuss a few examples.  
\begin{ex}\label{slice GLn}
We consider $G=\GL_n$. Let $\mu=(n,0,\ldots,0)$. First, we have
\begin{lem}\label{Lus obs}
The map \eqref{nil to Gr} is an open embedding.
\end{lem}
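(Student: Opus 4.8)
The plan is to identify the image of \eqref{nil to Gr} with an explicit open subscheme of $\Gr_{\leq\mu}$ and to exhibit the inverse morphism. For $\mu=(n,0,\dots,0)$ the description of Example \ref{momega1} equips $\Gr_{\leq\mu}$ with a universal quotient bundle $\mathcal Q=\La_0/\La$, locally free of rank $n$, carrying the endomorphism $\bar t$ given by multiplication by $t$ (well defined since each $\La$ is an $R[[t]]$-submodule). The whole argument hinges on one integral statement: the section $\bar t^{\,n}$ of the endomorphism bundle of $\mathcal Q$ vanishes, equivalently $t^{n}\La_0\subset\La$ holds universally.

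I would establish this first. At any closed point $\La$, the quotient $\La_0/\La$ is an $n$-dimensional $t$-power-torsion module with $\inv(\La)\leq(n,0,\dots,0)$, so $t$ acts nilpotently and $\bar t^{\,n}=0$ there; hence $\bar t^{\,n}$ vanishes at every closed point of $\Gr_{\leq\mu}$. Since $\Gr_{\leq\mu}$ is reduced and of finite type over the algebraically closed field $k$, a global section of a vector bundle that vanishes at all closed points is identically zero, so $\bar t^{\,n}=0$. This is precisely where the reducedness of the Schubert variety is used, and it is the main obstacle: over a non-reduced test ring $R$ there is no a priori reason for the matrix produced below to be nilpotent, and it is the universal vanishing on the reduced scheme that forces nilpotence integrally.

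Next I would single out the candidate open locus. For each $R$ let $c\colon R^{n}\to\La_0/\La$ be the composite of the inclusion of the constant vectors $R^{n}\subset R[[t]]^{n}=\La_0$ with the projection onto $\mathcal Q$; since source and target are projective of rank $n$, the locus where $c$ is an isomorphism is the non-vanishing locus of $\det c$, hence an open subscheme $U\subset\Gr_{\leq\mu}$. I would then verify that \eqref{nil to Gr} factors through $U$: for $\La=(t-A)\La_0$ one has $(t-A)v\in\La$ for all $v$, so $tv\equiv Av\pmod{\La}$; iterating shows that the constant vectors already surject onto $\La_0/\La$, and a surjection $R^{n}\twoheadrightarrow\mathcal Q$ of rank-$n$ projectives is an isomorphism. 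Thus $c$ is an isomorphism on the image and the image lies in $U$.

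Finally I would build the inverse $U\to\mN_{n}$ by $\La\mapsto A:=c^{-1}\circ\bar t\circ c$. By the first step $\bar t^{\,n}=0$, so $A^{n}=0$ and $A\in\mN_{n}(R)$, making the map well defined. The two constructions are mutually inverse: starting from $A$ and forming $(t-A)\La_0$, the relation $\overline{tv}=\overline{Av}$ returns $c^{-1}\bar t c=A$; conversely, from $\La\in U(R)$ with $A=c^{-1}\bar t c$ the same relation gives $(t-A)R^{n}\subset\La$, whence $(t-A)\La_0\subset\La$ because $\La$ is $R[[t]]$-stable, and a surjection $\La_0/(t-A)\La_0\twoheadrightarrow\La_0/\La$ of rank-$n$ projectives is an isomorphism, forcing $(t-A)\La_0=\La$. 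These are natural in $R$, hence inverse isomorphisms of functors $\mN_{n}\cong U$; composing with the open immersion $U\hookrightarrow\Gr_{\leq\mu}$ shows that \eqref{nil to Gr} is an open embedding. Apart from the reducedness input, the remaining steps are the elementary lattice computations just sketched.
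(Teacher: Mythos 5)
Your route is genuinely different from the paper's: there one sends $A\mapsto 1-At^{-1}\in L^{<0}\GL_n$, invokes the big--cell Lemma \ref{big cell} to see that $A\mapsto (t-A)\La_0$ is a locally closed immersion into $\Gr$, and then upgrades ``locally closed'' to ``open'' by the dimension count $\dim\mN_n=\dim\Gr_{\leq\mu}=n(n-1)$ together with irreducibility and reducedness of $\Gr_{\leq\mu}$. Your plan of identifying the image with the explicit open locus $U$ where the constant vectors trivialise $\La_0/\La$, and writing down the inverse, is a legitimate alternative, and most of the lattice computations are fine.

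However, the step you single out as the crux contains a genuine error: from $\bar t^{\,n}=0$ you deduce $A^n=0$ and then assert $A\in\mN_n(R)$. That implication is false over non-reduced $R$. Since $\mN_n$ is a \emph{variety}, its ideal is the radical of the ideal generated by the entries of $A^n$ (equivalently, it is generated by the coefficients of the characteristic polynomial), and this radical is strictly larger: for $n=2$, $R=k[\epsilon]/(\epsilon^2)$ and $\cha\ k\neq 2$, the matrix $A=\epsilon\cdot\Id$ satisfies $A^2=0$ while $\tr A=2\epsilon\neq 0$, so $A\notin\mN_2(R)$. (Relatedly, $(t-\epsilon)\La_0$ is an $R$-point of the quot functor displayed in Example \ref{momega1} but \emph{not} of the reduced Schubert variety; that displayed functor describes a possibly non-reduced thickening of $\Gr_{\leq\mu}$, a point your argument glosses over when it treats $R$-points of $\Gr_{\leq\mu}$ as arbitrary lattices with projective quotient.) The repair is short and in the spirit of your Step 1: either observe that \emph{all} coefficients of the characteristic polynomial of $\bar t$ on $\mathcal Q$ vanish at every closed point, hence vanish on the reduced scheme $\Gr_{\leq\mu}$, so that $A=c^{-1}\bar t c$ has characteristic polynomial $t^n$ and genuinely lies in $\mN_n(R)$; or, more cheaply, note that $U$ is reduced (open in a reduced scheme), so the morphism $U\to\gl_n$, which lands set-theoretically in the nilpotent cone, automatically factors through $\mN_n$. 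The same remark supplies the unaddressed verification that $A\mapsto(t-A)\La_0$ factors through the reduced subscheme $\Gr_{\leq\mu}$ in the first place ($\mN_n$ is reduced). With these corrections your argument closes up.
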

\begin{proof}Indeed, the map $\mN_n\to L^{<0}\GL_n,\ A\mapsto 1-At^{-1}$ is a closed embedding and therefore by Lemma \ref{big cell}, the map $\mN_n\to \Gr_{\GL_n},\ A\mapsto (t-A)\La_0$ is a locally closed embedding with image contained in $\Gr_{\leq \mu}$. Therefore, \eqref{nil to Gr} itself is a locally closed embedding. Since $\Gr_{\leq \mu}$ is irreducible and reduced and  $\dim \mN_n=\dim \Gr_{\leq \mu}=n(n-1)$, \eqref{nil to Gr} must be an open embedding. 
\end{proof}
Now let $\la=(1,\ldots,1)$. Note that $\Gr_\la$ is a point. Then Lemma \ref{Lus obs} is equivalent to saying that the map \eqref{nil to Gr} realises the transversal slice $S_{\leq \mu}^\la\subset \Gr_{\leq \mu}$. 
As a consequence $S_{\leq \mu}^\nu$ is isomorphic to the Slodowy slice for $\mO_\nu$ where $\la\leq \nu\leq \mu$.
\end{ex}
\begin{ex}
We assume that $G_\der$ is simple.. Let $\mu=\theta$ be quasi-minuscule. By the same construction as above, 
$S^0_{\leq \theta}$ is isomorphic to the closure of the minimal nilpotent orbit $\overline{\mO}_{\on{min}}$ of $\frakg$.
\end{ex}
\begin{rmk}
Note that Theorem \ref{singularity} implies
nilpotent orbits of $\GL_n$ and the minimal nilpotent orbit of a general semisimple Lie algebra $\frakg$ are normal, Cohen-Macaulay, Gorenstein, and have rational singularities.
This is not true for the general nilpotent orbits in semisimple Lie algebras, which implies that the above relation between transversal slices an nilpotent orbits does not extend to the general case.
\end{rmk}

\subsection{The Picard group.}\label{The Picard group}
We assume that $G$ is simple and simply connected (for simplicity). Let $\theta$ be the short dominant coweight, which we recall is a minimal element in $\xcoch(T)^+-\{0\}$. By Proposition \ref{geom of opp Sch}, $\Theta:=\Gr^{\geq \theta}$ is of codimension one, and $\Gr=\Gr^0\sqcup \Theta$. We show that it is the vanishing loci of a global section of an ample line bundle on $\Gr$.

\begin{prop}\label{big cell 2}
There is an ample line bundle $\mL$ and a global section $\sigma\in\Ga(\Gr,\mL)$ such that $\Gr^0=\{x\in\Gr \mid \sigma(x)\neq 0\}$.
\end{prop}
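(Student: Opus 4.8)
The plan is to realize $\sigma$ as the pullback of a Plücker coordinate on a $\GL_n$-Grassmannian, so that $\Gr^0$ appears as the preimage of a big cell, the ample bundle being the one already constructed. First I would fix a faithful $\mO$-linear representation $\rho\colon G\hookrightarrow\GL_n$ with $\GL_n/G$ affine (Proposition \ref{lin rep}); since $G$ is simple it factors through $\SL_n$. By Proposition \ref{gen G} the induced $f_\rho\colon\Gr\to\Gr_{\GL_n}$ is a closed embedding whose image lies in the neutral connected component (recall $\pi_0(\Gr)=\pi_1(G)_I=1$ by Theorem \ref{components}). I take $\mL:=f_\rho^*\mL_{\det}$, which is exactly the very ample line bundle on $\Gr$ produced at the end of \S~\ref{determinant line}; thus ampleness requires no further argument, and the whole content is the construction of $\sigma$ and the identification of its zero locus.

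Next I would build the section on the $\GL_n$-side. Write $V=k((t))^n$ with standard lattice $\Lambda_0=k[[t]]^n$ and complementary $k$-subspace $\Lambda_0^-=t^{-1}k[t^{-1}]^n$, so $V=\Lambda_0\oplus\Lambda_0^-$. Using the presentation $\Gr^{(N)}\hookrightarrow\Gr(2nN)$ from the proof of Theorem \ref{rep}, with $t^{-N}\Lambda_0/t^N\Lambda_0\cong k^{2nN}$, the image $W_N$ of $t^{-N}\Lambda_0^-$ is a half-dimensional subspace, and the associated Plücker coordinate is a global section of $\mL_{\det}=\mathcal{O}(1)$ on $\Gr(2nN)$ whose non-vanishing locus is the cell of subspaces transverse to $W_N$. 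These coordinates are compatible under $\Gr^{(N)}\hookrightarrow\Gr^{(N+1)}$ and glue to a global $\sigma_0\in\Gamma(\Gr_{\GL_n},\mL_{\det})$; concretely $\sigma_0$ is the determinant of the projection $\Lambda\to V/\Lambda_0^-\cong\Lambda_0$, so that $\{\sigma_0\neq 0\}$ is exactly the transverse locus $\{\Lambda\mid\Lambda\oplus\Lambda_0^-=V\}=L^{<0}\GL_n\cdot\Lambda_0$, the big cell of $\Gr_{\GL_n}$. I then set $\sigma:=f_\rho^*\sigma_0\in\Gamma(\Gr,\mL)$.

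Finally I would identify $\{\sigma\neq 0\}=f_\rho^{-1}(L^{<0}\GL_n\cdot\Lambda_0)$ with $\Gr^0=L^{<0}G\cdot e$. Since $\rho$ is $\mO$-linear, $\rho(L^{<0}G)\subseteq L^{<0}\GL_n$ and $e\mapsto\Lambda_0$, giving $\Gr^0\subseteq\{\sigma\neq 0\}$. For the reverse inclusion I would use the Birkhoff decomposition $\Gr=\bigsqcup_{\la\in\xcoch(T)^+}\Gr^\la$ (see \eqref{Birk decom 2} and Proposition \ref{geom of opp Sch}): the embedding carries the opposite cell $\Gr^\la=L^-G\cdot t^\la$ into the single $\GL_n$-opposite cell of the dominant rearrangement of $\rho_*\la:=\rho\circ\la$, and this equals the big cell (the opposite cell of $0$) only when $\rho_*\la=0$, i.e. $\langle\chi,\la\rangle=0$ for every weight $\chi$ of $\rho$; faithfulness of $\rho$ and semisimplicity of $G$ then force $\la=0$. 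Hence the only opposite cell meeting the big cell is $\Gr^0$, so $\{\sigma\neq 0\}=\Gr^0$ as required.

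The only genuinely computational step, and the one I expect to be the main obstacle, is the middle paragraph: checking that the Plücker coordinate is a \emph{regular} (not merely rational) global section of $\mL_{\det}$ cutting out precisely the non-transverse locus, and that the coordinates for successive $N$ are compatible under the closed embeddings $\Gr^{(N)}\hookrightarrow\Gr^{(N+1)}$. This is the classical Sato-Grassmannian Plücker computation; once it is in place, the reduction to $\GL_n$ and the Birkhoff-positivity argument for the zero locus are formal.
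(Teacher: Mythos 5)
Your proof is correct. The construction of the section itself coincides with the paper's: for $\SL_n$ the paper takes $\sigma=\det(\Upsilon)$ for the two-term complex $R\otimes L_0\oplus\Lambda\to R\otimes L_0\oplus R[[t]]^n$ with $L_0=(t^{-1}k[t^{-1}])^n$, which is precisely your determinant of the projection $\Lambda\to V/L_0\cong\Lambda_0$, and the identification of its non-vanishing locus with $L^{<0}\SL_n\cdot e$ is deferred there to \cite[Lemma 2]{Fa}, just as you defer the Sato--Grassmannian Pl\"ucker computation; so the obligation you flag is real but standard, and is discharged the same way in both treatments. Where you genuinely diverge is the descent from $\GL_n$ back to $G$. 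The paper chooses $\rho$ so that $G$ is the stabilizer in $\GL(V)$ of a vector and quotes \cite[Corollary 3]{Fa} for the identity $f_\rho^{-1}(L^{<0}\SL_n\cdot e)=L^{<0}G\cdot e$; you instead take an arbitrary faithful $\rho$ and derive the same identity from the Birkhoff decomposition \eqref{Birk decom} on both sides, using that $f_\rho$ carries $\Gr^\la=L^-G\cdot t^\la$ into the $\GL_n$-opposite cell indexed by the dominant rearrangement of $\rho\circ\la$ (the Weyl reordering costs only a permutation matrix, which lies in $L^-\GL_n$), that distinct opposite cells of $\Gr_{\GL_n}$ are disjoint, and that triviality of $\rho\circ\la$ forces $\la=0$ because the weights of a faithful representation span $\xch(T)\otimes\bQ$ (semisimplicity is not actually needed for this last step). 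This is a clean, self-contained alternative that avoids the stabilizer-of-a-vector representation and uses only \eqref{Birk decom} and Proposition \ref{geom of opp Sch}, which are already in place; the paper's route, in exchange, bypasses the Birkhoff decomposition for $\GL_n$ entirely and adapts more readily beyond the constant-group case. One point worth making explicit in your write-up is the hinge between your two halves: the transversality locus $\{\Lambda\mid\Lambda\oplus L_0=V\}$ lies in the neutral component of $\Gr_{\GL_n}$ and coincides there with $\Gr^0=L^-\GL_n\cdot\Lambda_0=L^{<0}\GL_n\cdot\Lambda_0$ (the constant loops stabilize $\Lambda_0$), which is what lets the Pl\"ucker computation feed into the Birkhoff argument.
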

\begin{proof}We explain the ideas and refer to \cite[Lemma 2, Corollary 3]{Fa} for details.
We first consider $G=\SL_n$. In fact $L^{<0}G\cdot e$ is the non-vanishing loci of a global section $\sigma$ of $\mL_{\det}$ constructed as follows. Let $L_0=(t^{-1}k[t^{-1}])^n$ and consider 2-term complex on $\Spec R$
\[0\to R\otimes L_0\oplus\Lambda\stackrel{\Upsilon}{\to} R\otimes L_0\oplus R[[t]]^n\to 0,\quad \Upsilon(v,w)=(v,0)+\beta(w),\]
where $\beta: \La\to R((t))=R\otimes L_0\oplus R[[t]]$ is the natural inclusion.
By definition, the determinant cohomology of this complex (or more precisely the complex obtained by dividing both terms by $ R\otimes L_0\oplus t^NR[[t]]^n$ for $N\gg 0$) is exactly the determinant line bundle $\mL_{\det}$ constructed in \S~\ref{determinant line}, and $\sigma=\det(\Upsilon)$ defines a global section of $\mL_{\det}$. Let us represent a point $x\in \Gr$ as $g\La_0$ for some $g\in LG$. Then by definition, $\sigma(g\La_0)\neq 0$ if and only if $\Upsilon$ is an isomorphism at this point. Note that the latter condition in turn is equivalent to $g^{-1} L_0=L_0$ (cf. \cite[Lemma 2]{Fa}). This finishes the proof for $\SL_n$.

Next, let $\rho:G\to \GL(V)$ be a representation such that there is a vector $v\in V$ whose stabiliser in $\GL(V)$ is $G$. Such a representation exists since $G$ is semisimple. Let $f_\rho: \Gr_G\to \GL_{\SL_n}$ be the induced map.
One can show that $f_\rho^{-1}(L^{<0}\SL_n\cdot e)=L^{<0}G\cdot e$. This implies that $\Theta_G$ is the pullback of $\Theta_{\SL_n}$ (as closed subsets). 
\end{proof}

Recall the definition of line bundles on a prestack (cf. \S~\ref{Intro:sp}).
Let $e\in X(k)$ be a point. We define the Picard groupoid $\Pic^e(X)$ of line bundles on $X$ rigidified at $e$ as the groupoids consisting of $(\mL,\epsilon)$ where $\mL$ is a line bundle on $X$ and $\epsilon: e^*\mL\simeq k$ is a trivialisation.
Note that if $X$ is a proper algebraic variety, then $\Pic^e(X)$ is discrete and reduces to the usual Picard group of $X$.

\begin{thm}\label{Pic of Gr}
Assume that $G$ is simple and simply-connected.
Endow $\Theta$
with the reduced closed ind-scheme structure. Then $\Theta$ is an effective Cartier divisor, and the corresponding line bundle $\mO(1):=\mO(\Theta)$ is ample. In addition, $\Pic^e(\Gr)=\bZ\mO(1)$.
\end{thm}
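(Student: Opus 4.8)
The plan is to deduce the first assertion from Proposition \ref{big cell 2} and to obtain the Picard computation from the geometry of the big cell, exploiting that $\Theta$ is the \emph{unique} irreducible divisor in the complement of $\Gr^0$.

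First I would establish that $\Theta$ is an effective Cartier divisor with $\mO(\Theta)$ ample. By Proposition \ref{big cell 2} there is an ample line bundle $\mL$ and a section $\sigma\in\Gamma(\Gr,\mL)$ whose non-vanishing locus is exactly $\Gr^0$, so the scheme-theoretic zero locus of $\sigma$ is an effective Cartier divisor supported on $\Gr\setminus\Gr^0=\Theta$. Since $\Theta=\overline{\Gr^\theta}$ is the closure of a single $L^-G$-orbit it is irreducible, and by Proposition \ref{geom of opp Sch} it has codimension one; hence $\on{div}(\sigma)=m\Theta$ for some $m\geq 1$ and $\mL\cong\mO(m\Theta)$. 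To get $\Theta$ itself Cartier with $\mO(1)=\mO(\Theta)$ ample, I would check that $\sigma$ vanishes to order one along $\Theta$, testing this on the transversal slice $S^0_{\leq\theta}$ (or on the curve $C_{\theta,\al}\cong\bP^1$ of \eqref{curve C}), where the claim reduces to the explicit $\SL_2$/$\SL_n$ computation underlying Proposition \ref{big cell 2}. Ampleness of $\mO(\Theta)$ then follows because a positive power of it is the ample bundle $\mL$.

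For the Picard group I would first reduce the rigidified statement to a group computation: $\Gr$ is connected, reduced (Theorem \ref{components}(2)) and ind-projective with $\Gamma(\Gr,\mO)=k$, so rigidifying at $e$ only removes the $\bG_m$ of global automorphisms and $\Pic^e(\Gr)\cong\Pic(\Gr)$. The heart of the argument is then the excision sequence for divisor classes attached to the open immersion $\Gr^0\hookrightarrow\Gr$ whose reduced complement is the single prime divisor $\Theta$. The big cell is $\Gr^0\cong L^{<0}G$ (Lemma \ref{big cell}); it is isomorphic to an infinite-dimensional affine space and carries a $\bG_m^{\on{rot}}$-action contracting it to $e$, so $\Pic(\Gr^0)=0$. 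Combined with the normality of $\Gr$ (each $\Gr_{\leq\mu}$ is normal by Theorem \ref{singularity}, using that $G$ is simply-connected), excision forces $\Pic(\Gr)$ to be generated by $\mO(\Theta)=\mO(1)$; and $\mO(1)$ has infinite order since, being ample on the ind-projective $\Gr$, it has positive degree on every projective curve. This yields $\Pic^e(\Gr)=\bZ\,\mO(1)$.

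The main obstacle is making this excision rigorous on the ind-scheme $\Gr$, which is not a Noetherian scheme: I would run it on the finite-dimensional normal projective Schubert varieties and pass to $\Pic(\Gr)=\varprojlim_\mu\Pic(\Gr_{\leq\mu})$. The subtlety is that on a fixed $\Gr_{\leq\mu}$ the complement of the slice $S^0_{\leq\mu}=\Gr^0\cap\Gr_{\leq\mu}$ can be reducible, so $\Pic(\Gr_{\leq\mu})$ may have rank larger than one; the crucial point---that only the class of $\Theta$ survives in the limit, i.e. that a line bundle on $\Gr$ is determined by a single integer---is exactly the delicate input, for which I would appeal to Faltings' analysis (\cite[Lemma 2, Corollary 3]{Fa}). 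As a consistency check, over $k=\bC$ this is transparent from Theorem \ref{top triv}: since $\Gr\simeq\Omega K$ one has $H^2(\Gr,\bZ)\cong\bZ$, so all the boundary curve classes are homologous and $\Pic(\Gr)=H^2(\Gr,\bZ)=\bZ$.
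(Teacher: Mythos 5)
There is a genuine gap at the first step: the claim that the section $\sigma$ of Proposition \ref{big cell 2} vanishes to order one along $\Theta$ is false for general $G$. The ample line bundle $\mL$ in that proposition is $f_\rho^*\mL_{\det}$ for a faithful representation $\rho$ (for $G\neq\SL_n$ one takes a representation admitting a vector whose stabiliser is $G$), and by Remark \ref{generator} this pullback is $\mO(d_\rho)$, where $d_\rho$ is the Dynkin index. Restricting to the curve $C_{\theta,\theta}\cong\bP^1$ of \eqref{curve C}, which meets $\Theta$ transversally in the single point $t^\theta$, the section $\sigma|_{C_{\theta,\theta}}$ is a section of $\mO_{\bP^1}(d_\rho)$ vanishing only at $t^\theta$, hence to order $d_\rho$ there; so $\on{div}(\sigma)=d_\rho\,\Theta$. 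One has $d_\rho=1$ only for the standard representations of $\SL_n$ and $\on{Sp}_{2n}$; for $\on{SO}_n$ and $G_2$ the minimal index is $2$, and for $E_8$ every $d_\rho$ is divisible by $60$. Extracting the $d_\rho$-th root of $\mL$ is exactly the content of the theorem and cannot be read off from $\sigma$. This is why the paper instead realises $\mO(\Theta)$ as the pullback of the boundary divisor $\Bun_G(\bP^1)\setminus\mathring{\Bun}_G(\bP^1)$, a pure codimension-one closed substack of a smooth algebraic stack locally of finite presentation, hence automatically Cartier (Lemma \ref{O1 on Bun}); ampleness then does follow, as you say, because a positive multiple of $\mO(\Theta)$ is the pullback of $\mL_{\det}$.

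Your computation of $\Pic^e(\Gr)$ also diverges from the paper's and is incomplete as stated. Excision controls the Weil divisor class group of a normal Noetherian scheme, not its Picard group, and the finite-dimensional approximations $\Gr^0\cap\Gr_{\leq\mu}=S^0_{\leq\mu}$ of the big cell are singular affine varieties whose class groups are nontrivial in general (for $\SL_2$, $S^0_{\leq\theta}$ is the quadric cone $x^2=yz$, with class group $\bZ/2$), so the vanishing you need is not available levelwise; you rightly flag that the surviving input is Faltings' analysis, but that analysis is then carrying the entire weight of the statement. The paper's route is different and more economical: transversality of $C_{\theta,\theta}$ with $\Theta$ together with injectivity of the restriction $\Pic^e(\Gr)\to\Pic(C_{\theta,\theta})$ (proved via Demazure resolutions) gives $\bZ\,\mO(\Theta)\subset\Pic^e(\Gr)\subset\Pic(\bP^1)=\bZ$, forcing equality.
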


\begin{rmk}\label{generator}
If $\rho: G\to \GL_n$ is a representation, the pullback of $\mL_{\det}$ along $f_\rho:\Gr_G\to \Gr_{\GL_n}$ then is a positive multiple of $\mO(1)$, i.e. $f_\rho^*\mL_{\det}=\mO(d_\rho)$ for some positive integer $d_\rho$, called the Dynkin index of the representation $\rho$. A general formula of $d_\rho$ can be found in \cite[Lemma 5.2]{KNR}, see also \cite[Lemma 10.6.1]{So} and \cite[Lemma 4.2]{YZ} (note that although only the case $\cha\ k=0$ was considered in all the above references, the argument of \cite[Lemma 4.2]{YZ} works for arbitrary characteristic).

For example, if $\rho$ is the adjoint representation, then $d_\rho=2h^\vee$, where $h^\vee$ is the dual Coxeter number of $G$. If $\rho$ is the standard representation of $\SL_n$ or $\on{Sp}_{2n}$, then $d_\rho=1$. If $G$ is an orthogonal group $\on{SO}_n$ (say $\cha\ k>2$) and $\rho$ is the standard representation, or $G=G_2$ and $\rho$ is the unique $7$-dimensional representation, then $d_\rho=2$ and the unique square root can be obtained by pullback of the pfaffian line bundle $\mL_{\on{Pf}}$ on $\on{OGr}(V)$ (see Remark \ref{Pfaffians}). 

From a different point of view, the above discussions essentially give the construction of $\mO(1)$ when $G$ is a classical group or $G=G_2$.
However, if $G=E_8$, $60\mid d_\rho$ for any $\rho$. So it is an interesting question to find an explicit construction of $\mO(1)$ in this case.
\end{rmk}

\begin{rmk}\label{square root of adjoint}
It follows that for the adjoint representation $\Ad: G\to \GL(\frakg)$, $f_{\Ad}^*\mL^{-1}_{\det}$ admits a unique rigidified square root, denoted by $\mL_{c}$. This is the so-called critical line bundle on the affine Grassmannian, which plays an important role in the geometric Langlands program (cf. \cite{BD}). 
\end{rmk}

One way to prove this theorem is to identify $\Gr$ with a partial flag variety of the corresponding Kac-Moody group (see \S~\ref{Kac-Moody} below) and then to invoke the results from \cite{Ma}. Here is a more direct approach. Namely, $\mO(1)$ should be the line bundle associated to the divisor $\Theta$. However, since $\Gr$ is not finite dimensional, and in general cannot be written as an increasing union of smooth finite dimensional subvarieties, its really definition relies on the following lemma.
\begin{lem}\label{O1 on Bun}
Assume that $G$ is simple and simply-connected.
There is an open substack $\mathring{\Bun}_G(\bP^1)$ of $\Bun_G(\bP^1)$ classifying trivialisable $G$-torsors on $\bP^1$, and the boundary $\Bun_G\setminus \mathring{\Bun}_G$ is pure of codimension one. Therefore, it is an effective Cartier divisor, defining a line bundle $\mO(1)$ on $\mathring{\Bun}_G(\bP^1)$.
\end{lem}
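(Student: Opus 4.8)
The plan is to pass through the uniformization isomorphism $\Bun_G(\bP^1)\simeq[L^-G\backslash\Gr]$ of Theorem~\ref{uniformization for P1}, under which the opposite Schubert stratification of $\Gr$ becomes the stratification of $\Bun_G(\bP^1)$ by the isomorphism classes $[\mO(\mu)]$, $\mu\in\xcoch(T)^+$. Since every trivialisable $G$-torsor is isomorphic to the trivial one, the locus of trivialisable bundles is the single class $[\mO(0)]$, whose preimage in $\Gr$ is the orbit $\Gr^0=L^{<0}G\cdot e$. I would therefore \emph{define} $\mathring{\Bun}_G(\bP^1):=[L^-G\backslash\Gr^0]$ and establish the three assertions---openness, purity of the boundary, and the Cartier property---in turn.

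First, because $G$ is simply-connected, Theorem~\ref{components}(1) gives $\pi_0(\Gr)\cong\pi_1(G)_I=0$, so $\Gr$ is connected, and by Lemma~\ref{big cell} the $L^-G$-stable cell $\Gr^0$ is open in $\Gr$. The projection $\Gr\to\Bun_G(\bP^1)$ is an $L^-G$-torsor, in particular a smooth surjection and hence an open map, so the image $\mathring{\Bun}_G\cong\bB G$ of $\Gr^0$ is an open substack.

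Second, the complement is $\Gr\setminus\Gr^0=\Gr^{\geq\theta}=:\Theta$, the opposite Schubert variety of the quasi-minuscule coweight $\theta$ (this is the decomposition $\Gr=\Gr^0\sqcup\Theta$ recorded before Proposition~\ref{big cell 2}). By Proposition~\ref{geom of opp Sch}(3), $\Theta$ is the closure of the single orbit $\Gr^\theta$, hence irreducible and closed; by part~(4) its codimension equals $(2\rho,\theta)-\dim(G/P_\theta)$. The quasi-minuscule relations $(\al,\theta)=2$ for the unique positive root $\al=\theta^\vee$ and $(\al,\theta)\in\{0,1\}$ otherwise give $(2\rho,\theta)=\sum_{\al>0}(\al,\theta)=\dim(G/P_\theta)+1$, so $\Theta$ has codimension exactly one. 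Descending to the quotient, the reduced boundary $[L^-G\backslash\Theta]$ is an irreducible closed substack of $\Bun_G(\bP^1)$ of pure codimension one.

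Finally, to promote this to an effective Cartier divisor I would use that $\Bun_G(\bP^1)$ is smooth, and work on a smooth finite-type atlas $U\to\Bun_G(\bP^1)$. There the pullback of the boundary is a reduced closed subscheme of pure codimension one in a regular scheme; since regular local rings are unique factorisation domains, every height-one prime is principal, so this subscheme is an effective Cartier divisor on $U$, and these glue to an effective Cartier divisor $\Bun_G\setminus\mathring{\Bun}_G$ on the stack. Its associated line bundle is the desired $\mO(1)$ (its restriction to $\mathring{\Bun}_G$ being canonically trivial). I expect this last step to be the main obstacle: one must reduce the passage from Weil to Cartier divisor to a smooth atlas rigorously in the stacky, only-locally-of-finite-type setting, and in particular confirm that $\Theta$ carries its reduced structure and pulls back to a genuine prime divisor on the atlas, which is exactly where regularity (the UFD property) is indispensable.
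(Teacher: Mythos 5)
Your proposal is correct and follows the same route as the paper's (very terse) proof: identify $\mathring{\Bun}_G(\bP^1)$ with $[L^-G\backslash \Gr^0]$ and the boundary with $[L^-G\backslash\Theta]$, then read off codimension one from Proposition \ref{geom of opp Sch}. The extra details you supply — the explicit check $(2\rho,\theta)=\dim G/P_\theta+1$ from the quasi-minuscule pairings, and the passage from a reduced pure-codimension-one closed substack to a Cartier divisor via smoothness of $\Bun_G(\bP^1)$ and the UFD property of regular local rings — are exactly what the paper leaves implicit (its "alternatively, use deformation theory" is the smoothness input you invoke), and they are all sound.
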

\begin{proof}We have $\mathring{\Bun}_G(\bP^1)= [L^-G\backslash L^{<0}G\cdot e]$, and $\Bun_G\setminus \mathring{\Bun}_G=[L^-G\backslash \Theta]$ is pure of codimension one. Alternatively, one can prove this using deformation theory. \end{proof}

Then the pullback defines a line bundle on $\Gr$, temporally denoted by $\mL(\Theta)$. Note that $\mL(\Theta)$ is ample since its certain multiple is the pullback of the determinant line bundle  by Proposition \ref{big cell}.
 
Next, it is not hard to show that the rational curve $C_{\theta,\theta}$ constructed in \eqref{curve C} intersects with $\Theta$ transversally at $t^\theta$ (e.g. by looking at the tangent spaces), where as before $\theta$ is the short dominant coroot. In addition, one can show that the restriction to $C_{\theta,\theta}$ induces an injective map of Picard groups (using Demazure resolutions). It follows that there are inclusions $\bZ\mL(\Theta)\subset \Pic^e(\Gr)\subset\Pic(\bP^1)$ which send $\mL(\Theta)$ to $\mO_{\bP^1}(1)$.
 The theorem then follows. See \cite[Corollary 12]{Fa} and \cite[\S~10.2]{So} for more details.

We leave the following statement as an exercise.
\begin{lem}\label{lifting Aut action}
The action of $\Aut^+(D)$ on $\Gr$ lifts to a unique action on $\mO(1)$ which restricts to the trivial action on the fiber of $\mO(1)$ at $e$.
\end{lem}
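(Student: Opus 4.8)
The plan is to exploit the rigidity of the Picard group established in Theorem~\ref{Pic of Gr}, namely $\Pic^e(\Gr)=\bZ\,\mO(1)$, together with the connectedness of $\Aut^+(D)$, in order to reduce the statement to the standard linearization theorem for a connected group acting on a projective variety.

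First I would record the structural inputs. The base point $e=(\mE^0,\id)$ is fixed by all of $\Aut(D)$, since pulling back the trivial torsor and its canonical trivialization along an automorphism of the disc returns $(\mE^0,\id)$; consequently $\Aut^+(D)$ acts on the line $\mO(1)|_e$ through a character. Next, $\Aut^+(D)=\Aut^{++}(D)\rtimes\bG_m^{\on{rot}}$ (see \eqref{decomp2}) is connected with pro-unipotent part $\Aut^{++}(D)$, so $\Pic(\Aut^+(D))=0$ and $\Hom(\Aut^+(D),\bG_m)=\bZ$, the nontrivial characters coming from the rotation torus. Finally, since $G$ is simply connected, $\Gr$ is connected and reduced (Theorem~\ref{components}), so $\Ga(\Gr,\mO_\Gr^\times)=k^\times$; thus a line bundle on $\Gr$ has only scalar automorphisms.

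The heart of the argument is existence and uniqueness of the linearization on the finite-dimensional pieces. Because $\Aut^+(D)$ preserves each Schubert cell $\Gr_\mu$, it preserves each Schubert variety $\Gr_{\leq\mu}=\bigsqcup_{\la\leq\mu}\Gr_\la$, which is projective and, being paved by the affine Schubert cells, has discrete Picard group ($\Pic^0(\Gr_{\leq\mu})=0$). The action on $\Gr_{\leq\mu}$ factors through a finite-dimensional connected quotient $H_\mu$ of $\Aut^+(D)$ (only finitely many of the coordinates $a_1,a_2,\ldots$ act nontrivially), with $\Pic(H_\mu)=0$ and $\Hom(H_\mu,\bG_m)=\bZ$. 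Since $H_\mu$ is connected and $\Pic(\Gr_{\leq\mu})$ is discrete, pullback fixes the class of the restriction of $\mO(1)$, i.e. $a^*\mO(1)\cong\pr^*\mO(1)$ on $H_\mu\times\Gr_{\leq\mu}$. The standard linearization criterion (the obstruction lying in $\Pic(H_\mu)=0$) then shows $\mO(1)|_{\Gr_{\leq\mu}}$ admits an $H_\mu$-linearization, and the set of linearizations is a torsor under $\Hom(H_\mu,\bG_m)=\bZ$. As $e\in\Gr_{\leq\mu}$ is fixed, evaluating the character by which $H_\mu$ acts on $\mO(1)|_e$ identifies this torsor with $\bZ$, so demanding triviality of that character selects a unique linearization.

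It remains to assemble these. I would check, again by uniqueness, that the normalized linearizations are compatible under the closed embeddings $\Gr_{\leq\la}\hookrightarrow\Gr_{\leq\mu}$ (the restriction of a normalized linearization is normalized), and then pass to the limit $\Gr=\underrightarrow{\lim}\,\Gr_{\leq\mu}$ to obtain the desired unique $\Aut^+(D)$-linearization of $\mO(1)$ trivial on $\mO(1)|_e$. Uniqueness can also be seen directly: two linearizations differ by an automorphism of $\pr^*\mO(1)$ over $\Aut^+(D)\times\Gr$, i.e. by an invertible function, which by $\Ga(\Gr,\mO^\times)=k^\times$ is pulled back from $\Aut^+(D)$; the cocycle condition forces it to be a character, which the normalization at $e$ forces to be trivial. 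The main obstacle is precisely the passage between the infinite-dimensional $\Gr$ (and the pro-algebraic $\Aut^+(D)$) and the finite-dimensional linearization theorem; what makes it work is that $\Gr$ is exhausted by the $\Aut^+(D)$-stable projective Schubert varieties and that on each the action factors through a finite-dimensional connected quotient whose character group is exactly $\bZ$.
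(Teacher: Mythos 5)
Your argument is correct; note that the paper offers no proof to compare against (the lemma is explicitly left as an exercise), so the only question is whether your solution is sound, and it is. The skeleton — $e$ is a fixed point, $\Aut^+(D)$ preserves each $\Gr_{\leq\mu}$ and acts there through a connected finite-dimensional quotient $H_\mu\cong\bG_m\times\bA^N$ with $\Pic(H_\mu)=0$ and $X^*(H_\mu)=\bZ$, the Picard functor of the normal projective variety $\Gr_{\leq\mu}$ is discrete, so the linearization theorem applies and the normalization at $e$ pins down a unique linearization, compatibly in $\mu$ — is exactly the right reduction, and your closing uniqueness argument via units on $\Aut^+(D)\times\Gr$ and $\Ga(\Gr,\mO^\times)=k^\times$ is the cleanest way to see uniqueness directly. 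Two small points deserve more care than you give them. First, "paved by the affine Schubert cells" is misleading: the $L^+G$-orbits $\Gr_\la$ are affine-space bundles over $G/P_\la$, not affine spaces; the affine paving you want is by Iwahori orbits (alternatively, deduce $\on{H}^1(\Gr_{\leq\mu},\mO)=0$ from normality, rational singularities and a Demazure resolution, which is an iterated $\bP^1$-bundle). Second, you should say a word about why the action on $\Gr_{\leq\mu}$ factors through a finite-dimensional quotient: embed into $\Gr_{\GL_n}$, note $\Gr_{\leq\mu}\subset\Gr^{(N)}$ for some $N$, and observe that the action of $\varphi$ on lattices pinched between $t^N\La_0$ and $t^{-N}\La_0$ depends only on $\varphi$ modulo a power of $t$.

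For comparison, there is a more hands-on route closer to the paper's own constructions that avoids the linearization theorem entirely: the lemma on relative determinant lines gives a canonical action of the stabilizer of $\La_0$ in $\GL(V)$ — in particular of $\Aut^+(D)\ltimes L^+G$ — on $\mL_{\det}$ over $\Gr_{\GL_n}$, trivial on the fiber at $\La_0$; pulling back along $f_\rho$ gives a normalized equivariant structure on $\mO(d_\rho)$, and since $\Pic^e(\Gr)=\bZ$ is torsion-free and rigidified line bundles on the connected reduced $\Gr$ have no automorphisms, the isomorphisms $g^*\mO(1)\cong\mO(1)$ exist, are unique, and hence automatically satisfy the cocycle condition. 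That version buys you an explicit construction; yours buys generality and makes the role of $X^*(\Aut^+(D))=\bZ$ and the normalization at $e$ transparent. Either is an acceptable solution to the exercise.
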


\subsection{Central extensions, and affine Kac-Moody algebras.}\label{Kac-Moody}
We continue to assume that $k$ is algebraically closed and $F=k((t))$.
Recall that affine Dynkin diagrams classify two different types of groups.
Let $\Gamma$ be a connected affine Dynkin diagram, and $S$ be a subset of vertices of $\Gamma$. The work of Bruhat-Tits attaches to $\Ga$ a simple and simply-connected reductive group $G_\Gamma$ over $F$ and to $S$ a parahoric group scheme $\underline G_S$ over $\mO$ (cf. \cite{Ti} for a summary and in particular \S 4.2 of \emph{ibid.} for the classification). On the other hand, attached to $\Ga$ there is a Kac-Moody algebra $\frakg_\Ga$ (\cite[Chap. 4]{Kac}) and the corresponding Kac-Moody group $G_{\Ga}$ and to $S$ a parabolic subgroup $P_S$ of $G_{\Ga}$\footnote{In fact, the situation is much more complicated. There are various versions of infinite dimensional groups (e.g. minimal, formal, etc.)  attached to $\frakg_\Ga$ due to the work of many people (Moody-Teo, Marcuson, Garland, Kac-Perterson, Slodowy, Tits, Mathieu,...). See \cite{Ti3} for a summary and references cited there.}.

Now we explain the relation between the Bruhat-Tits theory and the Kac-Moody theory by explaining the dotted arrow in the following diagram.
\[\xymatrix{
&\left\{\textrm{Reductive groups over } F\right\}\ar@{.>}[dd]\\
\left\{\textrm{Affine Dynkin diagrams}\right\} \ar[ur]\ar[dr]&\\
&\left\{\textrm{Affine Kac-Moody algebras/groups over } k\right\}
}\]

We will assume that $\Ga$ is an untwisted affine Dynkin diagram\footnote{The twisted cases are more complicated, in particular when $k$ has small characteristic.}. These are the affine Dynkin diagrams listed in \cite[Chap. 4, TABLE Aff 1]{Kac}. They can be obtained from the finite Dynkin diagrams (listed in \cite[Chap 4, TABLE Fin]{Kac}) by adding a distinguished node $v_0$. Reductive groups over $F$ corresponding to these Dynkin diagrams are those base change of simple and simply-connected reductive groups from $k$. If $S=\{v_0\}$, then the corresponding parahoric group scheme is hyperspecial, and is of the form $G\otimes_kk[[t]]$ (see Example \ref{hyperspacial and Iwahori}).

In this case, the line bundle $\mO(1)$ on $\Gr_G$ allows one to define a central extension of $LG$ by $\bG_m$
\begin{equation}\label{central extension}
1\to\bG_m\to \widehat{LG}\to LG\to 1.
\end{equation}
Namely, an $R$-point of $\widehat{LG}$ is an $R$-point $g\in LG(R)$ together with an isomorphism
\[g^*\mO(1)\simeq \mO(1).\] 
The action of the rotation torus canonically lifts to an action on $\widehat{LG}$ (by Lemma \ref{lifting Aut action}).
Let 
\[\widetilde{LG}:=\widehat{LG}\rtimes\bG_m^{\on{rot}}.\]
\begin{thm}The ind-group $\widetilde{LG}$ is isomorphic to the (formal version of) Kac-Moody group $G_\Ga$ constructed from the corresponding untwisted affine Dynkin diagram.
\end{thm}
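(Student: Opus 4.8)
The plan is to build mutually inverse homomorphisms between $\widetilde{LG}$ and the formal Kac-Moody group $G_\Ga$ by matching generators and relations, and then to confirm the identification by comparing their actions on integrable highest weight representations. The natural first step is to compute the Lie algebra. The Lie algebra of $LG$ is the loop algebra $L\frakg=\frakg\otimes k\ppart$, and by construction $\widehat{LG}$ is the central extension of $LG$ by $\bG_m$ attached to the $LG$-linearisation of $\mO(1)$; its Lie algebra is therefore a one-dimensional central extension $\widehat{L\frakg}=L\frakg\oplus k\mathbf{1}$ whose $2$-cocycle is the residue pairing $\omega(f,g)=\Res_{t=0}\kappa(f,dg)$, where $\kappa$ is the invariant form on $\frakg$ normalised so that $\kappa(\theta^\vee,\theta^\vee)=2$ for the highest root $\theta^\vee$. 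Adjoining the rotation torus $\bG_m^{\on{rot}}$ introduces the degree derivation $d=t\partial_t$, whence
\[\Lie(\widetilde{LG})=L\frakg\oplus k\mathbf{1}\oplus kd,\]
which is exactly the untwisted affine Kac-Moody algebra $\frakg_\Ga$ (cf. \cite{Kac}). The crucial point is that $\mO(1)$ is the \emph{ample generator} of $\Pic^e(\Gr)$ (Theorem \ref{Pic of Gr}), which pins the level down to one and hence forces $\omega$ to be the basic cocycle.

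Next I would exhibit the Chevalley generators of $G_\Ga$ inside $\widetilde{LG}$. For each finite simple root $\al_i$ $(1\le i\le\ell)$ the constant-loop embedding $i_{\al_i}:\SL_2\to G\subset LG$ supplies the rank-one subgroup for the vertex $v_i$; for the distinguished affine node $v_0$, whose affine simple root is $\al_0=\delta-\theta^\vee$, the rank-one subgroup is the $\SL_2$ through the highest-root vectors twisted by $t$, i.e. the one whose root vectors are $t\otimes f_{\theta^\vee}$ and $t^{-1}\otimes e_{\theta^\vee}$. Since each such $\SL_2$ is simply connected, the central extension \eqref{central extension} splits canonically over it, so these subgroups lift to $\widehat{LG}$; together with the cocharacters of $T$ and of $\bG_m^{\on{rot}}$ they account for all the standard generators of $G_\Ga$. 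Using the amalgam (Tits functor) presentation of the Kac-Moody group—or equivalently Mathieu's construction \cite{Ma}—it then suffices to check that these images satisfy the defining relations, and each such relation is supported on a rank-two subdiagram of $\Ga$, hence reduces to an identity inside a finite-dimensional group $G_{\{v_i,v_j\}}$ where it holds automatically. This produces a homomorphism $G_\Ga\to\widetilde{LG}$.

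Finally I would show this map is an isomorphism. Surjectivity follows from the Birkhoff and Iwahori decompositions together with Theorem \ref{components}: $LG$ is generated by $G(\mO)$ and the affine-root $\SL_2$, while the central $\bG_m$ and the rotation torus $\bG_m^{\on{rot}}$ are built in. For injectivity I would compare the two actions on the spaces $\Ga(\Gr,\mO(n))^*$. By the Borel--Weil theorem for affine flag varieties—available here thanks to the normality, Cohen--Macaulayness and Frobenius splitting of the Schubert varieties recorded in Theorem \ref{singularity}—these are precisely the integrable highest weight modules $L(n\Lambda_0)$ of level $n$, on which $G_\Ga$ acts faithfully; since the homomorphism intertwines the two actions, it must be injective.

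The main obstacle I expect is twofold. First comes the normalisation of the cocycle: one must verify that the extension produced by $\mO(1)$ is the \emph{level-one} extension and not a proper multiple, which amounts to computing the restriction of $\mO(1)$ to the rational curve $C_{\theta,\theta}$ of \eqref{curve C} (shown to be $\mO_{\bP^1}(1)$ in the proof of Theorem \ref{Pic of Gr}) and matching it against $\kappa(\theta^\vee,\theta^\vee)=2$. Second, and more delicate, is the bookkeeping of which \emph{version} of the Kac-Moody group one lands in: the completion implicit in $LG=G(k\ppart)$ contains the pro-unipotent $L^{>0}G$, so $\widetilde{LG}$ is the formal (maximal) Kac-Moody ind-group rather than Tits' minimal one, and in small characteristic one must additionally keep track of the non-reducedness phenomena flagged in Remark \ref{formal smoothness}.
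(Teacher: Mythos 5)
Your route is genuinely different from the paper's: the paper does not argue via generators and relations at all, but instead deduces the theorem from the more precise statement that $\Gr_{\underline G}\cong G_\Ga/P_S$ for every parahoric $\underline G$, which is in turn proved (following Beauville--Laszlo, Faltings, Pappas--Rapoport) by matching the Schubert varieties of $\Gr_{\underline G}$ with the Schubert varieties of Mathieu--Kumar flag varieties. That route sidesteps both of the "obstacles" you flag, because the formal Kac--Moody group is \emph{constructed} from its flag varieties, so no presentation of the completed group is ever needed. Your Lie-algebra computation and the level-one normalisation of the cocycle via the restriction of $\mO(1)$ to $C_{\theta,\theta}$ are fine and consistent with Remarks \ref{2-cocycle} and \ref{generator}.

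However, your injectivity step fails as stated. You claim that $G_\Ga$ acts faithfully on the family $\Ga(\Gr,\mO(n))^*\cong L(n\Lambda_0)$. It does not: since $\Lambda_0$ restricts trivially to the finite maximal torus $T$, every weight of $L(n\Lambda_0)$ restricted to $T$ lies in the root lattice of $\frakg$, so the finite centre $Z(G)\subset G\subset\widetilde{LG}$ acts trivially on every $L(n\Lambda_0)$. Concretely, for $G=\SL_2$ the element $-I$ is central in $LG$, acts trivially on $\Gr$ and (with the canonical rigidified linearisation) trivially on all $\Ga(\Gr,\mO(n))$, yet is a nontrivial element of $\widetilde{LG}$. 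To separate such elements you must use the integrable modules $L(\Lambda_i)$ for \emph{all} fundamental weights, i.e.\ sections of line bundles on the full affine flag variety $\Fl_G$, not only on $\Gr$ --- which is exactly why the paper works with all parahorics $P_S$ simultaneously. A second, unresolved issue is that the amalgam/Tits presentation you invoke defines the \emph{minimal} Kac--Moody group (modelled on $G(k[t,t^{-1}])$); checking relations only yields a map out of that group, and extending it to the formal completion modelled on $G(k\ppart)$, compatibly with the ind-scheme structures and the non-reducedness phenomena in small characteristic, is precisely the hard content that your sketch defers rather than supplies.
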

This can be deduced from the following more precise theorem. Let $\underline G$ be a parahoric group scheme of $G$, corresponding to a subset $S$ of vertices of the affine Dynkin diagram of $G$. Let $P_S\subset G_\Ga$ be the corresponding parabolic subgroup in the affine Kac-Moody group. Then
\begin{thm}
The affine Grassmannian $\Gr_{\underline G}$ is isomorphic to the partial flag variety $G_\Ga/P_S$ as defined in \cite{Ma,Ku}.
\end{thm}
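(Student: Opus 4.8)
The plan is to exhibit both sides as ind-projective ind-schemes carrying Schubert stratifications indexed by the same combinatorial data, to produce a morphism between them that is a cellwise isomorphism, and then to upgrade this to a global isomorphism using normality. I will \emph{not} presuppose the isomorphism $\widetilde{LG}\cong G_\Ga$, since that statement is meant to be deduced from the present one; rather, the comparison of the two group actions should emerge alongside the comparison of the homogeneous spaces. The guiding principle is that both spaces are homogeneous under a loop group equipped with an Iwahori--Matsumoto $BN$-pair, and the subset $S$ of affine Dynkin nodes simultaneously selects the Bruhat--Tits parahoric $L^+\underline G$ and the Kac--Moody parabolic $P_S$.

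First I would set up the two Schubert stratifications. On the affine-Grassmannian side, $G(F)$ carries the $BN$-pair whose Weyl group is the Iwahori--Weyl group $\widetilde W\cong\xcoch(T)\rtimes W$, and the parahoric $L^+\underline G$ is the standard parabolic attached to $S$; the Bruhat decomposition writes $\Gr_{\underline G}=L\underline G/L^+\underline G$ as a disjoint union of $L^+\underline G$-orbits indexed by the minimal coset representatives of $\widetilde W/W_S$, each an affine space whose dimension is the length of its representative (as in the discussion of Iwahori Schubert varieties). On the Kac--Moody side, $G_\Ga/P_S$ is by construction (Kumar, Mathieu) an ind-projective ind-scheme with a $B$-orbit decomposition indexed by the \emph{same} set $\widetilde W/W_S$, with cells of the same dimensions. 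Thus the two spaces have identical combinatorial skeleta.

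Next I would construct the comparison morphism via the projective embedding furnished by $\mO(1)$. By Theorem \ref{Pic of Gr}, $\mO(1)$ generates $\Pic^e(\Gr_{\underline G})$, and I would identify $H^0(\Gr_{\underline G},\mO(1))^\ast$ with an integrable highest-weight module $L(\La_S)$ of $\frakg_\Ga$, so that the tautological map realizes $\Gr_{\underline G}\hookrightarrow\bP\!\left(L(\La_S)\right)$ as the loop-group orbit of the highest-weight line. On the Kac--Moody side, $G_\Ga/P_S$ is by definition the closure of the $G_\Ga$-orbit of that same highest-weight line in $\bP\!\left(L(\La_S)\right)$. Matching base points and checking that the orbit of the highest-weight line under the central extension coincides with its orbit under $G_\Ga$ then yields a morphism $\varphi\colon G_\Ga/P_S\to\Gr_{\underline G}$ respecting the two stratifications. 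Finally I would argue that $\varphi$ carries each Schubert cell isomorphically onto the corresponding one (both being $\bA^{\ell(w)}$ with compatible group actions), so $\varphi$ is bijective and is birational on each Schubert variety; since $\Gr_{\underline G}$ is reduced (Theorem \ref{components}, $G$ simply-connected) and the Schubert varieties are normal on both sides (Theorem \ref{singularity} and its Iwahori analogue on one side, Kumar--Mathieu on the other), a bijective birational proper morphism onto a normal variety is an isomorphism by Zariski's main theorem, and passing to the direct limit gives the isomorphism of ind-schemes.

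The hard part will be the scheme-theoretic comparison rather than the set-theoretic one. Two points demand the most care. First, precisely identifying $H^0(\Gr_{\underline G},\mO(1))^\ast$ with $L(\La_S)$: this means pinning down the level, controlling the action of the center $\bG_m$ and of the rotation torus $\bG_m^{\on{rot}}$, and verifying that the orbit map factors through \emph{exactly} $P_S$ (neither a larger nor a smaller subgroup). Second, the upgrade from a bijective morphism to an isomorphism rests entirely on normality together with the reducedness of $\Gr_{\underline G}$, so the argument is only as strong as those inputs. Carrying this out uniformly for a general parahoric $\underline G$ and for the formal version of $G_\Ga$ (including the pro-unipotent directions), rather than merely in the split $\GL_n$/$\SL_n$ case where Plücker coordinates make everything explicit, is where the genuine technical work lies.
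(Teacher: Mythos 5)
Your overall strategy --- compare the two Schubert stratifications, produce a morphism that is birational on each Schubert variety, and conclude by normality --- is exactly the route the paper points to (its proof says the idea is ``to first prove that the Schubert varieties in $\Gr_{\underline G}$ are isomorphic to the (a priori different) Schubert varieties arising from Kac-Moody theory'', deferring the details to \cite{BL}, \cite{Fa} and \cite[\S 10]{PR}). The normality inputs you invoke (Theorem \ref{singularity} and its parahoric analogue on one side, Mathieu--Kumar on the other) are established independently of the present theorem, so that part of the plan is sound.

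The genuine problem is your construction of the comparison morphism. You propose to identify $\on{H}^0(\Gr_{\underline G},\mO(1))^*$ with an integrable highest-weight module $L(\La_S)$ and then match orbits of the highest-weight line. But in the paper's logical order this identification \emph{is} Theorem \ref{Borel-Weil}, which is obtained by combining the present theorem with the results of \cite{Ku,Ma}; as written, your plan is circular. You flag this step as ``the hard part'' but offer no independent route to it. The references avoid the issue by working one level down: either one constructs a homomorphism from the (minimal) Kac--Moody group to the central extension $\widehat{LG}$ directly, sending real root vectors to the one-parameter subgroups $x_\al(ct^n)$ and checking the defining relations, which induces the map of homogeneous spaces without ever mentioning $\on{H}^0(\mO(1))^*$; or one observes that the two Schubert varieties are images of literally the same Demazure resolution (the same iterated $\bP^1$-bundle, built from minimal parahorics on one side and minimal parabolics on the other), so they share a common proper birational cover, and normality on both sides identifies each with the image of that cover. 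One of these fixes is needed before your cellwise comparison can even be formulated. A smaller slip: the cells indexed by $\widetilde W/W_S$ with dimensions $\ell(w)$ are the \emph{Iwahori} orbits on $\Gr_{\underline G}$, not the $L^+\underline G$-orbits (the latter are indexed by $W_S\backslash\widetilde W/W_S$ and, e.g.\ in the hyperspecial case, are the $\Gr_\mu$, which are not affine spaces); this does not derail the argument, but the stratification you compare with the Kac--Moody $B$-orbit stratification must be the Iwahori one.
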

\begin{proof}See \cite{BL} for the case $G=\SL_n$ and \cite{Fa} for general $G$ (see also \cite[\S 10]{PR}). The idea is to first prove that the Schubert varieties in  $\Gr_{\underline G}$ are isomorphic to the (a priori different) Schubert varieties arising from Kac-Moody theory.
\end{proof}

\begin{rmk}\label{2-cocycle}
The Lie algebra $\widehat{L\frakg}$ of $\widehat{LG}$ is a central extension of $\frakg(F)$ by $k$. If $\cha\ k=0$, it can be explicitly described as $\frakg\otimes F\oplus k\mathbf{1}$ with the Lie bracket given by
\[[X\otimes f,\mathbf{1}]=0,\quad [X\otimes f,Y\otimes g]=[X,Y]\otimes fg- (\Res_{t=0}fdg) (X,Y)\mathbf{1},\]
where $X,Y\in \frakg,\ f,g\in F$, $(-,-): \frakg\times\frakg\to k$ is the normalised invariant bilinear form on $\frakg$ and $\Res: Fdt\to k$ is the residue map. We refer to \cite[\S~7]{Kac} for details. 
\end{rmk}

Combined with main results of \cite{Ku,Ma}, one obtains a Borel-Weil type theorem. For simplicity, we assume that $\cha\ k=0$ and state it for untwisted affine Dynkin diagram $\Ga$ and $S=\{v_0\}$.
\begin{thm}\label{Borel-Weil}
The dual of the space of global sections $\Ga(\Gr_G,\mO(m))^*$ is isomorphic to $L_m(\frakg_\Ga)$, the level $m$ integrable representation of $\frakg_{\Ga}$.
\end{thm}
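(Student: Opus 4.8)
The plan is to deduce the statement from the Borel--Weil theorem for Kac--Moody flag varieties (Kumar, Mathieu), using the two structural results recorded above: the identification of $\Gr_G$ with a partial flag variety of $G_\Ga$, and the computation of $\Pic^e(\Gr_G)$. For $S=\{v_0\}$ the parahoric $\underline G_S$ is the hyperspecial $G\otimes_kk[[t]]=L^+G$, so the preceding theorem gives $\Gr_G\cong G_\Ga/P_S$, where $P_S$ is the maximal parabolic whose Levi corresponds to the finite part of the Dynkin diagram. On such a flag variety the homogeneous line bundles are the $\mL(m\La_0)$, $m\in\bZ$, where $\La_0$ is the fundamental weight dual to the distinguished node $v_0$, and Kumar--Mathieu theory identifies $\Ga(G_\Ga/P_S,\mL(m\La_0))^*$ with $L(m\La_0)$, the irreducible integrable highest weight module of highest weight $m\La_0$. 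Since $\La_0$ has level one, $L(m\La_0)=L_m(\frakg_\Ga)$, and the theorem follows once we check $\mO(m)\cong\mL(m\La_0)$ compatibly with the group actions.

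The first step is to match the two line bundles. By Theorem \ref{Pic of Gr} we have $\Pic^e(\Gr_G)=\bZ\,\mO(1)$ with $\mO(1)$ ample, while $\Pic(G_\Ga/P_S)=\bZ\,\mL(\La_0)$ with $\mL(\La_0)$ ample; as both groups are infinite cyclic on an ample generator and the isomorphism $\Gr_G\cong G_\Ga/P_S$ respects the base points, it must carry $\mO(1)$ to $\mL(\La_0)$, hence $\mO(m)$ to $\mL(m\La_0)$.

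The second step is to see that this matching is equivariant in the correct way and computes the level. Here I would invoke the central extension \eqref{central extension}: by construction an $R$-point of $\widehat{LG}$ is a pair $(g,\,g^*\mO(1)\simeq\mO(1))$, so $\widehat{LG}$ acts on $\Ga(\Gr_G,\mO(m))$, and together with the lift of the rotation action from Lemma \ref{lifting Aut action} this upgrades to an action of $\widetilde{LG}=\widehat{LG}\rtimes\bG_m^{\on{rot}}\cong G_\Ga$, whence an action of $\frakg_\Ga$ (Remark \ref{2-cocycle}). The central $\bG_m$ scales the fibers of $\mO(1)$, so it acts on $\mO(m)$ with weight $m$; under $\widehat{L\frakg}\cong\frakg(F)\oplus k\mathbf{1}$ this is precisely the action of the canonical central element, which pins the level of $\Ga(\Gr_G,\mO(m))^*$ to $m=\langle m\La_0,\mathbf{1}\rangle$. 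Combining with the $H^0$-computation and the vanishing $H^{>0}(\Gr_G,\mO(m))=0$ supplied by Kumar--Mathieu, and passing to duals, gives $\Ga(\Gr_G,\mO(m))^*\cong L(m\La_0)=L_m(\frakg_\Ga)$.

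The main obstacle is not any single computation but the careful bookkeeping of the central extension through the isomorphism $\Gr_G\cong G_\Ga/P_S$: one must verify that the geometric notion of level (the power of $\mO(1)$) agrees with the algebraic one (the central charge on $\mathbf{1}$), and that the $\frakg_\Ga$-module structure on global sections is genuinely the integrable highest-weight one rather than some twist. Both the flag-variety identification and the Picard computation are themselves deep (they are the inputs from \cite{BL,Fa,PR,Ma,Ku} cited above), so the real content here is organizing these inputs and checking the normalizations; the hypothesis $\cha\ k=0$ is used to have the clean cocycle description of $\frakg_\Ga$ and to apply the Kumar--Mathieu cohomology vanishing verbatim.
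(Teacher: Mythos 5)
Your proposal is correct and follows essentially the same route as the paper, which proves the theorem precisely by combining the identification $\Gr_G\cong G_\Ga/P_S$ with the Borel--Weil theory of Kumar and Mathieu; your additional bookkeeping (matching $\mO(1)$ with $\mL(\La_0)$ via the ample generators of the two Picard groups, and reading off the level from the action of the central $\bG_m$ of $\widehat{LG}$ on the fibers) is exactly the normalization check implicit in the paper's citation of \cite{Ku,Ma} together with Theorem \ref{Pic of Gr} and the central extension \eqref{central extension}.
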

Integrable representations of Kac-Moody groups are the analogues of finite dimensional representations of reductive algebraic groups and can be constructed purely by representation theory method.
We briefly recall the definition of $L_m(\frakg_\Ga)$. We follow notations from \cite{Kac}. Let $\Lambda_0$ be the fundamental weight of $\frakg_\Ga$ as defined in \emph{ibid.} \S\ 6, and let $M(m\Lambda_0)$ be the Verma module of $\frakg_\Ga$ of highest weight $m\Lambda_0$ as defined in \emph{ibid.} \S\ 9. Then $L_m(\frakg_\Ga)$ is the quotient of $M(m\Lambda_0)$ by the submodule generated by those vectors given in \emph{ibid.} Formula (10.1.1).

\section{Lecture III: Beilinson-Drinfeld Grassmannians and factorisation structures}\label{Lecture III}
In this section, we introduce a version of affine Grassmannians over curves, known as Beilinson-Drinfeld Grassmannians. We also discuss some of their basic geometric properties, in particular the factorisation property.

\subsection{Beilinson-Drinfeld Grassmannians.}
A great discovery of
Beilinson-Drinfeld is that the interpretation of the affine Grassmannian via Theorem \ref{glob aff} admits vast generalisations.

First, we can allow the point $x$ in Theorem \ref{glob aff} to move. For a point $x\in X(R)$, let $\Ga_x$ denote the graph of the map $x:\Spec R\to X$. Then one can define the prestack
\[\Gr_{\underline G, X}(R)=\left\{(x,\mE,\beta)\ \left|\ \begin{split}&x\in X(R),\quad \mE \mbox{ is a } \underline{G}\mbox{-torsor on } X_R, \\
& \beta: \mE|_{X_R-\Ga_x}\cong \mE^0|_{X_R-\Ga_x} \mbox{ is a trivialisation}\end{split}\right.\right\}.\]
But in fact we can do more by allowing multiple points. Let $I$ be a finite non-empty set. For a point $x=\{x_i,i\in I\}\in X^I(R)$, let $$\Ga_x:=\bigcup_{i\in I} \Ga_{x_i}$$ denote the union of the graphs of the maps $x_i: \Spec R\to X$ (as closed subschemes of $X_R$).

\begin{dfn}
We define the presheaf $\Gr_{\underline G,X^I}$ over the self-product of curves $X^I$ as
\begin{equation}\label{BD Grass def}
\Gr_{\underline G, X^I}(R)=\left\{(x,\mE,\beta)\ \left|\ \begin{split}&x\in X^I(R),\quad \mE \mbox{ is a } \underline{G}\mbox{-torsor on } X_R, \\
& \beta: \mE|_{X_R-\Ga_x}\cong \mE^0|_{X_R-\Ga_x} \mbox{ is a trivialisation}\end{split}\right.\right\}.
\end{equation}
Let $q_I:\Gr_{\underline G,X^I}\to X^I$ denote the structural map.
\end{dfn}

\begin{thm}\label{representability of BD}
The presheaf $\Gr_{\underline G, X^I}$ is represented by an ind-scheme, ind-of finite type over $X^I$. It is ind-projective if $\underline G$ is reductive.
\end{thm}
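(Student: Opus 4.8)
The plan is to mimic the proofs of Theorems \ref{rep} and \ref{I:rep aff}, carrying everything out relative to the base $X^I$. First I would reduce to the case $\underline G=\GL_n$. By the global version of Proposition \ref{lin rep} (available since the affine opens of the smooth curve $X$ are Dedekind, cf.\ Remark \ref{general base}), choose a faithful representation $\rho:\underline G\to \GL_{n,X}$ with $\GL_n/\underline G$ quasi-affine, and affine when $\underline G$ is reductive. Running the argument of Proposition \ref{gen G} relative to $X^I$ — base-changing along a map $\Spec R\to \Gr_{\GL_n,X^I}$ and interpreting a reduction of structure group as a section of a (quasi-)affine bundle that is pinned down away from $\Ga_x$ — shows that $f_\rho:\Gr_{\underline G,X^I}\to \Gr_{\GL_n,X^I}$ is a locally closed embedding, and a closed embedding when $\GL_n/\underline G$ is affine. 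Since a locally closed (resp.\ closed) sub-ind-scheme of something ind-of-finite-type (resp.\ ind-projective) over $X^I$ is again of that type, it suffices to treat $\GL_n$ and then deduce the general statement from $f_\rho$.

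For $\underline G=\GL_n$ an $R$-point is a triple $(x,\mE,\beta)$ with $\mE$ a rank-$n$ bundle on $X_R$ and $\beta$ a trivialisation off $\Ga_x$, so $\mE$ may be viewed as a modification of $\mO^n_{X_R}$ supported along $\Ga_x$. Let $\mathcal D=\sum_{i\in I}\Ga_{x_i}\subset X\times X^I$ be the universal relative effective Cartier divisor, which is finite flat of degree $|I|$ over $X^I$. As the global analogue of \eqref{subfun}, I would define $\Gr^{(N)}_{\GL_n,X^I}\subset \Gr_{\GL_n,X^I}$ by the boundedness condition
\[\mO^n_{X_R}(-N\Ga_x)\subset \mE\subset \mO^n_{X_R}(N\Ga_x).\]
Every $R$-point lies in some $\Gr^{(N)}$, and the transition maps $\Gr^{(N)}\hookrightarrow\Gr^{(N+1)}$ are closed embeddings, so $\Gr_{\GL_n,X^I}=\bigcup_N\Gr^{(N)}$ exhibits it as a filtered union of the desired shape, provided each $\Gr^{(N)}$ is projective over $X^I$.

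To represent $\Gr^{(N)}$ I would pass to the quotient $Q=\mO^n(N\Ga_x)/\mE$, a coherent $\mO_{X_R}$-module supported on (a thickening of) $\mathcal D$ and finite locally free over $R$. Since $\mathcal D\to X^I$ is finite flat, the sheaf $\mathcal T_N:=\mO^n(N\mathcal D)/\mO^n(-N\mathcal D)$ is locally free over $X^I$, and $\mE\mapsto Q$ identifies $\Gr^{(N)}$ with the functor of $\mO_X$-linear, $R$-flat quotients of $\mathcal T_N$. This functor is represented by a closed subscheme of the relative Grassmannian $\mathrm{Gr}(\mathcal T_N)$ over $X^I$, the closed condition being $\mO_X$-linearity of the quotient (the exact analogue of the $\Phi$-stability of Lemma \ref{Embedding}); since $\mathrm{Gr}(\mathcal T_N)$ is projective over $X^I$, so is $\Gr^{(N)}$. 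This gives that $\Gr_{\GL_n,X^I}$ is ind-projective over $X^I$; ind-projectivity for reductive $\underline G$ then follows from the closed embedding $f_\rho$, and ind-of-finite-type in general from the locally closed embedding.

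The hard part, exactly as in the local Lemma \ref{projectivity}, is the equivalence used in the previous paragraph: I must verify that every $\mO_X$-linear, $R$-flat quotient $\mathcal T_N\twoheadrightarrow Q$ genuinely arises from a vector bundle, i.e.\ that $\mE:=\ker(\mO^n(N\mathcal D)\to Q)$ is locally free on all of $X_R$, so that $(\mE,\beta)\in\Gr(R)$. Flatness of $\mE$ over $X_R$ is clear away from $\mathcal D$, and near $\mathcal D$ the Beauville-Laszlo theorem (Theorem \ref{descent lemma}) reduces the claim to the completion along $\Ga_x$, where it becomes the local statement already established: reduce to $R$ finitely generated over $k$, apply the fiberwise flatness criterion \cite[Tag00MH]{St}, and use that the fibers are $t$-torsion-free. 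Executing this descent uniformly as the points $x_i$ are allowed to collide — so that the construction is compatible with the relative Cartier divisor $\mathcal D$ over all of $X^I$ — is the one genuinely new technical point compared with the one-point case of Theorem \ref{glob aff}.
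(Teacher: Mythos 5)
Your proof is correct and follows essentially the same route as the paper: reduce to $\GL_n$ via a faithful representation, exhaust by the bounded subfunctors $\Gr^{(N)}$, and identify each with the functor of $R$-flat, $\mO_X$-linear quotients of a sheaf supported on a thickening of $\Ga_x$, represented by a Quot-scheme (equivalently, your closed subscheme of the relative Grassmannian of $\mathcal{T}_N$ over $X^I$). The only superfluous step is the appeal to Beauville--Laszlo for the local freeness of $\mE=\ker(\mO^n(N\Ga_x)\to Q)$: since $X_R$ is an honest finite-type $R$-scheme (unlike $\Spec R[[t]]$, where the passage to $R[t]$ was needed), one can apply the fiberwise flatness criterion directly --- the fibers are torsion-free over a smooth curve, hence locally free --- which is what ``an argument as in Lemma \ref{projectivity}'' amounts to here and which sidesteps the colliding-points subtlety you flag at the end.
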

\begin{proof}
Again, one reduces to prove the theorem in the case when $\underline G=\GL_n$ by arguments similar to Proposition \ref{lin rep} and \ref{gen G}. Then it is enough to prove that for positive integer $N$,
\[\Gr_{\GL_n,X^I}^{(N)}=\left\{(x,\mE,\beta)\in\Gr_{\GL_n,X^I}\mid \mO_{X_R}^n(-N\Ga_x)\subset \mE\subset \mO_{X_R}^{n}(N\Ga_x)\right\}\]
is represented by a projective scheme over $X^I$. By an argument as in Lemma \ref{projectivity}, the map $\mE\mapsto \mF:=\mO_{X_R}^{n}/\mE(-N\Ga_x)$ identifies it with the functor that assigns $R$ the pairs $(x,\mF)$ where $x\in X(R)$ and $\mF$ is an $R$-flat $\mO_{X_R}$-module that is a quotient of $\mO_{X_R}^n/\mO_{X_R}^n(-2N\Ga_x)$.
Its representability then follows from the representability of quot schemes (in place of Grassmannians used in Lemma \ref{Embedding}). 
\end{proof}
For simplicity, write $\Gr_{\underline G,X^I}$ by $\Gr_{X^I}$ if the group $\underline G$ is clear for the context.

\begin{rmk}\label{projectivity of BD}
The last statement of Theorem \ref{representability of BD} holds in a more general situation.
We call a smooth affine group scheme $\underline G$ over $X$ a Bruhat-Tits group scheme (or a parahoric group scheme) if its restriction to every $D_x, x\in |X|$ is a parahoric group scheme in the sense of Bruhat-Tits. Using Proposition \ref{indproper}, an argument as in \cite{PZ} shows that $\Gr_{X^I}$ is ind-proper over $X^I$.
\end{rmk}

One can similarly define the multi-point version of the jet groups and the loop groups. For $x\in X^I(R)$, let $\hat{\Ga}_x$ denote the formal completion of the graphs $\Ga_x$ in $X_R$ (which is a formal scheme). We have the canonical map $\pi:\hat{\Ga}_x\to X_R$.
\begin{equation}
(L^+\underline G)_{X^I}=\left\{(x,\beta)\mid x\in X^I(R), \beta\in
\underline{G}(\hat{\Ga}_x)\right\}.
\end{equation}
\begin{prop}
$(L^+\underline{G})_{X^I}$ is represented by a scheme affine over $X^I$.
\end{prop}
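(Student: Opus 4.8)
The plan is to present $(L^+\underline G)_{X^I}$ as a filtered inverse limit, with affine transition maps, of Weil restrictions of $\underline G$ along the infinitesimal thickenings of the universal divisor on $X^I$; representability then reduces to the standard representability of Weil restriction along a finite locally free morphism.

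First I would trade the scheme-theoretic union $\Ga_x=\bigcup_{i\in I}\Ga_{x_i}$ for the relative effective Cartier divisor $D_x:=\sum_{i\in I}\Ga_{x_i}$ obtained by \emph{summing} the graphs. Since $X$ is a smooth curve, each $\Ga_{x_i}\subset X_R$ is a relative effective Cartier divisor, hence so is $D_x$, and in particular $D_x\to\Spec R$ is finite locally free of degree $|I|$. The ideal sheaves satisfy $\mI_{D_x}\subseteq\mI_{\Ga_x}$ and define the same closed subset, so (reducing to Noetherian $R$ by writing $R$ as a filtered union of its finitely generated subalgebras, as in the proof of Theorem~\ref{rep}) the thickenings $\{nD_x\}_{n\ge 1}$ are cofinal among the infinitesimal neighbourhoods of $\Ga_x$ and compute the same formal completion $\hat{\Ga}_x$. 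As $\underline G$ is affine of finite type over $X$, this gives
\[
\underline G(\hat{\Ga}_x)=\underleftarrow{\lim}_n\,\underline G(nD_x),
\]
so that $(L^+\underline G)_{X^I}=\underleftarrow{\lim}_n (L_n^+\underline G)_{X^I}$, where $(L_n^+\underline G)_{X^I}$ is the presheaf sending $R$ to the pairs $(x,\beta_n)$ with $x\in X^I(R)$ and $\beta_n\in\underline G(nD_x)$.

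Next I would represent each $(L_n^+\underline G)_{X^I}$. Over $X^I$ there is the universal relative effective Cartier divisor $\mD\subset X\times X^I$, the sum of the universal graphs, which is finite locally free of degree $|I|$ over $X^I$; its multiple $n\mD$ is finite locally free of degree $n|I|$ over $X^I$, and because $\mI_{\mD}$ is invertible its fibre over any $x\in X^I(R)$ is exactly $nD_x$. The pullback $\underline G\times_X n\mD$ is affine over $n\mD$, so its Weil restriction $\Res_{n\mD/X^I}(\underline G\times_X n\mD)$ along the finite locally free morphism $n\mD\to X^I$ is representable by a scheme affine and of finite type over $X^I$ (the standard representability of Weil restriction along a finite locally free morphism for affine targets; see Bosch--L\"utkebohmert--Raynaud, \emph{N\'eron Models}, \S 7.6). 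By the defining adjunction of Weil restriction its $R$-points over $x$ are the sections $\underline G(nD_x)$, so it represents $(L_n^+\underline G)_{X^I}$. The closed immersions $n\mD\hookrightarrow(n+1)\mD$ induce transition morphisms $(L_{n+1}^+\underline G)_{X^I}\to(L_n^+\underline G)_{X^I}$, which, being morphisms of affine $X^I$-schemes, are affine. An inverse limit of affine $X^I$-schemes along affine transition maps is again affine over $X^I$ (it is the relative spectrum of the filtered colimit of the corresponding quasi-coherent $\mO_{X^I}$-algebras); hence $(L^+\underline G)_{X^I}$ is a scheme affine over $X^I$.

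I expect the main obstacle to be the possible \emph{collision} of the points $x_i$: then $\Ga_x$ is not a disjoint union of discs, and the naive thickenings $V(\mI_{\Ga_x}^{\,n})$ fail to be flat over $X^I$ (their fibre degrees jump as points come together). The device that removes this difficulty is to thicken the Cartier divisor $\mD$ rather than the reduced union: $n\mD$ stays finite locally free of constant degree $n|I|$ across collisions, while the cofinality observation above guarantees it still computes the correct completion $\hat{\Ga}_x$. Everything else is a routine application of Weil restriction.
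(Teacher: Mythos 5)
Your argument is correct. The paper actually states this proposition without proof, so there is nothing to compare against directly; but your route — replacing the scheme-theoretic union $\Ga_x$ by the summed relative Cartier divisor $D_x$ (whose multiples $nD_x$ stay finite locally free of constant degree across collisions and are intertwined with the powers of $\mI_{\Ga_x}$ via $\mI_{\Ga_x}^{n|I|}\subseteq \mI_{D_x}^{n}\subseteq\mI_{\Ga_x}^{n}$), representing each finite level by Weil restriction along $n\mD\to X^I$, and passing to the affine inverse limit — is exactly the standard multi-point analogue of the jet-space proposition the paper proves for a single point, and it fills the omitted proof correctly. The only cosmetic remark is that the reduction to Noetherian $R$ is unnecessary, since the intertwining of the two ideal filtrations holds over an arbitrary base.
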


To rigorously define the global loop groups, we need the following lemma. 
\begin{lem}The formal scheme $\hat{\Ga}_x$ is ind-affine. Let $A_x$ be the topological algebra of its ring of functions and let $\hat{\Ga}'_x=\Spec A_x$. There is the canonical map $p:\hat{\Ga}_x\to\hat{\Ga}'_x$. Then there is a unique morphism $i: \hat{\Ga}'_x\to X_R$ such that $\pi=ip:\hat{\Ga}_x\to X_R$.
\end{lem}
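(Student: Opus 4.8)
The plan is to present $\hat{\Ga}_x$ as a filtered colimit of affine schemes, identify $A_x$ as the inverse limit of their coordinate rings, and then construct $i$ by using that $\Ga_x$, being finite over $\Spec R$, lies in an affine open of $X_R$; uniqueness will be purely formal.

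First I would fix a convenient scheme structure. Since $X$ is smooth over $k$, each graph $\Ga_{x_i}\subset X_R$ is a relative effective Cartier divisor (a section of the smooth relative curve $X_R\to\Spec R$), so the divisor $\Ga_x=\sum_{i\in I}\Ga_{x_i}$ has locally principal ideal sheaf $\mI$ generated by a single nonzerodivisor, and the thickenings $\Ga_x^{(n)}:=V(\mI^n)$ are finite flat over $R$ and compute the formal completion $\hat{\Ga}_x=\underrightarrow{\lim}_n\Ga_x^{(n)}$, the transition maps being closed embeddings. Each $\Ga_x^{(n)}$, being finite over the affine scheme $\Spec R$, is affine, so $\hat{\Ga}_x$ is ind-affine. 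Writing $B_n=\Gamma(\Ga_x^{(n)},\mO)$, a finite $R$-algebra, we obtain $A_x=\Gamma(\hat{\Ga}_x,\mO)=\underleftarrow{\lim}_n B_n$; set $\hat{\Ga}'_x=\Spec A_x$, and let $p$ be induced by the compatible system $\Spec B_n=\Ga_x^{(n)}\to\Spec A_x$ coming from the projections $A_x\to B_n$.

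For the morphism $i$ the key input is the standard fact that a closed subscheme finite over an affine base inside a quasi-projective scheme is contained in an affine open. Since a smooth curve over a field is quasi-projective, $X_R$ is quasi-projective over $R$, and $\Ga_x$ is finite over $\Spec R$, so there is an affine open $U\subset X_R$ with $\Ga_x\subset U$, hence all $\Ga_x^{(n)}\subset U$. The closed embeddings $\Ga_x^{(n)}\hookrightarrow X_R$ then factor through $U=\Spec\mO(U)$, giving compatible $R$-algebra maps $\mO(U)\to B_n$, whence a single map $\mO(U)\to A_x$ and a morphism $i:\hat{\Ga}'_x\to U\subset X_R$. The identity $\pi=i\circ p$ is checked on each thickening, where both sides restrict to the given embedding $\Ga_x^{(n)}\hookrightarrow X_R$. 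Uniqueness needs no choice of $U$: as $X_R$ is separated, the equalizer of two candidates $i_1,i_2$ is a closed subscheme $V(J)\subset\Spec A_x$, and $i_1\circ p=i_2\circ p$ forces each $A_x\to B_n$ to annihilate $J$, so $J\subseteq\bigcap_n\ker(A_x\to B_n)=\ker(A_x\to\underleftarrow{\lim}_n B_n)=0$, giving $i_1=i_2$.

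I expect the only genuinely non-formal step to be the existence of the affine neighborhood $U$, which is where quasi-projectivity of $X$ and finiteness of $\Ga_x$ over the affine base $\Spec R$ are essential; the remaining arguments are bookkeeping with the inverse system $(B_n)$ and the tautology that $p$ induces the identity on global functions.
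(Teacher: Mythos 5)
Your presentation of $\hat{\Ga}_x$ as the colimit of the finite (hence affine) thickenings $\Ga_x^{(n)}$, the identification $A_x=\underleftarrow\lim_n B_n$, and the uniqueness argument via $\bigcap_n\ker(A_x\to B_n)=0$ are all fine; note that the paper itself only cites \cite[Proposition 2.12.6]{BD} here, so there is no in-paper argument to compare against. The gap is in the existence step: the ``standard fact'' you invoke --- that a closed subscheme finite over an affine base inside a quasi-projective scheme lies in a single affine open --- is false in the generality you need. What is true is the version for a finite \emph{set} of points, or equivalently the version Zariski-locally on $\Spec R$. A counterexample inside the present setup: take $X=\bP^1_k$ with $k=\bC$ and $R$ the coordinate ring of a smooth affine curve admitting a line bundle $L$ of infinite order in $\Pic(R)$ (e.g.\ an affine elliptic curve). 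Since $R$ is Dedekind, $R^2\cong L\oplus L^{-1}$, and the two projections give $R$-points $x_1,x_2$ of $\bP^1$ whose graphs are disjoint sections of $\bP^1_R$ with $x_i^*\mO(1)\cong L^{\pm1}$. If an affine open $U$ contained $\Ga_x=\Ga_{x_1}\sqcup\Ga_{x_2}$, its complement $D$ would be the support of an effective divisor on the smooth surface $\bP^1_R$ with no vertical components and meeting every closed fibre, so $\mO(D)\cong \mathrm{pr}^*M\otimes\mO(n)$ with $n\ge1$ under $\Pic(\bP^1_R)\cong\Pic(R)\oplus\bZ$; triviality of $\mO(D)|_{\Ga_x}$ then forces $M\otimes L^{n}\cong\mO_R\cong M\otimes L^{-n}$, i.e.\ $L^{2n}\cong\mO_R$, a contradiction.

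The gap is not closed simply by passing to an open cover $\Spec R=\bigcup\Spec R_{f_j}$ over which such a $U$ exists: completion does not commute with localization, so $(A_x)_{f_j}\to\underleftarrow\lim_n\,(B_n)_{f_j}$ is injective but in general not surjective, and the locally constructed morphisms do not obviously assemble into one out of $\Spec A_x$. A correct construction of $i$ must use the completeness of $A_x$ along $\hat I=\ker(A_x\to B_1)$ in an essential way: for instance, one checks $1+\hat I\subset A_x^{\times}$ and that invertible modules and their generating sections lift uniquely along $A_x\to B_1$, and then lifts the restriction to $\Ga_x$ of a relatively ample line bundle on $X_R$ to $\Spec A_x$ to produce $i$; alternatively one shows directly that the preimages in $\Spec A_x$ of an affine cover of a neighbourhood of $\Ga_x$ are principal opens covering $\Spec A_x$. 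This is exactly the non-formal content supplied by \cite[Proposition 2.12.6]{BD}. (In the counterexample above the lemma itself still holds, because $\Ga_x$ is disconnected and $A_x$ splits into two factors each handled by your argument; but adding a third point whose graph meets both sections makes $\Ga_x$ connected while still admitting no ambient affine open, so the single-$U$ strategy cannot be the right general mechanism.)
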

\begin{proof}
See \cite[Proposition 2.12.6]{BD}.
\end{proof}
Then it makes sense to define $\hat{\Ga}_x^\circ=\hat{\Ga}'_x\setminus \Ga_x$.

\begin{equation}
(L\underline G)_{X^I}=\left\{(x,\beta)\mid x\in X^I(R), \beta\in
\underline G(\hat{\Ga}^\circ_x)\right\}.
\end{equation}

By Theorem \ref{descent lemma} and similar arguments as in Corollary \ref{global interpretation of loop}, we have
\begin{prop}
As $k$-spaces, $\Gr_{\underline G,X^I}\cong[(L\underline G)_{X^I}/(L^+\underline G)_{X^I}]$. In particular,
\begin{enumerate}
\item  $(L\underline G)_{X^I}$ is represented by an ind-scheme over $X^I$; 

\item $(L\underline G)_{X^I}$ naturally acts on $\Gr_{\underline G,X^I}$.
\end{enumerate}
\end{prop}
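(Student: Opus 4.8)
The plan is to reduce the quotient statement to a torsor statement, exactly as in the proof of Proposition~\ref{I:quotient}, and to carry out the requisite gluing through Beauville--Laszlo descent as in Corollary~\ref{global interpretation of loop}, now performed along the relative Cartier divisor $\Ga_x$ rather than a single point. First I would identify $(L\underline G)_{X^I}$ with the functor sending $R$ to the set of quadruples $(x,\mE,\beta,\epsilon)$, where $(x,\mE,\beta)\in\Gr_{\underline G,X^I}(R)$ and $\epsilon:\mE^0|_{\hat{\Ga}_x}\cong\mE|_{\hat{\Ga}_x}$ is a trivialisation over the formal completion. In one direction, such a quadruple produces the element $(\beta|_{\hat{\Ga}^\circ_x})(\epsilon|_{\hat{\Ga}^\circ_x})^{-1}\in\underline G(\hat{\Ga}^\circ_x)$, i.e.\ a point of $(L\underline G)_{X^I}(R)$; the opposite direction is the gluing.

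For the gluing I would work Zariski-locally on $X$ so that $\Ga_x$ is cut out by a single non-zero-divisor $f$, and apply Theorem~\ref{descent lemma} to the coordinate rings of $\underline G$ pulled back to $\hat{\Ga}'_x$ and to $X_R-\Ga_x$, with the isomorphism over $\hat{\Ga}^\circ_x$ supplied by $\beta$; this yields a $\underline G$-torsor $\mE$ on $X_R$ together with its tautological trivialisations $\epsilon$ over $\hat{\Ga}_x$ and $\beta$ over $X_R-\Ga_x$, and the two constructions are visibly mutually inverse. With the quadruple description in hand, forgetting $\epsilon$ gives the projection $(L\underline G)_{X^I}\to\Gr_{\underline G,X^I}$, and $(L^+\underline G)_{X^I}$ acts simply transitively on its fibers, since two trivialisations of the same $\mE$ over $\hat{\Ga}_x$ differ by an automorphism of the trivial torsor, i.e.\ an element of $\underline G(\hat{\Ga}_x)=(L^+\underline G)_{X^I}(R)$. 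To see that this projection is an \'etale-locally-trivial torsor, I would prove the global analogue of Lemma~\ref{et loc triv}: by smoothness of $\underline G$, any $\underline G$-torsor on $X_R$ admits a section over $\hat{\Ga}_x$ after an \'etale base change $\Spec R'\to\Spec R$. This gives $\Gr_{\underline G,X^I}\cong[(L\underline G)_{X^I}/(L^+\underline G)_{X^I}]$. Claim (1) then follows because $(L^+\underline G)_{X^I}$ is affine over $X^I$, so by descent for affine morphisms the torsor is schematic and affine over the ind-scheme $\Gr_{\underline G,X^I}$, hence an ind-scheme over $X^I$; claim (2) is immediate from the quotient presentation, where left multiplication of $(L\underline G)_{X^I}$ on itself commutes with the right $(L^+\underline G)_{X^I}$-action and hence descends to the quotient.

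The step I expect to be the main obstacle is making the Beauville--Laszlo gluing rigorous in the multi-point formal setting, where several features must be handled with care: the divisor $\Ga_x$ is only a relative Cartier divisor that becomes non-reduced when the points $x_i$ collide; the formal neighborhood $\hat{\Ga}_x$ is merely ind-affine, so one must first replace it by the genuine affine scheme $\hat{\Ga}'_x$ and its punctured version $\hat{\Ga}^\circ_x$ before the descent statement even type-checks; and one must verify that a local equation $f$ for $\Ga_x$ is a non-zero-divisor both on $X_R$ and on the completion, so that the hypotheses of Theorem~\ref{descent lemma} are met. Globalizing the Zariski-local gluing over $X^I$ and confirming independence of the auxiliary choice of $f$ is the essential bookkeeping; once this is arranged, everything else is a formal transcription of the one-point arguments, and a detailed treatment can be modeled on the proof of Corollary~\ref{global interpretation of loop} together with \cite{PZ}.
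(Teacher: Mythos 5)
Your proposal is correct and follows essentially the same route as the paper, which proves this proposition by invoking Theorem \ref{descent lemma} together with the argument of Corollary \ref{global interpretation of loop} (Beauville--Laszlo gluing along $\Ga_x$ to identify $(L\underline G)_{X^I}$ with triples consisting of a torsor on $X_R$, a formal trivialisation along $\hat{\Ga}'_x$, and a trivialisation on the complement), combined with the torsor argument of Proposition \ref{I:quotient} and the \'etale local triviality coming from smoothness of $\underline G$ as in Lemma \ref{et loc triv}. The technical points you flag (working with $\hat{\Ga}'_x$ rather than the ind-affine $\hat{\Ga}_x$, and choosing a local non-zero-divisor cutting out $\Ga_x$, e.g.\ the product of local equations of the $\Ga_{x_i}$) are exactly the ones the paper delegates to \cite[Proposition 2.12.6]{BD} and \cite{PZ}.
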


Beilinson-Drinfeld Grassmannians are truly remarkable objects. To wit, let us first analyse the case $I=\{1\}$ and $I=\{1,2\}$. 

Let $I=\{1\}$ and first assume that $\underline G=G\times X$ is constant. 
For every $x\in X$, by choosing an isomorphism $\mO_x\simeq \mO$, we obtain a canonical isomorphism
\[\Gr_{X}|_x=\Gr_{\underline G, x}\cong \Gr_{G}\]
by Theorem \ref{glob aff}.
However, globally
there is no canonical identification $\Gr_X\neq \Gr_G\times X$ in general. Rather, $\Gr_X$ can be regarded as a twisted product $X\tilde\times\Gr_G$ (cf. \S~\ref{Intro:torsor} for the definition of the twisted product). Namely, there is an $\Aut(D)$-torsor $\hat{X}\to X$ over $X$, classifying $x\in X$ together with $D_x\cong D$. Precisely
\[\hat{X}(R)=\left\{(x,\al)\mid x\in X(R), \al: \hat{\Ga}'_x\cong \Spec R[[t]].\right\}\]
 Likewise, there is the $\Aut^+(D)$-torsor $\hat{X}^+$ consisting of those $\al$ that restricts to the canonical isomorphism $\Ga_x\simeq \Spec R$. Recall that $\Aut(D)$ acts on $\Gr_G$ (as defined by \eqref{auto of disc}).
Then there are isomorphisms
\begin{equation}\label{III: twisted constr}
\Gr_X\cong \hat{X}\times^{\Aut(D)}\Gr_G\cong \hat{X}^+\times^{\Aut^+(D)}\Gr_G.
\end{equation}
The first isomorphism in fact implies that there exists a connection on $\Gr_X$ (see also Proposition \ref{III:property}).
The second isomorphism  allows one to define
\begin{equation}\label{III:SV over X}
\Gr_{\leq \mu,X}: =X\tilde\times \Gr_{\leq \mu}:=\hat{X}^+\times^{\Aut^+(D)}\Gr_{\leq \mu}
\end{equation}
as a closed subscheme. Alternatively, $\Gr_{\leq \mu,X}$ can be identified with the closed subset (with the reduced scheme structure) of $\Gr_X$ classifying those points $(x,\mE,\beta)$ such that $\inv_x(\beta)\leq \mu$.

\begin{rmk}
One can also describe $(L^+G)_X$ and $(LG)_X$ using the above twisting procedure.
\end{rmk}

We still assume that $I=\{1\}$, but that $\underline G$ is not necessarily constant. For example, one can construct a group scheme $\underline G$ over $X$ such that $\underline G|_{X-x}\simeq G\times (X-x)$ and $\underline G|_{D_x}$ is an Iwahori group scheme of $G$. Then $\Gr_{\underline G, X}$ is a deformation from the affine Grassmannian to the affine flag variety of $G$ (see Remark \ref{terminology} for the terminology). I.e. the fiber of $\Gr_{\underline G, X}$ over a point $y\neq x$ on $X$ is isomorphic to $\Gr_G$ and its fiber over $x$ is isomorphic to $\Fl_G$. This deformation plays a crucial role in the construction of central sheaves by Gaitsgory \cite{Ga}. Similar deformations over the ring $p$-adic integers were used in \cite{PZ} to define the local model of Shimura varieties.

Next, we turn to $\Gr_{X^2}$. Let $\Delta:X\to X^2$ be the diagonal.
\begin{prop}\label{III:Gr2}
There are canonical isomorphisms
\[\Delta:\Gr_X\cong \Gr_{X^2}\times_{X^2,\Delta}X,\quad c:\Gr_{X^2}|_{X^2-\Delta}\cong (\Gr_X\times\Gr_X)|_{X^2-\Delta}.\]
In addition, there is a natural $S_2$-action on $\Gr_{X^2}$ such that $c$ is $S_2$-equivariant, where $S_2$ acts on the $\Gr_X\times\Gr_X$ by the permutation of two factors.
\end{prop}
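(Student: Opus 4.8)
The plan is to read off both isomorphisms directly from the moduli description \eqref{BD Grass def}, the only geometric input being the behaviour of the incidence divisor $\Ga_x$ on the diagonal and on the open stratum. For the first isomorphism, I would observe that an $R$-point of $\Gr_{X^2}\times_{X^2,\Delta}X$ is a point $x\in X(R)$ together with a triple in $\Gr_{X^2}(R)$ whose underlying $X^2$-point is $(x,x)$. Since $\Ga_{(x,x)}=\Ga_x\cup\Ga_x=\Ga_x$, the datum $\beta$ is exactly a trivialisation of $\mE$ over $X_R-\Ga_x$, so such a triple is precisely a point of $\Gr_X(R)$. This identification is manifestly functorial in $R$, which gives the canonical isomorphism $\Delta$.

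For the factorisation isomorphism $c$, the first step is to check that over $X^2-\Delta$ the two graphs are disjoint. Both $\Ga_{x_1}$ and $\Ga_{x_2}$ are sections of the projection $X_R\to\Spec R$, so their scheme-theoretic intersection inside $X_R$ is the preimage $(x_1,x_2)^{-1}(\Delta)$ of the diagonal under $(x_1,x_2)\colon\Spec R\to X^2$; this is empty precisely when $(x_1,x_2)$ factors through the open subscheme $X^2-\Delta$. Hence for every such $R$-point the two opens $U_1:=X_R-\Ga_{x_2}$ and $U_2:=X_R-\Ga_{x_1}$ form a Zariski cover of $X_R$ with overlap $U_1\cap U_2=X_R-\Ga_x$.

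The map $c$ is then built by separating the modification at the two points. Given $(x_1,x_2,\mE,\beta)$, I would produce $\mE_1$ by gluing $\mE|_{U_1}$ to the trivial torsor $\mE^0|_{U_2}$ along $\beta$ over $U_1\cap U_2$; as $U_2\supset\Ga_{x_2}$ carries no modification, $\mE_1$ inherits a canonical trivialisation $\beta_1$ over $X_R-\Ga_{x_1}=U_2$, giving $(x_1,\mE_1,\beta_1)\in\Gr_X(R)$, and symmetrically one obtains $(x_2,\mE_2,\beta_2)$. The inverse sends a pair $((x_1,\mE_1,\beta_1),(x_2,\mE_2,\beta_2))$ with disjoint graphs to the torsor glued from $\mE_1|_{U_1}$ and $\mE_2|_{U_2}$ along $\beta_2^{-1}\beta_1$ over $U_1\cap U_2$, equipped there with $\beta:=\beta_1=\beta_2$. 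Every gluing is along the Zariski cover $\{U_1,U_2\}$ and uses only that $\underline G$-torsors satisfy fpqc (in particular Zariski) descent; functoriality in $R$ and the mutual inverseness of the two constructions are then formal verifications. Finally, the $S_2$-action on $\Gr_{X^2}$ is $(x_1,x_2,\mE,\beta)\mapsto(x_2,x_1,\mE,\beta)$, legitimate since $\Ga_{x_1}\cup\Ga_{x_2}$ is symmetric in the two points, and since swapping $x_1\leftrightarrow x_2$ interchanges $U_1\leftrightarrow U_2$ and hence $\mE_1\leftrightarrow\mE_2$, the map $c$ intertwines it with the permutation of the two factors of $\Gr_X\times\Gr_X$.

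I expect the only step with genuine content to be the disjointness of the graphs over $X^2-\Delta$, together with its stability under an arbitrary base change $\Spec R\to X^2-\Delta$, since this is exactly what makes $\{U_1,U_2\}$ a Zariski cover uniformly in $R$ and thereby legitimises the gluing. Once that is in place, everything else reduces to standard descent bookkeeping for torsors.
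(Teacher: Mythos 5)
Your proof is correct and follows essentially the same route as the paper: the diagonal case is read off from $\Ga_{(x,x)}=\Ga_x$, and the factorisation isomorphism is constructed by gluing $\mE$ (resp.\ the two torsors $\mE_1,\mE_2$) along the Zariski cover $\{X_R-\Ga_{x_2},\,X_R-\Ga_{x_1}\}$ exactly as in the paper's argument, with the same description of the $S_2$-action. The only added value is that you make explicit why the two graphs are disjoint uniformly over $X^2-\Delta$ (their scheme-theoretic intersection is $(x_1,x_2)^{-1}(\Delta)$), a point the paper leaves implicit.
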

\begin{proof}The first isomorphism is clear. To construct the second one, let $(x,y)\in (X^2-\Delta)(R)$. Then $(X_R-\Ga_x)$ and $(X-\Ga_y)$ form a Zariski cover of $X_R$. We define a map $$\Gr_{X^2}\times_{X^2}\Spec R\to \Gr_X\times_{X,x}\Spec R$$ by sending $(\mE,\beta)$ to $(\mE_x,\beta_x)$, where $\mE_x$ is obtained by gluing $\mE|_{X_R-\Ga_y}$ and $\mE^0|_{X_R-\Ga_x}$ via $\beta$ and therefore is equipped with a trivialisation $\beta_x: \mE|_{X_R- \Ga_x}\cong \mE^0|_{X_R-\Ga_x}$. Similarly, we have a map $\Gr_{X^2}\times_{X^2}\Spec R\to \Gr_X\times_{X,y}\Spec R$. Together, they define a map $\Gr_{X^2}|_{X^2-\Delta}\to (\Gr_X\times\Gr_X)|_{X^2-\Delta}$.

Conversely, if we have $(\mE_x,\beta_x)\in \Gr_X\times_{X,x}\Spec R$ and $(\mE_y,\beta_y)\in \Gr_X\times_{X,y}\Spec R$, we can construct $\mE$ by gluing $\mE_{x}|_{X_R- \Ga_y}$ and $\mE_y|_{X_R-\Ga_x}$ by $\beta^{-1}_y\beta_x$, which by definition is naturally equipped with a trivialisation $\beta$ on $X_R-\Ga_x-\Ga_y$.

Finally, the non-trivial element $\sigma\in S_2$ acts on $\Gr_{X^2}$ by sending $(x,y, \mE,\beta)$ to $(y,x,\mE,\beta)$. It is an exercise to check that under $c$, this action corresponds to the flip of the two factors of $\Gr_X\times\Gr_X$.
\end{proof}

These isomorphisms contradict to our intuition. Namely, the fibers of $\Gr_{X^2}\to X^2$ over the diagonal are ``smaller'' than its general fibers! This is purely an infinite dimensional phenomenon, as we all know that the fiber dimension of a family of algebraic varieties goes up under specialisation. 

In fact, there is no contradiction. We have the following result.
\begin{prop}\label{III:closure}
Assume that $\underline G=G\times X$ is constant and $p\nmid \pi_1(G_\der)$. Regard $\Gr_{\leq \mu,X}\times \Gr_{\leq \la,X}|_{X^2-\Delta}\subset \Gr_{X^2}|_{X^2-\Delta}$ via the above identification and let $\Gr_{\leq (\mu,\la)}$ denote the closure in $\Gr_{X^2}$. Then $\Gr_{\leq (\mu,\la)}|_{\Delta}= \Gr_{\leq \mu+\la,X}$.
\end{prop}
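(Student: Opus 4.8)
The plan is to prove the two inclusions $\Gr_{\leq(\mu,\la)}|_{\Delta}\subseteq \Gr_{\leq\mu+\la,X}$ and $\Gr_{\leq\mu+\la,X}\subseteq \Gr_{\leq(\mu,\la)}|_{\Delta}$ separately, the first by an upper--semicontinuity argument and the second by a dimension count. Throughout I regard all schemes with their reduced structures, so it suffices to establish an equality of closed subsets.

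For the first inclusion I would work representation-theoretically, exactly as in the proof of Proposition \ref{HP}. Fix a highest weight representation $\rho_\chi\colon G\to\GL(V_\chi)$ and consider, on the \emph{whole} of $\Gr_{X^2}$, the condition that $\rho_\chi(\beta)$ maps the induced bundle $V_{\chi,\mE}$ into $V_\chi\otimes\mO_{X_R}(D_\chi)$, where $D_\chi=(\chi,\mu)\Ga_{x_1}+(\chi,\la)\Ga_{x_2}$ is the relative effective divisor on $X_R$ weighted by $\mu$ along the first leg and by $\la$ along the second. By the argument of Proposition \ref{HP} (working Zariski-locally and bounding the pole orders of matrix entries) this defines a closed subscheme $Z_\chi\subseteq\Gr_{X^2}$; set $Z:=\bigcap_\chi Z_\chi$. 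Over $X^2-\Delta$, under the factorisation isomorphism $c$ of Proposition \ref{III:Gr2}, the bound $D_\chi$ is exactly the condition $\inv_{x_1}(\beta)\leq\mu$, $\inv_{x_2}(\beta)\leq\la$, so $Z$ contains $(\Gr_{\leq\mu,X}\times\Gr_{\leq\la,X})|_{X^2-\Delta}$ and hence its closure $\Gr_{\leq(\mu,\la)}$. The key point is that the relative divisor $D_\chi$ restricts over the diagonal to $(\chi,\mu+\la)\Ga_x$: since $\Ga_{x_1}$ and $\Ga_{x_2}$ both specialise to the single reduced graph $\Ga_x$, the cycle $(\chi,\mu)\Ga_{x_1}+(\chi,\la)\Ga_{x_2}$ limits to $(\chi,\mu+\la)\Ga_x$, \emph{not} to a divisor of twice that multiplicity. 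Hence $Z|_\Delta$ is precisely the locus where $\inv_x(\beta)\leq\mu+\la$ after intersecting over all $\chi$, which for $G_\der$ simply connected is $\Gr_{\leq\mu+\la,X}$ by Proposition \ref{HP}; the general case follows by lifting $\beta$ to a $z$-extension just as there, which is where the hypothesis $p\nmid\pi_1(G_\der)$ enters. This gives $\Gr_{\leq(\mu,\la)}|_\Delta\subseteq Z|_\Delta=\Gr_{\leq\mu+\la,X}$.

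For the reverse inclusion I would argue by irreducibility and dimension. The off-diagonal locus $(\Gr_{\leq\mu,X}\times\Gr_{\leq\la,X})|_{X^2-\Delta}$ is irreducible (each factor $\Gr_{\leq\mu,X}=X\tilde\times\Gr_{\leq\mu}$ is a fibration with irreducible fibre over the irreducible base $X$, and we pass to a dense open of the product), of dimension $2+(2\rho,\mu)+(2\rho,\la)=2+(2\rho,\mu+\la)$ by Proposition \ref{geom of Sch}. Hence its closure $\Gr_{\leq(\mu,\la)}$ is irreducible of that dimension, and it is proper, in fact projective, over $X^2$ because it lies in a finite-type (hence projective) piece of $\Gr_{X^2}$ by Theorem \ref{representability of BD}; in particular the structure map to $X^2$ has dense, hence closed and surjective, image. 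Now $\Delta$ is a smooth Cartier divisor in $X^2$, so its preimage $\Gr_{\leq(\mu,\la)}|_\Delta$ is a nonempty, proper Cartier divisor in the integral scheme $\Gr_{\leq(\mu,\la)}$; by Krull's principal ideal theorem every component has dimension exactly $1+(2\rho,\mu+\la)$. On the other hand $\Gr_{\leq\mu+\la,X}=X\tilde\times\Gr_{\leq\mu+\la}$ is irreducible of the same dimension $1+(2\rho,\mu+\la)$. Since the first inclusion exhibits $\Gr_{\leq(\mu,\la)}|_\Delta$ as a closed subset of $\Gr_{\leq\mu+\la,X}$ of full dimension, it must be all of it.

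The main obstacle is the sharpness of the first inclusion. The naive estimate bounding both modifications by the symmetric divisor $\Ga_{x_1}+\Ga_{x_2}$ only yields $\inv_x(\beta)\leq 2(\mu+\la)$ over $\Delta$, which is useless. The content is that one must weight the two graphs separately by $\mu$ and by $\la$, and then verify that this weighted relative divisor specialises to $(\chi,\mu+\la)\Ga_x$ over the diagonal; this is the precise form of the ``fusion'' of relative positions and is exactly what makes the sum $\mu+\la$ (rather than a larger coweight) appear. Once this is in place, the two inclusions and the dimension bookkeeping combine to give the result.
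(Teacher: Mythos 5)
Your argument correctly establishes the identity at the level of underlying reduced closed subschemes, and both halves are sound: the weighted divisor $D_\chi=(\chi,\mu)\Ga_{x_1}+(\chi,\la)\Ga_{x_2}$ does cut out a closed locus containing the off-diagonal family and specialising over $\Delta$ to the $V_\chi$-condition for $\mu+\la$, and the Krull/irreducibility count forces the reverse containment. But there is a genuine gap: the proposition, as it is used later in the paper, is a \emph{scheme-theoretic} statement. In the proof of Lemma \ref{Pic X2} the author needs $\Gr_{\leq(\la,\mu)}|_\Delta$ to be an \emph{integral scheme} and deduces Cohen--Macaulayness of $\Gr_{\leq(\la,\mu)}$ from the identification of its Cartier divisor over $\Delta$ with $\Gr_{\leq\mu+\la,X}$; both uses require that the scheme-theoretic fibre of the closure over the diagonal be \emph{reduced}, not merely that its reduction be $\Gr_{\leq\mu+\la,X}$. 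Your declaration ``I regard all schemes with their reduced structures'' quietly discards exactly this content. A priori the pullback of $\Delta$ to the integral scheme $\Gr_{\leq(\mu,\la)}$ could be a non-reduced Cartier divisor (a positive multiple of the reduced one), and nothing in a dimension count rules that out.

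This is also visible in the hypotheses: you claim $p\nmid\pi_1(G_\der)$ enters through the $z$-extension step of Proposition \ref{HP}, but that argument needs no such assumption, and indeed your entire proof never genuinely uses it --- a sign that the hard part has been bypassed. The hypothesis is the one appearing in Theorem \ref{singularity} (normality of Schubert varieties), which is the key input for reducedness of the special fibre. The missing step is the one supplied by the cited references \cite{Z07}, \cite{Z10}: after restricting to a curve in $X^2$ transverse to $\Delta$ one has a flat degeneration over a trait whose generic fibre is reduced and whose special fibre contains the normal variety $\Gr_{\leq\mu+\la,X}$ with the same dimension; one then shows the special fibre equals this subscheme by comparing dimensions of spaces of sections of an ample line bundle on the two fibres (a coherence-type statement, proved via Frobenius splitting or via the factorisation of the determinant line bundle), which is where the normality and the hypothesis on $p$ are actually consumed. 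Your write-up should either be presented explicitly as a proof of the set-theoretic version only, or be supplemented by this reducedness argument.
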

So as soon as we restrict to finite dimensional subvarieties, we restore our intuition.  A slightly  weaker version of this result was proved in \cite{Z07} (see also \cite{Z10}) but more or less the same arguments apply here.

Next we consider line bundles. Let $\mL$ be a line bundle on the prestack $\Bun_{\underline G}$ of $\underline G$-torsors on $X$, rigidified at the trivial $\underline G$-torsor. Note that we have the natural forgetful map
\begin{equation}\label{III:I-un}
u_{X^I}: \Gr_{X^I}\to \Bun_{\underline G}.
\end{equation}
Let $\mL_{X^I}$ denote the pullback of $\mL$ to $\Gr_{X^I}$.

\begin{prop}\label{III:line}
Assume that $\underline G=G\times X$ is constant and $G$ simply-connected. Then under the identification given in Proposition \ref{III:Gr2}, there are canonical isomorphisms
\[\Delta^*\mL_{X^2}\cong \mL_{X}, \quad \mL_{X^2}|_{X^2-\Delta}\cong c^*(\mL_X\boxtimes\mL_X|_{X^2-\Delta}).\]
\end{prop}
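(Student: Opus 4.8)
The statement splits into the two displayed isomorphisms, of which the first is essentially formal. The isomorphism $\Delta\colon\Gr_X\cong\Gr_{X^2}\times_{X^2,\Delta}X$ of Proposition \ref{III:Gr2} sends $(x,\mE,\beta)$ to $(x,x,\mE,\beta)$ and leaves the underlying $\underline G$-torsor $\mE$ untouched, so $u_{X^2}\circ\Delta=u_X$ as maps to $\Bun_{\underline G}$. Pulling back $\mL$ then gives
\[\Delta^*\mL_{X^2}=(u_{X^2}\circ\Delta)^*\mL=u_X^*\mL=\mL_X,\]
and this identification is canonical, being merely functoriality of pullback. So the work is entirely in the second isomorphism.

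For the factorisation over $X^2-\Delta$, write $\tilde u_i:=u_X\circ\pr_i\circ c$. On an $R$-point $(x,y,\mE,\beta)$ with $\Ga_x\cap\Ga_y=\emptyset$, the proof of Proposition \ref{III:Gr2} gives $u_{X^2}(x,y,\mE,\beta)=\mE$ while $\tilde u_1$ and $\tilde u_2$ return the glued torsors $\mE_x$ and $\mE_y$; here $\mE_x$ (resp.\ $\mE_y$) agrees with $\mE^0$ away from $\Ga_x$ (resp.\ $\Ga_y$) and with $\mE$ away from $\Ga_y$ (resp.\ $\Ga_x$), and $\mE$ is reglued from $\mE_x,\mE_y$ along $X_R-\Ga_x-\Ga_y$. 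Both sides of the claimed isomorphism are therefore pullbacks of $\mL$, and the plan is to produce a canonical, functorial isomorphism
\[\mL_{\mE}\;\cong\;\mL_{\mE_x}\otimes\mL_{\mE_y},\]
the rigidification $\epsilon\colon(\mE^0)^*\mL\cong k$ being used to absorb the fibre $\mL_{\mE^0}$.

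I would construct this in two complementary ways. Conceptually, form $\mathcal D:=u_{X^2}^*\mL\otimes c^*(\mL_X\boxtimes\mL_X)^{-1}$ over $X^2-\Delta$; setting one of the two $\Gr_X$-factors equal to the base point $e$ forces $\mE=\mE_x$ or $\mE=\mE_y$, so $\mathcal D$ restricts trivially to each ``axis'' $\Gr_X\times\{e\}$ and $\{e\}\times\Gr_X$. Since each factor is an increasing union of simply-connected projective varieties (the Schubert varieties $\Gr_{\leq\mu}$, with $\Pic\cong\bZ$ and $\Pic^0=0$), a relative see-saw argument shows that $\mathcal D$, being trivial on both axes, is pulled back from $X^2-\Delta$, and its rigidification then trivialises it; this yields the isomorphism for an arbitrary rigidified $\mL$. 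Explicitly and canonically, I would reduce to $\underline G=\GL_n$ and $\mL=\mL_{\det}$, the determinant-of-cohomology line bundle $\mE\mapsto\det R\Gamma(X_R,\mE)^{-1}$: because the modifications turning $\mE^0$ into $\mE$, $\mE_x$, $\mE_y$ are torsion supported on $\Ga_x\cup\Ga_y$, $\Ga_x$, $\Ga_y$ and the first locus is the disjoint union of the other two, the class identity $[\mE]+[\mE^0]=[\mE_x]+[\mE_y]$ holds in $K$-theory and multiplicativity of $\det R\Gamma$ supplies a canonical $\det R\Gamma(\mE)\otimes\det R\Gamma(\mE^0)\cong\det R\Gamma(\mE_x)\otimes\det R\Gamma(\mE_y)$; rigidifying at $\mE^0$ is exactly the displayed isomorphism. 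The local avatar of this is the additivity $\ga_{123}$ of the relative determinant lines from \S\ref{determinant line}.

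The remaining points are that the isomorphism be independent of the gluing presentation, be $S_2$-equivariant (hence compatible with the flip intertwined by $c$ in Proposition \ref{III:Gr2}), and satisfy the associativity/cocycle constraint required for the three-point generalisation over $X^3$; all of these follow from the canonicity, symmetry and $\ga$-additivity of the determinant construction. I expect the genuine obstacle to be exactly this demand for \emph{canonicity} rather than mere existence: to make the factorisation usable (for the fusion product in \S\ref{Lecture V} and for line bundles on Beilinson--Drinfeld Grassmannians) the isomorphism must be natural in $R$, symmetric, and coherent over the Ran space, and it is the determinant-of-cohomology model that makes all three transparent, whereas the see-saw argument settles existence but not naturality.
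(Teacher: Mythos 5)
Your first displayed isomorphism and your ``conceptual'' see-saw argument are essentially the paper's proof: one restricts $\mL_{X^2}|_{X^2-\Delta}$ to the axes $\Gr_X\times e$ and $e\times \Gr_X$ using the commutativity of the diagram coming from Proposition \ref{III:Gr2}, uses that $\Pic^e(\Gr_G)\cong\bZ$ is discrete and $\Gr_X$ is connected to see that the restriction to every slice $\Gr_X\times\{s\}$ and $\{s\}\times\Gr_X$ is $\mL_X$, and then applies the See-Saw principle (this is where ind-properness of $\Gr_X\to X$ enters). The rigidification along the unit section is exactly what pins the isomorphism down canonically and compatibly with the rigidifications, so your closing worry that the see-saw route ``settles existence but not naturality'' is unfounded; no second construction is needed.

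Your proposed explicit route through $\mL_{\det}$, on the other hand, has two genuine problems. First, for a general simply-connected $G$ an arbitrary rigidified line bundle on $\Bun_G$ need not be the pullback of $\mL_{\det}$ along any representation: by Remark \ref{generator}, for $G=E_8$ one has $60\mid d_\rho$ for every $\rho$, so $\mO(1)$ itself is not of this form and the reduction to $\GL_n$ cannot cover the stated generality. Second, the determinant factorisation is only $S_2$-equivariant up to the Koszul sign $(-1)^{\on{rk}M\,\on{rk}N}$ (Remark \ref{graded factorization}), whereas the isomorphism of Proposition \ref{III:line} is honestly $S_2$-equivariant; so the determinant model does not make the symmetry ``transparent'' but rather produces a genuinely different, graded, factorisation structure. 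This is precisely why the paper keeps Proposition \ref{III:line} and Proposition \ref{det factorization} as separate statements rather than deducing one from the other.
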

\begin{proof}
The first isomorphism is clear. We explain the second. Note that the following diagram is commutative by the proof of Proposition \ref{III:Gr2}
\[\xymatrix{
(\Gr_X\times X)|_{X^2-\Delta}\ar^{c^{-1}(\id\times e)}[rr]\ar[dr]&&\Gr_{X^2}|_{X^2-\Delta}\ar[dl]\\
&\Bun_G&
}\]
Therefore, the restriction of $\mL_{X^2}|_{X^2-\Delta}$ to $\Gr_X\times X|_{X^2-\Delta}$ is canonically isomorphic to $\mL_X\boxtimes\mO_{X}|_{X^2-\Delta}$. Using the fact that $\Pic^e(\Gr_G)\simeq \bZ$  which is discrete (so every line bundle algebraically equivalent to zero is trivial) and $\Gr_X$ is connected, we see that for every point of $s$ of $\Gr_X$, the restriction of $\mL_{X^2}|_{X^2-\Delta}$ to $\Gr_X\times\{s\}$ is isomorphic to $\mL_X$. By the same argument, the restriction of $\mL_{X^2}|_{X^2-\Delta}$ to $\{s\}\times \Gr_X$ is also isomorphic to $\mL_X$.

Since $\Gr_X$ is ind-proper over $X$, we can apply the See-Saw principle (cf. \cite[\S 5]{Mum}) to conclude that $\mL_{X^2}|_{X^2-\Delta}$ is canonically isomorphic to $\mL_X\boxtimes\mL_X|_{X^2-\Delta}$, compatible with the rigidifications.
\end{proof}

Note that if $G$ is not simply-connected, Proposition \ref{III:line} cannot hold for arbitrary line bundle on $\Bun_G$\footnote{Although probably one can generalise this proposition to not necessarily simply-connected groups by requiring $\mL$ on $\Bun_G$ to be ``multiplicative'' in appropriate sense.}. In addition, the argument does not work for non-constant group scheme $\underline G$ even the generic fiber of $\underline G$ is simply-connected,  since we crucially use the fact that $\Gr_{X^I}\to X^I$ is ind-proper (in order to apply the See-Saw principle). For most applications, Proposition \ref{det factorization} below can be served as a replacement.

Namely, the construction of the determinant line bundle on $\Gr_{\GL_n}$ in \S~\ref{determinant line} immediately generalises to give determinant line bundles $\mL_{\det, X^I}$ on $\Gr_{\GL_n,X^I}$.

\begin{prop}\label{det factorization}
There are similar isomorphisms as in Proposition \ref{III:line} for the determinant line bundle $\mL_{\det,X^I}$. 
\end{prop}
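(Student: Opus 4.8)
The plan is to establish both isomorphisms directly from the modular construction of $\mL_{\det,X^I}$, using the multiplicativity of the determinant over a disjoint union of supports; this bypasses the See-Saw principle, and hence the ind-properness and simple-connectedness hypotheses that Proposition \ref{III:line} required. Recall the explicit shape of the line bundle. A point $(x,\mE,\beta)\in\Gr_{\GL_n,X^I}(R)$ is a rank $n$ bundle $\mE$ on $X_R$ identified via $\beta$ with the trivial bundle $\mE^0=\mO_{X_R}^n$ away from $\Ga_x$. Choosing $N\gg 0$ so that $\mE^0(-N\Ga_x)\subset\mE\subset\mE^0(N\Ga_x)$ inside $\mE^0|_{X_R-\Ga_x}$, the quotients
\[
Q=\mE^0(N\Ga_x)/\mE,\qquad Q^0=\mE^0(N\Ga_x)/\mE^0
\]
are $R$-flat coherent sheaves supported near $\Ga_x$ and finite over $\Spec R$ (the flatness being exactly what enters the proof of Theorem \ref{representability of BD}), and, following \S\ref{determinant line}, the fiber of $\mL_{\det,X^I}$ is $\det Q^0\otimes(\det Q)^{-1}$, where $Q,Q^0$ are regarded as finite projective $R$-modules; this is independent of $N$ up to canonical isomorphism.

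For the diagonal isomorphism I would simply note that, under $\Delta\colon\Gr_X\cong\Gr_{X^2}\times_{X^2,\Delta}X$, a point of $\Gr_{X^2}$ lying over the diagonal is the very same triple $(x,\mE,\beta)$, now read with $\Ga_x=\Ga_{x_1}=\Ga_{x_2}$. The sheaves $Q$ and $Q^0$, and therefore the determinant lines, coincide on the nose, giving $\Delta^*\mL_{\det,X^2}\cong\mL_{\det,X}$ canonically and compatibly with the rigidifications.

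The content is in the off-diagonal isomorphism. Fix $(x_1,x_2)\in(X^2-\Delta)(R)$, so that $\Ga_{x_1}$ and $\Ga_{x_2}$ are disjoint closed subschemes of $X_R$. The supports of $Q$ and $Q^0$ then split, yielding canonical decompositions $Q\cong Q_1\oplus Q_2$ and $Q^0\cong Q^0_1\oplus Q^0_2$ by the idempotents of the structure sheaf cutting out the two disjoint components, with $Q_i,Q^0_i$ supported near $\Ga_{x_i}$. Since $\det$ is multiplicative on direct sums, $\det Q\cong\det Q_1\otimes\det Q_2$ and likewise for $Q^0$. Under the gluing isomorphism $c$ of Proposition \ref{III:Gr2}, the factor $(\mE_{x_i},\beta_{x_i})$ is precisely the modification of $\mE^0$ at $\Ga_{x_i}$ whose relative torsion sheaves are $Q_i$ and $Q^0_i$; hence the fiber of $\mL_{\det,X}\boxtimes\mL_{\det,X}$ at the corresponding point is
\[
\bigl(\det Q^0_1\otimes(\det Q_1)^{-1}\bigr)\otimes\bigl(\det Q^0_2\otimes(\det Q_2)^{-1}\bigr),
\]
which matches the fiber $\det Q^0\otimes(\det Q)^{-1}$ of $\mL_{\det,X^2}$. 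The same argument applied to any decomposition of $I$ into disjoint subsets handles general $X^I$.

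The main obstacle is promoting these fiberwise identities to canonical isomorphisms of line bundles over the base, independent of all auxiliary choices. Concretely, I would need to check that $R$-flatness of $Q,Q^0$ and the disjoint-support decomposition $Q\cong Q_1\oplus Q_2$ persist in families and are independent of $N$, and that the resulting isomorphism respects the rigidification at $\mE^0$ and the $S_2$-equivariance demanded in Proposition \ref{III:Gr2}. For the last point one should work with the graded determinant lines of Remark \ref{graded line}, so that the commutativity isomorphism $\det Q_1\otimes\det Q_2\cong\det Q_2\otimes\det Q_1$ carries the correct Koszul sign and the $S_2$-action matches the flip of the two factors. I expect the multiplicativity of the determinant over disjoint supports to be the only genuine input, the remaining verifications being the bookkeeping already built into the construction of $\mL_{\det,X^I}$.
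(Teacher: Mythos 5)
Your proof is correct and is essentially the paper's own argument, just spelled out in detail: the paper's proof consists precisely of observing that the diagonal isomorphism is clear and that the off-diagonal one follows from the canonical isomorphism $\det(M\oplus N)\cong\det(M)\otimes\det(N)$ applied to the disjoint-support decomposition of the torsion quotients. Your closing remarks on the Koszul sign and graded determinant lines are exactly the content of Remark \ref{graded factorization}, which the paper records separately rather than as part of this proposition.
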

In particular, given a representation $\rho:\underline G\to \GL_n$, via pullback along $f_\rho: \Gr_{\underline G,X^I}\to\Gr_{\GL_n,X^I}$, one can construct line bundles on $\Gr_{\underline G,X^I}$ together with isomorphisms as in Proposition \ref{III:line}.
\begin{proof}The existence of the first isomorphism is clear. The second isomorphism follows from the canonical isomorphism
\begin{equation}\label{det functor}
\det(M\oplus N)\cong \det(M)\otimes_R \det(N),
\end{equation}
where $M$ and $N$ are two finite projective $R$-modules.
\end{proof}

\begin{rmk}\label{graded factorization}
There is, however, a crucial difference between the isomorphism $\mL_{X^2}|_{X^2-\Delta}\cong c^*(\mL_X\boxtimes\mL_X|_{X^2-\Delta})$ in Proposition \ref{III:line} and the corresponding isomorphism in Proposition \ref{det factorization}. Namely, first note that both $\mL_{X^2}$ and $\mL_{\det,X^2}$ admit a natural $S_2$-action. While the isomorphism in Proposition \ref{III:line} is $S_2$-equivariant, the isomorphism in Proposition \ref{det factorization} is only $S_2$-equivariant up to a sign. The sign appears
because under \eqref{det functor}, the canonical isomorphism $\det(M\oplus N)\cong\det(N\oplus M)$ corresponds to the canonical isomorphism $\det(M)\otimes\det(N)\cong\det(N)\otimes\det(M)$ up to the sign $(-1)^{\on{rk}M\on{rk}N}$. This reflects the fact that taking the determinant line is a symmetric monoidal functor from the category $\on{Proj}_R$ of finite projective $R$-modules (with the tensor product given by direct sums) to the category $\Pic^\bZ_R$ of \emph{graded} invertible $R$-modules. We refer to \cite{OZ1} for further elaborations on this point. 
\end{rmk}

We list two more basic properties of Beilinson-Drinfeld Grassmannians.
\begin{prop}\label{III:property}~
\begin{enumerate}
\item There exists a canonical connection on $\Gr_{X^I}$. I.e. Let $x,y:S\to X^I$ be two points such that $x|_{S_{red}}=y|_{S_{red}}$. Then there is a canonical isomorphism $\Gr_{X^I}\times_{X^I,x}S=\Gr_{X^I}\times_{X^I,y}S$.

\item Beilinson-Drinfeld Grassmannians commute with \'etale base change. I.e., let $U\to X$ be an \'etale morphism. Then there is a canonical isomorphism $\Gr_{\underline G,X^I}\times_{X^I}U^I\cong\Gr_{\underline G_U, U^I}$.
\end{enumerate}
\end{prop}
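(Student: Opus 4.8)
My plan is to derive both assertions from the principle that $\Gr_{\underline G,X^I}$ is an essentially local object: a point $(\mE,\beta)$ is determined by the restriction of $\mE$ to the formal neighbourhood of the graphs together with the trivialisation away from them, and it can be reconstructed from this data by Beauville--Laszlo gluing (Theorem~\ref{descent lemma}, Corollary~\ref{global interpretation of loop}). For (1) this locality is already visible at the level of the open complement. The key observation is that the open subscheme $X_S-\Ga_x\subseteq X_S$ depends only on the underlying closed \emph{subset} $|\Ga_x|$, i.e. on the set-theoretic graphs of the $x_i$, and hence only on $x|_{S_{\mathrm{red}}}$: removing a closed subscheme produces the open subscheme supported on the complement of its support, and nilpotents in $\Ga_x$ are irrelevant. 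Thus if $x|_{S_{\mathrm{red}}}=y|_{S_{\mathrm{red}}}$ then $X_S-\Ga_x=X_S-\Ga_y$ as open subschemes, so the two moduli problems defining $\Gr_{X^I}\times_{X^I,x}S$ and $\Gr_{X^I}\times_{X^I,y}S$ — a $\underline G$-torsor on $X_S$ with a trivialisation on this common open — are literally identical. The resulting tautological identification is the connection; functoriality in $S$ and the cocycle compatibility for triples of infinitesimally equal points are immediate.

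\textbf{Part (2).} Here I would route everything through the presentation of both sides as fpqc quotients, $\Gr_{\underline G,X^I}\cong[(L\underline G)_{X^I}/(L^+\underline G)_{X^I}]$ and likewise for $U$. It then suffices to produce compatible isomorphisms $(L^+\underline G_U)_{U^I}\cong (L^+\underline G)_{X^I}\times_{X^I}U^I$ and $(L\underline G_U)_{U^I}\cong (L\underline G)_{X^I}\times_{X^I}U^I$. Both loop groups are defined purely in terms of the formal completion $\hat\Ga_x$ of the graphs and its punctured version $\hat\Ga_x^\circ$, so the heart of the matter is the claim that for $x\in U^I(R)$ with image $\bar x\in X^I(R)$, the map $f_R:U_R\to X_R$ induces an isomorphism $\hat\Ga_x\cong\hat\Ga_{\bar x}$ compatibly with $\underline G_U=f^*\underline G$. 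Granting this, $\underline G_U(\hat\Ga_x)=\underline G(\hat\Ga_{\bar x})$ and $\underline G_U(\hat\Ga_x^\circ)=\underline G(\hat\Ga_{\bar x}^\circ)$, and passing to quotients yields the comparison; equivalently, at the level of moduli one builds the map by gluing a transported copy of $\mE$ near the graphs to the trivial torsor $\mE^0$ on $X_R-\Ga_{\bar x}$ via Theorem~\ref{descent lemma}, with inverse obtained by restricting $\mF$ to the formal neighbourhood and transporting back along $f$.

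\textbf{Main obstacle.} The crux is exactly the formal-neighbourhood identification $\hat\Ga_x\cong\hat\Ga_{\bar x}$, and this is where \'etaleness is indispensable: since $f$ is formally \'etale it satisfies the unique infinitesimal lifting property, hence identifies each infinitesimal neighbourhood of a section $\Ga_{x_i}$ with that of its image $\Ga_{\bar x_i}$. The delicate point — which I expect to be the real difficulty — is that this is a clean isomorphism only once $f$ \emph{separates} the graphs, i.e. the $\Ga_{x_i}$ map bijectively onto the $\Ga_{\bar x_i}$; if two distinct sections acquired the same image, the formal neighbourhood upstairs would gain components absent downstairs. Near the diagonal (where one uses Proposition~\ref{III:Gr2}) and for an open immersion $U\hookrightarrow X$ this separation is automatic, and in general one works \'etale-locally on $X^I$ so as to reduce to a Zariski neighbourhood of each graph on which $f$ restricts to an open immersion, where the gluing of Theorem~\ref{descent lemma} applies verbatim. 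Verifying that these locally constructed isomorphisms are independent of the choices and glue — together with establishing the separation of graphs that makes the construction well defined — is the bookkeeping that the proof must carry out carefully.
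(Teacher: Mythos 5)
Your argument for Part (1) is correct and is essentially the argument of the cited reference \cite[\S 2.8]{BD}: the moduli problem defining the fiber of $\Gr_{X^I}$ over a point depends on that point only through the open subscheme $X_R-\Ga_x$, which is determined by the underlying closed subset $|\Ga_x|$ and hence by $x|_{S_{\mathrm{red}}}$; and since for any $T\to S$ the reduction $T_{\mathrm{red}}\to S$ factors through $S_{\mathrm{red}}$, the identification of open subschemes persists for every test point, so the two functors $\Gr_{X^I}\times_{X^I,x}S$ and $\Gr_{X^I}\times_{X^I,y}S$ are literally equal.

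For Part (2) there is a genuine gap, located exactly at the point you flag as ``the real difficulty'': when $f:U\to X$ is \'etale but not injective, the two sides of the asserted isomorphism have non-isomorphic fibers at tuples $(u_i)\in U^I$ with $u_i\neq u_j$ but $f(u_i)=f(u_j)$, so no \'etale-local bookkeeping can produce the isomorphism there. Concretely, take $U=X=\bG_m$ with $f(t)=t^2$ (characteristic $\neq 2$), $I=\{1,2\}$, and the point $(1,-1)\in U^2$. By the factorisation isomorphism for $\Gr_{U^2}$, the fiber of $\Gr_{\underline G_U,U^2}$ at $(1,-1)$ is $\Gr_G\times\Gr_G$, whereas the fiber of $\Gr_{\underline G,X^2}\times_{X^2}U^2$ there is the fiber of $\Gr_{\underline G,X^2}$ over the diagonal point $(1,1)$, which is a single copy of $\Gr_G$ by Proposition \ref{III:Gr2}. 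Your proposed remedy --- reducing to Zariski neighbourhoods of each graph on which $f$ is an open immersion --- cannot repair this, because the defect is not in gluing locally constructed isomorphisms but in the fact that the fibers genuinely differ at such points; there is not even a natural map in either direction, since pushing a torsor from $U$ down to $X$ would have to merge two independent modifications at $1$ and $-1$ into a single modification at $1$. The loop-group strategy you outline (identifying $\hat\Ga_x$ with $\hat\Ga_{\bar x}$ via formal \'etaleness) is the right one, but it only yields the isomorphism over the open locus of $U^I$ where $f$ separates the graphs, i.e.\ where $f(u_i)=f(u_j)$ forces $u_i=u_j$. That restricted statement --- in particular the case $|I|=1$, which is all the paper ever uses and all that the cited \cite{Z10} asserts --- is what your argument actually establishes; as literally stated for arbitrary $I$ and arbitrary \'etale $f$, Part (2) requires this restriction.
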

\begin{proof} For (1), see \cite[\S 2.8]{BD}.
For (2), see \cite[Lemma 3.2]{Z10}.
\end{proof}

We finish this subsection by introducing some variants of Beilinson-Drinfeld Grassmannians. 
First we have  convolution Grassmannians over $X^J$. 

Let $\phi:J\twoheadrightarrow \{1,2,\ldots,n\}$ be a surjective map of finite sets. Let $J_i=\phi^{-1}(i)$. For $x\in X^J$, let $x_i\in X^{J_i}$ be the corresponding component.
We can define the convolution Grassmannian
\begin{equation}\label{III:conv}
\Gr_{\underline G, \phi}=\left\{(x,(\mE_{i},\beta_i)_{i=1,2,\ldots,n})\ \left|\ \begin{split}&x\in X^J(R),\quad \mE_i \mbox{ are } \underline{G}\mbox{-torsor on } X_R, \\
& \beta_i: \mE_{i}|_{X_R-\Ga_{x_i}}\cong \mE_{i-1}|_{X_R-\Ga_{x_i}} \mbox{ where } \mE_0=\mE^0.\end{split}\right.\right\}.
\end{equation}
Sometimes, it is also denoted by $\Gr_{X^{J_1}}\tilde\times\Gr_{X^{J_2}}\tilde\times\cdots\tilde\times\Gr_{X^{J_n}}$ since there are projections
\[\pr_i:\Gr_{X^{J_1}}\tilde\times\Gr_{X^{J_2}}\tilde\times\cdots\tilde\times\Gr_{X^{J_i}}\to \Gr_{X^{J_1}}\tilde\times\Gr_{X^{J_2}}\tilde\times\cdots\tilde\times\Gr_{X^{J_{i-1}}}\]
realising $\Gr_{X^{J_1}}\tilde\times\Gr_{X^{J_2}}\tilde\times\cdots\tilde\times\Gr_{X^{J_i}}$ as the twisted product of $\Gr_{X^{J_1}}\tilde\times\Gr_{X^{J_2}}\tilde\times\cdots\tilde\times\Gr_{X^{J_{i-1}}}$ and $\Gr_{X^{J_i}}$ (see \S\ \ref{Intro:torsor} for the definition of the twisted product). Namely, there is an $(L^+\underline G)_{X^{J_n}}$-torsor  
\begin{equation}\label{torsor E}
\bE\to \Gr_{X^{J_1}}\tilde\times\Gr_{X^{J_2}}\tilde\times\cdots\tilde\times\Gr_{X^{J_{n-1}}}\times X^{J_n}
\end{equation} classifying a point $(x,(\mE_{i},\beta_i)_{i=1,2,\ldots,n-1})\in \Gr_{X^{J_1}}\tilde\times\Gr_{X^{J_2}}\tilde\times\cdots\tilde\times\Gr_{X^{J_{n-1}}}$, a point $y\in X^{J_n}$, and a trivialisation of $\mE_{n-1}$ along $\hat{\Ga}_y$. Then the twisted product is constructed using this torsor. 

There is also the convolution map
\[m_{\phi}: \Gr_{\underline G, \phi}\to \Gr_{X^J}\]
sending $(x,(\mE_{i},\beta_i)_{i=1,2,\ldots,n})$ to $(x, \mE_n, \beta_1\cdots\beta_n)$. Using an argument similar to Proposition \ref{III:Gr2}, one shows that
\begin{lem}\label{generic conv}
The map $m_\phi$ restricts to an isomorphism over $X^{(\phi)}$.
\end{lem}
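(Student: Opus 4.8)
The plan is to produce an explicit inverse to $m_\phi$ over $X^{(\phi)}$ by a gluing construction, exactly the $n$-fold analogue of the isomorphism $c$ in Proposition \ref{III:Gr2}. Recall that $X^{(\phi)}\subset X^J$ is the open locus where the graphs $\Ga_{x_1},\ldots,\Ga_{x_n}$ attached to the blocks $J_i=\phi^{-1}(i)$ are \emph{pairwise disjoint}. The forward map sends a chain $(x,(\mE_i,\beta_i))$ to $(x,\mE_n,\beta_1\cdots\beta_n)$, so I would build the inverse on $R$-points and then observe that both constructions are functorial.

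First, given $(x,\mE,\beta)\in\Gr_{X^J}(R)$ with $x\in X^{(\phi)}(R)$, I would define for each $0\le i\le n$ a $\underline G$-torsor $\mE_i$ on $X_R$ by gluing: take $\mE$ on the open $U_i:=X_R-(\Ga_{x_{i+1}}\cup\cdots\cup\Ga_{x_n})$ and $\mE^0$ on the open $V_i:=X_R-(\Ga_{x_1}\cup\cdots\cup\Ga_{x_i})$, identified over $U_i\cap V_i=X_R-\Ga_x$ by $\beta$. Disjointness is exactly what guarantees this is legitimate: the complements of $U_i$ and $V_i$ involve disjoint index sets $\{i+1,\dots,n\}$ and $\{1,\dots,i\}$, so they do not meet and $U_i\cup V_i=X_R$. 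This yields $\mE_0=\mE^0$ and $\mE_n=\mE$. Moreover $\mE_i$ and $\mE_{i-1}$ differ only in their prescription near $\Ga_{x_i}$: both equal $\mE$ on $X_R-(\Ga_{x_i}\cup\cdots\cup\Ga_{x_n})$ and both equal $\mE^0$ on $X_R-(\Ga_{x_1}\cup\cdots\cup\Ga_{x_i})$, two opens covering $X_R-\Ga_{x_i}$ (again by disjointness). Hence there is a canonical $\beta_i:\mE_i|_{X_R-\Ga_{x_i}}\cong\mE_{i-1}|_{X_R-\Ga_{x_i}}$ equal to the identity on each piece, and unwinding the gluings shows the composite $\beta_1\cdots\beta_n$ over $X_R-\Ga_x$ is precisely $\beta$. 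Thus $(x,(\mE_i,\beta_i))\in\Gr_{\underline G,\phi}$ maps to $(x,\mE,\beta)$ under $m_\phi$.

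Next I would check this is a two-sided inverse, for which it suffices to show the chain is forced. Given any chain with $\mE_n=\mE$ and $\beta_1\cdots\beta_n=\beta$, the torsor $\mE_i$ carries two trivialisations, $\mE_i\cong\mE$ on $U_i$ induced by $\beta_{i+1}\cdots\beta_n$ and $\mE_i\cong\mE^0$ on $V_i$ induced by $\beta_1\cdots\beta_i$, which over $X_R-\Ga_x$ are related by $\beta_1\cdots\beta_n=\beta$. Since $U_i\cup V_i=X_R$, these data determine $\mE_i$ as the $\beta$-gluing constructed above, and likewise pin down each $\beta_i$; so the chain is unique. As both $m_\phi$ and the gluing inverse are manifestly functorial operations on torsors, they define mutually inverse morphisms of presheaves over $X^{(\phi)}$, hence an isomorphism of ind-schemes.

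The only genuine work is the bookkeeping: tracking which open each $\mE_i$ is assembled from, verifying that the two-element families $\{U_i,V_i\}$ (and their shifts) really cover $X_R$ and $X_R-\Ga_{x_i}$ — which is exactly where disjointness of the graphs over $X^{(\phi)}$ is used — and confirming that the iterated gluing composes to $\beta$. This is conceptually nothing more than the $n$-fold iteration of the $n=2$ gluing in Proposition \ref{III:Gr2}, so I expect the main obstacle to be purely notational rather than mathematical.
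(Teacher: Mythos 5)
Your proof is correct and is exactly the argument the paper has in mind: it explicitly says the lemma follows "using an argument similar to Proposition \ref{III:Gr2}," and your construction is precisely the $n$-fold iteration of that two-point gluing, with the disjointness of the $\Ga_{x_i}$ over $X^{(\phi)}$ used in the same way to ensure the relevant pairs of opens cover $X_R$. Nothing to add.
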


Next we have a symmetrized version of Beilinson-Drinfeld Grassmannians. We refer to \cite[Tag01WQ, Tag056P]{St}  for the definition of (relative) effective Cartier divisors.
Recall that given $d$, the $d$th symmetric product $X^{(d)}=X^d/\!\!/S_d$ classifies effective Cartier divisors on $X$. I.e. for every $k$-algebra $R$, $X^{(d)}(R)$ is the set of relative effective Cartier divisor on $X_R$. 
 Then we can define the symmetrized version of Beilinson-Drinfeld Grassmannians $\Gr^{(d)}\to X^{(d)}$ as follows.
\begin{equation}\label{sym BD Grass}
\Gr^{(d)}=\left\{(D,\mE,\beta)\ \left|\ \begin{split}& D\subset X_R \mbox{ a relative effective Cartier divisor of degree } d, \\
&  \mE \mbox{ a } \underline{G}\mbox{-torsor on } X_R, \beta: \mE|_{X_R-D}\cong \mE^0|_{X_R-D} \mbox{ a trivialisation}\end{split}\right.\right\}.
\end{equation}
It is easy to see that the natural map
\[
\Gr_{X^d}\to \Gr^{(d)}\times_{X^{(d)}}X^d
\]
is an isomorphism.
Given a coweight $\mu$, we have a closed subvariety $\Gr^{(d)}_{\leq \mu}$ and the natural map
\[
\Gr_{\leq (\mu,\ldots,\mu)}\to\Gr^{(d)}_{\leq \mu}\times_{X^{(d)}}X^d
\]
is an isomorphism.
The the convolution map induces $\Gr_{\leq \mu}\tilde\times\cdots\tilde\times\Gr_{\leq \mu}\to \Gr_{\leq \mu}^{(d)}$.

\begin{ex}\label{momega1BD}
Let $G=\GL_n$, and $\omega_1=(1,0,\ldots,0)$. Then
\[\Gr^{(m)}_{\leq \omega_1}=\left\{ \mE\subset \mO_{X_R}^n\mid \mO_{X_R}^n/\mE \mbox{ is a projective } R\mbox{-module of rank } m \right\}.\]
We further assume that $X=\bA^1=\Spec k[t]$. Then Example \ref{momega1} and \ref{conv v.s. springer} have the global counterparts. First, Formula \eqref{nil to Gr} in fact defines
\[\gl_n\to \Gr_{\leq \omega_1}^{(n)}.\]
We leave it as an exercise to construct the counterpart of \eqref{sat and springer} in the global setting. In addition, the global analogue of
\eqref{sat and springer2} is
\[\begin{CD}
\widetilde{\gl}_n@>>> \Gr_{\omega_1,X}\tilde\times\cdots\tilde\times\Gr_{\omega_1,X}@>>> X^n\\
@VVV@VVV@VVV\\
\gl_n@>>>\Gr^{(m)}_{\leq \omega_1}@>>>X^{(n)},
\end{CD}\]
where $\widetilde{\gl}_n\to\gl_n$ is the Grothendieck-Springer resolution (\cite[\S~1]{Yu}), and the left square is Cartesian. We can identify $X^n\cong \bA^n$ with the standard Cartan subalgebra $\frakt_n$ of $\gl_n$ and $X^{(n)}$ with $\frakt_n/\!\!/S_n$. Then the composition of the two maps in the bottom row can be identified with the Chevalley map that sends a matrix to its characteristic polynomial. This diagram first appeared in \cite{MVy}.
\end{ex}

\begin{rmk}
There also exists a version of Beilinson-Drinfeld affine Grassmannians for $p$-adic groups, as recently constructed by P. Scholze \cite{Sch}. However, this is beyond the scope of our notes.
\end{rmk}

\subsection{Factorisation property.}
Let us summarise the most important properties of Beilinson-Drinfeld Grassmannians $\{\Gr_{X^I}\}$ into the following theorem, which vastly generalises Proposition \ref{III:Gr2}. But the proof remains similar.
\begin{thm}\label{III:factorization} Let $\underline G$ be a smooth affine group scheme over $X$.

(i) For every $I$, there is a unit section $e_I: X^I\to \Gr_{X^I}$ given by the trivial $\underline G$-torsor.

(ii) For every $\phi: J\twoheadrightarrow I$, let $\Delta(\phi):X^I\to X^J$ be the corresponding diagonal embedding sending $\{x_i, i\in I\}$ to $\{y_j, j\in J\}$ with $y_j=x_{\phi(j)}$. Then
there is a canonical isomorphism
\begin{equation}\label{BD restriction}
\Delta(\phi):\Gr_{X^I}\cong \Gr_{X^J}\times_{X^J,\Delta(\phi)}X^I
\end{equation}
By abuse of notation, the composition $\Gr_{X^I}\cong \Gr_{X^J}\times_{X^J,\Delta(\phi)}X^I\to \Gr_{X^J}$ is also denoted by $\Delta(\phi)$.

(iii) Given $\phi: J\twoheadrightarrow  I$, let $J=\sqcup_{i\in I}J_i$ denote the partition of $J$, and $X^{(\phi)}\subset X^J$ denote the open subset of those $\{x_j, j\in J\}$ such that $x_{j}\cap x_{j'}=\emptyset$ if $\phi(j)\neq \phi(j')$. Then there is a canonical isomorphism (usually called the factorization isomorphism)
\begin{equation}\label{BD factorization}
c_\phi: \Gr_{X^J}\times_{X^J}X^{(\phi)}\cong (\prod_{i\in I}\Gr_{X^{J_i}})\times_{X^J}X^{(\phi)}.
\end{equation}
These maps satisfy the following conditions:
\begin{enumerate}
\item $\Delta(\phi)\circ e_I=e_J|_{\Delta(\phi)}$ and $c_\phi (e_J|_{X^{(\phi)}})=(\prod_{i\in I} e_{J_i})|_{X^{(\phi)}}$.
\item Given $K\stackrel{\phi}{\twoheadrightarrow}  J\stackrel{\psi}{\twoheadrightarrow}  I$, $\Delta(\phi)\Delta(\psi)=\Delta(\psi\phi)$.
\item Given $K\stackrel{\phi}{\twoheadrightarrow}  J\stackrel{\psi}{\twoheadrightarrow}  I$, the following diagram is commutative
\[\begin{CD}
\Gr_{X^J}|_{X^{(\psi)}}@>\Delta(\phi)|_{X^{(\psi)}}>>\Gr_{X^K}|_{X^{(\psi\phi)}}\\
@Vc_{\psi} VV@VV c_{\psi\phi}V\\
(\prod_i\Gr_{X^{J_i}})|_{X^{(\psi)}}@>(\prod_{i}\Delta(\phi_i))|_{X^{(\psi)}}>> (\prod_i \Gr_{X^{K_i}})|_{X^{(\psi\phi)}},
\end{CD}\] 
where $\phi_i:K_i\to J_i$ is the map over $i\in I$.
\item Given $K\stackrel{\phi}{\twoheadrightarrow}  J\stackrel{\psi}{\twoheadrightarrow}  I$, $(\prod_{i\in I}c_{\phi_i})(c_{\psi\phi}|_{X^{(\phi)}})=c_{\phi}$.
\end{enumerate}
\end{thm}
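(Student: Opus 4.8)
The plan is to treat this as the many-point generalization of Proposition~\ref{III:Gr2}, reducing every assertion to the single fact that $\underline G$-torsors, being sheaves, can be cut apart and re-glued along \emph{disjoint} closed subschemes. I would first define the unit $e_I$ by $x\mapsto(x,\mE^0,\id)$, the trivial torsor with its tautological trivialisation, functoriality in $X^I$ being immediate. For (ii), the key observation is that for a surjection $\phi:J\twoheadrightarrow I$ and $x\in X^I(R)$ with image $y=\Delta(\phi)(x)$, one has $\Ga_y=\bigcup_{j\in J}\Ga_{x_{\phi(j)}}=\bigcup_{i\in I}\Ga_{x_i}=\Ga_x$ as closed subschemes of $X_R$, surjectivity ensuring that every $x_i$ occurs; hence the data $(\mE,\beta)$ defining a point of $\Gr_{X^J}$ over $y$ is literally the data defining a point of $\Gr_{X^I}$ over $x$, which yields \eqref{BD restriction}.

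The substance is (iii). Over $X^{(\phi)}$ put $D_i=\bigcup_{j\in J_i}\Ga_{x_j}$, so that the $D_i$ are pairwise disjoint, and define $c_\phi$ by sending $(\mE,\beta)$ to $(\mE_i,\beta_i)_{i\in I}$, where $\mE_i$ is glued from $\mE|_{X_R-\bigcup_{i'\neq i}D_{i'}}$ and $\mE^0|_{X_R-D_i}$ along their overlap $X_R-\bigcup_{i'}D_{i'}$ by means of $\beta$---these two opens cover $X_R$ precisely because the $D_i$ are disjoint---and $\beta_i$ is the trivialisation carried by the $\mE^0$-patch. The inverse reconstructs $\mE$ by gluing the various $\mE_i$ and $\mE^0$ along the cover $\{X_R-\bigcup_{i'\neq i}D_{i'}\}_{i\in I}\cup\{X_R-\bigcup_i D_i\}$, all transition maps factoring through the $\beta_i$ so that the cocycle condition is automatic; that the two constructions are mutually inverse uses only disjointness and the sheaf property, exactly as in Proposition~\ref{III:Gr2}.

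For the compatibilities, (1) is immediate since gluing trivial torsors by identity maps returns a trivial torsor, and (2) is the equality $\Delta(\phi)\Delta(\psi)=\Delta(\psi\phi)$ of diagonals on the base, along which the identification in (ii) transports because it merely records the coincidence of graphs. Conditions (3) and (4) follow from the uniqueness of the gluing: each side of the relevant diagram or equation produces, from one torsor-with-trivialisation, the same cut-and-reglue output. I expect (3) and (4) to be the only delicate points, but they are bookkeeping rather than mathematics. The main obstacle is therefore organisational: to avoid re-verifying the cocycle conditions and higher compatibilities by hand, I would isolate a single gluing lemma---over the locus where two relative effective divisors $D,D'$ are disjoint, a $\underline G$-torsor trivialised off $D\cup D'$ is the same datum as a pair of torsors trivialised off $D$ and off $D'$ respectively---and then deduce associativity, and with it all of (iii), (3) and (4), by iterating it.
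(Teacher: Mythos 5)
Your proposal is correct and follows essentially the same route as the paper, which proves this theorem only by remarking that ``the proof remains similar'' to Proposition~\ref{III:Gr2}: your identification $\Ga_{\Delta(\phi)(x)}=\Ga_x$ for (ii) and your cut-and-reglue construction of $c_\phi$ along the pairwise disjoint divisors $D_i$ for (iii) are exactly the many-point generalisation of the gluing argument given there, and your reduction of the compatibilities (1)--(4) to uniqueness of gluing (via a single binary gluing lemma iterated) is the intended bookkeeping. Nothing is missing.
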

\begin{ex}\label{FA ex}
Let us see why this theorem encodes all the information contained in Proposition \ref{III:Gr2}. First, applying \eqref{BD restriction} to the map $\phi:\{1,2\}\to \{1\}$ recovers the isomorphism $\Delta: \Gr_X\cong \Gr_{X^2}|_{\Delta}$, and applying it to the flip $\sigma:\{1,2\}\to \{1,2\}$ gives the canonical isomorphism $\Delta(\sigma):\Gr_{X^2}\to \Gr_{X^2}$ covering the flip map of $X^2\to X^2$.
On the other hand, applying \eqref{BD factorization} to the map
$\id: \{1,2\}\to \{1,2\}$, 
recovers the isomorphism $c:\Gr_{X^2}|_{X^2-\Delta}\cong \Gr_X\times\Gr_X|_{X^2-\Delta}$. 
Applying Property (3) to $\{1,2\}\stackrel{\sigma}{\to}\{1,2\}\stackrel{\id}{\to}\{1,2\}$ and Property (4) to $\{1,2\}\stackrel{\id}{\to}\{1,2\}\stackrel{\sigma}{\to}\{1,2\}$ implies that the isomorphism $c$ is compatible with $\Delta(\sigma)$ and the natural flip of $\Gr_{X}\times\Gr_X$, as claimed in Proposition \ref{III:Gr2}.
\end{ex}

The generalisation of Proposition \ref{III:line} is
\begin{thm}\label{III:factor line}
Assume that $G$ is simply-connected and $\underline G=G\times X$ is constant.
Let $\mL$ be a rigidified line bundle on $\Bun_{G}$, and let $\mL_{X^I}$ denote its pullback to $\Gr_{X^I}$. Then under the isomorphisms in Theorem \ref{III:factorization}, there are canonical isomorphisms,

(i) $e_I^*\mL_{X^I}\cong \mO_{X^I}$; (ii) $\Delta(\phi)^*\mL_{X^J}\cong \mL_{X^I}$; (iii) $\mL_{X^J}|_{X^{(\phi)}}\cong c_\phi^*(\boxtimes \mL_{X^{J_i}}|_{X^{(\phi)}})$.

In addition, these isomorphisms are compatible in the sense that the analogue statements (1)-(4) in Theorem \ref{III:factorization} hold. 
\end{thm}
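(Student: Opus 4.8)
The plan is to exploit that $\mL_{X^I}=u_{X^I}^*\mL$ is a pullback along the uniformization map \eqref{III:I-un}, and to match each of the three isomorphisms with a corresponding compatibility between the maps $u_{X^I}$, $u_{X^J}$ and the $u_{X^{J_i}}$. Parts (i) and (ii) are then immediate. The unit section $e_I$ classifies the trivial torsor, so $u_{X^I}\circ e_I$ is the constant map $X^I\to\Bun_G$ with value $[\mE^0]$; since $\mL$ is rigidified there, $e_I^*\mL_{X^I}=(u_{X^I}\circ e_I)^*\mL\cong \mO_{X^I}$ canonically, giving (i). For (ii), the isomorphism $\Delta(\phi)$ of Theorem \ref{III:factorization}(ii) sends $(x,\mE,\beta)\mapsto(\Delta(\phi)(x),\mE,\beta)$ with the \emph{same} underlying torsor $\mE$ (only the markings are collapsed along $\Delta(\phi)$), so $u_{X^J}\circ\Delta(\phi)=u_{X^I}$ and pulling back $\mL$ yields $\Delta(\phi)^*\mL_{X^J}\cong\mL_{X^I}$.

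The real content is (iii), where I would mimic the See-Saw argument of Proposition \ref{III:line}. First I record the effect of gluing with trivial torsors: over $X^{(\phi)}$ the points in distinct parts $J_i$ are disjoint, and if one feeds $(\mE_i,\beta_i)\in\Gr_{X^{J_i}}$ into the $i$-th slot while placing the unit section $e_{J_{i'}}$ in every other slot, the factorisation isomorphism $c_\phi$ reconstructs exactly $\mE_i$ (the modification occurs only along $\Ga_{x_i}$). Hence $u_{X^J}\circ c_\phi^{-1}$ restricted to the $i$-th factor equals $u_{X^{J_i}}$, so by part (i) the restriction of $c_\phi^*\mL_{X^J}|_{X^{(\phi)}}$ to the $i$-th factor (all others at the unit) is canonically $\mL_{X^{J_i}}|_{X^{(\phi)}}$. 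I would then consider $\mN:=c_\phi^*\mL_{X^J}|_{X^{(\phi)}}\otimes(\boxtimes_i\mL_{X^{J_i}}|_{X^{(\phi)}})^{-1}$ on $P:=(\prod_i\Gr_{X^{J_i}})\times_{X^J}X^{(\phi)}$ and show it is trivial. Each factor $\Gr_{X^{J_i}}\to X^{J_i}$ is ind-proper (Theorem \ref{representability of BD}) with connected reduced fibres isomorphic to $\Gr_G$ (Theorem \ref{components}, using that $G$ is simply-connected), and $\Pic^e(\Gr_G)\cong\bZ$ is discrete (Theorem \ref{Pic of Gr}); so every line bundle algebraically equivalent to zero on a fibre is trivial, and the restriction of $\mN$ to a slice $\Gr_{X^{J_i}}\times\{s\}$ is constant in $s$. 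Applying the See-Saw principle (\cite[\S5]{Mum}) one factor at a time, the fibrewise-triviality from the previous step propagates and forces $\mN$ to be pulled back from $X^{(\phi)}$; restricting all factors to the unit section and invoking (i) shows this base bundle is trivial, yielding the canonical isomorphism in (iii).

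Finally, the compatibilities (the analogues of (1)--(4)) follow formally from the canonicity of the three isomorphisms above together with the corresponding identities in Theorem \ref{III:factorization}. The crucial input is a uniqueness statement: since each relevant space is connected and $\Pic^e(\Gr_G)\cong\bZ$ is discrete, any two isomorphisms between these rigidified pullback line bundles that agree over the unit section differ by a global invertible function, which is constant and pinned down by the rigidification, hence they coincide. With this rigidity in hand, pulling back the commuting diagrams of Theorem \ref{III:factorization} and comparing over the unit sections makes every required diagram commute. I expect Step (iii) to be the main obstacle --- specifically the bookkeeping of the See-Saw across several factors together with the gluing-with-units identity --- while (i), (ii) and the compatibilities are essentially formal.
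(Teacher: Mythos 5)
Your proposal is correct and follows essentially the same route as the paper: the paper presents Theorem \ref{III:factor line} as the generalisation of Proposition \ref{III:line}, whose proof is exactly the argument you give --- the commutative diagram identifying $u_{X^J}\circ c_\phi^{-1}$ with $u_{X^{J_i}}$ on the slice where all other slots carry the unit section, followed by discreteness of $\Pic^e(\Gr_G)$ and the See-Saw principle over the ind-proper family, with (i) and (ii) being formal. Your rigidity argument for the compatibilities (two isomorphisms of rigidified line bundles over a connected ind-proper family with discrete relative Picard group differing only by a constant pinned by the rigidification) is a sound way to supply the details the paper leaves implicit.
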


The above structures and properties of Beilinson-Drinfeld Grassmannians are also shared by some other geometric objects so it is worth to summarise them into the following definition.
\begin{dfn}\label{FA space}
A factorization space $\mY$ over $X$ is an assignment for each non-empty  finite set $I$ a space $\mY_I\to X^I$ together with a collection of maps as in (ii)-(iii) of Theorem \ref{III:factorization}, satisfying the compatibility conditions (2)-(4) of Theorem \ref{III:factorization}. A factorization space is called unital if there exist a collection of maps as in (i) of Theorem \ref{III:factorization}, satisfying the compatibility condition (1) of Theorem \ref{III:factorization}.

A line bundle $\mL$ on a unital factorization space $\mY$ is an assignment for each $I$  a line bundle $\mL_I$ on $\mY_I$ together with a collection of isomorphisms as in (i)-(iii) of Theorem \ref{III:factor line} satisfying the natural compatibility conditions as in Theorem \ref{III:factor line}.
\end{dfn}

\begin{rmk}
Factorization spaces (resp. unital factorization spaces) are called chiral semigroups (resp. chiral monoids) in \cite[\S\ 3.10.16]{BD0}. We refer to \emph{loc. cit.} for  further discussions and examples.
\end{rmk}

\begin{rmk}\label{det factorization2}
One can also generalise Proposition \ref{det factorization} to say that the collection $\mL_{\det, X^I}$ form a graded factorization line bundle on $\Gr_{\GL_n, X^I}$. There are also a collection of isomorphisms as in (i)-(iii) of Theorem \ref{III:factor line} between them. But the compatibility conditions between these isomorphisms must be modified. More precisely, the analogous Statement (3) in Theorem \ref{III:line} must be modified. 
\end{rmk}

\subsection{The Ran space.}
It will be conceptually concise to organise all $\{\Gr_{X^I}\}$ into a single object and reformulate the above theorem using this object. 

\begin{dfn}
We define the Ran space $\Ran(X)$ of $X$ as the presheaf that assign every $R$ the set of finite non-empty subsets of $X(R)$.
\end{dfn}

Note that for any finite non-empty set $I$, there is a tautological map $X^I\to \Ran(X)$, which induces an isomorphism
\[\underrightarrow{\lim}_I X^I\to \Ran(X),\]
where the colimit is taken in the category of presheaves over the category of all finite non-empty sets with surjective transition maps.  Since the colimit is taken over a non filtered category, $\Ran(X)$ is not a space (although each $X^I$ is). In fact, it is not a sheaf even for the \'etale topology.

For $x=\{x_i,i\in I\}\in \Ran(X)(R)$, let $\Ga_x$ denote the union of all graphs $\Ga_{x_i}$ (as closed subschemes in $X_R$).

\begin{dfn}\label{dfn Ran Grass}
We define the Ran Grassmannian $\Gr_{\underline G,\Ran(X)}$ as
\[\Gr_{\underline G,\Ran(X)}=\left\{(x,\mE,\beta)\ \left|\ \begin{split}&x\in \Ran(X)(R),\quad \mE \mbox{ is a } \underline{G}\mbox{-torsor on } X_R, \\
& \beta: \mE|_{X_R-\Ga_x}\cong \mE^0|_{X_R-\Ga_x} \mbox{ is a trivialisation}\end{split}\right.\right\}.\]
\end{dfn}
Note that we do not claim any representability of $\Gr_{\underline G,\Ran(X)}$ as it is \emph{not} representable. However, 
there is a natural projection
\[q_{\Ran}: \Gr_{\underline G,\Ran(X)}\to \Ran(X),\]
and the unit section
\[e_{\Ran}:\Ran(X)\to  \Gr_{\underline G,\Ran(X)},\]
that are relatively representable.
In fact, we have
\[\Gr_{\underline G,X^I}=X^I\times_{\Ran(X),q_{\Ran}}\Gr_{\underline G,\Ran(X)}.\]
If the curve $X$ and the group scheme $\underline G$ are clear from the context, we write $\Gr_{\underline G,\Ran(X)}$ by $\Gr_{\Ran(X)}$ or by $\Gr_{\Ran}$ for brevity.

Here is a concise way to encode the factorization structure of the Beilinson-Drinfeld Grassmannians. First, the Ran space $\Ran(X)$ has a semigroup structure given by union of points
\[\on{union}: \Ran(X)\times \Ran(X)\to \Ran(X),\quad (\{x_i\},\{x_j\})\mapsto \{x_i,x_j\}.\]
Let $(\Ran\times \Ran)_{disj}\subset \Ran\times\Ran$ denote the open subpresheaf consisting of those $\{x_i\}$ and $\{x_j\}$ with $\{x_i\}\cap \{x_j\}=\emptyset$, and similarly one defines $(\Ran\times \Ran\times \Ran)_{disj}$. Then Theorem \ref{III:factorization} can be reformulated as
\begin{thm}\label{Ran Grass factorization}
There is a canonical isomorphism
\[a: \Gr_{\Ran}\times_{\Ran}(\Ran\times\Ran)_{disj}\simeq (\Gr_{\Ran}\times\Gr_{\Ran})|_{(\Ran\times\Ran)_{disj}},\]
compatible with the unit section $e_{\Ran}:\Ran\to\Gr_{\Ran}$ and satisfying a natural cocycle condition over $(\Ran\times\Ran\times \Ran)_{disj}$. 
\end{thm}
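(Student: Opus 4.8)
The plan is to construct the isomorphism $a$ directly on $R$-points by the same Zariski gluing procedure used in the proof of Proposition \ref{III:Gr2}, now applied to arbitrary finite subsets rather than to two distinct points; working with the moduli description of Definition \ref{dfn Ran Grass} avoids having to pass through the non-filtered colimit defining $\Ran(X)$. Concretely, an $R$-point of the left-hand side is a datum $(x^{(1)},x^{(2)},\mE,\beta)$ with $(x^{(1)},x^{(2)})\in(\Ran\times\Ran)_{disj}(R)$ and $(\mE,\beta)$ lying over the union $x^{(1)}\cup x^{(2)}$, so that $\beta$ trivialises $\mE$ on $X_R-\Ga_{x^{(1)}}-\Ga_{x^{(2)}}$. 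The disjointness condition guarantees that $X_R-\Ga_{x^{(1)}}$ and $X_R-\Ga_{x^{(2)}}$ form a Zariski open cover of $X_R$. First I would define $a$ by sending this datum to $((\mE_1,\beta_1),(\mE_2,\beta_2))$, where $\mE_1$ is obtained by gluing $\mE|_{X_R-\Ga_{x^{(2)}}}$ to $\mE^0|_{X_R-\Ga_{x^{(1)}}}$ along $\beta$ on the overlap, and symmetrically for $\mE_2$; each $\mE_i$ carries a canonical trivialisation $\beta_i$ away from $\Ga_{x^{(i)}}$.

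The inverse is built exactly as in Proposition \ref{III:Gr2}: given $((\mE_1,\beta_1),(\mE_2,\beta_2))$ over disjoint $(x^{(1)},x^{(2)})$, glue $\mE_1|_{X_R-\Ga_{x^{(2)}}}$ to $\mE_2|_{X_R-\Ga_{x^{(1)}}}$ along $\beta_2^{-1}\beta_1$ to obtain $(\mE,\beta)$ trivialised off $\Ga_{x^{(1)}}\cup\Ga_{x^{(2)}}$. That the two assignments are mutually inverse is a formal consequence of Zariski descent for $\underline G$-torsors (using that $\underline G$ is smooth affine), and both are manifestly natural in $R$, so they assemble into an isomorphism of presheaves. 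Compatibility with $e_{\Ran}$ is immediate: the input $\mE=\mE^0$, $\beta=\id$ returns the trivial torsor on each factor.

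Next I would verify the cocycle condition over $(\Ran\times\Ran\times\Ran)_{disj}$, which records that the two ways of factoring a triple of pairwise-disjoint subsets agree. This reduces to associativity of the gluing: for pairwise disjoint $x^{(1)},x^{(2)},x^{(3)}$ the opens $X_R-\Ga_{x^{(i)}}$ cover $X_R$, and reconstructing $\mE$ from its three pieces is independent of the order of gluing by the uniqueness clause in descent. I would organise this as a comparison of the two composite isomorphisms obtained by applying $a$ twice in the two possible orders, checking they coincide on $R$-points.

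The step I expect to require the most care is the bookkeeping relating this single statement to the family of isomorphisms $c_\phi$ of Theorem \ref{III:factorization}, together with the fact that $\Ran(X)$ is merely a presheaf (a colimit over the non-filtered category of finite sets with surjections, hence not even an \'etale sheaf). One cannot check the isomorphism after sheafification; instead everything must be verified functorially on $R$-points, and one must confirm compatibility with the transition maps $X^J\to X^I$, so that pulling $a$ back along $X^I\times X^J\to\Ran\times\Ran$ recovers precisely the factorisation isomorphism $c_\phi$ of \eqref{BD factorization}. Granting this compatibility, Theorem \ref{III:factorization} and the present statement carry the same information, as illustrated in Example \ref{FA ex}.
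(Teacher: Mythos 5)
Your construction is correct, and the underlying mechanism---gluing a torsor from its restrictions to the Zariski cover $\{X_R-\Ga_{x^{(1)}},\,X_R-\Ga_{x^{(2)}}\}$ of $X_R$, which is a cover precisely because of the disjointness condition---is the same one the paper uses for Proposition \ref{III:Gr2} and Theorem \ref{III:factorization}. Where you differ is in the organisation: the paper does not prove Theorem \ref{Ran Grass factorization} directly but declares it a reformulation of Theorem \ref{III:factorization}, leaving the translation as an exercise, so its route passes through the whole $X^I$-indexed family of isomorphisms $c_\phi$ and the non-filtered colimit presentation of $\Ran(X)$. You instead build $a$ in one stroke on $R$-points of the moduli description of Definition \ref{dfn Ran Grass}, where the fiber product over $\Ran$ is computed pointwise and no colimit manipulation is needed, and the cocycle condition reduces to associativity of gluing rather than to the compatibilities (2)--(4) of Theorem \ref{III:factorization}. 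This buys a self-contained proof of the Ran-space statement; the cost is that the comparison with the $c_\phi$ of \eqref{BD factorization}---which is what the later applications actually invoke---becomes a separate verification, and you rightly single this out as the delicate step: your check that pulling $a$ back along $X^{J_1}\times X^{J_2}\to\Ran\times\Ran$ recovers $c_\phi$ is exactly the content of the exercise the paper leaves to the reader. One small correction: the legitimacy of Zariski gluing for $\underline G$-torsors rests on $\underline G$ being \emph{affine} over $X$ (so a torsor is affine over its base and affine morphisms glue), not on smoothness, which plays no role in this descent step.
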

We leave it as an exercise to verify that Theorem \ref{III:factorization} and Theorem \ref{Ran Grass factorization} are equivalent.

\subsection{Rigidified line bundles on $\Gr_{\Ran}$.}\label{line bundle on BD Grass}
The Ran Grassmannian $\Gr_{\Ran}$ is a fascinating geometric object. As we shall see in the next section, it encodes many information of the moduli of $\underline G$-bundles on $X$. Therefore, it is important to study the geometry of $\Gr_{\Ran}$.
Here we illustrate some methods by one example, namely, we will calculate the Picard groupoid of rigidified line bundles on $\Gr_{\underline{G}, \Ran(X)}$, where $\underline{G}\to X$ is a simple, simply-connected group scheme over $X$. 

For a morphism $X\to S$ of prestacks equipped with a section $e:S\to X$, let $\Pic^e(X/S)$ denote the Picard groupoid of line bundles on $X$, equipped with a trivialisation along $e$.
Here is the theorem.
\begin{thm}\label{Rigid line}Assume that $G$ is simple and simply-connected and $\underline G=G\times X$ is constant.
Then for any point $x\in X$, the map $\Gr_x\to \Gr_{\Ran}$ induces an isomorphism
$\Pic^e(\Gr_{\Ran}/\Ran)\simeq \Pic^e(\Gr_x)$.
\end{thm}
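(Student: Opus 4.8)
\subsection*{Proof proposal}

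The plan is to compute $\Pic^e(\Gr_{\Ran}/\Ran)$ as an inverse limit and then show every term is governed by a single integer. Since $\Gr_{\Ran}=\varinjlim_I\Gr_{\underline G,X^I}$ and $\Ran=\varinjlim_I X^I$ (colimits over finite nonempty sets with surjective transition maps), a rigidified relative line bundle is the same datum as a compatible system of such, so that
\[
\Pic^e(\Gr_{\Ran}/\Ran)\cong\varprojlim_I\Pic^e(\Gr_{X^I}/X^I),
\]
the transition maps being pullback along the diagonal closed embeddings $\Delta(\phi)$ of Theorem \ref{III:factorization}. Writing $\delta_I\colon X\to X^I$ for the small diagonal and noting $\delta_J=\Delta(\phi)\circ\delta_I$ for every $\phi\colon J\twoheadrightarrow I$, the identifications $\Gr_{X^I}|_{\delta_I(x)}\cong\Gr_x$ of Theorem \ref{III:factorization}(ii) are compatible with all transition maps. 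Hence it suffices to prove, for each $I$, that restriction to the small-diagonal fibre induces an isomorphism $\Pic^e(\Gr_{X^I}/X^I)\xrightarrow{\ \sim\ }\Pic^e(\Gr_x)\cong\bZ$; the case $I=\{1\}$ then recovers the map in the statement.

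First I would dispose of the groupoid (automorphism) part: on each bounded piece $\Gr_{\leq\mmu,X^I}$, which is ind-proper over $X^I$ with geometrically connected reduced fibres (connectedness of $\Gr_G$ uses that $G$ is simply connected, Theorem \ref{components}), the pushforward of $\mO$ is $\mO_{X^I}$, so a rigidified line bundle has no nontrivial automorphisms; thus only $\pi_0$ is at issue. For surjectivity I would exhibit a factorization line bundle restricting to the ample generator $\mO(1)$ of $\Pic^e(\Gr_x)$ (Theorem \ref{Pic of Gr}): the generator of $\Pic(\Bun_G)$ pulls back under $u_{X^I}\colon\Gr_{X^I}\to\Bun_G$ of \eqref{III:I-un} to such a system by Theorem \ref{III:factor line}, and by one-point uniformization its restriction to $\Gr_x$ is $\mO(1)$.

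The heart is injectivity. Suppose a compatible system $(\mL_{X^I})$ restricts to the trivial bundle on $\Gr_x$. By Theorem \ref{III:factor line}(ii) applied to $\phi\colon I\twoheadrightarrow\{1\}$, the restriction of $\mL_{X^I}$ to the small diagonal is $\mL_X$, so $\mL_X$ has degree $0$ on the fibre $\Gr_x$; since $\Gr_X\to X$ is a flat ind-proper fibration with fibre $\Gr_G$ and $\Pic^e(\Gr_G)\cong\bZ$ is discrete, the degree is locally constant, hence $0$ on every fibre, and the See-Saw principle together with the rigidification forces $\mL_X\cong\mO$. I would then show each $\mL_{X^I}$ is trivial on every stratum of $X^I$: over the open locus of pairwise distinct points the factorization isomorphism of Theorem \ref{III:factor line}(iii) gives $\mL_{X^I}\cong\boxtimes\,\mL_X\cong\mO$, and over a partial diagonal the restriction isomorphisms of Theorem \ref{III:factorization}(ii) identify $\Gr_{X^I}$ with a product of lower Beilinson-Drinfeld Grassmannians, where triviality follows by induction on $|I|$; the rigidification pins the trivializing section uniquely on each stratum, and the compatibility conditions (1)--(4) of Theorem \ref{III:factorization} guarantee these sections agree on overlaps.

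It remains to glue these stratumwise trivializations into a global one, and this is the step I expect to be the main obstacle: the map $\Gr_{X^I}\to X^I$ is not flat, its fibres dropping in dimension along the diagonals (exactly the phenomenon behind Proposition \ref{III:closure}), so the trivializing section is a priori only defined off a codimension-one locus. Here I would use that the Schubert pieces $\Gr_{\leq\mmu,X^I}$ are normal --- for $G$ simply connected this holds with no hypothesis on the characteristic, by (the Beilinson-Drinfeld analogue of) Theorem \ref{singularity} --- together with the fact that along the diagonal divisors the section is already defined by the previous step. Since the still-uncovered strata have codimension $\geq 2$ in the total space (dropping both in the base and in the fibre), the canonical nowhere-vanishing section extends across them by normality (Hartogs), and the extension is again nowhere vanishing because its zero locus, if nonempty, would be a divisor contained in a codimension-$\geq 2$ set. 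This trivializes $\mL_{X^I}$ compatibly, proving injectivity; combined with the compatibility of the small-diagonal identifications, the inverse limit is $\bZ\cong\Pic^e(\Gr_x)$.
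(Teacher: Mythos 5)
Your reduction to the individual $\Pic^e(\Gr_{X^I}/X^I)$ and your treatment of $\mL_X$ are fine, but the step you yourself flag as the main obstacle is where the argument breaks, and the codimension count you propose to get around it is false. By Proposition \ref{III:closure}, the restriction of $\Gr_{\leq(\la,\mu)}$ to the diagonal is $\Gr_{\leq\la+\mu,X}$, which has dimension $(2\rho,\la+\mu)+1$, while $\Gr_{\leq(\la,\mu)}$ itself has dimension $(2\rho,\la+\mu)+2$: the fibre dimension of the Schubert pieces does \emph{not} drop over the diagonals (that is precisely the content of Proposition \ref{III:closure}, which "restores the intuition"), so the preimage of a partial diagonal is a divisor, not a set of codimension $\geq 2$. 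Hartogs therefore does not apply: the canonical trivializing section on the open stratum extends only as a rational section, a priori with a zero or pole of some order $n$ along the diagonal divisor, i.e.\ as a regular section of $\mL(n\Delta)$ rather than of $\mL$. Killing this $n$ is the actual content of the paper's Lemma \ref{Pic X2}: one uses Cohen--Macaulayness (Theorem \ref{singularity}) to get the filtration $\mL((n-1)\Delta)\subset\mL(n\Delta)$, the rigidification along the unit section $e$ to pin down the order of vanishing there, and the integrality of $\Gr_{\leq(\la,\mu)}|_\Delta=\Gr_{\leq\la+\mu,X}$ to propagate non-vanishing at $e(\Delta)$ to the whole special fibre. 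Your proposal skips exactly this analysis, and without it a line bundle such as $q^*\mO_{X^2}(\Delta)$ (trivial on every geometric fibre, nontrivial along the diagonal) is not ruled out by anything you say except the rigidification, which you never bring to bear on the order of vanishing.

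There is a second, logical problem with your surjectivity step: to know that the generator of $\Pic^e(\Bun_G)$ pulls back to $\mO(1)$ on $\Gr_x$ (rather than to some $\mO(d)$ with $d>1$) is exactly the surjectivity half of Theorem \ref{uniformize line}, which is the hard Sorger--Faltings result that the paper \emph{deduces} from Theorem \ref{Rigid line} together with Corollary \ref{Gr=Bun}(2); invoking it here is circular within the paper's architecture. The paper avoids this by observing that $\Pic^e(\Gr_{X^I}/X^I)$ is the group of global sections of an \'etale sheaf on $X^I$, so surjectivity can be checked on stalks after passing to an \'etale neighbourhood with $X=\bA^1\subset\bP^1$, where $\mO(1)$ on $\Bun_G(\bP^1)$ is constructed directly as a boundary divisor (Lemma \ref{O1 on Bun}) and visibly restricts to the ample generator on $\Gr$. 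You should either adopt that local reduction or produce the factorization line bundle by some other self-contained construction.
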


Note that we have
\[\Pic^e(\Gr_{\Ran}/\Ran)=\underrightarrow\lim\ \Pic^e(\Gr_{X^I}/X^I),\]
and the Picard groupoid $\Pic^e(\Gr_{X^I}/X^I)$ is discrete. In addition, it is the group of global sections of an \'etale sheaf on $(X^I)_{et}$
\[\mP ic^e(\Gr_{X^I}/X^I)(U)= \Pic^e((\Gr_{X^I})_U/U).\]
We will first study $\mP ic^e(\Gr_{X^I}/X^I)$.

\begin{lem}
Let $X$ be a smooth curve, and $\underline G$  be a connected reductive group scheme over $X$ that is fiberwise connected, and simply-connected.
Then there is a canonical isomorphism 
\[\bZ_X\simeq \mP ic^e(\Gr_X/X),\]
where $\bZ_X$ is the constant sheaf $\bZ$ on $X$.
In particular, for every point $x\in X$, the stalk $\mP ic^e(\Gr_X/X)_x=\Pic^e(\Gr_x)=\bZ$.
\end{lem}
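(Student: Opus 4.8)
The plan is to exploit the twisted-product presentation of $\Gr_X$ to reduce the computation to $\Gr_G$, and then to combine Theorem~\ref{Pic of Gr} with the equivariance supplied by Lemma~\ref{lifting Aut action}. Recall from \eqref{III: twisted constr} that $\Gr_X\cong \hat{X}^+\times^{\Aut^+(D)}\Gr_G$, where $\hat{X}^+\to X$ is an $\Aut^+(D)$-torsor. Choosing a formal coordinate étale-locally on $X$ trivialises this torsor, so that $\Gr_X|_U\cong \Gr_G\times U$ for suitable étale $U\to X$. First I would record that, because $\Gr_G$ is ind-proper and $\Pic^e(\Gr_G)=\bZ\mO(1)$ is discrete (Theorem~\ref{Pic of Gr}), a seesaw argument (cf. \cite[\S5]{Mum}, as in the proof of Proposition~\ref{III:line}) gives $\Pic^e((\Gr_G\times U)/U)=\Gamma(U,\bZ)$: the fibrewise degree is a locally constant $\bZ$-valued function, and any rigidified relative line bundle of fibrewise degree zero is trivial. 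Consequently $\mP ic^e(\Gr_X/X)$ is a locally constant sheaf with stalk $\Pic^e(\Gr_G)=\bZ$.

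It remains to trivialise this $\bZ$-local system \emph{canonically}, which is the heart of the statement. Here I would appeal to Lemma~\ref{lifting Aut action}: the ample generator $\mO(1)$ on $\Gr_G$ carries a canonical $\Aut^+(D)$-equivariant structure, uniquely normalised to act trivially on the fibre at $e$ and hence compatible with the rigidification. Under the twisted-product description $\Gr_X=[\hat{X}^+\times\Gr_G/\Aut^+(D)]$, an $\Aut^+(D)$-equivariant rigidified line bundle on $\Gr_G$ pulls back and descends to a rigidified relative line bundle on $\Gr_X/X$; applying this to $\mO(1)$ produces a canonical generator $\mO(1)_X$. The same construction is functorial under étale base change $U\to X$, so it defines a morphism of sheaves
\[
\bZ_X\longrightarrow \mP ic^e(\Gr_X/X),\qquad n\longmapsto \mO(1)_X^{\otimes n}.
\]
Because $\mO(1)_X$ is manufactured from the equivariant structure rather than from a coordinate choice, it is independent of the trivialisation of $\hat{X}^+$, which is precisely what kills any potential monodromy.

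Finally I would check that this map is an isomorphism on stalks, which suffices since both sides are sheaves. The stalk at a point $x$ is $\underrightarrow{\lim}_{U\ni x}\Pic^e((\Gr_X)_U/U)=\Pic^e(\Gr_x)$ by the local computation above, and under the coordinate identification $\Gr_x\cong\Gr_G$ the stalk of our map sends $n$ to $\mO(1)^{\otimes n}$, which is the identity on $\Pic^e(\Gr_G)=\bZ$ by Theorem~\ref{Pic of Gr}; this also yields the ``in particular'' claim on stalks. The main obstacle is conceptual rather than computational: it lies in promoting the \emph{fibrewise} equality $\Pic^e(\Gr_x)=\bZ$ to a canonical global trivialisation of the relative Picard sheaf, ruling out a nontrivial sign-monodromy $\pi_1(X)\to\{\pm1\}=\Aut(\bZ)$. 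The canonical $\Aut^+(D)$-equivariant structure on $\mO(1)$ from Lemma~\ref{lifting Aut action} is exactly the input that removes this ambiguity, and it is what distinguishes the \emph{constant} sheaf $\bZ_X$ from a merely locally constant one.
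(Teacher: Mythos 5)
Your argument is correct in the case $\underline G=G\times X$ constant, and it isolates the right difficulty (killing a possible sign-monodromy in a locally constant sheaf with stalk $\bZ$), but it resolves that difficulty by a different mechanism than the paper. The paper works \'etale-locally: using Proposition \ref{III:property} it reduces to $X=\bA^1$, where the global coordinate trivialises the $\Aut(D)$-torsor $\widehat{\bA^1}\to\bA^1$, so $\Gr_{\bA^1}\cong\Gr_0\times\bA^1$ and $\mP ic^e(\Gr_{\bA^1}/\bA^1)\cong\bZ_{\bA^1}$; the local isomorphisms then glue because the generator is pinned down intrinsically as the \emph{ample} generator $\mO(1)$ of Theorem \ref{Pic of Gr}. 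You instead stay global, descending $\mO(1)$ along the torsor $\hat X^+\times\Gr_G\to\Gr_X$ of \eqref{III: twisted constr} using the unique normalised $\Aut^+(D)$-equivariant structure of Lemma \ref{lifting Aut action}, which produces an actual global generator $\mO(1)_X$ rather than a compatible family of local ones; combined with your seesaw computation of the stalks (which is the same device as in the proof of Proposition \ref{III:line}) this gives the isomorphism. Your route is slightly heavier on input (it needs Lemma \ref{lifting Aut action}, which the paper only states as an exercise and does not use here) but more explicit about \emph{why} there is no monodromy; the paper's route is more economical, replacing the equivariance by the remark that ampleness canonically distinguishes $\mO(1)$ from $\mO(-1)$. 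The one point you do not address is that the lemma allows a non-constant $\underline G$, for which \eqref{III: twisted constr} is unavailable: one must first pass to an \'etale cover $U\to X$ trivialising $\underline G$ (and mapping to $\bA^1$, in the paper's version), run your argument there, and then descend using the canonicity of the generator --- exactly the gluing step your construction of $\mO(1)_X$ was designed to avoid. This is a routine patch, not a flaw in the core argument.
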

\begin{proof}
We first treat the case when $\underline G=G\times X$ is constant and there is an \'etale morphism $X\to \bA^1$. Then since affine Grassmannians commute with \'etale base change (see Proposition \ref{III:property}), we reduce to treat the case $X=\bA^1$. In this case, there is a canonical isomorphism $\Gr_{\bA^1}\cong \Gr_0\times {\bA^1}$. Namely, there exists a global coordinate on $\bA^1$ so the $\Aut(D)$-torsor $\widehat{\bA^1}\to \bA^1$ is trivial. Then the lemma follows. Note that the isomorphism $\bZ\cong \Pic^e(\Gr_{\bA^1})$ is canonical since there is an ample generator $\mO(1)$. 

In general, we have an \'etale map $f:U\to X$ such that $\underline G\times_XU\cong G\times U$ for some constant group $G$ and there is an \'etale map $U\to \bA^1$. The canonicity of the isomorphism $\bZ_U\cong \mP ic^e(\Gr_U/U)$ implies that it descends to a canonical isomorphism $\bZ_X\to \mP ic^e(\Gr_X/X)$.
\end{proof}

We write by $\mO(n)_X$ the line bundle on $\Gr_X$ corresponding to $n\in \bZ_X$ under the above isomorphism.

The crucial step is to understand $\mP ic^e(\Gr_{X^2}/X^2)$. Note that since $\Gr_X\to X$ is ind-projective, we have the canonical isomorphism
\[ \mP ic^e(\Gr_X/X)\boxtimes \mP ic^e(\Gr_X/X)\cong\mP ic^e(\Gr_X\times\Gr_X/X\times X)\]
by the See-Saw principle (see the proof of Proposition \ref{III:line}). In particular, 
\[\mP ic^e(\Gr_{X^2}/X^2)|_{X^2-\Delta}\cong \mP ic^e(\Gr_X/X)\boxtimes \mP ic^e(\Gr_X/X)|_{X^2-\Delta}\cong(\bZ_X\boxtimes\bZ_X)|_{X^2-\Delta}.\]
\begin{lem}\label{Pic X2}
The above isomorphism extends to an exact sequence
\[0\to \mP ic^e(\Gr_{X^2}/X^2)\to \bZ_X\boxtimes\bZ_X\to \Delta_*\bZ_X\to 0, \]
where the map $\bZ_X\boxtimes\bZ_X\to \Delta_*\bZ_X$ is given by $(a,b)\mapsto a-b$.
\end{lem}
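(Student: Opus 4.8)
The plan is to build the arrow $\rho\colon \mF:=\mP ic^e(\Gr_{X^2}/X^2)\to \bZ_X\boxtimes\bZ_X$ and to identify it, stalk by stalk, with the inclusion of the kernel of $(a,b)\mapsto a-b$. Here $\bZ_X\boxtimes\bZ_X$ is to be read as the rank‑two constant sheaf whose sections are pairs $(a,b)$ of locally constant integers, the two ``factor degrees''. Over $X^2-\Delta$ the factorization isomorphism $c$ of Proposition~\ref{III:Gr2}, together with the See‑Saw computation recorded just before the statement, already gives $\mF|_{X^2-\Delta}\cong(\bZ_X\boxtimes\bZ_X)|_{X^2-\Delta}$; since $X^2$ is a smooth surface and a punctured disc about a diagonal point stays connected, the adjunction $\mF\to j_*j^*\mF=\bZ_X\boxtimes\bZ_X$ ($j$ the open inclusion of $X^2-\Delta$) is the map $\rho$. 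It then suffices to verify at a diagonal point $(x,x)$ that $\rho$ is injective, that $(a-b)\circ\rho=0$, and that the image fills $\ker(a-b)$; off the diagonal both sheaves are the rank‑two constant sheaf and there is nothing to do.

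For injectivity I would descend to a finite‑dimensional model: fix a large dominant $\mu$ and let $Z_\mu\subset\Gr_{X^2}$ be the closure of $(\Gr_{\leq\mu,X}\times\Gr_{\leq\mu,X})|_{X^2-\Delta}$, which is projective over $X^2$ with central fibre $\Gr_{\leq 2\mu}$ by Proposition~\ref{III:closure}. Because $G$ is simply connected, Theorem~\ref{singularity} (with $p\nmid|\pi_1(G_\der)|=1$) makes these Schubert varieties normal, and one checks $Z_\mu$ is normal with reduced irreducible central fibre $q^{-1}(\Delta)$, where $q\colon\Gr_{X^2}\to X^2$ is the projection. A rigidified line bundle trivial on $Z_\mu|_{U-\Delta}$ is then $\mO(n\,q^{-1}\Delta)=q^{*}\mO_U(n\Delta)$, i.e.\ pulled back from the base, hence trivial after the rigidification along the unit section. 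Thus $\rho$ has no sections supported on $\Delta$, and in particular the diagonal stalk $\mF_{(x,x)}$ embeds into $\bZ^2$.

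The crux is $(a-b)\circ\rho=0$, namely that every rigidified $\mM$ has equal factor degrees $a=b$ near the diagonal; I expect this symmetry‑plus‑degeneration step to be the main obstacle. Pull $Z_\mu$ back along a smooth trait $T\to X^2$ meeting $\Delta$ transversally at $(x,x)$, with $T\smallsetminus\{0\}$ mapping off the diagonal, so the generic fibre is $\Gr_{\leq\mu}\times\Gr_{\leq\mu}$ and the special fibre is $\Gr_{\leq 2\mu}=\Gr_x$. Let $C\cong\bP^1$ be a translate of the curve $C_{\theta,\theta}$ of \eqref{curve C}, which detects degree on $\Gr_{\leq\mu}$ (it meets $\Theta$ transversally, so $\deg_C\mO(1)=1$). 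Spreading $C\times\{e\}$ and $\{e\}\times C$ over $T$ and taking flat limits produces cycles $C_0,C_0'$ in $\Gr_x$ with $\deg_{C_0}\mM|_{\Gr_x}=a$ and $\deg_{C_0'}\mM|_{\Gr_x}=b$, by constancy of degrees in flat families of curves inside the projective family $Z_\mu$. Now the nontrivial $\sigma\in S_2$ preserves $Z_\mu$ and, being compatible with $c$ and the flip (Proposition~\ref{III:Gr2}), exchanges the two families; yet $\sigma$ fixes the diagonal pointwise and hence acts as the identity on the central fibre $\Gr_x$. Therefore $C_0=\sigma(C_0)=C_0'$ as cycles on $\Gr_x$, whence $a=b$. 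The delicate points to nail down are the existence and $T$‑flatness of the limiting cycles and the $\sigma$‑equivariance of the limiting process, all of which I would phrase inside $Z_\mu$ so that classical flat‑family arguments apply.

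Finally, exactness. The computation just made gives $\on{im}\rho\subseteq\ker(a-b)$, whose diagonal stalk is the primitive line $\bZ\cdot(1,1)\subset\bZ^2$. The reverse inclusion is supplied by an explicit generator: pulling back the ample generator $\mO(1)$ on $\Bun_G$ — equivalently the determinant line bundle (Propositions~\ref{III:line} and~\ref{det factorization}) — yields $\mO(1)_{X^2}\in\mF$ with $\Delta^{*}\mO(1)_{X^2}=\mO(1)_X$ and off‑diagonal bidegree $(1,1)$, so $(1,1)\in\on{im}\rho$. Since $(1,1)$ is primitive and $\mF_{(x,x)}\hookrightarrow\bZ^2$ lands inside $\bZ\cdot(1,1)$, we conclude $\mF_{(x,x)}=\bZ\cdot(1,1)=\ker(a-b)_{(x,x)}$. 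As $(a,b)\mapsto a-b$ is visibly surjective onto $\Delta_*\bZ_X$ on stalks, the three arrows assemble into the asserted short exact sequence.
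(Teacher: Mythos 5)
Your first two steps are correct but follow a genuinely different route from the one in the text. For injectivity, the text argues with sections: it takes the canonical trivialisation $s$ over the punctured neighbourhood, views it as a section of $\mL(n\Delta)$ on $\Gr_{\leq(\la,\mu)}$ for the minimal $n$ (using Cohen--Macaulayness to get $\mL((n-1)\Delta)\subset\mL(n\Delta)$), and rules out $n>0$ by the vanishing along $e(\Delta)$; you instead argue with divisor classes on a normal model $Z_\mu$, which works but trades the Cohen--Macaulay input for normality of the global Schubert variety and reducedness of the scheme-theoretic central fibre --- neither is established in these notes, so ``one checks $Z_\mu$ is normal'' is carrying real weight. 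For the key step $a=b$, the text pulls $\mL$ back along the convolution map $m:\Gr_X\tilde\times\Gr_X\to\Gr_{X^2}$, applies See--Saw, and compares with the pullback of line bundles along the local convolution map \eqref{conv m0}; your degeneration of the curves $C\times\{e\}$ and $\{e\}\times C$ over a $\sigma$-stable trait, combined with the fact that $\Delta(\sigma)$ acts trivially on $\Gr_{X^2}|_\Delta$, is a valid alternative provided you really construct the two flat families of $1$-cycles inside the projective family and check the limits are exchanged by $\sigma$ --- the points you yourself flag as delicate. The text's convolution argument avoids all of this.

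The surjectivity step, however, has a genuine gap. You invoke ``the ample generator $\mO(1)$ on $\Bun_G$'' for the given curve $X$; but in these notes the existence of such a generator on $\Bun_G(X)$ (Theorem \ref{uniformize line}) is deduced from Theorem \ref{Rigid line}, which rests on the very lemma you are proving, so the appeal is circular. Your proposed substitute --- ``equivalently the determinant line bundle'' --- does not work either: by Remark \ref{generator} the pullback of $\mL_{\det}$ is $\mO(d_\rho)$ with $d_\rho>1$ in general (for $E_8$ one has $60\mid d_\rho$ for every $\rho$), so Proposition \ref{det factorization} only puts $(d_\rho,d_\rho)$ in the image; combined with your injectivity this shows the diagonal stalk is $m\bZ\cdot(1,1)$ for some $m\mid d_\rho$, not that $m=1$. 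The repair is the one the text uses: since $\mP ic^e(\Gr_{X^2}/X^2)$ is an \'etale sheaf and the stalk is all that matters, pass to an \'etale neighbourhood of $(x,x)$ so that one may assume $X=\bA^1\subset\bP^1$ with $\underline G$ constant, and pull back to $\Gr_{X^2}$ the line bundle $\mO(1)$ on $\Bun_G(\bP^1)$ constructed divisor-theoretically in Lemma \ref{O1 on Bun}; this produces the class $(1,1)$ without circularity.
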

\begin{proof}
First, we show that $ \mP ic^e(\Gr_{X^2}/X^2)\to  \bZ_X\boxtimes\bZ_X$ is injective. I.e. it does not admit non-zero sections supported along the diagonal. Let $(x,x)$ be a point on the diagonal. As above, by working \'etale locally on $X$ we can assume that $\underline G=G\times X$ is constant, and that $\Gr_{X}\to X$ is isomorphic to $\Gr\times X\to X$.  Let $V\to X\times X$ be an \'etale open neighbourhood of $(x,x)$. For brevity, we write $V\cap\Delta$ by $\Delta$ in the sequel.
Let $\mL$ be a rigidified line bundle on $\Gr_V:=\Gr_{X^2}\times_{X^2}V$, trivial on $\Gr_{V-\Delta}:=\Gr_V|_{V-\Delta}$. We need to show that $\mL$ is trivial. It is enough to show that its restriction to the Schubert variety $\Gr_{\leq(\la,\mu)}|_V\subset \Gr_V$ is trivial for all $(\la,\mu)$ (see Proposition \ref{III:closure} for the definition of $\Gr_{\leq(\la,\mu)}$). For brevity, we write $\mL|_{\Gr_{\leq(\la,\mu)}}$ by $\mL$.

By the definition of the rigidified line bundle, $\mL$ is equipped with a trivialisation $\theta: \mO_{V}\cong e^*\mL$. In addition, $\mL|_{\Gr_{V-\Delta}}$ is trivial. Since the pullback along  the map $\Gr_{\leq(\la,\mu)}|_{V-\Delta}\to V-\Delta$ induces an isomorphism of spaces of regular functions, there is a unique trivialisation 
$$s:  \mO_{\Gr_{\leq(\la,\mu)}|_{V-\Delta}}\cong \mL|_{V-\Delta},$$ 
such that $e^*(s)=\theta|_{V-\Delta}$. By Proposition \ref{III:closure} and  Theorem \ref{singularity}, the scheme $\Gr_{\leq(\la,\mu)}$ is Cohen-Macaulay. Therefore, $\mL((n-1)\Delta)$ is a subsheaf of $\mL(n\Delta)$. Then
there is an integer $n$ such that 
$$s\in \Gamma(\Gr_{\leq(\la,\mu)},\mL(n\Delta))\setminus \Gamma(\Gr_{\leq(\la,\mu)},\mL((n-1)\Delta)).$$ Then we regard $s$ as a regular section of $\mL(n\Delta)$. 
Note that then $e^*(s)$ coincides with $\mO_V\simeq e^*\mL\to e^*\mL(n\Delta)$ as a regular section of $(e^*\mL)(n\Delta)$. In particular, $n\geq 0$. But if $n>0$, $s$ would vanish at $e(\Delta)$ and therefore must vanish on the entire $\Gr_{\leq (\la,\mu)}|_\Delta$ (since the zero loci of $s$ is pure of codimension one and is contained in $\Gr_{\leq (\la,\mu)}|_\Delta$ which is an integral scheme again by Proposition \ref{III:closure}). This would imply that $s\in \Gamma(\Gr_{\leq(\la,\mu)},\mL((n-1)\Delta))$, which is contradiction. Therefore $n=0$. By the same reasoning $s$ does not vanish along $e(\Delta)$ and therefore does not vanish anywhere. Therefore, $s$ induces a trivialisation of $\mL$. This finishes the proof of the injectivity.

Now let $\mL$ be a rigidified line bundle on $\Gr_{X^2}$, whose restriction to $\Gr_{X^2}|_{X^2-\Delta}$ is $\mO(a)_X\boxtimes\mO(b)_X|_{X^2-\Delta}$. Then again by the See-Saw principle,  its pullback $m^*\mL$ along the convolution map $m: \Gr_X\tilde\times\Gr_X\to \Gr_{X^2}$ is of the form $\pr_1^*\mO(a)_X\otimes \mL'$, where $\mL'$ is some line bundle on $\Gr_X\tilde\times\Gr_X$ whose restriction to each $\pr_1^{-1}(y)(\simeq \Gr_X)$ is isomorphic to $\mO(b)_X$. Over a point on the diagonal, the global convolution map restricts to the local convolution map (see \eqref{conv m0}). Then an easy computation of the pullback of line bundles on $\Gr$ along \eqref{conv m0} implies that $a=b$. This shows that the composition $\Pic^e(\Gr_{X^2}/X^2)\to \bZ_X\boxtimes\bZ_X\to \Delta_*\bZ_X$ is zero.

Finally, we need to show that $\Pic^e(\Gr_{X^2}/X^2)$ maps surjectively to $\ker(\bZ_X\boxtimes\bZ_X\to\Delta_*\bZ_X)$. Again, it is enough to consider the stalks along the diagonal. So as before we can pass to an \'etale neighbourhood of $(x,x)$ and therefore assume that $X=\bA^1\subset \bP^1$ and $\underline G=G\times X$ is constant. Then we can conclude by pulling back of the line bundle $\mO(1)$ on $\Bun_G(\bP^1)$ (see Lemma \ref{O1 on Bun}) to $\Gr_{X^2}$. 
\end{proof}

By the same argument, one can show that $\mP ic^e(\Gr_{X^I}/X^I)$ fits into the following exact sequence
\[0\to \mP ic^e(\Gr_{X^I})\to \boxtimes_{i\in I}\bZ_X\to \bigoplus_{\phi:I\twoheadrightarrow J/\sim}\Delta(\phi)_*\boxtimes_{j\in J}\bZ_X.\]
Here $|J|=|I|-1$ and $\phi\sim \phi'$ if there is an isomorphism $J\cong J'$ intertwining $\phi$ and $\phi'$. In other words, the sum is taken over all partitions of $I$ into $|I|-1$ subsets. The map $d_{\phi}:\boxtimes_{i\in I}\bZ_X\to \Delta(\phi)_*\boxtimes_{j\in J}\bZ_X$ is given as follows: one can write $I=I'\cup\{1,2\}\to J=I'\cup\{1\}$ and the map $d_\phi$ is the identity on $\boxtimes_{i\in I'}\bZ_X$-factor and is the map $\bZ_X\boxtimes\bZ_X\to \Delta_*\bZ_X$ in Lemma \ref{Pic X2} on the factors corresponding to $\{1,2\}\to\{1\}$.

\begin{cor}For every point $x\in X$, and every $\phi: J\twoheadrightarrow I$, the maps $\Pic^e(\Gr_X)\to \Pic^e(\Gr_x)$ and $\Pic^e(\Gr_{X^J}/X^J)\to\Pic^e(\Gr_{X^I}/X^I)$ are an isomorphisms. 
\end{cor}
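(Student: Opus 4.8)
The plan is to reduce both assertions to the exact sequence for $\mP ic^e(\Gr_{X^I}/X^I)$ just established, by passing to global sections over the (geometrically connected) schemes $X^I$. Write $\mP_I:=\mP ic^e(\Gr_{X^I}/X^I)$, an \'etale sheaf of abelian groups on $X^I$, sitting in
\[0\to \mP_I\to \boxtimes_{i\in I}\bZ_X\to \bigoplus_{\psi:I\twoheadrightarrow J/\sim}\Delta(\psi)_*\boxtimes_{j\in J}\bZ_X,\]
where $\psi$ runs over the partitions of $I$ into $|I|-1$ blocks. First I would apply $\Gamma(X^I,-)$. Since $X$ is geometrically connected, so is every $X^I$, whence $\Gamma(X^I,\boxtimes_{i\in I}\bZ_X)=\bZ^I$ and $\Gamma(X^I,\Delta(\psi)_*\boxtimes_{j\in J}\bZ_X)=\Gamma(X^J,\boxtimes_{j\in J}\bZ_X)=\bZ^J$. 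By left exactness of global sections, $\Gamma(X^I,\mP_I)=\ker(\bZ^I\to\bigoplus_\psi\bZ^J)$. Each $\psi$ is the merging of a single pair $\{i,i'\}\subset I$, and the corresponding component of the boundary map is $(a_i)_{i}\mapsto a_i-a_{i'}$; as every pair arises from some $\psi$, the kernel is precisely the diagonal $\bZ\hookrightarrow\bZ^I$. Thus $\Gamma(X^I,\mP_I)\cong\bZ$, generated by the class $g_I$ mapping to $(1,\dots,1)$.

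To control the transition maps I would pin down $g_I$ geometrically. Choose any representation $\rho:G\to\GL_n$ with nonzero Dynkin index $d_\rho$ (e.g. the adjoint representation, $d_\rho=2h^\vee$). By Proposition \ref{det factorization} together with Remark \ref{generator}, the pullback $f_\rho^*\mL_{\det,X^I}$ restricts on the pairwise-disjoint locus to $\boxtimes_i\mO(d_\rho)_X$, so its image in $\bZ^I$ is $d_\rho\cdot(1,\dots,1)$; that is, $f_\rho^*\mL_{\det,X^I}=d_\rho\,g_I$ in $\Gamma(X^I,\mP_I)$. Now for a surjection $\phi:J\twoheadrightarrow I$, the map $\Delta(\phi)^*\colon \Gamma(X^J,\mP_J)\to\Gamma(X^I,\mP_I)$ is $\bZ$-linear, and by the diagonal-restriction compatibility of the determinant bundles (the first isomorphism of Proposition \ref{det factorization}, taken on the $\GL_n$-Grassmannian and pulled back along $f_\rho$) we have $\Delta(\phi)^*(f_\rho^*\mL_{\det,X^J})=f_\rho^*\mL_{\det,X^I}$, i.e. $\Delta(\phi)^*(d_\rho\,g_J)=d_\rho\,g_I$. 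As $\bZ$ is torsion-free and $d_\rho\neq 0$, this forces $\Delta(\phi)^*(g_J)=g_I$, so $\Delta(\phi)^*$ carries generator to generator and is an isomorphism. For the fiber map I would argue directly: by the lemma identifying $\bZ_X\simeq\mP ic^e(\Gr_X/X)$ and connectedness of $X$, $\Pic^e(\Gr_X)=\Gamma(X,\mP_1)=\Gamma(X,\bZ_X)=\bZ$, while $\Pic^e(\Gr_x)=(\mP_1)_x=\bZ$; restriction to the fiber is the passage from global sections to the stalk at $x$ for the constant sheaf $\bZ_X$ on a connected curve, which is the identity, hence an isomorphism.

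The computation is essentially formal once the displayed exact sequence is available, so I expect the main obstacle to lie not in the Corollary itself but in two bookkeeping points. The first is the precise description of the boundary maps $d_\psi$ and the verification that, taken together, they cut out exactly the diagonal; this is where the simply-connectedness hypothesis and the normality and Cohen--Macaulayness of the Schubert varieties (Theorem \ref{singularity}), already invoked to build the sequence, are genuinely used. The second is the temptation to seek a representation of Dynkin index $1$: since none exists for $G=E_8$, it is important to argue instead with an arbitrary nonzero $d_\rho$ and the torsion-freeness of $\bZ$, as above. No further geometric input beyond the See--Saw principle and the factorization of the determinant bundle seems to be required.
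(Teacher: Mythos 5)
Your argument is correct and is essentially the paper's intended one: the Corollary is meant to follow by taking global sections of the exact sequence for $\mP ic^e(\Gr_{X^I}/X^I)$, and your identification of $\Gamma(X^I,\mP ic^e(\Gr_{X^I}/X^I))$ with the diagonal copy of $\bZ$ inside $\bZ^I$ (reading the boundary maps as the differences $a_i-a_{i'}$ over all merged pairs, which is the reading that makes the sequence exact) is precisely the step the paper leaves to the reader. The only detail you add beyond what the paper records is the normalization of the transition maps, by pulling back $\mL_{\det,X^I}$ along $f_\rho$ and using Proposition \ref{det factorization} together with torsion-freeness of $\bZ$ to divide by $d_\rho$; this is a clean and correct way to see that $\Delta(\phi)^*$ carries generator to generator, and rightly avoids needing a representation of Dynkin index one.
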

Theorem \ref{Rigid line} then follows.

\section{Lecture IV: Applications to the moduli of $G$-bundles}\label{Lecture IV}
In this section, we explain how to study moduli of $G$-bundles via affine Grassmannians. The basic tools are the
uniformization theorems. We assume that $k$ is algebraically closed throughout this section.

\subsection{One point uniformization.}\label{one point uniformization}
Let $X$ be a smooth projective curve over $k$. Let $\underline G$ be a fiberwise connected smooth affine group scheme over $X$. Then the moduli space $\Bun_{\underline G}$ of $\underline G$-torsors on $X$ is an algebraic stack, locally of finite presentation over $k$ (e.g. see \cite[Proposition 1]{He}). 
\begin{ex} Let $\underline G|_{X-\{x_1,\ldots,x_n\}}\cong G\times (X-\{x_1,\ldots,x_n\})$ and $\underline G|_{D_{x_i}}$ be an appropriate parahoric group scheme at every $x_i$, then $\Bun_{\underline G}$ is the moduli stack of parabolic bundles of $G$ (with parabolic structures at $\{x_1,\ldots,x_n\}$).
\end{ex}

By the definition of $\Gr_{\underline G,x}$ (cf. \eqref{aff glob}), we have a natural morphism
\begin{equation}\label{one pt uni map}
u_x: \Gr_{\underline G,x}\to \Bun_{\underline G}.
\end{equation}
Let $X^*=X-\{x\}$, and let $\underline G^{X^*}$ denote the space of sections. I.e. 
\begin{equation}\label{Gout}
\underline G^{X^*}(R)=\Gamma(X_R^*,\underline G).
\end{equation}
\begin{lem}\label{rep of Gout}
The space $\underline G^{X^*}$ is represented by an group ind-scheme.
\end{lem}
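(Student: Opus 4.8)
The plan is to exploit that $X^{*}=X-\{x\}$ is an affine curve and that $\underline G$ is affine of finite type over $X$, so that everything reduces to linear algebra over the coordinate ring of $X^{*}$ filtered by order of pole at $x$. Write $X^{*}=\Spec A$, and note that, since $X$ is projective, $A=\bigcup_{N\geq 0}L_N$ where $L_N=H^{0}(X,\mO_X(Nx))$ is a finite-dimensional $k$-vector space and the maps $L_N\hookrightarrow L_{N+1}$ are inclusions. Restricting $\underline G$ to $X^{*}$ gives an affine group scheme $\underline G|_{X^{*}}=\Spec B$ of finite type over $A$, and a section of $\underline G$ over $X^{*}_R$ is exactly a morphism $X^{*}_R\to \Spec B$ over $\Spec A$, i.e. an $A$-algebra homomorphism $B\to A\otimes_k R$; thus $\underline G^{X^{*}}(R)=\Hom_{A\text{-alg}}(B,A\otimes_k R)$.

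Next I would realize $\underline G^{X^{*}}$ inside an ambient ind-affine space. Choosing generators of $B$ over $A$ yields a closed embedding $\underline G|_{X^{*}}\hookrightarrow \bA^{m}_{X^{*}}$, and hence $\underline G^{X^{*}}$ is a subfunctor of $R\mapsto (A\otimes_k R)^{m}$, the functor of sections of $\bA^{m}_{X^{*}}$. This ambient functor is ind-affine: since $A\otimes_k R=\bigcup_N (L_N\otimes_k R)$ and each $L_N\otimes_k R$ is a free $R$-module of finite rank, it is the increasing union of the affine spaces $(L_N\otimes_k R)^{m}$ represented by $\bA^{m\dim L_N}$ over $k$, and the transition maps are the closed embeddings induced by $L_N\hookrightarrow L_{N+1}$.

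Finally I would check that $\underline G^{X^{*}}$ is a closed ind-subscheme. Writing $B=A[y_1,\dots,y_m]/I$ with $I$ finitely generated (as $A$ is noetherian), a tuple $y\in(L_N\otimes_k R)^{m}$ lies in $\underline G^{X^{*}}(R)$ precisely when all generators of $I$ evaluate to $0$ in $A\otimes_k R$; using $L_N\cdot L_{N'}\subseteq L_{N+N'}$, each such evaluation is a polynomial expression landing in a fixed finite-dimensional piece $L_{N'}\otimes_k R$, so its vanishing is a closed condition on the affine space $(L_N\otimes_k R)^{m}$. Hence each intersection $\underline G^{X^{*}}\cap \bA^{m\dim L_N}$ is a closed subscheme, and these assemble into an ind-affine ind-scheme with closed transition maps. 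The group structure on $\underline G$ makes $\underline G^{X^{*}}$ a group functor, and multiplication and inversion are morphisms of ind-schemes, since they only raise the pole order in a controlled way; thus $\underline G^{X^{*}}$ is a group ind-scheme. The step I expect to require the most care is precisely the verification that evaluating the relations always lands in a finite-dimensional pole-filtration piece, so that the defining conditions are genuinely closed and the resulting filtration has closed transition maps; this is exactly where projectivity of $X$ (finiteness of the $L_N$) and affineness of $\underline G$ enter.
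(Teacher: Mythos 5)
Your proof is correct, but it takes a genuinely different route from the one in the paper. The paper's argument follows the same pattern as its representability proofs for affine Grassmannians: one first reduces to $\underline G=\GL_n$ by choosing a faithful representation with (quasi-)affine quotient, so that $\underline G^{X^*}\to \GL_n^{X^*}$ is a (locally) closed embedding, and then handles $\GL_n^{X^*}(R)=\GL_n(A\otimes_k R)$ directly, citing \cite{BL}. You instead work with an arbitrary affine group scheme from the start: you present $\underline G|_{X^*}=\Spec A[y_1,\dots,y_m]/I$ and exhaust $A=\bigcup_N H^0(X,\mO_X(Nx))$ by the pole filtration, checking that the relations cut out closed subschemes of the finite-dimensional affine pieces and that multiplication and inversion respect the filtration up to a controlled shift. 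The two approaches buy different things: yours is self-contained and does not require knowing that a faithful representation with quasi-affine quotient exists over the base $X^*$ (a nontrivial input, cf.\ the discussion around Proposition \ref{lin rep} and Remark \ref{general base}, though here $A$ is Dedekind so it is available), while the paper's reduction recycles machinery already set up for $\Gr_{\underline G}$ and outsources the $\GL_n$ computation to the literature. The only point in your write-up deserving one extra sentence is the closedness of the transition maps between the pieces $Z_N\subset \bA^{m\dim L_N}$: this is immediate because $Z_N=Z_{N+1}\cap \bA^{m\dim L_N}$ inside $\bA^{m\dim L_{N+1}}$ (the defining equations are the same), so the inclusion $Z_N\hookrightarrow Z_{N+1}$ is a base change of the closed linear embedding of ambient affine spaces.
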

\begin{proof}Again, one reduces to prove it for $\underline G=\GL_n$, which is easy (e.g. see \cite[(1.9)]{BL}).
\end{proof}
 By restriction to $D_x^*\subset X^*$, we may regard $\underline G^{X^*}$ as a subgroup of $L\underline G_x$. 
Then $\underline G^{X^*}$ acts transitively along the fibers of $u_x$ and therefore $u_x$ factors through
\begin{equation}\label{1pt uniform}
[\underline G^{X^*}\backslash\Gr_{\underline G,x}]\to \Bun_{\underline G}.
\end{equation}

This map is not necessarily surjective, as can be seen in the case $G=\bG_m$ (not every line bundle on the open curve $X^*$ is trivial). However, it is surjective if $\underline G=G\otimes X$ is semisimple. In fact, we have the following stronger result.
\begin{thm}
Let $\underline G$ be a smooth fiberwise connected affine group scheme over $X$
with semisimple generic fiber  over the rational function field $k(X)$. Then \eqref{1pt uniform} is an isomorphism.
\end{thm}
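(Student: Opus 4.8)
The plan is to prove that the forgetful morphism $u_x\colon \Gr_{\underline G,x}\to\Bun_{\underline G}$, which remembers the torsor $\mE$ and discards the trivialisation $\beta$, exhibits $\Gr_{\underline G,x}$ as a $\underline G^{X^*}$-torsor over $\Bun_{\underline G}$ for the fppf topology; the asserted isomorphism \eqref{1pt uniform} then follows formally by passing to the quotient stack. Concretely I would check two independent things: that $\underline G^{X^*}$ acts freely and transitively along the fibers of $u_x$, and that $u_x$ admits sections fppf-locally on the base. The first is a soft manipulation of the moduli descriptions; the second is the geometric heart of the matter.

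For the torsor identity I would analyse the fiber product $\Gr_{\underline G,x}\times_{\Bun_{\underline G}}\Gr_{\underline G,x}$. An $R$-point of it is a tuple $(\mE_1,\beta_1,\mE_2,\beta_2,\phi)$ with $\phi\colon\mE_1\cong\mE_2$ an isomorphism of $\underline G$-torsors on $X_R$. Then $g:=\beta_2\circ\phi\circ\beta_1^{-1}$ is an automorphism of $\mE^0|_{X_R^*}$, i.e. an element of $\Gamma(X_R^*,\underline G)=\underline G^{X^*}(R)$, and the assignment $(\mE_1,\beta_1,g)\mapsto(\mE_1,\beta_1,\mE_2,\beta_2,\phi)$ furnishes an isomorphism $\underline G^{X^*}\times\Gr_{\underline G,x}\cong\Gr_{\underline G,x}\times_{\Bun_{\underline G}}\Gr_{\underline G,x}$. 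This is precisely freeness and transitivity of the action on the fibers, and it uses nothing beyond the fact that $\underline G^{X^*}$ is a group ind-scheme (Lemma \ref{rep of Gout}), where $\underline G^{X^*}\subset L\underline G_x$ via restriction to $D_x^*$.

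The crux is surjectivity: for every $\underline G$-torsor $\mE$ on $X_R$ there should be an fppf cover $R\to R'$ over which $\mE|_{X^*_{R'}}$ is trivial, so that a local section $\beta$ of $u_x$ exists. Here $X^*=X-\{x\}$ is an \emph{affine} curve, so $X^*_{R'}$ is affine and the unipotent part of a Borel reduction is automatically trivialisable, since $H^1$ of coherent sheaves vanishes on an affine scheme; the torus part is controlled by semisimplicity, the $\SL_2$ shadow being Serre's splitting $L\oplus L^{-1}\cong\mO^2$ of a rank-two bundle of trivial determinant on an affine curve, even for nontrivial $L$. Making this precise is exactly the Drinfeld--Simpson theorem on local triviality of $G$-bundles off a point (\cite{DS}), which first reduces the structure group to a Borel \'etale-locally and then supplies the required fppf cover (\'etale, when the generic fiber is simply connected).

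Finally I would address the non-constant group scheme. Over a dense open $U\subset X$ the group $\underline G|_U$ is fiberwise semisimple and becomes constant after an \'etale cover, so Drinfeld--Simpson applies over $U$; the finitely many remaining points are governed by the parahoric structure of $\underline G$, and the triviality statement, being fppf-local on $\Spec R$, propagates. I expect this last step --- upgrading Drinfeld--Simpson from a constant semisimple group to a group scheme with merely semisimple generic fiber --- to be the main obstacle, and I would invoke Heinloth's uniformisation theorem (\cite{He}) for the general assertion. Granting surjectivity, together with the torsor computation of the second paragraph one concludes that $u_x$ is a $\underline G^{X^*}$-torsor, and hence that $[\underline G^{X^*}\backslash\Gr_{\underline G,x}]\simeq\Bun_{\underline G}$.
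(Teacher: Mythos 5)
Your proposal is correct and follows essentially the same route as the paper: the torsor identity on fibers is the routine formal part (the fiber-product computation you write out is standard), and the entire geometric content is the fppf-local triviality of $\mE|_{X^*_R}$, which the paper itself does not prove but attributes to Beauville--Laszlo for $\SL_n$, Drinfeld--Simpson for constant semisimple groups, and Heinloth for general group schemes with semisimple generic fiber --- exactly the references your outline defers to at the same step.
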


At the level of $k$-points, it says that 
$$\Bun_{\underline G}(k)= \Gamma\backslash G(F_x)/G(\mO_x),$$ where $\Gamma=\underline G^{X^*}(k)=\Gamma(X^*,\underline G)$. This is due to Harder. 
The $\SL_n$ case is due to Beauville-Laszlo (cf. \cite{BL}). 
For a constant semisimple group scheme, this is due to Drinfeld-Simpson (cf. \cite{DS}). It was proved in \emph{loc. cit.} that if $p\nmid|\pi_1(G)|$, then the $\underline G^{X^*}$-torsor $\Gr_{\underline G}\to \Bun_{\underline G}$ is \'etale locally trivial. In its full generality, the theorem is due to Heinloth (cf. \cite{He}).
Because of this theorem, we can think \eqref{one pt uni map} as a uniformization map.

\begin{ex}We illustrate the theorem in the simplest example $G=\SL_2$. In this case, $\Bun_G$ classifies rank two vector bundles on $X$ with the trivial determinant. We explain why every such bundle $\mE$ on $X$ becomes trivial on $X^*$. First, there exists a line bundle $\mL$ (negative enough) fitting into the following short exact sequence $0\to \mL\to \mE\to \mL^{-1}\to 0$. Now, since $X^*$ is affine, $\on{H}^1(X^*,\mL^{-2})=0$, so $\mE|_{X^*}\cong (\mL\oplus\mL^{-1})|_{X^*}$. 
Next, we can find a large enough integer $n$, a global section $s_1$ of $\mL(nx)$, and $s_2$ of $\mL^{-1}(nx)$ such that $s_1$ and $s_2$ do not have common zeros on $X^*$. Let $s=(s_1,s_2)|_{X^*}:\mO_{X^*}\to (\mL\oplus\mL^{-1})|_{X^*}$. Then $s$ vanishes nowhere on $X^*$, and therefore we have $0\to \mO_{X^*}\to (\mL\oplus\mL^{-1})|_{X^*}\to \mO_{X^*}\to 0$. Again, by vanishing of $\on{H}^1(X^*,\mO_{X^*})$, we have $\mE|_{X^*}\cong (\mL\oplus\mL^{-1})|_{X^*}\cong \mO_{X^*}^2$.
\end{ex}

For simplicity, we assume that $\underline G=G\times X$ in the rest of this subsection. In this case, write $\underline G^{X^*}$ by $G^{X^*}$.
\begin{cor}
There is a canonical isomorphism $c:\pi_0(\Bun_G)\cong \pi_1(G)$. In particular, if $G$ is simply-connected, then $\Bun_G$ is connected.
\end{cor}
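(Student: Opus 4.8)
The plan is to build the map from the uniformization morphism $u_x\colon \Gr_G\to\Bun_G$ of \eqref{one pt uni map} together with the computation of $\pi_0(\Gr_G)$ in Theorem \ref{components}. First I would record that, because $k$ is algebraically closed and $G$ is split, the inertia $I=\Gal(\overline{F}/F)$ acts trivially on $\pi_1(G_F)=\pi_1(G)$, so Theorem \ref{components}(1) gives a canonical identification $\pi_0(\Gr_G)\cong\pi_1(G)$ under which the class of $t^\mu$ corresponds to the image of $\mu$ in $\pi_1(G)=\xcoch(T)/\langle\Phi^\vee\rangle$. Since $u_x$ is a morphism of (ind-)stacks it induces a map on connected components, and composing yields $\bar c\colon\pi_1(G)\cong\pi_0(\Gr_G)\to\pi_0(\Bun_G)$. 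The goal is to show $\bar c$ is a bijection; the asserted $c$ is then its inverse, and canonicity is inherited from that of Theorem \ref{components}.

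Next I would construct a candidate inverse, the locally constant degree map $c\colon\pi_0(\Bun_G)\to\pi_1(G)$. For a torus this is the classical degree $\Bun_T\to\xcoch(T)$ (for instance $\Bun_{\bG_m}=\Pic(X)\to\bZ$), which is locally constant and already induces $\pi_0(\Bun_T)\cong\xcoch(T)$. For general $G$, every $G$-bundle admits a reduction to a Borel $B$ étale-locally (as used in the proof of Theorem \ref{uniformization for P1}); the induced $T$-bundle has a degree in $\xcoch(T)$ whose image in $\pi_1(G)$ is independent of the reduction, since two $B$-reductions change the $T$-degree only by a sum of coroots. This gives a locally constant $c\colon\Bun_G\to\pi_1(G)$, hence a map on $\pi_0$, and by unwinding definitions one checks $c\circ\bar c=\mathrm{id}_{\pi_1(G)}$, because the bundle $u_x(t^\mu)$ has $c$-invariant $[\mu]$. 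Consequently $c$ is surjective and $\bar c$ is injective, and it remains only to prove $c$ is injective, i.e.\ that each nonempty fibre $\Bun_G^{\delta}:=c^{-1}(\delta)$ is connected.

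This connectivity is what I expect to be the main obstacle. In the simply-connected case it is immediate and yields the ``in particular'': when $\pi_1(G)=0$, Theorem \ref{components} shows $\Gr_G$ is connected, while the preceding uniformization theorem (applicable since $G$ is semisimple) identifies $\Bun_G\cong[\,G^{X^*}\backslash\Gr_G\,]$, a quotient of a connected space and hence connected. For general reductive $G$ I would deduce connectivity of $\Bun_G^{\delta}$ by dévissage along the central isogeny data relating $G$ to $G_{\s}$ and to the torus $G/G_{\der}$: the relevant fibres are governed by $\Bun_{G_{\s}}$, which is connected by the simply-connected case just treated, while the components $\Bun_T^{\delta}$ of the torus quotient are torsors under the connected group $\Bun_T^{0}$ (a power of $\Pic^0(X)$, an abelian variety) and are therefore connected. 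Granting this, each $\Bun_G^{\delta}$ is connected, so $c$ is injective; combined with the surjectivity of $c$ and $c\circ\bar c=\mathrm{id}$ this makes $c$ a bijection with inverse $\bar c$, establishing the canonical isomorphism $\pi_0(\Bun_G)\cong\pi_1(G)$.
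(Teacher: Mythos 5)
Your strategy is genuinely different from the paper's. The paper deduces the corollary in essentially one step from the uniformization theorem: since $\Bun_G\cong[G^{X^*}\backslash\Gr_x]$, the set $\pi_0(\Bun_G)$ is the quotient of $\pi_0(\Gr_x)\cong\pi_1(G)$ by the action of $G^{X^*}$, and the entire content is the claim that $G^{X^*}$ lies in the neutral connected component of $LG$ (for which the paper cites \cite{BLS}), so that this action on $\pi_0$ is trivial. You instead construct the inverse by hand as a degree invariant and aim to prove injectivity through connectivity of its fibres. This is a legitimate alternative route (close to the one taken in \cite{BLS} for the degree map itself), but as written it has two gaps.

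First, the local constancy of $c$ is the load-bearing step, and you have not justified it in the case that matters. Under the standing hypotheses of this subsection $G$ is semisimple, so $\pi_1(G)$ is finite and $G$ has no nontrivial characters; the invariant is therefore \emph{not} detected by degrees of associated line bundles, and ``the classical degree for tori'' does not directly apply. What one actually needs is: the Drinfeld--Simpson $B$-reduction in families ({\'e}tale-locally on the base), local constancy of the degree of the resulting family of $T$-bundles, and the lemma that two $B$-reductions of the same $G$-bundle have $T$-degrees congruent modulo the coroot lattice --- the last being precisely the assertion you dispose of in one clause. Each of these is true, but each is a genuine input that must be supplied.

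Second, your treatment of injectivity is both incomplete and harder than necessary. You establish connectivity of the fibres only for simply-connected $G$ and defer the rest to a d\'evissage through $G_\s$ and $G/G_\der$ that you do not carry out (and which is delicate when $\pi_1(G)$ has torsion or the isogeny $G_\s\to G_\der$ is inseparable). But once the uniformization theorem is in hand for semisimple $G$, the map $\bar c$ is automatically surjective, and then $c\circ\bar c=\mathrm{id}$ together with local constancy of $c$ already forces both maps to be bijections; no fibre-connectivity argument is needed. So the d\'evissage can be dropped entirely, and the one thing your approach must actually prove is the local constancy of the degree --- or, following the paper, one replaces all of this by the single statement $G^{X^*}\subset (LG)^0$.
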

\begin{ex} When $G=\GL_n$, the map $\Bun_G\to \pi_0(\Bun_G)\cong\pi_1(G)\cong \bZ$ is given by attaching a vector bundle on $X$ its Chern class.\end{ex}
\begin{proof}
The point is to show that $G^{X^*}$ is contained in the neutral connected component of $LG$, and then it follows $\pi_0(\Bun_G)\cong \pi_0(\Gr_{x})$ so we can use Theorem \ref{components}. We refer to \cite[(1.2)]{BLS} for the proof of the claim.
\end{proof}
\begin{rmk}(i) Note that $G^{X^*}$ itself may not be connected. In fact, it was proved in \cite{BLS} that $\pi_0(G^{X^*})\simeq H^1(X^*,\pi_1(G))$ (in the case $\cha\ k=0$). 

(ii) For the generalisation of this result to non-constant group schemes, we refer to \cite{He}.
\end{rmk}

\subsection{Line bundles and conformal blocks.}
The uniformization theorem is in particular useful for the study of line bundles on $\Bun_G$. Here is the main theorem. 
\begin{thm}\label{uniformize line}
Assume that $G$ is simply-connected. Then the pullback induces an isomorphism $u_x^*:\Pic^e(\Bun_G)\to \Pic^e(\Gr_x)$.
\end{thm}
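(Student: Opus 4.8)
The plan is to exploit the uniformization theorem, which presents $\Bun_G$ as the quotient stack $[G^{X^*}\backslash\Gr_x]$ with $u_x:\Gr_x\to\Bun_G$ a $G^{X^*}$-torsor, étale locally trivial because $G$ is simply-connected (so $p\nmid|\pi_1(G)|$ trivially). Under this description a rigidified line bundle on $\Bun_G$ is precisely a $G^{X^*}$-equivariant rigidified line bundle on $\Gr_x$, and $u_x^*$ is exactly the functor that forgets the equivariant structure. Thus the theorem reduces to the assertion that forgetting the equivariance induces a bijection on isomorphism classes: every object of $\Pic^e(\Gr_x)$ admits a $G^{X^*}$-equivariant structure compatible with the rigidification, and such a structure is unique. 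I would organize the proof around these two points — uniqueness giving injectivity of $u_x^*$, existence giving surjectivity.

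For injectivity, suppose $\mL\in\Pic^e(\Bun_G)$ has $u_x^*\mL$ trivial as a rigidified bundle. Then $\mL$ corresponds to the trivial bundle $\mO_{\Gr_x}$ equipped with some equivariant structure. Since $\Gr_x$ is reduced, connected and ind-projective when $G$ is simply-connected (Theorem \ref{components} together with ind-projectivity), we have $\Gamma(\Gr_x,\mO^*)=k^*$, so after rigidification the equivariant structures on $\mO_{\Gr_x}$ form a torsor under the group of characters $\Hom(G^{X^*},\bG_m)$. Because $G$ is semisimple simply-connected we have $G=[G,G]$, hence $G^{X^*}=\Gamma(X^*,G)$ admits no nontrivial character, so the structure is the trivial one and $\mL$ is trivial on $\Bun_G$. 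The same vanishing of characters shows the equivariant structure on any fixed bundle is unique when it exists.

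For surjectivity, Theorem \ref{Pic of Gr} gives $\Pic^e(\Gr_x)=\bZ\cdot\mO(1)$, so it suffices to descend the ample generator $\mO(1)$. The cleanest conceptual route is via the central extension: $\mO(1)$ defines the extension $\widehat{LG}_x$ of $LG_x$ by $\bG_m$ of \S\ref{Kac-Moody}, and endowing $\mO(1)$ with a $G^{X^*}$-equivariant structure is equivalent to splitting this extension over the subgroup $G^{X^*}\subset LG_x$. I would produce the splitting from the residue theorem on the complete curve $X$: the cocycle governing $\widehat{LG}_x$ is built from the local residue pairing at $x$ (Remark \ref{2-cocycle}), while elements of $G^{X^*}$ are regular on all of $X^*$, so the associated meromorphic differentials have poles only at $x$; the sum-of-residues formula then forces the local contribution at $x$ to vanish, yielding the desired lift $G^{X^*}\to\widehat{LG}_x$ and hence a line bundle on $\Bun_G$ pulling back to $\mO(1)$.

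The main obstacle is exactly this last step: upgrading the residue-theorem splitting from $k$-points to a functorial, $R$-point-level trivialization of $\mO(1)$ restricted along the group ind-scheme $G^{X^*}$. A representation-theoretic shortcut (realizing $\mO(1)$ as a determinant-of-cohomology bundle for some $\rho$) does not suffice in general, since for $G=E_8$ every Dynkin index $d_\rho$ is divisible by $60$, so one cannot hit the generator this way. An alternative I would keep in reserve, better adapted to the machinery already built, is to transport the generator through the factorization picture of Lecture III: use $\Pic^e(\Gr_{\Ran}/\Ran)\cong\Pic^e(\Gr_x)$ (Theorem \ref{Rigid line}) to spread $\mO(1)$ into a genuinely global factorization line bundle over the Ran Grassmannian, which by its global nature carries the $G^{X^*}$-equivariance automatically and therefore descends to $\Bun_G$.
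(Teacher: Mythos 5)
Your reduction to equivariance is the right framing, and the injectivity half is essentially correct: it comes down to the vanishing of characters of $G^{X^*}$, which is a genuine theorem (Laszlo--Sorger) rather than an immediate consequence of $G=[G,G]$ --- for an ind-group scheme one must rule out characters at the level of $R$-points for all $R$, not just $k$-points --- but the statement you need is true and the structure of your argument is sound. You have also correctly diagnosed where the difficulty lies: surjectivity, i.e.\ descending the generator $\mO(1)$ along $u_x$, and you rightly observe that determinant line bundles cannot reach the generator for $E_8$.

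The gap is that neither of your two routes to surjectivity is actually carried out. Your primary route --- splitting the central extension $\widehat{LG}$ over $G^{X^*}$ via the residue theorem --- only works at the level of Lie algebras (and hence only in characteristic zero, and even there integrating a Lie-algebra splitting to a splitting of ind-group schemes over the infinite-dimensional $G^{X^*}$ is a serious problem, not a formality); you acknowledge this but do not resolve it. Your fallback route is indeed the one the paper takes, but the phrase ``by its global nature carries the $G^{X^*}$-equivariance automatically'' is exactly the assertion that requires proof, and it conceals the two nontrivial inputs. First, one needs the ad\`elic uniformization $[\underline G_{\Rat}\backslash \Gr_{\Rat}]\cong \Bun_G$ (Theorem \ref{ran uniformization of BunG}) together with the Beilinson--Drinfeld theorem that every line bundle on $\underline G_{\Rat}\otimes R$ is pulled back from $\Spec R$: it is this ``Pic-triviality'' of the structure group of the torsor $u_{\Rat}$, not any formal globality, that makes equivariant structures exist and be unique, so that $\Pic^e(\Bun_G)\simeq\Pic^e(\Gr_{\Rat})$. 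Second, one must compare $\Gr_{\Rat}$ with $\Gr_{\Ran}$ (Theorem \ref{Ran=Rat}(2)), which rests on the contractibility of the Ran space (Proposition \ref{Ran pt}); only then does Theorem \ref{Rigid line} transport the generator from $\Gr_{\Ran}$ back to $\Gr_x$. Without these two steps your argument establishes injectivity but not surjectivity, so the proof as written is incomplete.
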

In particular, if $G$ is simple, then $\Pic^e(\Bun_G)\cong\bZ$. The ample generator $\mO(1)$ of $\Gr_G$ descends to a generator of $\Pic^e(\Bun_G)$, still denoted by $\mO(1)$.

The injectivity of the map is easy. By the uniformization theorem, rigidified line bundles on $\Gr_G$ are the same as the $G^{X^*}$-equivariant line bundles on $\Gr_G$. Therefore, the corollary is equivalent to the statement that any character $G^{X^*}\to \bG_m$ is trivial. This last statement was proved in \cite[Corollary 5.2]{LS}. However, since the geometry of $G^{X^*}$ is still very complicated, it is not easy to deduce the surjectivity part of the theorem from the one point uniformization theorem. Theorem \ref{uniformize line} was proved in \cite{LS} for classical groups and $G_2$ by explicitly constructing the line bundle on $\Bun_G$ whose pullback is $\mO(1)$ (similar to the discussion in Remark \ref{generator}). Then the full theorem was proved by Sorger \cite{So0} when $\cha\ k=0$ (using some non-vanishing result of the space of conformal blocks) and by Faltings \cite[Theorem 17]{Fa} in general. 
In the next subsection, we will deduce this theorem from another uniformization theorem.

Now assume that $ k=\bC$ (or any algebraically closed field of characteristic zero). Let $\mL$ be a line bundle on $\Bun_G$. Global sections of $\mL$ are usually called non-abelian theta functions (because when $G=\bG_m$ they are classical theta functions). Let $\frakg^{X^*}=\frakg\otimes\mO_X(X^*)$.  This is the Lie algebra of $G^{X^*}$.
\begin{lem}
The Kac-Moody central extension (i.e. the Lie algebra of \eqref{central extension}) splits uniquely over $\frakg^{X^*}$ and therefore $\frakg^{X^*}$ embeds into the Kac-Moody algebra $\frakg_\Ga$ (cf. \S~\ref{Kac-Moody}) as a subalgebra.
\end{lem}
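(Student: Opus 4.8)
The plan is to produce an explicit Lie-algebra section of the central extension and then show it is forced. Recall from Remark \ref{2-cocycle} that, since $\cha\ k=0$, the Lie algebra $\widehat{L\frakg}$ is $\frakg\otimes F_x\oplus k\mathbf{1}$ with bracket
\[
[X\otimes f,Y\otimes g]=[X,Y]\otimes fg-(\Res_{t=0}f\,dg)(X,Y)\,\mathbf{1}.
\]
The subalgebra $\frakg^{X^*}=\frakg\otimes\mO_X(X^*)$ maps into $\frakg\otimes F_x$ by restricting a section on $X^*$ to the punctured disc $D_x^*$. I would first record that the underlying map $\mO_X(X^*)\hookrightarrow F_x$ is injective, since $\mO_X(X^*)\subset k(X)$ and the function field embeds into the complete local field $F_x$; hence $\frakg^{X^*}\hookrightarrow\frakg\otimes F_x$, and it suffices to produce a section with vanishing central component.

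The key step is that the cocycle vanishes identically on $\frakg^{X^*}$. Given $f,g\in\mO_X(X^*)$, both are regular on $X^*=X-\{x\}$, so $f\,dg$ is a rational $1$-form on the complete smooth curve $X$ whose only possible pole is at $x$. The residue theorem gives $\sum_{y\in X}\Res_y(f\,dg)=0$, and every term with $y\neq x$ vanishes because $f\,dg$ is regular there; therefore $\Res_{t=0}f\,dg=\Res_x(f\,dg)=0$. Consequently, for $X\otimes f,Y\otimes g\in\frakg^{X^*}$ the central term disappears, and the tautological lift $s:\frakg^{X^*}\to\widehat{L\frakg}$, $X\otimes f\mapsto(X\otimes f,0)$, satisfies $[s(a),s(b)]=s([a,b])$. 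Thus $s$ is a Lie-algebra splitting over $\frakg^{X^*}$; since its composite with the projection to $\frakg\otimes F_x$ is the injection above, $s$ is injective and realizes $\frakg^{X^*}$ as a subalgebra of $\widehat{L\frakg}$, hence of $\frakg_\Ga$.

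For uniqueness, two such splittings differ by a Lie-algebra homomorphism $\chi:\frakg^{X^*}\to k$, which must annihilate $[\frakg^{X^*},\frakg^{X^*}]$. As $G$ is simply-connected, $\frakg$ is semisimple, so $[\frakg,\frakg]=\frakg$; writing $A=\mO_X(X^*)$ and using $1\in A$ one gets $[\frakg\otimes A,\frakg\otimes A]\supseteq[\frakg,\frakg]\otimes A=\frakg\otimes A$, so $\frakg^{X^*}$ is perfect and admits no nonzero character. Hence $\chi=0$ and the splitting is unique.

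The only substantive input is the vanishing $\Res_x(f\,dg)=0$, which is precisely the residue theorem on the complete curve $X$; the remaining points (injectivity of restriction, perfectness of $\frakg\otimes A$) are formal. I expect the sole obstacle to be book-keeping: one must check that under the identification of functions in $\mO_X(X^*)$ with their Laurent expansions at $x$, the symbol $\Res_{t=0}$ appearing in Remark \ref{2-cocycle} is literally the geometric residue $\Res_x$ on $X$, so that the residue theorem applies verbatim.
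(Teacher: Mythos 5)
Your proof is correct and follows exactly the paper's own argument: the residue theorem on the complete curve $X$ forces the $2$-cocycle $\Res_{t=0} f\,dg$ of Remark \ref{2-cocycle} to vanish on $\frakg^{X^*}$, and uniqueness follows from perfectness of $\frakg^{X^*}$ via $[\frakg,\frakg]=\frakg$. The extra details you supply (injectivity of $\mO_X(X^*)\hookrightarrow F_x$, the explicit verification that $\frakg\otimes A$ is perfect) are accurate elaborations of the paper's terse proof.
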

\begin{proof} By the explicit formula in Remark \ref{2-cocycle} and the residue theorem, the central extension splits over $\frakg^{X^*}$. Since $[\frakg,\frakg]=\frakg$, such splitting is unique.
\end{proof}

In physics, the integrable representation $L_m(\frakg_\Ga)$  is the space of vacuum states of the WZW conformal field theory, and $\Hom_{\frakg^{X^*}}(L_m(\frakg_\Ga),k)$ is called the space of conformal blocks. Combining the above uniformization theorem and the Borel-Weil type Theorem \ref{Borel-Weil}, we obtain
\begin{cor}Assume that $G$ is simple and simply-connected.
The space of level $m$ non-abelian theta functions on $\Bun_G$ is isomorphic to the dual of the conformal block. I.e.
\[\Gamma(\Bun_G,\mO(m))^*\simeq L_m(\frakg_\Ga)/\frakg^{X^*} L_m(\frakg_\Ga),\]
where $\mO(m)=\mO(1)^{\otimes m}$.
\end{cor}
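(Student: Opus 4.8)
The plan is to turn the computation of global sections on $\Bun_G$ into a Lie-algebra (co)invariants computation by combining the one-point uniformization theorem with the Borel--Weil type Theorem~\ref{Borel-Weil}. First I would use the uniformization theorem of \S~\ref{one point uniformization}: for $G$ simple and simply-connected it identifies $\Bun_G$ with $[G^{X^*}\backslash\Gr_x]$ and exhibits $u_x:\Gr_x\to\Bun_G$ as a $G^{X^*}$-torsor, étale locally trivial. By Theorem~\ref{uniformize line} the line bundle $\mO(m)$ on $\Bun_G$ pulls back to $\mO(m)$ on $\Gr_x$, so descent of sections along this torsor yields a canonical isomorphism
\[\Gamma(\Bun_G,\mO(m))\cong \Gamma(\Gr_x,\mO(m))^{G^{X^*}},\]
the space of $G^{X^*}$-invariant global sections.

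Next I would bring in Borel--Weil. Theorem~\ref{Borel-Weil} gives $\Gamma(\Gr_x,\mO(m))^*\cong L_m(\frakg_\Ga)$; writing $S=\Gamma(\Gr_x,\mO(m))$ this says that $S$ is the restricted dual of the integrable module $L_m(\frakg_\Ga)$, carrying the contragredient action of $\frakg_\Ga$ and hence of the subalgebra $\frakg^{X^*}\hookrightarrow\frakg_\Ga$. Since $\cha k=0$ and the $\frakg^{X^*}$-action on $L_m(\frakg_\Ga)$ integrates to an action of $G^{X^*}$, invariance of a section under $G^{X^*}$ is equivalent to annihilation by $\frakg^{X^*}$; the contribution of the finitely many components of $G^{X^*}$ is handled by the fact, recalled in \S~\ref{one point uniformization}, that $G^{X^*}$ lies in the neutral component of the loop group. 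Thus $\Gamma(\Bun_G,\mO(m))\cong S^{\frakg^{X^*}}$.

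It remains to dualize. The elementary identity $(V^*)^{\frakg^{X^*}}=\on{Ann}(\frakg^{X^*}V)=(V/\frakg^{X^*}V)^*$, applied to $V=L_m(\frakg_\Ga)$, gives $S^{\frakg^{X^*}}\cong\bigl(L_m(\frakg_\Ga)/\frakg^{X^*}L_m(\frakg_\Ga)\bigr)^*$, whence
\[\Gamma(\Bun_G,\mO(m))^*\cong L_m(\frakg_\Ga)/\frakg^{X^*}L_m(\frakg_\Ga)\]
once one knows the coinvariants are finite-dimensional, so that the canonical map to the double dual is an isomorphism. \textbf{The main obstacle} is the first step together with the group/Lie-algebra comparison: because $\Gr_x$ is an ind-scheme and $G^{X^*}$ an ind-group, identifying sections on the quotient stack with $G^{X^*}$-invariant sections requires genuine care with étale descent of the rigidified line bundle and compatibility with its equivariant structure, and the integration argument passing from $G^{X^*}$-invariance to $\frakg^{X^*}$-invariance (including the $\pi_0(G^{X^*})$ issue) is equally delicate. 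Finite-dimensionality of the space of coinvariants—the finiteness of conformal blocks—is standard but indispensable for the final dualization.
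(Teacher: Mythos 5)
Your argument is correct and is exactly the route the paper takes: the corollary is obtained by combining the one-point uniformization theorem (sections on $\Bun_G$ = $G^{X^*}$-invariant sections on $\Gr_x$) with the Borel--Weil type Theorem~\ref{Borel-Weil} and then dualizing invariants into coinvariants. The only small point worth tightening is the $\pi_0$ issue: for $G$ simply-connected one has $\pi_0(G^{X^*})\simeq H^1(X^*,\pi_1(G))=0$, so $G^{X^*}$ is actually connected and the passage from $\frakg^{X^*}$-invariance to $G^{X^*}$-invariance needs no separate component argument.
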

The dimension of the space of conformal blocks is calculated by the so-called Verlinde formula. As a consequence, one also obtains the dimension of the space of non-abelian theta functions on $\Bun_G$. For example, we have
\begin{cor}Let $G=\SL_n$. Then
\[\dim \Gamma(\Bun_G,\mO(m))= (\frac{n}{n+m})^g \sum_{\substack{S\subset \{1,\ldots,m+n\}\\|S|=n}}\prod_{\substack{s\in S\\ t\not\in S}}\left|2\sin\pi\frac{s-t}{m+n}\right|^{g-1}.\]
\end{cor}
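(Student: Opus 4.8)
The plan is to deduce the formula from the preceding corollary together with the \emph{factorisation} of conformal blocks under degeneration of the curve, and then to diagonalise the resulting fusion recursion by the modular $S$-matrix, specialising the $\widehat{\fraks\frakl_n}$ data at the very end. First I would reduce the problem to a dimension count for conformal blocks: writing $V_g(X):=\Gamma(\Bun_G,\mO(m))^*\cong L_m(\frakg_\Ga)/\frakg^{X^*}L_m(\frakg_\Ga)$, it suffices to compute $\dim V_g(X)$. I would first record that this dimension is independent of the complex structure on $X$, since the spaces $V_g(X)$ are the fibres of (the dual of) a coherent sheaf over the moduli of curves whose formation commutes with base change; as $G$ is simply-connected, $\Bun_G$ is connected and $\mO(m)$ is the fixed ample generator from Theorem~\ref{uniformize line}, so the dimension is locally constant, hence constant in families.

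The crucial geometric input is the factorisation of conformal blocks under a nodal degeneration. Letting $X$ acquire a single non-separating node and passing to the normalisation $\tilde X$ (a curve of genus $g-1$ with two marked points), the sewing property asserts a canonical isomorphism $V_g(X)\cong\bigoplus_{\lambda\in P_m}V_{g-1}(\tilde X;\lambda,\lambda^*)$, the sum running over the finite set $P_m$ of level-$m$ dominant weights of $\widehat{\fraks\frakl_n}$, with $\lambda^*$ the dual weight. This is precisely the structure governed by the factorisation property of Beilinson-Drinfeld Grassmannians developed in Lecture~\ref{Lecture III}: one degenerates the curve and the Ran-space factorisation identifies the boundary contribution as a sum over intermediate states labelled by $P_m$. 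Iterating this over a pair-of-pants decomposition, the assignment $(g;\vec\lambda)\mapsto\dim V_g(X;\vec\lambda)$ defines a two-dimensional topological field theory whose genus-zero three-point numbers are the fusion coefficients $N_{\lambda\mu}^{\nu}$.

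With the TQFT structure in hand I would invoke Verlinde's theorem: the commutative fusion ring is semisimple and simultaneously diagonalised by the unitary $S$-matrix, yielding $\dim V_g(X)=\sum_{\lambda\in P_m}(S_{0\lambda})^{2-2g}$. It then remains to insert the explicit $\widehat{\fraks\frakl_n}$ data. Setting $\kappa=m+n$, the vacuum row of the $S$-matrix is $S_{0\lambda}=(\sqrt n\,\kappa^{(n-1)/2})^{-1}\prod_{\alpha>0}2\sin(\pi(\lambda+\rho,\alpha)/\kappa)$. Encoding each $\lambda\in P_m$ by an $n$-element subset $S\subset\{1,\ldots,m+n\}$ (the shifted labels of $\lambda+\rho$ taken mod $\kappa$) turns the product over positive roots into a product of sines indexed by pairs drawn from $S$, and a standard identity (the product of $2\sin\pi\frac{s-t}{\kappa}$ over all pairs of residues mod $\kappa$ depends only on $\kappa$) converts this into the cross-product $\prod_{s\in S,\,t\notin S}|2\sin\pi\frac{s-t}{\kappa}|$ of the statement. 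Finally the normalisation constants $\sqrt n$ and $\kappa^{(n-1)/2}$, together with the passage from weights to subsets, which overcounts by the factor $(m+n)/n$ coming from the centre $\bZ/n$, assemble into the prefactor $(n/(n+m))^g$, giving the asserted formula.

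The hard part will be the factorisation step. Proving the sewing isomorphism $V_g(X)\cong\bigoplus_\lambda V_{g-1}(\tilde X;\lambda,\lambda^*)$ rigorously requires the full theory of conformal blocks in families over the Deligne-Mumford compactification and the compatibility of the Beilinson-Drinfeld factorisation with degenerations of $X$ into nodal curves (the theorems of Tsuchiya-Ueno-Yamada, reworked algebro-geometrically by Beauville-Laszlo, Faltings and Sorger); establishing deformation-invariance of the dimension sits at the same level of difficulty. By contrast, once factorisation and deformation-invariance are granted, the Verlinde diagonalisation and the $\fraks\frakl_n$ specialisation are purely representation-theoretic and combinatorial, and I would expect them to be routine.
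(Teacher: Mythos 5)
Your outline is correct and is essentially the argument the paper relies on: the preceding corollary identifies $\Gamma(\Bun_G,\mO(m))^*$ with the space of conformal blocks, and the stated formula is then the Verlinde formula for $\widehat{\fraks\frakl}_n$ at level $m$, which the paper simply quotes from Beauville--Laszlo (Corollary 8.6 of \cite{BL}) --- whose proof proceeds exactly by the deformation-invariance, factorisation/sewing and $S$-matrix diagonalisation you describe. One small imprecision: the overcounting factor $(m+n)/n$ in passing from level-$m$ weights to arbitrary $n$-subsets of $\bZ/(m+n)$ comes from the translation action of $\bZ/(m+n)$ on subsets together with the translation-invariance of the summand (so that summing over all subsets equals $\tfrac{m+n}{n}$ times the sum over subsets containing a fixed element), rather than from the centre $\bZ/n$; the factor itself, and hence the prefactor $(n/(n+m))^g$, is right.
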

In particular, the right hand side is a positive integer! See \cite[Corollary 8.6]{BL}.
\begin{cor}Assume that $G$ is of $A,D,E$ type, then $\dim \Ga(\Bun_G,\mO(1))=|Z(G)|^g$, where $Z(G)$ is the center of $G$, and $g$ is the genus of $X$.
\end{cor}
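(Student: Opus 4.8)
The plan is to specialize the Verlinde formula to level one and then carry out a purely representation-theoretic computation of the level one modular $S$-matrix. By the previous corollary, $\dim\Ga(\Bun_G,\mO(1))$ equals the dimension of the space of level one conformal blocks for $\frakg_\Ga$ on the genus $g$ curve $X$, i.e.\ the dimension of the coinvariants $L_1(\frakg_\Ga)/\frakg^{X^*}L_1(\frakg_\Ga)$. This dimension is computed by the Verlinde formula, which I take as known (as in the remark above): writing $P_1$ for the finite set of level one dominant weights of $\frakg_\Ga$ and $(S_{\la\mu})_{\la,\mu\in P_1}$ for the modular $S$-matrix of the level one representations, one has
\[
\dim\Ga(\Bun_G,\mO(1))=\sum_{\la\in P_1}\bigl(S_{0\la}\bigr)^{2-2g},
\]
where $0\in P_1$ denotes the vacuum weight $\La_0$. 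Thus everything reduces to computing the first row $(S_{0\la})_{\la\in P_1}$ of the level one $S$-matrix.

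The key point, and the place where the simply-laced hypothesis enters decisively, is that for $A$, $D$, $E$ type the level one theory is as simple as possible. First, the level one dominant weights are precisely the affine fundamental weights $\La_i$ attached to the nodes $i$ of the affine Dynkin diagram with Coxeter mark $a_i=1$; these nodes form a single orbit under the group of special diagram automorphisms, which is isomorphic to $P/Q$ (here $P$ and $Q$ are the weight and root lattices of $\frakg$) and acts on them simply transitively. This gives a canonical bijection $P_1\cong P/Q$, and since $G$ is simply-connected we have $|P/Q|=|Z(G)|$, so $|P_1|=|Z(G)|$. Second, every level one module is a simple current: the fusion product makes $P_1$ into a finite abelian group isomorphic to $P/Q$, and each level one weight therefore has quantum dimension $S_{0\la}/S_{00}=1$. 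Hence $S_{0\la}=S_{00}$ is independent of $\la$, and the unitarity relation $\sum_{\la\in P_1}S_{0\la}^2=1$ forces
\[
S_{0\la}=S_{00}=|Z(G)|^{-1/2}\qquad\text{for every }\la\in P_1.
\]

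Substituting into the Verlinde formula then gives
\[
\dim\Ga(\Bun_G,\mO(1))=\sum_{\la\in P_1}\bigl(|Z(G)|^{-1/2}\bigr)^{2-2g}=|Z(G)|\cdot|Z(G)|^{g-1}=|Z(G)|^g,
\]
as claimed. As a consistency check, in type $A_{n-1}$ (so $G=\SL_n$ and $|Z(G)|=n$) the explicit Verlinde formula displayed above, evaluated at $m=1$, has $n+1$ summands each equal to $(n+1)^{g-1}$ by the identity $\prod_{k=1}^{n}2\sin\tfrac{\pi k}{n+1}=n+1$, and collapses to $\bigl(\tfrac{n}{n+1}\bigr)^g(n+1)^g=n^g$.

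I expect the main obstacle to be the level one $S$-matrix computation of the second paragraph, namely the claim that all level one modules are simple currents of quantum dimension one. This is genuinely special to the simply-laced case: in non-simply-laced types the fundamental weight attached to a short root has quantum dimension $>1$ and the clean conclusion fails. Concretely it can be verified by realizing the level one normalized characters of $\frakg_\Ga$ as the lattice theta functions $\Theta_{Q+\la}/\eta^{\dim\frakh}$ indexed by cosets $\la\in P/Q$, and reading off their modular $S$-transformation, which is the normalized discrete Fourier transform on the finite abelian group $P/Q$; its $(0,\la)$ entries are visibly all equal to $|P/Q|^{-1/2}$. The other ingredient, the Verlinde formula itself for the coinvariants of $L_1(\frakg_\Ga)$, is invoked as a black box.
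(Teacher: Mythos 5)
Your proposal is correct and follows the route the paper intends: the corollary is stated as a direct consequence of the Verlinde formula, and your specialization to level one via the simple-current structure of the level-one modules in the simply-laced case (all $S_{0\la}$ equal to $|P/Q|^{-1/2}=|Z(G)|^{-1/2}$) is the standard way to extract $|Z(G)|^g$ from it. The consistency check against the displayed $\SL_n$ formula at $m=1$ is also carried out correctly.
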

In particular, if $G=E_8$, there is a unique divisor on $\Bun_G$ that defines $\mO(1)$. It would be very interesting to find this divisor geometrically (this question was asked by Faltings, see \cite{Fa2}).

For further discussions and applications of the one point uniformization theorem, we refer to \cite{BL, LS, BLS, So0, So, He}. See also \cite{KNR, KN}.

\subsection{Ad\`elic uniformization.}
We have seen some important applications of the uniformization of $\Bun_{\underline G}$ by $\Gr_x$. However, in practice there are several limitations of this uniformization theorem. First of all it only works for semisimple group. More seriously, the geometry of the fibers of $u_x$, which 
are $\underline G^{X^*}$-torsors, are still complicated, so some questions about $\Bun_{\underline G}$ cannot be easily accessible by studying $\Gr_x$ (e.g. Theorem \ref{uniformize line}).
In this subsection, we introduce another uniformization of $\Bun_{\underline G}$ to overcome of these difficulties.

Recall the the following result of A. Weil. Assume that $k$ is algebraically closed and let $X$ be a smooth connected projective curve over $k$. Let $|X|$ denote the set of closed points. Let $\bO=\prod_{x\in |X|} \mO_x$ be the ring of integral ad\`eles and $\bA=\prod' F_x$ be the ring of ad\`eles, which is the \emph{restricted} product of $F_x$ over all closed points $x\in |X|$: an element in $\bA$ is a collection of $(f_x\in F_x)_{x\in |X|}$ such that $f_x\in \mO_x$ for all but finitely many $x$. Let $\eta$ denote the generic point of $X$, and $F_\eta$ the fractional field of $X$. Note that the natural embedding $F_\eta\subset F_x$ defines a diagonal embedding $F_\eta\subset \bA$.
Let $\underline G$ be a fiberwise connected smooth affine group scheme over $X$, and write $G=\underline G\otimes F_\eta$. 
Then
\begin{lem} There is a natural bijection
\begin{equation}\label{Weil}
\Bun_{\underline G}(k)\simeq G(F_\eta)\backslash G(\bA)/\underline G(\bO),
\end{equation}
\end{lem}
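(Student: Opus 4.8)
The plan is to construct the bijection explicitly in both directions and then check the two maps are mutually inverse. First I would produce the forward map, sending a $\underline G$-torsor $\mE$ on $X$ to a double coset. Since $k$ is algebraically closed, $F_\eta$ has cohomological dimension $\le 1$ and is a $C_1$-field, so every $\underline G$-torsor becomes trivial over the generic point $\Spec F_\eta$ (for reductive generic fibre this is Steinberg's theorem; in general one filters by the unipotent radical). Spreading out a generic trivialisation $\beta$ over a dense open $U\subset X$, I set $S=|X|\setminus U$, a finite set of closed points. At each closed point $x$ the restriction $\mE|_{D_x}$ is trivial, since its special fibre is a torsor under $\underline G\otimes k(x)$ over $\Spec k$, hence trivial as $k=\bar k$, and the argument of Lemma \ref{et loc triv} lifts this to a trivialisation $\alpha_x$ over $D_x$. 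Comparing $\beta$ and $\alpha_x$ over each punctured disc $D^*_x=\Spec F_x$ yields $g_x\in \underline G(F_x)=G(F_x)$; for $x\in U$ one may take $\alpha_x=\beta|_{D_x}$, so $g_x\in\underline G(\mO_x)$ for all $x\notin S$ and $(g_x)_x$ defines a point of $G(\bA)$. A direct check shows that replacing $\beta$ by $\gamma\beta$ ($\gamma\in G(F_\eta)$) and each $\alpha_x$ by $\alpha_x h_x$ ($h_x\in\underline G(\mO_x)$) alters $(g_x)$ by left multiplication by the diagonal $G(F_\eta)$ and right multiplication by $\underline G(\bO)$, so the class of $(g_x)$ in $G(F_\eta)\backslash G(\bA)/\underline G(\bO)$ depends only on the isomorphism class of $\mE$.

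For the inverse map I would glue. Given $(g_x)\in G(\bA)$, let $S$ be the finite set where $g_x\notin\underline G(\mO_x)$ and put $U=X\setminus S$. Starting from the trivial torsor on $U$ and on each disc $D_x$ with $x\in S$, I glue $\mE^0|_{D_x^*}$ to $\mE^0|_{U}|_{D_x^*}$ along $g_x$. Because this modifies the trivial torsor only at the finitely many points of $S$, the multi-point Beauville--Laszlo gluing (Theorem \ref{descent lemma}, iterated as in Theorem \ref{glob aff} and Corollary \ref{global interpretation of loop}) produces a $\underline G$-torsor on $X$ whose isomorphism class is independent of the choices. Unwinding the two constructions then shows they are mutually inverse: the adèle attached to a glued torsor recovers the original class, and reconstructing a torsor from its attached adèle returns the original torsor, both up to the ambiguities quotiented out above.

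The main obstacle is the existence of the generic trivialisation, i.e.\ the vanishing $\on{H}^1(F_\eta,\underline G)=0$; this is what forces the restricted-product (finiteness) condition and makes the gluing a finite operation, and handling a non-reductive generic fibre requires the filtration by the unipotent radical together with some care in positive characteristic. The second delicate point is the descent in the inverse direction: one must verify that gluing an open part of $X$ to the formal discs along the punctured discs genuinely yields a torsor on all of $X$, which is exactly the global form of Beauville--Laszlo already invoked in the proof of Theorem \ref{glob aff}.
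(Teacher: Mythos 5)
Your proposal is correct and follows essentially the same route as the paper: trivialise generically via Steinberg's theorem, spread out over a dense open $U$, trivialise over each disc $D_x$, compare the two trivialisations over $D_x^*$ to produce an ad\`ele, and invert by Beauville--Laszlo gluing along the finitely many bad points. The only additions are welcome elaborations (why $\mE|_{D_x}$ is trivial, and the caveat about non-reductive generic fibres), not a different argument.
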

\begin{proof} Let $\mE$ be a $\underline G$-torsor on $X$. Since $F_\eta$ is a field of cohomological dimension one, the $G$-torsor $\mE\otimes F_\eta$ is trivial by Steinberg's theorem. We choose such a trivialisation $\beta_\eta: \mE\otimes F_\eta\simeq \mE^0\otimes F_\eta$. It extends to a trivialisation over some open subset $U\subset X$.
On the other hand, for every $x\in |X|$, $\mE|_{D_x}$ is also trivial and we fix such a trivialisation $\beta_x: \mE|_{D_x}\simeq \mE^0|_{D_x}$. Then $(\beta|_{D_x^*})\circ(\beta_x^{-1}|_{D_x^*})$ is an automorphism of the trivial $\underline G$-torsor on $D_x^*$ and therefore defines an element $g_x\in G(F_x)$. Note that for $x\in |U|$, $g_x\in G(\mO_x)$ and therefore, from the triple $(\mE,\beta_\eta,\{\beta_x\}_{x\in |X|})$ we construct an element $(g_x)\in G(\bA)$. Different choices of $\beta_\eta$ and $\beta_x$ will modify this element by left and right multiplication by elements from $G(F_\eta)$ and from $G(\bO)$. Therefore,
$\mE$ gives rise to a well-defined element in $G(F_\eta)\backslash G(\bA)/\underline G(\bO)$. This defines a map $\Bun_{\underline G}(k)\to G(F_\eta)\backslash G(\bA)/\underline G(\bO)$.
Conversely, given an element $g=(g_x)\in G(\bA)$, there exists an open subset $U\subset X$ such that $g_x\in G(\mO_x)$ for $x\in |U|$. Then
one can construct $\mE$ on $X$ by gluing the trivial torsor on $U$ and on $D_x, x\in |X|\setminus |U|$ using $g_x\in G(F_x)=\Aut(\mE^0|_{D_x^*})$ as transition maps. This construction gives the inverse map. 
\end{proof}

We will describe an algebro-geometric version of \eqref{Weil}, which will be called the ad\`elic uniformization of $\Bun_{\underline G}$. The first question is to endow  the set $G(\bA)/\underline G(\bO)$ with an algebro-geometric structure. In fact, there are two natural solutions to this question.

The first solution uses the Ran Grassmannian $\Gr_{\Ran}$ (Definition \ref{dfn Ran Grass}).  While an element in $G(\bA)/\underline G(\bO)$ can be interpreted as a pair $(\mE,\beta_\eta)$, where $\mE$ is a $\underline G$ on $X$ and $\beta_\eta$ is section of $\mE$ over $\eta$ (or equivalently over some open subset $U\subset X$), a $k$-point of $\Gr_{\Ran}$ also specify a set of points on $X$ at which $\beta_\eta$ may fail to be a regular map. One can get rid of the redundancy as follows (following \cite[\S~2]{Ga3}).

Recall that the Ran space $\Ran=\Ran(X)$ has a semigroup structure by union of points. There is an ``action'' map 
\[\on{act}: \Ran\times \Gr_{\Ran}\to \Gr_{\Ran}\] sending $(x, (y,\mE,\beta))\in \Ran(R)\times\Gr_{\Ran}(R)$ to $(x\cup y, \mE, \beta|_{X_R-\Ga_{x\cup y}})$. The ``quotient'' presheaf, denoted by $\Gr_{\Ran,\on{indep}}$, then is defined as the coequalizer of the diagram
\[\Ran\times\Gr_{\Ran}\substack{\on{act}\\  \xrightarrow{\hspace*{.6cm}} \\[-0.6ex]  \xrightarrow{\hspace*{.6cm}} \\ \pr_2 } \Gr_{\Ran}.\]  
Concretely, $\Gr_{\Ran,\on{indep}}(R)$ is the quotient of $\Gr_{\Ran}(R)$ by the equivalence relation: $(x,\mE,\beta)\sim (x',\mE',\beta')$ if there is an isomorphism $\al: \mE\cong \mE'$ of $\underline G$-torsors on $X_R$ compatible with $\beta$ and $\beta'$ when restricted to $X_R-\Ga_{x\cup x'}$.
It follows by definition that 
$$\Gr_{\Ran,\on{indep}}(k)= G(\bA)/\underline G(\bO).$$ However $\Gr_{\Ran,\on{indep}}$ is in general not a space unless $\underline G$ is the trivial group (in which case $\Gr_{\Ran,\on{indep}}$ is a point). Fortunately, the fpqc sheafification of $\Gr_{\Ran,\on{indep}}$ exists and can be described explicitly as follows.

We say an open subscheme $U\subset X_R$ very dense if it is fiberwise dense, i.e. the composition $U\to X_R\to \Spec R$ is faithfully flat. Note that finite intersections of very dense open subsets are very dense.
We define $\Gr_{\underline G,\Rat(X)}$ as the prestack that assigns every $k$-algebra $R$ the following groupoid: objects are triples $(\mE,U,\beta)$ where $\mE$ is a $\underline G$-torsor on $X_R$, $U\subset X_R$ is a very dense open subscheme, and $\beta$ is a trivialisation of $\mE$ on $U$; morphisms from $(\mE,U,\beta)$ to $(\mE',U',\beta')$ are isomorphisms $\alpha:\mE\cong \mE'$ such that $\beta'\alpha|_{U\cap U'}=\beta|_{U\cap U'}$. 

We write $\Gr_{\underline G,\Rat(X)}$ by $\Gr_{\Rat}$ for simplicity if $\underline G$ and $X$ are clear from the context. We need 
\begin{lem}\label{rat sheaf}
The above groupoid is discrete, and $\Gr_{\Rat}$ is in fact a sheaf.
\end{lem}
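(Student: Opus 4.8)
The plan is to prove discreteness first, and then deduce the sheaf property from it together with fpqc descent. For discreteness it suffices to show that every object $(\mE,U,\beta)$ of the groupoid $\Gr_{\Rat}(R)$ has no nontrivial automorphisms, since then between any two objects there is at most one isomorphism (two isomorphisms would differ by an automorphism of the source). An automorphism is an isomorphism $\al:\mE\cong\mE$ of $\underline G$-torsors on $X_R$ with $\beta\al|_{U}=\beta|_{U}$; as $\beta$ is an isomorphism over $U$, this forces $\al|_{U}=\id$. Thus $\al$ and the identity are two sections over $X_R$ of the automorphism group scheme $\Aut(\mE)=\mE\times^{\underline G}\underline G$ (conjugation action), which is affine, hence separated, over $X_R$ because $\underline G$ is. So the whole matter reduces to showing that two sections of a separated $X_R$-scheme which agree on the very dense open $U$ must agree.

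First I would isolate the following density statement as the technical heart: if $U\subset X_R$ is very dense then $U$ is scheme-theoretically dense, i.e. $\mO_{X_R}\to j_*\mO_U$ is injective, where $j:U\hookrightarrow X_R$. Granting this, two sections of a separated $X_R$-scheme agreeing on $U$ coincide, giving $\al=\id$ and hence discreteness. To prove the density statement I would use that $U$ is scheme-theoretically dense exactly when it contains all weakly associated points of $X_R$. Since $X$ is smooth over $k$, the map $X_R=X\otimes_k R\to\Spec R$ is flat with integral (reduced) fibers, and one checks, working Zariski-locally where $X$ is \'etale over $\bA^1$ so that $X_R$ is locally of the form $R[t]$ up to \'etale base change, that the weakly associated points of $X_R$ are ``horizontal'': each lies over a weakly associated point $\mathfrak p$ of $R$ and is the image of the generic point of the fiber $X_{\kappa(\mathfrak p)}$. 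Because $U\to\Spec R$ is faithfully flat its fibers $U_{\kappa(\mathfrak p)}$ are nonempty, hence dense in the integral curve $X_{\kappa(\mathfrak p)}$, so they contain its generic point; thus $U$ contains every weakly associated point of $X_R$, as required.

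The hard part will be this density lemma in the non-Noetherian generality allowed by arbitrary $k$-algebras $R$, where one must genuinely work with weakly associated points rather than associated primes. I expect either to invoke the treatment of weak association and its behaviour under the flat, finitely presented extension $R\to R[t]$ (cf. \cite{St}), or to reduce to finitely generated $R$ by writing $R=\varinjlim R_i$ and descending the finite-type data $(\mE,U,\al)$ to some $R_i$, exactly as in the proof of Theorem \ref{rep}.

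Finally, with discreteness in hand, I would verify that $\Gr_{\Rat}$ is an fpqc sheaf by checking descent along a faithfully flat $R\to R'$: an object $(\mE',U',\beta')$ over $R'$ equipped with a descent datum descends piece by piece. The torsor $\mE'$ descends to a $\underline G$-torsor $\mE$ on $X_R$ by fpqc descent for torsors under the affine group $\underline G$; the open $U'\subset X_{R'}$ descends to an open $U\subset X_R$ since $X_{R'}\to X_R$ is faithfully flat and opens descend, and very density of $U$ descends because faithful flatness of $U\to\Spec R$ is detected after the faithfully flat base change to $R'$; and the trivialisation $\beta'$, being an isomorphism of torsors over $U'$, descends to a trivialisation $\beta$ over $U$. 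Effectivity and uniqueness of the descended object then follow from the discreteness established above, while compatibility with finite products is immediate; together these show that $\Gr_{\Rat}$ is a sheaf of sets.
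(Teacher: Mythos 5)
Your discreteness argument is correct but takes a genuinely different route from the paper's. Both proofs come down to the fact that a very dense open $U\subset X_R$ detects sections of affine $X_R$-schemes: you phrase this as ``two sections of a separated $X_R$-scheme agreeing on a scheme-theoretically dense open coincide'' and establish scheme-theoretic density of $U$ by locating the weakly associated points of $X_R$ over those of $R$ (horizontal, generic in the fibres, hence contained in $U$ by surjectivity of $U\to\Spec R$). The paper instead reduces, by embedding $\mE$ into an affine space, to ``a regular function vanishing on $U$ is zero,'' and proves this via Lemma \ref{very dense vs Cartier}: locally on $\Spec R$, $U$ contains the complement of a relative Cartier divisor $V(f)$ with $f$ a non-zero-divisor, so $A\to A_f$ is injective. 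Your route is more intrinsic and avoids constructing divisors, at the cost of the non-Noetherian bookkeeping with weakly associated primes, which you rightly flag as the delicate point (the computation of $\on{WeakAss}$ of $R[t]$ for arbitrary $R$, or else the limit reduction with the attendant shrinking of $U$, is where the real work sits — exactly the same kind of care the paper's Cartier-divisor lemma requires). The paper's choice pays for itself later, since Lemma \ref{very dense vs Cartier} is reused for Lemma \ref{sheafify Ran} and for the computation of regular functions on $\underline G_{\Rat}$.

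There is, however, a gap in your verification of the sheaf property, at the step ``the open $U'\subset X_{R'}$ descends to an open $U\subset X_R$ since opens descend.'' Descending an open along $X_{R'}\to X_R$ requires $p_1^{-1}(U')=p_2^{-1}(U')$ inside $X_{R'\otimes_RR'}$, but a descent datum for $\Gr_{\Rat}$ is an isomorphism of triples in the groupoid $\Gr_{\Rat}(R'\otimes_RR')$, and such isomorphisms are only required to intertwine the trivialisations on the \emph{intersection} of the two opens — they do not identify the opens themselves. Already for the cover $k\to k\times k$ one can take $U'=(X\setminus\{x_1\})\sqcup(X\setminus\{x_2\})$ with the trivial torsor and identity trivialisation: this carries a descent datum but is not the preimage of any open of $X$; for an infinite product $\prod_{i\in I}k$ one cannot even repair this by intersecting the opens. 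So effectivity of descent genuinely requires constructing a new very dense open over $R$ together with a compatible trivialisation after descending the torsor (for instance by controlling denominators via Lemma \ref{very dense vs Cartier}, or by a limit argument), rather than transporting $U'$ itself. To be fair, the paper's own proof only establishes discreteness and is silent on this point, but as written your descent step is incorrect.
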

\begin{proof}We need to show that there is no non-trivial automorphism for any object $(\mE,U,\beta)$. I.e., let $R$ be a $k$-algebra and $\mE$ be a $\underline G$-torsor over $X_R$. Let $\al: \mE\to \mE$ be an automorphism. If $\al|_U=\id$, then $\al=\id$. By embedding $\mE$ into some affine space over $X_R$, it reduces to show that if $f$ is a regular function on $X_R$, such that its restriction to some very dense open subset is zero, then $f=0$. Note that this is not completely trivial since we do not assume that $R$ is reduced. It follows from the following lemma. 
\end{proof}

\begin{lem}\label{very dense vs Cartier}
Let $X$ be a quasi-projective variety, and $Z\subset X$ be a finite (possibly empty) set of closed points, regarded as a closed subscheme.
Let $U\subset X_R$ be an open dense subset such that: (i) $Z_R\subset U$; and (ii) $U\to \Spec R$ is surjective. Then there is an open cover of $\Spec R$ by $\Spec R_i$ and relative Cartier divisors $D_i\subset X_{R_i}$ such that $Z_R\cap D_i=\emptyset$, and $X_{R_i}\setminus D_i\subset U\otimes_RR_i$. 
\end{lem}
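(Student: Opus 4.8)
The plan is to separate the closed set $\overline{X}_R\setminus U$ from the finite set $Z_R$ by the zero scheme of a single section of a suitable ample line bundle, working Zariski--locally on $\Spec R$. The crucial input is that $k$ is algebraically closed: then $X$ is geometrically integral, so every fibre of $X_R\to\Spec R$, and of any projective compactification, is integral. On an integral scheme the zero locus of a nonzero section of a line bundle is automatically an effective Cartier divisor, so the delicate ``relative Cartier'' requirement will reduce to the condition that the chosen section does not vanish identically on any fibre --- which in turn will be guaranteed by its nonvanishing on $Z_R$ (or, when $Z=\emptyset$, on a point of $U$).

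Concretely, I would first reduce to the case that $R$ is of finite type over $k$, hence Noetherian, by the same filtered--colimit and spreading--out argument used in the proof of Theorem~\ref{rep}; this makes Serre's global generation and vanishing theorems available. Choosing a locally closed embedding $X\hookrightarrow\bP^N_k$ with closure $\overline{X}$, I note that $\overline{X}$ is integral and projective, and that $U$ is open in $\overline{X}_R$, so $Y:=\overline{X}_R\setminus U$ is closed and disjoint from $Z_R$. For $d\gg 0$ the twisted ideal sheaf $\mI_Y(d)$ is globally generated and its cohomology commutes with base change along $\Spec R$. Fixing a point $s_0\in\Spec R$, I would pick $x\in U_{s_0}$ (nonempty by surjectivity of $U\to\Spec R$) and, separating the finite reduced set $P=\{x\}\cup Z_{s_0}$ from $Y$ over the fibre $\overline{X}_{s_0}$, produce a section $f\in\Gamma(\overline{X}_R,\mI_Y(d))$ with $f(x)\neq 0$ and $f(z)\neq 0$ for all $z\in Z_{s_0}$; this uses that $\Gamma(\mI_Y(d))$ surjects onto the sum of the fibres of $\mO(d)$ at the points of $P$ for $d\gg0$, followed by lifting from $\overline{X}_{s_0}$ to $\overline{X}_R$.

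To conclude I would localise. Each $f(z)$ is a well--defined element of $R$ (after trivialising $\mO(d)$ at the $k$--point $z$) lying outside $\mathfrak p_{s_0}$, and the locus $W=\{s:\,f|_{\overline{X}_s}\neq 0\}$ is open and contains $s_0$. Covering $W\cap D\!\left(\prod_{z\in Z}f(z)\right)$ (or $W$ itself if $Z=\emptyset$) by basic opens $\Spec R_i$, the section $f$ is fibrewise nonzero on the integral fibres $\overline{X}_s$ over $R_i$, hence a regular section, so its zero scheme is a relative effective Cartier divisor on $\overline{X}_{R_i}$; I take $D_i$ to be its trace on $X_{R_i}$. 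Then $Z_R\cap D_i=\emptyset$ because $f$ is a unit on $Z_{R_i}$, and $X_{R_i}\setminus D_i=X_{R_i}\cap\{f\neq 0\}\subseteq\overline{X}_{R_i}\setminus Y=U\otimes_R R_i$ because $f$ vanishes on $Y$; letting $s_0$ vary gives the cover. The main obstacle is precisely the relative Cartier condition: this is where integrality of the fibres (hence the hypothesis $k=\overline{k}$) is essential, both to upgrade ``nonvanishing on $Z_R$'' to ``fibrewise regular'' and to build $f$ uniformly over a whole neighbourhood of $s_0$ rather than merely on a single fibre.
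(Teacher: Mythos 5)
Your argument is correct, and at its core it coincides with the paper's: both proofs come down to producing, Zariski--locally on $\Spec R$, a single equation that vanishes on the complement of $U$, is nonzero along $Z$, and does not vanish identically on any fibre, and then invoking the slicing criterion for flatness (\cite[Tag00ME]{St}) together with the geometric integrality of the fibres of $X_R\to\Spec R$ to conclude that its zero scheme is a relative effective Cartier divisor. The difference lies in how the equation is found. The paper reduces to $X=\Spec A$ affine and takes $f$ in the ideal $\fraka$ cutting out the complement of $U$, chosen so that its image in $A\otimes_kR/\frakp$ is nonzero and misses $Z$; you compactify $X$ inside $\bP^N$ and extract $f$ from $\Gamma(\overline{X}_R,\mI_Y(d))$ for $d\gg0$ by Serre's theorems, where $Y=\overline{X}_R\setminus U$. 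Your route has the merit of treating a general quasi-projective $X$ in one stroke (the paper's reduction to the affine case is left implicit), at the price of the compactification and the ample twist. One caution: since $Y$ with its reduced structure need not be flat over $R$, the step ``$\Gamma(\mI_Y(d))$ surjects onto the fibres at the points of $P$, then lift from $\overline{X}_{s_0}$'' is the fragile point of your write-up (cohomology and base change does not apply naively here, and the points of $P$ need not be closed in $\overline{X}_R$); it is cleaner to use only global generation of $\mI_Y(d)$ on the total space $\overline{X}_R$, which for each of the finitely many points of $P$ gives a global section nonvanishing there, and then take a generic $k$-linear combination, possible since $k$ is infinite. With that adjustment the proof is complete; the remaining steps (openness of the locus where $f$ is fibrewise nonzero, inverting the elements $f(z)\in R$, and restricting the divisor from $\overline{X}_{R_i}$ to $X_{R_i}$) are all sound.
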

We recall that on an affine space $X=\Spec A$, a Cartier divisor $D$ is by definition a closed subscheme defined by one equation $f=0$ for some $f\in A$ that is \emph{not} a zero divisor (cf. \cite[Tag01WQ]{St}). Then from the definition the localisation map $A\to A_f$ is injective and therefore any function that vanishes on $X\setminus D$ will vanish on the entire $X$.
\begin{proof}We reduce to the following situation: $X=\Spec A$ is affine, and $S=\Spec R$,  and $U=\Spec A\otimes_k R\setminus V(\fraka)$, where $\fraka$ is an ideal of $A\otimes_k R$, and $V(\fraka)\cap Z=\emptyset$. 
The subjectivity of $U\to S$ then is equivalent to the condition that $\on{Im}(\fraka\otimes_RK\to A\otimes_kK)\neq 0$, where $K=\on{Frac}(R/\frakp)$ for every prime ideal $\frakp$ of $R$. We fix such a $\frakp$ and are free to shrink $S$ around $x=V(\frakp)$.

Writing $R$ as the union of finitely generated $k$-algebras, and by shrinking $U$, we can assume that $R$ is finitely generated and that the image of $U\to \Spec R$ contains $x$.
Let $\bar{f}$ be such a non-zero element in $\on{Im}(\fraka\otimes_R R/\frakp\to A\otimes_k R/\frakp)$ such that $V(\bar{f})\cap Z_K=\emptyset$, and let $f\in \fraka$ be a lifting of $\bar f$. 
Then by \cite[Tag00ME]{St}, the multiplication map $f: A\otimes_k R_\frakp\to A\otimes_k R_\frakp$ is injective and the quotient is $R_\frakp$-flat. Since $R$ is noetherian, by shrinking $S$ if necessary around $x$, we can assume that $R\to R_\frakp$ is injective. Then $f:A\otimes_k R\to A\otimes_k R$ is injective. Again by shrinking $S$ if necessary, we can assume that $V(f)$ does not contain vertical divisors and $V(f)\cap Z=\emptyset$. Then $f: A\otimes_k R/\frakq\to A\otimes_k R/\frakq$ is injective for every prime ideal $\frakq$ in $R$. Therefore, $(A\otimes_k R/f)_\frakq$ is flat over every $R_\frakq$ and therefore $A\otimes_kR/f$ is flat over $R$. Then $D=V(f)$ 
is the sought-after relative Cartier divisor.
\end{proof}

Clearly, the map $(x,\mE,\beta)\mapsto (\mE,U=X_R-\Ga_x,\beta)$ realises $\Gr_{\Ran,\on{indep}}$ as a subpresheaf of $\Gr_{\on{Rat}}$.
\begin{lem}\label{sheafify Ran}
The space  $\Gr_{\on{Rat}}$ is the fpqc sheafification of $\Gr_{\Ran,\on{indep}}$.
\end{lem}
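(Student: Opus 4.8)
The plan is to show that the natural map $a:\Gr_{\Ran,\on{indep}}\to \Gr_{\Rat}$ is a local isomorphism for the fpqc topology; since $\Gr_{\Rat}$ is a sheaf by Lemma~\ref{rat sheaf}, this will identify it with the fpqc sheafification of $\Gr_{\Ran,\on{indep}}$. Concretely I will establish two properties of $a$: first, that $a$ is a monomorphism of presheaves, and second, that $a$ is a local epimorphism, i.e.\ every section of $\Gr_{\Rat}$ lifts along $a$ after an fpqc base change. Granting both, I invoke the formal fact that sheafification is exact: it preserves the monomorphism $a$ and turns the local epimorphism $a$ into an epimorphism, so the induced map $a^{\mathrm{sh}}:\Gr_{\Ran,\on{indep}}^{\mathrm{sh}}\to \Gr_{\Rat}$ is simultaneously a mono and an epi of sheaves, hence an isomorphism.

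The monomorphism property is exactly the content of the remark preceding the lemma. Two classes $[(x,\mE,\beta)]$ and $[(x',\mE',\beta')]$ in $\Gr_{\Ran,\on{indep}}(R)$ have the same image in $\Gr_{\Rat}(R)$ precisely when there is an isomorphism $\al:\mE\cong\mE'$ compatible with $\beta$ and $\beta'$ over $X_R-\Ga_{x\cup x'}$, which is literally the equivalence relation defining $\Gr_{\Ran,\on{indep}}$; so $a$ is injective on sections. I will also record the elementary fact that in $\Gr_{\Rat}$ one is free to shrink the very dense open without changing the class: for very dense $U'\subset U$ the identity of $\mE$ furnishes an isomorphism $(\mE,U,\beta)\cong(\mE,U',\beta|_{U'})$, and the groupoid is discrete by Lemma~\ref{rat sheaf}, so these are the same element.

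The heart of the proof is local surjectivity. Fix $(\mE,U,\beta)\in\Gr_{\Rat}(R)$. Applying Lemma~\ref{very dense vs Cartier} with $Z=\emptyset$ yields a Zariski cover $\Spec R=\bigcup_i\Spec R_i$ and relative effective Cartier divisors $D_i\subset X_{R_i}$ with $X_{R_i}-D_i\subset U_{R_i}$. Refining so that each $D_i$ has constant degree $d_i$, the divisor $D_i$ is finite locally free over $\Spec R_i$ and is classified by a map $\Spec R_i\to X^{(d_i)}$. I then pull back along the finite surjection $X^{d_i}\to X^{(d_i)}$; since $X$ is a smooth curve, $X^{d_i}$ and $X^{(d_i)}$ are smooth of the same dimension, so by miracle flatness this map is finite and faithfully flat. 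Hence $\Spec R_i':=\Spec R_i\times_{X^{(d_i)}}X^{d_i}$ is finite faithfully flat over $\Spec R_i$, and over it the universal divisor splits into graphs: $D_i\otimes_{R_i}R_i'=\Ga_{x_1}\cup\cdots\cup\Ga_{x_{d_i}}=\Ga_x$ for an $R_i'$-point $x\in X^{d_i}(R_i')\to\Ran(X)(R_i')$. The disjoint union $\coprod_i\Spec R_i'\to\Spec R$ is then an fpqc cover, and over each $\Spec R_i'$ one has $X_{R_i'}-\Ga_x=X_{R_i'}-D_i\otimes R_i'\subset U_{R_i'}$. Thus $(x,\mE_{R_i'},\beta|_{X_{R_i'}-\Ga_x})$ is a point of $\Gr_{\Ran}(R_i')$ (Definition~\ref{dfn Ran Grass}) whose class maps under $a$ to $(\mE_{R_i'},X_{R_i'}-\Ga_x,\beta|)$, which equals the restriction of $(\mE,U,\beta)$ by the shrinking remark. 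This exhibits the required fpqc-local lift.

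I expect the main obstacle to be precisely this last step: converting the relative Cartier divisor produced by Lemma~\ref{very dense vs Cartier} into an honest union of graphs of sections, fpqc-locally, while keeping the (possibly shrunk) complement inside $U$. The two inputs that resolve it are the faithful flatness of $X^{d}\to X^{(d)}$ via miracle flatness for smooth varieties, and the freedom to shrink the very dense open in $\Gr_{\Rat}$; once these are in place the remaining verifications are formal.
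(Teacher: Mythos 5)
Your proposal is correct and follows essentially the same route as the paper: reduce to fpqc-local surjectivity (the mono part being the remark that $\Gr_{\Ran,\on{indep}}$ is a subpresheaf of $\Gr_{\Rat}$), use Lemma~\ref{very dense vs Cartier} to replace the very dense open Zariski-locally by the complement of a relative Cartier divisor, and then split that divisor into a union of graphs after the finite faithfully flat base change $X^{d}\to X^{(d)}$. Your added justifications (miracle flatness for $X^d\to X^{(d)}$, the shrinking of $U$, and the formal sheafification argument) only flesh out steps the paper leaves implicit.
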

\begin{proof}It is enough to show that fpqc locally every point $\Spec R\to \Gr_{\on{Rat}}$ can be lifted to a point of  $\Gr_{\Ran,\on{indep}}$. Let $(\mE,U, \beta)$ represent an $R$-point of $\Gr_{\on{Rat}}$. By Lemma \ref{very dense vs Cartier}, Zariski locally on $\Spec R$, we can assume that $U=X_R\setminus D$ for some relative Cartier divisor $D\subset X_R$. Recall that the presheaf that assigns every $R$ the set of relative Cartier divisors on $X$ is represented by symmetric powers of $X$. On the other hand, the symmetrisation map $X^d\to X^{(d)}$ is finite faithfully flat. Therefore, after a finite faithfully flat base change of $R$, we can assume that $U$ is the complement of $\Ga_x$ for some $x:\Spec R\to X^d$.
\end{proof}

Next, we need an algebro-geometric structure on $G(F_\eta)$. Again there are two solutions. We will only introduce 
${\underline G}_{\Rat}:={\underline G}_{\Rat(X)}$, which is the one corresponding to $\Gr_{\on{Rat}}$. It
 assigns every $R$ the groupoid of pairs $(U,\al)$, where $U$ is very dense open on $X_R$ and $\al\in \underline G(U)$. Again, this is a sheaf of groups. The following lemma is easy.
\begin{lem}\label{generic dependence}
${\underline G}_{\Rat(X)}$ depends only on $\underline G\otimes F_\eta$.
We have ${\underline G}_{\Rat(X)}(k)=G(F_\eta)$.
\end{lem}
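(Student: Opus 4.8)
The plan is to treat the two assertions separately, after recording the basic reformulation that makes both transparent. Since $X_R$ itself is very dense and finite intersections of very dense opens are again very dense, the very dense open subschemes of $X_R$ form a filtered system under reverse inclusion, and two pairs $(U,\al)$, $(U',\al')$ become equal in the groupoid exactly when $\al$ and $\al'$ restrict to the same section over some very dense $W\subseteq U\cap U'$. As the excerpt already asserts that $\underline G_{\Rat(X)}$ is a sheaf of groups, its $R$-points are just this filtered colimit,
\[
\underline G_{\Rat(X)}(R)=\underrightarrow{\lim}_{U}\ \underline G(U),
\]
the colimit running over very dense opens $U\subseteq X_R$ with transition maps given by restriction. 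Everything below is a computation of this colimit.

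For the second assertion I would specialise to $R=k$. Then $X_k=X$, and since $X$ is an integral curve over the field $k$, the very dense opens are precisely the nonempty opens, equivalently the opens containing the generic point $\eta$. Because $\underline G$ is smooth and affine over the Noetherian scheme $X$, it is of finite presentation over $X$; and $\Spec F_\eta=\underleftarrow{\lim}_{U\ni\eta}U$ is the cofiltered limit of these opens along open immersions. The commutation of morphisms into a finitely presented $X$-scheme with cofiltered limits of schemes (EGA~$\mathrm{IV}_3$, 8.14.2) then yields
\[
\underline G_{\Rat(X)}(k)=\underrightarrow{\lim}_{U}\Hom_X(U,\underline G)=\Hom_X(\Spec F_\eta,\underline G)=G(F_\eta),
\]
the last equality because an $X$-morphism $\Spec F_\eta\to\underline G$ is the same datum as an $F_\eta$-point of the generic fibre $G=\underline G\otimes F_\eta$.

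For the first assertion I would show that an isomorphism $\phi\colon \underline G_1\otimes F_\eta\xrightarrow{\sim}\underline G_2\otimes F_\eta$ of group schemes over $F_\eta$ induces an isomorphism $(\underline G_1)_{\Rat(X)}\cong(\underline G_2)_{\Rat(X)}$, naturally in $R$. By spreading out (both $\underline G_i$ are of finite presentation over $X$), $\phi$ extends to an isomorphism of group schemes $\phi_V\colon \underline G_1|_V\xrightarrow{\sim}\underline G_2|_V$ over some nonempty open $V\subseteq X$, which I may shrink so that $\phi_V$ is an isomorphism with inverse also defined over $V$. Given $(U,\al)\in(\underline G_1)_{\Rat(X)}(R)$, I would send it to $\bigl(U\cap V_R,\ \phi_{V,R}(\al|_{U\cap V_R})\bigr)$, where $V_R:=V\times_k\Spec R$ is very dense because $V$ is a nonempty $k$-scheme, so $V_R\to\Spec R$ is faithfully flat, and $U\cap V_R$ is very dense as a finite intersection of such. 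This is well defined on the colimit (representatives agreeing on $W$ have images agreeing on $W\cap V_R$), is a homomorphism since $\phi_V$ respects the group laws, is natural in $R$, and has inverse obtained by running the same construction with $\phi_V^{-1}$. Hence $\underline G_{\Rat(X)}$ depends on $\underline G$ only through $G=\underline G\otimes F_\eta$.

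The computation is routine, and the only points requiring care are the two invocations of finite presentation — used both to identify the $k$-points via $\Spec F_\eta=\underleftarrow{\lim}U$ and to spread out $\phi$ — together with the repeated verification that the opens produced (namely $V_R$ and the various intersections) remain very dense. I expect the only mild friction to be the bookkeeping of the very-dense condition under base change and intersection, rather than any substantial geometric input.
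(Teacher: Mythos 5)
Your proof is correct, and since the paper offers no argument for this lemma (it is simply declared easy), your write-up supplies exactly the intended details: the identification of $\underline G_{\Rat(X)}(R)$ with the filtered colimit $\underrightarrow{\lim}_U\,\underline G(U)$ over very dense opens, the passage to $\Spec F_\eta=\underleftarrow{\lim}_{U\ni\eta}U$ via the finite-presentation limit theorem for the $k$-points, and spreading out an isomorphism of generic fibres to an open $V\subset X$ for the first assertion. The only points needing care — that $V_R$ and the intersections remain very dense, and that the transition maps in the cofiltered system are affine so the limit theorem applies — are ones you handle or that follow immediately from the paper's remark that finite intersections of very dense opens are very dense.
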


Now we can state an algebro-geometric version of \eqref{Weil}.
\begin{thm}\label{ran uniformization of BunG}
Assume that the generic fiber of $\underline G$ is reductive.
The morphism $u_{\Rat}:\Gr_{\Rat}\to\Bun_{\underline G}$ is a ${\underline G}_{\Rat}$-torsor and therefore induces an isomorphism 
$$[{\underline G}_{\Rat(X)}\backslash \Gr_{\Rat}]\cong \Bun_{\underline G}.$$ Precisely, for every $\Spec R\to \Bun_{\underline G}$, there is a faithfully flat map $\Spec R'\to \Spec R$ such that $\Spec R'\times_{\Bun_{\underline G}}\Gr_{\Rat}$ is ${\underline G}_{\Rat}$-equivariantly isomorphic to $\Spec R'\times {\underline G}_{\Rat}$. 
\end{thm}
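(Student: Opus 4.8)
The plan is to verify directly the two defining conditions of a $\underline G_{\Rat}$-torsor from \S\ref{Intro:torsor}: that $\underline G_{\Rat}$ acts on $\Gr_{\Rat}$ compatibly with $u_{\Rat}$, and that $u_{\Rat}$ is fpqc-locally trivial. The action is the evident one: given $(U',\alpha)\in\underline G_{\Rat}(R)$ and $(\mE,U,\beta)\in\Gr_{\Rat}(R)$, set
\[
(U',\alpha)\cdot(\mE,U,\beta)=\bigl(\mE,\;U\cap U',\;\alpha\circ\beta\bigr),
\]
where $\alpha$ is viewed as the left-multiplication automorphism of $\mE^0|_{U'}$ and the trivialisation $\alpha\circ\beta$ is taken over $U\cap U'$. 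Since a finite intersection of very dense opens is very dense (Lemma \ref{very dense vs Cartier}), this lands in $\Gr_{\Rat}(R)$, and it manifestly leaves the underlying torsor $\mE$ unchanged; hence $u_{\Rat}$ is $\underline G_{\Rat}$-equivariant for the trivial action on $\Bun_{\underline G}$. That this is the correct group is recorded in Lemma \ref{generic dependence}.

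Next I would show that $\underline G_{\Rat}$ acts simply transitively on the (nonempty) fibres of $u_{\Rat}$, so that $\Gr_{\Rat}\to\Bun_{\underline G}$ is a pseudo-torsor. Given two points $(\mE,U,\beta)$ and $(\mE,U'',\beta'')$ of $\Gr_{\Rat}(R)$ lying over the same $\mE$, the composite $\beta''\circ\beta^{-1}$ is an automorphism of $\mE^0$ over the very dense open $U\cap U''$, i.e.\ a section $\alpha\in\underline G(U\cap U'')$, and $(U\cap U'',\alpha)$ carries the first point to the second. Uniqueness, and hence freeness, is exactly the content of Lemma \ref{rat sheaf}: a section of $\underline G$ over a very dense open that restricts to the identity on a smaller very dense open is already the identity, which via Lemma \ref{very dense vs Cartier} reduces to the fact that a regular function vanishing away from a relative Cartier divisor vanishes identically.

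The heart of the proof is the remaining local triviality: for every $\mE\in\Bun_{\underline G}(R)$ there is a faithfully flat $R\to R'$ and a very dense open $U\subset X_{R'}$ on which $\mE_{R'}$ can be trivialised, equivalently, the fibre of $u_{\Rat}$ over $\mE$ becomes nonempty after an fpqc base change. Here I would use the reductivity of the generic fibre. Since reductivity is an open condition on $X$, the group $\underline G$ is reductive over a dense open $X^\circ\subset X$, and a very dense open of $X^\circ_{R'}$ is still very dense in $X_{R'}$, so it suffices to trivialise $\mE_{R'}$ over a very dense open of $X^\circ_{R'}$. Over $X^\circ$ the theorem of Drinfeld--Simpson \cite[Theorem 1]{DS} (cf.\ also \cite{He}) applies: after an \'etale, hence fpqc, base change $R\to R'$ the torsor $\mE$ admits a reduction $\mE_B$ to a Borel subgroup $B\subset\underline G|_{X^\circ}$. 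A $B$-torsor is built by successive extensions from torsors under $\bG_a$ and $\bG_m$, as $B$ is solvable with graded pieces of these types; each such torsor on $X^\circ_{R'}$ is trivial on the complement of a relative Cartier divisor, possibly after a further faithfully flat base change as in the proof of Lemma \ref{sheafify Ran}. Intersecting the finitely many resulting very dense opens and composing the trivialisations yields a trivialisation of $\mE_B$, hence of $\mE$, on a very dense open $U$. This is the family analogue of the Steinberg-theorem argument used at the level of $k$-points in \eqref{Weil}.

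Finally, a choice of rational trivialisation $(\mE_{R'},U,\beta)$ over $R'$ identifies $\Spec R'\times_{\Bun_{\underline G}}\Gr_{\Rat}$ with $\Spec R'\times\underline G_{\Rat}$ via $\gamma\mapsto\gamma\cdot(\mE_{R'},U,\beta)$; the pseudo-torsor property makes this a $\underline G_{\Rat}$-equivariant isomorphism, which is the asserted local triviality, and the isomorphism $[\underline G_{\Rat}\backslash\Gr_{\Rat}]\cong\Bun_{\underline G}$ follows formally. I expect the genuine obstacle to be this local triviality step: propagating the cohomological-dimension-one triviality of $G$-torsors over the generic point to a trivialisation over a very dense open \emph{in families} over a possibly non-reduced $R$, which is precisely what forces the appeal to Drinfeld--Simpson and to the relative-Cartier-divisor bookkeeping of Lemmas \ref{very dense vs Cartier} and \ref{sheafify Ran}.
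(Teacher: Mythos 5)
Your setup (the action, the pseudo-torsor property via Lemma \ref{rat sheaf}, and the reduction of everything to the statement that every $\underline G$-torsor on $X_R$ can be trivialised on a very dense open after a faithfully flat base change) matches the paper, and the final step of trivialising a Borel-torsor by peeling off $\bG_a$'s and $\bG_m$'s is also the paper's. The gap is in how you obtain the Borel reduction. You restrict to the dense open $X^\circ\subset X$ where $\underline G$ is reductive and invoke Drinfeld--Simpson there; but \cite[Theorem 1]{DS} (and likewise Heinloth's generalisation) is a theorem about $G$-bundles on a \emph{projective} curve, and its proof uses properness essentially (finite-dimensionality and semicontinuity for spaces of sections, projectivity of the scheme of $B$-reductions of fixed degree). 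Once $\underline G$ fails to be reductive at even one closed point, $X^\circ$ is affine, and there is no off-the-shelf statement that a torsor under a reductive group scheme on an affine curve over an arbitrary (possibly non-reduced) $R$ acquires a $B$-reduction after an fppf base change on $R$. So the step ``Over $X^\circ$ the theorem of Drinfeld--Simpson applies'' is not justified, and it is exactly the crux of the theorem.

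The paper's way around this is the opposite move: keep the proper curve and change the group. Since $\underline G_{\Rat}$ and the notion of a rational trivialisation only see $\underline G$ over very dense opens (Lemma \ref{generic dependence}), one may replace $\underline G$ at the finitely many non-reductive fibers by parahoric group schemes, producing a Bruhat--Tits group scheme over all of $X$ (Remark \ref{projectivity of BD}); correspondingly one transports the given torsor to a torsor for the modified group by regluing over formal discs at those points. Then \cite[Corollary 26]{He}, which is proved for Bruhat--Tits group schemes on a projective curve, supplies the generic $B$-reduction after an fppf base change, and your concluding argument takes over. If you want to keep your proof, you need to insert this modification step (or an independent argument for generic $B$-reductions in families over a non-proper curve) in place of the appeal to \cite{DS} over $X^\circ$.
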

\begin{proof}
The basic ingredient of this proposition is the same as in Theorem \ref{uniformization for P1}. Namely every $\underline G$-torsor on $X_R$ admits a reduction to a Borel over some very dense open subset after a faithfully flat base change $\Spec R'\to\Spec R$. To prove this, by Lemma \ref{generic dependence} we can modify some fibers of $\underline G$ to make it a Bruhat-Tits group scheme (see Remark \ref{projectivity of BD} for the definition) over $X$ and then apply \cite[Corollary 26]{He}. But since $\bG_m$ and $\bG_a$-torsors can be trivialised Zariski locally, the theorem follows.
\end{proof}

One reason to introduce the above ad\`elic uniformization theorem of $\Bun_{\underline G}$ is because the geometry of the fibers of $u_{\Rat}$ are surprisingly ``simple''. Namely,
the group space ${\underline G}_{\Rat(X)}$ in some sense behaves like a point. We justify this assertion by the following theorem. 
Recall that for a $k$-space $\mF$ (or a $k$-prestack), it makes sense to talk about its ring of regular functions, the Picard groupoid of line bundles (see \eqref{Pic line}). In \cite[\S~2]{GL}, it was also explained how to define the ($\ell$-adic) (co)homology of a prestack.

\begin{thm} Let $R$ be a $k$-algebra of finite type.
\begin{enumerate}
\item[(1)] Every regular function on $\underline G_{\Rat}\otimes R$ is the pullback of a regular function on $\Spec R$.
\item[(2)] Every line bundle on  $\underline G_{\Rat}\otimes R$ is the pullback of a line bundle on $\Spec R$.
\item[(3)] The map $\underline G_{\Rat}\otimes R\to \Spec R$ induces an isomorphism of ($\ell$-adic) homology. 
\end{enumerate}
\end{thm}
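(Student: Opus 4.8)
The plan is to deduce all three statements from a single geometric input: that $\underline G_{\Rat}$ is, in an appropriate sense, contractible relative to the base. First I would replace the sheaf by an honest colimit of $(\text{ind-})$schemes. By Lemma~\ref{very dense vs Cartier} every very dense open is, Zariski-locally on $\Spec R$, the complement of a relative effective Cartier divisor, and such divisors are parametrised fpqc-locally by the graphs $\Ga_x$ with $x\in X^d$; so, exactly as in Lemma~\ref{sheafify Ran}, $\underline G_{\Rat}\otimes R$ is the fpqc sheafification of the filtered colimit $\underrightarrow\lim_{U}\Ga(U_R,\underline G)$, the colimit running over very dense opens $U$ under reverse inclusion and each term being an ind-scheme by (the proof of) Lemma~\ref{rep of Gout}. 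The transition maps are the restriction inclusions obtained by enlarging the pole divisor. A regular function (resp. rigidified line bundle, resp. homology class) on $\underline G_{\Rat}\otimes R$ is then a compatible system over this filtered family, so each of (1)--(3) becomes a statement about the section spaces $\Ga(U_R,\underline G)$ together with their restriction maps as one shrinks $U$.

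Next I would reduce the structure group. By Lemma~\ref{generic dependence} only the generic fibre $G=\underline G\otimes F_\eta$ matters, and since $G$ is reductive I would use the open Bruhat cell $U^-\times T\times U^+\subset G$, together with the filtration of the unipotent radicals by root subgroups $\cong\bG_a$, to devisse the problem to the elementary cases $\underline G=\bG_a$ and $\underline G=\bG_m$ (a torus). For $\bG_a$ the space $(\bG_a)_{\Rat}$ is the space of rational functions on $X_R$; it carries the scaling action of the multiplicative monoid $(\bA^1,\cdot)$, $f\mapsto sf$, under which $1$ acts as the identity and $0$ as the constant map to the unit section. This is an $\bA^1$-contraction onto the unit, and invoking the $\bA^1$-homotopy invariance of the $\ell$-adic homology of prestacks (\cite[\S~2]{GL}) immediately gives statement~(3) for $\bG_a$; the general case of (3) then follows from the devissage by a Mayer--Vietoris argument along the Bruhat stratification, modulo the torus case discussed below.

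The delicate point, and what I expect to be the main obstacle, is that (1) and (2) are strictly stronger than homotopy contractibility: a single affine space already carries many regular functions and the scaling homotopy alone does not constrain them. The mechanism that rescues (1) and (2) is the \emph{moving} of the divisor: the colimit defining $(\bG_a)_{\Rat}$ is filtered not only by enlarging the pole divisor $D$ but by translating it within its linear system, i.e. by multiplying by rational functions, so no linear functional $f\mapsto(\text{coefficient of }f)$ survives the identifications. Concretely, decomposing a regular function into its scaling-homogeneous pieces reduces (1) to the vanishing of its nonconstant pieces, the crucial case being a continuous $R$-linear functional, which is a compatible element of $\underleftarrow\lim_n H^0(X_R,\mO(nD))^\vee$; by Serre duality such a functional is given by pairing against a rational differential via $f\mapsto\sum_x\Res_x(f\omega)$, and the residue theorem $\sum_x\Res_x=0$---precisely the input used above to split the Kac--Moody cocycle over $\frakg^{X^*}$---forces it to vanish. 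Statement~(2) is handled the same way: a rigidified line bundle restricts compatibly to each $\Ga(U_R,\underline G)$, and the freedom to move $D$ trivialises the would-be transition data, so the bundle descends to $\Spec R$. The remaining hard input is the torus ($\bG_m$) case of (1)--(3), which must be treated separately since scaling no longer contracts it; there one exploits that moving divisors also trivialises the divisor-class invariant that would otherwise obstruct contractibility, and it is this case, rather than the additive one, that carries the real weight of the argument.
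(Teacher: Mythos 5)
Your overall architecture --- reduce to $\bG_a$ and $\bG_m$ via Lemma~\ref{generic dependence} and the big cell, then treat the rank-one cases by hand, citing \cite{GL} in spirit for (3) --- matches the paper's, and your observation that scaling contractibility alone cannot prove (1) is exactly right. But the mechanism you substitute for it does not work, and this is where the whole content of (1) and (2) lives. The transition maps in the colimit over very dense opens are just the restriction inclusions $\mO(U)\hookrightarrow \mO(U')$; multiplication by a rational function is an extra choice of trivialisation, not an identification imposed by the colimit. So a compatible system of linear functionals is simply an $R$-linear functional on $\underrightarrow\lim_n H^0(X_R,\mO(nD))=\Gamma(X_R\setminus D,\mO)$, i.e.\ an arbitrary element of the full linear dual, which is enormous and nonzero. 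It is not exhausted by functionals of the form $f\mapsto\sum_x\Res_x(f\omega)$: indeed every functional of that form is \emph{identically zero} by the residue theorem, so asserting that your functional has this shape is asserting the conclusion. Serre duality identifies $H^0(\mO(nD))^\vee$ with $H^1(X,\omega_X(-nD))$ level by level, but the inverse limit is a completion whose elements are not represented by rational differentials, and nothing in your argument selects the ones that are.

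The constraint that actually kills the nonconstant homogeneous pieces is not the moving of $D$ inside the colimit but the requirement that $\varphi$ be regular on \emph{every} scheme mapping to $(\bG_a)_{\Rat}$ --- in particular on the family $p_n\colon V_n\times V_n'\to(\bA^1)_{\Rat}$, $(f,g)\mapsto f/g$, where $V_n=\Gamma(X,\mL^n)$ for an ample $\mL$, $V_n'=V_n\setminus\{0\}$, and the \emph{denominator} is allowed to vary and to degenerate to $0$. By Lemma~\ref{very dense vs Cartier} every point factors Zariski-locally through some $p_n$; the pullback $p_n^*\varphi$ is invariant under the diagonal $\bG_m$-scaling of $(f,g)$, extends across the codimension $\geq 2$ locus $g=0$ once $\dim V_n>1$, and a diagonally $\bG_m$-invariant regular function on $(V_n\times V_n)\otimes R$ necessarily comes from $R$. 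For your linear piece this reads: linearity in $f$ plus total degree $0$ forces degree $-1$ in $g$, which is impossible for a function regular across $g=0$ --- that is the argument, and residues play no role in it. The same gap affects (2), where ``the freedom to move $D$ trivialises the would-be transition data'' is an assertion rather than an argument; the paper again pulls back to $V_n\times V_n'$ and extends across codimension two. Your sketches of (3) and of the Bruhat devissage are in the right spirit, but (1) and (2) as written are not proved.
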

Part (1) and (2) are due to Beilinson-Drinfeld (see \cite[Proposition 4.3.13]{BD} and \cite[\S~4.9.1]{BD0}). Part (3) is due to Gaitsgory-Lurie (see \cite{Ga3} and \cite[Theorem 3.3.1]{GL}). 
\begin{proof}We refer to \cite{GL} for Part (3), and only explain (1)
in the case when $G=\bG_m$ or $\bG_a$. The general case reduces to this special case (using Lemma \ref{generic dependence}). 

We can choose an ample line bundle $\mL$ on $X$. For $n\gg 0$, let $V_n=\Gamma(X,\mL^n)$, and let $V'_n= V_n\setminus \{0\}$. Then there are maps 
$$p_n:V'_n\times V'_n\to (\bG_m)_{\on{rat}(X)},\quad  p_n:V_n\times V'_n\to (\bA^1)_{\on{rat}(X)}$$ given by $p_n(f,g)\mapsto f/g$. By Lemma \ref{very dense vs Cartier}, every map $S\to (\bG_m)_{\on{rat}(X)}$ (or $S\to (\bA^1)_{\on{rat}(X)}$) Zariski locally on $S$ factors through $p_n$ for some $n$ large enough.
Given a regular function $\varphi$ on $Y_{\on{rat}(X)}\otimes R$, its pullback gives a regular function $p_n^*\varphi$ on $(V'_n\times V'_n)\otimes R$ (or on $(V_n\times V'_n)\otimes R$) which is invariant with respect to the obvious action of $\bG_m$ on $V_n'\times V'_n$ (or on $V_n\times V'_n$). Choose $n$ large enough so that $\dim V_n>1$. Then $p_n^*\varphi$ extends to a $\bG_m$-invariant regular function on $(V_n\times V_n)\otimes R$, which necessarily comes from $R$. This proves (1). One can similarly argue for (2).
 \end{proof}

To make this theorem more useful, we need the following companion statement.
By definition, there are the following sequence of maps
 \[\Gr_{\Ran}\stackrel{w}{\to} \Gr_{\Rat}\stackrel{u_{\on{Rat}}}{\to} \Bun_{\underline G},\]
and $u_{\Ran}=w\cdot u_{\on{Rat}}$. 
We have the following result.

\begin{thm}\label{Ran=Rat}~
\begin{enumerate}
\item[(1)] The pullback along $w$ induces an isomorphism of spaces of regular functions. 
\item[(2)] The pullback along $w$ induces an equivalence of Picard groupoids of rigidified line bundles. 
\item[(3)] The pushforward along $w$ induces an isomorphism of $\ell$-adic homology.
\end{enumerate}
\end{thm}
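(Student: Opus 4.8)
The plan is to factor the map as $w = a\circ p$, where $p\colon \Gr_{\Ran}\to \Gr_{\Ran,\on{indep}}$ is the coequalizer of $\on{act}$ and $\pr_2$ that defines $\Gr_{\Ran,\on{indep}}$, and $a\colon \Gr_{\Ran,\on{indep}}\to \Gr_{\Rat}$ is the fpqc sheafification furnished by Lemma~\ref{sheafify Ran}. The step $a$ is harmless for all three invariants: regular functions and (rigidified) line bundles are morphisms into the fpqc sheaf $\bG_a$ and the fpqc stack $\bB\bG_m$ respectively, so by the universal property of sheafification they are unchanged by $a$; and the $\ell$-adic homology of a prestack as defined in \cite{GL} is computed from the schemes mapping in and is likewise insensitive to fpqc sheafification. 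So in each case it remains to understand the coequalizer $p$, i.e.\ to prove that the invariant in question is invariant under the Ran-action $\on{act}$ that adjoins points.

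Because $p$ is a coequalizer of presheaves, pullback along $w$ identifies functions (resp.\ rigidified line bundles) on $\Gr_{\Rat}$ with functions (resp.\ rigidified line bundles) on $\Gr_{\Ran}$ equipped with a descent datum against the pair $(\on{act},\pr_2)$. For (1), I would write $\Gamma(\Gr_{\Ran},\mO)=\underleftarrow\lim_I \Gamma(\Gr_{X^I},\mO)$ and invoke the ind-properness of $q_I\colon\Gr_{X^I}\to X^I$ from Theorem~\ref{representability of BD}: regular functions are locally constant along the proper fibers, hence pulled back from $X^I$, where they are constant since $X$ is projective and connected. Constant functions carry a unique descent datum automatically, so $w^*$ is an isomorphism on regular functions.

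For (2), the task is to show that every rigidified line bundle $\mL$ on $\Gr_{\Ran}$ carries a unique $\on{act}$-equivariant structure compatible with its rigidification. Here I would use the factorization isomorphism of Theorem~\ref{Ran Grass factorization}: over the disjoint locus $\on{act}$ becomes the inclusion of an extra factorization factor equipped with its unit section, so $\on{act}^*\mL$ and $\pr_2^*\mL$ differ by the restriction of $\mL$ to that added factor. By the See-Saw argument already employed for Proposition~\ref{III:line} and Lemma~\ref{Pic X2}, together with the computation $\Pic^e(\Gr_{\Ran}/\Ran)\cong \Pic^e(\Gr_x)$ of Theorem~\ref{Rigid line}, this restriction is canonically trivialized by the rigidification; this produces the descent isomorphism, the cocycle condition follows from the compatibilities in Theorem~\ref{Ran Grass factorization}, and the rigidity of $\Pic^e$ gives uniqueness. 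Thus $w^*$ is an equivalence of Picard groupoids.

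The genuinely hard part is (3). The corresponding statement is that $w_*$ is an isomorphism on $\ell$-adic homology, which reduces to the assertion that adjoining a disjoint point does not change the homology; geometrically this is the homological contractibility of the added factorization factor, with the unit section providing the null-homotopy. This is not formal and is exactly the contractibility theorem for the unital Ran Grassmannian of Gaitsgory and Lurie, so I would deduce (3) by citing \cite{Ga3} and \cite[Theorem~3.3.1]{GL}. The main obstacle is therefore concentrated entirely in (3): parts (1) and (2) are soft consequences of ind-properness, connectedness, and the factorization/See-Saw machinery, whereas (3) rests on the deep acyclicity input from \cite{GL}, and a secondary (but routine) point is to confirm from the definitions there that the sheafification step $a$ also preserves homology.
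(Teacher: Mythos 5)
Your reduction to the coequalizer presentation via Lemma \ref{sheafify Ran} is exactly the paper's first step, and your treatment of (3) by invoking the Gaitsgory--Lurie contractibility results is also in line with the paper. The problem is your argument for (1). You claim that every regular function on $\Gr_{\Ran}$ is constant because $q_I:\Gr_{X^I}\to X^I$ is ind-proper and $X$ is projective and connected. This is false in the stated generality: the fibers of $q_I$ are in general neither connected nor reduced. For $\underline G=\GL_n\times X$ the fibers have $\pi_0\cong\bZ$, so $\Gamma(\Gr_{X^I},\mO)$ contains the algebra of \emph{all} functions $\bZ\to k$ coming from locally constant functions on the components; and $\Gr_{\underline G}$ can be non-reduced (e.g.\ for $\bG_m$, or for $\on{PGL}_2$ in characteristic $2$), so functions on a proper piece need not even be locally constant along the fibers. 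The theorem asserts only that $w^*$ is an isomorphism, not that the common value is $k$, and the paper's proof is arranged precisely so as not to have to compute $\Gamma(\Gr_{\Ran},\mO)$: one shows directly that $\on{act}^*=\pr_2^*$ on $\Gamma(\Gr_{\Ran},\mO)$. The key device is the diagonal section $(q_{\Ran},\id):\Gr_{\Ran}\to\Ran\times\Gr_{\Ran}$, $(x,\mE,\beta)\mapsto(x,(x,\mE,\beta))$, which is a common section of $\on{act}$ and $\pr_2$; since $\pr_2^*$ is an isomorphism by the contractibility of the Ran space for regular functions (Proposition \ref{Ran pt}(1)), $(q_{\Ran},\id)^*$ is its inverse, and composing with $\on{act}\circ(q_{\Ran},\id)=\id$ forces $\on{act}^*=\pr_2^*$. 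This step is absent from your proposal and cannot be replaced by the constancy claim.

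The same remark applies to (2): the paper proves it ``similarly,'' i.e.\ by the identical section trick, using that (rigidified) line bundles on $\Ran(X)\otimes R$ are pulled back from $\Spec R$. Your substitute --- factorization plus See-Saw plus Theorem \ref{Rigid line} --- imports hypotheses the theorem does not make ($G$ simple and simply connected, $\underline G$ constant, ind-properness of $q_I$ for the See-Saw principle), and it only controls the comparison of $\on{act}^*\mL$ with $\pr_2^*\mL$ over the disjoint locus rather than over all of $\Ran\times\Gr_{\Ran}$, where the descent isomorphism must be specified. So while your global architecture (sheafification step, coequalizer descent, Gaitsgory--Lurie input for homology) matches the paper, parts (1) and (2) as you have written them do not go through.
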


Note that this theorem is non-trivial even when $\underline G$ is the trivial group. In this case, we state it as  the following proposition.
\begin{prop}\label{Ran pt}~
\begin{enumerate}
\item[(1)] Every regular function on $\Ran X\otimes R$ is the pullback of a regular function on $\Spec R$.
\item[(2)] The natural map $\Ran X\otimes R\to \Spec R$ induces an isomorphism of ($\ell$-dic) homology. 
\end{enumerate}
\end{prop}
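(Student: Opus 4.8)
The plan is to prove the two parts by quite different means: part (1) is a formal computation with regular functions, whereas part (2) is the homological contractibility of the Ran space, which I would reduce to the absolute case and then deduce from the circle of ideas of \cite{GL}.

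For (1), recall the presentation $\Ran(X)\otimes R=\varinjlim_I X^I_R$, the colimit of presheaves being taken over the category $\mathcal{C}$ of finite non-empty sets with surjections, a surjection $\phi\colon J\twoheadrightarrow I$ producing the diagonal $\Delta(\phi)\colon X^I_R\to X^J_R$. Since $\Gamma(-,\mO)=\Hom(-,\bG_a)$ converts colimits of presheaves into limits,
\[\Gamma(\Ran(X)\otimes R,\mO)=\varprojlim_{\mathcal{C}}\Gamma(X^I_R,\mO).\]
Now $X^I$ is a connected projective variety over the algebraically closed field $k$, so $\Gamma(X^I,\mO)=k$; as $X^I$ is proper and $k\to R$ is flat, base change gives $\Gamma(X^I_R,\mO)=\Gamma(X^I,\mO)\otimes_k R=R$. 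Each transition map $\Delta(\phi)^*$ carries functions pulled back from $\Spec R$ to functions pulled back from $\Spec R$ and is therefore the identity of $R$; thus the diagram is the constant functor with value $R$. As $\mathcal{C}$ is connected (it has the initial object $\{*\}$), the limit of a constant functor is its value $R$, and the resulting isomorphism is induced by the structure map $\Ran(X)\otimes R\to\Spec R$. This is (1).

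For (2), I would first reduce to the absolute statement that $\Ran X\to\Spec k$ is an isomorphism on $\ell$-adic homology. Since $\Ran(X)\otimes R=\Ran(X)\times_k\Spec R$ and the $\ell$-adic chains functor $C_*$ of \cite[\S 2]{GL} commutes with the colimit defining $\Ran$ and with the base change to $\Spec R$, the relative homology over $\Spec R$ is obtained from $C_*(\Ran X)$ by extension of scalars; so it suffices to show $C_*(\Ran X)\simeq\Ql$. Unwinding, $C_*(\Ran X)=\varinjlim_{\mathcal{C}}C_*(X^I)$ with transition maps the diagonal pushforwards $\Delta(\phi)_*$, and the claim is that this homotopy colimit collapses onto the homology of a point. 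Unlike in (1), $\mathcal{C}$ has no terminal object, so the colimit does not collapse formally and genuine input is needed.

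That input is the interplay of the union semigroup structure with the connectedness of $X$. The map $\on{union}\colon\Ran X\times\Ran X\to\Ran X$ together with the Künneth isomorphism makes $C_*(\Ran X)$ a non-unital commutative algebra, and the diagonal $S\mapsto(S,S)$, for which $S\cup S=S$, provides a section of the multiplication; connectedness enters as $\on{H}_0(\Ran X)=\Ql$, guaranteeing the algebra is non-zero. The main step --- and the principal obstacle --- is to show that the endomorphism $\on{add}_{x_0}\colon S\mapsto S\cup\{x_0\}$ is homotopic to the identity on $C_*(\Ran X)$; granting this, the constant map $S\mapsto\{x_0\}$, which factors $\on{add}_{x_0}$ through $\Spec k$, is likewise homotopic to the identity, so the augmentation $C_*(\Ran X)\to\Ql$ becomes an equivalence. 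Establishing this homotopy rigorously --- equivalently, that the augmented idempotent algebra $C_*(\Ran X)$ is trivial --- is exactly the nontrivial content, and for the complete argument I would appeal to \cite{Ga3} and \cite[Theorem 3.3.1]{GL}. This trivial-group case of Proposition \ref{Ran pt} is precisely what is needed to bootstrap the general Theorem \ref{Ran=Rat}.
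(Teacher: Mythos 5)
Your proof of (1) is correct under the standing hypotheses of \S 4, but it is a genuinely different argument from the paper's. You use properness of $X$: since $X$ is a smooth projective connected curve over the algebraically closed field $k$, one has $\Gamma(X^I_R,\mO)=R$ by K\"unneth and flat base change, so $\Gamma(\Ran(X)\otimes R,\mO)=\underleftarrow\lim_I\Gamma(X^I_R,\mO)$ is the limit of a constant diagram of copies of $R$ with identity transition maps over a connected index category, hence equals $R$. (A cosmetic slip: $\{*\}$ is a \emph{terminal} object of the category of finite non-empty sets and surjections, not an initial one; only connectedness of the index category is used, so nothing breaks.) The paper instead argues locally: a compatible family $(f_I)$, restricted along $X^I\to X^{I\sqcup I\sqcup I}\to\Ran(X)$, is symmetric in the three blocks and satisfies $f(x,x,y)=f(x,y,y)$, and an elementary lemma on symmetric power series in three variables then forces $f(x,x,x)=f_I$ to be pulled back from $\Spec R$. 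The trade-off is that your argument is shorter and avoids the power-series lemma, but it genuinely needs $\Gamma(X^I,\mO)=k$ and so fails for affine or non-proper curves, whereas the paper's argument is local on $X$ and works for any connected curve; since the proposition is only invoked for the projective curve of \S 4 (and its companion results are likewise proved using an ample line bundle), your version suffices in context. For (2) you do essentially what the paper does, namely cite \cite{Ga3} and \cite[\S~2]{GL}; your sketch via the union semigroup is a fair heuristic, though as written the deduction of contractibility from $\on{add}_{x_0}\simeq\id$ is incomplete --- the constant map $S\mapsto\{x_0\}$ is not obtained from $\on{add}_{x_0}$ by factoring through $\Spec k$, so one would still need a second homotopy $\on{add}_{x_0}\simeq\on{const}_{x_0}$. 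Since the paper supplies no proof of (2) beyond the same references, this does not count against you.
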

\begin{proof}
Again, we refer to \cite{Ga3} and \cite[\S~2]{GL} for (2). 
To prove (1), we make use of the following lemma.
\begin{lem}
Let $f\in R[[x,y,z]]$ be a power series in $x,y,z$ with coefficients in $R$. If $f$ is invariant under permutations of $x,y,z$ and if $f(x,x,y)=f(x,y,y)$, then $f(x,x,x)$ is constant.
\end{lem}
We leave this lemma as an interesting exercise. By factoring $X^I\to \Ran(X)$ as $X^I\stackrel{\Delta}{\to} X^{I\sqcup I\sqcup I}\to \Ran(X)$, one sees that every function on $\Ran\otimes R$, when pullback to $X^I\otimes R$, comes from a regular function on $\Spec R$. (1) then follows.
\end{proof}

Now we explain Part (1) of Theorem \ref{Ran=Rat}. Recall the definition of the space of regular functions on a prestack (cf. \eqref{reg fun}). Since $\bG_a$ is a sheaf, according to Lemma \ref{sheafify Ran}, 
$$\Gamma(\Gr_{\Rat},\mO)=\Hom(\Gr_{\Rat},\bG_a)=\Hom(\Gr_{\Ran,\on{indep}},\bG_a)=\Gamma(\Gr_{\Ran,\on{indep}},\mO)$$
and by definition, the latter is isomorphic to the equaliser of the following sequence
\[\Gamma(\Gr_{\Ran},\mO)\substack{\on{act}^*\\  \xrightarrow{\hspace*{.6cm}}\\[-0.6ex] \xrightarrow{\hspace*{.6cm}} \\ \pr_2^*} \Gamma(\Ran\times \Gr_{\Ran},\mO).\]
Therefore, to prove (1), it is enough to show that $\on{act}^*=\pr_2^*$. There is a natural map $(q_{\Ran},\id):\Gr_{\Ran}\to \Ran\times\Gr_{\Ran}$ sending $(x,\mE,\beta)$ to $(x,(x,\mE,\beta))$. In addition
\[ \on{act}\circ(q_{\Ran},\id)=\pr_2\circ(q_{\Ran},\id)=\id.\]
By Proposition \ref{Ran pt} (1), $\pr_2^*$ is an isomorphism. It follows that $(q_{\Ran},\id)^*$ is an isomorphism which in turn implies that $\on{act}^*=\pr_2^*$. Part (2) of Theorem \ref{Ran=Rat} can be proved similarly.

\begin{cor}\label{Gr=Bun}~
\begin{enumerate}
\item The pullback along $u_{\Ran}$ induces an isomorphism of spaces of regular functions. 

\item It induces an equivalence of Picard groupoids of rigidified line bundles. 

\item The pushforward induces an isomorphism of homology of $\Gr_{\Ran}$ and $\Bun_{\underline G}$.
\end{enumerate}
\end{cor}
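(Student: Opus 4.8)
The plan is to deduce all three statements by factoring the uniformization map as $u_{\Ran}=w\cdot u_{\Rat}$ and treating the two maps $w\colon\Gr_{\Ran}\to\Gr_{\Rat}$ and $u_{\Rat}\colon\Gr_{\Rat}\to\Bun_{\underline G}$ separately, composing at the end. For $w$ the three required isomorphisms are exactly the content of Theorem \ref{Ran=Rat}, so nothing new is needed there. Thus the real work is to establish the analogous three statements for $u_{\Rat}$: that $u_{\Rat}^*$ induces an isomorphism on regular functions, an equivalence of rigidified Picard groupoids, and (by pushforward) an isomorphism of $\ell$-adic homology. These follow by combining the torsor presentation $\Bun_{\underline G}\cong[\underline G_{\Rat}\backslash\Gr_{\Rat}]$ of Theorem \ref{ran uniformization of BunG} with the theorem above asserting that $\underline G_{\Rat}$ is ``point-like'', i.e. that $\underline G_{\Rat}\otimes R\to\Spec R$ induces isomorphisms on regular functions, rigidified Picard groupoids, and $\ell$-adic homology.

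For part (1) I would rerun the argument from the proof of Theorem \ref{Ran=Rat}(1). By fpqc descent along the $\underline G_{\Rat}$-torsor $u_{\Rat}$, the space $\Gamma(\Bun_{\underline G},\mO)$ is the equalizer of the two pullbacks $\on{act}^*,\pr^*\colon\Gamma(\Gr_{\Rat},\mO)\rightrightarrows\Gamma(\underline G_{\Rat}\times\Gr_{\Rat},\mO)$, where $\on{act}$ is the torsor action and $\pr$ is the projection to $\Gr_{\Rat}$. The unit section $s\colon\Gr_{\Rat}\to\underline G_{\Rat}\times\Gr_{\Rat}$, $z\mapsto(1,z)$, satisfies $\on{act}\circ s=\pr\circ s=\id$. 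Writing $\Gr_{\Rat}$ as a colimit of ind-finite-type pieces (as in Lemma \ref{sheafify Ran}) and invoking the point-like property of $\underline G_{\Rat}$ for regular functions, the map $\pr^*$ is an isomorphism; then $s^*=(\pr^*)^{-1}$ is an isomorphism and hence $\on{act}^*=(s^*)^{-1}=\pr^*$. Therefore every regular function on $\Gr_{\Rat}$ is $\underline G_{\Rat}$-invariant and descends, so $u_{\Rat}^*$ is an isomorphism. Part (2) is the same descent one categorical level up: the point-like property for line bundles makes $\pr^*$ an equivalence of rigidified Picard groupoids, the section $s$ produces its inverse, and the rigidification along the unit of $\Bun_{\underline G}$ pins down the descent datum and forces the cocycle condition, so $u_{\Rat}^*$ is an equivalence.

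For part (3) I would use homological descent along the torsor. The \v{C}ech nerve, whose low-degree terms are $\underline G_{\Rat}\times\underline G_{\Rat}\times\Gr_{\Rat}\rightrightarrows\underline G_{\Rat}\times\Gr_{\Rat}\rightrightarrows\Gr_{\Rat}$, computes $H_*(\Bun_{\underline G})$ through the associated spectral sequence in the sense of \cite{GL}. By the point-like property for homology, each projection $\underline G_{\Rat}^{\times n}\times\Gr_{\Rat}\to\Gr_{\Rat}$ is a homology isomorphism, so the simplicial object is homologically constant and the spectral sequence degenerates, yielding $H_*(\Gr_{\Rat})\cong H_*(\Bun_{\underline G})$. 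Composing the three isomorphisms for $u_{\Rat}$ with those for $w$ from Theorem \ref{Ran=Rat} gives the statements for $u_{\Ran}$.

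I expect the main obstacle to be part (3): making the homological descent rigorous in the prestack setting, namely constructing the descent spectral sequence for the $\underline G_{\Rat}$-torsor and controlling its convergence, since $\Gr_{\Rat}$ and $\underline G_{\Rat}$ are infinite-dimensional objects rather than honest schemes. A secondary technical point pervading all three parts is the reduction of the point-like statements—stated over finite-type bases $R$—to the present situation where the base is $\Gr_{\Rat}$; this requires presenting $\Gr_{\Rat}$ as a sheafified colimit of ind-finite-type pieces and verifying that regular functions, line bundles, and $\ell$-adic homology each behave well under the relevant limits and colimits.
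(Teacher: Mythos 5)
Your proposal is correct and follows exactly the route the paper intends (the corollary is stated without proof there): factor $u_{\Ran}=u_{\Rat}\circ w$, quote Theorem \ref{Ran=Rat} for $w$, and for $u_{\Rat}$ combine the torsor presentation $\Bun_{\underline G}\cong[\underline G_{\Rat}\backslash\Gr_{\Rat}]$ with the point-like property of $\underline G_{\Rat}$, using the unit section to show the two pullbacks in the descent diagram agree. The technical points you flag (reducing the point-like statements from finite-type bases to $\Gr_{\Rat}$ via colimits, and the homological descent in the prestack setting, for which the paper defers to \cite{GL}) are the right ones.
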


\begin{rmk}
(i) Note that by combining Theorem \ref{Rigid line} and Part (2) of Corollary \ref{Gr=Bun}, we obtain a natural proof of Theorem \ref{uniformize line}.

(ii) Part (3) of Corollary \ref{Gr=Bun} is one of the key steps in Gaitsgory-Lurie's proof of Weil's conjecture on Tamagawa number for global function fields (cf. \cite{GL}).
\end{rmk}

\section{The geometric Satake equivalence}\label{Lecture V}
In this lecture, we discuss the geometric Satake equivalence, which is a cornerstone of the geometric Langlands program. 
We will assume that $k$ is algebraically closed unless otherwise stated. Let $\mO=k[[t]]$ and $F=k((t))$, and let $G$ denote a reductive group over $k$. Let $\underline G= G\otimes_k\mO$. For simplicity, we will write $\Gr$ for $\Gr_G$ if the group $G$ is clear.

We fix a prime number $\ell$ different from the characteristic of $k$.
Sheaves will mean $\Ql$-sheaves, and all pushforward and pullback are \emph{derived}.
Let us mention that the geometric Satake equivalence can also be formulated
in two other sheaf contents. (1) If $\cha\ k=0$, one can work with D-modules (see \cite[\S\ 5.3]{BD}); (2) if $k=\bC$, one can work with sheaves of $E$-vector spaces under analytic topology on $\Gr$, where $E$ is field of characteristic zero. The approach given below also applies to these contents.
On the other hand, there are versions of the geometric Satake equivalence for $\bZ_\ell$- and $\bF_\ell$-sheaves, or when $k=\bC$, for sheaves of $\Lambda$-modules, where $\Lambda$ is a commutative noetherian ring of finite global dimension (e.g. $\Lambda=\bZ$). The following approach then is insufficient. One needs to refer to \cite{MV} for the extension to general coefficients.
 
We also mention that there is a geometric Satake equivalence for $p$-adic groups. We refer to \cite{Z14} for details.

\subsection{The Satake category $\Sat_G$.}

First, the action of $L^+G$ on $\Gr$ satisfies Condition (S) in \S~\ref{A:ind}, since each orbit is of the form $\Gr_\mu=L^+G/(L^+G\cap t^{\mu} L^+G t^{-\mu})$ and  $L^+G\cap t^{\mu} L^+G t^{-\mu}$ is connected.
Therefore the construction of \S~\ref{A:ind} gives the category $\on{P}_{L^+G}(\Gr)$ of $L^+G$-equivariant perverse sheaves on $\Gr$. Denote by $\IC_\mu$ the intersection cohomology sheaf on $\Gr_{\leq \mu}$. Then
$\IC_\mu|_{\Gr_\mu}=\Ql[(2\rho,\mu)]$, and its restriction to each stratum $\Gr_\la$ is constant. Again since  $L^+G\cap t^{\mu} L^+G t^{-\mu}$ is connected, irreducible objects of $\on{P}_{L^+G}(\Gr)$ are exactly these $\on{IC}_\mu$'s.

\begin{prop}\label{semisimplicity}
The category $\on{P}_{L^+G}(\Gr)$ is semisimple.
\end{prop}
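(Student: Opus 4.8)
The plan is to exploit that $\on{P}_{L^+G}(\Gr)$ is a finite-length abelian category whose simple objects are exactly the $\IC_\mu$, $\mu\in\xcoch(T)^+$, as recalled above. For such a category semisimplicity is equivalent to the vanishing of $\on{Ext}^1(\IC_\mu,\IC_\la)$ for every pair of dominant coweights $\mu,\la$. I would compute this group as a morphism space in the equivariant derived category, $\on{Ext}^1(\IC_\mu,\IC_\la)=\Hom_{D^b_{L^+G}(\Gr)}(\IC_\mu,\IC_\la[1])$, and reduce everything to a single parity statement: all odd-degree Hom groups between intersection cohomology sheaves vanish.

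The key numerical input is that for every simple coroot $\al_i^\vee$ one has $\langle 2\rho,\al_i^\vee\rangle=2$. Since $\la\leq\mu$ means $\mu-\la$ is a nonnegative integral combination of simple coroots, it follows that $\dim\Gr_\mu-\dim\Gr_\la=(2\rho,\mu-\la)$ is even; hence all strata $\Gr_\la\subset\Gr_{\leq\mu}$ have dimensions of one and the same parity. Building on this, I would establish the parity property of the intersection cohomology sheaves: the stalk and, by self-duality, the costalk cohomology of $\IC_\mu$ along any stratum $\Gr_\la$ is concentrated in degrees congruent to $(2\rho,\mu)$ modulo $2$. I expect this to be the main obstacle, since it is precisely the assertion that $\Gr_{\leq\mu}$ is an ``even'' stratified space. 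I would prove it by induction on the closure order, using the support and cosupport conditions that characterise the intermediate extension together with the equal parity of the strata, or alternatively by exhibiting $\IC_\mu$ as a direct summand of a pushforward $m_*\Ql[\cdots]$ along a semismall convolution map $m:\Gr_{\leq\mmu}\to\Gr_{\leq\mu}$ and invoking the decomposition theorem, the parity then being visible from an affine paving of the source.

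Granting the parity property, I would conclude as follows. To compute $\Hom_{D^b(\Gr)}(\IC_\mu,\IC_\la[n])$ I may assume $\la\leq\mu$ (otherwise the common support is a smaller Schubert variety and the same argument applies) and, by the $(i^*,i_*)$ adjunction for the closed embedding $i:\Gr_{\leq\la}\hookrightarrow\Gr_{\leq\mu}$, rewrite it as $\Hom_{\Gr_{\leq\la}}(i^*\IC_\mu,\IC_\la[n])$. On each stratum $\Gr_\nu$ with $\nu\leq\la$ both $i^*\IC_\mu$ and $\IC_\la$ are sums of constant sheaves placed in degrees of parity $(2\rho,\nu)$, so the local Hom complexes live in even degrees; the spectral sequence attached to the stratification of $\Gr_{\leq\la}$ then forces $\Hom_{D^b(\Gr)}(\IC_\mu,\IC_\la[n])=0$ for all odd $n$, in particular for $n=1$.

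Finally I would account for the equivariant structure. Because the stabilisers $L^+G\cap t^\mu L^+G t^{-\mu}$ are connected (Condition (S) in \S~\ref{A:ind}), the forgetful functor identifies the simple equivariant perverse sheaves with the plain $\IC_\mu$, and the equivariant $\on{Ext}$-groups differ from the plain ones only by contributions of the equivariant cohomology of a point, which for the connected pro-group $L^+G$ lives in even degrees. The parity vanishing therefore persists in the equivariant category, giving $\on{Ext}^1_{\on{P}_{L^+G}(\Gr)}(\IC_\mu,\IC_\la)=0$ and hence the semisimplicity of $\on{P}_{L^+G}(\Gr)$.
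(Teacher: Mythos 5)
Your proposal is essentially the paper's own argument: semisimplicity is reduced to the vanishing of $\on{Ext}^1(\IC_\mu,\IC_\la)$, which is deduced from the parity of the stalk (and costalk) cohomology of the IC sheaves combined with the fact that all $L^+G$-orbits in a fixed connected component have dimensions of the same parity (Corollary \ref{parity}); the paper cites Kazhdan--Lusztig purity and defers the details to Gaitsgory and Richarz, while you spell out the spectral-sequence step. One caveat: of the two routes you offer for the parity property of $\IC_\mu$, the first --- induction on the closure order using only the support and cosupport conditions together with the evenness of the strata --- does not work: the affine cone over a smooth projective curve of positive genus is stratified by even-dimensional strata, yet its IC sheaf has odd-degree stalk cohomology at the vertex, so evenness of strata alone never forces parity of IC stalks. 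You must use your second route (exhibiting $\IC_\mu$ as a direct summand of the pushforward along a Demazure-type resolution whose fibers admit affine pavings, then applying the decomposition theorem), or equivalently invoke the Kazhdan--Lusztig computation as the paper does; with that correction the proof is complete.
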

\begin{proof}We need to show that $\on{Ext}^1(\IC_\la,\IC_\mu)=0$ for any $\la$ and $\mu$. The essential case is $\la\leq \mu$ or $\mu\leq \la$. In this case, the vanishing follows from: (i) purity of the stalk cohomology of $\on{IC}$ sheaves of (affine) Schubert varieties (cf. \cite{KL}); and (ii) parity of the dimension of $L^+G$-orbits in each connected component of $\Gr$ (see Lemma \ref{parity}). We refer to \cite[Proposition 1]{Ga} (see also \cite[Proposition 3.1]{Ri}) for details. 
\end{proof}

Here is a consequence.
\begin{cor}
Objects in $\on{P}_{L^+G}(\Gr)$ are $\Aut^+(D)$-equivariant.
\end{cor}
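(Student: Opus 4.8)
The plan is to exploit the semisimplicity just established in order to reduce the statement to the simple objects, and then to recognise each $\IC_\mu$ as an intermediate extension of a constant sheaf off a single orbit. First I would record that $\Aut^+(D)$ is \emph{connected}: by \eqref{decomp1} and \eqref{decomp2} it is an extension of the connected rotation torus $\bG_m^{\on{rot}}$ by the pro-unipotent group $\Aut^{++}(D)$. Consequently $\Aut^+(D)$-equivariance should be viewed as a \emph{property} of an object of $\on{P}_{L^+G}(\Gr)$ rather than as additional structure: the forgetful functor from $\Aut^+(D)$-equivariant perverse sheaves to perverse sheaves is fully faithful, and the essential image is a full subcategory closed under direct sums and summands.

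By Proposition \ref{semisimplicity} every object of $\on{P}_{L^+G}(\Gr)$ is a finite direct sum of the $\IC_\mu$, so it suffices to prove that each $\IC_\mu$ is $\Aut^+(D)$-equivariant. Here I would use that the action of $\Aut^+(D)$ preserves every Schubert cell $\Gr_\lambda$ (as recalled in \S\ref{II:Sch var}), hence preserves the closed Schubert variety $\Gr_{\leq\mu}=\overline{\Gr_\mu}$ together with its open dense stratum $\Gr_\mu$. On $\Gr_\mu$ the restriction $\IC_\mu|_{\Gr_\mu}=\Ql[(2\rho,\mu)]$ is a shifted constant sheaf, which is tautologically equivariant for \emph{any} group acting on $\Gr_\mu$, in particular for $\Aut^+(D)$. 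Writing $j\colon\Gr_\mu\hookrightarrow\Gr_{\leq\mu}$, we have $\IC_\mu=j_{!*}\bigl(\Ql[(2\rho,\mu)]\bigr)$, and since intermediate extension along a stratum-preserving action of a connected group sends equivariant perverse sheaves to equivariant perverse sheaves, $\IC_\mu$ inherits a canonical $\Aut^+(D)$-equivariant structure. This is the crux of the argument, and it is where the connectedness of $\Aut^+(D)$ is essential.

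To make the above precise within the formalism of \S\ref{A:ind}, I would verify that the $\Aut^+(D)$-action on the finite-dimensional variety $\Gr_{\leq\mu}$ factors through a finite-type quotient group $Q$, necessarily connected, so that the equivariant derived category of $\Gr_{\leq\mu}$ relative to $Q$ is defined and the three ingredients above (full faithfulness of the forgetful functor for connected $Q$, equivariance of the constant sheaf, and compatibility of $j_{!*}$ with equivariance) all live in the standard Bernstein--Lunts setting. The main obstacle is precisely this finiteness claim: one must check that the pro-unipotent tail $\Aut^{++}(D)=\Spec k[a_2,\ldots]$ acts on each bounded Schubert variety through the truncation forgetting $a_i$ for $i$ large, exactly as $L^+G$ acts on $\Gr_{\leq\mu}$ through a jet quotient. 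Granting this, the connectedness of $Q$ does all the work, and no monodromy computation is required.
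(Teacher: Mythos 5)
Your argument is correct and is essentially the paper's own proof: the paper likewise reduces to the simple objects via Proposition \ref{semisimplicity} and then invokes Lemma \ref{A:forgetful} (equivariance under a connected group is a property, and $\IC_\mu$ is intrinsically attached to the $\Aut^+(D)$-stable stratified variety $\Gr_{\leq\mu}$), which is exactly your $j_{!*}$ argument spelled out. Your extra remark that the action on each $\Gr_{\leq\mu}$ factors through a connected finite-type quotient of $\Aut^+(D)$ is the correct technical point needed to place the argument in the formalism of \S\ref{A:ind}, and it holds for the same reason as for $L^+G$.
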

\begin{proof}Since every IC sheaf must be $\Aut^+(D)$-equivariant (by Lemma \ref{A:forgetful}).
\end{proof}

There is natural a monoidal structure on $\on{P}_{L^+G}(\Gr)$, defined by Lusztig's convolution product of sheaves. Namely, for $\mA_1,\mA_2\in \on{P}_{L^+G}(\Gr)$, we denote by $\mA_1\tilde\boxtimes\mA_2$ the ``external twisted product'' of $\mA_1$ and $\mA_2$ on $\Gr\tilde\times\Gr$ (see \S~\ref{A:twist product} and  \S~\ref{A:ind}). For example, if $\mA_i=\IC_{\mu_i}$, then $\IC_{\mu_1}\tilde\boxtimes\IC_{\mu_2}$ is canonically isomorphic to the intersection cohomology sheaf of $\Gr_{\leq \mu_1}\tilde\times\Gr_{\leq\mu_2}$.
Then the convolution product of $\mA_1$ and $\mA_2$ is defined as
\begin{equation}\label{Lus conv}
\mA_1\star\mA_2:=m_!(\mA_1\tilde\boxtimes\mA_2),
\end{equation}
where $m:\Gr\tilde\times\Gr\to \Gr$ is the convolution map (defined by \eqref{conv m0}).
This is an $L^+G$-equivariant $\ell$-adic complex on $\Gr$. Similarly, one can define the $n$-fold convolution product $\mA_1\star\cdots\star\mA_n=m_!(\mA_1\tilde\boxtimes\cdots\tilde\boxtimes\mA_n)$ using the $n$-fold convolution map \eqref{conv m}. Note that since $m$ is ind-proper, $m_!=m_*$.

\begin{prop}\label{conv prod}
The convolution product $\mA_1\star\mA_2$ is perverse.
\end{prop}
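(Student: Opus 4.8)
The plan is to reduce to intersection cohomology sheaves and then to prove that the convolution map $m$ is \emph{semismall}, whence the perversity follows from the general behaviour of pushforwards along semismall maps. By Proposition~\ref{semisimplicity} the category $\on{P}_{L^+G}(\Gr)$ is semisimple with simple objects the $\IC_\mu$, and the functor $\mA_1\star\mA_2=m_!(\mA_1\tilde\boxtimes\mA_2)$ is biadditive and commutes with finite direct sums; hence it suffices to treat the case $\mA_i=\IC_{\mu_i}$. As recalled just before the statement, $\IC_{\mu_1}\tilde\boxtimes\IC_{\mu_2}$ is the intersection cohomology sheaf of the irreducible projective variety $Y:=\Gr_{\leq\mu_1}\tilde\times\Gr_{\leq\mu_2}$, which by Proposition~\ref{geom of Sch}(1) (together with the fact that the twisted product adds dimensions) has dimension $d=(2\rho,\mu_1+\mu_2)$, and $m\colon Y\to\Gr_{\leq\mu_1+\mu_2}$ is proper since the global $m$ is ind-proper.

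I would then invoke the general principle: for a proper morphism $f\colon Y\to Z$ with $Y$ irreducible of dimension $d$ which is semismall — i.e.\ for a stratification $Z=\bigsqcup_\alpha Z_\alpha$ over which $f$ is a fibration one has $\dim Z_\alpha+2\dim f^{-1}(z_\alpha)\leq d$ for all $\alpha$ — the complex $f_*\IC_Y$ (normalised so that $\IC_Y$ is perverse) is perverse. This reduces the whole statement to checking that $m$ is semismall, and I would take the stratification of $\Gr_{\leq\mu_1+\mu_2}$ by the Schubert cells $\Gr_\nu$ with $\nu\leq\mu_1+\mu_2$ dominant.

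By $L^+G$-equivariance $m$ is a fibration over each $\Gr_\nu$, so it is enough to bound the fibre over $t^\nu$. In the loop-group description $Y=(LG)_{\leq\mu_1}\times^{L^+G}\Gr_{\leq\mu_2}$ one identifies
\[
m^{-1}(t^\nu)\cong\bigl\{\,gL^+G\in\Gr_{\leq\mu_1}\ \bigm|\ g^{-1}t^\nu\in\Gr_{\leq\mu_2}\,\bigr\}.
\]
The key estimate is $\dim m^{-1}(t^\nu)\leq(\rho,\mu_1+\mu_2-\nu)$. Granting it, and using $\dim\Gr_\nu=(2\rho,\nu)$ from Proposition~\ref{geom of Sch}(1),
\[
\dim\Gr_\nu+2\dim m^{-1}(t^\nu)\ \leq\ (2\rho,\nu)+2(\rho,\mu_1+\mu_2-\nu)\ =\ (2\rho,\mu_1+\mu_2)\ =\ d,
\]
so $m$ is semismall; in fact every nonempty stratum is relevant, equality holding throughout, which is precisely what later makes $m_*\IC$ a sum of $\IC$-sheaves indexed by Mirkovi\'c--Vilonen cycles.

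The fibre dimension estimate is the one genuinely nontrivial ingredient and thus the main obstacle. I would prove it using the Mirkovi\'c--Vilonen theory of semi-infinite orbits $S_\eta=LU\cdot t^\eta$ (for $U$ the unipotent radical of $B$): intersecting $m^{-1}(t^\nu)$ with the $S_\eta$ and applying the basic bounds on $\dim(\Gr_\lambda\cap S_\eta)$, inductively over the two convolution factors, yields the required inequality. Since this input is developed only later in these notes (and in \cite{MV}), at the present stage one forward-references it, or alternatively establishes the rank-one case by the explicit $\SL_2$-computation already carried out in the proof of Proposition~\ref{geom of Sch}(2) and bootstraps to general $G$. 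Everything outside this dimension count is formal, so the semi-infinite orbit estimate is the single point requiring real work.
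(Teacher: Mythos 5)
Your argument is correct in outline, but it is not the proof these notes actually execute: it is the first of the two approaches mentioned right after the statement (the Lusztig/Mirkovi\'c--Vilonen/Ng\^o--Polo route via semismallness), whereas the proof carried out in the notes is the second one, via the fusion product. Concretely, the notes deduce Proposition \ref{conv prod} in Remark \ref{conv perverse}: one passes to the Beilinson--Drinfeld Grassmannian $\Gr_{X^2}$, where by Lemma \ref{generic conv} the twisted external product becomes an honest external product away from the diagonal, and then the ULA property together with Theorem \ref{ULA perverse} shows that $\mA_1\boxstar\mA_2$ is the middle extension from $X^2-\Delta$, hence perverse, hence so is its restriction to the diagonal up to shift. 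That route yields a strictly stronger statement ($\mA_1$ need not be $L^+G$-equivariant) and produces the commutativity constraint at the same time. Your route is purely local, needs no curve, and gives semismallness of $m$ as a byproduct --- which the notes instead derive \emph{from} perversity in Corollary \ref{Satake fiber} and Remark \ref{semismall}. Its price is that everything rests on the fibre bound $\dim m^{-1}(t^\nu)\le(\rho,\mu_1+\mu_2-\nu)$, which you do not prove but correctly locate in the semi-infinite orbit theory (\cite[Lemmas 4.3, 4.4]{MV}, \cite[Corollary 9.7]{NP}); since Theorem \ref{MV theory}(1) is established in these notes independently of Proposition \ref{conv prod}, forward-referencing it creates no circularity.

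Two corrections. First, the parenthetical claim that ``every nonempty stratum is relevant, equality holding throughout'' is false: for $G$ with $\hat G=\SL_2$ and $\mu_1=2\varpi$, $\mu_2=4\varpi$ one has $t^0\in\Gr_{\leq 6\varpi}=m(Y)$, but the fibre over $t^0$ is all of $\Gr_{\leq 2\varpi}$, of dimension $2<(\rho,6\varpi)=3$; equivalently $V_0$ does not occur in $V_{2\varpi}\otimes V_{4\varpi}$, so by Corollary \ref{Satake fiber} there is no top-dimensional component (compare Remark \ref{semismall} and \cite{Ha}). This is harmless for semismallness, which is an inequality, but the claim should be dropped. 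Second, the proposed fallback of proving the fibre estimate by ``the explicit $\SL_2$-computation and bootstrapping to general $G$'' is not viable: the curves $C_{\mu,\al}$ from the proof of Proposition \ref{geom of Sch} control codimension-one degenerations of Schubert cells, not fibres of $m$, and there is no rank-one reduction of the fibre dimension. The semi-infinite orbit estimate is the one genuine input, exactly as you say, and it must actually be supplied.
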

This is called a miraculous theorem in \cite{BD}. It allows us to stay in the world of abelian monoidal categories rather than the more involved notion of triangulated monoidal categories. Another miracle for $\Sat_G$ is Lemma \ref{left=right}.

There are essentially two different approaches to this proposition. 
The first approach is based on some semi-infinite geometry of affine Grassmannians (which will be introduced in \S~\ref{MV semiinfinite} below) in one way or another.  It was used by Lusztig to prove a numerical result of the affine Hecke algebra \cite{Lu} that is equivalent to the proposition (see \cite[Proposition 2.2.1]{Gi1}), and  by Mirkovi\'c-Vilonen to prove the semismallness of the convolution map that is also equivalent to the proposition (see  \cite[Lemma 4.3, 4.4]{MV} and Remark \ref{semismall} below). It was also used in \cite[Corollary 9.7]{NP} (see also \cite[\S\ 2.2]{Z14})  to give a direct proof of this proposition.

The second approach is based on Beilinson-Drinfeld Grassmannians. Using nearby cycles functor Gaitsgory proved a stronger result: Namely in Proposition \ref{conv prod}, $\mA_1$ can be any perverse sheaf on $\Gr$ (note that no equivariance condition on $\mA_1$  is needed in order to define the external twisted product). See Remark \ref{conv perverse} below. 
 
We need a corollary of Proposition \ref{conv prod}.
Let $\mmu=(\mu_1,\ldots,\mu_n)$ be a sequence of dominant coweights. By Proposition \ref{semisimplicity} and \ref{conv prod}, we can write
\[\IC_{\mu_1}\star\cdots\star\IC_{\mu_n}=\bigoplus_{\la} V_{\mmu}^\la\otimes \IC_\la,\]
where $V_{\mmu}^\la=\Hom_{\on{P}(\Gr)}(\IC_\la, \IC_{\mu_1}\star\cdots\star\IC_{\mu_n})$. Let $|\mmu|=\sum \mu_i$.
\begin{cor}\label{Satake fiber}
There is a canonical isomorphism
\[V_{\mmu}^\la\cong \on{H}^{(2\rho,|\mmu|-\la)}_c(\Gr_{\mmu}\cap m^{-1}(t^\la),\Ql),\]
and the latter vector space has a basis given by irreducible components of $\Gr_{\mmu}\cap m^{-1}(t^\la)$ of dimension $(2\rho,|\mmu|-\la)$.
\end{cor}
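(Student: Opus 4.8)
The plan is to realise $V_\mmu^\la$ as a single cohomology group of the costalk of the convolution $m_*\IC_\mmu$ at the point $t^\la$, and then to compute that costalk geometrically by proper base change along the convolution map. Write $\IC_\mmu:=\IC_{\mu_1}\tilde\boxtimes\cdots\tilde\boxtimes\IC_{\mu_n}$ for the intersection cohomology sheaf of $\Gr_{\leq\mmu}$, so that $\IC_{\mu_1}\star\cdots\star\IC_{\mu_n}=m_*\IC_\mmu$; since $m$ is ind-proper we have $m_!=m_*$. By Proposition \ref{semisimplicity} and Proposition \ref{conv prod}, $m_*\IC_\mmu$ is a semisimple perverse sheaf, so $V_\mmu^\la$ is precisely the multiplicity of $\IC_\la$ in it.

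The first step is to detect this multiplicity by restricting to the stratum $\Gr_\la$. Writing $i_\la:\Gr_\la\hookrightarrow\Gr$, the support and strict-support conditions for intersection cohomology sheaves show that $i_\la^*\IC_{\la'}$ is concentrated in cohomological degrees $\leq-(2\rho,\la)$, that $i_\la^*\IC_\la=\Ql[(2\rho,\la)]$ contributes exactly $\Ql$ in the top degree $-(2\rho,\la)$, and that for $\la'>\la$ the stalk lives in strictly smaller degrees. Hence $\mH^{-(2\rho,\la)}(i_\la^*m_*\IC_\mmu)\cong V_\mmu^\la$. Because $\IC_\la$ and $\IC_\mmu$ are Verdier self-dual and $m$ is proper, I would run the dual version with $i_\la^!$ and the cosupport conditions; this isolates the same multiplicity in the bottom degree $-(2\rho,\la)$ of the costalk, but now expressed through \emph{compactly supported} cohomology, which is the shape demanded by the statement.

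Next I would compute the costalk geometrically. Proper base change along the Cartesian square cutting out the fibre $m^{-1}(t^\la)$ identifies $i_{t^\la}^!m_*\IC_\mmu$ with the compactly supported cohomology of $m^{-1}(t^\la)$ with coefficients in the restriction of $\IC_\mmu$. Stratifying $\Gr_{\leq\mmu}$ by its twisted-product cells and using $\IC_\mmu|_{\Gr_\mmu}=\Ql[(2\rho,|\mmu|)]$ on the open dense cell, the shift by $(2\rho,|\mmu|)$ converts the relevant complex-degree $-(2\rho,\la)$ into ordinary degree $(2\rho,|\mmu|-\la)$. The key geometric input is the semismallness of the convolution map (Mirkovi\'c--Vilonen, cf.\ \cite{MV}): over $\Gr_\la$ the intersection $\Gr_\mmu\cap m^{-1}(t^\la)$ has dimension at most $(2\rho,|\mmu|-\la)/2$ (note $(2\rho,|\mmu|-\la)$ is even by the parity statement of Corollary \ref{parity}), while the lower strata and the singular part of $\IC_\mmu$ contribute only in strictly smaller degree. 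This should yield
\[V_\mmu^\la\cong\on{H}^{(2\rho,|\mmu|-\la)}_c(\Gr_\mmu\cap m^{-1}(t^\la),\Ql),\]
and, as this is the top compactly supported cohomology of a variety of dimension $(2\rho,|\mmu|-\la)/2$, it is freely spanned by the maximal-dimensional irreducible components of the fibre.

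The hard part will be this last reduction: showing, via the semismallness estimates, that both the discrepancy between $\IC_\mmu$ and the shifted constant sheaf on $\Gr_\mmu$ and the contributions of all the boundary strata $\Gr_{\mmu'}$, $\mmu'<\mmu$, are pushed strictly below the top degree, so that only the top-dimensional components of $\Gr_\mmu\cap m^{-1}(t^\la)$ survive. Everything preceding it is formal once semisimplicity, perversity, properness of $m$, and the (co)support conditions are in hand.
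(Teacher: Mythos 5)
Your overall architecture is the same as the paper's: identify $V_\mmu^\la$ with a single cohomology group of the (co)stalk of $\IC_{\mu_1}\star\cdots\star\IC_{\mu_n}$ at $t^\la$, compute that group by base change along the ind-proper map $m$, and use the stratification of the fibre together with perversity to see that only the top-dimensional components of $\Gr_\mmu\cap m^{-1}(t^\la)$ survive. Your first step is already correct and complete as written: strict support gives $\mH^{-(2\rho,\la)}\bigl(i_\la^* m_*(\IC_{\mu_1}\tilde\boxtimes\cdots\tilde\boxtimes\IC_{\mu_n})\bigr)\cong V_\mmu^\la$ using the \emph{ordinary} stalk.

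The gap is in the switch to $i^!$. Proper base change pairs $i^*$ with $m_!$ and $i^!$ with $m_*$: writing $i'$ for the inclusion of the fibre, one has $i_{t^\la}^* m_!\,\mF\cong R\Gamma_c\bigl(m^{-1}(t^\la), i'^*\mF\bigr)$, whereas $i_{t^\la}^! m_*\,\mF\cong R\Gamma\bigl(m^{-1}(t^\la), i'^!\mF\bigr)$ with the $!$-restriction as coefficients. You have paired $i^!$ with compactly supported cohomology of the $*$-restriction, which is the wrong combination. Two problems follow. First, $i'^!$ of the twisted IC sheaf on the (singular) fibre is \emph{not} a shifted constant sheaf on the open part $\Gr_\mmu\cap m^{-1}(t^\la)$ --- it is a shifted dualizing complex there --- so the stratification computation you describe would produce Borel--Moore homology rather than $\on{H}_c$, and would need an extra dualization at the end; the degree bookkeeping also comes out reflected. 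Second, the reason $\on{H}_c$ appears in the statement has nothing to do with choosing the costalk over the stalk: it is because $\Gr_\mmu\cap m^{-1}(t^\la)$ is the \emph{open} stratum of the proper fibre $\Gr_{\leq\mmu}\cap m^{-1}(t^\la)$ and therefore enters the stratification spectral sequence through extension by zero. The repair is simply to drop the dualization step: keep the ordinary stalk from your first paragraph, apply $i^*m_!=m'_!\,i'^*$, use $\IC_{\mu_1}\tilde\boxtimes\cdots\tilde\boxtimes\IC_{\mu_n}|_{\Gr_\mmu}=\Ql[(2\rho,|\mmu|)]$ on the open stratum, and then your dimension estimates (for the open stratum and, inductively, for the boundary strata $\Gr_{\mmu'}\cap m^{-1}(t^\la)$, $\mmu'<\mmu$) finish the argument exactly as in the paper; note also that the semismallness you quote from Mirkovi\'c--Vilonen is, in this text, itself deduced from the perversity of the convolution, so citing Proposition \ref{conv prod} suffices.
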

\begin{proof}Let $d=\dim \Gr_{\mmu}\cap m^{-1}(t^\la)$. By stratifying $\Gr_{\leq \mu_\bullet}$ as $\sqcup_{\mu'_\bullet}\Gr_{\mu'_\bullet}$,
a spectral sequence argument shows that the degree $2d- (2\rho,|\mmu|)$ stalk cohomology of $\IC_{\mu_1}\star\cdots\star\IC_{\mu_n}$ at $t^\la$ is given by
$$\on{H}^{2d}_c(\Gr_{\mmu}\cap m^{-1}(t^\la),\Ql).$$
It follows from the perversity of $\IC_{\mu_1}\star\cdots\star\IC_{\mu_n}$ that $2d- (2\rho,|\mmu|)\leq -(2\rho,\la)$, or equivalently  $d\leq (\rho,|\mmu|-\la)$. In addition, $V_{\mmu}^\la$ equals to the degree $-(2\rho,\la)$ stalk cohomology of $\IC_{\mmu}$ at $t^\la$. The corollary follows.
\end{proof}
\begin{rmk}\label{semismall} 
(i) Note that the argument also implies that 
\[\dim  \Gr_{\leq \mmu}\cap m^{-1}(t^{\la})\leq (\rho,|\mmu|-\la).\]
It follows that the convolution map is semismall. But in general, there might exists irreducible component of $\Gr_{\leq \mmu}\cap m^{-1}(t^{\la})$ of dimension strictly smaller than $(\rho,|\mmu|-\la)$. See \cite[Remark 4.3, 8.3]{Ha} for examples.

(ii) Sometimes these $ \Gr_{\leq \mmu}\cap m^{-1}(t^{\la})$ are called Satake fibers. In the case $G=\GL_n$, sometimes they can be identified with the Springer fibers (see Example \ref{conv v.s. springer}). In particular, it implies the semismallness of the Springer resolution $\tilde\mN_n\to \mN_n$.
\end{rmk}

By identifying $(\mA_1\star\mA_2)\star\mA_3$ and $\mA_1\star(\mA_2\star\mA_3)$ with $\mA_1\star\mA_2\star\mA_3$, one equips the convolution product with natural associativity constraints. We call the monoidal category $(\on{P}_{L^+G}(\Gr),\star)$ the Satake category, and sometimes denoted by $\Sat_G$ for simplicity.

\subsection{$\Sat_G$ as a Tannakian category.}\label{mon str of H}
In this subsection, we will first endow the hypercohomology functor $\on{H}^*: \Sat_G\to \on{Vect}_{\Ql}$ with a monoidal structure following \cite[\S\ 2.3]{Z14}. Then we will explain how to upgrade the monoidal structure on $(\Sat_G,\on{H}^*)$ to a symmetric monoidal structure. The actual proofs will be based on the later interpretation of the convolution product as a fusion product.

\begin{prop}\label{fiber functor}
The hypercohomology functor $\on{H}^*:\Sat_G\to \on{Vect}_{\Ql}$ has a canonical monoidal structure.
\end{prop}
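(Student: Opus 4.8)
The plan is to construct a natural isomorphism $\on{H}^*(\mA_1\star\mA_2)\cong \on{H}^*(\mA_1)\otimes_{\Ql}\on{H}^*(\mA_2)$ and to verify that it is compatible with the associativity constraints. Since the convolution map $m:\Gr\tilde\times\Gr\to\Gr$ is ind-proper (so $m_!=m_*$), applying $\on{H}^*(\Gr,-)$ to the definition $\mA_1\star\mA_2=m_!(\mA_1\tilde\boxtimes\mA_2)$ immediately reduces the problem to computing the cohomology of the external twisted product on the convolution space:
\[\on{H}^*(\Gr,\mA_1\star\mA_2)\;\cong\;\on{H}^*\bigl(\Gr\tilde\times\Gr,\ \mA_1\tilde\boxtimes\mA_2\bigr).\]

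First I would exploit the projection $\pr_1:\Gr\tilde\times\Gr\to\Gr$, which is the locally trivial fibration with fiber $\Gr$ arising from the presentation $\Gr\tilde\times\Gr=L\underline G\times^{L^+\underline G}\Gr$. By the construction of the twisted external product (\S\ \ref{A:twist product}), the restriction of $\mA_1\tilde\boxtimes\mA_2$ to each fiber $\pr_1^{-1}(x)\cong\Gr$ is a shift of $\mA_2$, while along the base it is controlled by $\mA_1$. The essential computation is then that the pushforward is untwisted,
\[R\pr_{1*}(\mA_1\tilde\boxtimes\mA_2)\;\cong\;\mA_1\otimes_{\Ql}\on{H}^*(\Gr,\mA_2),\]
with $\on{H}^*(\Gr,\mA_2)$ regarded as a constant graded coefficient object. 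Two inputs make this hold: (a) the $L^+\underline G$-torsor $L\underline G\to\Gr$ used to form the associated bundle has \emph{connected} structure group (as $G$ is connected), so the monodromy action on $\on{H}^*(\Gr,\mA_2)$ is trivial and no twisting local system survives; and (b) the Leray spectral sequence for $\pr_1$ degenerates, which I would deduce from purity of the $\IC_\mu$'s together with the parity vanishing of \ref{parity} (equivalently, from the semisimplicity of Proposition \ref{semisimplicity}), ensuring that $R\pr_{1*}$ is a genuine tensor product rather than a nonsplit filtered complex.

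Granting this, a second application of $\on{H}^*(\Gr,-)$ yields the Künneth-type isomorphism $\on{H}^*(\Gr\tilde\times\Gr,\mA_1\tilde\boxtimes\mA_2)\cong\on{H}^*(\Gr,\mA_1)\otimes_{\Ql}\on{H}^*(\Gr,\mA_2)$, manifestly functorial in each argument. For associativity I would run the same argument with the $n$-fold convolution $m:\Gr\tilde\times\cdots\tilde\times\Gr\to\Gr$ of \eqref{conv m}: ind-properness gives $\on{H}^*(\mA_1\star\cdots\star\mA_n)\cong\on{H}^*(\Gr\tilde\times\cdots\tilde\times\Gr,\mA_1\tilde\boxtimes\cdots\tilde\boxtimes\mA_n)$, and iterating the fibration computation along the successive projections $\pr_j$ produces $\bigotimes_i\on{H}^*(\mA_i)$. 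Since both bracketings $(\mA_1\star\mA_2)\star\mA_3$ and $\mA_1\star(\mA_2\star\mA_3)$ factor canonically through this common threefold object, the associativity constraint of $\star$ is transported by $\on{H}^*$ to the tautological associativity of $\otimes$ on $\on{Vect}_{\Ql}$, giving the required coherence.

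The main obstacle is precisely step (b)/(a): justifying $R\pr_{1*}(\mA_1\tilde\boxtimes\mA_2)\cong\mA_1\otimes\on{H}^*(\Gr,\mA_2)$ as a split, untwisted isomorphism and not merely up to a spectral sequence or a potentially nontrivial local system. This is exactly where connectedness of $L^+\underline G$ and the purity/parity input are indispensable. It is also the reason the paper prefers to derive the monoidal structure from the fusion-product description via Beilinson-Drinfeld Grassmannians: there the identification $\on{H}^*(\mA_1\star\mA_2)\cong\on{H}^*(\mA_1)\otimes\on{H}^*(\mA_2)$ drops out on the nose from the factorization isomorphism over $X^2\setminus\Delta$ and proper base change, and that route has the further payoff of simultaneously producing the commutativity constraint needed to upgrade $(\Sat_G,\on{H}^*)$ to a symmetric monoidal functor in the next step.
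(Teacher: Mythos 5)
Your reduction to computing $\on{H}^*(\Gr\tilde\times\Gr,\mA_1\tilde\boxtimes\mA_2)$ is fine, and the graded statement you want is true: by Zariski-local triviality of the torsor (Lemma \ref{big cell}), the decomposition theorem, and triviality of the monodromy of the connected structure group on $\on{H}^*(\Gr,\mA_2)$, one does have $R\pr_{1*}(\mA_1\tilde\boxtimes\mA_2)\cong\mA_1\otimes\on{H}^*(\Gr,\mA_2)$ \emph{non-canonically}. The genuine gap is that this does not produce a monoidal structure. Since the twisted product has no projection to the second factor, there is no canonical map $\on{H}^*(\mA_1)\otimes\on{H}^*(\mA_2)\to\on{H}^*(\Gr\tilde\times\Gr,\mA_1\tilde\boxtimes\mA_2)$ to begin with; what connectedness plus purity/parity actually buy is degeneration of the Leray spectral sequence of $\pr_1$, i.e.\ a canonical \emph{filtration} on $\on{H}^*(\mA_1\star\mA_2)$ whose associated graded is $\on{H}^*(\mA_1)\otimes\on{H}^*(\mA_2)$. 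A filtration of vector spaces splits, but not canonically, and without a canonical splitting the structure maps are undefined and the associativity coherence you propose to check is vacuous. This is precisely the point your "obstacle" paragraph waves at without resolving.

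The paper closes exactly this gap in one of two ways. The equivariant route (Lemmas \ref{soergel monoidal} and \ref{left=right}) manufactures the missing canonical map: after pulling back along the $L^mG$-torsor $\pi_m:\Gr^{(m)}\to\Gr$ the twisted product untwists into an honest product, the equivariant external product gives a canonical arrow $\on{H}^*_{L^+G}(\mA_1)\otimes_{R_{G}}\on{H}^*_{L^+G}(\mA_2)\to\on{H}^*_{L^+G}(\mA_1\star\mA_2)$, equivariant formality shows it is an isomorphism, and Lemma \ref{left=right} (proved via minuscule and quasi-minuscule generation and the geometric PRV) is needed so that the two $R_{G}$-module structures agree before specialising along $R_{G}\to\Ql$ via \eqref{de-equiv}. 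The fusion route instead identifies $\mA_1\star\mA_2$ with the special fibre of $j_{!*}((\mA_1\boxtimes\mA_2)|_{X^2-\Delta})$ over the Beilinson--Drinfeld Grassmannian, where the Künneth isomorphism is canonical off the diagonal and propagates by the ULA/local-system property of $\mH^*$. If you want to salvage your fibration argument, you must supply one of these mechanisms (or an equivalent one) for producing the map itself, not merely the dimension count.
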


The idea is as follows. First assume that $k=\bC$ so we can regard $\Gr$ and $\Gr\tilde\times\Gr$ as infinite dimensional complex analytic spaces as in \S~\ref{top loop group}. Note that the category $\on{P}_{L^+G}(\Gr_G)$ can also be defined via the analytic topology. It is easy to see that the following holds (see \cite[Proposition 2.1.1]{Gi1}).
\begin{lem}
Under the isomorphisms in Theorem \ref{top triv}, $\mA_1\tilde\boxtimes\mA_2$ on $\Gr\tilde\times\Gr$ is identified with $\mA_1\boxtimes\mA_2$ on $\Omega K\times\Omega K$.
\end{lem}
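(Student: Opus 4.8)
The plan is to deduce the statement from the topological triviality of the $L^+G$-torsor $LG\to\Gr$ furnished by Theorem \ref{top triv}(1). Recall from §\ref{A:twist product} and §\ref{A:ind} that the external twisted product is built by descent along this torsor: writing $p\colon LG\to\Gr$ for the projection and $\pi\colon LG\times\Gr\to LG\times^{L^+G}\Gr=\Gr\tilde\times\Gr$ for the quotient by the $L^+G$-action $h\cdot(g,x)=(gh^{-1},hx)$, the complex $p^*\mA_1\boxtimes\mA_2$ on $LG\times\Gr$ is $L^+G$-equivariant (since $p^*\mA_1$ is equivariant for the right action on $LG$ and $\mA_2\in\Sat_G$ is $L^+G$-equivariant), and $\mA_1\tilde\boxtimes\mA_2$ is defined as its descent, so that $\pi^*(\mA_1\tilde\boxtimes\mA_2)\cong p^*\mA_1\boxtimes\mA_2$.

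First I would record that Theorem \ref{top triv}(1) supplies a global section: the canonical map $s\colon\Omega K\to LG$ satisfies that $p\circ s\colon\Omega K\to\Gr$ is a homeomorphism and that the factorisation $LG=\Omega K\cdot L^+G$ trivialises $LG\cong\Omega K\times L^+G$. Consequently the identification $\Omega K\times\Omega K\cong\Gr\tilde\times\Gr$ of part (2) factors, via the homeomorphism $\Gr\cong\Omega K$ on the second factor, through $\Omega K\times\Gr\cong LG\times^{L^+G}\Gr$, and the latter is realised by the map $\tilde s\colon\Omega K\times\Gr\to LG\times\Gr$, $(\omega,x)\mapsto(s(\omega),x)$; by construction $\pi\circ\tilde s$ is precisely this identification.

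Next I would compute the transported sheaf by pulling back along $\tilde s$. Since $\pi^*(\mA_1\tilde\boxtimes\mA_2)\cong p^*\mA_1\boxtimes\mA_2$ and $\pi\circ\tilde s$ is the identification of part (2), functoriality of pullback gives
$$\mA_1\tilde\boxtimes\mA_2\;\cong\;\tilde s^*\pi^*(\mA_1\tilde\boxtimes\mA_2)\;\cong\;\tilde s^*(p^*\mA_1\boxtimes\mA_2)\;\cong\;(p\circ s)^*\mA_1\boxtimes\mA_2.$$
Because $p\circ s$ is exactly the homeomorphism $\Omega K\cong\Gr$ of Theorem \ref{top triv}(1), the factor $(p\circ s)^*\mA_1$ is identified with $\mA_1$; using $\Gr\cong\Omega K$ on the second factor as well, this exhibits $\mA_1\tilde\boxtimes\mA_2$ as $\mA_1\boxtimes\mA_2$ on $\Omega K\times\Omega K$, as claimed.

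The argument is essentially formal once Theorem \ref{top triv} is available; the only bookkeeping requiring attention is the system of perverse shifts implicit in forming $p^*\mA_1$ and in the descent, but since every identification above is a homeomorphism these shifts are transported without change. The genuine content — and hence the ``hard part'' — lies entirely in Theorem \ref{top triv}: the product $\tilde\boxtimes$ is truly non-split in general, and it collapses to an honest external product precisely because the Pressley–Segal factorisation $LG=\Omega K\cdot L^+G$ splits the torsor $LG\to\Gr$ topologically.
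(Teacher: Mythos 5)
Your argument is correct and is exactly the mechanism the paper is implicitly invoking (the paper itself only cites Ginzburg's Proposition 2.1.1 for this "easy to see" fact): the Pressley--Segal factorisation $LG=\Omega K\cdot L^+G$ splits the torsor $LG\to\Gr$ topologically, the identification of Theorem \ref{top triv}(2) is the one induced by the resulting section, and pulling the descent isomorphism $\pi^*(\mA_1\tilde\boxtimes\mA_2)\cong p^*\mA_1\boxtimes\mA_2$ back along that section untwists the product, with the perverse shifts from the finite-dimensional approximations cancelling as you note.
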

It follows from the K\"unneth formula that $\on{H}^*(-):=\on{H}^*(\Gr,-):\Sat_G\to \on{Vect}_{\Ql}$ admits a monoidal structure as claimed in Proposition \ref{fiber functor}.

If $k$ is general, Theorem \ref{top triv} and the above lemma do not literally make sense. There are two alternative approaches to make sense of these topological isomorphisms. One is based on a deformation from $\Gr\tilde\times \Gr$ to $\Gr\times \Gr$ via Beilinson-Drinfeld Grassmannians, which will be explained in \S\ \ref{fusion}. Here we explain the other approach using equivariant cohomology, which has the advantage of being purely local (i.e. independent of the choice of a curve).

Recall that for $\mA\in\Sat_G$, its $L^+G$-equivariant cohomology $\on{H}^*_{L^+G}(\mA)$ is a free $R_{G}$-module (cf. \S~\ref{A:char class}). By (informally) thinking $L^+G$-equivariant sheaves on $\Gr$ as sheaves on $L^+G\backslash LG/L^+G$,  there should exist another $R_{G}$-module structure on $\on{H}^*_{L^+G}(\mA)$, which can be literally constructed as follows. Let $L^+G^{(m)}\subset L^+G$ denote the $m$-th congruence subgroup (i.e. the kernel of the projection $L^+G\to L^mG$), and let $\Gr^{(m)}=LG/L^+G^{(m)}$ denote the universal $L^m G$-torsor on $\Gr$. Then $\Gr^{(m)}$ admits an action of $L^+G\times L^mG$ and the projection 
$$\pi_m:\Gr^{(m)}\to \Gr$$ is $L^+G$-equivariant. Then by \eqref{coh free action},
\[\on{H}^*_{L^+G}(\mA)= \on{H}^*_{L^+G\times L^mG}(\pi_m^*\mA),\] 
giving an $R_{G}$-bimodule structure on $\on{H}^*_{L^+G}(\mA)$. Note that this structure is independent of $m$, as soon as $m>0$.

Recall that the category of bimodules of a (unital) algebra $R$ has a standard monoidal structure $(M,N)\mapsto M\otimes_RN$. We have
\begin{lem}\label{soergel monoidal}
For every $\mA_1,\mA_2,\ldots,\mA_n$, there is a canonical isomorphism of $R_{G}$-bimodules
\[\on{H}^*_{L^+G}(\mA_1\star\cdots\star\mA_n)\cong \on{H}^*_{L^+G}(\mA_1)\otimes_{R_{G}}\cdots\otimes_{R_{G}}\on{H}^*_{L^+G}(\mA_n),\]
satisfying the natural compatibility conditions. Therefore, there is a natural monoidal structure on $\on{H}^*_{L^+G}(-):\Sat_G\to (\on{R}_{G}\otimes\on{R}_{G})\on{-mod}$. 
\end{lem}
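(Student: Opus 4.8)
The plan is to reduce to the case $n=2$ --- the general statement then follows by an evident induction, using the associativity of both the convolution product and of $\otimes_{R_G}$ together with the naturality of the isomorphisms constructed below --- and then to identify the equivariant cohomology of the twisted product with a tensor product over $R_G$ by a K\"unneth argument adapted to the associated-bundle structure of $\Gr\tilde\times\Gr$. First I would observe that, since the convolution map $m\colon\Gr\tilde\times\Gr\to\Gr$ is ind-proper, we have $m_!=m_*$ (as already noted, Proposition \ref{conv prod} and the surrounding discussion), whence by functoriality of the equivariant pushforward
\[\on{H}^*_{L^+G}(\mA_1\star\mA_2)=\on{H}^*_{L^+G}(\Gr\tilde\times\Gr,\ \mA_1\tilde\boxtimes\mA_2).\]
So it suffices to compute the equivariant cohomology of the external twisted product.

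Next I would pass to the congruence-subgroup model already set up above. Since $\mA_2$ is supported on some $\Gr_{\leq\mu_2}$ and is $L^+G$-equivariant, its equivariant structure factors through $L^mG=L^+G/L^+G^{(m)}$ for $m\gg0$; consequently, over the support, the twisted product is an associated bundle
\[\Gr_{\leq\mu_1}\tilde\times\Gr_{\leq\mu_2}\cong \Gr^{(m)}_{\leq\mu_1}\times^{L^mG}\Gr_{\leq\mu_2},\]
where $\Gr^{(m)}_{\leq\mu_1}\to\Gr_{\leq\mu_1}$ is the restriction of the $L^mG$-torsor $\pi_m\colon\Gr^{(m)}\to\Gr$, and under this description $\mA_1\tilde\boxtimes\mA_2$ is by construction the descent of $\pi_m^*\mA_1\boxtimes\mA_2$. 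Applying the K\"unneth formula along this $L^mG$-bundle yields a canonical map
\[\on{H}^*_{L^+G}(\Gr,\mA_1)\otimes_{R_G}\on{H}^*_{L^+G}(\Gr,\mA_2)\longrightarrow \on{H}^*_{L^+G}(\Gr\tilde\times\Gr,\ \mA_1\tilde\boxtimes\mA_2),\]
where on the left the $\otimes_{R_G}$ pairs the \emph{second} (congruence-subgroup) $R_G$-structure on $\on{H}^*_{L^+G}(\mA_1)$ with $\on{H}^*_{L^mG}(\mathrm{pt})=R_G$ acting on $\on{H}^*_{L^mG}(\Gr_{\leq\mu_2},\mA_2)=\on{H}^*_{L^+G}(\mA_2)$. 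That this map is an isomorphism I would deduce from the degeneration of the associated K\"unneth/Leray spectral sequence, which is guaranteed by the freeness of $\on{H}^*_{L^+G}(\mA)$ over $R_G$ established in \S~\ref{A:char class}.

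Then I would match the module structures. The overall left $L^+G$-equivariance induces the left $R_G$-action and corresponds to the left $R_G$-structure on $\on{H}^*_{L^+G}(\mA_1)$; the outermost congruence group applied to the convolution as a whole induces the right $R_G$-action and corresponds to the right structure on $\on{H}^*_{L^+G}(\mA_2)$; and the structure group $L^mG$ of the torsor in the middle supplies exactly the $R_G$ over which we tensor, pairing the right structure of the first factor with the left structure of the second. This gives the asserted isomorphism of $R_G$-bimodules. The compatibility (associativity) constraints then follow formally: the $n$-fold isomorphism is built by iterating the $n=2$ case, and the two bracketings agree because both reduce, via the naturality of the K\"unneth isomorphism and of $\pi_m^*$, to the canonical associativity of $\otimes_{R_G}$.

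The hard part will be the K\"unneth step together with the bookkeeping of the two distinct $R_G$-actions: one must check that the associated-bundle description of $\Gr\tilde\times\Gr$ intertwines the geometric convolution with the algebraic tensor product over $R_G$, and, in particular, that the \emph{right} $R_G$-structure on the first factor --- the nonstandard structure coming from the congruence subgroups --- is precisely the one produced by the torsor's structure group $L^mG$. Freeness over $R_G$ is what removes all higher $\mathrm{Tor}$ and spectral-sequence obstructions, so that the comparison map is an isomorphism on the nose rather than merely up to a filtration; verifying that the identifications are independent of $m$ (for $m\gg0$) and natural in the $\mA_i$ is the remaining, essentially bookkeeping, ingredient.
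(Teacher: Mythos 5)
Your proposal is correct and follows essentially the same route as the paper: present $\Gr_{\leq\mu_1}\tilde\times\Gr_{\leq\mu_2}$ as $\Gr^{(m)}_{\leq\mu_1}\times^{L^mG}\Gr_{\leq\mu_2}$, use descent along the free $L^mG$-action (i.e.\ \eqref{coh free action}) to rewrite $\on{H}^*_{L^+G}$ of the twisted product as $\on{H}^*_{L^+G\times L^mG}(\pi_m^*\mA_1\boxtimes\mA_2)$, and then apply the equivariant K\"unneth formula \eqref{equiv Kunneth}, whose validity rests on equivariant formality (freeness over $R_G$) of objects of $\Sat_G$. Your explicit bookkeeping of which $R_G$-structure is paired by the torsor's structure group $L^mG$ is exactly what the paper leaves implicit.
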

This is a variant of Soergel's idea to attach equivariant sheaves on (affine) flag varieties to Soergel bimodules. Here we just explain the idea of constructing a map in the case $n=2$ and $\mA_i=\IC_{\mu_i}$. The general case is similar (e.g. see \cite[\S~2.3]{Z14}). We assume that the action of $L^+G$ on $\Gr_{\leq \mu_2}$ factors through $L^mG$ so $\Gr_{\leq \mu_1}\tilde\times\Gr_{\leq \mu_2}= \Gr_{\leq\mu_1}^{(m)}\times^{L^mG}\Gr_{\leq \mu_2}$. By the definition of the convolution product and \eqref{coh free action}
\[\on{H}^*_{L^+G}(\IC_{\mu_1}\star\IC_{\mu_2})=\on{H}^*_{L^+G\times L^mG}(\pi_m^*\IC_{\mu_1}\boxtimes \IC_{\mu_2}).\]
On the other hand by the equivariant K\"unneth formula \eqref{equiv Kunneth}, there is a canonical isomorphism
\[\on{H}^*_{L^+G}(\IC_{\mu_1})\otimes_{R_{G}} \on{H}^*_{L^+G}(\IC_{\mu_2})\cong \on{H}^*_{L^+G\times L^mG}(\pi_m^*\IC_{\mu_1}\boxtimes \IC_{\mu_2}).\]
Putting them together, we obtain the desired isomorphism.

Another ingredient we need is
\begin{lem}\label{left=right}
The two $R_{G}$-structures on $\on{H}^*_{L^+G}(\mA)$ coincide.
\end{lem}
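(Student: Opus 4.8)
The plan is to reduce the statement to a computation on torus fixed points. Both $R_G$-module structures on $M:=\on{H}^*_{L^+G}(\mA)$ are induced from the $\on{H}^*_{L^+G}(\Gr)$-module structure through two algebra homomorphisms $\alpha_\ell,\alpha_r\colon R_G\to \on{H}^*_{L^+G}(\Gr)$: the left one is the pullback $\on{H}^*_{L^+G}(\on{pt})\to\on{H}^*_{L^+G}(\Gr)$ along $\Gr\to\on{pt}$, while the right one is the composite $R_G=\on{H}^*_{L^mG}(\on{pt})\to\on{H}^*_{L^+G}(\Gr)=\on{H}^*_{L^+G\times L^mG}(\Gr^{(m)})$, i.e.\ cup product with the characteristic classes of the universal $L^mG$-torsor $\pi_m\colon\Gr^{(m)}\to\Gr$. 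Since both structures on $M$ are pulled back from $\on{H}^*_{L^+G}(\Gr)$, it suffices to prove the universal identity $\alpha_\ell=\alpha_r$, which no longer involves $\mA$.

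To prove $\alpha_\ell=\alpha_r$ I would restrict to the maximal torus $T\subset G\subset L^+G$ of constant loops and localise at the $T$-fixed points $\{t^\mu\mid \mu\in\xcoch(T)\}$. The affine Grassmannian is equivariantly formal (this is exactly the parity/purity input already used in Proposition \ref{semisimplicity} and Corollary \ref{parity}), so $\on{H}^*_T(\Gr)$ is a free $\on{H}^*_T(\on{pt})$-module and the restriction $\on{H}^*_{L^+G}(\Gr)\to\on{H}^*_T(\Gr)\to\prod_\mu\on{H}^*_T(t^\mu)$ is injective. Hence it is enough to check that $\alpha_\ell(x)$ and $\alpha_r(x)$ have equal image in $\on{H}^*_T(t^\mu)$ for every $x\in R_G$ and every $\mu$.

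At a fixed point $t^\mu$ the stabiliser $L^+G\cap t^\mu L^+G t^{-\mu}$ is connected, and its evaluation to $G$ identifies $\alpha_\ell$ with the map $R_G\to\on{H}^*_T(t^\mu)$ induced by the inclusion of this stabiliser. For $\alpha_r$ one checks that, with the commuting (direct product) convention for the $L^+G\times L^mG$-action on $\Gr^{(m)}$, the stabiliser of the canonical lift of $t^\mu$ is the graph $\{(h,\on{ev}_m(h))\}$ of $\on{ev}_m\colon L^+G\to L^mG$; consequently the two projections to $BG$ induce the \emph{same} isomorphism on $\on{H}^*(BG)=R_G$, so there is no inversion/antipode discrepancy. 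Thus the two restrictions agree at every fixed point, giving $\alpha_\ell=\alpha_r$.

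The main obstacle is the global, torsion-free input: one must know that $\on{H}^*_{L^+G}(\Gr)$ injects into its restriction to the $T$-fixed points, which is precisely where equivariant formality (hence the purity of the stalks of the $\IC_\mu$) is genuinely used; getting the action conventions right so that the fixed-point comparison produces no sign is the only delicate point of the local computation. A cleaner but equivalent route that bypasses the localisation theorem is to pass instead to the $\bG_m^{\on{rot}}$-fixed locus $\bigsqcup_\mu G\cdot t^\mu$: by the contraction (hyperbolic localisation) principle, $\on{H}^*_{L^+G}(\mA)$ is computed on this finite-dimensional union of partial flag varieties, on which both $R_G$-module structures are manifestly the standard $G$-equivariant one.
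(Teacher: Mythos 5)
Your proof is correct in substance but takes a genuinely different route from the paper's. The paper argues object by object: it checks the lemma directly for $\IC_\mu$ with $\mu$ minuscule or quasi-minuscule using the explicit descriptions of those Schubert varieties (Lemmas \ref{minusch} and \ref{quasiminusch}), propagates it to convolutions of such sheaves via the monoidal isomorphism of Lemma \ref{soergel monoidal}, and then invokes the geometric PRV theorem to realise every object of $\Sat_G$ as a direct summand of such convolutions. You instead note that both actions are cup products with the images of two algebra maps $\alpha_\ell,\alpha_r\colon R_G\to\on{H}^*_{L^+G}(\Gr)$, reducing the lemma to one universal identity independent of $\mA$, verified by restriction to $T$-fixed points; this dispenses with the PRV input and the case analysis, at the cost of needing injectivity of the fixed-point restriction. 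On that point, the equivariant formality theorem in the appendix is stated for IC sheaves and semisimple pure complexes, not for the constant sheaf on a singular $\Gr_{\leq\lambda}$; what you actually need is that $\on{H}^*(\Gr_{\leq\lambda})$ is concentrated in even degrees, which holds by the affine paving by Iwahori orbits, so the freeness of $\on{H}^*_T(\Gr_{\leq\lambda})$ over $R_T$ is available but should be justified this way. Two smaller corrections: the stabiliser in $L^+G\times L^mG$ of the canonical lift of $t^\mu$ is not the graph of $\on{ev}_m$ but its twist by conjugation, namely $\bigl\{(g,\on{ev}_m(t^{-\mu}gt^{\mu}))\mid g\in L^+G\cap t^\mu L^+G t^{-\mu}\bigr\}$; your conclusion survives because $\Ad_{t^\mu}$ acts trivially on the constant torus $T$, so both projections still induce the canonical map $R_G\to R_T$ --- this is precisely the delicate point you flagged and it must be written out. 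Finally, the closing alternative via the $\bG_m^{\on{rot}}$-fixed locus is too vague to stand on its own (in what sense $\on{H}^*_{L^+G}(\mA)$ ``is computed'' there is not explained), but it is not needed for your main argument.
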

\begin{proof}We explain the ideas and refer to \cite[\S\ 2.3]{Z14} for details. Using Lemma \ref{minusch} and Lemma \ref{quasiminusch}, it is easy to prove the lemma for $\mA=\IC_\mu$ where $\mu$ is minuscule or quasi-minuscule. Then by Lemma \ref{soergel monoidal}, the statement holds for the convolutions of these $\IC_\mu$s. Finally, every $\mA\in\Sat_G$ appears as a direct summand of these convolutions (this is a geometric version of the so-called PRV conjecture, which was proved in \cite[Proposition 9.6]{NP}). The lemma follows.\end{proof}

\begin{rmk}
This lemma is specific to the case $\underline G=G\otimes \mO$. For example, it is not true for equivariant perverse sheaves on the affine flag variety $\Fl_G$ (see Remark \ref{terminology} for the terminology).
\end{rmk}

Note that combining Proposition \ref{soergel monoidal} and  \ref{left=right}, we actually endow the equivariant hypercohomology functor
$$\on{H}^*_{L^+G}: \Sat_G\to \on{Proj}_{R_G}$$ 
a canonical monoidal structure, where as before $\on{Proj}_{R_G}$ denotes the category of finite projective $R_G$-modules, equipped with the usual tensor product. 
By \eqref{de-equiv}, after specialising along the augmentation map $R_{G}\to \Ql$, we obtain the desired monoidal structure on $\on{H}^*$.

We leave it as an exercise to verify that when $k=\bC$, the above monoidal structure on $\on{H}^*$ coincides with the one constructed using the homeomorphism $\Gr\tilde\times\Gr\cong \Gr\times \Gr$.

Having endowed $\on{H}^*$ with a monoidal structure, we now explain the construction of the commutativity constraints on $\Sat_G$ that makes $\on{H}^*$ a symmetric monoidal functor. The key statement is as follows.
\begin{prop}\label{existence of comm}
For every $\mA_1,\mA_2\in\Sat_G$, there exists a unique isomorphism $c'_{\mA_1,\mA_2}:\mA_1\star\mA_2\cong\mA_2\star\mA_1$ such that the following diagram is commutative
\[\begin{CD}
\on{H}^*(\mA_1\star\mA_2)@>\on{H}^*(c'_{\mA_1,\mA_2})>>\on{H}^*(\mA_2\star\mA_1)\\
@V\cong VV@VV\cong V\\
\on{H}^*(\mA_1)\otimes\on{H}^*(\mA_2)@>\cong>c_{\on{gr}}>\on{H}^*(\mA_2)\otimes\on{H}^*(\mA_1),
\end{CD}\]
where the vertical isomorphisms come from Proposition \ref{monoidal functor}, and the isomorphism $c_{\on{gr}}$ is the commutativity constraint for graded vector spaces, i.e. 
$$c_{\on{gr}}(v\otimes w)=(-1)^{\deg(v)\deg(w)}(w\otimes v).$$
\end{prop}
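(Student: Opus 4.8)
The plan is to separate the two assertions. Uniqueness is formal: the hypercohomology functor $\on{H}^*$ is faithful (it is exact, and every object of $\Sat_G$ is a direct sum of $\IC_\mu$'s with $\on{H}^*(\IC_\mu)\neq 0$ by Proposition \ref{semisimplicity}), so a morphism in $\Sat_G$ is determined by its image under $\on{H}^*$. Since the commutative square forces $\on{H}^*(c'_{\mA_1,\mA_2})$ to equal $c_{\on{gr}}$ conjugated by the monoidal isomorphisms of Proposition \ref{fiber functor}, there is at most one such $c'_{\mA_1,\mA_2}$. Thus the whole content is the \emph{existence} of an isomorphism $\mA_1\star\mA_2\cong\mA_2\star\mA_1$ in $\Sat_G$ inducing the prescribed graded swap on cohomology.

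For existence I would pass to the Beilinson-Drinfeld Grassmannian $\Gr_{X^2}\to X^2$ and use the factorization isomorphisms of Proposition \ref{III:Gr2}: over $X^2-\Delta$ we have $c:\Gr_{X^2}|_{X^2-\Delta}\cong(\Gr_X\times\Gr_X)|_{X^2-\Delta}$, and along the diagonal $\Delta:\Gr_X\cong\Gr_{X^2}|_\Delta$. Given $\mA_1,\mA_2\in\Sat_G$, I would spread them out to the $\Aut^+(D)$-twisted families $\mA_{1,X},\mA_{2,X}$ on $\Gr_X$ (using the $\Aut^+(D)$-equivariance supplied by the corollary to Proposition \ref{semisimplicity}), form $c^*(\mA_{1,X}\boxtimes\mA_{2,X})$ on $\Gr_{X^2}|_{X^2-\Delta}$, and take the unipotent nearby cycles $\Psi$ along the diagonal. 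The crucial geometric input — a strengthening of Proposition \ref{conv prod} — is that this nearby cycles sheaf is perverse and is canonically identified with the convolution $\mA_1\star\mA_2$; this is exactly the \emph{fusion product} advertised in the remark preceding the proposition.

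The commutativity constraint then comes from the $S_2$-symmetry of the picture. By Proposition \ref{III:Gr2} the swap $\sigma\in S_2$ acts on $\Gr_{X^2}$ covering the transposition of $X^2$, and $c$ is $S_2$-equivariant for the permutation of the two factors of $\Gr_X\times\Gr_X$. Since $\sigma$ fixes the diagonal, pulling the fusion sheaf back along $\sigma$ and invoking the $S_2$-equivariance of $c$ exhibits a canonical isomorphism between the nearby cycles of $\mA_{1,X}\boxtimes\mA_{2,X}$ and of $\mA_{2,X}\boxtimes\mA_{1,X}$; through the fusion$=$convolution identification this is the sought $c'_{\mA_1,\mA_2}:\mA_1\star\mA_2\cong\mA_2\star\mA_1$. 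On cohomology the effect of $\sigma$ is computed on $X^2-\Delta$, where the fusion sheaf is literally an external product, so the swap acts by the standard commutativity constraint of the symmetric monoidal derived category of sheaves; the Künneth isomorphism $\on{H}^*(\mA_1)\otimes\on{H}^*(\mA_2)\cong\on{H}^*(\mA_1\boxtimes\mA_2)$ turns this into the graded swap $c_{\on{gr}}$, Koszul sign included, the sign being forced by the cohomological shift in the perverse normalization.

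The main obstacle I expect is twofold. First, establishing that the nearby cycles is perverse and agrees with $\star$ — i.e.\ proving fusion $=$ convolution — is genuinely the heart of the matter and demands care with the dimension estimates behind Proposition \ref{conv prod} together with the $t$-exactness (up to shift) of $\Psi$. Second, and more delicate for this precise statement, is reconciling the monoidal structure on $\on{H}^*$ used here (the fusion one, which manifestly produces $c_{\on{gr}}$) with the equivariant-cohomology monoidal structure of Proposition \ref{fiber functor} against which the proposition is phrased; verifying that these two coincide is what makes the commutative square hold literally. Once both points are in place, that $c'$ is a tensor natural transformation and satisfies the hexagon axioms will follow from the associativity and $S_3$-symmetry of the fusion construction over $X^3$.
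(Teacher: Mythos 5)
Your uniqueness argument is exactly the paper's (faithfulness of $\on{H}^*$ on the semisimple category $\Sat_G$), and your existence strategy --- pass to $\Gr_{X^2}$, realize $\mA_1\star\mA_2$ as a fusion product, and extract $c'_{\mA_1,\mA_2}$ from the $S_2$-equivariance of the factorization isomorphism of Proposition \ref{III:Gr2}, with the Koszul sign produced by swapping the two factors of an external product off the diagonal --- is precisely the strategy the paper carries out in \S\ref{fusion}, culminating in Corollary \ref{existence of comm2}. The one substantive divergence is how the fusion product is constructed and identified with convolution. You take (unipotent) nearby cycles of $c^*(\mA_{1,X}\boxtimes\mA_{2,X})$ along the diagonal; this is Gaitsgory's original route, the ``second approach'' the paper alludes to after Proposition \ref{conv prod}. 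The paper instead works in the category $\Sat_{X^2}$ of ULA objects: it shows the global convolution $\mA_{1,X}\boxstar\mA_{2,X}$ is ULA over $X^2$ (stability of ULA under twisted external products and proper pushforward, Theorem \ref{ULA prop}) and perverse off the diagonal, and then Theorem \ref{ULA perverse} gives both that it equals $j_{!*}$ of its restriction and that $i^*\cong\Psi(j^*)$ there --- so the two descriptions of the fusion product coincide, but the ULA formalism is what actually discharges the step you flag as the main obstacle. Note that the difficulty is not quite where you place it: perversity of $\Psi[-1]$ is automatic and the dimension estimates behind Proposition \ref{conv prod} are not the point; what must be proved is the absence of vanishing cycles of the convolution sheaf along the diagonal, i.e.\ its local acyclicity over $X^2$, and your proposal asserts this rather than proving it. Your second flagged point --- reconciling the fusion monoidal structure on $\on{H}^*$ with the equivariant-cohomology one of Proposition \ref{fiber functor}, against which the proposition's square is phrased --- is a genuine requirement, and the paper likewise only records it (it is left as an exercise there); your proposal, like the paper's proof, depends on that compatibility to make the stated diagram commute literally.
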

\begin{rmk}The readers should notice that Proposition \ref{fiber functor}  puts an additional structure on $(\Sat_G, \on{H}^*)$ while Proposition \ref{existence of comm} establishes some properties of $(\Sat_G,\on{H}^*)$.
\end{rmk}
As $\on{H}^*:\Sat_G\to\on{Vect}_{\Ql}$ is faithful (by semisimplicity of $\Sat_G$), the uniqueness of $c'_{\mA_1,\mA_2}$ is clear. The content is its existence. It will be proved in \S~\ref{fusion} after we interpret the convolution product as a fusion product. Note that \cite[\S~2.4]{Z14} suggests that there might be a purely local approach (a.k.a without using a global curve) of this proposition.

Here is the corollary that we need.
First, we can modify the constraints $c'_{\mA_1,\mA_2}$  by a Koszul sign change as follows (see also \cite[\S~5.3.21]{BD} or \cite{MV} after Remark 6.2 for a more elegant treatment). Namely, the category $\on{P}_{L^+G}(\Gr)$ admits a $\bZ/2$-grading induced by \eqref{parity map}. We say $\mA$ has pure parity if $p(\on{Supp}(\mA))$ is $1$ or $-1$, in which case we define $p(\mA)=p(\on{Supp}(\mA))$. 
Here $\on{Supp}(\mA)$ denote the support of $\mA$. Then
\begin{equation}\label{koszul sign change}
c_{\mA_1,\mA_2}:=(-1)^{p(\mA_1)p(\mA_2)}c'_{\mA_1,\mA_2},
\end{equation}
if $\mA_1$ and $\mA_2$ have the pure parity $p(\mA_1)$ and $p(\mA_2)$. With this modified commutativity constraints, the following diagram is commutative
\[\begin{CD}
\on{H}^*(\mA_1\star\mA_2)@>\on{H}^*(c_{\mA_1,\mA_2})>>\on{H}^*(\mA_2\star\mA_1)\\
@V\cong VV@VV\cong V\\
\on{H}^*(\mA_1)\otimes\on{H}^*(\mA_2)@>\cong>c_{\on{vect}}>\on{H}^*(\mA_2)\otimes\on{H}^*(\mA_1),
\end{CD}\]
where the isomorphism $c_{\on{vect}}$ is the usual commutativity constraint for vector spaces, i.e. 
$c_{\on{vect}}(v\otimes w)=w\otimes v$.

\begin{thm}\label{Tannakian structure}
The monoidal category $\Sat_G$, equipped with the above constraints $c_{\mA_1,\mA_2}$, form a symmetric monoidal category. The hypercohomology functor $\on{H}^*$ is a tensor functor.  In fact, $(\on{P}_{L^+G}(\Gr),\star,c,\on{H}^*)$ form a neutral Tannakian category. 
\end{thm}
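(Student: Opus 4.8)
The statement to prove is Theorem~\ref{Tannakian structure}: that $(\on{P}_{L^+G}(\Gr),\star,c,\on{H}^*)$ forms a neutral Tannakian category. The plan is to assemble this from the pieces already established rather than to prove everything from scratch, since the two genuinely hard inputs---Proposition~\ref{fiber functor} (monoidal structure on $\on{H}^*$) and Proposition~\ref{existence of comm} (existence of the commutativity constraint)---are deferred to the fusion-product discussion in \S\ref{fusion}. My task here is really to verify the axioms of a symmetric monoidal category with the modified constraints, and then to check the four hypotheses of the Tannakian reconstruction theorem (rigidity, $\Ql$-linearity of $\End$ of the unit, exactness and faithfulness of the fiber functor).

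First I would verify that the Koszul-modified constraint $c_{\mA_1,\mA_2}=(-1)^{p(\mA_1)p(\mA_2)}c'_{\mA_1,\mA_2}$ defined in \eqref{koszul sign change} makes $\Sat_G$ a \emph{symmetric} monoidal category. The hexagon axioms and the symmetry relation $c_{\mA_2,\mA_1}\circ c_{\mA_1,\mA_2}=\id$ can be checked after applying $\on{H}^*$, because $\on{H}^*$ is faithful by the semisimplicity of $\Sat_G$ (Proposition~\ref{semisimplicity}). Under $\on{H}^*$, the diagram in the theorem identifies $c_{\mA_1,\mA_2}$ with the \emph{ordinary} commutativity constraint $c_{\on{vect}}$ on $\on{Vect}_{\Ql}$; since $(\on{Vect}_{\Ql},\otimes,c_{\on{vect}})$ is symmetric, all the coherence diagrams commute there, and faithfulness pulls them back to $\Sat_G$. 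The point of the sign modification is precisely to convert the Koszul sign $c_{\on{gr}}$ appearing in Proposition~\ref{existence of comm} into the signless $c_{\on{vect}}$, so that $\on{H}^*$ becomes an honest symmetric tensor functor rather than a functor to super-vector-spaces. This also shows simultaneously that $\on{H}^*$ is a tensor functor.

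Next I would check the remaining Tannakian axioms. Semisimplicity (Proposition~\ref{semisimplicity}) gives that $\Sat_G$ is abelian; the unit object is $\IC_0=\Ql$ supported at the base point, and $\End(\IC_0)=\Ql$, so the category is $\Ql$-linear with the correct endomorphisms of the unit. For rigidity I would construct duals: the natural candidate is $\mA\mapsto \mathbb{D}\mA$, a combination of Verdier duality with pullback along the inversion map $\iota:\Gr\to\Gr$ induced by $g\mapsto g^{-1}$ on $LG$; one checks $\iota$ interchanges $\Gr_\mu$ and $\Gr_{-w_0\mu}$, so $\IC_\mu^\vee\cong\IC_{-w_0\mu}$, and that this is compatible with $\star$ and furnishes evaluation and coevaluation morphisms. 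Faithfulness and exactness of $\on{H}^*$ follow again from semisimplicity together with the fact that $\on{H}^*(\IC_\mu)\neq 0$ for every $\mu$ (its total dimension equals $\dim V_\mu$ of the dual group, but at this stage I only need nonvanishing, which is immediate since $\IC_\mu$ has a nonzero stalk). Granting all this, the Tannakian reconstruction theorem of Deligne--Milne applies and yields that $\Sat_G$ is neutral Tannakian with fiber functor $\on{H}^*$.

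The main obstacle is genuinely located upstream, in the two cited propositions: the \emph{existence} of the commutativity constraint $c'_{\mA_1,\mA_2}$ (Proposition~\ref{existence of comm}) is the crux, and it cannot be seen from the local convolution picture alone because $\star$ is manifestly \emph{not} symmetric at the level of the convolution diagram $\Gr\tilde\times\Gr\to\Gr$. The resolution---interpreting $\mA_1\star\mA_2$ as a fusion product via the Beilinson--Drinfeld Grassmannian $\Gr_{X^2}$ and its factorization isomorphism (Theorem~\ref{III:factorization}), where the $S_2$-symmetry of $X^2$ and nearby cycles produce the commutativity---is exactly what \S\ref{fusion} supplies. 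Within the present proof, the only subtlety I must handle carefully is the \emph{sign}: the determinant/graded nature of the factorization (Remark~\ref{graded factorization}) forces $c'$ to match $c_{\on{gr}}$ with a Koszul sign, and I must confirm that the parity function $p$ from \eqref{parity map} is compatible with $\star$ (so that $p(\mA_1\star\mA_2)=p(\mA_1)p(\mA_2)$ on pure-parity objects, using that $\dim\Gr_\mu\equiv(2\rho,\mu)\bmod 2$, Corollary~\ref{parity}) in order for \eqref{koszul sign change} to define a coherent constraint. Everything else is formal bookkeeping once Propositions~\ref{fiber functor} and~\ref{existence of comm} are in hand.
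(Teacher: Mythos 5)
Your proposal is correct and follows the paper's skeleton for most of the argument: the coherence axioms (symmetry and hexagons) are verified exactly as in the paper, by pushing everything through the faithful functor $\on{H}^*$ and using that the modified constraint becomes $c_{\on{vect}}$ on cohomology; your remark that one must check $p(\mA_1\star\mA_2)=p(\mA_1)p(\mA_2)$ for the Koszul sign change to be coherent is a point the paper leaves implicit, and it holds because the convolution map is compatible with the group structure on $\pi_0(\Gr)$. Where you genuinely diverge is the rigidity step. The paper does \emph{not} construct duals: it invokes the Deligne--Milne criterion \cite[Proposition 1.20, Remark 2.18]{DM}, which upgrades an abelian symmetric tensor category with $\End(\mathbf{1})=\Ql$ and an exact faithful tensor functor to $\on{Vect}_{\Ql}$ to a rigid (hence Tannakian) one, provided only that every object $X$ with $\dim\on{H}^*(X)=1$ is invertible; this last condition reduces to the observation that $\dim\on{H}^*(\IC_\mu)=1$ forces $\Gr_{\leq\mu}$ to be a point, i.e.\ $\mu$ central, whence $\IC_{-\mu}\star\IC_\mu\cong\IC_0$. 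Your route---defining $\mA^\vee$ as Verdier duality composed with pullback along the inversion of $LG$, identifying $\IC_\mu^\vee\cong\IC_{-w_0\mu}$, and producing evaluation and coevaluation---is also standard and works, but the part you compress into ``one checks \dots\ furnishes evaluation and coevaluation morphisms'' is the real labor: one must verify that Verdier duality intertwines $\star$ with the opposite convolution (using properness of $m$), that $\IC_0$ occurs with multiplicity one in $\IC_\mu\star\IC_{-w_0\mu}$ (via Corollary \ref{Satake fiber}), and that the resulting unit and counit satisfy the triangle identities after normalization. The Deligne--Milne criterion buys you exactly the right to skip all of that; your construction buys an explicit description of the duality functor, which is useful later but is not needed to prove the theorem.
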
 
\begin{proof}We need to check $c_{\mA_2,\mA_1}c_{\mA_1,\mA_2}=\id$, and the hexagon axioms.
Using the faithfulness of $\on{H}^*$, it is enough to prove these after taking the cohomology. Using Proposition \ref{existence of comm}, and the fact $c_{\on{vect}}^2=\id$, we see that $\on{H}^*(c_{\mA_2,\mA_1}c_{\mA_1,\mA_2})=\id$, and therefore $c_{\mA_2,\mA_1}c_{\mA_1,\mA_2}=\id$. The hexagon axioms can be proved similarly.
The second statement is clear.

To show that $(\on{P}_{L^+G}(\Gr),\star,c,\on{H}^*)$ is a neutral Tannakian category, we apply \cite[Proposition 1.20, Remark 2.18]{DM}. Namely, again by semisimplicity, $\on{H}^*$ is exact (and faithful). It is easy to check Condition (a)-(d) of Proposition 1.20 of \emph{ibid.} (with $\IC_0$ being the unit object). To verify Condition (e) there, one observes that $\dim \on{H}^*(\Gr,\IC_\mu)=1$ if and only if $\dim \Gr_{\leq \mu}=0$, in which case $\mu$ is a cocharacter of the center $Z_G\subset G$. Then $\IC_{-\mu}\star\IC_\mu\cong \IC_0$.
\end{proof}

\subsection{Langlands dual group.}\label{MV semiinfinite}
We have seen that $\Sat_G$ together with the commutativity constraints $c$ and the fiber functor $\on{H}^*$ form a neutral Tannakian category (modulo Proposition \ref{existence of comm}, which will be established in the next subsection).
Let 
$$\tilde G=\Aut^\otimes \on{H}^*$$ denote the Tannakian group. The next step is to give an explicit description of this affine group scheme in terms of $G$. First, it is easy to show that
\begin{lem}The affine group scheme $\tilde G$ over $\Ql$ is a connected reductive group.
\end{lem}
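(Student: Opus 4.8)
The plan is to read off the group-theoretic properties of $\tilde G=\Aut^\otimes\on{H}^*$ from the internal structure of the Tannakian category $(\Sat_G,\star,c,\on{H}^*)$ via the dictionary of \cite{DM}, under the identification $\Rep(\tilde G)\simeq\Sat_G$ that sends the simple object $\IC_\mu$ to the irreducible representation of ``highest weight'' $\mu\in\xcoch(T)^+$. I would first establish that $\tilde G$ is of finite type. Since $\xcoch(T)^+$ is a finitely generated monoid (it is the intersection of the lattice $\xcoch(T)$ with a rational polyhedral cone, so Gordan's lemma applies), choose generators $\mu_1,\dots,\mu_r$ and set $\mathcal A=\bigoplus_{i=1}^r(\IC_{\mu_i}\oplus\IC_{\mu_i^*})$, where $\mu^*$ denotes the dominant representative of $-\mu$, so that $\IC_{\mu_i^*}$ is the dual of $\IC_{\mu_i}$ in $\Sat_G$. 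The key geometric input is that $\IC_{n\mu}$ occurs as the leading summand of the $n$-fold convolution $\IC_\mu^{\star n}$: by Corollary \ref{Satake fiber} the multiplicity of $\IC_{n\mu}$ in $\IC_\mu^{\star n}$ is computed by $\on{H}^0_c$ of the fibre of the convolution map over $t^{n\mu}$, which is a single point (the unique chain with each step of relative position exactly $\mu$). Consequently every $\IC_\mu$ is a subquotient of a tensor word in the $\IC_{\mu_i}$ and their duals, so $\mathcal A$ tensor-generates $\Sat_G$, and by \cite[Prop.~2.20]{DM} the resulting homomorphism $\tilde G\hookrightarrow\GL(\on{H}^*(\mathcal A))$ is a closed immersion. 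Hence $\tilde G$ is an affine algebraic group of finite type over $\Ql$.

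Next I would deduce reductivity and connectedness. By Proposition \ref{semisimplicity} the abelian category $\Sat_G=\Rep(\tilde G)$ is semisimple; since we are in characteristic zero, a finite-type affine group scheme whose finite-dimensional representation category is semisimple has reductive identity component, so $\tilde G^\circ$ is reductive. For connectedness I would use the criterion that $\tilde G$ is connected if and only if it admits no nontrivial finite quotient \cite[Cor.~2.22]{DM}. Suppose $\Gamma$ were such a quotient; then $\Rep(\Gamma)\subset\Sat_G$ would be a tensor subcategory, closed under subquotients, with only finitely many isomorphism classes of simple objects and containing some nontrivial simple $\IC_\mu$ with $\mu\neq 0$. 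But then all the convolution powers $\IC_\mu^{\star n}$ lie in $\Rep(\Gamma)$, hence so do their leading summands $\IC_{n\mu}$; as the coweights $\{n\mu\}_{n\geq 1}$ are pairwise distinct, $\Rep(\Gamma)$ would contain infinitely many non-isomorphic simple objects, a contradiction. Therefore $\tilde G=\tilde G^\circ$ is a connected reductive group.

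The formal steps — semisimplicity $\Rightarrow$ reductive identity component, and finite generation of the dominant cone $\Rightarrow$ finite type via a tensor generator — are essentially routine applications of \cite{DM}. I expect the crux to be the connectedness argument, whose entire force comes from the geometric fact that $\IC_{n\mu}$ genuinely appears as a constituent of $\IC_\mu^{\star n}$ for every $n$: this is what forces the tensor subcategory generated by any single nontrivial $\IC_\mu$ to be infinite, and it is exactly the leading-term content of Corollary \ref{Satake fiber}. The remaining care will be in making the Tannakian translation ``no nontrivial finite quotient $\Leftrightarrow$ connected'' precise and in confirming the multiplicity-one (single-point fibre) statement for the top stratum of the convolution, since everything downstream rests on it.
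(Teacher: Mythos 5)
Your proof is correct and follows essentially the same route as the paper: the key input in both is that $\IC_{\mu_1+\mu_2}$ occurs as the leading direct summand of $\IC_{\mu_1}\star\IC_{\mu_2}$ (your single-point-fibre computation via Corollary \ref{Satake fiber} is just the birationality of the convolution map $\Gr_{\leq\mu_1}\tilde\times\Gr_{\leq\mu_2}\to\Gr_{\leq\mu_1+\mu_2}$ restated), which yields both algebraicity from a finite set of tensor generators and connectedness via \cite[Corollary 2.22]{DM}, with reductivity coming from semisimplicity of $\Sat_G$. Your invocation of Gordan's lemma and the explicit inclusion of duals are slightly more careful than the paper's ``e.g.\ fundamental coweights,'' but the argument is the same.
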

\begin{proof}First note $\IC_{\mu_1}\star\IC_{\mu_2}$ contains $\IC_{\mu_1+\mu_2}$ as a direct summand since the convolution map $\Gr_{\leq\mu_1}\tilde\times\Gr_{\leq \mu_2}\to \Gr_{\leq \mu_1+\mu_2}$ is birational. From this fact, we see that $\Sat_G$ is generated as a tensor category by finitely many $\IC_\mu$'s (e.g. by those $\IC_\mu$'s with $\mu$ being fundamental coweights), and there is no tensor subcategory that contains only direct sums of finite collection of $\IC$'s (e.g. the $n$-fold self convolution product of $\IC_\mu$ contains $\IC_{n\mu}$ as a direct summand). Then by \cite[Proposition 2.20, Corollary 2.22]{DM}, $\tilde G$ is algebraic and connected. Finally, since $\Sat_G$ is semisimple, $\tilde G$ is reductive (\cite[Proposition 2.23]{DM}).
\end{proof}

Recall that to a connected reductive group $G$, one associates a quadruple $(\xch,\xcoch,\Phi,\Phi^\vee)$, called the root datum of $G$ (cf. \S~\ref{Intro:group}). 
The surprising fact is that connected reductive group over an algebraically closed field is uniquely determined by this combinatoric datum  (up to isomorphism). In addition, if $(\xch,\xcoch,\Phi,\Phi^\vee)$ arises from some $G$, then $(\xcoch,\xch,\Phi^\vee,\Phi)$ also arises from some $\hat G$ in the sense that $\xcoch$ (resp, $\xch$) is the weight lattice (resp. coweight lattice) of the Cartan torus $\hat T$ of $\hat G$, and $\Phi^\vee\subset \xcoch$ (resp. $\Phi\subset\xch$) is the corresponding root sets (resp. coroot sets). See \cite[\S~1,2]{Sp} for a summary. The reductive group $\hat G$ is usually called the Langlands dual group of $G$.
Note that the passage from $G$ to $\hat G$ is completely combinatoric and relies on the classification theorem of reductive groups. The following table gives some examples of this passage.

\begin{center}
\begin{tabular}{| c | c | c | c | c | c |}
\hline 
$G$ & $\GL_n$ & $\on{SL}_n$ & $\on{SO}_{2n+1}$ & $\on{SO}_{2n}$ & $E_8$  \\ \hline 
$\hat G$ & $\GL_n$ & $\on{PGL}_{n}$ & $\on{Sp}_{2n}$ & $\on{SO}_{2n}$ & $E_8$ \\ \hline 
\end{tabular}
\end{center}

\begin{thm}\label{birth of dual group}
The Tannakian group $\tilde G$ has the root datum  $(\xcoch,\xch,\Phi^\vee,\Phi)$ and therefore is isomorphic to $\hat G$.
\end{thm}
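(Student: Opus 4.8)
The plan is to determine the root datum of the connected reductive group $\tilde G$ (reductivity and connectedness being already established) and then invoke the classification of reductive groups over an algebraically closed field: two such groups with the same root datum are isomorphic. Throughout fix $T\subset B\subset G$ with unipotent radical $N\subset B$, so that $\xcoch(T),\xch(T),\Phi,\Phi^\vee$ and $W$ are as in \S\ref{Intro:group}. I want to produce a maximal torus $\hat T\subset\tilde G$ with $\xch(\hat T)=\xcoch(T)$, and to show that the roots and coroots of $(\tilde G,\hat T)$ are $\Phi^\vee$ and $\Phi$ respectively; the main tool is the Mirkovi\'c-Vilonen analysis of the semi-infinite orbits.

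First I would construct the weight functors. For $\nu\in\xcoch(T)$ let $S_\nu=LN\cdot t^\nu\subset\Gr$ be the corresponding semi-infinite orbit, and set $F_\nu=\on{H}^{(2\rho,\nu)}_c(S_\nu,-)$. The heart of the matter is the Mirkovi\'c-Vilonen theorem (\cite{MV}) that for $\mathcal{A}\in\Sat_G$ the cohomology $\on{H}^*_c(S_\nu,\mathcal{A})$ is concentrated in degree $(2\rho,\nu)$, and that there is a canonical isomorphism $\on{H}^*(\Gr,\mathcal{A})\cong\bigoplus_\nu F_\nu(\mathcal{A})$ of functors. This rests on the dimension estimate $\dim(S_\nu\cap\Gr_{\leq\mu})\leq(\rho,\mu+\nu)$ together with the parity vanishing of Corollary \ref{parity}, and is most cleanly obtained through hyperbolic localization, which also shows each $F_\nu$ is exact. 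The resulting $\xcoch(T)$-grading of the fiber functor $\on{H}^*$ is compatible with convolution (using the factorization/fusion description of the tensor structure), so it defines a homomorphism $\hat T\to\tilde G$ from the torus $\hat T$ with $\xch(\hat T)=\xcoch(T)$; hence $\xcoch(\hat T)=\xch(T)$.

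Next I would run highest-weight theory to fix the dominant cone and show $\hat T$ is maximal. Since $S_\mu\cap\Gr_{\leq\mu}=\{t^\mu\}$ one gets $F_\mu(\IC_\mu)=\Ql$ and $F_\nu(\IC_\mu)=0$ unless $\nu\leq\mu$, by the closure relations among the $S_\nu$. Therefore $V_\mu:=\on{H}^*(\Gr,\IC_\mu)$ is the irreducible representation of $\tilde G$ of highest weight $\mu$, the monoid of highest weights of $\tilde G$ is exactly $\xcoch(T)^+$, and $\hat T$ is a maximal torus. Consequently the Weyl group, the dominant chamber and the positive roots of $(\tilde G,\hat T)$ agree with those predicted by the datum $(\xcoch(T),\xch(T),\Phi^\vee,\Phi)$; in particular the positive system of $\tilde G$ consists of positive multiples of the expected coroots.

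The remaining and hardest step is to pin down the roots and coroots of $\tilde G$ exactly, since the dominant cone alone determines the simple coroots only up to positive scaling. Here I would reduce to semisimple rank one: for a standard Levi $M\subset G$ the semi-infinite construction is compatible with a tensor functor $\Sat_M\to\Sat_G$ dual to a Levi embedding $\hat M\hookrightarrow\tilde G$, which reduces the computation of each simple root to the case where $G$ has semisimple rank one. In that case I would use the explicit minuscule and quasi-minuscule Schubert varieties of Lemma \ref{minusch} and Lemma \ref{quasiminusch}: computing $\on{H}^*(\Gr_{\leq\mu},\IC_\mu)$ and its weight decomposition for $\mu$ a fundamental or the quasi-minuscule coweight $\theta$ exhibits the relevant weights as the coroots $\Phi^\vee$ of $G$, thereby identifying the simple root of $\tilde G$ with $\alpha^\vee$ and its coroot with $\alpha$, and in particular distinguishing dual pairs such as $(\SL_2,\on{PGL}_2)$. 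Assembling these rank-one identifications yields the full root datum $(\xcoch(T),\xch(T),\Phi^\vee,\Phi)$ for $(\tilde G,\hat T)$, and the classification theorem then gives $\tilde G\cong\hat G$. I expect the construction and exactness/compatibility of the weight functors $F_\nu$ — the genuinely geometric input resting on the semi-infinite dimension estimates — to be the main obstacle.
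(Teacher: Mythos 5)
Your proposal follows the paper's proof for its first two-thirds: the weight functors built from the semi-infinite orbits $S_\nu$, the Mirkovi\'c--Vilonen concentration and the decomposition $\on{H}^*\cong\bigoplus_\nu\on{H}_c^{(2\rho,\nu)}(S_\nu,-)$, its compatibility with convolution, the resulting maximal torus $\hat T\subset\tilde G$, and the identification of $\xcoch(T)^+$ with the monoid of dominant weights via $F_\mu(\IC_\mu)=\Ql$ are all exactly the paper's steps (Theorem \ref{MV theory} and the surrounding lemmas). Where you genuinely diverge is the last step, which you correctly isolate as the delicate point: pinning down the simple roots of $\tilde G$ on the nose rather than up to positive scaling. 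You propose reducing to standard Levis of semisimple rank one and computing $\on{H}^*(\IC_\mu)$ with its weight grading for (quasi-)minuscule $\mu$ via Lemmas \ref{minusch} and \ref{quasiminusch}; this is essentially the Mirkovi\'c--Vilonen/Richarz route and it works, though two points need care: the constant term functor goes $\on{CT}^G_M:\Sat_G\to\Sat_M$ (not $\Sat_M\to\Sat_G$ as you wrote), so the induced map is $\hat M\to\tilde G$, and one must check it is a closed immersion onto a Levi (every object of $\Sat_M$ must occur as a subquotient of some $\on{CT}^G_M(\mA)$, cf.\ \cite[Prop.~2.21]{DM}). The paper instead avoids rank-one reduction entirely: it observes that the semigroup $\tilde Q^+$ generated by the positive roots of $\tilde G$ consists exactly of those $\la$ with $\mu-\la$ a weight of some $L_\mu$, which by Theorem \ref{MV theory} is the condition $t^{\mu-\la}\in\Gr_{\leq\mu}$, i.e.\ $\la\in(Q^\vee)^+$; hence the simple roots of $\tilde G$ are the indecomposables of $(Q^\vee)^+$, namely the simple coroots of $G$, and a final combinatorial lemma (a root datum is determined by its dominant cone together with its simple roots) finishes the proof. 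Your route buys explicit rank-one control (and in particular distinguishes $\SL_2$ from $\on{PGL}_2$ by direct computation); the paper's buys a shorter, computation-free argument at the cost of the abstract uniqueness lemma for root data.
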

\begin{rmk}
In a sense, this theorem can be regarded as a classification free definition of the Langlands dual group of $G$.
\end{rmk}

The rest of this subsection explains the idea of proving this theorem and some applications. In the course of the proof, we will put more structures on $\tilde G$ (see Proposition \ref{canonical pinning}). Note that there is another approach of this theorem, due to Richarz (cf. \cite[\S~3]{Ri}).

First, it is easy to check
\begin{lem}
Assume that $G=T$ is a torus. Then the theorem holds.
\end{lem}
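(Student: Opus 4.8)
The plan is to make the geometry of $\Gr_T$ completely explicit and then read off the Tannakian data by hand. First I would note that a torus has no roots, so $2\rho=0$ and every Schubert cell $\Gr_\mu$ has dimension $(2\rho,\mu)=0$. Together with the Cartan decomposition \eqref{Cartan decomp} (or directly with Theorem \ref{components}, which gives $\pi_0(\Gr_T)\cong\xcoch(T)$), this shows that the reduced affine Grassmannian $(\Gr_T)_{\on{red}}$ is the discrete set $\{t^\mu\mid\mu\in\xcoch(T)\}$, with each $\Gr_\mu=\{t^\mu\}$ a reduced point. Since perverse sheaves only see the reduced structure, $\IC_\mu$ is just the skyscraper sheaf $\Ql$ supported at $t^\mu$ and placed in cohomological degree $0$ (as $(2\rho,\mu)=0$), and the $L^+T$-equivariance is automatic.

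Next I would identify $\Sat_T$ as an abelian category. Because the strata are disjoint reduced points there are no extensions between distinct $\IC_\mu$'s, and by Proposition \ref{semisimplicity} the category is semisimple with simple objects $\{\IC_\mu\}_{\mu\in\xcoch(T)}$. Hence $\Sat_T$ is equivalent to the category of finite-dimensional $\xcoch(T)$-graded $\Ql$-vector spaces, an object $\bigoplus_\mu V_\mu\otimes\IC_\mu$ corresponding to the graded space whose $\mu$-component is $V_\mu$.

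The key computation is the convolution product. Since $T$ is abelian, $t^\mu\cdot t^\nu=t^{\mu+\nu}$ in $LT$, so the convolution map $m$ sends the point of $\Gr_T\tilde\times\Gr_T$ lying over $(t^\mu,t^\nu)$ to $t^{\mu+\nu}$; I would conclude that $\IC_\mu\star\IC_\nu\cong\IC_{\mu+\nu}$. Under the identification above this is exactly the tensor product of graded vector spaces, the grading adding under $\otimes$. Moreover every $\mu$ has even parity $p(\IC_\mu)=(-1)^{(2\rho,\mu)}=1$, so the Koszul sign in \eqref{koszul sign change} is trivial and the commutativity constraint $c$ is the standard one on (graded, equivalently ungraded) vector spaces; this is consistent with $\on{H}^*$ sending $\IC_\mu$ to a one-dimensional space in degree $0$.

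Finally I would invoke Tannakian reconstruction. The triple consisting of the category of $\xcoch(T)$-graded vector spaces, the tensor product adding degrees, and the grading-forgetful fiber functor $\on{H}^*$ is precisely $\Rep(\hat T)$ for the diagonalizable group $\hat T=\Spec\Ql[\xcoch(T)]$, whose character lattice is $\xcoch(T)$ and cocharacter lattice is $\xch(T)$. Hence $\tilde T\cong\hat T$, and its root datum is $(\xcoch,\xch,\emptyset,\emptyset)=(\xcoch,\xch,\Phi^\vee,\Phi)$, as claimed. There is no serious obstacle in this base case; the only points needing care are the vanishing of extensions, so that $\Sat_T$ is genuinely the semisimple category of graded vector spaces, and the bookkeeping identifying the tensor-automorphism group of the forgetful functor with the diagonalizable group attached to $\xcoch(T)$.
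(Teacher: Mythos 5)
Your proof is correct and takes essentially the same route as the paper: identify $(\Gr_T)_{\on{red}}$ with the discrete set $\xcoch(T)$, identify $\Sat_T$ with $\xcoch(T)$-graded finite-dimensional vector spaces with $\on{H}^*$ the grading-forgetful functor, and read off the dual torus by Tannakian reconstruction. The paper's proof is a two-line version of exactly this argument; your additional checks (the convolution $\IC_\mu\star\IC_\nu\cong\IC_{\mu+\nu}$, the triviality of the Koszul sign since $2\rho=0$) are the details it leaves implicit.
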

\begin{proof}
Indeed, $(\Gr_T)_{\on{red}}$ is a discrete set of points canonically isomorphic to $\xcoch(T)$. Then it is easy to see $\Sat_T$ is equivalent to the category of $\xcoch(T)$-graded finite dimensional $\Ql$-vector spaces and $\on{H}^*$ is just the functor forgetting the grading. The lemma then follows.
\end{proof}

To deduce the general case from this special situation, we mimic the classical construction to define a ``categorical Satake transform'', i.e. a symmetric monoidal functor
\[\on{CT}:\Sat_G\to \Sat_T,\]
where $T$ is the abstract Cartan of $G$.

We use notations as in \S~\ref{Intro:group}. We fix a Borel subgroup $B\subset G$, and let $T=B/U$ be the abstract Cartan, where $U$ is the unipotent radical of $B$.
Consider the following induced diagram
\begin{equation}\label{CT diagram}
\Gr_{T}\stackrel{q}{\leftarrow} \Gr_{B}\stackrel{i}{\to} \Gr_{G},
\end{equation}
Note that $\Gr_{B}\to \Gr_{G}$ is bijective at the level of points but is far from being an isomorphism of ind-schemes. Indeed, the morphism $i$ itself is pathological. However, each connected component of $(\Gr_B)_{\on{red}}$ is locally closed in $\Gr_G$. Since we are only interested in $\ell$-adic sheaves on these spaces, we can safely replace the involved affine Grassmannians by their reduced sub-ind-schemes (but to simplify the notation we will omit the subscript $_{\on{red}}$ in the sequel). Then as mentioned above, $\Gr_T\simeq \xcoch(T)$ is a discrete set of points. For $\la\in\xcoch(T)$, we define
$$S_\la= i(q^{-1}(\la))\subset \Gr_G.$$
Note that it is an orbit through $t^\la$ under the action of $LU$ on $\Gr_G$ (via the embedding $LU\subset LG$). 
Recall the partial order $\leq$ on $\xcoch(T)$. First, we have
\begin{prop}\label{geom of Sla}
The union $S_{\leq \la}:=\cup_{\la'\leq \la} S_{\la'}$ is closed and $S_\la\subset S_{\leq \la}$ is open dense. In particular, $S_\la$ is locally closed in $\Gr_G$ and $S_{\leq \la}$ is its closure $\bar{S}_\la$.\end{prop}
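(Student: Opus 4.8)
The plan is to realise each $S_\la$ as the attracting cell of a well-chosen $\bG_m$-action and then to reduce every closure question to finite-dimensional Schubert varieties. Recall from the discussion around \eqref{CT diagram} that $\Gr_T\simeq\xcoch(T)$ is discrete, that $S_\la=i(q^{-1}(\la))$ is the single $LU$-orbit through $t^\la$, and that, since $\Gr_B\to\Gr_G$ is a bijection on points, one has the (Iwasawa) decomposition $\Gr=\bigsqcup_{\la\in\xcoch(T)}S_\la$ on points. As $LU$ is a connected ind-group, each $S_\la$ is irreducible; hence once I prove $\overline{S_\la}=S_{\leq\la}$ with $S_\la$ locally closed, the openness and density of $S_\la$ in $S_{\leq\la}$ are automatic, and closedness of $S_{\leq\la}$ follows by applying the same closure statement to each $\la'\leq\la$.

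First I would introduce the $\bG_m$-action. Choose a regular dominant cocharacter $\ga\colon\bG_m\to T$, so that $\langle\al,\ga\rangle>0$ for every positive root $\al$, and let it act on $\Gr$ through the constant loops $T\subset L^+G$; this preserves every $\Gr_{\leq\mu}$ and has fixed-point set exactly $\{t^\la\}_{\la\in\xcoch(T)}$. The key computation is that for $u\in LU$ one has $\ga(s)\cdot(u\,t^\la)=(\ga(s)u\ga(s)^{-1})\,t^\la$ in $\Gr$, and $\ga(s)u\ga(s)^{-1}\to 1$ as $s\to 0$ because $U$ is generated by positive root groups; thus $\lim_{s\to0}\ga(s)\cdot x=t^\la$ for every $x\in S_\la$, i.e. $S_\la$ is precisely the attracting set of $t^\la$. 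Intersecting with the projective, $T$-stable variety $\Gr_{\leq\mu}$, the Bia{\l}ynicki--Birula theory of attracting cells for a $\bG_m$-action on a projective variety shows that each $S_\la\cap\Gr_{\leq\mu}$ is locally closed and that $\overline{S_\la\cap\Gr_{\leq\mu}}$ is a union of such cells $S_{\la'}\cap\Gr_{\leq\mu}$. Letting $\mu$ grow then yields that $S_\la$ is locally closed in $\Gr$ and that $\overline{S_\la}$ is a union of semi-infinite orbits $S_{\la'}$.

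It then remains to identify the order: $S_{\la'}\subset\overline{S_\la}$ if and only if $\la'\leq\la$. Since $\overline{S_\la}$ is closed and $LU$-stable, $t^{\la'}\in\overline{S_\la}$ already forces $S_{\la'}=LU\cdot t^{\la'}\subset\overline{S_\la}$, so it suffices to decide which $T$-fixed points lie in $\overline{S_\la}$. For $\la'\leq\la\Rightarrow t^{\la'}\in\overline{S_\la}$ I would reduce, via a chain $\la=\la_0>\la_1>\cdots>\la_r=\la'$ decreasing by simple coroots, to the case $\la'=\la-\al^\vee$; there an explicit rank-one computation inside the $\SL_2$ attached to $\al$---entirely parallel to the curve $C_{\mu,\al}$ of \eqref{curve C} in the proof of Proposition \ref{geom of Sch}---exhibits $t^{\la-\al^\vee}$ in the closure of $LU_\al\cdot t^\la\subset S_\la$. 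For the reverse implication I would linearise the $\ga$-action using the ample generator $\mO(1)$ on $\Gr_{\leq\mu}$: the weight of $\ga$ on the fibre of $\mO(1)$ at $t^\la$ is an explicit (quadratic) function of $\la$, and monotonicity of this weight along $\bG_m$-degenerations forces any $t^{\la'}\in\overline{S_\la}$ to satisfy $\la'\leq\la$.

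The main obstacle is twofold. First, the attracting-cell machinery is genuinely finite-dimensional, whereas $S_\la$ and $\overline{S_\la}$ are infinite-dimensional, so the passage through the exhausting family $\{\Gr_{\leq\mu}\}$ must be arranged so that the cell structures and closures are compatible as $\mu$ grows. Second, and more delicate, is pinning down the exact closure order: getting the signs right in the weight computation for $\mO(1)$ (equivalently, checking that the Bia{\l}ynicki--Birula order on fixed points coincides with the coweight order $\leq$ of \S\ref{Intro:group}) is the real content, and is where I expect to spend most of the effort. An alternative that sidesteps the sign bookkeeping is to introduce the opposite orbits $T_\mu=LU^-\cdot t^\mu$ and use the elementary facts $S_\la\cap T_\la=\{t^\la\}$ and $S_\la\cap T_{\la'}\neq\emptyset\Rightarrow\la'\leq\la$ to constrain which fixed points can appear in $\overline{S_\la}$.
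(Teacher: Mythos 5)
Your framework is sound in outline, and two of its ingredients coincide with the paper's: the identification of $S_\la$ with the attracting set of $t^\la$ for a regular dominant cocharacter is exactly what the paper uses immediately afterwards (in the proof of Lemma \ref{intersection of semi and Sch}), and the curve $C_{\la,\al}$ of \eqref{curve C} is exactly how the paper proves $S_{\la-\al}\subset\overline{S}_\la$. The genuine gap is in the converse direction $t^{\la'}\in\overline{S}_\la\Rightarrow\la'\leq\la$ (equivalently, the closedness of $S_{\leq\la}$), which is the real content of the proposition, and neither of your two routes closes it as written. For route (a): ``monotonicity of the weight along $\bG_m$-degenerations'' is an honest statement about the two limit points of a single $\bG_m$-orbit, but an arbitrary point of $\overline{S}_\la$ need not be of the form $\lim_{s\to\infty}\ga(s)x$ with $x\in S_\la$; it is only reached through broken chains of orbits, which may be traversed in either direction, so the inequality does not propagate to closure points. (Also, under the constant torus the weight of $\mL_{\det}$ at $t^\nu$ is \emph{linear} in $\nu$ --- the quadratic term comes from loop rotation --- so one $\ga$ yields one linear inequality, and you must quantify over all dominant $\ga$ and then invoke integrality in the coroot lattice to recover $\la'\leq\la$.) For route (b): the fact $S_\la\cap T_{\la'}\neq\emptyset\Rightarrow\la'\leq\la$ is indeed an elementary highest-weight-vector computation, but you never supply the bridge from $t^{\la'}\in\overline{S}_\la$ to $S_\la\cap T_{\la'}\neq\emptyset$, and this is not automatic: $LU^-\cdot S_{\la'}$ is not open in $\Gr$ (its tangent space at $t^{\la'}$ misses the directions $t^{-1}\frakt\otimes k[t^{-1}]$), so no neighbourhood of $t^{\la'}$ is swept out by the $T_\nu$ with $\nu$ controlled.

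The correct repair of route (a) --- and it is, in substance, the paper's proof --- is to replace orbit-limit monotonicity by a closed condition on weight components. For each highest-weight representation $V_\chi$ with highest-weight line $\ell_\chi$, a point of $S_{\la'}$ satisfies $\beta^{-1}(\ell_\chi)\subset t^{-(\chi,\la')}V_{\chi,\mE}$ with equality of order, so $S_{\leq\la}$ is exactly the locus where $\beta^{-1}(\ell_\chi)\subset t^{-(\chi,\la)}V_{\chi,\mE}$ for all $V_\chi$; each such condition is the vanishing of finitely many matrix coefficients, hence closed, exactly as in Proposition \ref{HP} (with a $z$-extension reduction when $G_\der$ is not simply connected). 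Note this is the same computation underlying your ``elementary fact'' about $S_\la\cap T_{\la'}$, so the representation-theoretic input is unavoidable; the efficient move is to phrase it as a closed condition on all of $\Gr$ rather than as a statement about orbit intersections. Finally, a small point you should make explicit: even granting $\overline{S}_{\la'}=S_{\leq\la'}$ for every $\la'$, the set $S_{\leq\la}$ is an \emph{infinite} union, so its closedness requires observing that only finitely many $S_{\la'}$ meet a given $\Gr_{\leq\mu}$ (namely those with $t^{\la'}\in\Gr_{\leq\mu}$, by your attractor description), which makes the union locally finite.
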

\begin{proof}The proof is similar to the proof of Proposition \ref{HP} and Proposition \ref{geom of Sch}. First we show that $\cup_{\la'\leq \la} S_{\la'}$ is closed. We first assume that $G_\der$ is simply-connected. Then points on $S_{\leq \la}$ admit the following moduli interpretation. Namely, for a highest weight representation $V_\chi$ of $G$, let $\ell_\chi$ be the corresponding highest weight line. Then
\[S_{\leq \la}=\left\{(\mE,\beta)\in\Gr\mid \beta^{-1}(\ell_\chi)\subset t^{-(\chi,\la)}(V_{\chi,\mE}), \mbox{ for all } V_\chi\right\}.\] 
Then as argued in Proposition \ref{HP}, one can express these conditions as a collection of matrix equations, and therefore defines a closed subset. For general $G$, one can pass to a $z$-extension to reduce to the case when $G_\der$ is simply-connected.

Then it remains to shows that if $\al$ is a positive coroot, $S_{\la-\al}$ is in the closure $\overline{S}_\la$. But this follows from the construction of the curve $C_{\la,\al}$ in \eqref{curve C}. 
\end{proof}

Next, we have the following relation between these $LU$-orbits with Schubert and opposite Schubert cells.
\begin{lem}\label{intersection of semi and Sch}
If the intersection $S_\la \cap \Gr_\mu$ is non-empty, then $t^\la\in \Gr_{\leq \mu}$. If the intersection $S_\la\cap \Gr^{\mu}$ is non-empty, then $t^\la\in \Gr^{\geq \mu}$.
\end{lem}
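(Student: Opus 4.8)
The plan is to exploit the contracting $\bG_m$-action on $\Gr$ attached to a regular dominant cocharacter, realising $t^\la$ as the $s\to 0$ limit of points of $S_\la$ that happen to lie inside a (opposite) Schubert variety. Fix $\ga:\bG_m\to T$ with $\langle\al,\ga\rangle>0$ for every positive root $\al$, and let $\bG_m$ act on $\Gr$ by left translation through $T\subset G\subset L^+G$. Since $\ga(s)\in T(k)\subset G(\mO)=L^+G$, every $t^\mu$ is a fixed point of this action.

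First I would analyse the orbit of a point $x=u\cdot t^\la\in S_\la$ with $u\in LU$. Because $\ga(s)$ commutes with $t^\la$ in $T(F)$ and $\ga(s)\in L^+G$, one has $\ga(s)\cdot x=(\ga(s)u\ga(s)^{-1})\cdot t^\la$ for $s\in\bG_m$. Writing $u$ through root subgroups $u_\al(f_\al)$, $f_\al\in F$, conjugation rescales $u_\al(f_\al)\mapsto u_\al(s^{\langle\al,\ga\rangle}f_\al)$; as $\langle\al,\ga\rangle\ge 1$ for all positive $\al$, the assignment $s\mapsto\ga(s)u\ga(s)^{-1}$ is a morphism $\bA^1\to LU$ with value $1$ at $s=0$. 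Hence $\phi(s):=(\ga(s)u\ga(s)^{-1})\cdot t^\la$ defines a morphism $\phi:\bA^1\to\Gr$ with $\phi(s)=\ga(s)\cdot x$ for $s\neq 0$ and $\phi(0)=t^\la$.

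Next I would invoke invariance and closedness of the two kinds of Schubert varieties. The variety $\Gr_{\leq\mu}$ is a union of $L^+G$-orbits, hence stable under $\ga(\bG_m)$, and is closed by Proposition \ref{geom of Sch}. If $S_\la\cap\Gr_\mu\neq\emptyset$, choose $x$ in this intersection; then $\phi(\bG_m)=\ga(\bG_m)\cdot x\subset\Gr_{\leq\mu}$, and closedness forces $t^\la=\phi(0)\in\Gr_{\leq\mu}$. For the second assertion, $\Gr^{\geq\mu}$ is $L^-G$-stable, hence $\ga(\bG_m)$-stable because $\ga(\bG_m)\subset G\subset L^-G$ sits inside the constant loops, and it is closed by Proposition \ref{geom of opp Sch}. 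The identical limit argument applied to $x\in S_\la\cap\Gr^\mu\subset\Gr^{\geq\mu}$ yields $t^\la\in\Gr^{\geq\mu}$.

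The main obstacle, and the only point needing genuine care, is making the ``limit'' rigorous in the ind-scheme $\Gr$: I must check that $s\mapsto\ga(s)\cdot x$ extends to an honest morphism from $\bA^1$ landing in a fixed finite-type subscheme, rather than a merely set-theoretic limit. This is exactly what the explicit rescaling $u_\al(f_\al)\mapsto u_\al(s^{\langle\al,\ga\rangle}f_\al)$ provides: the pole orders occurring in $u$ are bounded and all exponents $\langle\al,\ga\rangle$ are positive, so $\phi$ factors through a single finite-dimensional $\Gr^{(N)}$. A secondary bookkeeping issue is fixing an ordering of the positive roots to express $u\in LU$ as a product of root-group elements, for which I would use the filtration of $U$ by its descending central series so that the rescaling behaves compatibly on successive quotients.
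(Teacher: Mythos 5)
Your proof is correct and follows essentially the same route as the paper: the paper also contracts along a dominant regular cocharacter (it uses $2\rho^\vee$), observing that $S_\la$ is the attracting set of $t^\la$ for this $\bG_m$-action and that $\Gr_{\leq\mu}$, respectively $\Gr^{\geq\mu}$, is closed and invariant. Your explicit verification via the rescaling of root subgroups just spells out why $\lim_{s\to 0}\ga(s)\cdot x=t^\la$ holds as a genuine morphism from $\bA^1$, which the paper leaves implicit.
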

\begin{proof}Let $2\rho^\vee$ denote the sum of positive coroots of $G$. We choose an embedding $T\subset B$. By composition
\[\bG_m\stackrel{2\rho^\vee}{\to} T\subset G\subset L^+G,\]
the action of $L^+G$ on $\Gr_G$ induces a $\bG_m$-action on $\Gr_G$. Then $\Gr_B$ becomes the attractor of this action and $\Gr_T\subset \Gr_G$ is the fixed point subset. I.e. 
$$S_\la=\left\{x\in \Gr_G\mid \lim\limits_{s\to 0}2\rho^\vee(s)x=t^\la\right\}.$$
Then since $\Gr_\mu$ is $2\rho^\vee(\bG_m)$-invariant, $S_\la\cap \Gr_\mu\neq\emptyset$ only if $t^\la\in \overline{\Gr_\mu}=\Gr_{\leq \mu}$. The same argument implies the second statement.
\end{proof}
\begin{cor}\label{boundary of semi}
The set $S_{<\la}:=\cup_{\la'<\la} S_{\la'}$ is the intersection of $\overline{S}_\la$ with a hyperplane section of the ample line bundle $\mO(1)$.
\end{cor}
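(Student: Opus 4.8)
\textit{Proof sketch.} The plan is to exhibit a global section $\sigma \in \Gamma(\Gr_G,\mO(1))$ whose zero locus meets $\overline{S}_\la = S_{\leq\la}$ (Proposition \ref{geom of Sla}) precisely in $S_{<\la}$, choosing $\sigma$ by means of the $\bG_m$-action through $2\rho^\vee$. Recall from the proof of Lemma \ref{intersection of semi and Sch} that, composing $\bG_m \xrightarrow{2\rho^\vee} T \subset G \subset L^+G$, the orbit $S_\la$ is the attractor of the fixed point $t^\la$:
\[ S_\la = \left\{x \in \Gr_G \ \middle|\ \lim_{s\to 0} 2\rho^\vee(s)\, x = t^\la\right\}, \]
and the fixed points of this $\bG_m$ are exactly the $t^{\la'}$. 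After a $z$-extension as in Proposition \ref{HP} we may assume $G_\der$ is simply-connected, so that $\mO(1)$ is defined and, via the central extension \eqref{central extension} restricted to the constant loops $G$, carries a canonical $\bG_m$-equivariant structure. For each $\la'$ let $n(\la')\in\bZ$ be the weight by which $\bG_m$ acts on the fiber $\mO(1)_{t^{\la'}}$.

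First I would prove the strict monotonicity $n(\la') < n(\la)$ for every $\la' < \la$. Since $\la-\la'$ is a non-negative sum of simple coroots, it suffices to treat one step $\mu \mapsto \mu-\al$ with $\al$ a simple coroot and telescope along a chain $\la = \mu_0 > \mu_1 > \cdots > \mu_r = \la'$. For a single step I would restrict to the $\bG_m$-stable rational curve $C_{\mu,\al}\cong\bP^1$ of \eqref{curve C}, which joins the fixed points $t^\mu$ and $t^{\mu-\al}$; here $t^\mu$ is the sink attracting the open cell $C_{\mu,\al}\setminus\{t^{\mu-\al}\}\subset S_\mu$, and $t^{\mu-\al}$ is the source. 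As $\mO(1)$ is ample, $\mO(1)|_{C_{\mu,\al}}$ has positive degree, and the elementary computation of $\bG_m$-weights on $\mathcal O_{\bP^1}(d)$ with $d>0$ shows that the fiber weight at the sink strictly exceeds that at the source, i.e. $n(\mu) > n(\mu-\al)$. Summing these positive differences along the chain yields $n(\la) > n(\la')$.

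Next I would take $\sigma$ to be the $\bG_m$-eigensection of weight $n(\la)$: as $\mO(1)$ is very ample, some global section is nonzero at $t^\la$, and since $\mO(1)_{t^\la}$ is pure of weight $n(\la)$, the weight-$n(\la)$ component of that section is again nonzero at $t^\la$; take it for $\sigma$. It then remains to identify $\{\sigma=0\}\cap\overline{S}_\la$, which I would check on each finite-dimensional projective piece $\overline{S}_\la\cap\Gr_{\leq\nu}$ by a Bialynicki--Birula weight argument. On a $\bG_m$-stable affine neighbourhood of $t^{\la'}$ an equivariant trivialisation of $\mO(1)$ (fiber weight $n(\la')$) turns $\sigma$ into a regular eigenfunction of weight $n(\la)-n(\la')$; for $\la' < \la$ this weight is positive, whereas all eigenfunction-weights on the attractor cell of the sink $t^{\la'}$ are non-positive, forcing $\sigma|_{S_{\la'}}\equiv 0$. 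On $S_\la$ itself, which is a $\bG_m$-contractible affine space with unique fixed point $t^\la$, the same trivialisation turns $\sigma$ into a weight-$0$, hence constant, function equal to $\sigma(t^\la)\neq 0$, so $\sigma$ vanishes nowhere on $S_\la$. Since $\overline{S}_\la = S_\la\sqcup S_{<\la}$, this gives $\{\sigma=0\}\cap\overline{S}_\la = S_{<\la}$.

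The main obstacle is making the Bialynicki--Birula step rigorous on the singular, stratified spaces $\overline{S}_\la\cap\Gr_{\leq\nu}$: one needs that each $S_{\la'}$ is the attractor cell of $t^{\la'}$ with affine-space structure and a single fixed point (so invariant functions are constant and positive-weight eigenfunctions vanish), and that $\bG_m$-stable affine neighbourhoods admitting equivariant trivialisations of $\mO(1)$ exist. These are part of the Mirkovi\'c--Vilonen analysis of semi-infinite orbits; the ampleness-plus-curve input for the monotonicity of $n$ is the only genuinely new ingredient, and it is elementary.
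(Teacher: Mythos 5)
Your argument is correct, and it takes a genuinely different route from the one in the text. The proof given here translates everything to $\la=0$ and identifies the hyperplane section with the boundary $\Theta=\Gr^{\geq\theta}$ of the big cell, using that $S_0\subset\Gr^0$ while $S_{<0}\cap\Gr^0=\emptyset$; the section is then (implicitly) the translate of the canonical section cutting out $\Theta$. You instead manufacture the section intrinsically as a $2\rho^\vee(\bG_m)$-eigensection of weight $n(\la)$, and your only inputs are the positivity of $\deg\bigl(\mO(1)|_{C_{\mu,\al}}\bigr)$ on the $T$-stable curves of \eqref{curve C} (giving the strict monotonicity $n(\la')<n(\la)$ for $\la'<\la$) and the attractor description of $S_{\la'}$ from the proof of Lemma \ref{intersection of semi and Sch}. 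What this buys is independence from the Birkhoff decomposition and the opposite cells, and it makes the mechanism (positivity of fiber weights along the order $<$) completely explicit; the cost is the equivariant bookkeeping. The two points you flag as obstacles are in fact harmless, and lighter than you fear. First, do not try to weight-decompose the infinite-dimensional space $\Gamma(\Gr,\mO(1))$ directly (its isotypic decomposition is only a completed product): take the weight-$n(\la)$ component of $\tau|_{\Gr_{\leq\nu}}$ inside each finite-dimensional $\Gamma(\Gr_{\leq\nu},\mO(1))$; these are compatible under restriction and so define a global eigensection, and evaluation at $t^\la$ kills all other weights, so $\sigma(t^\la)\neq 0$. (For the initial $\tau$, note that $\mO(1)$ is only known to be ample here, so produce $\tau$ by translating the section with divisor $\Theta$ by $t^\la$, or by Theorem \ref{Borel-Weil}, rather than by invoking very ampleness.) Second, no Bia{\l}ynicki--Birula structure theory, affine-space structure on the strata, or equivariant local trivialisation near $t^{\la'}$ is needed: for $x\in S_{\la'}$ the orbit map extends to $a_x:\bA^1\to\Gr$, $s\mapsto 2\rho^\vee(s)x$, with $a_x(0)=t^{\la'}$, and in any $\bG_m$-equivariant trivialisation of $a_x^*\mO(1)$ the eigensection becomes $c\,s^{\,n(\la)-n(\la')}$; regularity at $s=0$ forces $c=0$ when $n(\la)>n(\la')$, while for $\la'=\la$ the exponent is $0$, so $\sigma$ is constant along every orbit closure in $S_\la$ and hence nowhere zero there. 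With these two adjustments your proof is complete and self-contained.
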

When $\cha\ k=0$, Mirkovi\'c-Vilonen proved this (cf. \cite[Proposition 3.1(b)]{MV}) using the basic representation $L_1(\frakg_\Ga)$ of the corresponding Kac-Moody algebra (cf. \S~\ref{Kac-Moody}). We give an alternative argument which does not rely on the Kac-Moody theory (and works for arbitrary characteristic).
\begin{proof} By translation by $t^{-\la}$, we can assume $\la=0$. It is easy to see that $S_0\subset \Gr^0$. By Lemma \ref{intersection of semi and Sch}, $S_{<0}\cap \Gr^0=\emptyset$. In other words, $S_{<0}=S_{\leq 0}\cap \Theta$, where we recall from Theorem \ref{Pic of Gr} that $\Theta$ is an effective Cartier divisor on (the neutral connected component of) $\Gr_G$ corresponding to $\mO(1)$.  This finishes the proof.
\end{proof}

Now we can state the main geometric results of semi-infinite orbits proved by Mirkovi\'c-Vilonen (cf. \cite[\S\ 3]{MV}). 
\begin{thm}\label{MV theory}~
\begin{enumerate}
\item If $t^\la\in \Gr_{\leq \mu}$, then $S_\la\cap \Gr_\mu$ is non-empty and $S_\la \cap \Gr_{\leq \mu}$ is
pure of dimension $(\rho,\la+\mu)$.

\item Let $\mA\in\Sat_G$. Then $(q_!i^*\mA)_\la=\on{H}_c^*(S_\la, \mA)$ is concentrated in degree $(2\rho,\la)$. In addition, there is a canonical isomorphism
$$\on{H}_c^{(2\rho,\la)}(S_\la, \IC_\mu)\otimes\Ql((\rho,\la+\mu))\cong \Ql[\on{Irr}(S_\la\cap\Gr_{\leq \mu})],$$ where the r.h.s is the $\Ql$-span of irreducible components of $S_\la\cap\Gr_{\leq \mu}$.

\item There is a canonical isomorphism of functors
$$\on{H}^*(-)\cong \on{CT}:=\bigoplus_\la \on{H}_c^*(S_\la,-):\Sat_G\to \on{Vect}_{\Ql}.$$
\end{enumerate}
\end{thm}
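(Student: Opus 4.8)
The plan is to treat the three assertions in order, bootstrapping from the geometry of the semi-infinite orbits established in Proposition \ref{geom of Sla} and Lemma \ref{intersection of semi and Sch}. Throughout I use the $\bG_m$-action through $2\rho^\vee$ fixed in the proof of Lemma \ref{intersection of semi and Sch}, for which $S_\la$ is precisely the attracting set of the fixed point $t^\la$; alongside it I introduce the repelling orbits $S_\la^-:=LU^-\cdot t^\la$, the attracting sets for the inverse action.

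For (1), I would first prove the two complementary dimension bounds
\[
\dim\bigl(S_\la\cap\Gr_{\le\mu}\bigr)\le(\rho,\la+\mu),\qquad \dim\bigl(S_\la^-\cap\Gr_{\le\mu}\bigr)\le(\rho,\mu-\la).
\]
Each is proved exactly as the closedness estimate in Proposition \ref{HP}: expressing membership in $S_\la$ (resp. $S_\la^-$) through vanishing of matrix coefficients of $\beta$ against highest (resp. lowest) weight vectors, after passing to a $z$-extension so that $G_\der$ is simply connected. Since $\Gr_\mu$ is smooth and irreducible of dimension $(2\rho,\mu)$ by Proposition \ref{geom of Sch}, and $(\rho,\la+\mu)+(\rho,\mu-\la)=(2\rho,\mu)$, the attracting and repelling cells cut out in $\Gr_\mu$ by the two orbits have complementary expected dimensions; combined with the non-emptiness of $S_\la\cap\Gr_\mu$ (which follows by descending from $t^\mu$ along the rational curves $C_{\mu,\al}$ of \eqref{curve C} together with the closure relations of Proposition \ref{geom of Sla}), this forces both bounds to be equalities and shows that every irreducible component of $S_\la\cap\Gr_{\le\mu}$ already meets the open cell $\Gr_\mu$ in the expected dimension, giving purity.

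The crux is (2). Writing $(q_!i^*\mA)_\la=\on{H}^*_c(S_\la,\mA)$, I would establish the concentration in degree $(2\rho,\la)$ by two half-estimates. For the vanishing in degrees $>(2\rho,\la)$, stratify $S_\la\cap\Gr_{\le\mu}=\sqcup_{\nu\le\mu}(S_\la\cap\Gr_\nu)$ and run the spectral sequence of the stratification: on $\Gr_\nu$ the stalks of $\IC_\mu$ live in degrees $\le-(2\rho,\nu)$, with the inequality strict for $\nu<\mu$, while $\dim(S_\la\cap\Gr_\nu)=(\rho,\la+\nu)$ by (1); the elementary bound that compactly supported cohomology of a complex with stalks in degrees $\le b$ on a $d$-dimensional space vanishes above degree $2d+b$ then kills everything above $(2\rho,\la)$ and shows that the top degree receives a contribution only from the open stratum $\nu=\mu$. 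The vanishing in degrees $<(2\rho,\la)$ is the genuinely hard half: here I would invoke Braden's hyperbolic localization theorem to identify $q_!i^*$ with the opposite functor $\bar q_*\bar i^!$ built from the orbits $S_\la^-$, and then apply the dual support estimate coming from the second bound in (1); equivalently, one dualizes using $D\,\IC_\mu\cong\IC_\mu$ and the fact that $2\rho^\vee$-hyperbolic localization commutes with Verdier duality. Granting concentration, the top-degree computation above identifies $\on{H}^{(2\rho,\la)}_c(S_\la,\IC_\mu)$, after the Tate twist by $(\rho,\la+\mu)$, with the $\Ql$-span of the top-dimensional components of $S_\la\cap\Gr_{\le\mu}$, i.e. the Mirkovi\'c--Vilonen cycles.

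Finally, for (3) I would use the closed filtration of $\Gr$ by the $S_{\le\la}=\bar S_\la$ of Proposition \ref{geom of Sla} to produce a spectral sequence with $E_1=\bigoplus_\la \on{H}^*_c(S_\la,\mA)$ abutting to $\on{H}^*(\Gr,\mA)$. By (2) the $\la$-summand is concentrated in degree $(2\rho,\la)$, and by the parity map \eqref{parity map} and Corollary \ref{parity} all these degrees have a fixed parity on each connected component of $\Gr$; since every differential shifts cohomological degree by one, they all vanish and the sequence degenerates at $E_1$. The same parity argument annihilates the connecting maps in the gluing triangles, so the resulting isomorphism $\on{H}^*\cong\on{CT}=\bigoplus_\la\on{H}^*_c(S_\la,-)$ is canonical and functorial. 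I expect the main obstacle to be the lower-degree (co-support) vanishing in (2), which is exactly where one cannot avoid the opposite orbits $S_\la^-$ and the hyperbolic-localization/duality input; the dimension estimates of (1) are technical but follow the template already set in Proposition \ref{HP}.
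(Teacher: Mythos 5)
Your parts (2) and (3) follow essentially the paper's own route — Braden's hyperbolic localization $q_!i^*\cong (q^-)_*(i^-)^!$ for the co-support half of the degree estimate, the spectral sequence of the stratification of $S_\la$ by $S_\la\cap\Gr_\nu$ together with the stalk bounds for the support half, and degeneration-by-parity of the $\{S_\la\}$-spectral sequence for (3). One caveat on (3): degeneration only produces a canonical \emph{filtration} of $\on{H}^*$ with associated graded $\bigoplus_\la\on{H}^*_c(S_\la,-)$. Since distinct $\la,\la'$ can satisfy $(2\rho,\la)=(2\rho,\la')$, the graded pieces can collide in a single cohomological degree and parity does not split the filtration there; the canonical splitting requires the complementary filtration built from the opposite orbits $S^-_\la$ (which you introduce for (2) but do not use here).

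The genuine gap is in (1). First, the method you propose for the upper bounds — ``expressing membership in $S_\la$ through vanishing of matrix coefficients, exactly as in Proposition \ref{HP}'' — proves that $S_{\leq\la}$ is \emph{closed}, but imposing equations bounds codimension from above, i.e.\ dimension from \emph{below}; it cannot yield $\dim(S_\la\cap\Gr_{\leq\mu})\leq(\rho,\la+\mu)$. Second, even granting both upper bounds, the deduction that complementarity forces equality is a non sequitur: $S_\la$ and $S_\la^-$ meet only in the single fixed point $t^\la$, which lies in $\Gr_\mu$ only when $\la\in W\mu$, so for general $\la$ the two pieces cut out of $\Gr_\mu$ are disjoint, and two disjoint locally closed subsets of an irreducible variety whose dimension bounds sum to $\dim\Gr_\mu$ need not attain those bounds (two points in $\bA^2$ already violate the conclusion). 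Nor does purity follow: a component of $S_\la\cap\Gr_{\leq\mu}$ could a priori lie entirely in a boundary stratum $\Gr_{\leq\nu}$ with $\nu<\mu$ and have smaller dimension. The ingredient you are missing is Corollary \ref{boundary of semi}: the boundary $S_{<\la}=\overline{S}_\la\setminus S_\la$ is the intersection of $\overline{S}_\la$ with a hyperplane section of the ample line bundle $\mO(1)$, so intersecting any positive-dimensional projective component with it drops the dimension by exactly one. Since $(\rho,\al)=1$ for a simple coroot $\al$, the dimension formula and purity then follow by induction along $\la\mapsto\la-\al$, anchored at the extreme cases \eqref{extreme semi}, namely $S_\mu\cap\Gr_{\leq\mu}\cong\bA^{(2\rho,\mu)}$ and $S_{w_0(\mu)}\cap\Gr_{\leq\mu}=\{t^{w_0(\mu)}\}$. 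Your use of the curves $C_{\mu,\al}$ for non-emptiness is fine, but the dimension and purity statements need the hyperplane-section mechanism rather than the complementarity of attracting and repelling sets.
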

\begin{rmk}
Mirkovi\'c-Vilonen established the theorem for a general coefficient ring $\Lambda$ (when $k=\bC$). In this generality, one needs to replace $\IC_\mu$ in (1) by the corresponding standard or costandard object.
\end{rmk}

Let us explain the basic ideas going into the proof of this theorem and refer to \cite[\S\ 3]{MV} for details.  Another approach to the theorem was given in \cite[\S~2.2]{Z14}, based on some ideas of \cite{NP}. This alternative approach is necessary to establish the geometric Satake for $p$-adic groups because (currently) we do have the corresponding Corollary \ref{boundary of semi} in that setting (there is no Kac-Moody theory nor the Cartier divisor $\Theta$ available in that setting). 

First, it follows from Corollary \ref{boundary of semi} that if $\al$ is a simple coroot, $\dim(S_{\la-\al}\cap\Gr_{\leq \mu})=\dim(S_\la\cap\Gr_{\leq \mu})-1$. Then  (1) follows by induction, starting with
\begin{equation}\label{extreme semi}
S_\la\cap \Gr_{\leq \mu}=S_\la\cap \Gr_\mu=\left\{\begin{array}{ll}t^{w_0(\mu)} & \la=w_0(\mu), \\
                               L^+U t^\mu\cong \bA^{(2\rho,\mu)} & \la=\mu, \end{array}\right.
\end{equation}
where $w_0$ is the longest element in the Weyl group of $G$.

For (2), note that
the functor $q_!i^*$ is nothing but
Braden's hyperbolic localisation functor (cf. \cite{Br}). More precisely, let $B^-$ be the opposite Borel such that $B\cap B^-=T$. Then $\Gr_{B^-}$ is the repeller of the $2\rho^\vee(\bG_m)$-action and we have the diagram 
$$\Gr_T\stackrel{q^-}{\leftarrow}\Gr_{B^-}\stackrel{i^-}{\to}\Gr_G.$$ By Braden's theorem, 
$$q_!i^*\mA=(q^-)_*(i^-)^!\mA, \quad \mbox{ for } \mA\in\Sat_G.$$ In addition, the stratification of $S_\la$ by $\{S_\la\cap\Gr_\mu, \mu\in\xcoch(T)^+\}$ induces a spectral sequence with $E_1$-term $\on{H}_c^*(S_\la\cap\Gr_\mu,\mA)$ and abutment $\on{H}_c^*(S_\la,\mA)$. 
These considerations, together with the dimension formula in (1), establish (2).

To prove (3), the stratification of $\Gr$ by semi-infinite orbits $\{S_\la, \la\in\xcoch(T)\}$ induces a spectral sequence with $E_1$-term $\on{H}_c^*(S_\la,-)$ and abutment $\on{H}^*(-)$. It degenerates at the $E_1$-term by degree reasons established in (ii). So there is a natural filtration on $\on{H}^*$ with associated graded $\oplus_\la \on{H}_c^*(S_\la,-)$. Explicitly, this is a filtration indexed by $(\xcoch(T),\leq)$ defined as
\[\on{Fil}_{\geq \mu}\on{H}^*(\mA)=\ker(\on{H}^*(\mA)\to \on{H}^*(S_{<\la},\mA)),\] 
where $S_{<\la}=\bar{S}_\la-S_\la$. On the other hand, the stratification of $\Gr$ by $$\left\{S^-_\la:=i^-(q^-(\la)), \la\in\xcoch(T)\right\}$$ gives another filtration of $\on{H}^*$ as
\[\on{Fil}'_{<\la}\on{H}^*(\mA)=\on{Im}(\on{H}^*_{S^-_{<\la}}(\mA)\to \on{H}^*(\mA)),\]
where $\on{H}^*_{S^-_{<\la}}(\mA)$ denotes the cohomology of the $!$-restriction of $\mA$ to $S^-_{<\la}$.
These two filtrations are complimentary to each other (by Braden's theorem and (2)), and together define the decomposition $\on{H}^*=\oplus_\la \on{H}_c^*(S_\la,-)$.

\begin{rmk}
(i) It follows that both $\dim (S_\la\cap \Gr_{\leq \mu})=(\rho,\la+\mu)$ and $\dim \Gr_{\leq \mu}-\dim (S_\la\cap \Gr_{\leq \mu})=(\rho,\mu-\la)$ tends to infinity as $\mu$ goes to infinity. In other words, $S_\la$ is infinite dimensional and infinite codimensional in $\Gr_G$. On the other hand, it is an orbit of $LU$. For this reason, $S_\la$ is usually called a semi-infinite orbit. 

(ii) The intersection of $S_\la\cap S^-_\mu$ is finite dimensional, and depends only on $\la-\mu$. So it is usually denoted by $Z_{\la-\mu}$ and is called the Zastava space. Note that $Z_{\la-\mu}$ is non-empty if and only if $\la\geq \mu$.

(iii) It was also proved in \cite[Theorem 3.6]{MV} that the filtration and the decomposition of $\on{H}^*$ do not depend on the choice of the embeddings $B\subset G$ and $T\subset B$. See also Corollary \ref{canonical TB}.
\end{rmk}

We can regard $\on{CT}$ as a functor from $\Sat_G\to \Sat_T$, and the isomorphism in Theorem \ref{MV theory} (iv) as an isomorphism 
\begin{equation}\label{geom sat tran}
\on{H}^*\circ \on{CT}\cong \on{H}^*: \Sat_G\to \on{Vect}_\Ql.
\end{equation}
\begin{prop}
There is a unique monoidal structure on $\on{CT}$ such that the isomorphism \eqref{geom sat tran} is monoidal. 
\end{prop}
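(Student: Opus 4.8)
The linchpin is that the fiber functor $\on{H}^*\colon\Sat_T\to\on{Vect}_{\Ql}$ is \emph{faithful}: recall that $\Sat_T$ is the category of $\xcoch(T)$-graded finite-dimensional $\Ql$-vector spaces and that $\on{H}^*$ merely forgets the grading. Hence a monoidal structure on $\on{CT}$, that is, a natural family of isomorphisms $J_{\mA_1,\mA_2}\colon\on{CT}(\mA_1)\otimes\on{CT}(\mA_2)\xrightarrow{\sim}\on{CT}(\mA_1\star\mA_2)$ in $\Sat_T$, is completely determined by the underlying family of linear maps. Requiring \eqref{geom sat tran} to be monoidal forces $\on{H}^*(J_{\mA_1,\mA_2})$ to equal the composite of the monoidal constraint of $\on{H}^*\colon\Sat_G\to\on{Vect}_{\Ql}$ from Proposition \ref{fiber functor} with the isomorphisms \eqref{geom sat tran} on either side. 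This prescription gives uniqueness at once, and reduces existence to two checks: that the prescribed linear map respects the $\xcoch(T)$-grading (so that it is a morphism $J$ in $\Sat_T$), and that the resulting $J$ obeys the associativity and unit constraints.

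The grading compatibility is where the geometry enters, and I would prove it by realizing $\on{CT}$ as hyperbolic localization for the $\bG_m$-action on $\Gr$ through $2\rho^\vee$, for which $\Gr_T=\{t^\la\}$ is the fixed locus and $S_\la$ the attractor of $t^\la$. Since $m$ is ind-proper, $m_!=m_*$, and the convolution map $m\colon\Gr\tilde\times\Gr\to\Gr$ is equivariant for this action. The $\bG_m$-fixed points of $\Gr\tilde\times\Gr$ are the twisted pairs $(t^\la,t^\mu)$, the attractor through such a pair is the twisted product $S_\la\tilde\times S_\mu$, and $m(t^\la,t^\mu)=t^{\la+\mu}$. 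Applying the compatibility of hyperbolic localization with proper pushforward (Braden's theorem, cf.\ \cite{Br}, already invoked for Theorem \ref{MV theory}) to $m$, together with a K\"unneth isomorphism for the twisted product of the equivariant sheaves $\mA_1\tilde\boxtimes\mA_2$, yields a canonical isomorphism
\[
\on{H}_c^*(S_\nu,\mA_1\star\mA_2)\;\cong\;\bigoplus_{\la+\mu=\nu}\on{H}_c^*(S_\la,\mA_1)\otimes\on{H}_c^*(S_\mu,\mA_2).
\]
The right-hand side is precisely the $\nu$-graded component of $\on{CT}(\mA_1)\otimes\on{CT}(\mA_2)$, so this is exactly the statement that $J$ preserves the grading; one then checks it is compatible with the identifications \eqref{geom sat tran}. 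The associativity and unit axioms for $J$ descend from the corresponding axioms for the monoidal functor $\on{H}^*$ on $\Sat_G$, again by faithfulness of $\on{H}^*$ on $\Sat_T$.

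The main obstacle is the $\bG_m$-equivariant geometry of the convolution Grassmannian: identifying the attractors of $\Gr\tilde\times\Gr$ with the twisted products $S_\la\tilde\times S_\mu$ and verifying that hyperbolic localization commutes with $m_!$ in this ind-scheme setting, where Braden's theorem must be applied on the finite-dimensional Schubert pieces $\Gr_{\leq\mmu}$ and the answer assembled over them. Once this input is secured, the rest is formal. An alternative I would keep in reserve is to deduce the decomposition from the fusion interpretation of the convolution product developed in \S\ref{fusion}: there the factorization of the semi-infinite orbits over the Beilinson--Drinfeld Grassmannian makes the splitting $\on{H}_c^*(S_\nu,\mA_1\star\mA_2)\cong\bigoplus_{\la+\mu=\nu}\on{H}_c^*(S_\la,\mA_1)\otimes\on{H}_c^*(S_\mu,\mA_2)$ transparent without invoking the equivariant degeneration.
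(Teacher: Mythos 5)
Your route to existence is genuinely different from the paper's. The paper fixes the monoidal structure on $\on{H}^*$ from Proposition \ref{fiber functor} once and for all and shows it is compatible with the filtration $\on{Fil}_{\geq\mu}$ (and, by Verdier duality, with the complementary one), so that it descends to the associated graded $\on{CT}=\on{gr}\on{H}^*$ and automatically makes \eqref{geom sat tran} monoidal; the compatibility is checked by lifting the filtration to $T$-equivariant cohomology, where it is by free $R_T$-modules, and then localising to the generic point of $\Spec R_T$, where the equivariant localisation theorem reduces everything to the fixed points $t^\nu$. You instead build a monoidal structure on $\on{CT}$ directly, by applying Braden's theorem to the $\bG_m$-equivariant ind-proper convolution map, identifying the fixed points of $\Gr\tilde\times\Gr$ with the pairs $(t^\la,t^\mu)$ and their attractors with $S_\la\tilde\times S_\mu$. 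This is a legitimate and known alternative (it is close in spirit to the Mirkovi\'c--Vilonen weight-functor computation, \cite[Proposition 6.4]{MV}, but done on $\Gr\tilde\times\Gr$ rather than on the Beilinson--Drinfeld Grassmannian), and your two flagged obstacles --- the identification of attractors in the convolution Grassmannian and the K\"unneth isomorphism for the twisted product over $S_\la$ --- are real but manageable on the finite-dimensional pieces $\Gr_{\leq\mmu}$. What each approach buys: yours stays entirely with ordinary (compactly supported) cohomology and produces the graded decomposition of $\on{H}^*_c(S_\nu,\mA_1\star\mA_2)$ in one stroke; the paper's stays within the equivariant framework in which the monoidal structure on $\on{H}^*$ was originally defined, so no translation between two constructions is ever needed.

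That translation is the one genuine gap in your write-up. The proposition is not merely that \emph{some} monoidal structure on $\on{CT}$ exists; it is that \eqref{geom sat tran} is monoidal for the \emph{specific} structure on $\on{H}^*$ constructed in Proposition \ref{fiber functor} via equivariant cohomology. Your Braden-constructed isomorphism and the equivariant K\"unneth isomorphism are two a priori unrelated natural isomorphisms $\on{H}^*(\mA_1)\otimes\on{H}^*(\mA_2)\cong\on{H}^*(\mA_1\star\mA_2)$, and the sentence ``one then checks it is compatible with the identifications \eqref{geom sat tran}'' is precisely where the paper's argument does its real work: the isomorphism $\on{H}^*\cong\on{CT}$ of Theorem \ref{MV theory}(3) is defined by the splitting coming from the two complementary filtrations, so one must show that the fixed monoidal structure respects both, and the localisation over the generic point of $\Spec R_T$ is what makes that comparison tractable. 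Without an argument here (for instance, showing both isomorphisms are filtered with the same associated graded and that a filtered isomorphism inducing the identity on $\on{gr}$ suffices), your proof establishes a monoidal structure on $\on{CT}$ but not yet the stated compatibility.
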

The monoidal structure on $\on{CT}$, if exists, is clearly unique. The existence was proved in \cite[Proposition 6.4]{MV} using the fusion product interpretation of the convolution product. Here we sketch an alternative approach, using equivariant cohomology.
\begin{proof}
The monoidal structure on $\on{CT}$, if exists, is clearly unique. We sketch the existence.

Recall from the above discussion that there is a filtration $\on{Fil}^*_{\geq \mu}$ on $\on{H}^*$ such that $\on{CT}=\on{gr}\on{H}^*$. It is enough to show that the monoidal structure given in Proposition \ref{fiber functor} is compatible with this filtration. 
Then the Verdier dual argument shows that the complementary filtration is also compatible with the monoidal structure. By the construction of the isomorphism $\on{H}^*\cong \on{CT}$ from the above discussion, the proposition then follows.

The obvious problem is that the monoidal structure of $\on{H}^*$ is constructed using the $L^+G$-equivariant cohomology whereas $S_\la$ is not $L^+G$-equivariant. However, $S_\la$ is stable under the action of the constant torus $T\subset L^+T\subset L^+G$.
Then the stratification $\{S_\la\}$ also induces a spectral sequence with $E_1$-term $\on{H}_{T,c}^*(S_\la,-)$ and abutment $\on{H}^*_{T}$. By degree reasons (cf. Theorem \ref{MV theory} (2)), the spectral sequence degenerates at $E_1$-term so one can lift the filtration $\on{Fil}_{\geq \mu}\on{H}^*$ to a filtration of $\on{H}^*_{T}$ by free $R_{T}$-modules
\[\on{Fil}_{\geq \mu}\on{H}_{T}^*(\mA)=\ker(\on{H}_{T}^*(\mA)\to \on{H}_{T}^*(S_{<\la},\mA)),\]
so that the associate graded is $\oplus_\la\on{H}_{T,c}(S_\la,\mA)$ (which is in particular a free $R_{T}$-module). 
In addition, after base change along the augmentation map $R_{T}\to\Ql$, one recovers the filtration on $\on{H}^*$.
See \cite[\S\ 2.2]{YZ} for more details.

Since $\on{H}_{T}^*(\mA)\cong \on{H}^*_{L^+G}(\mA)\otimes_{R_{G}}R_{T}$, the monoidal structure on $\on{H}_{L^+G}^*$ constructed in Proposition \ref{monoidal functor} induces a monoidal structure on $\on{H}_{T}^*$ by base change. Therefore it is enough to show that the canonical isomorphism
\begin{equation}\label{equiv monoidal}
\on{H}^*_{T}(\mA_1)\otimes_{R_{T}}\on{H}^*_{T}(\mA_2)\cong \on{H}^*_{T}(\mA_1\star\mA_2)
\end{equation}
is compatible with the filtration. In addition, it is enough to check this over the generic points of $\Spec R_{T}$. Let  us write $\on{H}_\eta=\on{H}^*_{T}\otimes_{R_{T}}Q$, where $Q$ is the fractional field of $R_{T}$.
Applying the equivariant localisation theorem (cf. Theorem \ref{equiv localization}) we have
\[\on{H}_\eta(\mA)\cong \bigoplus_\nu \on{H}_\eta(\mA|_{t^\nu}),\quad \on{H}_\eta(S_{<\la},\mA)\cong\bigoplus_{\nu<\la}\on{H}_\eta(\mA|_{t^\nu}).\]
Therefore, $\on{Fil}_{\geq \la}\on{H}_\eta(\mA)=\oplus_{\nu\geq \la}\on{H}_\eta(\mA|_{t^\nu})$.
In addition, it follows from the definition of the monoidal structure on $\on{H}^*_{T}$ that under these identifications, \eqref{equiv monoidal} is induced by the canonical isomorphisms
\[\begin{split}\bigoplus_{\nu_1+\nu_2=\nu} \on{H}_\eta(\mA_1|_{t^{\nu_1}})\otimes_Q \on{H}_\eta(\mA_2|_{t^{\nu_2}})\cong &\bigoplus_{\nu_1+\nu_2=\nu}\on{H}_\eta((\mA_1|_{t^{\nu_1}})\otimes (\mA_2|_{t^{\nu_2}})) \\
\cong &\quad \on{H}_\eta(\mA_1\star\mA_2|_{t^\nu}),
\end{split}\]
where the first isomorphism is the equivariant K\"unneth formula \eqref{equiv Kunneth} and the second is again the localisation theorem. 

Now the proposition follows by putting everything together.
\end{proof}

\begin{rmk}\label{Parabolic MV}
Mirkovi\'c-Vilonen's theory generalises to parabolic subgroups. Let $P\subset G$ be a parabolic subgroup and $L_P$ be the Levi quotient. Then we have the induced diagram
\[\Gr_L\stackrel{q}{\leftarrow}\Gr_P\stackrel{i}{\to}\Gr_G.\]
Given $\mA\in\Sat_G$, one can then define $q_!i^*\mA$ as a complex of sheaves on $\Gr_L$. By choosing an appropriate one-parameter subgroup of $G$ we can also regard it as a hyperbolic localization functor and therefore $q_!i^*\mA$ is perverse (up to some shifts). Then we have the parabolic version of $\on{CT}=\on{CT}^G_T$
\[\on{CT}^G_L: \Sat_G\to \Sat_L,\quad \mA\mapsto q_!i^*\mA[\mbox{shifts}],\]
such that that $\on{CT}^G_T=\on{CT}^L_T\circ\on{CT}^G_L$. 
\end{rmk}

Applying Proposition \ref{existence of comm}, we see that $\on{CT}$ in fact respects to the symmetric monoidal structure, and thus is a tensor functor between two Tannakian categories. It thus induces a homomorphism
\[\hat{T}\cong \tilde T\to \tilde G.\]
This defines a subtorus of $\tilde G$. 
\begin{lem}
This is a maximal torus.
\end{lem}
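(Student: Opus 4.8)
The plan is to prove that the subtorus $\hat T \cong \tilde T \hookrightarrow \tilde G$ produced by the tensor functor $\on{CT}$ is in fact a \emph{maximal} torus of $\tilde G$. Since $\tilde G$ is a connected reductive group (by the lemma already established in this subsection) and $\tilde T$ is a torus, the cleanest way to verify maximality is a dimension (or rank) count: it suffices to show that $\dim \tilde T$ equals the rank of $\tilde G$, equivalently that the centraliser $Z_{\tilde G}(\tilde T)$ equals $\tilde T$ itself, equivalently that $\tilde T$ is its own centraliser. The representation-theoretic reformulation, via Tannakian duality, is the most tractable: a homomorphism of reductive groups $\hat T \to \tilde G$ has image a maximal torus if and only if the restriction functor $\Rep(\tilde G) \to \Rep(\hat T)$, i.e. $\on{CT}\colon \Sat_G \to \Sat_T$, decomposes every representation into one-dimensional $\hat T$-weight spaces whose multiplicities match the $\tilde T$-weight multiplicities of $\tilde G$-representations, and moreover the weights that occur generate a full-rank sublattice.

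First I would make precise the weight bookkeeping. For $\IC_\mu \in \Sat_G$, Theorem \ref{MV theory}(3) identifies $\on{H}^*(\IC_\mu)$ with $\on{CT}(\IC_\mu) = \bigoplus_\la \on{H}^*_c(S_\la, \IC_\mu)$, and part (2) shows each summand is concentrated in a single degree with dimension equal to the number of irreducible components of $S_\la \cap \Gr_{\leq\mu}$ of top dimension $(\rho,\la+\mu)$. Under the identification $\tilde T \cong \hat T$ (whose character lattice is $\xcoch(T)$, the coweight lattice of $T$, since $\Sat_T$ is the category of $\xcoch(T)$-graded vector spaces by the torus lemma), the object $\on{CT}(\IC_\mu)$ is the $\hat T$-representation whose $\la$-weight space is exactly $\on{H}^*_c(S_\la,\IC_\mu)$. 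Thus the $\hat T$-weights occurring in $\on{CT}(\IC_\mu)$ are precisely the coweights $\la$ with $S_\la \cap \Gr_{\leq\mu} \neq \emptyset$, which by Theorem \ref{MV theory}(1) and Lemma \ref{intersection of semi and Sch} are exactly the $\la \leq \mu$ (up to the Weyl-orbit boundary), with $\mu$ itself occurring with multiplicity one by the extreme computation \eqref{extreme semi}.

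The key step is then to extract rank information. I would argue that as $\mu$ ranges over $\xcoch(T)^+$, the weights $\la$ appearing in the various $\on{CT}(\IC_\mu)$ span the full lattice $\xcoch(T)$: indeed $\mu$ itself appears for every dominant $\mu$ by \eqref{extreme semi}, and the dominant coweights generate $\xcoch(T)$ as a group. Since the $\hat T$-weights of the collection of $\tilde G$-representations $\{\on{CT}(\IC_\mu)\}$ span $\xcoch(T) = X^\bullet(\hat T)$, the composite $\hat T \to \tilde G \to \GL(\on{H}^*(\IC_\mu))$ is, on characters, surjective onto a full-rank lattice; hence the map $\hat T \to \tilde G$ has finite kernel and $\dim \tilde T = \dim \hat T = \operatorname{rank}\xcoch(T)$. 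It then remains to show this common rank equals the rank of $\tilde G$. For this I would invoke that the \emph{highest} weight $\mu$ of $\on{CT}(\IC_\mu)$ occurs with multiplicity one and dominates all other weights in the standard order, exactly mirroring the highest-weight theory of $\tilde G$-representations; combined with the fact (from the proof that $\tilde G$ is reductive) that the $\IC_\mu$ with $\mu$ fundamental generate $\Sat_G$ as a tensor category, this forces $\tilde T$ to be a maximal torus rather than a proper subtorus of a maximal torus.

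The main obstacle I anticipate is controlling the centraliser $Z_{\tilde G}(\tilde T)$ directly — that is, ruling out that $\tilde T$ sits inside a strictly larger torus of $\tilde G$. The dimension count above shows $\dim\tilde T = \operatorname{rank}\xcoch(T)$, so the real content is the identity $\operatorname{rank}(\tilde G) = \operatorname{rank}\xcoch(T)$, which ultimately needs the \emph{full} determination of the root datum of $\tilde G$ as $(\xcoch,\xch,\Phi^\vee,\Phi)$ in Theorem \ref{birth of dual group}. To avoid circularity I would instead establish maximality self-containedly by proving that $\on{CT}$ is a \emph{faithful} exact tensor functor whose image weights generate $X^\bullet(\hat T)$ and that no $\tilde T$-weight space in any $\on{CT}(\IC_\mu)$ can be further refined — equivalently that the abstract Cartan $T \subset L^+T \subset L^+G$, whose action preserves each $S_\la$, already acts on $\on{H}^*_c(S_\la,\IC_\mu)$ through the single character $\la$, so that $\tilde T$ separates the summands maximally. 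Making this last separation argument rigorous, using the $T$-equivariant refinement $\on{H}^*_T$ appearing in the proof of the preceding proposition, is where the genuine work lies.
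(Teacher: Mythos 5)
There is a genuine gap: you assemble the right ingredients (the weight bookkeeping via Theorem \ref{MV theory}(3), the multiplicity--one statement \eqref{extreme semi}, the observation that the occurring weights generate $\xcoch(T)$ so that $\hat T\to\tilde G$ is a closed embedding of a torus of dimension $\on{rank}\,\xcoch(T)$), but you never close the argument. Maximality amounts to the inequality $\on{rank}\,\tilde G\le\dim\tilde T$, and you assert that this "ultimately needs the full determination of the root datum," then propose a substitute "separation" argument that you explicitly leave unfinished. That substitute is also not sound as stated: the fact that the constant torus $T\subset L^+G$ acts on $\on{H}^*_c(S_\la,\IC_\mu)$ through the single character $\la$ is a statement about a torus in $G$ acting on cohomology; it says nothing about whether a hypothetical torus of $\tilde G$ strictly containing $\tilde T$ could refine these weight spaces, which is what maximality requires you to exclude.

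The missing step is short and needs no knowledge of the root datum. By \eqref{extreme semi} the weight $\mu$ occurs in $L_\mu=\on{H}^*(\IC_\mu)$ with multiplicity one, and every other occurring weight $\la$ satisfies $\la\le\mu$ (Lemma \ref{intersection of semi and Sch}). Hence the restrictions $\chi_{L_\mu}|_{\tilde T}$ are upper triangular with respect to $\le$, with leading coefficient $1$ at the distinct extremal weights $\mu$, so they are linearly independent; equivalently the restriction $R(\tilde G)\to R(\tilde T)$ is injective. An injection $R(\tilde G)\otimes\bQ\hookrightarrow R(\tilde T)\otimes\bQ$ of finitely generated domains forces $\on{rank}\,\tilde G=\dim\Spec(R(\tilde G)\otimes\bQ)\le\dim\Spec(R(\tilde T)\otimes\bQ)=\dim\tilde T$ (equivalently: the characters of $\tilde G$ already separate semisimple conjugacy classes after restriction to $\tilde T$, which fails for any non-maximal torus). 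This is exactly the paper's proof, and it avoids the circularity you were worried about: the full identification of the root datum in Theorem \ref{birth of dual group} is deduced \emph{after}, and partly from, this lemma.
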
 
\begin{proof}
Indeed, let $L_\mu=\on{H}^*(\IC_\mu)$ be an irreducible representation of $\tilde G$. Then under the decomposition given by Theorem \ref{MV theory} (3), $\on{H}_c^{(2\rho,\la)}(S_\la,\IC_\mu)$ 
is the $\la$-weight subspace of $L_\mu$.
By \eqref{extreme semi},
$\dim \on{H}_c^{(2\rho,\mu)}(S_\mu,\IC_\mu)=1$. This implies that all the characters of $L_\mu$ are linearly independent. Therefore, the representation ring of $\tilde G$ is a subring of the representation ring of $\tilde T$, which implies that $\tilde T$ is maximal.
\end{proof}
\begin{rmk}Note that $T$ (defined as the abstract Cartan) is not a subgroup of $G$, but its Langlands dual $\hat T$ is canonically a subgroup of $\tilde G$.
\end{rmk}

We continue the proof of the theorem.

Let $\ell_\mu=\on{H}_c^{(2\rho,\mu)}(S_\mu,\IC_\mu)$.
Note that the natural map (up to scalar) 
$$L_{\mu_1}\otimes L_{\mu_2}\to L_{\mu_1+\mu_2}$$ sends $\ell_{\mu_1}\otimes\ell_{\mu_2}$ to $\ell_{\mu_1+\mu_2}$, since the monoidal structure on $\on{H}^*$ is compatible with the filtration $\on{Fil}$ on $\on{H}^*$.  Therefore via the Pl\"ucker relation, the collection $\{\ell_\mu\subset L_\mu, \mu\in\xcoch(T)^+\}$ defines a Borel subgroup $\tilde B\subset \tilde G$, and $\tilde T\subset \tilde B$.  
It follows by definition that $\mu$ is the highest weight (with respect to $\tilde B$) of the representation $L_\mu$. Therefore, the set of dominant coweights $\xcoch(T)^+$  coincide with the set of dominant weights $\xch(\tilde T)^+$ of $\tilde T$ with respect to $\tilde B$. 

Recall the following description of the semigroup $\tilde Q^+\subset \xch(\tilde T)$ generated by positive roots of $\tilde G$. A weight $\la\in \tilde Q^+$ if and only if there exists a highest weight representation $L_\mu$ such that $\mu-\la$ appears as a weight of $L_\mu$. By Theorem \ref{MV theory}, this is equivalent to the existence of some $\mu$ such that $t^{\mu-\la}\in \Gr_{\leq \mu}$, which is equivalent to $\la$ being a sum of positive coroots of $G$.
Therefore the semigroup  $(Q^\vee)^+\subset\xcoch(T)$ generated by positive coroots of $G$ coincides with the semigroup $\tilde Q^+$. It then follows that the set of simple coroots of $G$ coincide with the set of simple roots of $\tilde G$. 
To finish the proof of Theorem \ref{birth of dual group}, it remains to prove the following lemma.
\begin{lem}
A root datum $(\xch,\xcoch,\Phi,\Phi^\vee)$ is uniquely determined by the semi-group ${\xch}^+$ of dominant weights and the set $\Delta\subset\Phi$ of simple roots.
\end{lem}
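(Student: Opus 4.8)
The plan is to reconstruct $\Phi$ and $\Phi^\vee$ (together with the bijection $\alpha\leftrightarrow\alpha^\vee$) from the ingredients actually retained, namely the lattice $\xch$, its dual $\xcoch=\Hom(\xch,\bZ)$, the monoid $\xch^+$, and the finite set $\Delta\subset\xch$. Since every root is $W$-conjugate to a simple one and $W$ is generated by the simple reflections $s_\alpha(\lambda)=\lambda-\langle\lambda,\alpha^\vee\rangle\alpha$, it suffices to recover the set $\Delta^\vee=\{\alpha^\vee : \alpha\in\Delta\}\subset\xcoch$ of simple coroots; once $\Delta$ and $\Delta^\vee$ are known, the $s_\alpha$ are determined, and then $\Phi=W\cdot\Delta$ and $\Phi^\vee=W\cdot\Delta^\vee$ pin down the whole datum. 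So the content is concentrated entirely in recovering $\Delta^\vee$ from $\xch^+$.

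First I would use the defining description $\xch^+=\{\lambda\in\xch : \langle\lambda,\alpha^\vee\rangle\ge 0\ \forall\,\alpha\in\Delta\}$. Passing to $\xch^+_{\bR}:=\xch^+\otimes\bR$, this exhibits the dominant cone as the dual cone of $C^\vee:=\on{cone}(\Delta^\vee)\subset\xcoch_{\bR}$. Because the simple coroots are linearly independent, $C^\vee$ is a simplicial cone, hence closed and finitely generated, so it agrees with the bidual $(\xch^+_{\bR})^\vee$, an object manifestly computed from $\xch^+$ alone. Its extremal rays are exactly the rays $\bR_{\ge 0}\alpha^\vee$; let $v_1,\dots,v_r$ be their primitive integral generators in $\xcoch$. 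Thus $\xch^+$ already recovers each simple coroot up to a positive scalar. I expect this step to go through uniformly whether or not $G$ is semisimple: when $G$ carries a central torus the cone $\xch^+_{\bR}$ merely contains the annihilator subspace of $\Delta^\vee$, but the identity $(\on{cone}(\Delta^\vee))^{\vee\vee}=\on{cone}(\Delta^\vee)$ still holds.

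The main obstacle, and the only genuinely non-formal point, is fixing the normalisation, since $C^\vee$ determines the $\alpha^\vee$ only up to scaling. Here I would first match each ray to a unique simple root and then invoke a root-datum axiom. Because $\langle\alpha,\alpha^\vee\rangle=2>0$ while $\langle\beta,\alpha^\vee\rangle\le 0$ for simple $\beta\neq\alpha$, each primitive generator $v_i$ pairs strictly positively with exactly one element of $\Delta$, which gives a canonical bijection $v_i\leftrightarrow\alpha$. Writing $\alpha^\vee=m_\alpha v_i$ with $m_\alpha\in\bZ_{>0}$ and imposing $\langle\alpha,\alpha^\vee\rangle=2$ forces $m_\alpha=2/\langle\alpha,v_i\rangle$, so this ratio must be the unique positive integer making $\alpha^\vee$ integral; this determines $\Delta^\vee$ completely. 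The reconstruction of $(\Phi,\Phi^\vee)$ via $W$ described above then finishes the proof, and every step being canonical yields the asserted uniqueness.
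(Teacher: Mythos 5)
Your proof is correct and takes essentially the same route as the paper's (which is only a sketch): recover the walls of the dominant chamber, equivalently the simple coroot directions, from $\xch^+$, hence the Weyl group $W$, and then set $\Phi=W\cdot\Delta$ and $\Phi^\vee=W\cdot\Delta^\vee$. You supply the details the paper leaves as an exercise, notably the matching of extremal rays of the dual cone to elements of $\Delta$ and the normalisation of each $\alpha^\vee$ via $\langle\alpha,\alpha^\vee\rangle=2$.
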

\begin{proof}Indeed, ${\xch}^+$ determines the dominant Weyl chamber of $\xch\otimes \bR$, and therefore the Weyl group $W$. Then $\Phi=W(\Delta)$. We leave the details as an exercise.
\end{proof}
Note that the above arguments also give the following corollary.

\begin{cor}\label{canonical TB}
The grading on the hypercohomology functor $\on{H}^*$ corresponds to the one parameter subgroup 
$$2\rho:\bG_m\to \tilde T\subset \tilde G,$$ 
where the cocharacter $2\rho\in \xcoch(\tilde T)=\xch(T)$ is the sum of positive coroots of $\tilde G$. 

In particular, $\tilde T$ is the centraliser of $2\rho(\bG_m)$ in $\tilde G$ and $\tilde B$ is the unique Borel that contains $\tilde T$ and such that $2\rho$ is dominant with respect to $\tilde B$. Therefore, $(\tilde T,\tilde B)$ is independent of the choice of $T\subset B$ and $B\subset G$.
\end{cor}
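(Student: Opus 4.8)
The plan is to exhibit the cohomological grading as an \emph{intrinsic} cocharacter of $\tilde G$, identify it with $2\rho$ using Mirkovi\'c--Vilonen theory, and then read off $(\tilde T,\tilde B)$ from its centraliser and dominance. First I would observe that the monoidal structure on $\on{H}^*$ from Proposition \ref{fiber functor} respects cohomological degree: under the isomorphism $\on{H}^*(\mA_1\star\mA_2)\cong\on{H}^*(\mA_1)\otimes\on{H}^*(\mA_2)$ the total cohomological grading on the right matches that on the left, this being built into the (equivariant) K\"unneth isomorphisms of Lemma \ref{soergel monoidal}. Hence the cohomological grading is a tensor-compatible $\bZ$-grading of the fiber functor, and so defines a cocharacter $\gamma_{\on{coh}}:\bG_m\to\tilde G=\Aut^\otimes\on{H}^*$. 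The key point is that $\gamma_{\on{coh}}$ is intrinsic to $\on{H}^*$: its definition uses no choice of $T\subset B\subset G$.

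Next I would identify $\gamma_{\on{coh}}$ with $2\rho$. By Theorem \ref{MV theory}(2)--(3), the $\tilde T$-weight-$\la$ subspace of $L_\mu=\on{H}^*(\IC_\mu)$ is exactly $\on{H}_c^{(2\rho,\la)}(S_\la,\IC_\mu)$, which is pure of cohomological degree $(2\rho,\la)$. Under the dualities $\xcoch(\tilde T)=\xch(T)$ and $\xch(\tilde T)=\xcoch(T)$, the cocharacter $2\rho\in\xch(T)=\xcoch(\tilde T)$ acts on the weight-$\la$ space ($\la\in\xcoch(T)=\xch(\tilde T)$) by the character $z\mapsto z^{(2\rho,\la)}$; this is precisely the action of $\gamma_{\on{coh}}$, which scales cohomological degree $d$ by $z^d$. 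Since the $\IC_\mu$'s generate $\Sat_G$ as a tensor category and $\on{H}^*$ is faithful, the fact that $\gamma_{\on{coh}}$ and $2\rho(\bG_m)\subset\tilde T$ induce the same $\bG_m$-action on every $L_\mu$ forces the equality of cocharacters $\gamma_{\on{coh}}=2\rho$.

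For the ``in particular'' clause I would invoke regularity of $2\rho$. The roots of $\tilde G$ are the coroots $\Phi^\vee$ of $G$, and for every positive coroot $\al^\vee$ one has $(2\rho,\al^\vee)>0$, since $2\rho$ is strictly dominant ($\langle\rho,\al_i^\vee\rangle=1$ for simple $\al_i^\vee$). Thus $2\rho$ is a regular cocharacter of $\tilde G$, so $Z_{\tilde G}(2\rho(\bG_m))$ is a maximal torus containing $\tilde T$, hence equals $\tilde T$. Moreover, the simple roots of $\tilde G$ are the simple coroots of $G$ (established in the proof of Theorem \ref{birth of dual group}), and $2\rho$ pairs positively with each of them; therefore $2\rho$ is dominant and regular for $\tilde B$, and $\tilde B$ is the unique Borel containing $\tilde T$ with respect to which $2\rho$ is dominant.

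Finally, independence of the choices is then immediate: $\gamma_{\on{coh}}=2\rho(\bG_m)$ is intrinsic to $\on{H}^*$, and both $\tilde T=Z_{\tilde G}(2\rho(\bG_m))$ and $\tilde B$ (the unique Borel making $2\rho$ dominant) are determined by $\gamma_{\on{coh}}$ together with the intrinsic group $\tilde G$, with no reference to $T\subset B\subset G$. I expect the main obstacle to be the first step: verifying that the monoidal isomorphism of Proposition \ref{fiber functor} is strictly degree-preserving, so that the cohomological grading descends to an honest cocharacter of $\tilde G$ rather than merely a grading of underlying vector spaces, and, hand in hand with this, pinning down the pairing conventions so that the degree $(2\rho,\la)$ of the $\la$-weight space matches the value of the cocharacter $2\rho$ on the weight $\la$.
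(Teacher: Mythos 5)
Your proposal is correct and takes essentially the same route as the paper, which deduces the corollary directly from the Mirkovi\'c--Vilonen computation that the $\la$-weight space of $\on{H}^*(\IC_\mu)$ is concentrated in cohomological degree $(2\rho,\la)$, together with the regularity of $2\rho$ and the fact that the cohomological grading of the fiber functor is defined without reference to $T\subset B\subset G$. The one point you flag as a potential obstacle --- that the monoidal isomorphism on $\on{H}^*$ is strictly degree-preserving, so the grading yields an honest cocharacter of $\tilde G$ --- does hold (it is built into the equivariant K\"unneth isomorphism and its specialisation along $R_G\to\Ql$) and is exactly what the paper leaves implicit.
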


At the end of this subsection, we give a few applications of the geometric Satake equivalence. First it produces certain ``canonical bases'' for representations of $\hat G$.

\begin{thm}\label{MV basis}~
\begin{enumerate}
\item Via Theorem \ref{MV theory} (2), irreducible components of $S_\la\cap \Gr_{\leq \mu}$ provide a natural basis of the $\la$-weight space of $L_\mu$.

\item Via Corollary \eqref{Satake fiber}, irreducible components of $\Gr_{\leq \mmu}\cap m^{-1}(t_\la)$ of dimension $(\rho,|\mmu|-\la)$ provide a natural basis of $\Hom_G(V_\la, V_{\mu_1}\otimes\cdots\otimes V_{\mu_n})$.
\end{enumerate}
\end{thm}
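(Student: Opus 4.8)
The plan is to deduce both statements as immediate corollaries of the geometric Satake equivalence (Theorem \ref{birth of dual group}) combined with the two geometric inputs already established: the Mirkovi\'c-Vilonen dimension/basis theorem (Theorem \ref{MV theory}) and the convolution fiber computation (Corollary \ref{Satake fiber}). The essential point is that the equivalence $\on{H}^*:\Sat_G\xrightarrow{\sim}\Rep_{\hat G}$ sends $\IC_\mu$ to the irreducible representation $L_\mu=V_\mu$ of $\hat G$ of highest weight $\mu$, and it intertwines the weight-space decomposition coming from $\tilde T\subset\tilde G$ with the geometric decomposition by semi-infinite orbits.

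For part (1), I would argue as follows. By Corollary \ref{canonical TB} the grading/weight decomposition of $\on{H}^*(\IC_\mu)$ under $\tilde T=\hat T$ is exactly the decomposition $\bigoplus_\la \on{H}^{*}_c(S_\la,\IC_\mu)$ furnished by Theorem \ref{MV theory}(3), with the summand indexed by $\la$ being the $\la$-weight space of $L_\mu$. Now invoke Theorem \ref{MV theory}(2): the graded piece $\on{H}_c^{(2\rho,\la)}(S_\la,\IC_\mu)$ is canonically identified, after a Tate twist, with $\Ql[\on{Irr}(S_\la\cap \Gr_{\leq\mu})]$, the $\Ql$-span of the top-dimensional irreducible components of $S_\la\cap\Gr_{\leq\mu}$ (these have dimension $(\rho,\la+\mu)$ by part (1) of that theorem). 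Since $L_\mu=V_\mu$ under the equivalence, this exhibits $\on{Irr}(S_\la\cap\Gr_{\leq\mu})$ as a canonical basis of the $\la$-weight space of $V_\mu$. So part (1) is essentially a transcription of Theorem \ref{MV theory}(2)--(3) through the Tannakian identification.

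For part (2), the plan is parallel but uses the convolution picture. Under the equivalence, the convolution product corresponds to tensor product, so $\on{H}^*(\IC_{\mu_1}\star\cdots\star\IC_{\mu_n})\cong V_{\mu_1}\otimes\cdots\otimes V_{\mu_n}$ as $\hat G$-representations. By the decomposition preceding Corollary \ref{Satake fiber}, the multiplicity space is $V_{\mmu}^\la=\Hom_{\Sat_G}(\IC_\la,\IC_{\mu_1}\star\cdots\star\IC_{\mu_n})$, which under the equivalence is precisely $\Hom_{\hat G}(V_\la,V_{\mu_1}\otimes\cdots\otimes V_{\mu_n})$. Corollary \ref{Satake fiber} then provides the canonical isomorphism $V_{\mmu}^\la\cong \on{H}^{(2\rho,|\mmu|-\la)}_c(\Gr_{\mmu}\cap m^{-1}(t^\la),\Ql)$, together with the statement that the latter has a basis given by the irreducible components of $\Gr_{\mmu}\cap m^{-1}(t^\la)$ of top dimension $(\rho,|\mmu|-\la)$. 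Combining these identifications yields the asserted basis of $\Hom_{\hat G}(V_\la,V_{\mu_1}\otimes\cdots\otimes V_{\mu_n})$.

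Since all the hard geometric inputs (the dimension estimates, the semismallness, the identification of multiplicity spaces with compactly-supported cohomology, and the reconstruction of $\hat G$) are already in hand, there is no genuinely difficult step remaining; the work is purely one of bookkeeping. The only point requiring slight care is to match the $\hat G$-module structure on $\on{H}^*(\IC_\mu)$ with the abstract irreducible $V_\mu$, i.e. to check that the highest weight is indeed $\mu$ --- but this was already pinned down in the proof of Theorem \ref{birth of dual group} via the lines $\ell_\mu\subset L_\mu$ defining $\tilde B$, so I would simply cite that. Thus the main (very mild) obstacle is notational: ensuring the identifications $L_\mu=V_\mu$ and $\on{CT}$-weight spaces $=$ $\hat T$-weight spaces are compatibly normalized, after which both statements follow formally.
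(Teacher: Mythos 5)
Your proposal is correct and follows exactly the route the paper intends: the theorem is stated there as an immediate consequence of Theorem \ref{MV theory}(2)--(3) and Corollary \ref{Satake fiber} transported through the Tannakian equivalence, and your write-up simply makes that bookkeeping explicit (including the correct top dimension $(\rho,|\mmu|-\la)$ for the components in part (2)). No gaps.
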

Bases in (1) are usually called Mirkovi\'c-Vilonen bases, and irreducible components of $S_\la \cap \Gr_{\leq \mu}$ are usually called MV cycles. Bases in (2) are sometimes called Satake bases.

Next, we give some cute applications of the geometric Satake equivalence to topology.

\begin{ex}(i) We have a canonical isomorphism 
$$\on{H}^*(\Gr(i,n))\cong \wedge^i (\bigoplus_{j=0}^{n-1}\Ql[-2j](-j)).$$

(ii) Let $X_n\subset \bP^{n+1}$ be the smooth $n$-dimensional quadric. Then
\[
\on{H}^*(X_n)\cong\left\{\begin{array}{ll}
\bigoplus_{j=0}^n \Ql[-2j](-j) &  n \mbox{ is odd,} \\ 
\bigoplus_{j=0}^n \Ql[-2j](-j)\bigoplus \Ql[-n](-\frac{n}{2}) &  n \mbox{ is even.}
\end{array}\right.
\] 
The proof is by identifying $\Gr(i,n)$ and $X_n$ as minuscule Schubert varieties of $\on{PGL}_n$, $\on{SO}_{n+2}$.
\end{ex}

Recall that if $k=\bC$, $\Gr\cong \Omega K$ and therefore is an $H$-space. As a result, the cohomology ring $\on{H}^*(\Gr)$ has a natural graded commutative and cocommutative Hopf algebra structure. If $k$ is general, this still holds. Namely, the convolution map $m:\Gr\tilde\times \Gr\to \Gr$ induces a comultiplication for the equivariant cohomology
\[\on{H}^*_{L^+G}(\Gr)\to \on{H}^*_{L^+G}(\Gr)\otimes_{R_G}\on{H}^*_{L^+G}(\Gr).\]
The similar statement in Lemma \ref{left=right} holds for $\on{H}^*_{L^+G}(\Gr)$. Therefore, we can specialise along $R_G\to\Ql$, which induces
a comultiplication of usual cohomology. 

By the classification of such Hopf algebras, $\on{H}^*(\Gr)=U(\fraka)$ (or $\on{H}^*(\Omega K)=U(\fraka)$ if $k=\bC$), where $\fraka$ is the primitive part of $\on{H}^*(\Gr_G)$. Since $\on{H}^*(\Gr)$ acts on $\on{H}^*(\mA)$ by the cup product, a general Tannakian formalism induces a map
$\fraka\to \hat \frakg=\Lie \hat G$. 

\begin{thm} \label{canonical pinning}We fix an isomorphism $\Ql(1)\simeq \Ql$.
\begin{enumerate}
\item The Chern class $c_1(\mO(1))\in \fraka$ and its image in $\hat \frakg$ is a principal nilpotent element $N$ of $\hat \frakg$. 

\item The map $\fraka\to\hat\frakg$ is injective and identifies $\fraka$ with the centraliser of $N$ in $\hat\frakg$.

\item $(\tilde G,\tilde B,\tilde T, N)$ is a canonical pinning of $\tilde G$.
\end{enumerate}
\end{thm}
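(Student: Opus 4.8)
The plan is to manufacture the nilpotent $N$ out of the ample class $c_1(\mO(1))$, locate it in the simple root spaces of $\tilde G$ using the grading of Corollary \ref{canonical TB}, and then match the primitive cohomology $\fraka$ with the centraliser $\frakz_{\hat\frakg}(N)$ by means of Kostant's theory of the principal $\mathfrak{sl}_2$. We may assume $G$ is semisimple and simply-connected, so that $\mO(1)$ is the canonical ample generator of $\Pic^e(\Gr)$.

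For (1), I would first show $c_1(\mO(1))$ is primitive. Since $\Gr$ is connected with $\on{H}^1(\Gr)=0$ and $\on{H}^2(\Gr)=\Ql\,c_1(\mO(1))$ (Theorem \ref{Pic of Gr}), the class $c_1(\mO(1))$ sits in the lowest positive degree of the connected graded Hopf algebra $\on{H}^*(\Gr)$ and is therefore automatically primitive; equivalently primitivity is the infinitesimal shadow of the multiplicativity of $\mO(1)$ under convolution (Theorem \ref{III:factor line}). Cup product with $c_1(\mO(1))$ raises cohomological degree by $2$, and by Corollary \ref{canonical TB} the cohomological grading is induced by $2\rho(\bG_m)\subset\tilde T$; hence its image $N\in\hat\frakg$ satisfies $[2\rho,N]=2N$, so $N$ lies in the sum $\bigoplus_{\alpha\in\Delta}\hat\frakg_\alpha$ of simple root spaces of $\tilde G$, using $\langle 2\rho,\alpha\rangle=2\Leftrightarrow\alpha\in\Delta$. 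Writing $N=\sum_\alpha N_\alpha$, it remains to see each $N_\alpha\neq 0$: applying the parabolic restriction $\on{CT}^G_L$ of Remark \ref{Parabolic MV} for the Levi $L$ of semisimple rank one attached to $\alpha$ reduces the question to $\SL_2$ and $\on{PGL}_2$, where $\Gr_{\leq\omega}\cong\bP^1$ and ampleness of $\mO(1)$ (hard Lefschetz) makes the cup action nonzero. A nilpotent in $\bigoplus_{\alpha\in\Delta}\hat\frakg_\alpha$ with all simple-root components nonzero is regular, hence principal, proving (1).

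For (2), I would first pin down $\fraka$ itself. Rationally $\Gr\simeq\Omega K$ is an $H$-space (Theorem \ref{top triv}) and $\on{H}^*(\Gr;\Ql)$ is a free graded-commutative algebra on $r=\on{rank}\,G$ generators concentrated in even degrees $2e_1,\dots,2e_r$ (the exponents, with $e_1=1$); by Milnor--Moore $\on{H}^*(\Gr)=\on{Sym}(\fraka)$, so $\fraka$ is abelian of dimension $r$ with one line in each degree $2e_i$, the degree-$2$ line being spanned by $c_1(\mO(1))$. Since $\fraka$ is abelian and contains $c_1(\mO(1))$, the image of $\fraka\to\hat\frakg$ commutes with $N$, i.e. lands in $\frakz_{\hat\frakg}(N)$; by Kostant's theorem $\frakz_{\hat\frakg}(N)$ is abelian of dimension $r$ and, graded by $\on{ad}(2\rho)$, has one line in each degree $2e_i$. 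As the map respects the $2\rho$-gradings, (2) reduces to injectivity degree by degree. The main obstacle is exactly this nonvanishing: the cleanest route is to test the cup action against the faithful family $\{\IC_\mu\}$, which realises all irreducibles $L_\mu=\on{H}^*(\IC_\mu)$ of $\tilde G$, so that an element of $\hat\frakg$ killing every $L_\mu$ vanishes; it then suffices to exhibit for each $e_i$ an $\IC_\mu$ on which the degree-$2e_i$ generator acts nontrivially. I would extract this from the Lefschetz $\mathfrak{sl}_2$: hard Lefschetz for the projective $\Gr_{\leq\mu}$ makes $\on{H}^*(\Gr)$ an $\mathfrak{sl}_2$-module with raising operator $\cup\,c_1(\mO(1))$ realising $N$, and comparing this with Kostant's principal $\mathfrak{sl}_2=(N,2\rho,N^-)$ and the isotypic decomposition $\hat\frakg=\bigoplus_i V(2e_i)$ matches the generators of $\fraka$ with Kostant's basis of $\frakz_{\hat\frakg}(N)$ degree by degree. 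This is where one genuinely invokes Ginzburg's computation of $\on{H}^*(\Gr)$, and I expect the bookkeeping reconciling the Hopf-primitive grading with the Lefschetz/Kostant grading to be the delicate point.

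Finally, (3) is formal: Corollary \ref{canonical TB} already furnishes the canonical pair $(\tilde T,\tilde B)$, and a pinning of $\tilde G$ is exactly the extra datum of a nonzero vector in each simple root space $\hat\frakg_\alpha$, $\alpha\in\Delta$. By (1) the components $N_\alpha$ of $N$ provide precisely such vectors, and since $\mO(1)$ is the canonical ample generator and the isomorphism $\Ql(1)\simeq\Ql$ is fixed, $N$ — and hence the tuple $(\tilde G,\tilde B,\tilde T,N)$ — is canonical, independent of the auxiliary choices $T\subset B\subset G$.
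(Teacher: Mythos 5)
Your overall strategy is the right one and is essentially the one followed in the sources the paper cites for this theorem (\cite[\S~4]{Gi1} and \cite{YZ}): primitivity of $c_1(\mO(1))$, the observation that Corollary \ref{canonical TB} forces $[2\rho,N]=2N$ so that $N\in\bigoplus_{\alpha\in\Delta}\hat\frakg_\alpha$, hard Lefschetz as the source of principality, and Kostant's description of $\frakz_{\hat\frakg}(N)$ together with the Hopf structure of $\on{H}^*(\Gr)$ for (2); the paper itself only records that principality is a consequence of hard Lefschetz for intersection cohomology. However, two steps need repair. First, in (1) your reduction of $N_\alpha\neq 0$ to the rank-one Levi via $\on{CT}^G_L$ silently uses that cup product with $c_1(\mO(1)_G)$ preserves the MV filtration attached to the parabolic and induces, on the associated graded $\on{H}^*(\Gr_L,\on{CT}^G_L\mA)$, cup product with a positive multiple of $c_1(\mO(1)_L)$; this is true but is itself a nontrivial compatibility that you would have to prove. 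It is cleaner to avoid the reduction: pick a single $\mu$ with $\mu-w_0\mu$ supported on all of $\Delta$ (e.g.\ $\mu=\theta$ quasi-minuscule for $G$ simple); hard Lefschetz for $\Gr_{\leq\mu}$ says $N^{(2\rho,\mu)}$ carries the lowest weight line $L_\mu(w_0\mu)$ isomorphically onto $L_\mu(\mu)$, which is impossible if some $N_\alpha=0$, since then every monomial in the remaining $N_\beta$ moves weights only by sums of simple roots different from $\alpha$.

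The genuine gap is the injectivity of $\fraka\to\hat\frakg$ in (2). Your proposed mechanism --- exhibit for each $e_i$ an $\IC_\mu$ on which the degree-$2e_i$ generator acts nontrivially, and ``extract this from the Lefschetz $\mathfrak{sl}_2$'' --- cannot work as stated: hard Lefschetz only controls the operators $N^k$, and for $i>1$ the power $N^{e_i}$ is not primitive, hence is linearly independent from the primitive generator $a_i\in\fraka$ inside $\on{H}^{2e_i}(\Gr)=\on{Sym}(\fraka)_{2e_i}$; no amount of information about $\cup\, c_1(\mO(1))$ alone can show that $a_i$ acts nonzero on some $L_\mu$. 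Your subsequent ``matching of the generators of $\fraka$ with Kostant's basis degree by degree'' presupposes the isomorphism you are trying to establish, so as written it is circular. This step requires a separate input --- for instance the identification of the ($L^+G$-equivariant) homology of $\Gr$ with functions on the regular centraliser via equivariant localisation as in \cite{YZ} and \cite{BFM}, or Ginzburg's direct analysis of the $\on{H}^*(\Gr)$-module structure on $\on{IH}^*(\Gr_{\leq\mu})$ --- and you have named the reference but not supplied the idea. Part (3) is indeed formal once (1), (2) and Corollary \ref{canonical TB} are in place.
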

(1) and (2) were proved in \cite[\S 4]{Gi1} and \cite{YZ}. We only mention that $N$ is principal nilpotent is a consequence of the hard Lefschetz theorem for intersection cohomology (cf. \cite[Lemma 4.4.2]{Gi1}). (3) was proved in \cite{YZ}.
This theorem in particular gives a description of the based loop group of a compact Lie group when $k=\bC$.
\begin{rmk}
(i) There is an integral version of this theorem, formulated in terms of homology of $\Gr$ (\cite[Theorem 1.1]{YZ}). The proof relies on the integral version of the geometric Satake equivalence \cite{MV}.

(ii) One can similarly describe the equivariant cohomology $\on{H}^*_{L^+G}(\Gr)$ by the regular centraliser of $\hat\frakg$, see \cite[Theorem 2.12]{BFM} and \cite[Proposition 6.6]{YZ}. This is the first step towards the study of $L^+G$-equivariant derived category $D_{L^+G}(\Gr)$ (also known as the derived Satake category) as in \cite{BF}.
\end{rmk}

\subsection{Fusion product.}\label{fusion}
The factorization structure of the Beilinson-Drinfeld Grassmannians allow us to interpret the convolution product \eqref{Lus conv} as a fusion product, which in term allows one to easily deduce Proposition \ref{fiber functor} and Proposition \ref{existence of comm} together.

First, by the same procedure to define $\on{P}_{L^+G}(\Gr_G)$ (as in \S~\ref{A:ind}), we have the abelian category $\on{P}_{(L^+G)_{X^I}}(\Gr_{X^I})$ of  $(L^+G)_{X^I}$-equivariant perverse sheaves on $\Gr_{X^I}$. Following Reich (cf. \cite{Rei}), we define $\Sat_{X^I}\subset \on{P}_{(L^+G)_{X^I}}(\Gr_{X^I})$ as the full subcategory of those sheaves that are ULA with respect to the morphism $q_I:\Gr_{X^I}\to X^I$ (see \S~\ref{ULA sheaf} for a review of the ULA property). By Theorem \ref{III:factorization} and Theorem \ref{ULA perverse}, for every $\phi: J\to I$, there is a functor
\begin{equation}\label{Sat restriction}
\Delta(\phi)^\bullet:=(\nu^\phi)^*[|I|-|J|]\cong(\nu^\phi)^![|J|-|I|]:\Sat_{X^J}\to\Sat_{X^I}.
\end{equation}

Given a surjective map $\phi:J\twoheadrightarrow I=\{1,2,\ldots,n\}$,  we have the convolution map from the convolution Grassmannian to the Beilinson-Drinfeld Grassmannian
\[m_\phi:\Gr_{X^{J_1}}\tilde\times\cdots\tilde\times\Gr_{X^{J_n}}\to \Gr_{X^J}.\]
For $\mA_i\in\Sat_{X^{J_i}}$, one can form the ``external twisted product'' $\mA_1\tilde\boxtimes\cdots\tilde\boxtimes\mA_n$ on $\Gr_{X^{J_1}}\tilde\times\cdots\tilde\times\Gr_{X^{J_n}}$ using \eqref{torsor E}. Then one can define the ``external convolution product'' 
\[\mA_1\boxstar \mA_2\boxstar \cdots\boxstar \mA_n:=(m_\phi)_!(\mA_1\tilde\boxtimes\cdots\tilde\boxtimes\mA_n).\]
Here is the crucial statement.

\begin{prop}\label{conv=fusion}~
\begin{enumerate}
\item The complex $\mA_1\boxstar\mA_2\boxstar\cdots\boxstar\mA_n$ belongs to $\Sat_{X^{J}}$ and therefore, there is the well defined external convolution product functor
\[\boxstar_{i\in I}:\Sat_{X^{J_1}}\times\cdots\times\Sat_{X^{J_n}}\to \Sat_{X^J},\quad (\mA_1,\ldots,\mA_n)\mapsto \mA_1\boxstar\cdots\boxstar\mA_n.\]

\item There is a canonical isomorphism
\begin{equation}\label{eqfusion}
\mA_1\boxstar\cdots\boxstar\mA_n\cong j_{!*}((\mA_1\boxtimes\cdots\boxtimes\mA_n)|_{X^{(\phi)}})
\end{equation}
where $j: X^{(\phi)}\to X^J$ denotes the open embedding and $(\mA_1\boxtimes\cdots\boxtimes\mA_n)|_{X^{(\phi)}}$ is regarded as a perverse sheaf on $\Gr_{X^J}\times_{X^J}X^{(\phi)}$ via the isomorphism \eqref{BD factorization}.
\end{enumerate}
\end{prop}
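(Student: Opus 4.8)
The plan is to prove both statements simultaneously by analysing the convolution map $m_\phi$ through the factorisation geometry of Theorem~\ref{III:factorization}. The key point is that over the disjoint locus $X^{(\phi)}$ the convolution map is an isomorphism onto $(\prod_i \Gr_{X^{J_i}})\times_{X^J}X^{(\phi)}$ by Lemma~\ref{generic conv}, so the external twisted product $\mA_1\tilde\boxtimes\cdots\tilde\boxtimes\mA_n$ restricts on $X^{(\phi)}$ to the ordinary external product $(\mA_1\boxtimes\cdots\boxtimes\mA_n)|_{X^{(\phi)}}$ under the factorisation isomorphism~\eqref{BD factorization}. Since each $\mA_i$ is perverse and ULA over $X^{J_i}$, the external tensor product $\mA_1\boxtimes\cdots\boxtimes\mA_n$ is perverse and ULA over $X^J$ (ULA and perversity are preserved under external products because $X^{(\phi)}\subset X^J$ is open and the relevant Grassmannians split as a product there). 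Thus the right-hand side of~\eqref{eqfusion}, namely $j_{!*}$ of a shifted-perverse ULA sheaf, is a well-defined perverse sheaf on $\Gr_{X^J}$, and it is ULA over $X^J$ because the intermediate extension of a ULA sheaf along $j$ remains ULA (this is part of the ULA formalism recorded in \S~\ref{ULA sheaf}). This already establishes that the right-hand side lies in $\Sat_{X^J}$.

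**Identifying the two sides.** First I would establish perversity of the left-hand side $\mA_1\boxstar\cdots\boxstar\mA_n=(m_\phi)_!(\mA_1\tilde\boxtimes\cdots\tilde\boxtimes\mA_n)$. The map $m_\phi$ is ind-proper, so $(m_\phi)_!=(m_\phi)_*$. The external twisted product is perverse on the convolution Grassmannian (this is the twisted-product analogue of the statement that $\IC_{\mu_1}\tilde\boxtimes\IC_{\mu_2}$ is an IC sheaf), and I would invoke the ULA-preservation of convolution: because the $\mA_i$ are ULA over the respective factors, the pushforward $(m_\phi)_*$ of their twisted product is again perverse and ULA over $X^J$. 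This is exactly the global, multi-point upgrade of Proposition~\ref{conv prod}, and the cleanest route is Gaitsgory's nearby-cycles argument, which shows that the convolution of ULA perverse sheaves stays perverse by deforming through the Beilinson--Drinfeld degeneration rather than appealing to semi-infinite geometry. Granting this, both sides of~\eqref{eqfusion} are objects of $\Sat_{X^J}$, which proves part (1).

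**The comparison map and the main obstacle.** To prove part (2), I would construct a canonical morphism between the two sides and check it is an isomorphism. Over the open locus $X^{(\phi)}$ both sides restrict, via~\eqref{BD factorization} and Lemma~\ref{generic conv}, to the same sheaf $(\mA_1\boxtimes\cdots\boxtimes\mA_n)|_{X^{(\phi)}}$; this furnishes a canonical isomorphism over $X^{(\phi)}$. By the universal property of the intermediate extension $j_{!*}$, any perverse sheaf on $\Gr_{X^J}$ with no subobjects or quotients supported on the complement $X^J\setminus X^{(\phi)}$ that agrees with $(\mA_1\boxtimes\cdots\boxtimes\mA_n)|_{X^{(\phi)}}$ over $X^{(\phi)}$ must coincide with the right-hand side. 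Thus the real content is to show that the left-hand side $\mA_1\boxstar\cdots\boxstar\mA_n$ has neither sub nor quotient supported on the diagonal locus. This is precisely where the ULA property is essential: a ULA perverse sheaf over $X^J$ has no subobject or quotient supported on any proper closed subset of the base (equivalently, it is the intermediate extension from any dense open subscheme of $X^J$), by the characterisation of ULA sheaves in Theorem~\ref{ULA perverse}. I expect this step to be the main obstacle, since it requires the full strength of the ULA formalism together with the perversity statement from part (1); once both are in hand, the identification~\eqref{eqfusion} is forced. The final clean statement is that the external convolution product is the intermediate extension of the factorised external product, so that restricting along the diagonal $\Delta(\phi)$ and invoking~\eqref{Sat restriction} will recover the ordinary convolution product on a single copy of $\Gr$ — which is how this proposition feeds into the fusion interpretation used to prove Propositions~\ref{fiber functor} and~\ref{existence of comm}.
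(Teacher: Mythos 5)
You have assembled the right ingredients --- ULA of the external twisted product, ind-properness of $m_\phi$, perversity over $X^{(\phi)}$ via Lemma \ref{generic conv}, and the characterisation of ULA perverse sheaves as intermediate extensions --- but the way you chain them together contains two genuine problems. First, your claim that ``the intermediate extension of a ULA sheaf along $j$ remains ULA'' is not part of the formalism recorded in \S~\ref{ULA sheaf}: Theorem \ref{ULA perverse} runs in the opposite direction (it takes a complex \emph{already known} to be ULA on all of $\Gr_{X^J}$ and perverse over the open locus, and concludes that it is perverse and equal to $j_{!*}$ of its restriction); it does not assert that $j_{!*}$ of a ULA sheaf on the open part is ULA on the total space. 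So your argument that the right-hand side of \eqref{eqfusion} lies in $\Sat_{X^J}$ is unsupported as written. Second, and more seriously, you establish perversity of $\mA_1\boxstar\cdots\boxstar\mA_n$ by invoking ``the global, multi-point upgrade of Proposition \ref{conv prod}'' via Gaitsgory's nearby-cycles argument. No such upgrade is available as an input here: in the logical architecture of these notes the perversity of the convolution product (Proposition \ref{conv prod}) is \emph{deduced from} the present proposition (see Remark \ref{conv perverse}), and the ``nearby-cycles argument'' you gesture at is precisely the argument you are supposed to be writing out. Relying on it makes your proof circular.

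The repair is short and is exactly the intended route: apply Theorem \ref{ULA prop} (3) and (4)' to see that $\mA_1\tilde\boxtimes\cdots\tilde\boxtimes\mA_n$ is ULA over $X^J$ (locally in the smooth topology its support is a product of the supports of the $\mA_i$), then Theorem \ref{ULA prop} (2) to see that the pushforward along the ind-proper map $m_\phi$ is ULA, then observe via Lemma \ref{generic conv} and \eqref{BD factorization} that its restriction to $X^{(\phi)}$ is the perverse sheaf $(\mA_1\boxtimes\cdots\boxtimes\mA_n)|_{X^{(\phi)}}$. Theorem \ref{ULA perverse} then delivers, in a single stroke, both the perversity of $\mA_1\boxstar\cdots\boxstar\mA_n$ on all of $X^J$ and the isomorphism \eqref{eqfusion}; no prior perversity input and no separate verification that the right-hand side is ULA are needed. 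The $(L^+G)_{X^J}$-equivariance is clear, completing part (1).
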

\begin{proof}First note that if all $\mA_i$s are ULA, then so is $\mA_1\tilde\boxtimes\cdots\tilde\boxtimes\mA_n$. This is because
locally (in smooth topology), the support $\on{Supp}(\mA_1\tilde\times\cdots\tilde\times\mA_n)$ is isomorphic to the product $\on{Supp}(\mA_1)\times\cdots\times\on{Supp}(\mA_n)$. Then the claim follows from Theorem \ref{ULA prop} (3) and (4)'.

Now applying Theorem \ref{ULA prop} (2), we see that $\mA_1\boxstar\cdots\boxstar\mA_n$ is ULA with respect to $\Gr_{X^J}\to X^J$. In addition, it is perverse when restricted to $X^{(\phi)}$, since under the isomorphism \eqref{BD factorization} 
\[\mA_1\boxstar \mA_2\boxstar \cdots\boxstar \mA_n|_{X^{(\phi)}}=\mA_1\boxtimes\cdots\boxtimes\mA_n|_{X^{(\phi)}},\]
by Lemma \ref{generic conv}. 
Therefore, it follows by Theorem \ref{ULA perverse} that it is perverse and  $\mA_1\boxstar\cdots\boxstar\mA_n\cong j_{!*}((\mA_1\boxtimes\cdots\boxtimes\mA_n)|_{X^{(\phi)}})$.
Finally, the $(L^+G)_{X^J}$-equivariance is clear.
\end{proof}

Let us deduce consequences ((i)-(iii) below) from this proposition. In particular we will obtain another proof of Proposition \ref{fiber functor} and a proof of Proposition \ref{existence of comm}.

(i) Combining with \eqref{Sat restriction}, we can define the internal convolution product functor on each $\Sat_{X^J}$ as
\begin{equation}\label{internal convolution}
\ostar_{i\in I}: \times_{i\in I}\Sat_{X^J}\to \Sat_{X^J},\quad \mA_1\ostar\mA_2\ostar\cdots\ostar\mA_n:= \Delta^\bullet(\mA_1\boxstar\mA_2\boxstar\cdots\boxstar\mA_n),
\end{equation}
where $\Delta: X^I\to (X^I)^n$ is the diagonal embedding and $\Delta^\bullet=\Delta^*[-(n-1)|I|]$. By the same reasoning as for $\Sat_G$, each $\Sat_{X^J}$ is a monoidal category.

\begin{rmk}\label{triv group}
One may try to define the internal convolution product $\ostar$ on the whole $\on{P}_{(L^+G)_{X^J}}(\Gr_{X^J})$ using the same formula \eqref{internal convolution}. However, the following example shows that $\ostar$ is not perverse exact on this bigger category and this is one of the reasons we impose the ULA condition on the sheaves.

Assume $G=\{e\}$ is trivial. Then $\on{P}_{L^+G}(\Gr_X)$ is nothing but the category of perverse sheaves on $X$ and $\Sat_X$ is the category of local systems on $X$ shifted to degree $-1$. Then by definition, 
$$\mA_1\boxstar\mA_2=\mA_1\boxtimes\mA_2,\quad \mA_1\ostar\mA_2=\mA_1\otimes\mA_2[-1].$$
In particular, if either $\mA_1$ or $\mA_2$ is a skyscraper sheaf, then $\mA_1\ostar\mA_2$ is not perverse. 
\end{rmk}

Note that we have a natural functor
\[\Sat_G\to \Sat_X,\quad \mA\mapsto \mA_X:=\Ql\tilde\boxtimes\mA[1],\]
where $\mA_X=\Ql\tilde\boxtimes\mA[1]$ is the external twisted product on $\Gr_X=X\tilde\times\Gr$ (see \eqref{III: twisted constr}). 
\begin{lem}~
\begin{enumerate}
\item The functor $\mA\mapsto \mA_X$ is fully faithful.
\item For each $n$, there is a canonical isomorphism 
$$(\mA_1\star\cdots\star\mA_n)_X\cong (\mA_1)_X\ostar\cdots\ostar(\mA_n)_X.$$
In particular, there is a natural monoidal structure on the functor $\Sat_G\to \Sat_X,\ \mA\mapsto \mA_X$. 
\end{enumerate}
\end{lem}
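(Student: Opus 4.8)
The plan is to treat the two assertions separately: (1) by reducing to the simple objects of the semisimple category $\Sat_G$, and (2) by a proper base-change identity that realizes the internal convolution $\ostar$ over the diagonal $X\hookrightarrow X^n$ as the twist of the local convolution $\star$.

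For (1), I would first check that $\mA\mapsto\mA_X$ genuinely lands in $\Sat_X$. By construction $\mA_X=\Ql\tilde\boxtimes\mA[1]$ is the twist of $\mA[1]$ along the $\Aut^+(D)$-torsor $\hat X^+\to X$ of \eqref{III: twisted constr}, which is legitimate because objects of $\Sat_G$ are $\Aut^+(D)$-equivariant; hence étale-locally on $X$ it is the external product $\mA\boxtimes\Ql_X[1]$, which is perverse on $\Gr_X$ and, being lisse along $X$, is ULA for $q\colon\Gr_X\to X$. Fully faithfulness is then immediate from the semisimplicity of $\Sat_G$: it suffices to check it on the simple generators $\IC_\mu$, and $(\IC_\mu)_X$ is canonically the intersection cohomology sheaf $\IC_{\mu,X}$ of the irreducible variety $\Gr_{\leq\mu,X}=X\tilde\times\Gr_{\leq\mu}$ (the shift $[1]$ matches dimensions), so it is simple with $\End(\IC_{\mu,X})=\Ql=\End(\IC_\mu)$, while distinct $\mu$ give sheaves with distinct supports and hence no nonzero morphisms, matching $\Hom_{\Sat_G}(\IC_\mu,\IC_\la)$. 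Additivity and exactness of the functor then upgrade this to fully faithfulness on all objects.

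For (2) the crux is geometric. Writing $\phi\colon J=\{1,\dots,n\}\twoheadrightarrow\{1\}$ and $\Delta\colon X\to X^n$ for the diagonal, I would show that restricting the $n$-fold Beilinson--Drinfeld convolution diagram along $\Delta$ recovers the twist over $X$ of the local convolution diagram: there is a canonical isomorphism identifying $(\Gr_X\tilde\times\cdots\tilde\times\Gr_X)|_\Delta$ with $X\tilde\times(\Gr\tilde\times\cdots\tilde\times\Gr)$ under which $m_\phi|_\Delta$ becomes the twist $X\tilde\times m$ of the local convolution map $m$ (all of $\Ga_x$ collides at one moving point of $X$), using the diagonal restriction isomorphism \eqref{BD restriction} on the base. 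Since $G$ is reductive the maps $m_\phi$ are ind-proper, so proper base change gives
\[
\Delta^*(m_\phi)_!\bigl((\mA_1)_X\tilde\boxtimes\cdots\tilde\boxtimes(\mA_n)_X\bigr)\cong (X\tilde\times m)_!\bigl(X\tilde\times(\mA_1\tilde\boxtimes\cdots\tilde\boxtimes\mA_n)\bigr)=X\tilde\times(\mA_1\star\cdots\star\mA_n).
\]
It remains to match the shifts: the $n$ copies of $\Ql_X[1]$ contribute $\Ql_X[n]$ along the diagonal, while $\Delta^\bullet=\Delta^*[-(n-1)]$ brings this back to $\Ql_X[1]$, exactly the shift in $(\mA_1\star\cdots\star\mA_n)_X=\Ql\tilde\boxtimes(\mA_1\star\cdots\star\mA_n)[1]$, with no correction needed in the $\Gr$-direction. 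Thus $\Delta^\bullet$ of the external convolution is canonically $(\mA_1\star\cdots\star\mA_n)_X$, giving the asserted isomorphism. The monoidal structure then follows: the case $n=2$ supplies the constraint $(\mA\star\mB)_X\cong\mA_X\ostar\mB_X$, the unit is matched since $(\IC_0)_X$ is the unit of $\ostar$, and the associativity coherence follows from the associativity of both $\star$ and $\ostar$ together with the compatibility of the base-change isomorphisms with iterated diagonals (i.e.\ conditions (2)--(4) of Theorem \ref{III:factorization}); fully faithfulness from (1) exhibits $\mA\mapsto\mA_X$ as a monoidal embedding.

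I expect the main obstacle to be part (2): pinning down the canonical isomorphism over the diagonal together with its $(L^+G)_{X^n}$-equivariance and the shift bookkeeping, and verifying that the proper base-change isomorphism is canonical and compatible with the factorization isomorphisms, so that the resulting monoidal constraint actually satisfies the coherence axioms. The cohomological input in part (1) is by contrast essentially formal once one knows $(\IC_\mu)_X=\IC_{\mu,X}$.
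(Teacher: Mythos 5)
Your proposal is correct, but the two halves relate to the paper differently. For part (2) you follow essentially the paper's route: the paper's entire proof is the canonical isomorphism $X\tilde\times(\Gr\tilde\times\cdots\tilde\times\Gr)\cong(\Gr_X\tilde\times\cdots\tilde\times\Gr_X)|_\Delta$, and your explicit proper-base-change argument plus the shift accounting ($n$ copies of $[1]$ against the $[-(n-1)]$ in $\Delta^\bullet$, leaving the single $[1]$ in $(\mA_1\star\cdots\star\mA_n)_X$) is precisely the bookkeeping that one-line proof suppresses. For part (1), however, you take a genuinely different route. The paper argues by descent along the correspondence $X\tilde\times\on{Supp}(\mA)\leftarrow\hat X^+\times\on{Supp}(\mA)\to\on{Supp}(\mA)$: both legs are inverse limits of smooth surjective maps with geometrically connected fibers, so by \cite[Proposition 4.2.5]{BBD} the shifted pullbacks to the middle term are fully faithful on perverse sheaves, and since $\mA_X$ and $\mA[1]$ pull back to the same object there, $\Hom(\mA_X,\mB_X)\cong\Hom(\mA,\mB)$ for arbitrary $\mA,\mB$. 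You instead invoke semisimplicity of $\Sat_G$ (Proposition \ref{semisimplicity}) together with the identification $(\IC_\mu)_X\cong\IC_{\mu,X}$ and a comparison of Homs between simples. This is valid in the setting of the paper, with two caveats worth noting: the identification of $(\IC_\mu)_X$ with the IC sheaf of $\Gr_{\leq\mu,X}$ is itself an instance of smooth descent for IC sheaves, so the paper's key input reappears in disguise; and the paper's argument, being insensitive to semisimplicity, survives the passage to integral or torsion coefficients as in \cite{MV}, whereas yours is tied to the $\Ql$-setting. Within that setting your proof is complete.
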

\begin{proof}Note that both maps in the diagram $X\tilde\times \on{Supp}(\mA)\leftarrow \hat X^+\times \on{Supp}(\mA)\to \on{Supp}(\mA)$ can be written as inverse limit of smooth surjective maps with geometrically connected fibers. Then (1) follows from \cite[Proposition 4.2.5]{BBD}. (2) follows from the canonical isomorphism 
$$X\tilde\times (\Gr\tilde\times\cdots\tilde\times\Gr)\cong \Gr_X\tilde\times\cdots\tilde\times\Gr_X|_\Delta.$$
\end{proof}
\begin{rmk}\label{conv perverse}
Note that this lemma together with Proposition \ref{conv=fusion} gives a proof of Proposition \ref{conv prod}. In fact, it proves a stronger statement (as in the remark after Proposition 6 of \cite{Ga}) since there is no need to assume that $\mA_1$ is $L^+G$-equivariant.
\end{rmk}

(ii) Note that $(q_I)_*\mA$ is ULA on $X^I$. Let
\begin{equation}\label{relative coh}
\mH^*(\mA):=\bigoplus_i \mH^i(q_I)_*\mA[i].
\end{equation} 
This is a graded local system on $X^I$ by Theorem \ref{ULA prop} (1). It follows that there is a canonical isomorphism
\begin{equation}\label{monoidal functor}
\mH^*(\mA_1\boxstar\cdots\boxstar\mA_n)\cong \mH^*(\mA_1)\boxtimes\cdots\boxtimes\mH^*(\mA_n).
\end{equation}
since this holds over $X^{(\phi)}$ by the K\"unneth formula. We thus obtain a functor $\mH^*: \Sat_{X^I}\to \on{Loc}^{\on{gr}}(X^I)$ to the category of graded local systems on $X^I$, compatible with the external convolution and tensor product.   As a consequence, the functor $\mH^*$ has a natural monoidal structure with respect to the internal convolution and tensor product
\[\mH^*: (\Sat_{X^I},\ostar)\to (\on{Loc}^{\on{gr}}(X^I),\otimes).\]
For every  point $i_x:\{x\}\to X$, taking the stalk $i_x^*$ is a symmetric monoidal functor from $\on{Loc}^{\on{gr}}(X)$ to the category of graded vector spaces $\on{Vect}^{\on{gr}}_{\Ql}$, both equipped with the graded commutativity constraints $c_{\on{gr}}$. Although $i_x^*$ depends on the choice of the point $x$,
the composition
\begin{equation}\label{decomp coh}
\Sat_G\to \Sat_X\stackrel{\mH}{\to} \on{Loc}^{\on{gr}}(X)\stackrel{i_x^*}{\to}\on{Vect}^{\on{gr}}_{\Ql}
\end{equation}
does not, and is canonically isomorphic to the hypercohomology functor $\on{H}^*$ by the proper base change. We thus also obtain a monoidal structure on $\on{H}^*$. We leave it as an exercise to show that this monoidal structure on $\on{H}^*$  coincides with the one constructed in Proposition \ref{fiber functor}.

(iii) The expression given in the right hand side of \eqref{eqfusion} is usually called the fusion product of $\mA_1,\cdots,\mA_n$. This interpretation gives more visible symmetries of the convolution product, coming from the symmetry of $X^I$. The following simple statements are enough to prove Proposition \ref{existence of comm}. For a more general discussion, see Remark \ref{symmetry on fusion}.

Let $\mA_1,\mA_2\in\Sat_X$. 
Recall that there is an involution $\Delta(\sigma):\Gr_{X^2}\cong\Gr_{X^2}$, see Proposition \ref{III:Gr2}. It follows from that proposition and \eqref{eqfusion} that 
there is a canonical isomorphism 
$$\Delta(\sigma)^*(\mA_1\boxstar\mA_2)\cong \mA_2\boxstar\mA_1$$ 
such that the following diagram is commutative
\[\xymatrix{
\sigma^*\mH^*(\mA_1\boxstar\mA_2)\ar^\cong[r]\ar_\cong[d]& \mH^*\Delta(\sigma)^*(\mA_1\boxstar\mA_2)\ar^\cong[r]&\mH^*(\mA_2\boxstar\mA_1)\ar^\cong[d]\\
\sigma^*(\mH^*(\mA_1)\boxtimes\mH^*(\mA_2))\ar_{c_{\on{gr}}}^\cong[rr]&&\mH^*(\mA_2)\boxtimes\mH^*(\mA_1),
}\]
where $c_{\on{gr}}$ is the isomorphism (say at $(x,y)\in X^2$) given by
\[\begin{split}\sigma^*(\mH^*(\mA_1)\boxtimes\mH^*(\mA_2))_{(x,y)}=\mH^*(\mA_1)_y\otimes\mH^*(\mA_2)_x \\ \stackrel{c_{\on{gr}}}{\cong}\mH^*(\mA_2)_x\otimes \mH^*(\mA_1)_y=(\mH^*(\mA_2)\boxtimes\mH^*(\mA_1))_{(x,y)}.
\end{split}\]

Therefore, 
\begin{cor}\label{existence of comm2}
There is a canonical isomorphism $c'_{\mA_1,\mA_2}:\mA_1\ostar\mA_2\cong\mA_2\ostar\mA_1$ such that the following diagram is commutative
\[\begin{CD}
\mH^*(\mA_1\ostar\mA_2)@>c'_{\mA_1,\mA_2}>\cong>\mH^*(\mA_2\ostar\mA_1)\\
@V\cong VV@VV\cong V\\
\mH^*(\mA_1)\otimes\mH^*(\mA_2)@>c_{\on{gr}}>>\mH^*(\mA_2)\otimes\mH^*(\mA_1).
\end{CD}\]
\end{cor}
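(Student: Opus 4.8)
The plan is to read off Corollary \ref{existence of comm2} directly from the fusion-product interpretation established in Proposition \ref{conv=fusion}, using the functoriality of the intermediate extension $j_{!*}$ under the flip symmetry of $X^2$. First I would take $\mA_1,\mA_2\in\Sat_X$ and invoke the factorisation isomorphism \eqref{BD factorization} for $\phi=\id:\{1,2\}\to\{1,2\}$, which identifies $(\mA_1\boxtimes\mA_2)|_{X^{(\phi)}}$ with the restriction of the external convolution to the disjoint locus $X^2-\Delta$. By Proposition \ref{III:Gr2} the flip $\sigma\in S_2$ induces the involution $\Delta(\sigma):\Gr_{X^2}\cong\Gr_{X^2}$ covering the swap of the two factors of $X^2$, and on $X^2-\Delta$ the factorisation isomorphism $c$ is $S_2$-equivariant, intertwining $\Delta(\sigma)$ with the naive flip of $\Gr_X\times\Gr_X$. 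The key point is that $\Delta(\sigma)^*$ carries the clean external product $(\mA_1\boxtimes\mA_2)|_{X^{(\phi)}}$ to $(\mA_2\boxtimes\mA_1)|_{X^{(\phi)}}$, now as a sheaf on the $\sigma$-twisted open locus.

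Next I would apply $j_{!*}$. Since $\Delta(\sigma)$ is an isomorphism of ind-schemes carrying the open locus $X^{(\phi)}$ to itself (compatibly with $j$) and intertwining the two external products over that locus, and since the intermediate extension commutes with pullback along an isomorphism, the canonical isomorphism over $X^2-\Delta$ extends uniquely to a global isomorphism
\[
\Delta(\sigma)^*(\mA_1\boxstar\mA_2)\cong \mA_2\boxstar\mA_1
\]
of perverse sheaves on $\Gr_{X^2}$, by the identification \eqref{eqfusion}. Restricting along the diagonal $\Delta:X\to X^2$ via the functor $\Delta^\bullet$ of \eqref{Sat restriction} then produces the desired isomorphism $c'_{\mA_1,\mA_2}:\mA_1\ostar\mA_2\cong\mA_2\ostar\mA_1$ of objects in $\Sat_X$, since $\Delta(\sigma)$ restricts to the identity on the diagonal.

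To get the stated commutative diagram, I would track what this isomorphism does to relative cohomology. Applying the functor $\mH^*$ of \eqref{relative coh} and using the monoidal compatibility \eqref{monoidal functor}, which over $X^{(\phi)}$ is just the K\"unneth isomorphism, the flip $\Delta(\sigma)$ is translated into the K\"unneth flip on $\mH^*(\mA_1)\boxtimes\mH^*(\mA_2)$. Because the external product here is a product of complexes of sheaves, the K\"unneth isomorphism is symmetric only up to the Koszul sign rule, so the flip on the cohomology factors is precisely the graded commutativity constraint $c_{\on{gr}}$ with its sign $(-1)^{\deg(v)\deg(w)}$. This yields exactly the commutative square displayed just before Corollary \ref{existence of comm2} at the level of the local systems $\mH^*$, and taking a stalk $i_x^*$ at any point (which is a symmetric monoidal functor to graded vector spaces, again with constraint $c_{\on{gr}}$) together with the identification \eqref{decomp coh} of $i_x^*\circ\mH^*$ with $\on{H}^*$ gives the diagram of Corollary \ref{existence of comm2} verbatim.

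The main obstacle I anticipate is not the formal bookkeeping of symmetry but the precise verification that the K\"unneth isomorphism contributes exactly the graded sign $c_{\on{gr}}$ rather than the naive (signless) swap. This is where the grading on $\mH^*$ is essential: one must be careful that the shifts by $[i]$ in \eqref{relative coh} and the perverse normalisations entering \eqref{Sat restriction} and the definition of $\mA_X$ are bookkept consistently, so that the total sign matches the cohomological degrees. Once that sign is pinned down, deducing Proposition \ref{existence of comm} from Corollary \ref{existence of comm2} is immediate via the fully faithful embedding $\mA\mapsto \mA_X$ of $\Sat_G$ into $\Sat_X$ and its monoidal structure, which translates $\ostar$ back into the convolution $\star$ and $\mH^*$ back into $\on{H}^*$.
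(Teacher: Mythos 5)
Your proposal is correct and follows essentially the same route as the paper: the flip $\Delta(\sigma)$ of Proposition \ref{III:Gr2} combined with the fusion description \eqref{eqfusion} (intermediate extension commuting with pullback along an isomorphism) gives $\Delta(\sigma)^*(\mA_1\boxstar\mA_2)\cong\mA_2\boxstar\mA_1$, which restricts along the fixed diagonal to the commutativity isomorphism for $\ostar$, with the Koszul sign in the K\"unneth flip accounting for $c_{\on{gr}}$. This is exactly the argument the paper gives in item (iii) of \S\ \ref{fusion}, merely with the implicit steps spelled out.
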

Finally, since $\Sat_G$ is a full subcategory of  $\Sat_X$ and the hypercohomology functor $\on{H}^*:\Sat_G\to \on{Vect}_{\Ql}$ is isomorphic to \eqref{decomp coh}, Proposition \ref{existence of comm} follows.

\begin{rmk}\label{symmetry on fusion}
More generally, for every $\phi: J\twoheadrightarrow I$ ($I$ non-ordered) we have a functor
\[\underset{\phi}{\fstar}: \times_{i\in I}\Sat_{X^{J_i}}\to \Sat_{X^J},\quad ( \mA_{i}, i\in I)\mapsto \underset{\phi}{\fstar}\mA_i:=j_{!*}((\boxtimes_i \mF_i)|_{X^{(\phi)}}).\]
(That the right hand side is ULA with respect to $q_I$ is a consequence of Proposition \ref{conv=fusion}.) Then by Theorem \ref{III:factorization}, we have
for every $K\stackrel{\phi}{\twoheadrightarrow} J\stackrel{\psi}{\twoheadrightarrow} I$ canonical isomorphisms of functors
\begin{equation}\label{fusion prod}
\underset{\phi}{\fstar}= \underset{\psi\phi}{\fstar}(\times_{i\in I} \underset{\phi_i}{\fstar}): \times_{j} \Sat_{X^{K_j}}\to \Sat_{X^K},
\end{equation}
and
\begin{equation}
\Delta(\phi)^\bullet \underset{\psi\phi}{\fstar}= \underset{\psi}{\fstar}(\times_{i\in I}\Delta(\phi_i)^\bullet): \times_{i}\Sat_{X^{K_i}}\to \Sat_{X^J}.
\end{equation}
It directly follows from these formulas and \cite[Proposition 1.5]{DM} that $(\Sat_{X^I},\ostar)$ has a tensor category structure and that $\mH^*: (\Sat_{X^I},\ostar)\to (\on{Loc}^{\on{gr}}(X^I),\otimes)$ has a tensor structure.
\end{rmk}

\begin{ex}
We assume that $G=\GL_n$. The above construction of the commutativity constraints via fusion product and Example \ref{momega1BD} together imply 
that under the identification of $\Gr_{\omega_1,\ldots,\omega_1}\cap m^{-1}(t^\la)$ with a Springer fiber as in Example \ref{conv v.s. springer}, 
the action of the symmetric group $S_m$ on $\Hom(V_\la, V_{\omega_1}^{\otimes m})$ coincides with the Springer action up to a sign. In particular, it gives the Schur-Weyl duality.
Note that this identification is also the starting point of Laumon's construction of automorphic sheaves from the Springer sheaf (cf. \cite{FGKV}).
\end{ex}

\subsection{Bootstraps.}
To apply the geometric Satake correspondence to the (geometric) Langlands program, it is important not to assume that $k$ is algebraically closed, and it is important to extend it to equivalences between
$\Sat_{X^I}$ (for various $I$) and certain categories involving the Langlands dual group $\hat G$. 
In this subsection, we explain how to bootstrap the previous results to obtain such extensions.

We begin with some general nonsense.
\begin{dfn}
Let $K$ denote a linear algebraic group over a field $E$, and $\mC$ be an $E$-linear abelian category with infinite direct sums (so it makes sense to talk about tensoring an object in $\mC$ by an $E$-vector space). 
A representation of $K$ in $\mC$ is an object $M$ with a morphism $\rho: M\to M\otimes_{E}\mO_K$ satisfying the usual commutative diagram for a coaction. A morphism $f: (M_1,\rho_1)\to (M_2,\rho_2)$ of $K$-representations is a morphism $f:M_1\to M_2$ in $\mC$ intertwining $\rho_i$. The category of $K$-representations in $\mC$ is denoted by $\on{Rep}(K,\mC)$.  

In general if $\mC$ does not necessarily admit infinite direct sums, one first passes to its the ind-completion $\on{Ind}\mC$ and defines $\on{Rep}(K,\mC)$ as the full subcategory of $\on{Rep}(K,{\on{Ind}\mC})$ consisting of those $(M,\rho)$ whose underlying object $M$ is in $\mC$. One immediately checks that  if $\mC$ admits infinite direct sums, the new definition coincides with the old one.
\end{dfn}

\begin{ex}If $\mC=\on{Vect}_E$ is the category of finite dimensional $E$-vector spaces, then  $\on{Rep}(K,\mC)$ is the category of finite dimensional algebraic representations of $K$, and is denoted by $\Rep_E(K)$ for simplicity.
\end{ex}
\begin{ex}\label{KLoc}
Let $\Ga$ be a profinite group, and let $\on{Rep}^c_\Ql(\Ga)$ denote the category of continuous $\ell$-adic representations of $\Ga$. Let $\on{Rep}(K,{\on{Rep}^c_\Ql(\Ga)})$ be the above defined category of representations of $K$ in $\on{Rep}^c_\Ql(\Ga)$. Then it is easy to see that an object in $\on{Rep}(K,{\on{Rep}^c_\Ql(\Ga)})$ is nothing but a representation of $\Ga\times K$, that is continuous in the first factor and algebraic in the second factor. 
\end{ex}

Note that if $F:\mC\to \mD$ is an additive functor, it canonically lifts a functor $F:\on{Rep}(K,{\mC})\to \on{Rep}(K,{\mD})$.

Some classical constructions in representation theory generalise to this setting. We mention a few. Given a $K$-linear representation $V$ and an object $M\in \on{Rep}(K,{\mC})$, the tensor product $M\otimes V$ makes sense as an object in $\on{Rep}(K,{\mC})$. If $\mC$ is a monoidal abelian category, and $M,N\in \on{Rep}(K,{\mC})$, then $M\otimes N$ admits a natural diagonal $K$-action and therefore the monoidal structure can be upgraded to a monoidal structure on $\on{Rep}(K,{\mC})$. We can also define the invariant $M^K$ as the equaliser of the map $\rho:M\to M\otimes \mO_K$ and the map $M=M\otimes_EE\stackrel{e_K}{\to} M\otimes_E\mO_K$. Then $M\mapsto M^K$ is a left exact functor from $\on{Rep}(K,{\mC})$ to $\mC$.

Now let $k$ be a not necessarily algebraically closed field and $\bar k$ its separable closure. Let $\Ga_k=\Gal(\bar k/k)$.
Let $G$ be a connected reductive group over $k$, $\underline G=G\otimes_k\mO$, and let $\Gr$ denote the affine Grassmannian of $\underline G$. 
Let $\hat G=\Aut^{\otimes}(\on{H}^*)$ be the Tannakian group of $\Sat_{G_{\bar k}}$, equipped with the canonical pinning from Proposition \ref{canonical pinning}\footnote{We do not really need Theorem \ref{birth of dual group} to identify it with the one arising from the usual combinatoric construction.}. We will first explain how to define the full Langlands dual group ${^L}G$ as a semi-direct product of $\hat G$ and $\Ga_k$. In fact, as we shall see, there are two versions of ${^L}G$.

As explained in \cite{RZ}, there is a canonical action $\on{act}^{\on{geo}}$ of $\Ga_k$ on $\hat G$ defined as follows. Every $\ga\in \Ga_k$ induces an automorphism $\Gr\otimes \bar k\cong \Gr\otimes \bar k$ and therefore the pullback $\ga^*$ induces a tensor automorphism of $\Sat_{G_{\bar k}}$.  In addition, there is a canonical isomorphism 
$\al_\ga: \on{H}^*\cong \on{H}^*\circ \ga^*$
of tensor functors. 
Then for every $g\in \hat G$, regarded as an automorphism of the tensor functor $\on{H}^*$, $\ga(g)$ is the tensor automorphism of $\on{H}^*$ defined as
\begin{equation}\label{gaaction}
\on{H}^*\stackrel{\al_\ga}{\longrightarrow}\on{H}^*\circ\ga^*\stackrel{g\circ \id}{\to}\on{H}^*\circ\ga^*\stackrel{\al^{-1}_\ga}{\to}\on{H}^*.
\end{equation}
Since the isomorphism $\al_\ga$ preserves the cohomological degree,  $\on{act}^{\on{geo}}$ preserves $2\rho:\bG_m\to \hat G$ and therefore preserves
$\tilde T\subset \tilde B\subset \hat G$ by Corollary \ref{canonical TB}. But it does not preserve the principal nilpotent element $N$ from Proposition \ref{canonical pinning}.

Let $\hat G_\ad$ be the adjoint quotient of $\hat G$. Then cocharacter $\bG_m\stackrel{2\rho}{\to} \hat G\to \hat G_\ad$ admits a unique square root, denoted by $\rho_\ad$.
Let $\on{cycl}:\Ga_k\to\bZ_\ell^\times$
be the cyclotomic character of $\Ga_k$ defined by the action of
$\Ga_k$ on the $\ell^\infty$-roots of unity of $\bar k$.  We define a homomorphism
\[\chi:\Ga_k\stackrel{\on{cycl}}{\to} \bZ_\ell^\times\stackrel{\rho_\ad}{\to} \hat G_{\ad}(\Ql),\]
which induces an action $\Ad_{\chi}:\Ga_k\to\Aut(\hat G)$ of $\Ga_k$ on 
$\hat G$ by inner automorphisms. 
\begin{lem}\label{comparison}The action $\on{act}^{\on{alg}}:=\on{act}^{\on{geo}}\circ \Ad^{-1}_{\chi}$ preserves the pinning of $\hat G$ defined in Proposition \ref{canonical pinning}.
\end{lem}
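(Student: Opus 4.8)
The plan is to show that $\on{act}^{\on{geo}}(\ga)$ already preserves the pair $(\tilde B,\tilde T)$ and that it fails to fix the pinning vector $N$ only through the inner automorphism $\Ad_{\chi(\ga)}$. First I would recall, as noted right after \eqref{gaaction}, that $\al_\ga\colon\on{H}^*\cong\on{H}^*\circ\ga^*$ is an isomorphism of tensor functors respecting the cohomological grading, so $\on{act}^{\on{geo}}(\ga)$ commutes with the grading cocharacter $2\rho\colon\bG_m\to\tilde T$. By Corollary \ref{canonical TB}, $\tilde T$ is the centraliser of $2\rho(\bG_m)$ and $\tilde B$ is the unique Borel containing $\tilde T$ for which $2\rho$ is dominant, so $\on{act}^{\on{geo}}(\ga)$ preserves both $\tilde T$ and $\tilde B$. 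Hence $\on{act}^{\on{geo}}(\ga)$ lies in the stabiliser of $(\tilde B,\tilde T)$ in $\Aut(\hat G)$, which is the semidirect product of $\tilde T_{\ad}(\Ql)$ (acting by inner automorphisms) with the group of pinned (diagram) automorphisms; thus I may write $\on{act}^{\on{geo}}(\ga)=\Ad_{s_\ga}\circ\theta_\ga$ for a unique $s_\ga\in\tilde T_{\ad}(\Ql)$, where $\theta_\ga$ permutes the simple root vectors according to the Galois action on the based root datum and in particular fixes $N=\sum_i N_i$.

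It then remains to identify $s_\ga$ by evaluating $\on{act}^{\on{geo}}(\ga)$ on $N$. By Proposition \ref{canonical pinning}, $N$ is the image of $c_1(\mO(1))$ under the map $\fraka\to\hat\frakg$ coming from the cup-product action of $\on{H}^*(\Gr)$ on the fibre functor. Unwinding the definition \eqref{gaaction}, using that $\al_\ga$ is the pullback isomorphism on cohomology and that cup product is natural under $\ga^*$, the operator $\on{act}^{\on{geo}}(\ga)(N)$ is cup product with $(\ga^*)^{-1}c_1(\mO(1))$. Since $\mO(1)$ is defined over $k$, its first Chern class is Galois invariant in $\on{H}^2(\Gr_{\bar k},\Ql(1))$; after the fixed trivialisation $\Ql(1)\simeq\Ql$ this means the geometric pullback scales $c_1(\mO(1))\in\on{H}^2(\Gr_{\bar k},\Ql)$ by $\on{cycl}(\ga)^{-1}$, whence $(\ga^*)^{-1}c_1(\mO(1))=\on{cycl}(\ga)\,c_1(\mO(1))$ and therefore $\on{act}^{\on{geo}}(\ga)(N)=\on{cycl}(\ga)\,N$. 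On the other hand, because $\langle\hat\al_i,2\rho\rangle=2$ for every simple root $\hat\al_i$ of $\hat G$, the square root $\rho_{\ad}$ pairs to $1$ with each $\hat\al_i$, so $\Ad_{\chi(\ga)}=\Ad_{\rho_{\ad}(\on{cycl}(\ga))}$ scales each $N_i$, hence $N$, by $\on{cycl}(\ga)$. Comparing with $\on{act}^{\on{geo}}(\ga)(N)=\Ad_{s_\ga}(\theta_\ga(N))=\sum_i\hat\al_i(s_\ga)N_i$ forces $\hat\al_i(s_\ga)=\on{cycl}(\ga)$ for all $i$, i.e. $s_\ga=\rho_{\ad}(\on{cycl}(\ga))=\chi(\ga)$.

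Finally I would assemble these: since $\rho_{\ad}$ is fixed by diagram automorphisms, $\theta_\ga$ commutes with $\Ad_{\chi(\ga)}$, and
\[
\on{act}^{\on{alg}}(\ga)=\on{act}^{\on{geo}}(\ga)\circ\Ad_{\chi(\ga)}^{-1}=\Ad_{\chi(\ga)}\circ\theta_\ga\circ\Ad_{\chi(\ga)}^{-1}=\theta_\ga,
\]
which preserves the pinning $(\hat G,\tilde B,\tilde T,N)$ by construction. The main obstacle is the middle step: making rigorous the compatibility between the Tannakian Galois action $\on{act}^{\on{geo}}$ on $\hat\frakg=\Lie\hat G$ defined by \eqref{gaaction} and the ordinary Galois representation on $\on{H}^*(\Gr_{\bar k},\Ql)$ in which $\fraka$ and $c_1(\mO(1))$ live, together with the careful bookkeeping of the Tate twist; this is precisely the input that pins down the cyclotomic scaling, and it is carried out in \cite{RZ} and \cite{YZ}.
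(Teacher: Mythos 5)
Your argument is correct and follows essentially the same route as the paper: the paper's proof consists precisely of the observation that $c_1(\mO(1))$ lies in $\on{H}^2(\Gr_G\otimes\bar k,\Ql(1))^{\Ga_k}$, deferring the remaining bookkeeping to \cite[Proposition A.6]{RZ}. What you have done is spell out that bookkeeping — the decomposition $\on{act}^{\on{geo}}(\ga)=\Ad_{s_\ga}\circ\theta_\ga$ over the stabiliser of $(\tilde B,\tilde T)$, the identification $s_\ga=\chi(\ga)$ via the cyclotomic scaling of $N$, and the conjugation computation — and you correctly flag that the only nontrivial input is the Galois-equivariance of $\fraka\to\hat\frakg$, which is exactly what the cited reference supplies.
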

\begin{proof}This follows from the fact that the first Chern class $c_1(\mO(1))$ is an element in $\on{H}^2(\Gr_G\otimes\bar k, \Ql(1))^{\Ga_k}$. See \cite[Proposition A.6]{RZ} for details.
\end{proof}
\begin{rmk}
This remark is not needed in the sequel. Recall that there is always an action $\psi: \Ga_k\to \on{Out}(G\otimes \bar k)$ of $\Ga_k$ on $G\otimes \bar k$ by outer automorphisms. On the other hand,
by Theorem \ref{birth of dual group}, there is a canonical isomorphism $\on{Out}(G\otimes\bar k)\cong \on{Out}(\hat G)=\Aut(\hat G, \tilde B, \tilde T, N)$. As explained in \cite{RZ}, under this isomorphism, $\on{act}^{\on{alg}}$ coincides with $\psi$. 
\end{rmk}
We define the full Langlands dual group of $G$ as
\[{^L}G^{\on{alg}}:= \hat G\rtimes_{\on{act}^{\on{alg}}}\Ga_k.\]
Since $\Ga_k$ is profinite and the action $\on{act}^{\on{alg}}$ of $\Ga_k$ on $\hat G$ factors through a finite quotient, we may regard ${^L}G^{\on{alg}}$ as a pro-algebraic group.

On the other hand, according to the above discussion, it is also natural to consider
\[{^L}G^{\on{geo}}:= \hat G\rtimes_{\on{act}^{\on{geo}}}\Ga_k,\]
which is more closely related to $\Sat_G$. We regard ${^L}G^{\on{geo}}$ as a topological group.

As an erratum, we pointed out that \cite[Corollary A.8]{RZ} was wrong. The correct statement is as follows.
\begin{lem}\label{geom Langlands dual}
Assume that $\chi: \Ga_k\to \hat G_\ad(\Ql)$ lifts to a continuous homomorphism $\tilde \chi:\Ga_k\to \hat G(\Ql)$. Then there is a natural isomorphism (of topological groups)
\[{^L}G^{\on{geo}}\cong{^L}G^{\on{alg}},\quad (g,\ga)\mapsto (g\tilde\chi(\ga),\ga).\]
\end{lem}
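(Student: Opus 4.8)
The plan is to prove that the explicit map $\Phi\colon (g,\ga)\mapsto (g\tilde\chi(\ga),\ga)$ is an isomorphism of topological groups by checking it is a continuous homomorphism with continuous inverse, the inverse being the equally explicit $(h,\ga)\mapsto (h\tilde\chi(\ga)^{-1},\ga)$. Continuity of both maps is immediate from the continuity of $\tilde\chi$ and of the multiplication, and bijectivity is clear once $\Phi$ is known to be a homomorphism; so the entire content is that $\Phi$ intertwines the two semidirect-product multiplications. First I would record the one structural input needed: since $\chi=\rho_\ad\circ\on{cycl}$ is valued in the torus $\hat G_\ad$ through $\rho_\ad$, and $\on{act}^{\on{geo}}$ fixes $\rho_\ad$ (it preserves the cohomological grading, i.e. $2\rho$, by Corollary \ref{canonical TB}, and $\rho_\ad$ is the canonical square root of $2\rho$), the inner automorphism $\Ad_{\chi(\ga)}$ acts trivially on $\tilde T$ and $\on{act}^{\on{geo}}$ fixes $\chi$ pointwise. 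Consequently the defining relation $\on{act}^{\on{alg}}=\on{act}^{\on{geo}}\circ\Ad_\chi^{-1}$ can be rewritten, using the chosen lift $\tilde\chi(\ga)\in\hat G$ of $\chi(\ga)$, as the inner twist
\[
\on{act}^{\on{alg}}(\ga)=\Ad_{\tilde\chi(\ga)}^{-1}\circ\on{act}^{\on{geo}}(\ga).
\]

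With this rewriting, expanding $\Phi\big((g_1,\ga_1)(g_2,\ga_2)\big)=\Phi(g_1,\ga_1)\Phi(g_2,\ga_2)$ and cancelling the common factors $g_1$ and $\on{act}^{\on{geo}}(\ga_1)(g_2)$, the homomorphism property collapses to the single identity
\[
\on{act}^{\on{geo}}(\ga_1)\big(\tilde\chi(\ga_2)\big)=\tilde\chi(\ga_1)\,\tilde\chi(\ga_2)\,\tilde\chi(\ga_1)^{-1},\qquad \ga_1,\ga_2\in\Ga_k,
\]
which says exactly that $\tilde\chi$ is a $1$-cocycle for $\on{act}^{\on{alg}}$, equivalently that $\ga\mapsto(\tilde\chi(\ga),\ga)$ is a homomorphic section of ${^L}G^{\on{alg}}\to\Ga_k$. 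This is the heart of the matter and the step I would isolate as an auxiliary lemma. To analyse it I would first observe that the preimage in $\hat G$ of the torus $\hat G_\ad$ is exactly $\tilde T$, so $\tilde\chi$ automatically takes values in $\tilde T$; in particular its image is commutative and the right-hand side above is simply $\tilde\chi(\ga_2)$. Second, since $\Ad_\chi$ is trivial on $\tilde T$, the actions $\on{act}^{\on{geo}}$ and $\on{act}^{\on{alg}}$ agree on $\tilde T$ and act there through the diagram automorphism $\sigma_\ga$ attached to $\ga$ (here one uses that $\on{act}^{\on{alg}}$ preserves the pinning, Lemma \ref{comparison}). Thus the cocycle identity reduces to the clean assertion that $\sigma_{\ga_1}$ fixes $\tilde\chi(\ga_2)$, i.e. that $\tilde\chi$ is valued in the Galois-fixed subtorus $\tilde T^{\Ga_k}$.

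The main obstacle is precisely this last point. A priori $\sigma_{\ga_1}\big(\tilde\chi(\ga_2)\big)$ and $\tilde\chi(\ga_2)$ are two elements of $\tilde T$ with the same image $\chi(\ga_2)$ in $\hat G_\ad$, so they differ by an element of the center $Z(\hat G)$, on which $\Ga_k$ may act nontrivially when $G$ is not split. Controlling this central discrepancy is exactly where the hypothesis that $\chi$ admits a genuinely \emph{homomorphic} continuous lift $\tilde\chi$ (rather than merely a set-theoretic or cocycle-theoretic lift) is forced into play, and it is the reason the original \cite[Corollary A.8]{RZ} failed. I would handle it by showing that $\ga_1\mapsto\sigma_{\ga_1}\big(\tilde\chi(\ga_2)\big)\tilde\chi(\ga_2)^{-1}$ is a continuous central-valued obstruction, and that the existence of a continuous homomorphic lift of the one-dimensional character $\chi=\rho_\ad\circ\on{cycl}$ (with $\rho_\ad$ being $\sigma$-invariant) forces this obstruction to vanish, placing $\tilde\chi$ in $\tilde T^{\Ga_k}$ and hence establishing the displayed cocycle identity. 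Once that identity holds, verifying that $\Phi$ and the candidate inverse are mutually inverse continuous homomorphisms is a routine substitution, which finishes the proof; as a sanity check, in the split case $\sigma_\ga$ is trivial and the identity holds on the nose, recovering the expected isomorphism ${^L}G^{\on{geo}}\cong\hat G\times\Ga_k$.
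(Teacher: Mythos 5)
Your reduction is correct and well executed: unwinding the two semidirect-product multiplications, the map $(g,\ga)\mapsto(g\tilde\chi(\ga),\ga)$ is a homomorphism if and only if $\on{act}^{\on{geo}}(\ga_1)\bigl(\tilde\chi(\ga_2)\bigr)=\tilde\chi(\ga_1)\tilde\chi(\ga_2)\tilde\chi(\ga_1)^{-1}$ for all $\ga_1,\ga_2$, and since $\tilde\chi$ automatically lands in the commutative group $\tilde T$ (the preimage of $\hat T_{\ad}$), on which $\on{act}^{\on{geo}}$ and $\on{act}^{\on{alg}}$ agree and act through the diagram automorphisms, this is exactly the statement that $\tilde\chi$ is valued in the Galois-fixed part of $\tilde T$. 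The paper gives no proof of the lemma, so there is nothing to compare against, but this is surely the intended skeleton.

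The gap is in the step you single out as the heart of the matter and then only assert: the existence of a continuous homomorphic lift of $\chi$ does \emph{not} force the given $\tilde\chi$ to be Galois-fixed. Concretely, let $G$ be the quasi-split unitary group $\on{U}_3$ attached to a quadratic extension $k'/k$ (say $k$ finite, so a half Tate twist exists and the hypothesis of the lemma holds), so $\hat G=\GL_3$ and the nontrivial element of $\Gal(k'/k)$ acts on the diagonal torus by $(t_1,t_2,t_3)\mapsto(t_3^{-1},t_2^{-1},t_1^{-1})$ and on $Z(\hat G)=\bG_m$ by inversion. The cocharacter $(1,0,-1)$ lifts $\rho_{\ad}$, so $\tilde\chi_0(\ga)=\diag(\on{cycl}(\ga),1,\on{cycl}(\ga)^{-1})$ is a continuous homomorphic lift of $\chi$ and is Galois-fixed; but $\tilde\chi_1(\ga)=\diag(\on{cycl}(\ga)^2,\on{cycl}(\ga),1)=\tilde\chi_0(\ga)\cdot\on{cycl}(\ga)$ is an equally valid continuous homomorphic lift (the central factor dies in $\on{PGL}_3$), and the diagram automorphism carries $\tilde\chi_1(\ga)$ to $\tilde\chi_1(\ga)\cdot\on{cycl}(\ga)^{-2}\neq\tilde\chi_1(\ga)$. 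By your own criterion the displayed map fails to be a homomorphism for $\tilde\chi=\tilde\chi_1$. The point is that two homomorphic lifts differ by a continuous homomorphism $\Ga_k\to Z(\hat G)(\Ql)$, which need not be valued in $Z(\hat G)^{\Ga_k}$; so equivariance is a property of the chosen lift, not a consequence of existence. To close the argument you must either (a) add to the hypothesis, or build into the choice, that $\tilde\chi$ be $\on{act}^{\on{geo}}$-equivariant --- which holds for the lifts of Remark \ref{lifting condition}: in case (ii) $\tilde\chi=2\rho\circ\on{cycl}^{1/2}$ factors through the Galois-fixed cocharacter $2\rho$, and in case (i) one must choose the lift $\tilde\rho$ of $\rho_{\ad}$ Galois-invariantly --- or (b) actually prove that the existence of some homomorphic lift implies the existence of an equivariant one, a genuine (if mild) statement about $Z(\hat G)$-valued cocycles that your sketch does not address. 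Everything before and after this point in your proposal is routine and correct.
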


\begin{rmk}\label{lifting condition}
The lifting $\tilde \chi$ exists in the following cases. 
\begin{enumerate}
\item[(i)] The cocharacter $\rho_\ad$ lifts to a  cocharacter $\tilde \rho: \bG_m\to \hat G$. For example, this holds if $\hat G=\GL_n$. Note that we do not require $2\tilde \rho=2\rho$. 
\item[(ii)]  The cyclotomic character $\on{cycl}: \Ga_k\to \bZ_\ell^\times$ admits a square root, i.e. there exists a half Tate twist $\Ql(\frac{1}{2})$. For example, this holds if $\on{char} k=p>0$ or $k$ is a $p$-adic field ($p\neq \ell$). 
\end{enumerate}
\end{rmk}

\begin{ex}\label{geom v.s. alg dual}
Here is an example when these two groups are not isomorphic. We consider $k=\bR$ and $G=\on{PGL}_2(\bR)$. Then ${^L}G^{\on{alg}}=\SL_2\times \Gal(\bC/\bR)$ and ${^L}G^{\on{geo}}=\SL_2\rtimes \Gal(\bC/\bR)$, where the complex conjugation acts on $\SL_2$ by sending $\begin{pmatrix} a & b\\ c & d \end{pmatrix}$ to $\begin{pmatrix} a & -b\\ -c & d \end{pmatrix}$. Note that the center of ${^L}G^{\on{alg}}$ is isomorphic to $\bZ/2\times \bZ/2$ whereas the center of ${^L}G^{\on{geo}}$ is isomorphic to $\bZ/4$. Therefore, they are non-isomorphic.
\end{ex}

Although ${^L}G^{\on{geo}}$ can not be regarded as a pro-algebraic group, one can always embed it into a larger pro-algebraic group as follows. Let 
\begin{equation}\label{Deligne modification}
\hat G^T:=\hat G\rtimes\bG_m
\end{equation}
where $\bG_m$ acts on $\hat G$ by inner automorphisms through  $\rho_\ad:\bG_m\to \hat G_\ad$. For example if $\hat G=\SL_2$, then $\hat G^T=\GL_2$.
The action $\on{act}^{\on{alg}}$ naturally extends to an action of $\Ga_k$ on $\hat G^T$, denoted by the same notation. Let 
$${^L} G^T=\hat G^T\rtimes_{\on{act}^{\on{alg}}}\Ga_k,$$ which can be regarded as a pro-algebraic group. Let $d: \hat G^T\to \bG_m$ denote the natural projection. Then there is the following short exact sequence
\[1\to {^L}G^{\on{geo}}\to {^L}G^T\stackrel{d\times \on{cycl}^{-1}}{\longrightarrow}\bG_m\to 1.\]
\begin{rmk}
Note that $\hat G^T$ is in fact the dual group of a central extension of $G$ by $\bG_m$. It seems that ${^L}G^T$ is exactly the $C$-group appeared in \cite[Definition 5.3.2]{BuGe}. As suggested as above and also by \cite{BuGe}, this seems to be a more natural object in order to formulate the attached Galois representations to automorphic representations.
\end{rmk}

Let $\on{Rep}^c_{\Ql}({^L}G^{\on{geo}})$ denote the category of finite dimensional continuous representations of ${^L}G^{\on{geo}}$ that is algebraic when restricted to $\hat G$.  There is a similarly defined category  $\on{Rep}^c_{\Ql}({^L}G^T)$.  Let $B$ be the regular representation of $\bG_m$, regarded as an algebra object in the ind-completion of $\on{Rep}_{\Ql}({^L}G^T)$ via the homomorphism $d\times \on{cycl}^{-1}$. Recall that for a monoidal category $\mC$ and an algebra object $B\in \mC$, it makes sense to talk about $B$-modules in $\mC$, which form a category denoted by $B\on{-mod}(\mC)$.
Then there is an equivalence
\[B\on{-mod}(\on{Rep}^c_{\Ql}({^L}G^T))\simeq \on{Rep}^c_{\Ql}({^L}G^{\on{geo}}),\quad V\mapsto V\otimes_B\Ql,\]
where $B\to \Ql$ is the counit map (dual to $1\in \bG_m$).

Now let $\Sat_G$ denote the category of $L^+G$-equivariant perverse sheaves on $\Gr$, equipped with the convolution product (note that \eqref{Lus conv} is defined over $k$). For example if $G=\{e\}$ is trivial, this is the category $\on{Rep}^c_{\Ql}(\Ga_k)$ of continuous $\ell$-adic representations of $\Ga_k$ with the usual tensor product. 
Here is the full version of the geometric Satake equivalence.

\begin{thm}\label{full Sat equiv}
The  hypercohomology functor $\on{H}^*:\Sat_G\to \on{Rep}^c_{\Ql}(\Ga_k)$ lifts to a canonical equivalence 
\begin{equation}\label{full Sat}
F:\Sat_G\simeq \on{Rep}^c_{\Ql}({^L}G^{\on{geo}})
\end{equation} 
such that the following diagram is commutative (up to a canonical isomorphism)
\[\begin{CD}
\Sat_G@>\simeq>> \on{Rep}^c_{\Ql}({^L}G^{\on{geo}})\\
@V\on{Pull\ back}VV@VV\Res V\\
\Sat_{G_{\bar k}}@>\simeq>> \on{Rep}_{\Ql}(\hat G).
\end{CD}\]
\end{thm}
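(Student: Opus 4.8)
The plan is to prove the equivalence by Galois descent from the geometric Satake equivalence over $\bar k$, which has already been established (Theorem \ref{Tannakian structure}, together with Theorem \ref{birth of dual group} and Proposition \ref{canonical pinning}) in the form $\Sat_{G_{\bar k}} \simeq \on{Rep}_{\Ql}(\hat G)$. The first step is to record that $\ell$-adic perverse sheaves satisfy Galois descent: since $\Gr_{\bar k} = \Gr \otimes_k \bar k$, pullback along $\Spec \bar k \to \Spec k$ identifies $\Sat_G$ with the category of continuous $\Ga_k$-equivariant objects of $\Sat_{G_{\bar k}}$, where $\Ga_k$ acts through the pullback automorphisms $\ga \mapsto \ga^*$. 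Continuity is automatic: every object of $\Sat_{G_{\bar k}}$ is a finite direct sum of the $\IC_\mu$, each defined over a finite subextension of $\bar k/k$, so the descent datum factors through a finite quotient of $\Ga_k$. One must check that $L^+G$-equivariance, perversity, and (for the factorizable refinements) the ULA condition all descend; these are \'etale-local and pose no difficulty.

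The second step is to transport the Galois action across the $\bar k$-equivalence. Under $\Sat_{G_{\bar k}} \simeq \on{Rep}_{\Ql}(\hat G)$ the tensor automorphism $\ga^*$ corresponds to an automorphism of $\on{Rep}_{\Ql}(\hat G)$, which by Tannakian reconstruction is induced by an automorphism of $\hat G$. By the very definition of $\on{act}^{\on{geo}}$ in \eqref{gaaction} --- which uses the canonical isomorphism $\al_\ga : \on{H}^* \cong \on{H}^* \circ \ga^*$ to conjugate automorphisms of the fiber functor --- this automorphism is exactly $\on{act}^{\on{geo}}(\ga)$. Hence $\Sat_G$ is identified with the continuous $\Ga_k$-equivariant objects of $\on{Rep}_{\Ql}(\hat G)$ for the action $\on{act}^{\on{geo}}$. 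As in Example \ref{KLoc}, a representation of $\hat G$ equipped with a compatible $\Ga_k$-action (where $\Ga_k$ acts on $\hat G$ itself) is the same datum as a continuous representation of $\hat G \rtimes_{\on{act}^{\on{geo}}} \Ga_k = {}^L G^{\on{geo}}$ that is algebraic on $\hat G$. This produces the equivalence $F$ and, by construction, the commutative square with restriction.

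The third step is compatibility with the tensor structures. Since $\ga^*$ is a tensor functor and $\al_\ga$ is an isomorphism of tensor functors, the descent equivalence of the first step is monoidal for the convolution product on $\Sat_G$ defined over $k$. The functor $\on{H}^*$ on $\Sat_G$ lands in $\on{Rep}^c_{\Ql}(\Ga_k)$ because $\on{H}^*(\Gr_{\bar k}, \mathcal{A})$ carries its natural Galois action; one checks that $F$ lifts $\on{H}^*$ through the forgetful map ${}^L G^{\on{geo}} \to \Ga_k$. Then full faithfulness and essential surjectivity of $F$ reduce, via the descent formula $\Hom_{\Sat_G}(\mathcal{A},\mathcal{B}) = \Hom_{\Sat_{G_{\bar k}}}(\mathcal{A}_{\bar k}, \mathcal{B}_{\bar k})^{\Ga_k}$ and the analogous formula on the dual side, to the already-known $\bar k$-equivalence.

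I expect the main obstacle to be the bookkeeping in the second step: verifying that the automorphism of $\hat G$ induced by $\ga^*$ is precisely $\on{act}^{\on{geo}}(\ga)$, and that these assemble into a genuine continuous action of $\Ga_k$ rather than a mere collection of automorphisms --- i.e.\ checking the cocycle compatibility of $\al_{\ga\ga'}$ with $\al_\ga \circ \ga^*\al_{\ga'}$. Equivalently, the delicate point is that it is the semidirect product, and not a twisted form of it, that emerges; this is exactly where the distinction between ${}^L G^{\on{geo}}$ and ${}^L G^{\on{alg}}$ (Lemma \ref{comparison}, Example \ref{geom v.s. alg dual}) becomes visible, so one must ensure that the geometric normalization is the one obtained.
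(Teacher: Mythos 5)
Your argument is correct in outline but follows a genuinely different route from the one in the notes --- it is essentially the descent approach of Richarz \cite{Ri}, which the notes explicitly flag as an alternative. You identify $\Sat_G$ with continuous $\Ga_k$-equivariant objects of $\Sat_{G_{\bar k}}$ and transport the descent datum across the $\bar k$-equivalence, using the pair $(\ga^*,\al_\ga)$ to produce $\on{act}^{\on{geo}}(\ga)$ and hence the semidirect product ${^L}G^{\on{geo}}$; the delicate point you correctly isolate (the cocycle compatibility of the $\al_\ga$, which is what forces the genuine semidirect product rather than a twist) is exactly where the distinction with ${^L}G^{\on{alg}}$ lives. The notes instead construct an explicit quasi-inverse: after reducing to split $G$, they introduce the auxiliary Tannakian subcategory $\Sat_G^T$ spanned by the $\IC_\mu(i)$, identify its Tannakian group with $\hat G^T=\hat G\rtimes\bG_m$, form the object $\mR^T$ corresponding to the regular representation of $\hat G^T$, and set $S(V)=\bigl((e_*V\star\mR^T)^{\hat G^T}\bigr)\otimes_B\Ql$. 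That constructive detour is not gratuitous: the same regular-representation object, in the factorized form $\mR_{X^I}=\boxstar_{i\in I}\mR_X$, is precisely what drives the proof of Theorem \ref{multiple point Sat} over $X^I$, where a bare descent argument is not available. Your route is shorter and more transparent for the statement at hand; the notes' route sets up machinery reused later.

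One correction: continuity of the descent datum is \emph{not} automatic. That every object of $\Sat_{G_{\bar k}}$ is defined over a finite subextension only says it is canonically fixed by an open subgroup of $\Ga_k$; the isomorphisms $\theta_\ga:\ga^*\mA\cong\mA$ making up a descent datum may still fail to depend continuously on $\ga$ (already for $G$ trivial, a descent datum on the object $\Ql$ is an arbitrary, possibly discontinuous, character of $\Ga_k$). Continuity must be imposed as a condition cutting out the essential image of the pullback $\Sat_G\to\Sat_{G_{\bar k}}$, and it then matches the continuity built into $\on{Rep}^c_{\Ql}({^L}G^{\on{geo}})$. With that adjustment your argument goes through.
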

\begin{proof}
We sketch the proof (see \cite[\S~5]{Ri} for a different approach). Since the pullback of a sheaf on $\Gr$ to $\Gr\otimes \bar k$ is $\Ga_k$-equivariant, by general non-sense (\cite[Lemma A.3]{RZ}) the hypercohomology factors in the way as stated in the theorem such that the diagram is commutative. It remains to prove that it is an equivalence. We will prove this by constructing a quasi-inverse functor. 

To explain the ideas, we will assume that $G$ is split so every $L^+G$-orbit is defined over $k$. Indeed, if $G$ is non-split, one can first pass to a finite extension $k'/k$ such that $G_{k'}$ is split and then perform a further Galois descent.

For a Schubert variety $\Gr_{\leq \mu}$, let  $\IC_\mu$ be its intersection cohomology complex (so $\IC_\mu|_{\Gr_\mu}=\Ql[(2\rho,\mu)]$). 
We first consider the full subcategory $\Sat_G^T$ of $\Sat_G$ spanned by direct sums of $\IC_\mu(i)$ for  $\mu\in \xcoch(T)^+, i\in \bZ$. 

\begin{lem}~
\begin{enumerate}
\item The category $\Sat_G^T$ is a Tannakian category with a fiber functor given by $\on{H}^*$.

\item The Tannakian group $\Aut^{\otimes}(\on{H}^*)$ is isomorphic to $\hat G^T$ defined in \eqref{Deligne modification}.
\end{enumerate}
\end{lem}
\begin{proof}
(1) Indeed Corollary \ref{Satake fiber} implies that  $\Sat_G^T$ is closed under the convolution product. In addition, as the commutativity constraints $c$ in Proposition \ref{existence of comm} is $\Ga_k$-equivariant (since $c_{\on{vect}}$ is $\Ga_k$-equivariant),  $\Sat_G^T$ is symmetric monoidal. Then as arguing in Theorem \ref{Tannakian structure}, it is Tannakian.

(2) Let us denote  the Tannakian group by $\tilde G^T$. There is an exact sequence
\[1\to \hat G\to \tilde G^T\stackrel{d}{\to} \bG_m\to 1,\]
where the first map is induced by the pullback $\Sat_G^T\to \Sat_{G_{\bar k}}$ and the second map is given by the full subcategory spanned by direct sums of $\IC_e(i), i\in\bZ$. There is a splitting $\bG_m\to \tilde G^T$ of $d:\tilde G^T\to \bG_m$
corresponding to the following grading on $\on{H}^*(\mA)$
$$\on{H}^*(\mA)=\bigoplus_i \Ql(i)\otimes \Hom(\Ql(i), \on{H}^*(\mA)).$$
Then as in Lemma \ref{geom Langlands dual},  using the fact that $c_1(\mO(1))\in \on{H}^2(\Gr_G\otimes\bar k,\bQ_\ell(1))^{\Ga_k}$, one concludes that $\tilde G^T=\hat G^T$.
\end{proof}

We will fix a quasi-inverse $S^T: \Rep_\Ql(\hat G^T)\simeq \Sat_G^T$ of $\on{H}^*$.

Now we consider the full category $\Sat_G$. 
We will make use of the abstract nonsense developed at the beginning of this subsection. Let $R^T$ be the ring of regular functions on $\hat G^T$, regarded as an object in $\on{Ind}\on{Rep}_{\Ql}(\hat G^T)$ via the left regular representation. Let $\mR^T=S^T(R^T)\in \on{Ind}\Sat^T_G$ be the corresponding object. Since $R^T$ also admits a right regular representation, $\mR^T$ is in fact an object in $\on{Rep}(\hat G^T,{\on{Ind}\Sat_G})$. Since $G$ is split, $\Rep^c_\Ql({^L}G^T)=\on{Rep}(\hat G^T,{\on{Rep}^c_{\Ql}(\Ga_k)})$. An object $V$ in $\on{Rep}(\hat G^T,{\on{Rep}^c_{\Ql}(\Ga_k)})$ can be regarded as a sheaf on $\Spec k$ with an action by $\hat G^T$. Then by general nonsense, $e_*V\star\mR^T$ makes sense as an  object $\Rep(\hat G^T, \on{Ind}\Sat_G)$, and therefore $(e_*V\star \mR^T)^{\hat G^T}$ is an object in $\on{Ind}\Sat_G$. We thus obtain a tensor functor
\[ \on{Rep}(\hat G^T,{\on{Rep}^c_{\Ql}(\Ga_k)})\to \on{Ind}\Sat_G,\quad V\mapsto (e_*V\star \mR^T)^{\hat G^T},\]
still denoted by $S^T$, because its restriction to  $\Rep_\Ql(\hat G^T)\subset \on{Rep}(\hat G^T,{\on{Rep}^c_{\Ql}(\Ga_k)})$ is nothing but the functor $S^T$ we fix at the beginning.

Recall that we regard $B=\mO_{\bG_m}$ as an object in $\on{Rep}(\hat G^T,{\on{Rep}^c_{\Ql}(\Ga_k)})$. Then 
$$\mB:=S^T(B)= \IC_e\otimes B.$$
Therefore $S^T$ sends $B$-module objects in $\on{Rep}(\hat G^T,{\on{Rep}^c_{\Ql}(\Ga_k)})$ to $\mB$-module objects in $\on{Ind}\Sat_G$. We thus can define 
\[S: \on{Rep}^c_{\Ql}({^L}G^{\on{geo}})\simeq B\on{-mod}(\on{Rep}^c_{\Ql}({^L}G^T)) \to \on{Ind}\Sat_G: V\mapsto S^T(V)\otimes_{B}\Ql,\]
where as before $B\to\Ql$ is the counit map.

Note that for $V\in \on{Rep}(\hat G^T,{\on{Rep}^c_{\Ql}(\Ga_k)})$,
$$\on{H}^*\circ S^T(V)=\on{H}^*((e_*V\star \mR^T)^{\hat G^T})=(V\otimes R^T)^{\hat G^T}=V,$$ 
regarded as a representation of ${^L}G^{\on{geo}}$. Then it follows that $S$ is the desired quasi-inverse of $\on{H}^*:\Sat_G\to \on{Rep}^c_{\Ql}({^L}G^{\on{geo}})$. We leave the details as an exercise.
\end{proof}
\begin{rmk}\label{simplify}
The proof of the theorem can be simplified if one of conditions in Remark \ref{lifting condition} holds. For example, if $\rho\in\xch(T)$  or if a half Tate twist exists, one can define the normalised intersection cohomology complex 
$$\IC_\mu^{\on{norm}}=\IC_\mu((2\rho,\mu)),$$ and can define a subcategory $\Sat_G^N\subset \Sat_G$ spanned by direct sums of these normalised IC sheaves. This is a monoidal subcategory of $\Sat_G$ (by Corollary \ref{Satake fiber}) and the pullback induces an equivalence 
$$\Sat_G^N\simeq \Sat_{G_{\bar k}}$$ 
of monoidal categories. Therefore, the ring of regular functions $R$ on $\hat G$ gives an object $\mR\in \on{Rep}(\hat G,{\on{Ind}\Sat_G})$ which can be used as a replacement of $\mR^T$ to define the functor $S$ directly.
\end{rmk}
As ${^L}G^{\on{alg}}$ can be regarded a pro-algebraic group, there is the category of algebraic representations $\on{Rep}_\Ql({^L}G^{\on{alg}})$. 
If a lifting $\tilde \chi:\Ga_k\to \hat G(\Ql)$ exists, then via the natural isomorphism in Lemma \ref{geom Langlands dual}, we can regard $\on{Rep}_\Ql({^L}G^{\on{alg}})$ as a full subcategory of $\on{Rep}^c_{\Ql}({^L}G^{\on{geo}})$. Under the assumption that $\rho\in \xch(T)$ or if a half Tate twist exists, it was explained in \cite{RZ} how to select out a full subcategory of $\Sat_G$ (denoted by $\on{P}_{L^+G}^f(\Gr_G)$ in \emph{loc. cit.}) corresponding to $\on{Rep}_\Ql({^L}G^{\on{alg}})$  under the equivalence \eqref{full Sat}. The statement is particularly neat when $k$ is finite. 
\begin{prop}
Assume that $k=\bF_q$ is finite. Let $\Sat_G^0\subset \Sat_G$ denote the full subcategory of semisimple pure perverse sheaves of weight zero. Then the equivalence \eqref{full Sat equiv} restricts to an equivalence
\[F:\Sat_G^0\simeq \on{Rep}_\Ql({^L}G^{\on{alg}})\]
\end{prop}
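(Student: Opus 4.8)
The plan is to read the statement through the equivalence $F$ of Theorem~\ref{full Sat equiv} and reduce everything to a statement about how the geometric Frobenius $\sigma$, a topological generator of $\Ga_k=\Gal(\bar\bF_q/\bF_q)\cong\widehat{\bZ}$, acts on $\on{H}^*(\mA)$. Since $\on{char} k=p>0$, Remark~\ref{lifting condition}(ii) provides a half Tate twist, hence a square root $\on{cycl}^{1/2}\colon\Ga_k\to\bZ_\ell^\times$ of the cyclotomic character and a continuous lift $\tilde\chi=2\rho\circ\on{cycl}^{1/2}\colon\Ga_k\to\hat G(\Ql)$ of $\chi=\rho_\ad\circ\on{cycl}$ (its image in $\hat G_\ad$ is $2\rho_\ad\circ\on{cycl}^{1/2}=\rho_\ad\circ\on{cycl}$). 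By Lemma~\ref{geom Langlands dual} this identifies ${^L}G^{\on{geo}}\cong{^L}G^{\on{alg}}$ and realizes $\on{Rep}_\Ql({^L}G^{\on{alg}})$ as the full subcategory of $\on{Rep}^c_\Ql({^L}G^{\on{geo}})$ consisting of those $V$ on which, after the twist by $\tilde\chi$, the generator $\sigma$ acts through a finite quotient of $\widehat\bZ$. It therefore suffices to show that $F$ carries $\Sat_G^0$ isomorphically onto this subcategory.

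Next I would translate the two defining conditions on $\mA$. Because $F$ is an equivalence of abelian categories, $\mA$ is semisimple in $\Sat_G$ if and only if $V=\on{H}^*(\mA)$ is a semisimple representation of ${^L}G^{\on{geo}}$; as the identity component $\hat G$ is reductive, this holds precisely when the operator $\rho_V(\sigma)$ is semisimple. For purity I would invoke Weil~II: each $\mA$ is supported on a proper Schubert variety $\Gr_{\le\mu}$, so if $\mA$ is pure of weight $0$ then $\on{H}^j(\mA)$ is pure of weight $j$, i.e. the eigenvalues of $\sigma$ on $\on{H}^j(\mA)$ have absolute value $q^{j/2}$ under every complex embedding, and conversely this eigenvalue condition together with semisimplicity pins down the pure weight-$0$ semisimple objects. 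Now Corollary~\ref{canonical TB} identifies the cohomological grading with the cocharacter $2\rho$, whence $\tilde\chi(\sigma)=2\rho(q^{1/2})$ acts on $\on{H}^j(\mA)$ by the scalar $q^{j/2}$. Combining these, $\mA\in\Sat_G^0$ if and only if $\rho_V(\tilde\chi(\sigma)^{-1}\sigma)$ is semisimple with all eigenvalues of absolute value $1$.

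The final step is the arithmetic input. The eigenvalues of $\rho_V(\tilde\chi(\sigma)^{-1}\sigma)$ are $q$-Weil numbers, since the $\IC$-sheaves on varieties over $\bF_q$ are mixed of geometric origin by Weil~II; those of weight $0$ are algebraic units all of whose conjugates have absolute value $1$, so by Kronecker's theorem they are roots of unity, and a semisimple operator with root-of-unity eigenvalues has finite order. Thus $\tilde\chi(\sigma)^{-1}\sigma$ acts through a finite quotient of $\widehat\bZ$ modulo $\hat G$, which is exactly the condition that $V$ be an \emph{algebraic} representation of the pro-algebraic group ${^L}G^{\on{alg}}$; the converse reverses these implications. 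Consequently $F$ restricts to a functor $\Sat_G^0\to\on{Rep}_\Ql({^L}G^{\on{alg}})$ that is fully faithful (inherited from $F$) and essentially surjective (by the equivalence of conditions just established), which proves the proposition. I expect the main obstacle to be the bookkeeping of the second paragraph: aligning Deligne's weight normalization on hypercohomology with the $2\rho$-grading and the $\on{cycl}^{1/2}$-twist entering $\tilde\chi$, and checking that ``pure of weight $0$ and semisimple'' is detected exactly by the eigenvalues of $\sigma$ on the $\on{H}^j(\mA)$; once this is in place, the reductivity of $\hat G$ and Kronecker's theorem make the remaining implications formal.
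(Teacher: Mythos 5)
Your overall strategy is essentially the intended one, and in fact it reconstructs the one substantive line of the paper's proof: the paper simply cites \cite[Remark A.9, Proposition A.10]{RZ} and records that for $\mA\in\Sat_G^0$ the action of $\Ga_k$ on $F(\mA)$ coincides with the natural Frobenius action on $\bigoplus_i\on{H}^i(\mA(\tfrac{i}{2}))$ --- which is precisely the identity you derive in your second paragraph by combining Corollary \ref{canonical TB} (the cohomological grading is the cocharacter $2\rho$) with the twist by $\tilde\chi=2\rho\circ\on{cycl}^{1/2}$. So the reduction to ``the twisted Frobenius has finite order if and only if $\mA$ is semisimple and pure of weight zero'' is the right move, and your bookkeeping is consistent up to the usual arithmetic-versus-geometric normalization of $\on{cycl}(\sigma)$.

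The genuine gap is in the final arithmetic step. Kronecker's theorem applies to algebraic \emph{integers} all of whose conjugates have absolute value $1$; a weight-zero Weil number in Deligne's sense is merely an algebraic number with that property and need not be integral (e.g.\ $(3+4i)/5$). Your justification --- that the relevant sheaves are of geometric origin --- covers $\IC_\mu$ itself, whose Frobenius eigenvalues on stalks and on $\on{H}^*$ are honest powers of $q$ by Kazhdan--Lusztig purity, but it does not cover the general simple object of $\Sat_G$ over $\bF_q$, which (since the stabilizers are connected) is $\IC_\mu\otimes\chi$ for an \emph{arbitrary} continuous character $\chi:\Ga_k\to\Ql^\times$; purity of weight zero constrains only the archimedean absolute values of $\chi(\sigma)$ (up to the forced power of $q^{1/2}$), not its integrality. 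For a non-integral weight-zero twist the twisted Frobenius is semisimple with all eigenvalues of absolute value $1$ yet of infinite order, so the inclusion $F(\Sat_G^0)\subset\on{Rep}_{\Ql}({^L}G^{\on{alg}})$ does not follow from purity plus Kronecker as written; you must either build integrality (equivalently, ``eigenvalues are roots of unity times powers of $q^{1/2}$'') into the reading of ``pure of weight zero'', or verify that this is how the category is set up in \cite{RZ}. Two further assertions you use without proof are standard but worth flagging: that semisimplicity of $V$ as a representation of ${^L}G^{\on{geo}}$ is equivalent to semisimplicity of the single operator $\rho_V(\sigma)$ (Frobenius-semisimplicity over the reductive identity component), and that for \emph{semisimple} objects purity can be detected on global cohomology --- the latter requires the classification of simple objects of $\Sat_G$ over $\bF_q$ just described. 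The reverse inclusion, which you dispatch by ``reversing the implications'', is in fact the unproblematic direction: finite-order characters are integral, and purity of $\IC_\mu\otimes\chi$ then follows from Kazhdan--Lusztig purity.
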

\begin{proof}See \cite[Remark A.9, Proposition A.10]{RZ}. Note that for $\mA\in\Sat_G^0$, the action of $\Ga_k$ on $F(\mA)$ coincides with the natural action of $\Ga_k$ on $\oplus_i \on{H}^i(\mA(\frac{i}{2}))$.
\end{proof}

Next, we will describe a version of the geometric Satake equivalence for all $\Sat_{X^I}$. For simplicity, we assume that $G$ is split and either $\rho\in \xch(T)$ or a half Tate twist exists. Then as explained above, there is a fully faithful functor
\[S: \Rep(\hat G)\simeq \Sat_G^N\to \Sat_G.\]
Let $X$ be a curve over $k$. For a finite non-empty set $I$, let $\Loc(X^I)$ denote the category of $\Ql$-local systems on $X^I$. We also have $\Rep(\hat G^I,\Loc(X^I))$.
There are the following functors relating these categories. Let $\phi:J\twoheadrightarrow I$.
\begin{enumerate}
\item[(i)] the restriction $\Delta(\phi)^*: \on{Rep}(\hat G^J,{\on{Loc}(X^J)})\to \on{Rep}(\hat G^I, {\on{Loc}(X^I)})$, where $\hat G^I$ acts on $\Delta(\phi)^*\mF$ via the embedding $\hat G^I\to \hat G^J$.
\item[(ii)] the external product $\boxtimes_{i\in I}:\prod_{i\in I} \on{Rep}(\hat G^{J_i},{\on{Loc}(X^{J_i})})\to \on{Rep}(\hat G^J,\on{Loc}(X^J))$.
\end{enumerate}

Here is the theorem. 
\begin{thm}\label{multiple point Sat}
There is a canonical equivalence 
\[S_I: \on{Rep}(\hat G^I, \on{Loc}(X^I))\simeq \Sat_{X^I}\]
compatible with the restriction functors and the external product functors in obvious sense.
\end{thm}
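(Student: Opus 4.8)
The plan is to realize both sides as categories of local systems valued in representations and to run the same regular-representation bootstrap used for Theorem~\ref{full Sat equiv}, now relativized over the base $X^I$, with the factorization structure of Theorem~\ref{III:factorization} supplying the coherence that lets one pass between different finite sets. Throughout, the organizing principle is that $\Sat_{X^I}$ is $\Loc(X^I)$-linear (tensoring by $q_I^*$ of a local system preserves both the $(L^+G)_{X^I}$-equivariance and the ULA condition), is symmetric monoidal for the internal fusion product $\ostar$ with the commutativity constraints of Corollary~\ref{existence of comm2} and Remark~\ref{symmetry on fusion}, and carries a faithful exact $\Loc(X^I)$-linear tensor fiber functor $\mH^*:\Sat_{X^I}\to\Loc^{\on{gr}}(X^I)$ by \eqref{monoidal functor}; the role of $\on{Vect}_{\Ql}$ in the classical reconstruction is thus played by $\Loc(X^I)$, and the $2\rho$-grading on $\mH^*$ (Corollary~\ref{canonical TB}) is inner for $\hat G^I$ and so may be forgotten.

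First I would treat $I=\{1\}$. Taking the stalk of the above data at a geometric point $x\in X$ and applying proper base change identifies the fiber with $(\Sat_{G_{\bar k}},\on{H}^*)$, whose Tannakian group is $\hat G$ by Theorem~\ref{birth of dual group}. Under the running hypotheses ($G$ split and either $\rho\in\xch(T)$ or a half Tate twist exists) Remark~\ref{simplify} gives the fully faithful tensor functor $S:\Rep(\hat G)\simeq\Sat_G^N\hookrightarrow\Sat_G$, and I would spread the regular-representation object $\mathcal R=S(R)$ out over the curve, forming $\mathcal R_X\in\Rep(\hat G,\on{Ind}\Sat_X)$, and define the quasi-inverse $V\mapsto (q_1^*V\otimes\mathcal R_X)^{\hat G}$ exactly as in the proof of Theorem~\ref{full Sat equiv}. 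The only genuinely new feature is that $\mH^*$ now records the monodromy of the family as $x$ varies, so the reconstruction produces $\Sat_X\simeq\Rep(\hat G,\Loc(X))$ rather than $\Rep(\hat G)$; fullness, faithfulness and essential surjectivity are checked fiberwise against the already-established single-point equivalence.

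For general $I$ I would bootstrap by factorization. Over the disjoint locus $X^{(\phi)}$ the isomorphism \eqref{BD factorization} identifies $\Sat_{X^I}|_{X^{(\phi)}}$ with the external product of the single-point categories, and matches it with $\Rep(\hat G^I,\Loc(X^I))|_{X^{(\phi)}}$ restricted to external products of $\Rep(\hat G,\Loc(X))$, so $S_I$ is forced on the open part. The substantive point is that an object of $\Sat_{X^I}$ is recovered from its restriction to $X^{(\phi)}$ by the intermediate extension $j_{!*}$: this is precisely the ULA statement of Proposition~\ref{conv=fusion} combined with Theorem~\ref{ULA perverse}, which guarantees that ULA perverse sheaves on $\Gr_{X^I}$ carry no subquotients supported on the diagonals and are the clean $j_{!*}$-extensions of their generic restrictions. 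I would therefore define $S_I$ as the $j_{!*}$-extension of the externally factorized single-point equivalences, and use the coherences (1)--(4) of Theorem~\ref{III:factorization}, in the functorial form \eqref{fusion prod} of Remark~\ref{symmetry on fusion}, to verify that $S_I$ is a well-defined tensor functor intertwining the restriction functors $\Delta(\phi)^\bullet$ with restriction along $\hat G^I\to\hat G^J$ and the fusion functors $\underset{\phi}{\fstar}$ with the external products $\boxtimes_{i\in I}$.

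I expect the main obstacle to be the relative reconstruction itself: making precise that $\Sat_{X^I}$ is $\Rep(\hat G^I,\Loc(X^I))$ over the non-pointlike base $\Loc(X^I)$, and showing that the reconstructed symmetry is the \emph{constant} group $\hat G^I$ with no extra entanglement between the geometry of $X^I$ and the dual-group directions beyond what the local-system coefficients already record. This is exactly where the ULA hypothesis is indispensable, since it is what forces $\mH^*(\mathcal A)$ to be a genuine (lisse) local system and $\mathcal A$ itself to be determined by generic factorized data together with monodromy; it is also where the compatibility of the full system of factorization isomorphisms of Theorem~\ref{III:factorization} must be checked to be coherent, rather than merely holding pointwise, so that the various $S_I$ assemble into a single morphism of factorization categories.
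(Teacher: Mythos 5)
Your overall strategy---spreading the regular-representation object over the curve, using the ULA property to reduce to an open or generic locus, and invoking factorization for the compatibilities---is the same as the paper's, and for $I=\{1\}$ your formula $V\mapsto (q_1^*V\otimes\mR_X)^{\hat G}$ is essentially the paper's $S_I$. There are, however, two concrete gaps in your passage to general $I$. First, defining $S_I$ as ``the $j_{!*}$-extension of the externally factorized single-point equivalences'' only makes sense on objects of $\Rep(\hat G^I,\Loc(X^I))$ whose restriction to $X^{(\phi)}$ is an external product; a general object (a local system $V$ on $X^I$ with a $\hat G^I$-action) is not of this form, and conversely an arbitrary object of the factorized category over $X^{(\phi)}$ can carry monodromy around the diagonals, in which case its intermediate extension is not ULA over $X^I$ and does not lie in $\Sat_{X^I}$. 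One can repair this by decomposing $V=\bigoplus_\la V_\la\otimes L_\la$ with $V_\la\in\Loc(X^I)$ and $L_\la$ irreducible for $\hat G^I$ (hence an external product), but the paper's device is cleaner: only the single object $\mR_{X^I}=\boxstar_{i\in I}\mR_X$ is produced by fusion (Proposition \ref{conv=fusion} applies to it because it \emph{is} an external convolution), and then $S_I(\mF)=((e_I)_*\mF[|I|]\ostar\mR_{X^I})^{\hat G^I}$ is defined on all of $\Rep(\hat G^I,\Loc(X^I))$ at once, with the $\hat G^I$-equivariance carried entirely by $\mR_{X^I}$.

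Second, ``fullness, faithfulness and essential surjectivity are checked fiberwise'' at a geometric point cannot work as stated: a morphism of local systems is not determined by its value at one closed point, and an equivalence on each fiber (e.g.\ the functor forgetting monodromy) need not be an equivalence. The correct reduction is to the geometric \emph{generic} point: $\Loc(X^I)\to\Rep^c_{\Ql}(\Gal(\overline{\eta^I}/\eta^I))$ is fully faithful, and the ULA property makes the restriction $\Sat_{X^I}\to \on{P}((\Gr_{X^I})_{\eta^I})$ fully faithful, so both the construction of the forward functor $F_I$ (which your proposal leaves implicit) and the unit/counit isomorphisms $S_IF_I\cong\id$, $F_IS_I\cong\id$ need only be carried out over $\eta^I$, where $(\Gr_{X^I})_{\eta^I}\cong(\prod_{i\in I}\Gr_X)_{\eta^I}$ and everything reduces to Theorem \ref{full Sat equiv}. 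With these two adjustments your argument matches the paper's.
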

\begin{rmk}
A version of the this theorem was first proved by Gaitsgory (\cite{Ga2}). In fact, he considered larger categories in both sides, namely $\on{Rep}(\hat G^I, \on{P}(X^I))$ on the l.h.s and $\on{P}_{(L^+G)_{X^I}}(\Gr_{X^I})$ on the r.h.s. However, the restriction functors as in (1) are not defined on these bigger categories (see Remark \ref{triv group}). Our approach is different from \emph{loc. cit.}
\end{rmk}
The following corollary plays a crucial role in \cite{Laf}.
\begin{cor}For each $I$, there is a fully faithful functor $S_I: \on{Rep}(\hat G^I)\to \Sat_{X^I}$ such that for every surjective map $\phi: J\twoheadrightarrow I$, there is a canonical isomorphism of functors $\Delta(\phi)^*S_J\cong S_I\circ \Res_{\hat G^I}^{\hat G^J}$, where $\Res_{\hat G^I}^{\hat G^J}$ is the usual restriction functor for the embedding $\hat G^I\to \hat G^J$.
\end{cor}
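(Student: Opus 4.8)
The plan is to realise this Corollary as the restriction of the equivalence $S_I$ of Theorem \ref{multiple point Sat} to the full subcategory of constant local systems. First I would embed $\Rep(\hat G^I)$ into $\Rep(\hat G^I,\Loc(X^I))$ by sending a finite dimensional representation $V$ to $V\otimes\underline{\Ql}_{X^I}$, the constant local system on $X^I$ with fibre $V$, equipped with the given coaction of $\hat G^I$ (which is constant along $X^I$). Composing with $S_I$ then produces the desired functor $S_I:\Rep(\hat G^I)\to\Sat_{X^I}$.

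The functor so defined is fully faithful. Since $X$ is geometrically connected, so is $X^I$, and therefore $\Hom_{\Loc(X^I)}(V\otimes\underline{\Ql}_{X^I},W\otimes\underline{\Ql}_{X^I})=\Hom_{\Ql}(V,W)$: constant local systems carry trivial monodromy, so morphisms between them are just $\Ql$-linear maps of fibres. Imposing $\hat G^I$-equivariance of the coactions identifies this with $\Hom_{\hat G^I}(V,W)$, so the embedding $V\mapsto V\otimes\underline{\Ql}_{X^I}$ is fully faithful; as $S_I$ is an equivalence, the composite is fully faithful as well.

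For the compatibility with restriction, the key observation is purely geometric: for $\phi:J\twoheadrightarrow I$ the map $\Delta(\phi):X^I\to X^J$ is the corresponding diagonal, so $\Delta(\phi)^*$ carries a constant local system $W\otimes\underline{\Ql}_{X^J}$ to the constant local system $W\otimes\underline{\Ql}_{X^I}$, and the coaction on the pullback is by construction the restriction of the $\hat G^J$-coaction along the embedding $\hat G^I\hookrightarrow\hat G^J$. Hence, on constant local systems, the representation-side restriction functor $\Delta(\phi)^*$ becomes exactly $\Res^{\hat G^J}_{\hat G^I}$ followed by the embedding into constant local systems. The compatibility of $S_I$ with restriction functors asserted in Theorem \ref{multiple point Sat} gives a canonical isomorphism $\Delta(\phi)^\bullet\circ S_J\cong S_I\circ\Delta(\phi)^*$ of functors $\Rep(\hat G^J,\Loc(X^J))\to\Sat_{X^I}$ (where $\Delta(\phi)^\bullet$ is the geometric restriction functor of \eqref{Sat restriction}, written $\Delta(\phi)^*$ in the statement); restricting this isomorphism to $\Rep(\hat G^J)\subset\Rep(\hat G^J,\Loc(X^J))$ and invoking the previous sentence yields the claimed $\Delta(\phi)^\bullet S_J\cong S_I\circ\Res^{\hat G^J}_{\hat G^I}$.

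The substantive content is entirely in Theorem \ref{multiple point Sat}; the remaining work for the Corollary is the bookkeeping above. The only point requiring care is to confirm that the notion of ``constant local system'' is compatible on both sides, that is, that the embedding $\Rep(\hat G^I)\hookrightarrow\Rep(\hat G^I,\Loc(X^I))$ intertwines the restriction (and external product) structures with those of Theorem \ref{multiple point Sat}. I expect the main, and rather mild, obstacle to be verifying the naturality and coherence of these identifications uniformly in $\phi$, which amounts to unwinding the definition of $\Delta(\phi)^*$ on coactions and checking that the cocycle-type compatibilities are inherited from the equivalences $S_J$; none of this should require new geometric input beyond the connectedness of $X^I$.
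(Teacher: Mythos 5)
Your proposal is correct and is exactly the intended derivation: the paper states this Corollary without proof as an immediate consequence of Theorem \ref{multiple point Sat}, and the evident argument is precisely yours — embed $\Rep(\hat G^I)$ into $\Rep(\hat G^I,\Loc(X^I))$ via constant local systems (fully faithful by geometric connectedness of $X^I$), compose with the equivalence $S_I$, and observe that the restriction functor of Theorem \ref{multiple point Sat}(i) carries constant objects to constant objects via $\Res^{\hat G^J}_{\hat G^I}$. Your parenthetical identification of the Corollary's $\Delta(\phi)^*$ with the shifted functor $\Delta(\phi)^\bullet$ of \eqref{Sat restriction} is also the right reading.
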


We explain the ideas to prove this theorem.  
First, we claim that the functor
$\mH^* :\Sat_{X^I}\to \on{Loc}(X^I)$
canonically lifts to a functor $$F_I: \Sat_{X^I}\to \on{Rep}(\hat G^I,{\on{Loc}(X^I)}).$$ Namely,
let $\eta^I$ be the generic point of $X^I$, and $\overline{\eta^I}$ a geometric point over $\eta^I$. Since the restriction functor 
$$i_{\overline{\eta^I}}^*: \on{Loc}(X^I)\to \on{Rep}^c_{\Ql}(\Gal(\overline{\eta^I}/\eta^I))$$ 
is fully faithfully, it suffices to show that every $\mH^*(\mA)_{\overline{\eta^I}}$ admits a canonical action of $\hat G^I$. However, since $(\Gr_{X^I})_{\eta^I}\cong (\prod_i \Gr_X)_{\eta^I}$, the desired lifting follows from Theorem \ref{full Sat equiv}. 

To define a functor in another direction, we need an object in $\on{Rep}(\hat G^I, {\on{Ind}\Sat_{X^I}})$ analogous to the sheaf $\mR^T$ on $\Gr$ (or under our assumption the sheaf $\mR$ in Remark \ref{simplify}).  
We define
\[\mR_X:= S(R)_X,\]
which is an object in $\on{Rep}(\hat G, {\on{Ind}\Sat_X})$ ( by the reasoning as in Theorem \ref{full Sat equiv}). For general $I$, we define
\[\mR_{X^I}:=\boxstar_{i\in I} \mR_X\]
as the self external convolution product of $\mR_X$, naturally regarded as an object in $\on{Rep}(\hat G^I,{\on{Ind}\Sat_{X^I}})$.

Then we can define the functor 
$S_{I}: \on{Rep}(\hat G^I,{\on{Ind}\on{Loc}(X^I)})\to \on{Ind}\Sat_{X^I}$ as
$$S_{I}(\mF)= ((e_I)_*\mF[|I|]\ostar\mR_{X^I})^{\hat G^I}.$$
We claim that for every $\mA\in\Sat_{X^I}$, there is a canonical isomorphism
$S_IF_I(\mA)\cong \mA$,
and for every $\mF\in \on{Rep}(\hat G^I, {\on{Loc}(X^I)})$ there is a canonical isomorphism
$F_IS_I(\mF)\cong \mF$.
Note that by the ULA property, the restriction functor
\[\Sat_{X^I}\to \on{P}_{((L^+G)_{X^I})_{\eta^I}}((\Gr_{X^I})_{\eta^I})\]
is fully faithful. Therefore, it is enough to construct these isomorphisms over $\eta^I$, which reduces to Theorem \ref{full Sat equiv}.

From the factorization property of $\mR_{X^I}$, it is easy to check that $S_I$'s are compatible with external convolution product. By the interpretation of the convolution product as the fusion product, $F_I$'s are compatible with restrictions to diagonals. The theorem then follows.

\subsection{From the geometric Satake to the classical Satake.}\label{applications of sat}

In this subsection, we discuss the relationship between the geometric Satake equivalence and the classical Satake isomorphism. For simplicity, we will assume that $G$ is split over $k=\bF_q$. The general case was discussed in \cite{RZ}. Let $\sigma$ denote the geometric Frobenius automorphism of $\bF_q$.
We fix a half Tate twist $\Ql(\frac{1}{2})$, or equivalently a square root $q^{1/2}$ ($=\tr(\sigma,\Ql(\frac{1}{2}))^{-1}$)  of $q$ in $\Ql$.  To relate the geometric Satake with the classical Satake, we also need to choose an isomorphism $\iota:\Ql\cong \bC$. We can arrange our choice of $\Ql(\frac{1}{2})$ and $\iota$ such that 
$$\iota(q^{1/2})\in \bR^{<0}.$$ The advantage of this choice is that under $\iota$, the trace of the geometric Frobenius on $\Ql[i](\frac{i}{2})$  is \emph{positive} for every $i\in \bZ$.

We will let $H_G$ denote the classical spherical Hecke algebra, consisting of $G(\mO)$-bi-invariant $\bZ$-valued functions on $G(F)$, equipped with the convolution product
\[(f*g)(x)=\int_{G(F)}f(y)g(y^{-1}x)dy,\]
where the measure is chosen such that the volume of $G(\mO)$ is $1$. For $\mu\in \xcoch(T)^+$, let $c_\mu$ denote the characteristic function on $G(\mO)t^\mu G(\mO)$. Then $\{c_\mu,\ \mu\in\xcoch(T)^+\}$ form a $\bZ$-basis of $H_G$.

On the other hand, let $R(\hat G)$ denote the representation ring of $\hat G$, which is a $\bZ$-algebra with a basis given by characters of irreducible representations. For $\mu\in\xcoch(T)^+$, let $L_\mu=\on{H}^*(\IC_\mu)$ denote the corresponding irreducible representation and let $\chi_\mu$ denote the corresponding character.

The classical Satake isomorphism (or rather, Langlands' reinterpretation) is a canonical isomorphism
\[\mS: H_G\otimes\bC\cong R(\hat G)\otimes\bC.\]
We refer to \cite{Gr} for details.
Now we explain the geometric approach.

Recall that the Grothendieck fonctions-faisceaux dictionary attaches to every $\mA\in \Sat_G$ (or more generally every $\ell$-adic complex on $\Gr_G$) a function 
\[f_\mA: \Gr(k)\to \Ql,\quad f_\mA(x)=\sum_i(-1)^i\tr(\sigma_x, \on{H}^i_{\bar x}(\mA)),\]
where $\bar x$ denotes a geometric point over $x$, $\on{H}^i_{\bar x}$ denotes the stalk cohomology at $\bar x$, and $\sigma_x\in \Gal(\bar x/x)$ denotes the geometric Frobenius element. Since $\mA$ is $L^+G$-equivariant, we can regard $f_\mA\in H_G\otimes_\bZ\Ql$. In addition, it follows from definition that
\begin{lem}\label{conv vs classical conv}
For $\mA_1,\mA_2\in\Sat_G$,
$f_{\mA_1\star\mA_2}=f_{\mA_1}*f_{\mA_2}$.
\end{lem}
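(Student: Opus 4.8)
The plan is to evaluate both sides pointwise on $\Gr(\bF_q)=G(F)/G(\mO)$ by means of the Grothendieck sheaf-to-function dictionary, reducing the identity to the defining integral of the classical convolution. First I would record the fiberwise trace principle: for an ind-proper morphism $\pi\colon Y\to Z$ over $k=\bF_q$ and an $\ell$-adic complex $\mathcal K$ on $Y$ with ind-proper support, proper base change identifies the stalk of $\pi_!\mathcal K=\pi_*\mathcal K$ at $x$ with the compactly supported cohomology of the fiber, and the Grothendieck--Lefschetz trace formula then gives $f_{\pi_!\mathcal K}(x)=\sum_{y\in\pi^{-1}(x)(k)}f_{\mathcal K}(y)$. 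I apply this to $\pi=m\colon\Gr\tilde\times\Gr\to\Gr$ and $\mathcal K=\mA_1\tilde\boxtimes\mA_2$. Since $\mA_1,\mA_2$ are supported on finitely many Schubert cells, $\on{Supp}(\mA_1\tilde\boxtimes\mA_2)$ lies in some projective $\Gr_{\leq\mu_1}\tilde\times\Gr_{\leq\mu_2}$, on which $m$ is proper with finite fibers over $k$-points; equivalently $f_{\mA_1}$ is finitely supported on $\Gr(k)$, so
\[
f_{\mA_1\star\mA_2}(x)=\sum_{z\in m^{-1}(x)(k)}f_{\mA_1\tilde\boxtimes\mA_2}(z)
\]
is a finite sum.

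The second step is to identify the stalk-function of the twisted external product. Using the presentation $\Gr\tilde\times\Gr\cong LG\times^{L^+G}\Gr$, a $k$-point $z$ is a class $[y,x_2]$ with $y\in G(F)$, $x_2\in\Gr(k)$, with $\pr_1(z)=yG(\mO)$ and $m(z)=y\cdot x_2$. Because $\mA_1\tilde\boxtimes\mA_2$ is constructed by descent along the \'etale-locally trivial $L^+G$-torsor $LG\to\Gr$ and $\mA_2$ is $L^+G$-equivariant, the stalk at $z$ is canonically $(\mA_1)_{\,y G(\mO)}\otimes(\mA_2)_{x_2}$ as a module over the geometric Frobenius $\sigma_z$. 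Multiplicativity of the trace on tensor products then yields $f_{\mA_1\tilde\boxtimes\mA_2}(z)=f_{\mA_1}(yG(\mO))\,f_{\mA_2}(x_2)$. The equivariance of $\mA_2$ also guarantees that $f_{\mA_2}$ is a well-defined $G(\mO)$-invariant function, so this product is independent of the choice of representative $y$.

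Finally I would parametrize the fiber and assemble the Hecke convolution. By the isomorphism $(\pr_1,m)\colon\Gr\tilde\times\Gr\cong\Gr\times\Gr$ of \eqref{I: isom}, the projection $\pr_1$ restricts to a bijection $m^{-1}(x)(k)\xrightarrow{\sim}\Gr(k)=G(F)/G(\mO)$, and the unique point over $yG(\mO)$ is $[y,\,y^{-1}x]$. Hence
\[
f_{\mA_1\star\mA_2}(x)=\sum_{yG(\mO)\in G(F)/G(\mO)}f_{\mA_1}(yG(\mO))\,f_{\mA_2}(y^{-1}x),
\]
the sum being finite by compact support of $f_{\mA_1}$. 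Since both $f_{\mA_i}$ are $G(\mO)$-bi-invariant (as noted for $L^+G$-equivariant sheaves) and the Haar measure is normalized so that $\on{vol}(G(\mO))=1$, the integrand $y\mapsto f_{\mA_1}(y)f_{\mA_2}(y^{-1}x)$ is right $G(\mO)$-invariant and the coset sum equals $\int_{G(F)}f_{\mA_1}(y)f_{\mA_2}(y^{-1}x)\,dy=(f_{\mA_1}*f_{\mA_2})(x)$, which is the claim. The main obstacle is the second step: verifying rigorously that the sheaf-function correspondence is compatible with the twisting construction, i.e. that the stalk of $\mA_1\tilde\boxtimes\mA_2$ genuinely factors as the tensor product of the two stalks with the correct Frobenius action. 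This rests on the $L^+G$-equivariance of $\mA_2$ together with the local triviality of the torsor $LG\to\Gr$, and everything else is bookkeeping with the trace formula.
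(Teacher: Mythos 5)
Your proof is correct and follows essentially the same route as the paper's: identify $m^{-1}(x)(k)$ with $\{(y,y^{-1}x)\mid y\in G(F)/G(\mO)\}$, factor the stalk of $\mA_1\tilde\boxtimes\mA_2$ at such a point as $(\mA_1)_y\otimes(\mA_2)_{y^{-1}x}$, and sum traces using the normalisation $\on{vol}(G(\mO))=1$. You simply make explicit the two ingredients the paper leaves implicit, namely the proper base change plus Lefschetz trace justification of the fiberwise summation and the descent argument for the stalk factorisation of the twisted product.
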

\begin{proof}Let $m:\Gr\tilde\times\Gr\to \Gr$ be the convolution map. Given $x\in \Gr(k)$, the fiber $m^{-1}(x)(k)$ can be identified with $\{(y, y^{-1}x)\mid y\in G(F)/G(\mO)\}$, and by the construction there is a canonical isomorphism of stalks $(\mA_1\tilde\boxtimes\mA_2)_{(y,y^{-1}x)}\cong (\mA_1)_y\otimes(A_2)_{y^{-1}x}$. Then according to our choice of the measure, 
\[\begin{split}
f_{\mA_1}*f_{\mA_2}(x)&=\int_{G(F)}f_{\mA_1}(y)f_{\mA_2}(y^{-1}x)dy=\sum_{y\in G(F)/G(\mO)}f_{\mA_1}(y)f_{\mA_2}(y^{-1}x)\\
&=\sum_{y\in m^{-1}(x)(k)} \tr(\sigma_{\bar y}, (\mA_1\tilde\boxtimes\mA_2)_{\bar y})=f_{\mA_1\star\mA_2}(x).
\end{split}\]
\end{proof}

Now, let $f_\mu=f_{\IC_\mu[-(2\rho,\mu)]}$, where we recall that $\IC_\mu|_{\Gr_\mu}=\Ql[(2\rho,\mu)]$. By the purity of stalk cohomology of the intersection cohomology complex of (affine) Schubert varieties (cf. \cite{KL}), one can write
\begin{equation}\label{stalk IC}
f_\mu=c_\mu+\sum_{\la<\mu}a_{\mu\la}(q)c_\la,
\end{equation}
where $$a_{\mu\la}(x)=\sum a_{\mu\la,i}x^i\in \bN[x]$$ are Kazhdan-Lusztig polynomials.
Recall that $a_{\mu\la,i}=\dim \on{H}^{2i-(2\rho,\mu)}_{t^\la}\IC_\mu$ is the degree $2i-(2\rho,\mu)$ stalk cohomology of $\IC_\mu$ at $t^\la$. 

Recall that the $K$-group of a monoidal abelian category is a natural $\bZ$-algebra (so called the $K$-ring). Then the geometric Satake equivalence induces an isomorphism of $\bZ$-algebras
\[K(\Sat_G^N)\cong K(\on{Rep}_\Ql(\hat G)),\]
where we recall $\Sat_G^N$ is the monoidal subcategory of $\Sat_G$ spanned by normalised IC sheaves (see Remark \ref{simplify}). 
Let $\sigma_q$ denote the geometric Frobenius of $k=\bF_q$. Then 
by \eqref{stalk IC} and Lemma \ref{conv vs classical conv}, the Grothendieck fonctions-faisceaux dictionary induces an algebra isomorphism
\[f: K(\Sat_G^N)\otimes\Ql\to H_G\otimes\Ql.\]
On the other hand, there is always a canonical isomorphism $\on{Ch}:K(\on{Rep}_\Ql(\hat G))\cong R(\hat G)$ by sending $[V]$ to its character $\chi_V$. Putting together, we obtain the following isomorphism
\[
\mS':H_G\otimes\Ql\stackrel{f^{-1}}{\cong} K(\Sat_G^N)\otimes\Ql \cong K(\on{Rep}_\Ql(\hat G))\otimes\Ql\stackrel{\on{Ch}}{\cong}R(\hat G)\otimes\Ql
\]
\begin{lem}Under the chosen isomorphism $\iota:\Ql\cong\bC$, $\mS'=\mS$. In particular
$$\mS^{-1}(\chi_\mu)=(-\iota(q^{1/2}))^{-(2\rho,\mu)}f_\mu.$$
\end{lem}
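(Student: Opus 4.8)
The plan is to show that the two isomorphisms $\mS'$ and $\mS$ agree by comparing them on the natural basis coming from irreducible representations, and then to extract the explicit formula. First I would recall what $\mS'$ does on the classes $[\IC_\mu^{\on{norm}}]\in K(\Sat_G^N)$. By construction $\on{Ch}$ sends $[\IC_\mu^{\on{norm}}]=[\on{H}^*(\IC_\mu^{\on{norm}})]$ to $\chi_\mu$, so to compute $\mS'^{-1}(\chi_\mu)$ it suffices to compute $f([\IC_\mu^{\on{norm}}])=f_{\IC_\mu^{\on{norm}}}$. Since $\IC_\mu^{\on{norm}}=\IC_\mu((2\rho,\mu))$, the Tate twist by $(2\rho,\mu)$ multiplies the trace of the geometric Frobenius by $(q^{1/2})^{-(2\rho,\mu)}$ on each stalk, relative to the shifted complex $\IC_\mu[-(2\rho,\mu)]$ whose associated function is $f_\mu$ of \eqref{stalk IC}. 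Tracking the shift and the twist carefully gives $f_{\IC_\mu^{\on{norm}}}=(-\iota(q^{1/2}))^{-(2\rho,\mu)}f_\mu$, where the sign $(-1)^{(2\rho,\mu)}$ comes from the cohomological shift $[-(2\rho,\mu)]$ entering the alternating sum in the fonctions-faisceaux dictionary. This already yields the displayed formula $\mS'^{-1}(\chi_\mu)=(-\iota(q^{1/2}))^{-(2\rho,\mu)}f_\mu$ once we know $\mS'=\mS$.

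Next I would verify that $\mS'$ is genuinely an algebra isomorphism, which is where the earlier results do the heavy lifting: $f$ is multiplicative by Lemma \ref{conv vs classical conv} together with the fact that convolution corresponds to the product in both $K$-rings, and $\on{Ch}$ is the standard ring isomorphism from the $K$-ring of $\Rep_\Ql(\hat G)$ to $R(\hat G)$. The triangularity relation \eqref{stalk IC} guarantees that $f$ carries the basis $\{[\IC_\mu^{\on{norm}}]\}$ to a basis of $H_G\otimes\Ql$ unitriangular (after the twist) with respect to $\{c_\mu\}$, so $\mS'$ is an isomorphism and is determined by its values $\mS'^{-1}(\chi_\mu)$.

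The substantive point, and the main obstacle, is the identification $\mS'=\mS$ rather than the bookkeeping of twists and signs. The classical Satake isomorphism $\mS$ is characterized (following Langlands--Gross, cf.\ \cite{Gr}) by its compatibility with the Satake transform via constant terms along the unipotent radical: concretely, $\mS$ is pinned down by the requirement that the eigenvalues of $c_\mu$ acting on an unramified principal series with Satake parameter $s\in\hat T$ are read off from $\chi_\mu(s)$, equivalently by the commutativity of $\mS$ with the homomorphism to the Hecke algebra of the torus given by the (normalized) constant term map. To match $\mS'$ with this, I would use the geometric avatar of the constant term, namely the Mirkovi\'c--Vilonen functor $\on{CT}=\on{CT}^G_T$ of Theorem \ref{MV theory} and Remark \ref{Parabolic MV}, whose effect on functions is exactly the classical Satake/constant-term integral over the semi-infinite orbits $S_\la$. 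Since $\on{CT}$ is a tensor functor inducing the inclusion $\hat T\hookrightarrow\hat G$ of maximal tori and computes the weight multiplicities $\dim\on{H}^{(2\rho,\la)}_c(S_\la,\IC_\mu)$ as the $\la$-weight space of $L_\mu$, applying the fonctions-faisceaux dictionary to $\on{CT}$ shows that $f_\mu$ has the prescribed image under the torus Satake transform; the normalization by $(-\iota(q^{1/2}))^{-(2\rho,\mu)}$ is precisely the half-sum-of-roots shift that appears in the classical definition of $\mS$ to make it an algebra map and independent of choices. Granting this compatibility, both $\mS'$ and $\mS$ induce the same map after composing with the constant term, and since the torus case is elementary (the Hecke algebra of $T$ is the group algebra of $\xcoch(T)$ and $\mS$ is the identity there up to the chosen twist), injectivity of the constant term on the relevant span forces $\mS'=\mS$. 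I expect the delicate part to be matching the precise normalizing factors in \cite{Gr} against our choice $\iota(q^{1/2})\in\bR^{<0}$, so that the sign $(-1)^{(2\rho,\mu)}$ and the power of $q^{1/2}$ come out exactly as stated.
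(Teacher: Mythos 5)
Your proposal is correct and follows essentially the same route as the paper: the identification $\mS'=\mS$ is reduced to the compatibility of the geometric constant term functor $\on{CT}$ (lifted to $\on{CT}^N:\Sat_G^N\to\Sat_T^N$) with the classical Satake transform under the fonctions--faisceaux dictionary, with the normalizing factor $(-\iota(q^{1/2}))^{-(2\rho,\mu)}$ accounted for exactly as you describe. The paper's proof is only a two-sentence sketch deferring to \cite{RZ} and \cite[\S 3]{Gr}, so your more detailed bookkeeping of the shift, the Tate twist, and the reduction to the (elementary) torus case via injectivity of the constant term is a faithful expansion of the intended argument rather than a different one.
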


The proof was sketched at the end of \cite{RZ}. First, the functor $\on{CT}:\Sat_G\to \Sat_T$ naturally lifts to a functor 
$$\on{CT}^N: \Sat_G^N\to \Sat_T^N.$$ 
(Indeed, it follows from Theorem \ref{MV theory} (iii) that $\on{H}_c^*(\mA)$ is of Tate type).
Under the Grothendieck fonctions-faisceaux dictionary and our choice of $\iota$ the functor $\on{CT}^N$ exactly corresponds to the classical Satake transform (e.g. see \cite[\S\ 3]{Gr} for explicit formulas). 

\begin{rmk}
(i) If one prefer not to introduce a square root $q^{1/2}$, one can formulate a version of the Satake isomorphism via the equivalence $\Sat_G^T\cong \Rep_\Ql(\hat G^T)$. 

(ii) Since $\iota f_\mA\in H_G\otimes\bZ[q^{\pm 1/2}]$ for every $\mA\in \Sat_G^N$, the isomorphism $\mS'$ restricts to an isomorphism $H_G\otimes \bZ[q^{\pm 1/2}]\simeq R(\hat G)\otimes \bZ[q^{\pm 1/2}]$. If in addition $\rho\in \xch(T)$, it further restricts to an isomorphism $H_G\otimes\bZ[q^{-1}]\simeq R(\hat G)\otimes\bZ[q^{-1}]$. This coincides with the classical theory (see \cite[\S 3]{Gr}).
\end{rmk}

Let us also mention that the Kazhdan-Lusztig polynomials $a_{\mu\la}$ admits the following interpretation via $\hat G$.
Recall the pinning on $\hat G$ given by Proposition \ref{canonical pinning}. It defines an increasing filtration (the
Brylinski-Kostant filtration) on any representation $V$ of $\hat G$ as
\begin{equation}\label{BK fil}
N_iV=(\ker N)^{i+1}.
\end{equation}
For $\la\in\xcoch(T)$, denote by $V(\la)$ the corresponding weight subspace of $V$, under the action of
$\tilde T$. Then filtration \eqref{BK fil} induces
\begin{equation}\label{BKfilmu}
N_iV(\la)=V(\la)\cap N_iV.
\end{equation}

The following theorem was proved in \cite{Lu} and \cite{Bry}, and by another method in \cite[\S~5]{Gi1} (see also \cite[\S~5]{Z11}). 
\begin{thm}For $\mu\in\xcoch(T)^+$, let $L_\mu=\on{H}^*(\IC_\mu)$ be the corresponding representation of $\hat G$.
\[a_{\mu\la}(x)=\sum_i \dim \on{gr}^N_iL_\mu(\la)x^i.\]
\end{thm}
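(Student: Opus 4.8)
The plan is to read the two sides of the identity as two different gradings of the single vector space $L_\mu(\la)$ and to match them through the hard Lefschetz theorem, following the geometric method of \cite[\S~5]{Gi1}. First I would set up the operator side. By Theorem \ref{canonical pinning}, cup product with $c_1(\mO(1))$ realises the principal nilpotent $N\in\hat\frakg$ acting on $L_\mu=\on{H}^*(\IC_\mu)$, and by Corollary \ref{canonical TB} the cohomological grading on $L_\mu$ is exactly the eigenvalue grading of the semisimple element of the principal $\fraks\frakl_2$-triple, namely the cocharacter $2\rho:\bG_m\to\tilde T$; in particular the weight space $L_\mu(\la)=\on{H}_c^{(2\rho,\la)}(S_\la,\IC_\mu)$ (Theorem \ref{MV theory}) sits in the single cohomological degree $(2\rho,\la)$. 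Since $\Gr_{\leq\mu}$ is projective, hard Lefschetz for the self-dual complex $\IC_\mu$ guarantees that $N$ together with this grading integrates to a genuine $\fraks\frakl_2$-action on $L_\mu$, so that $N=\cup\,c_1(\mO(1))$ raises cohomological degree by $2$ and the Brylinski--Kostant filtration $N_iL_\mu=\ker N^{i+1}$ is the kernel filtration of the raising operator of this $\fraks\frakl_2$.

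The heart of the argument is to identify, degree by degree, the associated graded of this kernel filtration on $L_\mu(\la)$ with the stalk cohomology $\on{H}^{2i-(2\rho,\mu)}_{t^\la}\IC_\mu$, whose dimension is $a_{\mu\la,i}$ by \eqref{stalk IC}. I would do this by comparing two localisations of $\IC_\mu$ at the fixed point $t^\la$ of the regular one-parameter subgroup $2\rho^\vee(\bG_m)$ acting on $\Gr$: the ordinary stalk $i_{t^\la}^*\IC_\mu$, which carries the Kazhdan--Lusztig grading, and the hyperbolic localisation $\on{H}_c^*(S_\la,\IC_\mu)$, which by Theorem \ref{MV theory} computes $L_\mu(\la)$. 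The Lefschetz operator $\cup\,c_1(\mO(1))$ acts compatibly on the spectral sequence attached to the stratification of $\Gr$ by the semi-infinite orbits $S_\nu$, whose $E_1$-page is $\bigoplus_\nu\on{H}_c^*(S_\nu,\IC_\mu)$ and which degenerates by Theorem \ref{MV theory}(2)--(3). Using self-duality of $\IC_\mu$ to pass between stalk and costalk, together with the purity statement of \cite{KL}, I would match the $i$-th step of the stalk-degree filtration with the $i$-th step of the kernel filtration of $N$: a class in $\on{gr}^N_iL_\mu(\la)$ is one annihilated by $N^{i+1}$ but not $N^i$, and a careful degree count identifies these with the classes contributing to $\on{H}^{2i-(2\rho,\mu)}_{t^\la}\IC_\mu$. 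Summing over $i$ against the bookkeeping variable $x$ then yields $a_{\mu\la}(x)=\sum_i\dim\on{gr}^N_iL_\mu(\la)\,x^i$.

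The main obstacle is precisely this matching step: the filtration $\ker N^{i+1}$ is defined on all of $L_\mu$ and is not a priori compatible with the $\tilde T$-weight decomposition, because the principal nilpotent $N=\sum_{\al}e_\al$ shifts weights by simple coroots and can produce cancellations across weight spaces. Controlling this requires the full force of hard Lefschetz, to know that the $\fraks\frakl_2$-strings are unobstructed, together with the dimension estimate $\dim(S_\la\cap\Gr_{\leq\mu})=(\rho,\la+\mu)$ of Theorem \ref{MV theory}(1), which forces $\on{H}_c^*(S_\la,\IC_\mu)$ into a single degree and thereby rigidifies the comparison. Should the direct geometric matching prove too delicate, I would fall back on the two-step route underlying the attribution to \cite{Lu} and \cite{Bry}: first invoke Lusztig's identification of the affine Kazhdan--Lusztig polynomial $a_{\mu\la}(q)$ with the $q$-analogue of the weight multiplicity of $L_\mu(\la)$, which is the combinatorial shadow of \eqref{stalk IC} under the Satake isomorphism of \S~\ref{applications of sat}, and then Brylinski--Kostant's theorem that this same $q$-analogue is the Poincar\'e polynomial of the principal-nilpotent filtration on $L_\mu(\la)$.
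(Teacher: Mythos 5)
The paper does not actually prove this theorem; it only attributes it to \cite{Lu} together with \cite{Bry}, with \cite[\S~5]{Gi1} and \cite[\S~5]{Z11} cited for a second, geometric method. Your fallback paragraph --- Lusztig's identification of $a_{\mu\la}(q)$ with the $q$-analogue of weight multiplicity, followed by Brylinski's theorem that this $q$-analogue is the Poincar\'e polynomial of the filtration \eqref{BKfilmu} --- is exactly the route the paper cites, and as a proof-by-reference it is unobjectionable and matches the paper.

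Your primary, geometric route, however, has a genuine gap at precisely the step you call the heart of the argument. Hard Lefschetz for $\IC_\mu$ together with Corollary \ref{canonical TB} only controls the coarse grading $\on{H}^k(\IC_\mu)=\bigoplus_{(2\rho,\la)=k}L_\mu(\la)$: it determines the Jordan type of $N$ on all of $L_\mu$, hence the dimensions of $\ker N^{i+1}\cap\on{H}^k$, but it cannot separate two weights $\la\neq\la'$ with $(2\rho,\la)=(2\rho,\la')$, which occur as soon as $G$ has rank at least $2$. For the same reason a ``careful degree count'' cannot identify $\on{gr}^N_iL_\mu(\la)$ with the stalk $\on{H}^{2i-(2\rho,\mu)}_{t^\la}\IC_\mu$: the individual weight space and the individual stalk are both strictly finer invariants than anything visible to the cohomological grading, and, as you yourself observe, the Lefschetz $\fraks\frakl_2$-strings are not compatible with the $\tilde T$-weight decomposition (so ``unobstructed strings'' buys you nothing weight by weight). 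The missing ingredient in the cited geometric proofs is $T$-equivariant cohomology: $\on{H}^*_{T}(\IC_\mu)$ is a free $R_{T}$-module carrying both the lifted MV filtration (whose associated graded is $\bigoplus_\la L_\mu(\la)\otimes R_{T}$, as in the proof that $\on{CT}$ is monoidal) and, after localisation to the generic point of $\Spec R_{T}$ (Theorem \ref{equiv localization}), a decomposition into the equivariant stalks at the individual fixed points $t^\la$; comparing these two structures, with the equivariant Chern class of $\mO(1)$ acting and the purity of \cite{KL} fixing the grading on each stalk, is what actually yields the identity weight by weight. Without this, or some equivalent device that sees the full $\tilde T$-action rather than only its composition with $2\rho$, your primary argument does not close; I would either supply the equivariant localisation step or rely on the Lusztig--Brylinski route outright.
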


\appendix

\section{Complements on sheaf theory}\label{App}
In this appendix, we review some sheaf theory that are used in \S~\ref{Lecture V}. We assume that $k$ is an algebraically closed field. Let $\ell$ be a prime different from $\cha\ k$. Sheaves will mean $\ell$-adic sheaves. All pushforward and pullback are \emph{derived}. (Ind)schemes are always (ind-)of finite type over $k$. The standard reference is \cite{BBD}.

\subsection{Equivariant category of perverse sheaves.}
\subsubsection{Basic properties.} Recall that for a scheme $X$ over $k$, there is the bounded derived category of constructible $\ell$-adic complexes $D_c^b(X)$. It contains the category of perverse sheaves $\on{P}(X)$ as a full abelian subcategory. If $X$ admits an action of a linear algebraic group $K$ (with the action map denoted by $\on{act}$), it makes sense to define the abelian category of $K$-equivariant perverse sheaves $\on{P}_K(X)$ on $X$. I.e., an object in $\on{P}_K(X)$ is a perverse sheaf $\mF$ on $X$ together with an isomorphism $\theta$ along the two maps 
$$\on{act},\ \pr_2:K\times X\substack{\longrightarrow\\ \longrightarrow} X,$$ satisfying: (i) $e^*\theta=\id$ where $e:X\to K\times X$ is given by the unit of $K$; (ii) a natural cocycle condition on $K\times K\times X$. 
\begin{rmk}
There is a more involved notion of (bounded) equivariant derived category $\on{D}_{K,c}^b(X)$
of (constructible) $\ell$-adic sheaves on $X$. It  cannot be defined in a simple way as above. Instead, one can either define this category as in \cite{BeLu}, or view it as the derived category of $\ell$-adic sheaves on the algebraic stack $[X/K]$ (e.g. cf. \cite{LZ}). In the note, we only occasionally use this concept in a very formal way, i.e. there exists a good Grothendieck's six operation formalism between these triangulated categories (over different schemes). 
\end{rmk}

By definition, there is a forgetful functor $\on{P}_K(X)\to \on{P}(X)$. 
\begin{lem}\label{A:forgetful}
If $K$ is a connected algebraic group, this forgetful functor is fully faithful, with essential image consisting of those perverse sheaves $\mF$ such that $\on{act}^*\mF\cong\pr_2^*\mF$. 
\end{lem}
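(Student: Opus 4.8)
The plan is to deduce both assertions from a single geometric input. Since $K$ is a smooth connected group, the two maps $a:=\on{act}$ and $p:=\pr_2$ from $K\times X$ to $X$ are smooth and surjective with connected fibres isomorphic to $K$: indeed $p$ is a projection, and $a=p\circ\Phi$ for the automorphism $\Phi\colon(g,x)\mapsto(g,gx)$ of $K\times X$, so $a$ has the same fibres as $p$. Writing $d=\dim K$, \cite[Proposition 4.2.5]{BBD} then shows that $a^*[d]$ and $p^*[d]$ are fully faithful exact functors $\on{P}(X)\to\on{P}(K\times X)$; in particular $a^*$ and $p^*$ induce isomorphisms on $\Hom$-groups between perverse sheaves. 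I will use repeatedly that $a\circ e=p\circ e=\id_X$, whence $e^*a^*=e^*p^*=\id$ on $\on{P}(X)$.

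For full faithfulness, only the surjectivity of $\Hom_{\on{P}_K(X)}(\mF,\mG)\to\Hom_{\on{P}(X)}(\mF,\mG)$ needs an argument, injectivity being clear. Given a map $\phi\colon\mF\to\mG$ of underlying perverse sheaves and equivariant structures $\theta_\mF,\theta_\mG$, I would form $\psi:=\theta_\mG^{-1}\circ p^*(\phi)\circ\theta_\mF\colon a^*\mF\to a^*\mG$. Full faithfulness of $a^*$ gives a unique $\phi'$ with $a^*(\phi')=\psi$, and applying $e^*$ together with the unit conditions $e^*\theta_\mF=e^*\theta_\mG=\id$ yields $\phi'=e^*\psi=\phi$. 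Hence $\psi=a^*(\phi)$, which unwinds to the equivariance identity $p^*(\phi)\circ\theta_\mF=\theta_\mG\circ a^*(\phi)$; thus $\phi$ underlies a morphism in $\on{P}_K(X)$. The same argument applied to two equivariant structures on a single perverse sheaf shows the structure is unique whenever it exists.

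For the essential image one inclusion is trivial, so suppose $\mF$ is perverse with some isomorphism $\theta_0\colon a^*\mF\xrightarrow{\sim}p^*\mF$. First I would normalise: setting $u:=e^*\theta_0\in\Aut(\mF)$ and $\theta:=(p^*u)^{-1}\circ\theta_0$ gives $e^*\theta=\id$. The remaining, and main, task is the cocycle condition over $K\times K\times X$. Here I would introduce the three maps $q_1\colon(g,h,x)\mapsto(gh,x)$, $q_2\colon(g,h,x)\mapsto(g,hx)$, $q_3\colon(g,h,x)\mapsto(h,x)$ to $K\times X$, and observe that $a\circ q_1=a\circ q_2=:A$ is the triple action $(g,h,x)\mapsto ghx$ while $p\circ q_1=p\circ q_3=:P$ is $(g,h,x)\mapsto x$, so that $q_1^*\theta$ and $q_3^*\theta\circ q_2^*\theta$ are both morphisms $A^*\mF\to P^*\mF$. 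The key point is that $A$ is again smooth surjective with connected fibres (each isomorphic to $K\times K$), so $A^*$ is faithful. Thus the automorphism $\delta:=(q_3^*\theta\circ q_2^*\theta)^{-1}\circ q_1^*\theta$ of $A^*\mF$ equals $A^*(\delta_0)$ for a unique $\delta_0\in\Aut(\mF)$; restricting along the section $s\colon x\mapsto(e_K,e_K,x)$, for which $q_1\circ s=q_2\circ s=q_3\circ s=e$ and $A\circ s=\id_X$, the normalisation $e^*\theta=\id$ forces $\delta_0=s^*\delta=\id$, hence $\delta=\id$. This is precisely the cocycle identity, so $(\mF,\theta)$ is $K$-equivariant. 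The only real subtlety is keeping track of the source and target of the various pulled-back copies of $\theta$; the faithfulness of $A^*$ and the vanishing supplied by the unit section then do all the work.
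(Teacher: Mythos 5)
Your proof is correct and, for the full-faithfulness part, follows essentially the same route as the paper: both arguments reduce the equivariance identity for a morphism to a check after pulling back along the unit section $e$, using that pullback along a smooth surjective map with connected fibres (you use $\on{act}$, the paper uses $\pr_2$) is fully faithful on perverse sheaves by \cite[Proposition 4.2.5]{BBD}. The essential-image statement is left as an exercise in the paper, and your completion of it — normalising $\theta_0$ so that $e^*\theta=\id$ and then deducing the cocycle identity from the faithfulness of $A^*$ for the triple action map together with restriction along $x\mapsto(e_K,e_K,x)$ — is the intended argument and is carried out correctly.
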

The significance of this lemma is that being $K$-equivariant is a property rather than an additional structure of a perverse sheaf on $X$. Note that the statement is false for disconnected groups. Indeed, if $K$ is finite, $\on{P}_K(\on{pt})$ is equivalent to the category of finite dimensional representations of $K$. The statement is also false for the equivariant derived category.
\begin{proof}We sketch the proof of the first statement and leave the second statement as an exercise. The functor is clearly faithful. Let $(\mF_1,\theta_1)$ and $(\mF_2,\theta_2)$ be two $K$-equivariant perverse sheaves and $\varphi: \mF_1\to \mF_2$ be a morphism, we need to show that it automatically intertwines with $\theta_i$, i.e. we need to show that
\begin{equation}\label{A:comp}
\theta_2^{-1}\circ\on{act}^*(\varphi)\circ\theta_1=\pr_2^*(\varphi). 
\end{equation}
Note that since $K$ is connected, $\pr^*[\dim K]: \on{P}(X)\to \on{P}(K\times X)$ is fully faithful by \cite[Proposition 4.2.5]{BBD}. Then to check \eqref{A:comp}, it is enough to show $e^*(\theta_2^{-1}\circ\on{act}^*(\varphi)\circ\theta_1)=e^*(\pr_2^*(\varphi))$, which then is tautological.
\end{proof}

We will make use the following two properties of the equivariant category. 
\begin{lem}\label{A:unip}
Let $K_1\subset K$ be a closed normal subgroup. 
\begin{enumerate}
\item If the action of $K_1$ on $X$ is free and $[X/K_1]$ is represented by an algebraic space $\bar X$, then the pull back along $q:X\to \bar X$ induces an equivalence of categories 
\begin{equation}\label{equivariance free action}
q^*[\dim K_1]:\on{P}_{K/K_1}(\bar X)\simeq \on{P}_K(X)
\end{equation}

\item Assume that $K_1$ is connected. If the action of $K_1$ on $X$ is trivial, then the forgetful functor
\begin{equation}\label{unipotence}
\on{P}_K(X)\to \on{P}_{K/K_1}(X)
\end{equation}
is an equivalence of categories.
\end{enumerate}
\end{lem}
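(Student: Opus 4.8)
The plan is to treat both statements as instances of descent along a $K_1$-torsor, the only difference being which torsor one descends along; the connectedness hypotheses enter through the single fact, already invoked for Lemma~\ref{A:forgetful}, that for a smooth surjective morphism $f$ of relative dimension $d$ with geometrically connected fibres the functor $f^*[d]$ is fully faithful on perverse sheaves (\cite[Proposition 4.2.5]{BBD}). Throughout I use the naive definition of equivariance recalled above: an object of $\on{P}_K(X)$ is a perverse sheaf $\mF$ together with an isomorphism $\theta\colon \on{act}^*\mF\cong \pr_2^*\mF$ on $K\times X$ subject to the unit condition on $X$ and the cocycle condition on $K\times K\times X$.

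For (1), the first step is to observe that the free action makes $q\colon X\to\bar X$ a $K_1$-torsor, hence smooth and surjective of relative dimension $d=\dim K_1$, and that the map $(g,x)\mapsto(gx,x)$ is an isomorphism $K_1\times X\cong X\times_{\bar X}X$. Under this isomorphism the two structure maps $\on{act},\pr_2$ become the two projections of the \v{C}ech nerve of $q$, so a $K_1$-equivariant structure $\theta$ on $\mF$ is \emph{literally} a descent datum for $\mF$ along $q$. Smooth descent for the abelian category of perverse sheaves then yields $q^*[d]\colon \on{P}(\bar X)\simeq \on{P}_{K_1}(X)$. The second step is to layer in the residual group: because $K_1$ is normal the $K$-action on $X$ descends to an action of $K/K_1$ on $\bar X$, and one checks that a full $K$-equivariant structure on $\mF$ is the same as the combined data of a $K_1$-equivariant structure (equivalently, via the first step, a descent $\bar\mF$ on $\bar X$) together with a compatible $K/K_1$-equivariant structure on $\bar\mF$. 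This bookkeeping---matching the single cocycle on $K\times K\times X$ with the pair of cocycles for $K_1$ and for $K/K_1$---upgrades the first equivalence to $q^*[d]\colon \on{P}_{K/K_1}(\bar X)\simeq \on{P}_K(X)$.

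For (2), I would first note that triviality of the $K_1$-action together with normality forces the $K$-action on $X$ to factor through $K/K_1$ (for $k\in K_1$ one has $\on{act}(gk,x)=\on{act}(g,\on{act}(k,x))=\on{act}(g,x)$), so that $X$ is genuinely a $K/K_1$-space and $\on{P}_{K/K_1}(X)$ is defined. The relevant torsor is now $q_K\times\id\colon K\times X\to (K/K_1)\times X$, which is a $K_1$-torsor; since $K_1$ is connected its fibres are connected, so $(q_K\times\id)^*[\dim K_1]$ is fully faithful on perverse sheaves. Because the action factors through $K/K_1$, both $\on{act}^*\mF$ and $\pr_2^*\mF$ are pullbacks along $q_K\times\id$ of the corresponding sheaves on $(K/K_1)\times X$; hence $\theta$ descends uniquely to an isomorphism $\bar\theta$ on $(K/K_1)\times X$, and the unit and cocycle conditions for $\bar\theta$ follow from those for $\theta$ by the same full faithfulness (applied over $X$ and over $(K/K_1)\times(K/K_1)\times X$, using that $K_1$ and $K_1\times K_1$ are connected). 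Sending $(\mF,\theta)\mapsto(\mF,\bar\theta)$ is exactly the forgetful functor of the lemma, and its quasi-inverse is the inflation functor pulling an equivariant structure back along $q_K$.

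I expect the main obstacle to be the descent step rather than the connectedness manipulations, which are formal. Concretely, part (1) relies on smooth descent holding at the level of the \emph{abelian} category $\on{P}(\bar X)$ of perverse sheaves and on the clean identification of $K_1$-equivariance with a descent datum; once this is in place, the passage to the residual group and all of part (2) reduce to repeated application of \cite[Proposition 4.2.5]{BBD}. It is worth recording why connectedness is essential: the failure of Lemma~\ref{A:forgetful} for disconnected groups (e.g.\ $\on{P}_K(\on{pt})\simeq\Rep(K)$ for finite $K$) is precisely the failure of $\bar\theta$, and of its coherence data, to descend, so the hypothesis that $K_1$ be connected in (2) cannot be dropped.
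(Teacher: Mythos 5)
Your argument is correct and is essentially the paper's: part (1) is descent of perverse sheaves along the $K_1$-torsor $q$ (the paper phrases this by trivialising the torsor \'etale-locally, so that $X=\bar X\times K_1$, while you keep the global \v{C}ech-nerve descent datum --- the same argument), and part (2) rests on full faithfulness of $f^*[\dim K_1]$ for smooth surjections with connected fibres, which is exactly the input to Lemma~\ref{A:forgetful} that the paper invokes. The one step you rightly single out --- descent of the abelian category of perverse sheaves along a smooth surjection --- is standard (\cite[Proposition 4.2.5]{BBD} plus \'etale-local sections of $q$) and is precisely where the paper's reduction to the trivial torsor does the work, so there is no gap.
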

\begin{proof}By the \'etale descent of perverse sheaves, we can assume that $X=\bar X\times K_1$. Then (1) follows easily.
 (2) follows from Lemma \ref{A:forgetful}.
\end{proof}
\begin{rmk}\label{equivariant descent}
The equivalence \eqref{equivariance free action} continues to hold if one replaces $\on{P}_{K/K_1}$ by the equivariant derived categories. But it is less trivial (see \cite[\S\ 2.6.2]{BeLu}).
\end{rmk}

\subsubsection{Twisted external product.}\label{A:twist product}
Now let $p:E\to X$ be a $K$-torsor and $Y$ be a scheme with an action of $K$. Let $\mF\in\on{P}(X)$ and $\mG\in \on{P}_K(Y)$, then one can form the twisted external product $\mF\tilde\boxtimes\mG$ as a perverse sheaf on $X\tilde\times Y$ as follows. We have the perverse sheaf $p^*\mF[\dim K]\boxtimes \mG$ on $E\times Y$ which is $K$-equivariant by construction. Then by Lemma \ref{A:unip} (i), it descends to a unique (up to a unique isomorphism) perverse sheaf on $X\tilde\times Y$, which is our $\mF\tilde\boxtimes\mG$. Note that by Remark \ref{equivariant descent}, one can actually define the external twisted product for $\ell$-adic complexes.

\subsubsection{Equivariant cohomology.}\label{A:char class}
Let $K$ be a linear algebraic group over $k$ and let $R_{K}=\on{H}^*(\bB K)$ be the cohomology of the classifying stack of $K$. It is a finitely generated augmented commutative $\Ql$-algebra isomorphic to a polynomial algebra. Concretely, it can be realised as follows. Let  $\{E_n\to B_n\}$ denote a sequence of $K$-torsors over $\{B_n\}$, which approximates of the classifying space of $K$. E.g. we can embed $K$ into some $\GL_r$ such that $\GL_r/K$ is quasi-affine. For $n$ large, let $E_n:=S_{n,r}$ be the Stiefel variety, i.e. the tautological $\GL_r$-torsor over $\Gr(r,n)$. Then $B_n:=E_n/K$ is represented by a scheme, $B_n\subset B_{n+1}$ is a closed embedding, and $\on{H}^*(\underrightarrow\lim_n B_n)= \on{H}^*(\bB K)$. 
For example, if $K=T$ is a torus, then there is a canonical isomorphism $\Spec R_T\cong \Lie T$. If $K$ is unipotent, then $R_K=\Ql$.

Let $\mF\in \on{P}_K(X)$ (or more generally $\mF\in\on{D}^b_{K,c}(X)$). It makes sense to talk about the $K$-equivariant cohomology $\on{H}_K^*(X,\mF)$. It can be either defined as the cohomology of $\mF$ as a sheaf on the quotient stack $[X/K]$, or more concretely
$$\on{H}_K^*(X,\mF):=\on{H}^*(\underrightarrow\lim_n (B_n\tilde\times X), \Ql\tilde\boxtimes \mF).$$
From the construction, $\on{H}^*_K(X,\mF)$ is a module over $\on{H}^*(\underrightarrow\lim_n B_n)=R_{K}$. 

It follows from the definition that if $K_1\subset K$ acts freely on $X$ with $\bar X$ the quotient as in Lemma \ref{A:unip}, then
\begin{equation}\label{coh free action}
\on{H}^*_K(X,q^*\mF)=\on{H}^*_{K/K_1}(\bar X,\mF).
\end{equation}
On the other hand, if $K_1$ is unipotent and acts trivially on $X$, then
\begin{equation}\label{coh unip}
\on{H}^*_K(X,\mF)=\on{H}^*_{K/K_1}(X,\mF).
\end{equation}

Now assume that $K$ is a connected linear algebraic group. Let $\mF\in\on{P}_K(X)$. The Leray spectral sequence associated to the projection $B_n\tilde\times X\to B_n$ induces a spectral sequence with the $E_2$-term $\on{H}^*(\bB K)\otimes \on{H}^*(X,\mF)$ and the abutment $\on{H}^*_K(X,\mF)$. Following \cite{GKM}, we say that $\mF$ is equivariantly formal if the spectral sequence degenerates at $E_2$-term. 
\begin{thm}
If $X$ is proper, then the intersection cohomology sheaf $X$, viewed as a $K$-equivariant perverse sheaf on $X$, is equivariantly formal. 
\end{thm}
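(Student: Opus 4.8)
The plan is to reduce the assertion to a statement about weights and then to kill the higher differentials for weight reasons, following \cite{GKM} but working $\ell$-adically. Recall that equivariant formality of $\mF$ concerns the Leray spectral sequence of the fibration $\pi_n:B_n\tilde\times X\to B_n$ with fiber $X$, whose $E_2$-page is $\on{H}^p(B_n)\otimes\on{H}^q(X,\mF)$ and whose abutment is $\on{H}^{p+q}(B_n\tilde\times X,\Ql\tilde\boxtimes\mF)$; letting $n\to\infty$ recovers the spectral sequence with $E_2$-term $R_K\otimes\on{H}^*(X,\mF)$ abutting to $\on{H}^*_K(X,\mF)$. The identification of the $E_2$-page already uses that $K$ is connected: the higher direct image $R^q\pi_{n*}(\Ql\tilde\boxtimes\mF)$ is the local system on $B_n$ with fiber $\on{H}^q(X,\mF)$ and monodromy given by the $K$-action on $\on{H}^q(X,\mF)$, which is trivial since a connected group acts trivially on cohomology. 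Thus $E_2^{p,q}=\on{H}^p(B_n)\otimes\on{H}^q(X,\mF)$, and it suffices to show that every differential $d_r$ (for $r\ge 2$) vanishes, for each $n$ in a stable range.

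The key inputs are three purity statements. First, by Gabber's purity theorem the (normalized) intersection cohomology complex $\mF=\IC_X$ is pure of weight $0$. Second, since $X$ is proper, proper pushforward along $X\to\on{pt}$ preserves purity, so $\on{H}^q(X,\mF)$ is pure of weight $q$. Third, $\on{H}^p(B_n)$ is of Tate type and pure of weight $p$ in the stable range: indeed $R_K=\on{H}^*(\bB K)$ is a polynomial algebra on generators in even degrees of Tate type (this reduces to the reductive quotient of $K$, the unipotent radical contributing trivially as in \eqref{coh unip}), and $B_n$ approximates $\bB K$. To make these weight statements available over an arbitrary algebraically closed $k$, I would first spread out $X$, $K$, the action, and $\mF$ over a finitely generated subring and specialize to a finite field $\bF_q$, so that Frobenius weights are defined and the entire spectral sequence becomes one of mixed $\Ql$-sheaves (equivalently, of Galois modules), hence Frobenius-equivariant.

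With weights in hand the degeneration is immediate: the term $E_2^{p,q}$ is pure of weight $p+q$, while $d_r:E_r^{p,q}\to E_r^{p+r,q-r+1}$ lands in a subquotient of weight $(p+r)+(q-r+1)=p+q+1$. A morphism of mixed sheaves is strict for the weight filtration, so a map between pure objects of distinct weights vanishes; therefore $d_r=0$ for all $r\ge 2$. Passing to the limit $n\to\infty$ gives degeneration of the limiting spectral sequence at the $E_2$-term, which is exactly equivariant formality of $\IC_X$.

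The main obstacle is the \emph{passage to weights over a general base field}: the clean statement that $d_r$ shifts weight by $1$ only acquires content once a genuine weight formalism is in place, so the substance of the argument is the spreading-out and specialization to $\bF_q$, together with the verification that the Leray spectral sequence of $\pi_n$ is compatible with the mixed structure, so that its differentials are morphisms of pure sheaves. The remaining ingredients—triviality of the monodromy from connectedness of $K$, Gabber purity of $\IC_X$, preservation of purity under proper pushforward, and the Tate-type computation of $\on{H}^*(\bB K)$—are standard and can be cited.
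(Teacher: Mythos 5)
Your argument is correct and is precisely the weight-theoretic proof the paper has in mind: the text only cites \cite[Theorem 14.1]{GKM} and remarks that ``the proof relies on the theory of weights,'' and your write-up (purity of $\IC_X$, purity of $\on{H}^*(X,\IC_X)$ by properness, Tate-type purity of $\on{H}^*(B_n)$, spreading out to $\bF_q$, and vanishing of the $d_r$ because they shift weight by $1$) is exactly that argument with the details filled in. Nothing to change.
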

The proof relies on the theory of ``weights''. See \cite[Theorem 14.1]{GKM}. In fact, the same argument applies to semi-simple pure perverse sheaves of geometric origin.

For an equivariantly formal complex $\mF$, there is a non-canonical isomorphism $$\on{H}^*_K(X,\mF)\cong R_{K}\otimes\on{H}^*(X,\mF).$$ In particular, in this case $\on{H}^*_K(X,\mF)$ is a free $R_{K}$-module of rank equal to the dimension of $\on{H}^*(X,\mF)$. Canonically, there is an isomorphism
\begin{equation}\label{de-equiv}
\on{H}^*(X,\mF)\cong \on{H}^*_K(X,\mF)\otimes_{R_{K}}\Ql,
\end{equation}
where $R_{K}\to\Ql$ is the augmentation map. In addition, one has the equivariant K\"unneth formula. Assume that both $X$ and $Y$ are proper and $\mF\in\on{P}_K(X)$ and $\mG\in\on{P}_K(Y)$ are semisimple pure perverse sheaves. Then there is the canonical isomorphism
\begin{equation}\label{equiv Kunneth}
\on{H}_K^*(X,\mF)\otimes_{R_{K}}\on{H}_K^*(Y,\mG)\cong \on{H}^*_K(X\times Y,\mF\boxtimes\mG).
\end{equation}
This follows from the usual K\"unneth formula and the degeneration of the spectral sequence.

Let us also state the localization theorem for equivariant cohomology (see \cite[(6.2)]{GKM} for the topological version). First, for $\mF\in \on{D}_{T,c}^b(X)$, one can define the $T$-equivariant cohomology with compact support as 
$$\on{H}_{T,c}^*(X,\mF):=\on{H}_c^*(\underrightarrow\lim_n (B_n\tilde\times X), \Ql\tilde\boxtimes \mF).$$

\begin{thm}\label{equiv localization}
Let $K=T$ be a torus acting on $X$ and let $i:X^0\subset X$ be the subscheme of $T$-fixed point. Then for $\mF\in\on{D}_{T,c}^b(X)$, the natural maps
\[\on{H}^*_T(X^0,i^!\mF)\to \on{H}^*_T(X,\mF),\quad \on{H}^*_{T,c}(X,\mF)\to \on{H}^*_{T,c}(X^0,i^*\mF)\]
become isomorphisms after localising to the generic point of $R_{T}$.
\end{thm}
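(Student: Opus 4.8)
The statement is the $\ell$-adic equivariant localisation theorem, so the plan is to follow the classical Atiyah--Bott--Borel argument as in \cite[\S 6]{GKM}, the only substantive point being to run the dévissage in the algebraic (torus-action) setting. Write $U=X\setminus X^0$ with open embedding $j:U\hookrightarrow X$ and closed embedding $i:X^0\hookrightarrow X$. The two standard triangles
\[
i_*i^!\mF\to \mF\to Rj_*j^*\mF\xrightarrow{+1},\qquad
j_!j^*\mF\to \mF\to i_*i^*\mF\xrightarrow{+1}
\]
yield, after applying equivariant (resp. compactly supported equivariant) global sections, long exact sequences
\[
\cdots\to \on{H}^*_T(X^0,i^!\mF)\to \on{H}^*_T(X,\mF)\to \on{H}^*_T(U,j^*\mF)\to\cdots
\]
and
\[
\cdots\to \on{H}^*_{T,c}(U,j^*\mF)\to \on{H}^*_{T,c}(X,\mF)\to \on{H}^*_{T,c}(X^0,i^*\mF)\to\cdots.
\]
Since $R_T$ is a polynomial ring, hence a domain, localising at its generic point means tensoring with $Q=\on{Frac}(R_T)$, which is an exact operation and commutes with these long exact sequences. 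Therefore the theorem reduces to the single vanishing assertion
\[
\on{H}^*_T(U,j^*\mF)\otimes_{R_T}Q=0,\qquad \on{H}^*_{T,c}(U,j^*\mF)\otimes_{R_T}Q=0,
\]
i.e. to showing that the (compactly supported) equivariant cohomology of a fixed-point-free $T$-scheme is $R_T$-torsion.

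To prove this I would stratify $U$ by stabiliser type. For each subtorus $S\subseteq T$ let $X_S\subset U$ be the locally closed, $T$-invariant locus of points whose stabiliser has identity component exactly $S$; by the standard theory of torus actions on a finite-type $k$-scheme there are finitely many such nonempty strata, $S$ acts trivially on $X_S$, and $T/S$ acts on $X_S$ with finite stabilisers. Since $U$ contains no $T$-fixed point, every stratum has $S\subsetneq T$ proper. Choosing a finite filtration of $U$ by $T$-invariant closed subsets whose successive open differences are the strata $X_S$, and restricting $\mF$ to each, the triangles above produce a dévissage reducing the vanishing on $U$ to the vanishing on each individual stratum. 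Thus it suffices to show, for every proper subtorus $S\subsetneq T$ and every $\mG\in \on{D}^b_{T,c}(X_S)$, that $\on{H}^*_T(X_S,\mG)\otimes_{R_T}Q=0$, and similarly with compact supports.

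The heart of the matter, and the step I expect to be the main obstacle, is this stratum-wise vanishing, which I would establish by a support argument in $\Spec R_T\cong \Lie T$. Because $S$ acts trivially on $X_S$ and an isogeny $S\times (T/S)\to T$ induces an isomorphism on $\Ql$-cohomology of classifying stacks, there is an identification $R_T\cong R_S\otimes_{\Ql}R_{T/S}$ and a Künneth-type factorisation
\[
\on{H}^*_T(X_S,\mG)\cong \on{H}^*_{T/S}(X_S,\mG)\otimes_{\Ql}R_S,
\]
with $R_{T/S}$ acting through the first factor. Now $T/S$ acts on $X_S$ with finite stabilisers, so the quotient $[X_S/(T/S)]$ is a Deligne--Mumford-type quotient and $\on{H}^*_{T/S}(X_S,\mG)$ agrees rationally with the finite-dimensional cohomology of this quotient; consequently it is a finitely generated $R_{T/S}$-module supported at the origin of $\Lie(T/S)$, i.e. $R_{T/S}$-torsion. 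Hence the support of $\on{H}^*_T(X_S,\mG)$ in $\Lie T$ is contained in $\Lie S\times\{0\}=\Lie S$, a proper linear subspace that avoids the generic point; localising at $(0)$ therefore kills it (using that $\on{H}^*_T(X_S,\mG)$ is finitely generated over the Noetherian ring $R_T$, which follows from the finiteness of $\on{H}^*(X_S,\mG)$ and the approximation spectral sequence). The identical argument applies to $\on{H}^*_{T,c}$. Feeding this back through the dévissage and then through the two long exact sequences gives the isomorphisms in the statement, the targets $\on{H}^*_T(X^0,i^!\mF)$ and $\on{H}^*_{T,c}(X^0,i^*\mF)$ being free $R_T$-modules since $T$ acts trivially on $X^0$.
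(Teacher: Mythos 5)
Your argument is correct and is essentially the argument the paper sketches: both proofs use the open--closed distinguished triangle(s) to reduce the theorem to the assertion that the ($T$-equivariant, possibly compactly supported) cohomology of the fixed-point-free locus is a torsion $R_{T}$-module, and both prove that assertion by a d\'evissage over finitely many $T$-invariant pieces on which some subtorus acts with small stabilisers. Where you differ is in the details of that d\'evissage, and your version is the more careful one: the paper covers $U$ by invariant pieces on which a non-trivial subtorus acts \emph{freely} and applies \eqref{coh free action}, but such a subtorus need not exist (e.g.\ for $\bG_m$ acting on $\bG_m$ by $t\cdot x=t^2x$ every stabiliser is $\mu_2$), whereas your stratification by the identity component of the stabiliser, combined with the observation that finite stabilisers are invisible to $\Ql$-cohomology, is correct as stated; handling the compactly supported statement via the second triangle rather than by Verdier duality is equally fine. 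One small correction: your closing claim that $\on{H}^*_T(X^0,i^!\mF)$ and $\on{H}^*_{T,c}(X^0,i^*\mF)$ are free $R_{T}$-modules is not needed for the theorem and is false for a general object of $\on{D}^b_{T,c}(X^0)$ --- already on $\bB\bG_m$ the cone of a non-zero map $\Ql[-2]\to\Ql$ has equivariant cohomology equal to $\Ql$, which is torsion; freeness of this kind requires extra hypotheses such as equivariant formality, as in the theorem quoted from \cite{GKM} in the appendix.
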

\begin{proof}We sketch the proof of the first isomorphism. The second then follows by the Verdier duality. Let $j: X-X^0\to X$ be the open complement. From the distinguished triangle $i_!i^!\mF\to \mF\to j_*j^*\mF\to$, it reduces to show that if $X^0=\emptyset$, then $\on{H}^*_T(X,\mF)$ is a torsion $R_T$-module. We can find a finite cover of $X=\cup X_i$ such that on each $X_i$ is $T$-invariant and there is a non-trivial subtorus $T_i\subset T$ acting on $X_i$ freely. Then it follows from \eqref{coh free action} that the cohomology of $\on{H}^*_T(X_i,\mF|_{X_i})$ is a torsion $R_T$-module. Therefore $\on{H}^*_T(X,\mF)$ is also a torsion $R_T$-module by a spectral sequence argument.
\end{proof}

\subsubsection{Pro-algebraic groups acting on ind-schemes.}\label{A:ind}
Let $X$ be an ind-scheme. We can define the category of perverse sheaves on $X$ as $\on{P}(X)=\underrightarrow\lim\on{P}(X_i)$, where $X=\underrightarrow\lim X_i$ is a presentation of $X$ as closed subschemes of finite type over $k$. This is independent of the choice of the presentation.

Now assume that $X$ admits an action by a pro-algebraic group $K$. We assume that 
\[(S)\quad\quad \left.\begin{array}{c}\mbox{the stabiliser of each geometric point only has} \\ \mbox{ finitely many connected components.}\end{array}\right.\]
Then it makes sense to define the category of $K$-equivariant perverse sheaves on $X$ as follows.

First, there exists a presentation $X=\underrightarrow\lim X_i$, such that each $X_i$ is a $K$-stable closed subscheme of finite type over $k$, and the action of $K$ on $X_i$ factors through an algebraic quotient $K_i$. Then we can choose $K_i$ to be an algebraic quotient of $K$ by a connected normal subgroup. In addition, if $X_i\subset X_j$ is a closed subscheme, we can arrange $K_i$ to be a quotient of $K_j$.

Then we define $\on{P}_K(X_i):=\on{P}_{K_i}(X_i)$. By  Lemma \ref{A:unip} (ii), we see that $\on{P}_K(X)$ is independent of the choice of $K_i$ up to a canonical equivalence.  Therefore, we have a fully faithful functor
\[\on{P}_K(X_i)=\on{P}_{K_i}(X_i)\simeq \on{P}_{K_j}(X_i)\to \on{P}_{K_j}(X_j)=\on{P}_K(X_j).\]  
Finally, we define 
\[\on{P}_K(X)=\underrightarrow\lim \on{P}_K(X_i).\]

Now assume that $E\to X$ is a $K$-torsor over an ind-scheme ind-of finite type, and $Y$ is a $K$-ind-scheme satisfying (S). Let $\mF\in\on{P}(X)$ and $\mG\in\on{P}_K(Y)$, we can similarly define the twisted product $\mF\tilde\boxtimes\mG$ as an object in $\on{P}(X\tilde\times Y)$. Namely, $\mF$ is supported on some $X_i$ and $\mG$ is supported on some $Y_j$. The action of $K$ on $Y_j$ factors through $K_j$. Then $E_{ij}:=E|_{X_i}\times^KK_i\to X_i$ is a $K_i$-torsor and 
$$X_i\tilde\times Y_j= E|_{X_i}\times^KY_j= E_{ij}\times^{K_j}Y_j$$ 
is a closed subscheme of $X\tilde\times Y$. Then the construction in \S\ \ref{A:twist product} gives $\mF\tilde\boxtimes\mG$ as an object in $\on{P}(X_i\tilde\times Y_j)\subset \on{P}(X\tilde\times Y)$.

Finally, let $K^u$ be the pro-unipotent radical of $K$ and assume that $K/K^u$ is algebraic. Let $X$ be a $K$-ind-scheme satisfying (S). Then for $\mF\in \on{P}_K(X)$, we have the well-defined $K$-equivariant cohomology $H_K^*(X,\mF)$, which is a module over $R_{K}=\on{H}^*(\bB (K/K^u))$. All the properties of the equivariant cohomology discussed in \S\ \ref{A:char class} extend to this setting.

\subsection{Universally local acyclicity.}\label{ULA sheaf}

Let $S$ be a scheme and $s$ be a geometric point of $S$, we denote by $S_{(s)}$ the strict Henselisation of $S$ at $s$.  We will formally write $t\to s$ if $t$ is a geometric point of $S_{(s)}$, and call it the specialisation map.

We recall the following definition, as in \cite[]{SGA4.5}. 
\begin{dfn}Let $f:X\to S$ be a morphism of schemes of finite type over $k$. An $\ell$-adic complex $\mF$ on $X$ is called locally acyclic with respect to $f$ if for every geometric point $x\in X$, and every geometric point $t\in S_{(f(x))}$, the natural map $R\Gamma(X_{(x)},\mF)\to R\Gamma( (X_{(x)})_t,\mF)$ is an isomorphism, where $(X_{(x)})_t=X_{(x)}\times_{S_{(f(x))}}t$. It is called universally locally acyclic (ULA) if it is locally acyclic after arbitrary base change $S'\to S$.
\end{dfn}

We can reformulate local acyclicity as follows.  Let $f:X\to S$ be a morphism as above and $\mF$ be an $\ell$-adic complex on $X$. Let $s$ be a geometric point of $S$ and $t$ be a geometric point of $S_{(s)}$. Let $j_t: X\times_{S} t\to X$ be the natural map. We write
\[\Psi_{t\to s}(\mF):= ((j_t)_*j_t^*\mF)|_{X_s} \]
and think it as the nearby cycles along the specialisation map $t\to s$ (recall that by our convention $(j_t)_*$ is derived). Indeed, if $(S,s,\eta)$ is a strictly local Henselian trait (i.e. $S$ is the spectrum of a local strictly Henselian DVR with $s$ and $\eta$ its closed and generic points)  and $t$ is a geometric point over $\eta$, $\Psi_{t\to s}(\mF)$ is by definition the 1-dimensional nearby cycles $\Psi(\mF_\eta)$ of $\mF$.

\begin{lem}\label{LA reform}
The complex $\mF$ is locally acyclic with respect to $f$ if and only if for every $s$ and $t$ as above, the natural map 
\begin{equation}\label{sp}
\mF|_{X_s}\to \Psi_{t\to s}(\mF)
\end{equation}
is an isomorphism. 
\end{lem}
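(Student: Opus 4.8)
The plan is to prove Lemma \ref{LA reform} by unwinding both conditions into statements about stalks of the specialisation map \eqref{sp} and recognising that local acyclicity is precisely the assertion that this map is an isomorphism after taking stalks at every geometric point of the special fibre. First I would fix geometric points $s$ of $S$, $t \to s$ in $S_{(s)}$, and $x$ in $X_s$, and compute the stalk of $\Psi_{t\to s}(\mF) = ((j_t)_*j_t^*\mF)|_{X_s}$ at $x$. By the standard description of the stalk of a pushforward in terms of the strict Henselisation, this stalk is $R\Gamma((X_{(x)})_t, \mF)$, where $(X_{(x)})_t = X_{(x)} \times_{S_{(s)}} t = X_{(x)} \times_{S_{(f(x))}} t$ (noting $f(x)$ and $s$ give the same strict Henselisation since $x \in X_s$). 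Meanwhile the stalk of $\mF|_{X_s}$ at $x$ is just $\mF_x = R\Gamma(X_{(x)}, \mF)$, the latter equality because $X_{(x)}$ is strictly local so its cohomology computes the stalk. Under these identifications, the map \eqref{sp} on stalks at $x$ is exactly the cospecialisation map $R\Gamma(X_{(x)}, \mF) \to R\Gamma((X_{(x)})_t, \mF)$ appearing in the definition of local acyclicity.

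With this translation in hand, the proof reduces to a formal equivalence: \eqref{sp} is an isomorphism of complexes on $X_s$ if and only if it is an isomorphism on all stalks, i.e. at every geometric point $x$ of $X_s$, and by the previous paragraph this is precisely the condition that $R\Gamma(X_{(x)}, \mF) \to R\Gamma((X_{(x)})_t, \mF)$ be an isomorphism for all such $x$ and all $t \to s$. Since local acyclicity with respect to $f$ asks exactly for these cospecialisation maps to be isomorphisms for every geometric point $x \in X$ and every $t \in S_{(f(x))}$, and since every geometric point of $X$ lies in some fibre $X_s$, the two conditions match up verbatim once we range over all $s$. Thus I would conclude by observing that the family of conditions indexed by $(s, t, x)$ in the definition and the family indexed by $(s,t)$ with \eqref{sp} checked stalkwise are the same family.

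The main obstacle, and the only genuinely technical point, is the careful identification of the stalk $(\Psi_{t\to s}(\mF))_x$ with $R\Gamma((X_{(x)})_t, \mF)$; this requires commuting the operations of taking the stalk at $x$ (i.e. pulling back to $X_{(x)}$) with the pushforward $(j_t)_*$, which is justified by smooth-local base change for the strict Henselisation together with the fact that strict Henselisation is a filtered limit of \'etale neighbourhoods and that $R\Gamma$ commutes with the relevant limits. One must also be slightly careful that $t$, as a geometric point of $S_{(s)}$, is the same datum as a compatible geometric point over the image of $f(x)$, which holds because $x \in X_s$ forces $f(x)$ to specialise to $s$ and hence $S_{(f(x))} \cong S_{(s)}$. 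Once these base-change and identification points are settled, the remainder is the purely formal ``isomorphism $\Leftrightarrow$ isomorphism on all stalks'' principle for $\ell$-adic complexes, together with the bookkeeping that the two indexings of conditions coincide.
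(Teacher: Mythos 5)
Your proof is correct and follows essentially the same route as the paper, which simply observes that local acyclicity is exactly the statement that \eqref{sp} induces an isomorphism on stalks at every geometric point $x$ of $X_s$. Your identification of the stalk of $\Psi_{t\to s}(\mF)$ at $x$ with $R\Gamma((X_{(x)})_t,\mF)$ via the filtered-limit description of $X_{(x)}$ is the (standard) point the paper leaves implicit, and the rest is the same stalkwise bookkeeping.
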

\begin{proof}Indeed, local acyclicity says the above map induces the isomorphism of stalks at every geometric point $x$ of $X_s$.
\end{proof}
\begin{rmk}\label{LA explanation}
Note that if $(S,s,\eta)$ is a trait, Lemma \ref{LA reform} says that $\mF$ is locally acyclic with respect to $f$ if and only if $\mF_s\to \Psi(\mF_\eta)$ is an isomorphism, or equivalently the vanishing cycles $\Phi(\mF)$ vanish. 
Therefore, for a higher dimensional base $S$, roughly speaking a sheaf $\mF$ is ULA with respect to $f: X\to S$ if there are no vanishing cycles along any direction on $S$. This can be made precise by introducing  nearby cycles and vanishing cycles functors over higher dimensional base (cf. \cite[Example 1.7(b)]{Il3}). 
\end{rmk}

Here are some basic properties of ULA sheaves that we need.

\begin{thm}\label{ULA prop}~
\begin{enumerate}
\item Let $S$ be a smooth variety over $k$. Then a complex $\mF$ on $S$ is ULA with respect to the identity map if and only if each cohomology sheaf $\mH^i(\mF)$ is a local system on $S$.
 
\item If $f:X\to Y$ is a proper morphism over $S$, and assume that $\mF$ is ULA with respect to $X\to S$. Then $f_*\mF$ is ULA with respect to $S$.

\item If $f:X\to Y$ is a smooth morphism over $S$. Then $f^*\mF$ is ULA with respect to $X\to S$ if and only if $\mF$ is ULA with respect to $Y\to S$.

\item Let $f_i:X_i\to S, i=1,2$ be two morphisms and $\mF_i$ a complex on $X_i$ ULA with respect to $f_i$. Then $\mF_1\boxtimes_S\mF_2$ is ULA with respect to $f_1\times_Sf_2:X_1\times_SX_2\to S$.

\item[(4)'] Let $f_i:X_i\to S_i, i=1,2$ be two morphisms and let $\mF_i$ be a complex on $X_i$ ULA with respect to $f_i$. Then $\mF_1\boxtimes\mF_2$ is ULA with respect to $f_1\times f_2$.
\end{enumerate}
\end{thm}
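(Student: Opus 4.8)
The plan is to deduce (4)$'$ from the same-base statement (4) of Theorem \ref{ULA prop} by taking $S=S_1\times S_2$ as the common base and exhibiting $\mF_1\boxtimes\mF_2$ as an external product over $S$ of two sheaves that are each ULA over $S$. The whole argument is formal once (4) is granted; the only nontrivial external input is a base-change stability property of ULA, which I isolate first.

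First I would record the (essentially tautological) fact that the ULA condition is stable under base change: if $\mF$ is ULA with respect to $f\colon X\to S$ and $u\colon S'\to S$ is arbitrary, then the pullback $\mF_{S'}$ on $X_{S'}=X\times_S S'$ is ULA with respect to $f_{S'}\colon X_{S'}\to S'$. Indeed, to test ULA of $\mF_{S'}$ one base-changes further along any $v\colon S''\to S'$, and $(\mF_{S'})_{S''}=\mF_{S''}$ for the composite $S''\xrightarrow{v}S'\xrightarrow{u}S$; local acyclicity of the latter holds because $\mF$ is \emph{universally} locally acyclic over $S$. Applying this with the projection $\pr_1\colon S_1\times S_2\to S_1$, whose base change turns $f_1\colon X_1\to S_1$ into $f_1\times\id\colon X_1\times S_2\to S_1\times S_2$ and pulls $\mF_1$ back to $\mF_1\boxtimes\Ql$, shows that $\mF_1\boxtimes\Ql$ is ULA with respect to $f_1\times\id$. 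Symmetrically, $\Ql\boxtimes\mF_2$ is ULA with respect to $\id\times f_2\colon S_1\times X_2\to S_1\times S_2$.

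Next I would carry out the elementary identification of fiber products
\[
(X_1\times S_2)\times_{S_1\times S_2}(S_1\times X_2)\cong X_1\times X_2,
\]
under which both projections from the fiber product factor through the projections to $X_1$ and $X_2$; hence the external product over $S=S_1\times S_2$ of $\mF_1\boxtimes\Ql$ and $\Ql\boxtimes\mF_2$ becomes $\pr_{X_1}^*\mF_1\otimes\pr_{X_2}^*\mF_2=\mF_1\boxtimes\mF_2$. Applying Theorem \ref{ULA prop} (4) over the base $S_1\times S_2$ to the two ULA sheaves produced above then yields that $\mF_1\boxtimes\mF_2$ is ULA with respect to $f_1\times f_2$, as required.

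I expect the main obstacle to lie not in (4)$'$ itself but in (4), on which this deduction rests: its proof requires a K\"unneth-type compatibility of the nearby-cycles functor $\Psi_{t\to s}$ with products, most cleanly phrased through Lemma \ref{LA reform} together with the behaviour of nearby cycles over a (possibly higher-dimensional) base $S$. Should one instead wish to avoid assuming (4), the same K\"unneth property for $\Psi$ is the crux of a direct proof of (4)$'$, and (4) is then recovered from (4)$'$ by base change along the diagonal $\Delta\colon S\to S\times S$, so that the two statements are in fact equivalent modulo the base-change stability isolated above.
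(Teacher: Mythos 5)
Your deduction of (4)$'$ from (4) is correct and is essentially the paper's own argument: both rest on the identity $(X_1\times X_2)\times_{S_1\times S_2}S\cong(X_1\times_{S_1}S)\times_S(X_2\times_{S_2}S)$ together with base-change stability of ULA, you merely specialising to $S=S_1\times S_2$, which suffices because (4) already returns a universal conclusion. Be aware, though, that the theorem also contains parts (1)--(3), which your proposal leaves untouched; the paper handles (1) by induction on the cohomological amplitude starting from the case of an honest sheaf, and (2) and (3) by the compatibility of $\Psi_{t\to s}$ with proper pushforward and smooth pullback.
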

\begin{proof} 
(1) It follows by definition that an honest sheaf on $S$ is ULA with respect to $\id:S\to S$ if and only if it is a local system. The general case follows by induction on the cohomological amplitude of $\mF$.

(2) and (3) follow from the fact that the functor
$\Psi_{t\to s}$ commutes with the proper push-forward and the smooth pullback.
(4) is \cite[Corollary 2.5]{Il3}. (4)' follows from (4) by noting that for every morphism $S\to S_1\times S_2$, $(X_1\times X_2)\times_{S_1\times S_2}S=(X_1\times_{S_1}S)\times_S(X_2\times_{S_2}S)$.
\end{proof}

Now we discuss a result related to ULA perverse sheaves. 
\begin{thm}\label{ULA perverse}
Let $f: X\to S$ be a morphism of $k$-varieties with $S$ smooth and $D\subset S$ a smooth effective divisor. Let $i: Z=f^{-1}(D)\to X$ be the closed embedding and $j:U=X-Z\to X$ be the open complement. If a complex $\mF$ on $X$ is ULA with respect to $f$ and $\mF|_U$ is perverse, then 
\[\mF\cong j_{!*}(\mF|_U), \quad i^!\mF[1]\cong i^*\mF[-1].\]
In particular, $\mF$ is also perverse. In addition, $i^*\mF[-1]$ is perverse and ULA with respect to $fi$. 
\end{thm}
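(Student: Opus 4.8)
The plan is to reduce the whole statement to the one-dimensional theory of nearby and vanishing cycles, where the $\on{ULA}$ hypothesis becomes the vanishing of vanishing cycles. Every assertion is local on $X$ and may be tested after shrinking $S$ étale-locally around $D$, so I would first use that $D\subset S$ is a smooth divisor in the smooth $S$ to choose, locally, a function $g\colon S\to\bA^1$ with $D=g^{-1}(0)$ and $dg$ nowhere vanishing, hence $g$ smooth. Setting $h=g\circ f$ we have $Z=h^{-1}(0)$ locally. Since local acyclicity is preserved under composition of $f$ with a smooth morphism of the base (smooth morphisms being universally locally acyclic, cf.\ \cite{SGA4.5}), $\mF$ is then $\on{ULA}$ with respect to $h\colon X\to\bA^1$, which puts us in the trait situation of Remark \ref{LA explanation}.

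Next I would record the two basic identifications along $Z$. Writing $\Psi_h$ for the (total) nearby-cycles functor along the Henselian trait at $0\in\bA^1$, Lemma \ref{LA reform} says that $\on{ULA}$ with respect to $h$ is equivalent to the specialisation map $i^*\mF\to\Psi_h\mF$ being an isomorphism, i.e.\ $\Phi_h\mF=0$. To treat $i^!$ I would invoke the stability of $\on{ULA}$ complexes under Verdier duality (\cite{SGA4.5}) together with Gabber's compatibility of ${}^p\Psi_h:=\Psi_h[-1]$ with $\bD$: applying $\bD_Z$ to the specialisation isomorphism for $\bD_X\mF$ yields a cospecialisation isomorphism $i^!\mF\cong\Psi_h\mF[-2]$. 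Combining the two gives
\[
i^!\mF\cong i^*\mF[-2],\qquad\text{equivalently}\qquad i^!\mF[1]\cong i^*\mF[-1],
\]
which is the second displayed assertion (up to a Tate twist, harmless over an algebraically closed field).

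For perversity I would use Gabber's theorem that ${}^p\Psi_h=\Psi_h[-1]$ is perverse $t$-exact. Because $\Psi_h\mF$ depends only on the restriction of $\mF$ to the generic fibre, it is computed from $\mF|_U$ alone; choosing any perverse extension of the perverse sheaf $\mF|_U$ across $Z$ and applying Gabber shows ${}^p\Psi_h\mF$ is perverse on $Z$. Hence $i^*\mF[-1]\cong{}^p\Psi_h\mF$ is perverse — the final perversity claim — and reading off degrees gives $i^*\mF\cong({}^p\Psi_h\mF)[1]\in{}^pD^{\leq-1}(Z)$ and, dually, $i^!\mF\cong({}^p\Psi_h\mF)[-1]\in{}^pD^{\geq1}(Z)$. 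With $j^*\mF=\mF|_U$ perverse, the gluing $t$-structure for $(j^*,i^*,i^!)$ forces $\mF\in{}^pD^{\leq0}(X)\cap{}^pD^{\geq0}(X)$, so $\mF$ is perverse; and the support/cosupport bounds $i^*\mF\in{}^pD^{\leq-1}$, $i^!\mF\in{}^pD^{\geq1}$ are exactly the conditions characterising the intermediate extension, giving $\mF\cong j_{!*}(\mF|_U)$. Finally, the remaining $\on{ULA}$ assertion for $i^*\mF[-1]\cong\Psi_h\mF[-1]$ with respect to $fi\colon Z\to D$ I would deduce from the fact that the nearby-cycles complex of a sheaf $\on{ULA}$ over $S$ is itself $\on{ULA}$ relative to $D$, which holds precisely because $\on{ULA}$ guarantees that $\Psi_h$ commutes with all further base changes along $D$ (cf.\ \cite{Il3}).

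The hard part will be the shift-and-twist bookkeeping in the duality step: pinning down the exact normalisation in Gabber's identity $\bD_Z\circ\Psi_h\cong\Psi_h\circ\bD_X\,[-2]$ so that the codimension-one shift lands correctly, and being careful about the distinction between perversity of $\mF|_U$ on the full-dimensional open $U$ and the relative-dimension shift implicit in feeding generic-fibre data into ${}^p\Psi_h$. A secondary point is checking that the local choices of $g$ produce isomorphisms that are independent of $g$ (up to the canonical specialisation maps) and hence glue; once the support estimates are in hand, the deduction of perversity via the gluing $t$-structure and the identification with $j_{!*}$ are formal.
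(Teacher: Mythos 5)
Your proposal is correct and, after the common first step, diverges from the paper's argument in a genuinely different way. Both proofs begin identically: reduce \'etale-locally to a smooth $g:S\to\bA^1$ with $D=g^{-1}(0)$, compose to make $\mF$ ULA over $\bA^1$, identify $i^*\mF\cong\Psi(j^*\mF)$ via Lemma \ref{LA reform}, and invoke Gabber's theorem (\cite[Corollary 4.5]{Il2}) to conclude that $i^*\mF[-1]$ is perverse. From there the paper stays duality-free: it bounds the perverse amplitude of $\mF$ using $j_!j^*\mF\to\mF\to i_*i^*\mF$, computes ${}^p\mH^r(i^*j_*j^*\mF)$ for all $r$ from the monodromy triangle $i^*j_*j^*\mF\to\Psi^P\stackrel{1-t}{\to}\Psi^P$, and then reads off $i^!\mF[1]\cong i^*\mF[-1]$, the perversity of $\mF$, and the short exact sequence forcing $\mF\cong j_{!*}(\mF|_U)$ from $i_*i^!\mF\to\mF\to j_*j^*\mF$. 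You instead obtain $i^!\mF\cong i^*\mF[-2]$ by dualizing the specialisation isomorphism, which requires knowing that $\bD_X\mF$ is again ULA; over the trait this does follow from Gabber's $\bD\circ\Phi\cong\Phi\circ\bD$ (so $\Phi(\bD_X\mF)\cong\bD(\Phi\mF)=0$), but it is not a statement of \cite{SGA4.5}, so the citation should be to \cite{Il2} rather than to SGA $4\frac{1}{2}$ --- self-duality of the ULA condition over a general base is a much more recent result, and you genuinely need the reduction to the trait before invoking it. Granting that, your recollement finish (support condition $i^*\mF\in{}^pD^{\leq-1}$, cosupport condition $i^!\mF\in{}^pD^{\geq1}$, hence perversity and the standard characterisation of $j_{!*}$) is arguably cleaner than the paper's, at the cost of the shift/twist bookkeeping you flag; the paper's route avoids duality entirely and produces the extra short exact sequence $0\to\mF\to j_*j^*\mF\to{}^p\mH^0j_*j^*\mF\to0$ as a byproduct. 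One simplification: the final assertion that $i^*\mF[-1]$ is ULA with respect to $fi$ needs no nearby-cycles argument at all --- taking $S'=D$ in the definition of universal local acyclicity gives $X\times_SD=Z$ with base-changed sheaf $i^*\mF$, so local acyclicity of $i^*\mF$ over $D$ after any further base change $D'\to D$ is immediate from that of $\mF$ over $S$; this is worth recording, since the paper's own proof is silent on this clause.
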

\begin{proof} The question is local in \'etale topology, so we can assume that there is a \emph{smooth} map $g: S\to \bA^1$ such that $D=g^{-1}(0)$. By \cite[Lemma 2.14]{SGA4.5}, $\mF$ is ULA with respect to $gf: X\to \bA^1$.
Then we reduce to the situation $f:X\to \bA^1$ and $Z=f^{-1}(0)$. According to Remark \ref{LA explanation}, $i^*\mF\to \Psi(j^*\mF)$ is an isomorphism. It follows that
\[i^*\mF[-1]\cong \Psi(j^*\mF)[-1]=:\mC\]
is perverse (\cite[Corollary 4.5]{Il2}). Using the distinguished triangle
\[ j_!j^*\mF\to \mF\to i_*i^*\mF\to,\]
we see that $\mF$ lives in perverse cohomological degree $-1$ and $0$.
In addition,  it follows from the distinguished triangle (see proof of \cite[Lemma 3.11]{SGA4.5})
\[i^*j_*j^*\mF\to \Psi(j^*\mF)^P\stackrel{1-t}{\to} \Psi(j^*\mF)^P\to\]
that the perverse cohomology sheaf ${^p}\mH^r i^*j_*j^*\mF\cong \mC$ for $r=-1,0$ and vanishes for $r\neq -1,0$, where $P$ denotes the wild inertia and $t$ denotes a generator of the tame inertia.
It then follows from the distinguished triangle 
$$i_*i^!\mF\to \mF\to j_*j^*\mF\to $$ that $i^!\mF[1]\cong \mC$, that $\mF$ is perverse and that there is a short exact sequence of perverse sheaves
$$0\to \mF\to j_*j^*\mF\to {^p}\mH^0j_*j^*\mF\to 0.$$ 
This in turn implies that $\mF\cong j_{!*}j^*\mF$.
\end{proof}

\begin{rmk}
When the base $S$ is smooth, there is another equivalent definition of local acyclicity as introduced in \cite[\S~5.1,~Appendix B]{BGa}. Under this other definition, Theorem \ref{ULA prop} (iv) was proved by Gaitsgory (private communication) and the rest part of Theorem \ref{ULA prop} and Theorem \ref{ULA perverse} were proved by Reich in \cite{Rei}.
\end{rmk}

%
%
%
%
%
%
%


\begin{bibdiv}
	\begin{biblist}

\quash{

\quash{
@book {MR0463174,
    AUTHOR = {Deligne, P.},
     TITLE = {Cohomologie \'etale},
    SERIES = {Lecture Notes in Mathematics, Vol. 569},
      NOTE = {S{\'e}minaire de G{\'e}om{\'e}trie Alg{\'e}brique du
              Bois-Marie SGA 4${1{\o}er 2}$,
              Avec la collaboration de J. F. Boutot, A. Grothendieck, L.
              Illusie et J. L. Verdier},
 PUBLISHER = {Springer-Verlag, Berlin-New York},
      YEAR = {1977},
     PAGES = {iv+312pp},
   MRCLASS = {14F20},
  MRNUMBER = {0463174},
MRREVIEWER = {J. S. Milne},
}

@book {MR654325,
    AUTHOR = {Deligne, Pierre and Milne, James S. and Ogus, Arthur and Shih,
              Kuang-yen},
     TITLE = {Hodge cycles, motives, and {S}himura varieties},
    SERIES = {Lecture Notes in Mathematics},
    VOLUME = {900},
 PUBLISHER = {Springer-Verlag, Berlin-New York},
      YEAR = {1982},
     PAGES = {ii+414},
      ISBN = {3-540-11174-3},
   MRCLASS = {14Kxx (10D25 12A67 14A20 14F30 14K22)},
  MRNUMBER = {654325},
}
}

}

\bib{Ap}{article}{
   author={Alper, Jarod},
   title={Adequate moduli spaces and geometrically reductive group schemes},
   journal={Algebr. Geom.},
   volume={1},
   date={2014},
   number={4},
   pages={489--531},
   issn={2214-2584},
   review={\MR{3272912}},
   doi={10.14231/AG-2014-022},
}

\bib{BG}{article}{
   author={Baranovsky, Vladimir},
   author={Ginzburg, Victor},
   title={Conjugacy classes in loop groups and $G$-bundles on elliptic
   curves},
   journal={Internat. Math. Res. Notices},
   date={1996},
   number={15},
   pages={733--751},
   issn={1073-7928},
   review={\MR{1413870}},
   doi={10.1155/S1073792896000463},
}

\bib{BL}{article}{
   author={Beauville, Arnaud},
   author={Laszlo, Yves},
   title={Conformal blocks and generalized theta functions},
   journal={Comm. Math. Phys.},
   volume={164},
   date={1994},
   number={2},
   pages={385--419},
   issn={0010-3616},
   review={\MR{1289330}},
}

\bib{BL2}{article}{
   author={Beauville, Arnaud},
   author={Laszlo, Yves},
   title={Un lemme de descente},
   language={French, with English and French summaries},
   journal={C. R. Acad. Sci. Paris S\'er. I Math.},
   volume={320},
   date={1995},
   number={3},
   pages={335--340},
   issn={0764-4442},
   review={\MR{1320381}},
}

\bib{BLS}{article}{
   author={Beauville, Arnaud},
   author={Laszlo, Yves},
   author={Sorger, Christoph},
   title={The Picard group of the moduli of $G$-bundles on a curve},
   journal={Compositio Math.},
   volume={112},
   date={1998},
   number={2},
   pages={183--216},
   issn={0010-437X},
   review={\MR{1626025}},
   doi={10.1023/A:1000477122220},
}

\bib{Bea}{article}{
   author={Beauville, Arnaud},
   title={Symplectic singularities},
   journal={Invent. Math.},
   volume={139},
   date={2000},
   number={3},
   pages={541--549},
   issn={0020-9910},
   review={\MR{1738060}},
   doi={10.1007/s002229900043},
}

\bib{BBD}{article}{
   author={Be{\u\i}linson, A. A.},
   author={Bernstein, J.},
   author={Deligne, P.},
   title={Faisceaux pervers},
   language={French},
   conference={
      title={Analysis and topology on singular spaces, I},
      address={Luminy},
      date={1981},
   },
   book={
      series={Ast\'erisque},
      volume={100},
      publisher={Soc. Math. France, Paris},
   },
   date={1982},
   pages={5--171},
   review={\MR{751966}},
}

\bib{Be}{article}{
   author={Be{\u\i}linson, A. A.},
   title={How to glue perverse sheaves},
   conference={
      title={$K$-theory, arithmetic and geometry},
      address={Moscow},
      date={1984--1986},
   },
   book={
      series={Lecture Notes in Math.},
      volume={1289},
      publisher={Springer, Berlin},
   },
   date={1987},
   pages={42--51},
   review={\MR{923134}},
   doi={10.1007/BFb0078366},
}

\bib{BBE}{article}{
   author={Beilinson, Alexander},
   author={Bloch, Spencer},
   author={Esnault, H{\'e}l{\`e}ne},
   title={$\epsilon$-factors for Gauss-Manin determinants},
   note={Dedicated to Yuri I. Manin on the occasion of his 65th birthday},
   journal={Mosc. Math. J.},
   volume={2},
   date={2002},
   number={3},
   pages={477--532},
   issn={1609-3321},
   review={\MR{1988970}},
}

\bib{BD0}{book}{
   author={Beilinson, Alexander},
   author={Drinfeld, Vladimir},
   title={Chiral algebras},
   series={American Mathematical Society Colloquium Publications},
   volume={51},
   publisher={American Mathematical Society, Providence, RI},
   date={2004},
   pages={vi+375},
   isbn={0-8218-3528-9},
   review={\MR{2058353}},
}

\bib{BD}{article}{
   author={Beilinson, Alexander},
   author={Drinfeld, Vladimir},
   title={Quantization of Hitchin's integrable system and Hecke eigensheaves}, 
   eprint={www.math.uchicago.edu/~mitya/langlands},
}

\bib{BeLu}{book}{
   author={Bernstein, Joseph},
   author={Lunts, Valery},
   title={Equivariant sheaves and functors},
   series={Lecture Notes in Mathematics},
   volume={1578},
   publisher={Springer-Verlag, Berlin},
   date={1994},
   pages={iv+139},
   isbn={3-540-58071-9},
   review={\MR{1299527}},
}

\bib{BFM}{article}{
   author={Bezrukavnikov, Roman},
   author={Finkelberg, Michael},
   author={Mirkovi{\'c}, Ivan},
   title={Equivariant homology and $K$-theory of affine Grassmannians and
   Toda lattices},
   journal={Compos. Math.},
   volume={141},
   date={2005},
   number={3},
   pages={746--768},
   issn={0010-437X},
   review={\MR{2135527}},
   doi={10.1112/S0010437X04001228},
}

\bib{BF}{article}{
   author={Bezrukavnikov, Roman},
   author={Finkelberg, Michael},
   title={Equivariant Satake category and Kostant-Whittaker reduction},
   language={English, with English and Russian summaries},
   journal={Mosc. Math. J.},
   volume={8},
   date={2008},
   number={1},
   pages={39--72, 183},
   issn={1609-3321},
   review={\MR{2422266}},
}

\bib{BS}{article}{
   author={Bhatt, Bhargav},
   author={Scholze, Peter},
   title={Projectivity of Witt vector affine Grassmannians}, 
   eprint={http://arxiv.org/abs/1507.06490},
}

\bib{Br}{article}{
   author={Braden, Tom},
   title={Hyperbolic localization of intersection cohomology},
   journal={Transform. Groups},
   volume={8},
   date={2003},
   number={3},
   pages={209--216},
   issn={1083-4362},
   review={\MR{1996415}},
   doi={10.1007/s00031-003-0606-4},
}

\bib{BGa}{article}{
   author={Braverman, A.},
   author={Gaitsgory, D.},
   title={Geometric Eisenstein series},
   journal={Invent. Math.},
   volume={150},
   date={2002},
   number={2},
   pages={287--384},
   issn={0020-9910},
   review={\MR{1933587}},
   doi={10.1007/s00222-002-0237-8},
}

\bib{Bry}{article}{
   author={Brylinski, Ranee Kathryn},
   title={Limits of weight spaces, Lusztig's $q$-analogs, and fiberings of
   adjoint orbits},
   journal={J. Amer. Math. Soc.},
   volume={2},
   date={1989},
   number={3},
   pages={517--533},
   issn={0894-0347},
   review={\MR{984511}},
   doi={10.2307/1990941},
}

\bib{BuGe}{article}{
   author={Buzzard, K.},
   author={Gee, T.},
   title={The conjectural connections between automorphic representations and Galois representations}, 
   eprint={http://arxiv.org/abs/1009.0785},
}   

\bib{SGA4.5}{article}{
   author={Deligne, Pierre},
   title={Th\'eor\`emes de finitude en cohomologie $\ell$-adique}, 
   language={French},
   book={
      title={Cohomologie \'etale},
      series={Lecture Notes in Mathematics, Vol. 569},
      note={S\'eminaire de G\'eom\'etrie Alg\'ebrique du Bois-Marie SGA
      4${1\over 2}$;
      Avec la collaboration de J. F. Boutot, A. Grothendieck, L. Illusie et J.
      L. Verdier},
      publisher={Springer-Verlag, Berlin-New York},
      date={1977},
   },
   pages={233--261},
   review={\MR{0463174}},
}

\bib{DM}{article}{
   author={Deligne, Pierre},
   author={Milne, James S.},
   title={Tannakian categories}, 
   book={
      title={Hodge cycles, motives, and Shimura varieties},
      series={Lecture Notes in Mathematics},
      volume={900},
      publisher={Springer-Verlag, Berlin-New York},
      date={1982},
   }
   pages={101--228},
   isbn={3-540-11174-3},
   review={\MR{654325}},
}

\bib{DS}{article}{
   author={Drinfel{\cprime}d, V. G.},
   author={Simpson, Carlos},
   title={$B$-structures on $G$-bundles and local triviality},
   journal={Math. Res. Lett.},
   volume={2},
   date={1995},
   number={6},
   pages={823--829},
   issn={1073-2780},
   review={\MR{1362973}},
   doi={10.4310/MRL.1995.v2.n6.a13},
}

\bib{Dr}{article}{
   author={Drinfeld, Vladimir},
   title={Infinite-dimensional vector bundles in algebraic geometry: an
   introduction},
   conference={
      title={The unity of mathematics},
   },
   book={
      series={Progr. Math.},
      volume={244},
      publisher={Birkh\"auser Boston, Boston, MA},
   },
   date={2006},
   pages={263--304},
   review={\MR{2181808}},
   doi={10.1007/0-8176-4467-9-7},
}

\bib{Fa}{article}{
   author={Faltings, Gerd},
   title={Algebraic loop groups and moduli spaces of bundles},
   journal={J. Eur. Math. Soc. (JEMS)},
   volume={5},
   date={2003},
   number={1},
   pages={41--68},
   issn={1435-9855},
   review={\MR{1961134}},
   doi={10.1007/s10097-002-0045-x},
}

\bib{Fa2}{article}{
   author={Faltings, Gerd},
   title={Theta functions on moduli spaces of $G$-bundles},
   journal={J. Algebraic Geom.},
   volume={18},
   date={2009},
   number={2},
   pages={309--369},
   issn={1056-3911},
   review={\MR{2475817}},
   doi={10.1090/S1056-3911-08-00499-2},
}

\bib{FGKV}{article}{
   author={Frenkel, E.},
   author={Gaitsgory, D.},
   author={Kazhdan, D.},
   author={Vilonen, K.},
   title={Geometric realization of Whittaker functions and the Langlands
   conjecture},
   journal={J. Amer. Math. Soc.},
   volume={11},
   date={1998},
   number={2},
   pages={451--484},
   issn={0894-0347},
   review={\MR{1484882}},
   doi={10.1090/S0894-0347-98-00260-4},
}

\bib{FZ}{article}{
   author={Frenkel, Edward},
   author={Zhu, Xinwen},
   title={Any flat bundle on a punctured disc has an oper structure},
   journal={Math. Res. Lett.},
   volume={17},
   date={2010},
   number={1},
   pages={27--37},
   issn={1073-2780},
   review={\MR{2592725}},
   doi={10.4310/MRL.2010.v17.n1.a3},
}

\bib{Ga}{article}{
   author={Gaitsgory, D.},
   title={Construction of central elements in the affine Hecke algebra via
   nearby cycles},
   journal={Invent. Math.},
   volume={144},
   date={2001},
   number={2},
   pages={253--280},
   issn={0020-9910},
   review={\MR{1826370}},
   doi={10.1007/s002220100122},
}

\bib{Ga2}{article}{
   author={Gaitsgory, D.},
   title={On de Jong's conjecture},
   journal={Israel J. Math.},
   volume={157},
   date={2007},
   pages={155--191},
   issn={0021-2172},
   review={\MR{2342444}},
   doi={10.1007/s11856-006-0006-2},
}

\bib{Ga3}{article}{
   author={Gaitsgory, Dennis},
   title={Contractibility of the space of rational maps},
   journal={Invent. Math.},
   volume={191},
   date={2013},
   number={1},
   pages={91--196},
   issn={0020-9910},
   review={\MR{3004779}},
   doi={10.1007/s00222-012-0392-5},
}

\bib{GL}{article}{
   author={Gaitsgory, Dennis},
   author={Lurie, Jacob},
   title={Weil's conjecture for function fields}, 
   eprint={http://math.harvard.edu/~lurie/papers/tamagawa.pdf},
}

\bib{Gi1}{article}{
   author={Ginzburg, Victor},
   title={Perverse sheaves on a loop group and Langlands' duality}, 
   eprint={http://arxiv.org/abs/alg-geom/9511007},
}

\bib{GKM}{article}{
   author={Goresky, Mark},
   author={Kottwitz, Robert},
   author={MacPherson, Robert},
   title={Equivariant cohomology, Koszul duality, and the localization
   theorem},
   journal={Invent. Math.},
   volume={131},
   date={1998},
   number={1},
   pages={25--83},
   issn={0020-9910},
   review={\MR{1489894}},
   doi={10.1007/s002220050197},
}

\bib{Gortz}{article}{
   author={G{\"o}rtz, Ulrich},
   title={Affine Springer fibers and affine Deligne-Lusztig varieties},
   conference={
      title={Affine flag manifolds and principal bundles},
   },
   book={
      series={Trends Math.},
      publisher={Birkh\"auser/Springer Basel AG, Basel},
   },
   date={2010},
   pages={1--50},
   review={\MR{3013026}},
   doi={10.1007/978-3-0346-0288-4-1},
}

\bib{Gr}{article}{
   author={Gross, Benedict H.},
   title={On the Satake isomorphism},
   conference={
      title={Galois representations in arithmetic algebraic geometry
      (Durham, 1996)},
   },
   book={
      series={London Math. Soc. Lecture Note Ser.},
      volume={254},
      publisher={Cambridge Univ. Press, Cambridge},
   },
   date={1998},
   pages={223--237},
   review={\MR{1696481}},
   doi={10.1017/CBO9780511662010.006},
}

\bib{Ha}{article}{
   author={Haines, Thomas J.},
   title={Equidimensionality of convolution morphisms and applications to
   saturation problems},
   journal={Adv. Math.},
   volume={207},
   date={2006},
   number={1},
   pages={297--327},
   issn={0001-8708},
   review={\MR{2264075}},
   doi={10.1016/j.aim.2005.11.014},
}

\bib{He}{article}{
   author={Heinloth, Jochen},
   title={Uniformization of $\scr G$-bundles},
   journal={Math. Ann.},
   volume={347},
   date={2010},
   number={3},
   pages={499--528},
   issn={0025-5831},
   review={\MR{2640041}},
   doi={10.1007/s00208-009-0443-4},
}

\bib{Il2}{article}{
   author={Illusie, Luc},
   title={Autour du th\'eor\`eme de monodromie locale},
   language={French},
   note={P\'eriodes $p$-adiques (Bures-sur-Yvette, 1988)},
   journal={Ast\'erisque},
   number={223},
   date={1994},
   pages={9--57},
   issn={0303-1179},
   review={\MR{1293970}},
}

\bib{Il3}{article}{
   author={Illusie, Luc},
   title={Around the Thom-Sebastiani theorem}, 
   eprint={http://www.math.u-psud.fr/~illusie/Illusie-nearby-cycles27d.pdf},
}

\bib{Kac}{book}{
   author={Kac, Victor G.},
   title={Infinite-dimensional Lie algebras},
   edition={3},
   publisher={Cambridge University Press, Cambridge},
   date={1990},
   pages={xxii+400},
   isbn={0-521-37215-1},
   isbn={0-521-46693-8},
   review={\MR{1104219}},
   doi={10.1017/CBO9780511626234},
}

\bib{KWWY}{article}{
   author={Kamnitzer, Joel},
   author={Webster, Ben},
   author={Weekes, Alex},
   author={Yacobi, Oded},
   title={Yangians and quantizations of slices in the affine Grassmannian},
   journal={Algebra Number Theory},
   volume={8},
   date={2014},
   number={4},
   pages={857--893},
   issn={1937-0652},
   review={\MR{3248988}},
   doi={10.2140/ant.2014.8.857},
}

\bib{KL}{article}{
   author={Kazhdan, David},
   author={Lusztig, George},
   title={Schubert varieties and Poincar\'e duality},
   conference={
      title={Geometry of the Laplace operator},
      address={Proc. Sympos. Pure Math., Univ. Hawaii, Honolulu, Hawaii},
      date={1979},
   },
   book={
      series={Proc. Sympos. Pure Math., XXXVI},
      publisher={Amer. Math. Soc., Providence, R.I.},
   },
   date={1980},
   pages={185--203},
   review={\MR{573434}},
}

\bib{Ku}{article}{
   author={Kumar, Shrawan},
   title={Demazure character formula in arbitrary Kac-Moody setting},
   journal={Invent. Math.},
   volume={89},
   date={1987},
   number={2},
   pages={395--423},
   issn={0020-9910},
   review={\MR{894387}},
   doi={10.1007/BF01389086},
}

\bib{KNR}{article}{
   author={Kumar, Shrawan},
   author={Narasimhan, M. S.},
   author={Ramanathan, A.},
   title={Infinite Grassmannians and moduli spaces of $G$-bundles},
   journal={Math. Ann.},
   volume={300},
   date={1994},
   number={1},
   pages={41--75},
   issn={0025-5831},
   review={\MR{1289830}},
   doi={10.1007/BF01450475},
}

\bib{KN}{article}{
   author={Kumar, Shrawan},
   author={Narasimhan, M. S.},
   title={Picard group of the moduli spaces of $G$-bundles},
   journal={Math. Ann.},
   volume={308},
   date={1997},
   number={1},
   pages={155--173},
   issn={0025-5831},
   review={\MR{1446205}},
   doi={10.1007/s002080050070},
}

\bib{Laf}{article}{
   author={Lafforgue, Vincent},
   title={Chtoucas pour les groupes r\'{e}ductifs et param\'{e}trisation de Langlands globale}, 
   eprint={http://arxiv.org/abs/1209.5352},
}

\bib{LS}{article}{
   author={Laszlo, Yves},
   author={Sorger, Christoph},
   title={The line bundles on the moduli of parabolic $G$-bundles over
   curves and their sections},
   language={English, with English and French summaries},
   journal={Ann. Sci. \'Ecole Norm. Sup. (4)},
   volume={30},
   date={1997},
   number={4},
   pages={499--525},
   issn={0012-9593},
   review={\MR{1456243}},
   doi={10.1016/S0012-9593(97)89929-6},
}

\bib{LZ}{article}{
   author={Liu, Yifeng},
   author={Zheng, Weizhe},
   title={Enhanced six operations and base change theorem for Artin stacks},
   eprint={http://arxiv.org/abs/1211.5948},
}

\bib{Lu0}{article}{
   author={Lusztig, G.},
   title={Green polynomials and singularities of unipotent classes},
   journal={Adv. in Math.},
   volume={42},
   date={1981},
   number={2},
   pages={169--178},
   issn={0001-8708},
   review={\MR{641425}},
   doi={10.1016/0001-8708(81)90038-4},
}

\bib{Lu}{article}{
   author={Lusztig, George},
   title={Singularities, character formulas, and a $q$-analog of weight
   multiplicities},
   conference={
      title={Analysis and topology on singular spaces, II, III},
      address={Luminy},
      date={1981},
   },
   book={
      series={Ast\'erisque},
      volume={101},
      publisher={Soc. Math. France, Paris},
   },
   date={1983},
   pages={208--229},
   review={\MR{737932}},
}

\bib{Ma}{article}{
   author={Mathieu, Olivier},
   title={Formules de caract\`eres pour les alg\`ebres de Kac-Moody
   g\'en\'erales},
   language={French, with English summary},
   journal={Ast\'erisque},
   number={159-160},
   date={1988},
   pages={267},
   issn={0303-1179},
   review={\MR{980506}},
}

\bib{MS}{article}{ 
   author={Milne, James S.},
   author={Shih, Kuang-yen},
   title={Conjugates of Shimura varieties},
   book={
      title={Hodge cycles, motives, and Shimura varieties},
      series={Lecture Notes in Mathematics},
      volume={900},
      publisher={Springer-Verlag, Berlin-New York},
      date={1982},
   }
   pages={280--356},
   isbn={3-540-11174-3},
   review={\MR{654325}},
}

\bib{MV}{article}{
   author={Mirkovi{\'c}, I.},
   author={Vilonen, K.},
   title={Geometric Langlands duality and representations of algebraic
   groups over commutative rings},
   journal={Ann. of Math. (2)},
   volume={166},
   date={2007},
   number={1},
   pages={95--143},
   issn={0003-486X},
   review={\MR{2342692}},
   doi={10.4007/annals.2007.166.95},
}

\bib{MVy}{article}{
   author={Mirkovi\'c, I.},
   author={Vybornov, M.},
   title={Quiver varieties and Beilinson-Drinfeld Grassmannians of type $A$}, 
   eprinte={http://arxiv.org/abs/0712.4160},
}

\bib{Mum}{book}{
   author={Mumford, David},
   title={Abelian varieties},
   series={Tata Institute of Fundamental Research Studies in Mathematics,
   No. 5 },
   publisher={Published for the Tata Institute of Fundamental Research,
   Bombay; Oxford University Press, London},
   date={1970},
   pages={viii+242},
   review={\MR{0282985}},
}

\bib{Na}{article}{
   author={Nadler, David},
   title={Matsuki correspondence for the affine Grassmannian},
   journal={Duke Math. J.},
   volume={124},
   date={2004},
   number={3},
   pages={421--457},
   issn={0012-7094},
   review={\MR{2084612}},
   doi={10.1215/S0012-7094-04-12431-5},
}

\bib{NP}{article}{
   author={Ng{\^o}, B. C.},
   author={Polo, P.},
   title={R\'esolutions de Demazure affines et formule de Casselman-Shalika
   g\'eom\'etrique},
   language={French, with English summary},
   journal={J. Algebraic Geom.},
   volume={10},
   date={2001},
   number={3},
   pages={515--547},
   issn={1056-3911},
   review={\MR{1832331}},
}

\bib{OZ1}{article}{
   author={Osipov, Denis},
   author={Zhu, Xinwen},
   title={A categorical proof of the Parshin reciprocity laws on algebraic
   surfaces},
   journal={Algebra Number Theory},
   volume={5},
   date={2011},
   number={3},
   pages={289--337},
   issn={1937-0652},
   review={\MR{2833793}},
   doi={10.2140/ant.2011.5.289},
}

\bib{OZ2}{article}{
   author={Osipov, Denis},
   author={Zhu, Xinwen},
   title={The two-dimensional Contou-Carr\`ere symbol and reciprocity laws}, 
   eprint={http://arxiv.org/abs/1305.6032},
}   

\bib{PR}{article}{
   author={Pappas, G.},
   author={Rapoport, M.},
   title={Twisted loop groups and their affine flag varieties},
   note={With an appendix by T. Haines and Rapoport},
   journal={Adv. Math.},
   volume={219},
   date={2008},
   number={1},
   pages={118--198},
   issn={0001-8708},
   review={\MR{2435422}},
   doi={10.1016/j.aim.2008.04.006},
}

\bib{PR2}{article}{
   author={Pappas, G.},
   author={Rapoport, M.},
   title={$\Phi$-modules and coefficient spaces},
   language={English, with English and Russian summaries},
   journal={Mosc. Math. J.},
   volume={9},
   date={2009},
   number={3},
   pages={625--663, back matter},
   issn={1609-3321},
   review={\MR{2562795}},
}

\bib{PZ}{article}{
   author={Pappas, G.},
   author={Zhu, X.},
   title={Local models of Shimura varieties and a conjecture of Kottwitz},
   journal={Invent. Math.},
   volume={194},
   date={2013},
   number={1},
   pages={147--254},
   issn={0020-9910},
   review={\MR{3103258}},
   doi={10.1007/s00222-012-0442-z},
}

\bib{PS}{book}{
   author={Pressley, Andrew},
   author={Segal, Graeme},
   title={Loop groups},
   series={Oxford Mathematical Monographs},
   note={Oxford Science Publications},
   publisher={The Clarendon Press, Oxford University Press, New York},
   date={1986},
   pages={viii+318},
   isbn={0-19-853535-X},
   review={\MR{900587}},
}

\bib{R3}{article}{
   author={Rapoport, Michael},
   title={A positivity property of the Satake isomorphism},
   journal={Manuscripta Math.},
   volume={101},
   date={2000},
   number={2},
   pages={153--166},
   issn={0025-2611},
   review={\MR{1742251}},
   doi={10.1007/s002290050010},
}

\bib{Ra}{article}{
   author={Rapoport, Michael},
   title={A guide to the reduction modulo $p$ of Shimura varieties},
   language={English, with English and French summaries},
   note={Automorphic forms. I},
   journal={Ast\'erisque},
   number={298},
   date={2005},
   pages={271--318},
   issn={0303-1179},
   review={\MR{2141705}},
}

\bib{Rei}{article}{
   author={Reich, Ryan Cohen},
   title={Twisted geometric Satake equivalence via gerbes on the
   factorizable Grassmannian},
   journal={Represent. Theory},
   volume={16},
   date={2012},
   pages={345--449},
   issn={1088-4165},
   review={\MR{2956088}},
   doi={10.1090/S1088-4165-2012-00420-4},
}

\bib{RZ}{article}{
   author={Richarz, Timo},
   author={Zhu, Xinwen}, 
   title={Construction of the full Langlands dual group via the geometric Satake correspondence}, 
   note={Appendix to \cite{Z11}},
}

\bib{Ri}{article}{
   author={Richarz, Timo},
   title={A new approach to the geometric Satake equivalence},
   journal={Doc. Math.},
   volume={19},
   date={2014},
   pages={209--246},
   issn={1431-0635},
   review={\MR{3178249}},
}

\bib{Sch}{book}{
   author={Scholze, Peter},
   author={Weinstein, Jared},
   title={$p$-adic geometry}, 
   eprint={https://math.berkeley.edu/~jared/Math274/ScholzeLectures.pdf},
   url={https://math.berkeley.edu/~jared/Math274/ScholzeLectures.pdf},
}

\bib{Se2}{book}{
   author={Serre, Jean-Pierre},
   title={Local fields},
   series={Graduate Texts in Mathematics},
   volume={67},
   note={Translated from the French by Marvin Jay Greenberg},
   publisher={Springer-Verlag, New York-Berlin},
   date={1979},
   pages={viii+241},
   isbn={0-387-90424-7},
   review={\MR{554237}},
}

\bib{So0}{article}{
   author={Sorger, Christoph},
   title={On moduli of $G$-bundles of a curve for exceptional $G$},
   language={English, with English and French summaries},
   journal={Ann. Sci. \'Ecole Norm. Sup. (4)},
   volume={32},
   date={1999},
   number={1},
   pages={127--133},
   issn={0012-9593},
   review={\MR{1670528}},
   doi={10.1016/S0012-9593(99)80011-1},
}

\bib{So}{article}{
   author={Sorger, Christoph},
   title={Lectures on moduli of principal $G$-bundles over algebraic curves},
   conference={
      title={School on Algebraic Geometry},
      address={Trieste},
      date={1999},
   },
   book={
      series={ICTP Lect. Notes},
      volume={1},
      publisher={Abdus Salam Int. Cent. Theoret. Phys., Trieste},
   },
   date={2000},
   pages={1--57},
   review={\MR{1795860}},
}

\bib{Sp}{article}{
   author={Springer, T. A.},
   title={Reductive groups},
   conference={
      title={Automorphic forms, representations and $L$-functions},
      address={Proc. Sympos. Pure Math., Oregon State Univ., Corvallis,
      Ore.},
      date={1977},
   },
   book={
      series={Proc. Sympos. Pure Math., XXXIII},
      publisher={Amer. Math. Soc., Providence, R.I.},
   },
   date={1979},
   pages={3--27},
   review={\MR{546587}},
}

\bib{St}{article}{
   author = {Stack project authors},
   title = {Stack project}, 
   eprint = {http://stacks.math.columbia.edu},
   date = {2016}
   url = {http://stacks.math.columbia.edu},
}

\bib{Ti}{article}{
   author={Tits, J.},
   title={Reductive groups over local fields},
   conference={
      title={Automorphic forms, representations and $L$-functions},
      address={Proc. Sympos. Pure Math., Oregon State Univ., Corvallis,
      Ore.},
      date={1977},
   },
   book={
      series={Proc. Sympos. Pure Math., XXXIII},
      publisher={Amer. Math. Soc., Providence, R.I.},
   },
   date={1979},
   pages={29--69},
   review={\MR{546588}},
}

\bib{Ti3}{article}{
   author={Tits, Jacques},
   title={Groupes associ\'es aux alg\`ebres de Kac-Moody},
   language={French},
   note={S\'eminaire Bourbaki, Vol.\ 1988/89},
   journal={Ast\'erisque},
   number={177-178},
   date={1989},
   pages={Exp.\ No.\ 700, 7--31},
   issn={0303-1179},
   review={\MR{1040566}},
}

\bib{YZ}{article}{
   author={Yun, Zhiwei},
   author={Zhu, Xinwen},
   title={Integral homology of loop groups via Langlands dual groups},
   journal={Represent. Theory},
   volume={15},
   date={2011},
   pages={347--369},
   issn={1088-4165},
   review={\MR{2788897}},
   doi={10.1090/S1088-4165-2011-00399-X},
}

\bib{Yu}{article}{
     AUTHOR = {Yun, Zhiwei}, 
     TITLE = {Lectures on Springer theories and orbital integrals}, 
     Book = {
         title={These Proceedings},
     }
}

\bib{Z07}{article}{
   author={Zhu, Xinwen},
   title={Affine Demazure modules and $T$-fixed point subschemes in the
   affine Grassmannian},
   journal={Adv. Math.},
   volume={221},
   date={2009},
   number={2},
   pages={570--600},
   issn={0001-8708},
   review={\MR{2508931}},
   doi={10.1016/j.aim.2009.01.003},
}

\bib{Z10}{article}{
   author={Zhu, Xinwen},
   title={On the coherence conjecture of Pappas and Rapoport},
   journal={Ann. of Math. (2)},
   volume={180},
   date={2014},
   number={1},
   pages={1--85},
   issn={0003-486X},
   review={\MR{3194811}},
   doi={10.4007/annals.2014.180.1.1},
}

\bib{Z11}{article}{
   author={Zhu, Xinwen},
   title={The geometric Satake correspondence for ramified groups},
   language={English, with English and French summaries},
   journal={Ann. Sci. \'Ec. Norm. Sup\'er. (4)},
   volume={48},
   date={2015},
   number={2},
   pages={409--451},
   issn={0012-9593},
   review={\MR{3346175}},
}

\bib{Z14}{article}  {
     AUTHOR = {Zhu, Xinwen},
     TITLE = {Affine Grassmannians and the geometric Satake in mixed characteristic},
     EPRINT = {http://arxiv.org/abs/1407.8519},
     URL = {},
}

	\end{biblist}
\end{bibdiv}

\end{document}